\providecommand{\tabularnewline}{\\}
\providecommand{\tabularnewline}{\\}
\providecommand{\tabularnewline}{\\}
\providecommand{\tabularnewline}{\\}
\date{}
\def\theenumi{\arabic{enumi}}
\def\theenumii{\alph{enumii}}
\def\p@enumii{\theenumi.}
\def\theenumiii{\arabic{enumiii}}
\def\p@enumiii{(\theenumi)(\theenumii)}
\def\p@enumiv{\p@enumiii.\theenumiii}
\newtheorem{theorem}{Theorem}[section]
\newtheorem{assumption}[theorem]{Assumption}
\newtheorem{corollary}[theorem]{Corollary}
\newtheorem{fact}[theorem]{Fact}
\newtheorem{lemma}[theorem]{Lemma}
\newtheorem{problem}[theorem]{Problem}
\newtheorem{proposition}[theorem]{Proposition}
\theoremstyle{definition}
\newtheorem{definition}[theorem]{Definition}
\newtheorem{notation}[theorem]{Notation}
\newtheorem{remark}[theorem]{Remark}
\newtheorem{example}[theorem]{Example}
\newtheorem{examples}[theorem]{Examples}
\newcommand{\e}{\varepsilon}
\newcommand{\G}{\Gamma}
\newcommand{\E}{{\mathbf E}}
\newcommand{\Z}{{\mathbb Z}}
\newcommand{\F}{{\mathbb F}}
\begin{document}

\title{Speed of random walks, isoperimetry and compression of finitely generated
groups}

\author{Jérémie Brieussel and Tianyi Zheng}
\maketitle
\begin{abstract}
We give a solution to the inverse problem (given a function, find a corresponding group) for large classes of speed, entropy, isoperimetric profile, return probability and $L_p$-compression functions of finitely generated groups of exponential volume growth. For smaller classes, we give solutions among solvable groups. As corollaries, we prove a recent conjecture of Amir on joint evaluation of speed and entropy exponents and we obtain a new proof of the existence of uncountably many pairwise non-quasi-isometric solvable groups, originally due to Cornulier and Tessera. We also obtain a formula relating the $L_p$-compression exponent of a group and its wreath product with the cyclic group for $p$ in $[1,2]$.
\end{abstract}

\section{Introduction}

An important topic in group theory is the description of asymptotic
behaviors of geometric and probabilistic quantities, such as volume
growth, isoperimetric profile, Hilbert and Banach space compression
on the geometric side, and speed, entropy and return probability of
random walks on the probabilistic side. The study of these quantities
falls into three types of questions. First given a group, compute
the associated functions. Secondly the inverse problem : given a function,
find a group with this asymptotic behavior. Thirdly understand the
relationship between these quantities and their interactions with
other topics in group theory, such as amenability, Poisson boundaries,
classification up to quasi-isometry, etc. This paper contributes to
solve the second question for finitely generated groups of exponential
volume growth.

The first solution to an inverse problem for a large class of functions concerned compression gaps for
non-amenable groups. Arzhantseva, Drutu and
Sapir proved that essentially any sublinear function is the upper
bound of a Hilbert compression gap of width $\log^{1+\epsilon}(x)$
of some non-amenable group \cite{Arzhantseva2009}. Their construction does not provide a
solution to the other inverse problems, because non-amenability forces
volume growth to be exponential, speed and entropy growth to be linear and return
probability to decay exponentially.

In the amenable setting, a partial solution to the inverse problem
is known for volume growth, entropy and speed. Bartholdi and Erschler
have proved \cite{Bartholdi2014} that for any regular
function $f(n)$ between $n^{0.77}$ and $n$ there is a group with volume growth
$e^{f(n)}$ up to multiplicative constant in front of the argument. This statement at the level of exponents was first obtained in \cite{Brieussel2014}. For any function
between $\sqrt{n}$ and $n^{\gamma}$ for $\gamma<1$, there is a
group and a finitely supported measure with this entropy up
to multiplicative constant by Amir-Virag \cite{AmirVirag2012}, see also \cite{Brieussel2013} for a statement
with precision $n^{o(1)}$. Amir and Virag also showed that for any
function between $n^{\frac{3}{4}}$ and $n^{\gamma}$, $\gamma<1$,
there is a group and a finitely supported measure with this speed
up to multiplicative constant. These examples are all permutational
wreath products, based on groups acting on rooted trees.

This paper develops the construction of new families of groups, not
related to rooted trees nor permutational wreath products, for which
speed, entropy, return probability, isoperimetric profiles, Hilbert
and some Banach space compression can all be computed.

Before stating our result, let us recall the usual definitions. Let
$\Delta$ be a finitely generated group equipped with a generating
set $T$, $\mu$ be a probability measure on $\Delta$. Let $X_{1},X_{2},\ldots$
be a sequence of i.i.d. random variables with distribution $\mu$,
then $W_{n}=X_{1}\ldots X_{n}$ is the random walk on $\Delta$ with
step distribution $\mu$. Its law is the $n$-fold convolution power
$\mu^{\ast n}$. Its \textbf{speed} (rate of escape) is the expectation
\[
L_{\mu}(n)=\E|W_{n}|_{\Delta}=\sum_{g\in\Delta}\left|g\right|_{\Delta}\mu^{\ast n}(g)
\]
where $|\cdot|_{\Delta}$ is the word distance on the Cayley graph
$(\Delta,T)$. Its \textbf{Shannon entropy} is the quantity 
\[
H_{\mu}(n)=H(W_{n})=-\sum_{x\in\Delta}\mu^{\ast n}(x)\log\mu^{\ast n}(x).
\]
The pair $\left(\Delta,\mu\right)$ has Liouville property if the Avez
asymptotic entropy $h_{\mu}=\lim_{n\to\infty}\frac{H_{\mu}(n)}{n}$
is $0$. By classical work of Avez \cite{Avez1976}, Derrienic \cite{Derriennic1976},
Kaimanovich-Vershik \cite{KV}, this is equivalent to the fact that
all bounded $\mu$-harmonic functions are constant.

The \textbf{return probability} is 
\[
\mathbf{P}\left[W_{2n}=e\right]=\mu^{\ast(2n)}(e)
\]
where $e$ is the neutral element in $\Delta$. The $\ell^{p}$-\textbf{isoperimetric
profile} is defined as 
\[
\Lambda_{p,\Delta,\mu}(v)=\inf\left\{ \frac{\sum_{x,y\in\Delta}\left|f(xy)-f(x)\right|^{p}\mu(y)}{2\sum_{x\in\Delta}\left|f(x)\right|^{p}}:\ f\in\ell^{p}(G),\ 1\le\left|\mbox{supp}(f)\right|\le v\right\} .
\]

The \textbf{compression} of an embedding $\Psi$ of $\Delta$ into
a Banach space $\mathfrak{X}$ is the function 
\[
\rho_{\Psi}(t)=\inf\left\{ \left\Vert \Psi(x)-\Psi(y)\right\Vert _{\mathfrak{X}}:\ \left|x^{-1}y\right|_{\Delta}\ge t\right\} .
\]
The embedding is said to be uniform if $\rho_{\Psi}(t)>0$ for all $t\ge 1$
and equivariant if $\Psi$ is a $1$-cocycle, see Section \ref{sec:Compression-expander}.
The couple of functions $(g_{1},g_{2})$ is an $\mathfrak{X}$\textendash{}\textbf{compression
gap} of $\Delta$ if any $1$-Lipschitz embedding $\varphi:\Delta\to\mathfrak{X}$
satisfies $\rho_{\varphi}(t)\le g_{2}(t)$ for all $t\ge1$ and there
exists a $1$-Lipschitz embedding $\Psi:\Delta\to\mathfrak{X}$ such that
$\rho_{\Psi}(t)\ge g_{1}(t)$ for all $t\ge1$. The $\mathfrak{X}$\textendash{}equivariant
compression gap is defined in the same manner, restricting to equivariant
embeddings. Let $L_{p}=L_{p}([0,1],m)$ be the classical Lebesgue
space. By \cite[Theorem 9.1]{Naor2011}, when $Delta$ is amenable, 
for all $p\ge1$, $(g_{1},g_{2})$ is an $L_{p}$-compression gap of $\Delta$
if and only if it is an equivariant $L_{p}$-compression gap of~$\Delta$.

Among these quantities, the compression gap is obviously independent
of the choice of the measure $\mu$ and up to multiplicative constants,
it is invariant under quasi-isometry. The $\ell^{p}$-isoperimetric
profiles and return probability associated with symmetric probability
measures of finite generating support are also known to be stable
under quasi-isometry, see Pittet-Saloff-Coste \cite{PittetSaloffCoste2000},
but it is an open question regarding speed and entropy.

Denote $f(x)\simeq_{C}g(x)$ if $\frac{1}{C}g\left(x\right)\leq f(x)\leq Cg(x)$
for all $x\geq1$. We write $f(x)\simeq g(x)$ and call $f$ and $g$
equivalent if there exists $C$ with $f(x)\simeq_{C}g(x)$. Write
$\log_{\ast}(x)=\log(x+1)$. 

The groups we construct are diagonal products of lamplighter groups.
Such a group is determined once are given a family of groups $\left\{ \Gamma_{s}\right\} $,
usually finite and satisfying some conditions, and a sequence $(k_{s})$
of \textquotedbl{}scaling factors\textquotedbl{} -see Section \ref{section-metric}.
The case where the groups $\Gamma_{s}$ are expanders permits to show
our main result.

\begin{theorem}\label{main-expanders} There exists a universal constant
$C>1$. For any non-decreasing function $f:[1,\infty)\to[1,\infty)$
such that $f(1)=1$ and $x/f(x)$ is non-decreasing, there exists
a group $\Delta$ of exponential volume growth equipped with a finite
generating set $T$ and a finitely-supported symmetric probability
measure $\mu$ such that 
\begin{itemize}
\item the speed and entropy are $L_{\mu}(n)\simeq_{C}H_{\mu}(n)\simeq_{C}\sqrt{n}f(\sqrt{n})$, 
\item the $\ell^{p}$-isoperimetric profile $\Lambda_{p,\Delta,\mu}(v)\simeq_{C}\left(\frac{f\left(\log(v)\right)}{\log v}\right)^{p}$
for any $p\in[1,2]$, 
\item the return probability $-\log\left(\mu^{\ast(2n)}(e)\right)\simeq_{C}w(n)$,
where $w(n)$ is defined implicitly by $n=\int_{1}^{w(n)}\left(\frac{s}{f(s)}\right)^{2}ds$, 
\item $\left(\frac{1}{C\epsilon}\frac{n/f(n)}{\log_{\ast}^{1+\epsilon}(n/f(n))},C\cdot2^{p}\frac{n}{f(n)}\right)$
is an equivariant $L_{p}$-compression gap for $\Delta$ for any $p>1$,
$\epsilon>0$. 
\end{itemize}
When the function $f$ is not asymptotically linear, i.e. $\lim_{x\to\infty}f(x)/x=0$,
the group \ensuremath{\Delta} can be chosen elementary amenable with asymptotic dimension one,
$(\Delta,\mu)$ has the Liouville property, and the equivariant $L_p$-compression gap
is also valid for $p=1$.

\end{theorem}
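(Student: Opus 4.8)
The plan is to realize $\Delta$ as an explicit diagonal product of lamplighter groups and then read off all five asymptotic invariants from a single two-sided estimate on its word metric. Fix marked groups $\Gamma_s=\langle a_s,b_s\rangle$ whose Cayley graphs form an expander family with $\log|\Gamma_s|\to\infty$ — and, for the amenable variant, mark them in addition by generators of bounded order, e.g. taking $(2,3)$-generated finite simple groups that still form an expander family — together with a non-decreasing sequence of scaling factors $(k_s)$. The group $\Delta$ is generated by the shift $\tau$ and by lamp generators $a,b$, where in the $s$-th coordinate $a$ lights the lamp $a_s$ at position $0$ and $b$ lights $b_s$ at position $k_s$; in this way the $\Gamma_s$-structure only becomes visible at scales $\gtrsim k_s$ along $\mathbb{Z}$. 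The two free parameters, the groups $\Gamma_s$ and the sequence $(k_s)$, are chosen by inverting the target formulas: pick scales $r_s$ at which to sample $f$, set $k_s\asymp r_s$, and choose $\log|\Gamma_s|$ so that the total "active lamp weight up to scale $r$", namely $\sum_{s:\,k_s\le r}(\text{per-site contribution})$, is comparable to $f(r)$. The hypotheses that $f$ and $x/f(x)$ are both non-decreasing are exactly what make this selection consistent.

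The technical backbone is a description, via a normal form, of $|g|_\Delta$ up to a universal multiplicative constant, in terms of the diameter $R$ of the $\mathbb{Z}$-support of $g$ and of the lamp configurations in the coordinates $s$ with $k_s\lesssim R$ (coordinates with $k_s\gg R$ being essentially invisible). Everything else specializes this. For speed and entropy, take $\mu$ of switch--walk--switch type on $(\Delta,T)$: the $\mathbb{Z}$-projection of $W_n$ is a lazy simple random walk with range $\asymp\sqrt n$, the lamp coordinates with $k_s\lesssim\sqrt n$ get well mixed and the others barely move, so inserting the typical configuration of $W_n$ into the metric estimate gives $L_\mu(n)\lesssim\sqrt n f(\sqrt n)$, while the reverse inequality and the two-sided entropy bound follow from the general inequality $H_\mu(n)\le L_\mu(n)\log|T|$ together with a lower bound on the entropy produced in the mixed lamp coordinates, obtained by comparison with independent lamplighter walks. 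For the $\ell^p$-isoperimetric profile, test functions supported on "$\mathbb{Z}$-intervals times lamp configurations" give the upper bound $\Lambda_{p,\Delta,\mu}(v)\lesssim(f(\log v)/\log v)^p$, and the matching lower bound follows from the presentation of $\Delta$ in terms of the factors $\Gamma_s\wr\mathbb{Z}$, using that for $p\in[1,2]$ the $\ell^p$-profile of a lamplighter is governed by logarithmic-scale averages; this yields $\Lambda_{p,\Delta,\mu}(v)\simeq(f(\log v)/\log v)^p$. The return probability is then extracted from the case $p=2$ through the standard correspondence between $\Lambda_2$ and $\mu^{*2n}(e)$, which turns $\Lambda_2(v)\simeq(f(\log v)/\log v)^2$ into $-\log\mu^{*2n}(e)\simeq w(n)$ with $n=\int_1^{w(n)}(s/f(s))^2\,ds$.

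For the compression gap the upper bound exploits the expander hypothesis: $\Delta$ contains, at each scale $\asymp k_s$, a metric copy of the $s$-th expander, and the $L_p$-Poincaré inequality of expanders forces $\rho_\varphi(t)\lesssim 2^p\,n/f(n)$ for every $1$-Lipschitz $\varphi:\Delta\to L_p$. For the lower bound I would build an explicit equivariant $1$-Lipschitz map $\Delta\to L_p$ as a weighted $\ell_p$-direct sum of $1$-cocycles, one per coordinate $s$, combining a good cocycle for the $\mathbb{Z}$-direction with functions recording the $\Gamma_s$-lamp states and weights tuned to $k_s$; summing a barely divergent series in the spirit of Arzhantseva--Drutu--Sapir \cite{Arzhantseva2009} produces the $\log_{\ast}^{1+\epsilon}$ loss and the bound $\rho_\Psi(t)\gtrsim\frac1\epsilon\,\frac{n/f(n)}{\log_{\ast}^{1+\epsilon}(n/f(n))}$. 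By the theorem of Naor quoted above \cite{Naor2011}, both sides pass to general (non-equivariant) embeddings once $\Delta$ is amenable.

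Finally, suppose $f$ is not asymptotically linear, so $\sqrt n f(\sqrt n)=o(n)$. With the $\Gamma_s$ marked by bounded-order generators, each finitely generated subgroup of the lamp subgroup $N$ embeds into a product of a finite group and an abelian group of bounded exponent, hence is finite; so $N$ is locally finite, and since $\Delta/N\cong\mathbb{Z}$ the group $\Delta$ is (locally finite)-by-$\mathbb{Z}$. Therefore $\Delta$ is elementary amenable, and being (locally finite)-by-$\mathbb{Z}$ it has asymptotic dimension one. Since $H_\mu(n)\simeq_C\sqrt n f(\sqrt n)=o(n)$ we get $h_\mu=\lim_n H_\mu(n)/n=0$, so $(\Delta,\mu)$ is Liouville by Kaimanovich--Vershik \cite{KV}. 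Amenability of $\Delta$ also makes Naor's theorem available at $p=1$, and as lamplighter-type embeddings into $L_1$ and the $L_1$-Poincaré inequality of expanders are both available, the compression gap extends to $p=1$. (When $f$ is asymptotically linear one has $w(n)\asymp n$, the return probability decays exponentially, and $\Delta$ is necessarily non-amenable, which is why the restriction is needed.) The main obstacle throughout is the metric estimate together with the compression upper bound: the $s$-th expander is a feature of $\Delta$ only at the single scale $\asymp k_s$, so the Poincaré obstruction must be carefully localized to balls of that radius and then matched, with the correct constant and the correct argument $n/f(n)$, to the embedding lower bound — making the two sides of the gap frame the same function while the four other invariants drop out of one metric formula is the delicate point.
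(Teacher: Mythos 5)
Your overall architecture is the same as the paper's: a diagonal product of lamplighter factors $\Gamma_s\wr\mathbb{Z}$ with scaling factors $(k_s)$, a universal two-sided word-metric estimate, a switch--walk--switch measure to read off speed and entropy, test functions and projections to factors for the $\ell^p$-profiles, Coulhon--Grigor'yan for return probability, and an expander Poincar\'e obstruction paired with an Arzhantseva--Drutu--Sapir-style cocycle for the compression gap. Within that shared framework, however, there are two genuine gaps.

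First, the metric estimate that you treat as the ``technical backbone'' requires a nontrivial hypothesis on the marked groups $\Gamma_s$: the relative abelianization $\Gamma_s/[A(s),B(s)]^{\Gamma_s}$ must be all of $A\times B$ (Assumption~\ref{Gamma_Involution}), so that the tails of each lamp configuration are forced by $\theta_0(f_0)$ via Lemma~\ref{abelian}; without this the representative words $\omega(f_s',0)$ of Lemma~\ref{one_metric} need not project trivially to the other factors and Proposition~\ref{metric} fails. Your proposal does not impose this condition, and the example you offer is actually incompatible with it: a nonabelian finite simple group $\Gamma_s$ has $[A(s),B(s)]^{\Gamma_s}=\Gamma_s$, so its relative abelianization is trivial, not $A\times B$. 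The paper makes the assumption hold by explicitly replacing the Lafforgue quotient $\Lambda_m$ by the direct product $\Lambda_m\times(A\times B)$; some such device must appear in any version of the argument.

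Second, ``a non-decreasing sequence of scaling factors $(k_s)$'' is not enough: the metric proof requires strict doubling, $k_{s+1}>2k_s$ (Assumption~\ref{k_growth}), and this exponential growth is used precisely to guarantee that the erasure word $\omega_j$ used to cancel the $s$-th factor moves the cursor within an interval shorter than $k_{s'}$ for $s'>s$ (and similarly for $s'<s$), so that it leaves all other factors unchanged. A merely non-decreasing $(k_s)$ breaks this localization. Relatedly, for the upper bound of the compression gap in the regime where $f$ is asymptotically linear you must include an infinite factor $\Gamma_{\mathfrak s_0}=\Gamma$, and a spectral-gap Poincar\'e inequality on finite quotients no longer suffices: one needs Property $(F_{L_p})$ (or strong Property (T)) of $\Gamma$ itself, as in Lemma~\ref{expander-Lp-infty}, to show that equivariant $1$-Lipschitz maps restricted to the homothetic copy of $\Gamma$ are bounded. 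Your proposal invokes only the Poincar\'e inequality of the expanders, which does not cover this case; this is exactly why the statement restricts the $p=1$ gap and the elementary-amenable conclusion to the non-asymptotically-linear regime.
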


Since the constant $C$ is universal, this result is new even when
$f$ is asymptotically linear.

The first statement asserts that any regular function between diffusive
$\sqrt{n}$ and linear $n$ is the speed and entropy function of a
random walk on a group. It improves on Amir-Virag \cite{AmirVirag2012}
by the range between diffusive and $n^{\frac{3}{4}}$ for speed, and
by the range close to linear for speed and entropy. The constant in
Theorem \ref{main-expanders} is universal, whereas the constants
in \cite{AmirVirag2012} diverge when approaching linear behavior.
Moreover, concerning speed and entropy we can find such a group $\Delta$
in the class of $4$-step solvable groups. This is the case when the
groups $\Gamma_{s}$ are usual lamplighters over finite $d$-dimensionnal
lattices with $d\geq3$, see Theorem \ref{possible-speed-linear}
and Example \ref{example-lamplighters}.

Our result regarding possible speed functions should be compared with known constraints on speed functions.
From subadditivity, $L_{\mu}(n+m)\le L_{\mu}(n)+L_{\mu}(n)$ for any convolution
walk on a group $\Delta$. By Lee-Peres \cite{LeePeres2013}, there
is a universal constant $c>0$ such that for any amenable group $\Delta$
equipped with a finite generating set $T$, for any symmetric probability
measure $\mu$ on $G$, $L_{\mu}(n)\ge c\sqrt{p_{\ast}n}$, where
$p_{\ast}=\min_{g\in T}\mu(g)$. On the other hand, we obtain that
any function $g(n)$ such that $\frac{g(n)}{\sqrt{n}}$ and $\frac{n}{g(n)}$
are non-decreasing is equivalent to a speed function.

The third statement can be derived from the $\ell^{2}$-isoperimetric
profile estimate in the second statement via the Coulhon-Grigor'yan
theory \cite{Coulhon1997}, see section \ref{return}. For $p\in[1,2]$
any regular function between constant and $n^{-p}$ is equivalent to $\Lambda_{p,\Delta,\mu}\circ\exp$
for some group $\Delta$, and any regular function between $n^{\frac{1}{3}}$
and linear $n$ is equivalent to $-\log\mu^{\ast n}(e)$. Again, this should be compared with known constraints for isoperimetric profile and return probability
for groups with exponential volume growth. By Coulhon
and Saloff-Coste \cite{Coulhon1993}, for any symmetric probability
measure $\mu$ on $\Delta$, $-\log\mu^{\ast n}(e)\geq cn^{\frac{1}{3}}$
and $\Lambda_{p,\Delta,\mu}\circ\exp(x)\geq c'x^{-p}$, $p\in[1,2]$,
where the constants $c,c'$ depend on the volume growth rate of $\left(\Delta,T\right)$
and $p_{\ast}=\min_{g\in T}\mu(g)$.

From the result on $\ell^{1}$-isoperimetric profile we derive Corollary
\ref{folner} that any sufficiently regular function above exponential
is equivalent to a Følner functions. The result extends \cite[Corollary 1.5]{Olshanskii2013}.
It also answers \cite[Question 5]{Erschler2010} positively that there
exists elementary amenable groups with arbitrarily fast growing Følner
function, while simple random walk on it has the Liouville property. Groups
of subexponential volume growth and arbitrarily large Følner function
were first constructed by Erschler \cite{Erschler2006}.

When $f$ is not asymptotically linear, the fourth statement asserts
that any unbounded non-decreasing sublinaer function $h(n)$  is
equivalent to the upper bound of an equivariant $L_{p}$-compression gap of width
$\log^{1+\epsilon}h(n)$ of an amenable group. Recall that equivariant
and non-equivariant compression are equivalent for amenable groups
\cite{Naor2011}. Regarding non-equivariant compression, it is an amenable analogue to Arzhantseva-Drutu-Sapir
\cite{Arzhantseva2009} and slightly improves on it as the width depends
on the upper bound. It also provides other examples of amenable groups with poor compression, after \cite{Austin2011}, \cite{Olshanskii2013} and \cite{BEcompression}. It also follows easily that any function below $\sqrt{x}$ is the upper bound of an equivariant $L_p$-compression gap of a non-amenable group, simply considering the direct product of an amenable solution with the free group on two generators. The equivariant compression of a non-amenable group is at most $\sqrt{x}$ and this bound is attained for free groups by \cite{Guentner2004}.

In order to obtain the equivariant $L_p$-compression gap with upper bound $x/f(x)$ bounded,
 we actually need to choose the family $\{\Gamma_{s}\}$
among quotients of a Lafforgue lattice with strong Property (T) \cite{Lafforgue2008}.
In this case, we also obtain an upper bound on the compression exponent
of $\Delta$ into any uniformly convex normed space, see Corollary
\ref{compression-expander-upper}.

The relationship between these five quantities is more easily understood
at the level of exponents. The \textbf{exponent} of a function $f$
is $\lim\frac{\log f(n)}{\log n}$ when the limit exists. For a compression
gap of width less than any power, the lower exponent of the upper
bound coincides with the definition of the $\mathfrak{X}$-compression
exponent introduced in Guentner-Kaminker \cite{Guentner2004}, 
\[
\alpha_{\mathfrak{X}}^{\ast}(G)=\sup\left\{ \alpha_{\mathfrak{X}}(\Psi):\ \Psi\mbox{ is a Lipschitz map }G\to\mathfrak{X}\right\} ,
\]
where the compression exponent $\alpha_{\mathfrak{X}}(\Psi)$ of the
map $\Psi:\Delta\rightarrow\mathfrak{X}$ is defined as 
\[
\alpha_{\mathfrak{X}}(\Psi)=\sup\left\{ \alpha\ge0:\ \exists c>0\ \mbox{s.t. }\rho_{\Psi}(t)\ge ct^{\alpha}\ \mbox{for all }t\ge1\right\} .
\]
When $\mathfrak{X}$ is the classical Lebesgue space $L_{p}$, we
write $\alpha_{p}^{\ast}(G)$ for the $L_{p}$-compression exponent.
The equivariant compression exponent $\alpha_{\mathfrak{X}}^{\#}(G)$
is defined similarly, restricting to $G$-equivariant embeddings $\Psi$.
When $G$ is amenable, $\alpha_{p}^{\ast}(G)=\alpha_{p}^{\#}(G)$,
see \cite[Theorem 1.6]{Naor2011}.

The exponents when $\left\{ \Gamma_{s}\right\} $ are expanders or
dihedral are given in Figure \ref{table}.

\begin{figure}
\begin{centering}
\begin{tabular}{|c|c|c|c|c|c|}
\hline 
Exponent of  & $\E\left|W_{n}\right|_{\Delta}$  & $H(W_{n})$  & $-\log\mu^{\ast2n}(e)$  & $\Lambda_{p,\Delta,\mu}\circ\exp$  & $\alpha_{p}^{\#}(\Delta)$\tabularnewline
\hline 
\hline 
$\left\{ \Gamma_{s}\right\} $ \mbox{expanders}  & $\frac{1+\theta}{2+\theta}$  & $\frac{1+\theta}{2+\theta}$  & $\frac{1+\theta}{3+\theta}$  & $\frac{-p}{1+\theta}$  & $\frac{1}{1+\theta}$\tabularnewline
\hline 
$\left\{ \Gamma_{s}\right\} $ \mbox{dihedral}  & $\frac{1+3\theta}{2+4\theta}$  & $\frac{1}{2}$  & $\frac{1}{3}$  & $-p$  & $\max\left\{ \frac{1}{1+\theta},\frac{2}{3}\right\} $\tabularnewline
\hline 
\end{tabular}
\par\end{centering}

\caption{\label{table}Exponents for sequences of parameters $k_{s}=2^{2s}$
and $l_{s}=\mbox{diam}(\Gamma_{s})\simeq2^{2\theta s}$, where $\theta\in(0,\infty)$. The isoperimetric profile and compression exponent are all valid for $p\in[1,2]$. The compression for expanders is valid for $p \in [1,\infty)$.}
\end{figure}

When the quotient groups $\Gamma_{s}$ are dihedral, the resulting
diagonal product $\Delta$ is $3$-step solvable.

\begin{theorem}\label{main-proba-dihedral} Let $\epsilon>0$.There
exists a constant $C>0$. For any function $f$ such that $\frac{f(n)}{\log^{1+\epsilon}(n)\sqrt{n}}$
and $\frac{n^{\frac{3}{4}}}{f(n)}$ are non-decreasing, there is a
$3$-step solvable group $\Delta$ and a finitely-supported symmetric probability measure such that the speed function is
\[
\E\left|W_{n}\right|_{\Delta}\simeq_{C}f(n)
\]
and the entropy and return probability satisfy 
\[
\frac{1}{C}\sqrt{n}\leq H(W_{n}^{\Delta})\leq C\sqrt{n}\log^{2}n\mbox{ and }\frac{1}{C}n^{\frac{1}{3}}\leq-\log\mu^{\ast n}(e)\leq Cn^{\frac{1}{3}}\log^{\frac{4}{3}}n.
\]
\end{theorem}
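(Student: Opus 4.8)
The plan is to specialize the diagonal‑product machinery of Section~\ref{section-metric} to dihedral fibres $\Gamma_s=D_{l_s}$ (the dihedral group of order $2l_s$) and to choose the scaling sequence $(k_s)$ together with the diameters $l_s=\mathrm{diam}(\Gamma_s)$ so that the speed of the resulting diagonal product $\Delta$ is $f$ up to constants. The mechanism that makes the three quantities behave so differently should be isolated first: inside $D_{l_s}$, with the word metric of the standard pair of generating reflections, an element whose rotation part is ``angle $j$'' has length of order $j$, i.e.\ it is \emph{linearly} deep, yet it carries only of order $\log j$ bits of information and lies in a ball of polynomial size. Consequently, once the walk has filled $\asymp\sqrt n$ sites of the base $\Z$ and driven the rotation part of each scale‑$s$ fibre (a lazy walk on $\Z/l_s\Z$) to typical depth $\asymp\min(\sqrt t,l_s)$, the word length picks up a term of order (number of sites)$\times$(depth) — which can be made as large as $n^{3/4}$ by taking the $l_s$ large — whereas the entropy picks up only (number of sites)$\times O(\log n)$ and the isoperimetric profile, hence the return probability, hardly moves. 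This dichotomy is the whole point of using dihedral rather than expander fibres.

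Concretely, I would first check that the marked pair $\{t_s,r_st_s\}$ of $D_{l_s}$ (two reflections with product the rotation $r_s$) satisfies the hypotheses imposed on the fibres in Section~\ref{section-metric}, so that $\Delta=\Delta(\{D_{l_s}\},(k_s))$ is defined, finitely generated, of exponential volume growth (it maps onto the lamplighter $(\Z/2\Z)\wr\Z$ by killing all rotation parts) and $3$‑step solvable ($\Z$ has derived length $1$, $D_{l_s}$ derived length $2$, so $D_{l_s}\wr\Z$ has derived length $3$, and $\Delta$ embeds in a product of such groups). Then I would read $(k_s)$ and $(l_s)$ off $f$ by the inversion procedure of the proof of Theorem~\ref{main-expanders}, with the expander diameter there replaced by $l_s$: scale $s$ turns on around time $n\asymp k_s^{2}$, at which moment its fibres have depth $\asymp\min(k_s,l_s)$, and the requirement $\sum_{s:\,k_s\lesssim\sqrt n}(\text{scale-}s\text{ contribution})\simeq f(n)$ determines, up to dyadic interpolation, admissible integer sequences $(k_s),(l_s)$ whenever $f(n)/(\sqrt n\log^{1+\epsilon}n)$ is non‑decreasing (this lower cutoff is exactly the slack needed to sum the $\Theta(\log n)$ scale contributions and to interpolate between dyadic scales) and $n^{3/4}/f(n)$ is non‑decreasing (this keeps $l_s$ below roughly $k_s$, so the rotation parts never wrap around before the next scale activates).

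With the parameters fixed, the speed estimate comes from feeding the metric formula of Section~\ref{section-metric} — $|W_n|_\Delta\asymp|m_n|+\sum_s\mathrm{cost}_s(\varphi^{(s)}_n)$, with $m_n$ the cursor on the base and $\varphi^{(s)}_n$ the scale‑$s$ configuration, subject to the diagonal compatibility constraint — into the computation of $L_\mu(n)$: on each active scale the rotation parts are distributed like lazy simple random walk on $\Z/l_s\Z$, so a site worked on for time $t$ contributes $\asymp\min(\sqrt t,l_s)$ while the reflection bit contributes $O(1)$, and summing over sites and scales with the parameter choice yields $\E|W_n|_\Delta\simeq_C f(n)$ — the upper bound via subadditive control plus the fact that scales with $k_s\gg\sqrt n$ are essentially untouched, the lower bound via anticoncentration of the rotation walks at the scales with $k_s\asymp\sqrt n$. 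For the entropy, $H(W_n)\le C\sqrt n\log^2 n$ because $W_n$ is determined by its base range ($\asymp\sqrt n$ sites), the $\Theta(\log n)$ active scales, and one lamp value per visited site per active scale, each in a set of size $\asymp l_s$ polynomial in $n$; and $H(W_n)\ge\frac1C\sqrt n$ because $\Delta$ surjects onto $(\Z/2\Z)\wr\Z$, whose random walk has entropy $\gtrsim\sqrt n$, and entropy does not increase under a push‑forward. Finally, $-\log\mu^{\ast n}(e)\ge\frac1C n^{1/3}$ is the Coulhon--Saloff‑Coste bound~\cite{Coulhon1993} for groups of exponential growth, while $-\log\mu^{\ast n}(e)\le C n^{1/3}\log^{4/3}n$ follows from $\Lambda_{2,\Delta,\mu}\circ\exp(x)\gtrsim x^{-2}$ (the dihedral line of Figure~\ref{table}, which does not see $f$ because the dihedral fibres contribute only boundedly to the abelianized picture) fed into the Coulhon--Grigor'yan correspondence~\cite{Coulhon1997} of Section~\ref{return}, the polylogarithmic loss absorbing the imprecision of the dyadic parameter fit.

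The hard part is the speed computation, and inside it the bookkeeping of the ``transition window'' during which a scale passes from inactive to active to nearly saturated: one must show that the contributions of the several overlapping windows active at time $n$ add up to $f(n)$ with a constant independent of $f$, while respecting the diagonal compatibility constraint among fibres that are simultaneously in their windows — and it is precisely the control of these windows that consumes the two monotonicity hypotheses on $f$. The entropy and return‑probability bounds, by contrast, are robust features of any dihedral diagonal product of this shape and should require only polylogarithmic care once the metric and isoperimetric inputs of Section~\ref{section-metric} are in hand.
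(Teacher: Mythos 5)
Your overall route is the one the paper actually takes — dihedral fibres, the metric estimate of Section~\ref{section-metric}, the approximation machinery of Appendix~\ref{approximation} to read $(k_s),(l_s)$ off $f$, the surjection onto $\Delta_0\simeq(\Z/2\Z\times\Z/2\Z)\wr\Z$ for the entropy lower bound, and the $\ell^2$\nobreakdash-profile fed into Coulhon--Grigor'yan for the return probability upper bound — so at that level there is nothing new to compare. The difficulty is that the part you flag as ``the hard part'' is precisely where the sketch does not close, and the reason is a missing logarithmic factor that changes both the parameterization and the role of the hypothesis $f(n)/(\sqrt n\log^{1+\epsilon}n)$ non-decreasing.

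The metric estimate (Lemma~\ref{one_metric}) does not charge a scale-$s$ configuration by $\sum_x |f_s(x)|_{\Gamma_s}$; it charges it by the essential contribution $\sum_j k_s\max_{x\in I_j^s}(|f_s(x)|_{\Gamma_s}-1)_+$, i.e.\ $k_s$ times the \emph{maximum} lamp depth over each window of width $k_s/2$. Since the lamps over the $\asymp k_s$ sites of a window are essentially independent dihedral walks, that maximum is inflated by a factor $\asymp\log^{1/2}k_s$ over the typical depth (this is spelled out in Lemmas~\ref{dihedral-one-upper} and~\ref{dihedral-one-lower} via the Gaussian deviation bound~(\ref{eq:dihedral-deviation})). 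So the scale-$s$ contribution is
\[
\min\Bigl\{\,n^{3/4}\,\bigl(\tfrac{\log k_s}{k_s}\bigr)^{1/2},\ \tfrac{n}{k_s},\ n^{1/2}l_s\,\Bigr\},
\]
not $n^{3/4}k_s^{-1/2}$. Your ``(number of sites)$\times$(depth)'' heuristic is only the weaker lower bound $\sum_x|f_s(x)|$, which differs from the matching upper bound by the unbounded factor $\log^{1/2}k_s$; used as stated it would not yield a two-sided $\simeq_C$ estimate for $\E|W_n|_\Delta$, and the inversion $(k_s)\leftarrow f$ built on it would drift by $\log^{1/2}k_s$. The paper absorbs this by substituting $\kappa_s=(k_s/\log k_s)^{1/2}$ in the approximation step (proof of Theorem~\ref{possible-speed-dihedral}); some such device is needed. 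Relatedly, your account of the $\log^{1+\epsilon}$ hypothesis (``slack to sum $\Theta(\log n)$ scale contributions'') is not the operative mechanism: it is used through Corollary~\ref{approximation-speed} to guarantee $\log k_s\le l_s$, which kills one of the three terms in the minimum above and lets the two-term formula of the Remark after Proposition~\ref{dihedral-speed} apply. Finally, a small imprecision in the entropy upper bound: the per-site-per-scale entropy is $O(\log n)$ not because $l_s$ is polynomial in $n$ (it need not be), but because the lamp depth is a dihedral walk of length at most the traverse count $T(k_s,\cdot,n)\le n$, so conditioning on the traverse time function gives $H\le\log n$ per site; the rest of your count of $\asymp\sqrt n$ sites and $\Theta(\log n)$ scales is fine. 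The return-probability lower bound you attribute to Coulhon--Saloff-Coste works equally well; the paper uses the projection to $\Delta_0$ and \cite{Pittet2002}, but both give $n^{1/3}$.
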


The factor $\log^{1+\epsilon}n$ is only technical. It follows from
Proposition \ref{dihedral-speed} that there is no gap isolating the
diffusive behavior $\sqrt{n}$, but the analysis is simplified by
this mild hypothesis.

Theorems \ref{main-expanders} and \ref{main-proba-dihedral} permit
to solve a recent conjecture of Amir \cite{Amir2015}.

\begin{corollary}\label{entropy-speed-exponent} For any $\theta\in\left[\frac{1}{2},1\right]$
and $\gamma\in\left[\frac{1}{2},1\right]$ satisfying 
\[
\theta\le\gamma\le\frac{1}{2}(\theta+1),
\]
there exists a finitely generated group $G$ and a symmetric probability
measure $\mu$ of finite support on $G$, such that the random walk
on $G$ with step distribution $\mu$ has entropy exponent $\theta$
and speed exponent $\gamma$. \end{corollary}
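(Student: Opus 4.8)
The plan is to realize every pair $(\theta,\gamma)$ with $\tfrac12\le\theta\le\gamma\le\tfrac12(\theta+1)$ as the (entropy exponent, speed exponent) of some finitely supported symmetric random walk, treating the region in three parts. On the diagonal $\gamma=\theta$ I would apply Theorem \ref{main-expanders} to $f(x)=x^{2\theta-1}$: this $f$ is non-decreasing since $2\theta-1\ge0$, the ratio $x/f(x)=x^{2(1-\theta)}$ is non-decreasing since $\theta\le1$, and $f(1)=1$, so the theorem gives a group on which speed and entropy are both equivalent to $\sqrt n\,f(\sqrt n)=n^{\theta}$, hence both exponents equal $\theta=\gamma$.

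For the bottom edge $\theta=\tfrac12$, $\gamma\in(\tfrac12,\tfrac34)$, I would apply Theorem \ref{main-proba-dihedral} with $f(n)=n^{\gamma}$, the two monotonicity hypotheses being immediate (at the endpoints one takes instead $f(n)=\sqrt n\log^{1+\epsilon}n$ for $\gamma=\tfrac12$ and $f(n)=n^{3/4}$ for $\gamma=\tfrac34$); this produces a group with speed exponent $\gamma$ and entropy exponent $\tfrac12$. For a general $(\theta,\gamma)$ with $\tfrac12\le\theta\le\gamma\le\tfrac34$ I would then take the direct product $\Delta_1\times\Delta_2$ of the diagonal group $\Delta_1$ of exponent $\theta$ from the previous step and the dihedral group $\Delta_2$ of speed exponent $\gamma$, with a product measure $\mu_1\otimes\mu_2$. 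Since word length on a direct product is the sum of the two word lengths and the coordinate walks are independent, $L(n)=L_{\mu_1}(n)+L_{\mu_2}(n)$ and $H(n)=H_{\mu_1}(n)+H_{\mu_2}(n)$, so the speed exponent is $\max(\theta,\gamma)=\gamma$ and the entropy exponent is $\max(\theta,\tfrac12)=\theta$.

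The remaining region $\tfrac34<\gamma\le\tfrac12(\theta+1)$ is where the two theorems do not combine directly, and this is the main obstacle: the dihedral construction caps the speed exponent at $\tfrac34$, while in Theorem \ref{main-expanders} speed and entropy exponents always coincide, so no direct product of groups produced by these theorems can have speed exponent $>\tfrac34$ together with a strictly smaller entropy exponent. To raise the speed exponent I would introduce a wreath product with $\mathbb{Z}$: for a finitely generated $G$ with a finite symmetric generating set, the natural symmetric step distribution on $G\wr\mathbb{Z}$ makes the lamplighter walk cover $\asymp\sqrt n$ sites each visited $\asymp\sqrt n$ times, whence $L(n)\asymp\sqrt n\,L_G(\sqrt n)$ and $H(n)\asymp\sqrt n\,H_G(\sqrt n)$, so an exponent $a$ of $G$ becomes $\tfrac{1+a}{2}$ for $G\wr\mathbb{Z}$. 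Iterating this $k$ times on the dihedral group of speed exponent $\beta\in[\tfrac12,\tfrac34]$ gives speed exponent $1-\tfrac{1-\beta}{2^{k}}$ — which sweeps $[1-2^{-(k+1)},1-2^{-(k+2)}]$ as $\beta$ varies, hence all of $[\tfrac34,1)$ as $k$ varies — and entropy exponent $1-2^{-(k+1)}$; a final direct product with a diagonal expander group of exponent $\theta$ then raises the entropy exponent to $\theta$ while leaving the speed exponent $\gamma$ unchanged, provided $\theta\ge1-2^{-(k+1)}$. The difficulty is that this scheme leaves gaps when $\theta$ lies strictly between two consecutive dyadic values $1-2^{-(k+1)}$: no $k$ then satisfies both $\theta\ge1-2^{-(k+1)}$ and the dihedral constraint $\beta\in[\tfrac12,\tfrac34]$ compatibly with $\gamma=1-\tfrac{1-\beta}{2^{k}}$. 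Closing these gaps, so that the construction covers the whole region and in particular its boundary $\gamma=\tfrac12(\theta+1)$, is the crux; I expect one must instead run the general diagonal-product construction of the paper with a mixed family $\{\Gamma_s\}$ interleaving dihedral and expander factors across scales — or replace $\mathbb{Z}$ by a more flexible base in the wreath product — tuned so that the resulting speed and entropy exponents are exactly $\gamma$ and $\theta$, and computing the exponents of such a mixed diagonal product is then the main technical step.
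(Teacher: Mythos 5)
Your handling of the region $\gamma\le 3/4$ is correct and essentially matches the second case of the paper's Proposition \ref{entropy-speed-function}: taking the direct product of a dihedral diagonal product with speed exponent $\gamma$ and entropy exponent $\tfrac12$ (from Theorem \ref{possible-speed-dihedral} and Proposition \ref{dihedral-entropy}) with an expander diagonal product of joint speed/entropy exponent $\theta$ (Theorem \ref{possible-speed-linear}) yields the pair $(\theta,\gamma)$; the diagonal $\gamma=\theta$ via Theorem \ref{main-expanders} with $f(x)=x^{2\theta-1}$ is likewise fine.

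The gap you flag for $\gamma>3/4$ is, however, genuine and cannot be closed inside the iterated $\cdot\wr\mathbb{Z}$ scheme. After $k$ iterations the entropy exponent is pinned at the dyadic value $1-2^{-(k+1)}$, which depends only on $k$ and not on the dihedral parameter $\beta$, whereas the admissible floor $\theta=2\gamma-1$ varies continuously as $\gamma$ sweeps the $k$-th interval $[1-2^{-(k+1)},1-2^{-(k+2)}]$; for $\gamma$ strictly in its interior one has $2\gamma-1<1-2^{-(k+1)}$, and a direct product with a diagonal expander group can only \emph{raise}, never lower, the entropy exponent. The paper's resolution is neither a variant of your wreath scheme nor the interleaved dihedral/expander diagonal product you speculate about at the end, but a direct appeal to the result of Amir--Virag \cite{AmirVirag2012}: for any speed function $f$ with $f(n)/n^{3/4}$ non-decreasing and $n^{1-\epsilon}/f(n)$ non-decreasing there is a permutational wreath product $G_1=\mathbb{Z}\wr_{\mathcal{S}}\mathcal{M}_m$ and a finitely supported step distribution $q_1$ with $L_{q_1}(n)\simeq f(n)$ and, crucially, $H_{q_1}(n)\simeq \frac{f(n)^2}{n}\log(n+1)$. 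This is the \emph{minimal} entropy compatible with that speed (by the inequality $H_\mu(n)\ge\frac{1}{16n}L_\mu(n)^2-1$ quoted just before Proposition \ref{entropy-speed-function}), so with $f(n)=n^\gamma$ the entropy exponent of $G_1$ is exactly $2\gamma-1$, the lower endpoint of the admissible range of $\theta$. The direct product of $G_1$ with the diagonal expander group of joint exponent $\theta$ then has speed exponent $\max\{\gamma,\theta\}=\gamma$ and entropy exponent $\max\{2\gamma-1,\theta\}=\theta$ for every $\theta\in[2\gamma-1,\gamma]$, covering the whole region $\gamma\ge 3/4$. This Amir--Virag input, which supplies groups with minimal entropy for a prescribed speed in the regime $\gamma\ge 3/4$, is the essential ingredient your proposal is missing; without it the region between the dyadic values is genuinely out of reach.
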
 The case where both
exponents belong to $\left[\frac{3}{4},1\right]$ was treated by Amir
\cite{Amir2015}. Proposition \ref{entropy-speed-function} gives
a precise statement regarding functions rather than exponents.

For the group $\Delta$ constructed with finite dihedral groups that
appears in Theorem \ref{main-proba-dihedral}, we estimate its $L_{p}$-compression
exponent for $p\in[1,\infty)$, see Theorem \ref{cotype2}. Explicit
evaluation of compression exponents yields the following result. It
answers \cite[Question 7.6]{Naor2008} positively within the class
of finitely generated $3$-step solvable groups. With certain choices
of parameters, such groups also provide the first examples of amenable
groups where the $L_{p}$-compression exponent, $p>2$, is strictly
larger than the Hilbert compression exponent.

\begin{theorem} \label{dihedral-exponent}

For any $\frac{2}{3}\leq\alpha\leq1$, there exists a $3$-step solvable
group $\Delta$ such that for any $p\in[1,2]$, 
\[
\alpha_{p}^{\ast}(\Delta)=\alpha_{p}^{\#}(\Delta)=\alpha.
\]
Further, there exists a $3$-step solvable group $\Delta_{1}$ such
that for all $q\in(2,\infty)$, 
\[
\alpha_{q}^{\ast}(\Delta_{1})\ge\frac{3q-4}{4q-5}>\alpha_{2}^{\ast}(\Delta_{1})=\frac{2}{3}.
\]

\end{theorem}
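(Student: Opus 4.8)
\emph{Proof proposal.} The plan is to read off both assertions from the computation in Theorem~\ref{cotype2} of the $L_q$-compression exponents of the diagonal products $\Delta=\Delta(\{\Gamma_s\},(k_s))$ with $\Gamma_s$ finite dihedral (the $3$-step solvable, elementary amenable groups of Theorem~\ref{main-proba-dihedral}), by choosing the scaling sequences $(k_s)$ and $l_s:=\mathrm{diam}(\Gamma_s)$ suitably and then passing from equivariant to arbitrary embeddings via $\alpha_q^\ast(\Delta)=\alpha_q^\#(\Delta)$ for amenable groups \cite[Theorem~1.6]{Naor2011}. From Theorem~\ref{cotype2} (compatibly with Figure~\ref{table}) I will use two facts about the regime $l_s\simeq k_s^\theta$, $\theta\in(0,\infty)$: first, $\alpha_p^\#(\Delta)=\max\{\tfrac1{1+\theta},\tfrac23\}$ for every $p\in[1,2]$; second, for $q>2$ an explicit \emph{equivariant} embedding $\Delta\to L_q$ whose compression exponent exceeds $\tfrac23$ when $\theta$ is not too large.

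\emph{First assertion.} Given $\alpha\in[\tfrac23,1]$, I would set $k_s=2^{2s}$ and choose $\theta=\tfrac{1-\alpha}{\alpha}$, i.e.\ $l_s\simeq 2^{2\theta s}$; for the endpoint $\alpha=1$, where this forces $\theta=0$, I would instead let $\theta\to0$ with $(l_s)$ unbounded but slowly growing, so that the speed of $\Delta$ is diffusive up to a slowly varying factor (the bottom $\sqrt n\log^{1+\epsilon}n$ of the range of Theorem~\ref{main-proba-dihedral}) and $\alpha_p^\#(\Delta)=1$. Then the first fact above gives $\alpha_p^\#(\Delta)=\max\{\alpha,\tfrac23\}=\alpha$ for all $p\in[1,2]$ in one stroke; alternatively it suffices to verify the endpoints $p=1,2$, since $L_2\hookrightarrow L_p\hookrightarrow L_1$ isometrically forces $\alpha_1^\#(\Delta)\ge\alpha_p^\#(\Delta)\ge\alpha_2^\#(\Delta)$. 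Finally \cite[Theorem~1.6]{Naor2011} upgrades this to $\alpha_p^\ast(\Delta)=\alpha_p^\#(\Delta)=\alpha$ for $p\in[1,2]$, which is the first assertion.

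\emph{Second assertion.} I would take $\Delta_1=\Delta(\{\Gamma_s\},(k_s))$ with $\theta=1$, say $k_s=l_s=4^s$. The first fact gives $\alpha_2^\#(\Delta_1)=\max\{\tfrac12,\tfrac23\}=\tfrac23$, hence $\alpha_2^\ast(\Delta_1)=\tfrac23$ by amenability; note that at $q=2$ the bound $\le\tfrac23$ does \emph{not} come from the speed, since the speed exponent of $\Delta_1$ is $\tfrac23$ and the Naor--Peres speed/compression inequality would only give $\alpha_2^\ast(\Delta_1)\le\tfrac34$ --- it comes from the finer structural (metric cotype of Hilbert space, or isoperimetric) obstruction built into Theorem~\ref{cotype2}. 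For $q>2$ I would invoke the second fact: one superposes on an optimal diffusive Hilbert component, accounting for the $\tfrac23$ of the lamplighter part, a component that encodes the cyclic ``lamp'' coordinates --- whose configurations span torus-like blocks $(\Z/l_s)^{k}$ at scale $k_s$ --- using the \emph{weaker} metric cotype $q$ of $L_q$ for $q>2$, and then optimizes the relative weighting of the two components in $q$; this yields compression exponent $\ge\frac{3q-4}{4q-5}$ into $L_q$. Since $\frac{3q-4}{4q-5}>\tfrac23$ exactly for $q>2$ and $\alpha_q^\ast\ge\alpha_q^\#$, this gives $\alpha_q^\ast(\Delta_1)\ge\frac{3q-4}{4q-5}>\alpha_2^\ast(\Delta_1)=\tfrac23$; it is also consistent with (and asymptotically matches) the upper bound $\alpha_q^\ast(\Delta_1)\le\tfrac34$ above.

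\emph{The hard part.} Granting Theorem~\ref{cotype2}, the above is essentially bookkeeping, and the difficulty sits inside that theorem. The main obstacle I anticipate is the construction and analysis of the $q$-adapted equivariant embeddings for $q>2$: one must keep quantitative track of which torus blocks $(\Z/l_s)^{k}$ occur at which scale in $\Delta_1$, embed them into $L_q$ with the distortion afforded by metric cotype $q$, and carry out the scale-by-scale optimization that produces exactly $\frac{3q-4}{4q-5}$. A secondary but still non-routine point is the matching upper bound $\alpha_2^\#(\Delta_1)\le\tfrac23$ (and, for the first assertion, $\alpha_p^\#(\Delta)\le\tfrac1{1+\theta}$ when $\tfrac1{1+\theta}>\tfrac23$): since the speed is too slow to force these through the martingale-type-$2$ inequality, they require a Poincar\'e/metric-cotype argument directly on the balls of the group.
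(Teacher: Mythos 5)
Your first assertion follows the paper's route exactly: take dihedral lamp groups with $l_s\simeq k_s^{\theta}$, $\theta=(1-\alpha)/\alpha$, apply Theorem~\ref{cotype2}(i) to get $\alpha_p^{\#}(\Delta)=\max\{\tfrac1{1+\theta},\tfrac23\}=\alpha$ for $p\in[1,2]$, and upgrade via amenability. That part is fine.

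The second assertion contains a concrete error: your choice $\theta=1$ (i.e.\ $k_s=l_s=4^s$) does \emph{not} produce the lower bound $\frac{3q-4}{4q-5}$. Plugging $\theta=1$ into the Theorem~\ref{cotype2}(ii) lower bound
\[
\frac{\theta+1-\frac{2}{q}}{\left(2-\frac{1}{q}\right)\theta+1-\frac{2}{q}}
\]
gives $\frac{2-2/q}{3-3/q}=\frac{2}{3}$ exactly, so the stated bound degenerates and gives nothing strictly larger than $\alpha_2^{\ast}(\Delta_1)=\tfrac23$. Indeed a short computation shows that this ratio exceeds $\tfrac23$ precisely when $\tfrac1q<\theta<1$, and $\theta=1$ sits on the boundary where the advantage vanishes. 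The paper instead takes $\theta=\tfrac12$ (e.g.\ $k_s=2^{2s}$, $l_s=2^{s}$), for which the numerator and denominator become $\tfrac{3}{2}-\tfrac{2}{q}$ and $2-\tfrac{5}{2q}$, i.e.\ exactly $\frac{3q-4}{4q-5}$ after clearing denominators; and at $q=2$ the formula $\max\{\tfrac1{1+\theta},\tfrac23\}$ still evaluates to $\tfrac23$ since $\tfrac1{1+1/2}=\tfrac23$. So the needed group is the one with $\theta=\tfrac12$, not $\theta=1$; otherwise your argument, which is otherwise the same bookkeeping derivation from Theorem~\ref{cotype2} as the paper's, goes through.
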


In fact, both speed and compression exponents depend explicitly on
the parameter sequences $(k_{s}),(l_{s})$, see Figure \ref{table}
for the usual choice of parameters.

As a corollary, we obtain a new proof of the following result of Cornulier-Tessera
\cite{Cornulier2013}.

\begin{corollary}[Cornulier-Tessera
\cite{Cornulier2013}]\label{CT} There exists uncountably many pairwise non quasi-isometric
finitely generated $3$-step solvable groups. \end{corollary}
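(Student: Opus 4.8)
The plan is to exploit the fact, already developed in the paper, that the speed and compression exponents of the diagonal product $\Delta$ depend explicitly and continuously on the ``scaling parameter'' $\theta$ (equivalently on the growth rate of the sequences $(k_s),(l_s)$), as recorded in Figure \ref{table} and in Theorem \ref{dihedral-exponent}. Since speed and entropy are quasi-isometry invariants of a group up to a universal choice of measure --- or, more robustly, since the $\ell^p$-isoperimetric profile and the $L_p$-compression exponent are genuine quasi-isometry invariants (by Pittet--Saloff-Coste \cite{PittetSaloffCoste2000} and the fact that the compression gap is quasi-isometry invariant up to multiplicative constants) --- two groups in our family with different values of the invariant exponent cannot be quasi-isometric. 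So producing uncountably many pairwise non-quasi-isometric $3$-step solvable groups reduces to producing uncountably many ``sufficiently different'' admissible parameter sequences.

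Concretely, first I would fix the construction with $\Gamma_s$ finite dihedral groups, so that the resulting diagonal product is $3$-step solvable (as stated just before Theorem \ref{main-proba-dihedral}). Second, for each real parameter $\theta$ in some interval, say $\theta\in(0,1)$, I would choose $k_s = 2^{2s}$ and $l_s = \mathrm{diam}(\Gamma_s)\simeq 2^{2\theta s}$ as in Figure \ref{table}, obtaining a group $\Delta_\theta$; by Theorem \ref{main-proba-dihedral} and Theorem \ref{dihedral-exponent} its speed exponent (or its $\ell^p$-isoperimetric exponent, which is a clean quasi-isometry invariant) is an explicit strictly monotone function of $\theta$. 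Third, since a single group has only countably many finite generating sets and the relevant exponents are generating-set independent, and since quasi-isometry preserves these exponents, the groups $\{\Delta_\theta : \theta\in(0,1)\}$ fall into uncountably many distinct quasi-isometry classes: if $\Delta_\theta$ and $\Delta_{\theta'}$ were quasi-isometric they would share the same isoperimetric/compression exponent, forcing $\theta=\theta'$. A counting argument then gives uncountably many classes among an uncountable family.

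The main obstacle is not the existence of the invariant but making sure the invariant is genuinely a quasi-isometry invariant rather than merely an invariant of the pair $(\Delta,\mu)$. Speed and entropy are not known to be quasi-isometry invariant (the paper itself flags this), so the cleanest route is to run the argument through the $\ell^p$-isoperimetric profile or the $L_p$-compression exponent, both of which the paper has already shown are stable under quasi-isometry; one must check that the explicit formulas in Figure \ref{table} do indeed vary over an uncountable set of values as the parameters range over an uncountable admissible family (e.g. that $\frac{1}{1+\theta}$, or equivalently the compression exponent $\max\{\frac{1}{1+\theta},\frac23\}$ restricted to $\theta\in(0,1/2)$, is injective in $\theta$). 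A secondary, purely bookkeeping, point is to verify that every $\Delta_\theta$ in the chosen range really is finitely generated and $3$-step solvable --- this is immediate from the dihedral construction --- and that the admissibility hypotheses of Theorem \ref{main-proba-dihedral} (monotonicity of $f(n)/(\log^{1+\epsilon}(n)\sqrt n)$ and of $n^{3/4}/f(n)$) are met for each parameter value, which holds because the corresponding speed function is a genuine power $n^{\gamma}$ with $\gamma$ strictly between $\frac12$ and $\frac34$. Putting these pieces together yields Corollary \ref{CT}.
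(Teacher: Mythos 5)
Your proposal follows essentially the same route as the paper: the quasi\hyp isometry invariant used is the $L_p$-compression exponent, and the engine is Theorem~\ref{dihedral-exponent}, which for each $\alpha\in[\tfrac23,1]$ produces a $3$-step solvable diagonal product $\Delta$ with $\alpha_p^{\ast}(\Delta)=\alpha$, hence a continuum of pairwise non-quasi-isometric classes. This is correct and is precisely the method the authors advertise (``using compression as quasi-isometry invariant'' in place of Cornulier--Tessera's asymptotic cones).

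One alternate route you offer as a fallback, however, is wrong: the $\ell^p$-isoperimetric profile does \emph{not} distinguish the dihedral groups $\Delta_\theta$. Figure~\ref{table} and the remark following Proposition~\ref{return-dihedral} show that for the dihedral construction one has
\[
\frac{1}{C_p}\frac{1}{\log^{p}v}\;\le\;\Lambda_{p,\Delta,\mathfrak{q}}(v)\;\le\;C_p\,\frac{(\log\log v)^{2p}}{\log^{p}v}
\]
for \emph{every} admissible choice of $(k_s),(l_s)$; the exponent is $-p$ regardless of $\theta$, because the dominant isoperimetric obstruction already lives in the fixed quotient $\Delta_0\simeq(\Z/2\Z\times\Z/2\Z)\wr\Z$. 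The exponent $-p/(1+\theta)$ from the first row of Figure~\ref{table} that does vary with $\theta$ belongs to the \emph{expander} lamp groups, which are not solvable; so within the $3$-step solvable family you must use compression, not isoperimetry, and your stated fallback would stall. Likewise, return probability ($1/3$) and entropy ($1/2$) exponents are constant in the dihedral case and cannot separate the $\Delta_\theta$. Everything else --- the caveat that speed/entropy are not known to be quasi-isometry invariant, the restriction to $\theta\in(0,\tfrac12)$ so that $\max\{\tfrac{1}{1+\theta},\tfrac23\}=\tfrac{1}{1+\theta}$ is injective, and the observation that each $\Delta_\theta$ is finitely generated and $3$-step solvable --- is handled correctly.
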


The original proof used asymptotic cones. Our method is completely
different, using compression as quasi-isometry invariant. Theorem \ref{dihedral-exponent} and Corollary \ref{CT} do not hold for $2$-step solvable groups. By Baumslag \cite{Baumslag1972},
any finitely generated metabelian group embeds into a finitely presented
metabelian group, so there are countably many classes of isomorphism
of metabelian groups.

The method we apply to estimate compression exponent of $\Delta$
in the dihedral case is very different from the case with expanders.
To better understand the compression of $\Delta$, which is a diagonal
product of groups $\Gamma_{s}\wr\mathbb{Z}$, a first step is
to evaluate compression exponent of general wreath product $H\wr\mathbb{Z}$. This has been an  object of intensive study \cite{AGS2006},  \cite{Naor2008}, \cite{Austin2009}, \cite{Tessera2011}, \cite{Cornulier2012}. We develop a novel approach and derive the following explicit formula.

\begin{theorem}\label{wreath}

Let $p\in[1,2]$, $H$ be a finitely generated infinite group. Then
the equivariant $L_{p}$-compression exponent of $H\wr\mathbb{Z}$
is 
\[
\alpha_{p}^{\#}(H\wr\mathbb{Z})=\min\left\{ \frac{\alpha_{p}^{\#}(H)}{\alpha_{p}^{\#}(H)+\left(1-\frac{1}{p}\right)},\alpha_{p}^{\#}(H)\right\} .
\]
\end{theorem}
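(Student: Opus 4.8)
The plan is to prove the two inequalities separately, treating the upper bound (no embedding does better) and the lower bound (an explicit equivariant embedding achieving the exponent) independently.

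For the \emph{lower bound}, I would build an equivariant embedding of $H\wr\mathbb{Z}$ into an $L_p$-space out of three ingredients. First, take a near-optimal equivariant embedding $\Psi_H\colon H\to L_p$ with compression exponent close to $\alpha:=\alpha_p^{\#}(H)$, i.e.\ $\rho_{\Psi_H}(t)\gtrsim t^{\alpha-\delta}$. Second, use the standard equivariant embedding of $\mathbb{Z}$ into $L_p$ (compression exponent $1$, indeed one can even use $\min\{1,1/p\}$-type estimates, but for $\mathbb{Z}$ itself the exponent is $1$). Third — and this is the crucial point — assemble from the lamp configurations a cocycle into $\bigoplus_{\mathbb{Z}} L_p \cong L_p$ whose compression must be controlled: for an element $(f,n)$ of $H\wr\mathbb{Z}$ with lamp function $f\colon\mathbb{Z}\to H$ supported on an interval, the $\mathbb{Z}$-component contributes roughly $|n|^{1/p}$ worth of displacement if one tracks the "odometer" (the walk must traverse the support), so a ball of radius $t$ in $H\wr\mathbb{Z}$ can contain configurations where the support has length $\sim t$ and each lamp value has size $\sim 1$, or the support is small and a single lamp has size $\sim t$. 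Optimizing the embedding across these regimes is exactly what produces the $\min$ and the $(1-1/p)$ correction: one rescales the lamp-embedding coordinates so that the contribution from many small lamps and the contribution from one large lamp are balanced against the word length, which behaves like $(\text{support length}) + \sum_i |f(i)|_H$. I expect the clean way to organize this is via the first inequality in the $\min$, realized by a direct-sum cocycle with carefully chosen weights, and to note that one can always fall back to embedding $H$ alone (ignoring $\mathbb{Z}$ and all but one coordinate), which gives the second term $\alpha$ in the $\min$.

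For the \emph{upper bound}, I would argue that any equivariant (equivalently, by \cite{Naor2011} and amenability of $H\wr\mathbb{Z}$ when $H$ is amenable — but here one must be careful since $H$ need not be amenable; still, for the equivariant statement one works directly with cocycles) embedding $\Phi\colon H\wr\mathbb{Z}\to L_p$ restricts on the copy $\bigoplus_{\mathbb{Z}}H$ to something that, by a Poincaré/compression-inequality argument à la Austin–Naor–Peres and Naor–Peres, cannot compress the "sum of $k$ independent copies of $H$" better than the $\ell^p$-combination allows. Concretely, the $k$-fold "diagonal" copies of $H$ sitting over a length-$k$ interval of lamps, together with the $\mathbb{Z}$-translation relating them, force a comparison $\rho_\Phi$ against the $\ell^p$-sum of $k$ displacements in $H$ each of compression $\le\alpha$, over a word ball of radius $\sim k + (\text{lamp contribution})$; extracting the exponent gives $\alpha/(\alpha + 1 - 1/p)$, and separately restricting to a single lamp gives $\le\alpha$. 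The technical engine here is a $p$-smoothness/$p$-convexity estimate for $L_p$, $1\le p\le 2$ ($L_p$ is $p$-uniformly smooth in this range), allowing one to sum $p$-th powers of coordinate displacements; this is precisely why the statement is restricted to $p\in[1,2]$.

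The main obstacle I anticipate is the upper bound, specifically making rigorous the claim that an arbitrary equivariant embedding of the wreath product cannot "cheat" by correlating the lamp coordinates with the base coordinate in a way that beats the $\ell^p$-aggregation bound. The natural tool is to test the cocycle on a well-chosen family of group elements — translates of a fixed lamp configuration by the $\mathbb{Z}$-action — and apply the uniform smoothness of $L_p$ together with equivariance (which turns the cocycle identity into an exact additivity over the $\mathbb{Z}$-orbit, so that the displacement over a length-$k$ orbit segment is a genuine sum of $k$ translates of a single vector, whose norm is then bounded below by $k^{1/p}$ times the single-step norm only after a smoothness argument). Pinning down the correct family of test elements so that their word lengths in $H\wr\mathbb{Z}$ match the $\ell^p$-combination of the $H$-lengths — and handling the interplay with the near-optimality hypothesis on $\alpha_p^{\#}(H)$ (which is a supremum, so one works with $\alpha-\delta$ throughout and lets $\delta\to 0$ at the end) — is where the real care is needed.
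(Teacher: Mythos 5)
Your lower-bound sketch is in the right spirit (and in fact the paper simply cites the explicit three-component embedding of Naor--Peres \cite[Theorem 3.3]{Naor2008}, which it reviews in \S7.2), so the essential content is in the upper bound, and there your proposal has a genuine gap.

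You correctly intuit that one should test an arbitrary equivariant $1$-Lipschitz $\Phi\colon H\wr\mathbb{Z}\to L_p$ against a family of configurations involving a row of lamps of length $k$, and that $p$-smoothness of $L_p$ (equivalently Markov type $p$, which is why $p\in[1,2]$) must enter. But two things are missing and one thing is backwards. First, the test set should not be ``translates of a fixed lamp configuration by the $\mathbb{Z}$-action'' but rather the \emph{full finite lamplighter graph} $\mathcal{L}_m$ over a segment $[0,m-1]$ whose lamps take values in $\{e_H, h_n\}$, where $h_n\in H$ is a near-optimally distorted element for the induced embedding $\psi_H=\Phi|_H$ (i.e.\ $|h_n|_H\geq n$ but $\|\psi_H(h_n)-\psi_H(e_H)\|_p\le 2\rho_{\psi_H}(n)$); without feeding the distortion of $H$ into the lamp values, the construction cannot see $\alpha_p^{\#}(H)$ at all. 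Second, and crucially, the reversible Markov kernel one runs on $\mathcal{L}_m$ must have a \emph{Cauchy ($1$-stable) step distribution on the base segment}, not a simple-random-walk step. The $1$-stable walk has spectral gap $\simeq 1/(m\log m)$ and single-step $p$-th moment $\simeq m^{p-1}$; with the simple walk you get spectral gap $\simeq 1/m^2$ and bounded $p$-th moment, and the resulting Poincar\'e bound is too weak to extract the exponent $\frac{\alpha}{\alpha+1-1/p}$ --- the balance $m\simeq \rho_{\psi_H}(n)^{p/(p-1)}$ between the lamp energy $\rho_{\psi_H}(n)^p$ and the base energy $m^{p-1}$ is exactly what produces the $1-\frac1p$ term, and that base energy is a feature of the heavy-tailed step. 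This is the real technical novelty of the argument and is absent from your plan. Finally, your phrase ``whose norm is then bounded below by $k^{1/p}$ times the single-step norm only after a smoothness argument'' has the inequality reversed: uniform smoothness / Markov type $p$ gives an \emph{upper} bound $\|\Phi(Z_t)-\Phi(Z_0)\|_p\lesssim t^{1/p}\|\Phi(Z_1)-\Phi(Z_0)\|_p$ (it limits how far the embedding can spread), while the lower bound on the group-metric displacement comes from the diameter/mass-distribution estimate of the finite lamplighter graph, not from smoothness. Relatedly, the remark that the ``$\mathbb{Z}$-component contributes roughly $|n|^{1/p}$'' is misleading; $\mathbb{Z}$ embeds bilipschitzly into $L_p$, and the $1/p$ in the final formula comes from the energy/spectral-gap balance above, not from the base embedding.
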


The groups we consider to prove Theorems \ref{main-expanders}, \ref{main-proba-dihedral}
and \ref{dihedral-exponent} are diagonal products of lamplighter
groups. Given a family of groups $\left\{ \Gamma_{s}\right\} $ all
generated by the union of two finite groups $A$ and $B$, a factor
of the diagonal product is the lamplighter group $\Gamma_{s}\wr\Z$
endowed with generating set consisting of the shift, a copy of the
lamp subgroup $A$ at position $0$ and a copy of the lamp subgroup
$B$ at position $k_{s}\in\Z$. The diagonal product is the subgroup
of the direct product generated by the diagonal generating set, see Section \ref{section-metric}. It
is parametrized by the sequence of groups $\{\Gamma_{s}\}$ and the
sequence of \textquotedbl{}scaling factors\textquotedbl{} $(k_{s})$.
When the groups $\{\Gamma_{s}\}$ are chosen among quotients of a
group $\Gamma$, the choice of parameters heuristically permits to
interpolate between $\Gamma\wr\Z$ and the usual wreath product $(A\times B)\wr\Z$.

Metrically, placing the two types of generators apart essentially
has the effect of rescaling the copies of $\Gamma_{s}$ by a factor
$k_{s}$. Moreover, under suitable assumptions, the metric in the
diagonal product is essentially the same as in a direct product.

The construction of diagonal products is reminiscent of the piecewise
automata group of Erschler \cite{Erschler2006} and the groups of
Kassabov-Pak \cite{KassabovPak2011}, which permit to obtain oscillating
or \textquotedbl{}close to non-amenable\textquotedbl{} behaviors,
but where more precise estimates are not known. The diagonal product
with $\{\Gamma_{s}\}$ dihedral groups was introduced in \cite{Brieussel2015}
to obtain the first examples of speed functions between $\sqrt{n}$
and $n^{\frac{3}{4}}$.

\textbf{Organization of the paper:}

The detailed construction of diagonal products is given in Section
\ref{section-metric}. A technical assumption on the family $\{\Gamma_{s}\}$
and a list of examples satisfying it appears in \ref{sub: construction}.
The essential estimate relating the metric of a diagonal product to
that of a direct product is established in \ref{sub: metric} under
the assumption that the sequence $(k_{s})$ is strictly doubling.
In \ref{sub: embedded-metric}, we describe some metric spaces naturally
embedded in the diagonal product. It will be used in sections \ref{sec:Compression-expander}
and \ref{sec:dihedral-compression} on compression.

Section \ref{sec: speed} is devoted to the speed and entropy of random
walks. We first treat in \ref{sub:linear-speed} the case, including
expanders, where the groups $\{\Gamma_{s}\}$ have uniform linear
speed up to diameter. Theorem \ref{possible-speed-linear} gives the
first point of Theorem \ref{main-expanders}. The case of dihedral
groups is studied in \ref{sub:dihedral-speed}, proving two thirds
of Theorem \ref{main-proba-dihedral}. Evaluation of speed and entropy
of diagonal products relies on estimations of traverse time of the simple
random walk on $\Z$, which are recalled in Appendix \ref{app: excursions}.
The joint prescription of speed and entropy of Corollary \ref{entropy-speed-exponent}
is obtained in \ref{sub: joint}. Section \ref{sec: speed} is not
used further in the paper, and a reader not interested in speed or
entropy can omit it.

Isoperimetric profiles and return probability are studied in Section
\ref{sec:isoperimetric-profiles}. The second point of Theorem \ref{main-expanders}
is derived as Theorem \ref{possible-profile}. It is proved together
with Corollary \ref{folner} regarding Følner functions in \ref{sub: profile}.
The third point is derived as Theorem \ref{possible-return} in \ref{return}
using Coulhon-Grigor'yan theory. Dihedral groups are treated in \ref{iso-dihedral}
finishing the proof of Theorem \ref{main-proba-dihedral}. A reader
interested mainly in compression can formally omit Section \ref{sec:isoperimetric-profiles},
though the test functions of Proposition \ref{iso-Delta} will be
used in Section \ref{sec:Compression-expander}.

Obstructions for embeddings into Banach spaces are reviewed in Section
\ref{obstructions}. They are based on Poincaré inequalities on finite
metric spaces embedded in the group. The classical spectral version stated
in \ref{sub:Spectral-method} will be used in Sections \ref{sec:Compression-expander}
and \ref{HwrZ}. Markov type inequalities of \ref{sub:Markov-type-method}
will be used in Sections \ref{HwrZ} and \ref{sec:dihedral-compression}
and the Mendel-Naor metric cotype inequalities presented in \ref{sub:metric-cotype-inequalities}
will be used in Section \ref{sec:dihedral-compression}.

In Section \ref{sec:Compression-expander}, we consider diagonal products
where $\{\Gamma_{s}\}$ are quotients of a Lafforgue lattice with
strong Property $(T)$. We first establish in 6.1 an upper bound on
compression exponent valid in any uniformly convex Banach space, and
then in 6.2 derive the proof of the fourth part of Theorem \ref{main-expanders},
in the form of Theorem \ref{compression-expander}. This is done after
three preliminary steps : first provide an upper bound when all quotients
$\{\Gamma_{s}\}$ are finite, secondly an upper bound when one of
them is the whole group $\Gamma$. Thirdly an explicit $1$-cocycle,
related to isoperimetry, is constructed to get a lower bound.

Section \ref{HwrZ} is devoted to Theorem \ref{wreath}. It requires
none of previous Sections except for Poincaré and Markov type inequalities
of Section \ref{obstructions}, but it uses several facts about stable
random walks on lamplighters over a segment, gathered in Appendix
\ref{stable}. It also serves as a warm-up for Section \ref{sec:dihedral-compression}.

The compression of diagonal products with dihedral lamp groups is
studied in Section \ref{sec:dihedral-compression}. Theorem \ref{dihedral-exponent} is proved there,
as well as some explicit bounds for $L_{p}$-compression $p>2$, 
stated in Theorem \ref{cotype2}. As before, we first derive some upper bound using metric
cotype of \ref{sub:metric-cotype-inequalities} and Markov type inequalities
of \ref{sub:Markov-type-method}, then describe an explicit embedding
into $L_{q}$ $q\geq2$. Section \ref{sec:dihedral-compression} formally
uses only Sections \ref{section-metric} and \ref{obstructions},
but is best understood reading also Sections \ref{sec:Compression-expander}
and \ref{HwrZ}.

Finally we point out a few open questions in Section \ref{problems}.
Appendix \ref{approximation} explains a natural approximation of
regular functions by piecewise constant and linear functions. It is
used repeatedly to prove Theorem \ref{main-expanders} in Sections
\ref{sec: speed}, \ref{sec:isoperimetric-profiles} and \ref{sec:Compression-expander}.

{\bf Acknowledgment.} We thank Goulnara Arzhantseva and Yves Cornulier for interesting comments as well as Masato Mimura for a simplification in example \ref{examples} and pointing out the LEF property of Fact \ref{EA}.

\section{The construction and metric structure\label{section-metric}}

\subsection{The construction with diagonal product\label{sub: construction}}

The wreath product of a group $\Gamma$ with $\Z$ is the group $\Gamma\wr\Z=\Gamma^{(\Z)}\rtimes\Z$
where $\Gamma^{(\Z)}$ is the set of functions $f:\Z\rightarrow\Gamma$
with finite $\mbox{support}(f)=\left\{ j\in\Z:\ f(j)\neq e_{\Gamma}\right\} $.
An element is represented by a pair $(f,i)$. We refer to $f$ as
the lamp configuration and $i$ as the position of the cursor. The
product rule is 
\[
(f,i)(g,j)=(f(\cdot)g(\cdot-i),i+j).
\]
The neutral element is denoted as $\left(\boldsymbol{e},0\right)$ where $\mbox{support}(\boldsymbol{e})$
is the empty set. For $j\in\Z$ and $\gamma\in\Gamma$, we denote
by $\gamma\delta_{j}$ the function taking value $\gamma$ at $j$
and $e_{\Gamma}$ elsewhere.

Let $A=\{a_{1},\dots,a_{|A|}\}$ and $B=\{b_{1},\dots,b_{|B|}\}$
be two finite groups. Let $\left\{ \Gamma_{s}\right\} $ be a sequence
of groups such that each $\Gamma_{s}$ is marked with a generating
set of the form $A(s)\cup B(s)$ where $A(s)$ and $B(s)$ are finite
subgroups of $\Gamma_{s}$ isomorphic respectively to $A$ and $B$.
We fix the isomorphic identification and write $A(s)=\{a_{1}(s),\dots,a_{|A|}(s)\}$
and similarly for $B(s)$.

Fix a sequence $\left(k_{s}\right)_{s\ge0}$ of strictly increasing
integers. Take the wreath product $\Delta_{s}=\Gamma_{s}\wr\mathbb{Z}$,
and mark it with generating tuple $\mathcal{T}_{s}$ 
\[
\mathcal{T}_{s}=\left(\tau(s),\alpha_{1}(s),\ldots,\alpha_{|A|}(s),\beta_{1}(s),\ldots,\beta_{|B|}(s)\right)
\]
where $\tau(s)=\left(\boldsymbol{e},+1\right)$ and 
\[
\alpha_{i}(s)=\left(a_{i}(s)\delta_{0},0\right),1\leq i\leq|A|,\ \beta_{i}(s)=\left(b_{i}(s)\delta_{k_{s}},0\right),1\leq i\leq|B|
\]
With slight abuse of notation, we use the symbol $\Delta_{s}$ to
denote the marked group. Alternatively, the marked group $\Delta_{s}$
can be viewed as the projection 
\[
\boldsymbol{\pi}_{s}:\mathbf{G}=\mathbb{Z}\ast A\ast B\to\Gamma_{s}\wr\mathbb{Z},
\]
where $\mathbf{G}$ is the free product of $\left\langle \mathbf{\boldsymbol{\tau}}\right\rangle =\Z,\left\langle \boldsymbol{\alpha}_{i},1\leq i\leq|A|\right\rangle \simeq A$
and $\left\langle \boldsymbol{\beta}_{i},1\leq i\leq|B|\right\rangle \simeq B$
and the projection sends $\boldsymbol{\tau}$ to $\tau(s)$, $\boldsymbol{\alpha}_{i}$
to $\alpha_{i}(s)$ and $\boldsymbol{\beta}_{i}$ to $\beta_{i}(s)$.

The \textbf{diagonal product} $\Delta$ of the, possibly finite, sequence
of marked groups $\left\{ \Delta_{s}\right\} $ is the quotient group
$\mathbf{G}/\cap_{s}\ker\left(\boldsymbol{\pi}_{s}\right)$, with
the projection map $\boldsymbol{\pi}:\mathbf{G}\to\Delta$. It is
marked with generating tuple $\mathcal{T}=\left(\tau,\alpha_{1},\ldots,\alpha_{|A|},\beta_{1},\dots,\beta_{|B|}\right)$.
A word in $\mathcal{T}$ represents $e_{\Delta}$ if and only if the
same word in $\mathcal{T}_{s}$ represents $e_{\Delta_{s}}$ for each
$s$. We use $\pi_{s}:\Delta\to\Delta_{s}$ to denote the projection
from $\Delta$ to the component $\Delta_{s}$.

The group $\Delta$ is completely determined once are given the family
of marked groups $\left\{ \Gamma_{s}\right\} $ and the sequence of distances
$(k_{s})$. An element $g$ of $\Delta$ is completely determined
by the family of projections $\pi_{s}(g)=(f_{s},i_{s})$ and one immediately
checks that the projection onto $\Z$ is independent of $s$. Therefore
we write $\left(\left(f_{s}\right),i\right)$ for a typical element
of $\Delta$, where $f_{s}\in\Gamma_{s}^{(\Z)}$ and $i\in\Z$.

\begin{assumption}\label{Gamma_Involution}

Throughout the paper, we assume the following : 
\begin{itemize}
\item $k_{0}=0$ and $\G_{0}=A(0)\times B(0)\simeq A\times B$ 
\item We call $\G_{s}/[A(s),B(s)]^{\Gamma_{s}}$ the relative abelianization
of $\G_{s}$, where $[A(s),B(s)]^{\Gamma_{s}}$ is the normal closure
of the subgroup generated by commutators $[a_{i}(s),b_{j}(s)]$. We
assume that 
\[
\G_{s}/[A(s),B(s)]^{\Gamma_{s}}\simeq A(s)\times B(s)\simeq A\times B.
\]

\end{itemize}
\end{assumption}

The first assumption is mainly for convenience of notations. It follows
easily from Lemma \ref{abelian} below that the marked group $\left(A\times B\right)\wr\Z$
with usual generating set ($k_{0}=0$) is a quotient of $\Delta$
as soon as $(k_{s})$ is unbounded.

The second assumption is non-trivial and restrictive. It requires
that the relative abelianization, which is always a quotient of $A\times B$,
is in fact isomorphic to $A\times B$. As we will see below, we can
find interesting families of groups satisfying Assumption \ref{Gamma_Involution}.

\begin{notation}

Take a family $\left\{ \Gamma_{s}\right\} $ of quotients of an infinite
group $\Gamma$, and parametrize the group $\Gamma_{s}$ by its diameter
$l_{s}=\mbox{diam}(\Gamma_{s})$ with respect to the generating set
$A(s)\cup B(s)$. Taking the value $l_{s}=\infty$ corresponds to
the choice $\Gamma_{s}=\Gamma$, otherwise $l_{s}<\infty$, $\Gamma_{s}$
is a finite quotient group of $\Gamma$. We say that the sequences
$(k_{s}),(l_{s})$ parametrize the diagonal product $\Delta$. Formally
$\left(k_{s}\right)$ can take the value $\infty$, we make the convention
that if $k_{s}=\infty$, then $\Delta_{s}$ is the trivial group $\Delta_{s}=\{e_{\Delta_{s}}\}$.

\end{notation}

In this paper we will take a group $\Gamma$ and a family $\left\{ \Gamma_{s}\right\} $
of quotients of $\Gamma$ from the following list of specific examples.

\begin{example}\label{examples}

The groups $\left\{ \Gamma_{s}\right\} $ can be taken to form a family
of expanders. For example we obtain the following sequence of finite
groups from the Lafforgue super expanders. By Lafforgue \cite{Lafforgue2008},
for any local field $F$, the group $SL(3,F)$ has Property $(T)$
in any uniformly convex Banach space $\mathfrak{X}$. A fortiori taking $F=\F_p[[X^{-1}]]$, this is also the case for the non-uniform lattice $\Lambda=SL(3,\F_p[X])$, generated by the union of the two following finite subgroups:
\[
A =\left\langle \left(\begin{array}{ccc} 1 & 1 & 0 \\ 0 & 1 & 0 \\ 0 & 0 & 1 \end{array}\right), \left(\begin{array}{ccc} 1 & X & 0 \\ 0 & 1 & 0 \\ 0 & 0 & 1 \end{array}\right) \right\rangle \simeq \F_p^2 \textrm{, and } B=\left\langle \left(\begin{array}{ccc} 0 & 1 & 0 \\ 0 & 0 & 1 \\ 1 & 0 & 0 \end{array}\right)\right\rangle \simeq \Z/3\Z.
\]

There also exist positive constants $c,c_{1}$ such that the ``congruence subgroups'' $\Lambda_m=SL(3,\F_p[X]/(X^m-1))$ satisfy $cm-c_{1}\le\log\left|\Lambda_{m}\right|\le cm+c_{1}$ for all $m \geq 1$.

To make sure the second part of Assumption \ref{Gamma_Involution}
holds, we take $\Gamma$ (resp. $\Gamma_{m}$) to be the diagonal
product of $\Lambda$ (resp. $\Lambda_{m}$) with $A\times B$. Since
$\Lambda$ is a quotient group of $\Gamma$ of index at most $|A|\left|B\right|$,
it follows from the hereditary properties (see \cite[Section 1.7]{Bekka2008})
that $\Gamma$ has Property $(T)$ in any uniformly convex Banach
space $\mathfrak{X}$. Since $\left\{ \Gamma_{m}\right\} $ is a sequence
of finite quotient groups of $\Gamma$, by \cite[Proposition 5.2]{Lafforgue2008},
there exists a constant $\delta(\Gamma,\mathfrak{X})>0$ such that for
any function $f:\Gamma_{m}\to\mathfrak{X}$, $m\in\mathbb{N}$, 
\begin{equation}
\frac{1}{\left|\Gamma_{m}\right|^{2}}\sum_{x,y\in\Gamma_{m}}\left\Vert f(x)-f(y)\right\Vert _{\mathfrak{X}}^{2}\le\frac{1}{\delta(\Gamma,\mathfrak{X})}\frac{1}{\left|\Gamma_{m}\right|}\sum_{x\in\Gamma_{m}}\sum_{u\in A(m)\cup B(m)}\left\Vert f(x)-f(xu)\right\Vert _{\mathfrak{X}}^{2}.\label{eq:lafforgue-poincare}
\end{equation}
In particular, $\left\{ \left(\Gamma_{m},A(m)\cup B(m)\right)\right\} $
form a family of expanders in $\ell^{2}$ with spectral gap uniformly
bounded from below by $\delta(\Gamma,\ell^{2})$.

We will refer to this family $\left\{ \Gamma_{m}\right\} $ as the
Lafforgue super expanders, each $\Gamma_{m}$ is marked with generating
set $A(m)\cup B(m)$. Note that by construction 
\[
cm-c_{1}\le\log\left|\Lambda_{m}\right|\le\log\left|\Gamma_{m}\right|\le cm+c_{1}+\log(3p^{2}).
\]
From the inequality (\ref{eq:lafforgue-poincare}), by \cite[Theorem 13.8]{HLW2006}
there exists constant $c_{2}>0$ depending only on $r$ and $\delta(\Gamma,\ell^{2})$
such that the $\ell^{2}$-distortion satisfies 
\[
c_{2}\log\left|\Gamma_{m}\right|\le c_{\ell^{2}}(\Gamma_{m})\le\mbox{diam}(\Gamma_{m})\le r\log\left|\Gamma_{m}\right|,
\]
and by \cite[Corollary 3.5 ]{Arzhantseva2009}, there exists $c_{3}>0$
depending only on $r$ and $\delta(\Gamma,\mathfrak{X})$ such that
\[
c_{3}\log\left|\Gamma_{m}\right|\le c_{\mathfrak{X}}(\Gamma_{m}).
\]
See section \ref{obstructions} for the definition of distortion.
Lafforgue super expanders are a crucial tool to study compression
in arbitrary uniformly convex Banach spaces. 

In most statements of this paper, it is sufficient to use the classical Property (T) rather than its strenghtenings in uniformly convex Banach spaces. Therefore we can also take $\Lambda=EL(3,\F_p(X,Y))$ which has property (T) by Ershov-Jaikin-Zapirain \cite{ErshovJaikin} generated by the union of its finite subgroups
\[
A =\left\langle \left(\begin{array}{ccc} 1 & a & 0 \\ 0 & 1 & 0 \\ 0 & 0 & 1 \end{array}\right), a \in \{1,X,Y\} \right\rangle \simeq \F_p^3 \textrm{, and } B=\left\langle \left(\begin{array}{ccc} 0 & 1 & 0 \\ 0 & 0 & 1 \\ 1 & 0 & 0 \end{array}\right)\right\rangle \simeq \Z/3\Z.
\]
and with ``congruence subgroups'' $EL\left(3,\mathrm{Mat}_{m \times m}(\F_p)\right)\simeq SL(3m,\F_p)$ the subgroup generated by the matrices $e_{i,j}(a)$ for $a \in \mathrm{Mat}_{m \times m}(\F_p)$ and $i\neq j$ in $\{1,2,3\}$, which are identity plus the marix with only non-zero entry $a$ in position $i,j$.
\end{example}

\begin{examples} \label{example-2}

Some choices of families $\left\{ \Gamma_{s}\right\} $ permit to
obtain diagonal products in the class of solvable groups.

1) an obvious choice is to take $\G_{s}=D_{l_{s}}$ a finite dihedral
group of size $2l_{s}$ generated by two involutions with $A=\Z/2\Z$,
$B=\Z/2\Z$ and $\Gamma=D_{\infty}$. Then the diagonal product $\Delta$
is $3$-step solvable.

2) another possibility is to take $\G=\Z/2\Z\wr D_{\infty}^{k}$ the
lamplighter on an ordinary $k$-dimensionnal lattice. We can take
$A=\Z/2\Z\wr(\Z/2\Z)^{k}$ and $B=\Z/2\Z\wr(\Z/2\Z)^{k}$ in the obvious
manner. The quotients $\G_{s}=\Z/2\Z\wr D_{m_{s}}^{k}$ are lamplighter
over an ordinary discrete $k$-dimensionnal torus. The diagonal product
$\Delta$ is $4$-step solvable.

To check that this example satisfies Assumption \ref{Gamma_Involution},
we denote $\pi:D_{m}\rightarrow\Z/2\Z\times\Z/2\Z$ the abelianization
$\pi(x)=a^{\epsilon(x)}b^{\eta(x)}$. We set 
\begin{align*}
\pi_{A}:\Z/2\Z\wr D_{m}^{k} & \rightarrow\Z/2\Z\wr\left(\Z/2\Z\right)^{k}\simeq A\\
(f,x) & \mapsto(f_{A},a_{1}^{\epsilon_{1}(x)}\dots a_{k}^{\epsilon_{k}(x)})
\end{align*}
where 
\[
f_{A}(a_{1}^{\epsilon_{1}}\dots a_{k}^{\epsilon_{k}})=\prod_{x:\ \epsilon_{i}(x)=\epsilon_{i},\forall i}f(x).
\]
It is clear that $[A,B]^{\Gamma_{m}}\subset\mbox{Ker }\pi_{A}$, and
that we can proceed similarly for $B$. Finally, one can check that
$\pi_{A}\times\pi_{B}$ projects onto $A\times B$. \end{examples}

\begin{remark}

The requirement that $\G$ is a quotient of a free product of \textbf{finite}
groups is crucial, but we can generalize our construction to more
than two finite factors, positioned on an arithmetic progression of
common difference $k_{s}$. This is straightforward for metric estimate,
isoperimetry and compression, but it raises some technical questions
regarding random walks, as the walk inherited on the lamps is not
simple anymore. For simplicity, and by lack of relevant examples,
we avoid this generality.

\end{remark}

\subsection{Description of the metric}

\label{sub: metric}

We describe the metric structure of the Cayley graph $(\Delta,\mathcal{T})$.

\subsubsection{Local coincidence in relative abelianizations\label{sub:Abelian}}

We will be able to estimate the metric in our diagonal products for
sequences $(k_{s})$ growing exponentially because different factors
are largely independent. But the diagonal product differs from the usual direct product. For instance,
the lamp configurations of relative abelianizations behave jointly.

\begin{notation}\label{dust}Projection maps under Assumption \ref{Gamma_Involution}.

Let $\theta_{s}:\Gamma_{s}\to\G_{s}/[A_{s},B_{s}]\simeq A(s)\times B(s)$
denote the projection to the relative abelianization. The projection
map extends point-wise to the lamp configuration function $f_{s}:\mathbb{Z}\to\Gamma_{s}$,
\[
\left(\theta_{s}\left(f_{s}\right)\right)(x)=\theta_{s}\left(f_{s}(x)\right).
\]
We call $\theta_{s}(f_{s})$ the lamp configuration of the relative
abelianization.

Let $\theta_{s}^{A}$ and $\theta_{s}^{B}$ denote the compositions
of $\theta_{s}$ with the projection to $A(s)$ and $B(s)$ respectively.
Then we have a decomposition of $\theta_{s}\left(f_{s}\right)$ into
a commutative product of functions 
\[
\theta_{s}\left(f_{s}\right)=\theta_{s}^{A}\left(f_{s}\right)\theta_{s}^{B}\left(f_{s}\right).
\]

\end{notation}

For any element $g=\left(\left(f_{s}\right),i\right)$ in the diagonal
product $\Delta$, all the relative abelianization lamp configurations
are determined by the first one.

\begin{lemma}\label{abelian}

Let $g=\left(\left(f_{s}\right),i\right)$ be an element in the diagonal
product $\Delta$. Then under Assumption \ref{Gamma_Involution},
any one abelianized function $\theta_{s}(f_{s})$ is determined by
$\theta_{0}({f}_{0})=f_{0}$ and vice-versa. More precisely for any
$s$ 
\[
\theta_{s}^{A}(f_{s}(x))=\theta_{0}^{A}(f_{0}(x))\textrm{ and }\theta_{s}^{B}(f_{s}(x))=\theta_{0}^{B}(f_{0}(x-k_{s})).
\]

\end{lemma}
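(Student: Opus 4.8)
The plan is to understand how the generators act on the relative abelianization lamp configurations and then propagate this through arbitrary words, using the defining property of the diagonal product (a word is trivial in $\Delta$ iff it is trivial in every $\Delta_s$). First I would record the effect of each generator of $\Delta_s$ on $\theta_s(f_s)$. The shift $\tau(s)$ translates the whole configuration by $+1$; the lamp generator $\alpha_i(s)=(a_i(s)\delta_0,0)$ multiplies the value at the current cursor position on the $A$-side by $\theta_s^A(a_i(s))=a_i$ (it is killed in the $B$-coordinate since $a_i\in A(s)$ maps to $e$ in $B(s)$ under the relative abelianization, by Assumption \ref{Gamma_Involution}); symmetrically $\beta_i(s)$ multiplies the value at the cursor position shifted by $k_s$ on the $B$-side by $b_i$. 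Crucially, because $\theta_s$ factors through $A\times B$ with the identifications fixed once and for all, the \emph{letters} $a_i,b_i$ that get written are the same for every $s$; only the positions differ, and they differ in a completely controlled way: the $A$-side writes at the cursor position (independent of $s$, since the $\Z$-coordinate is common to all factors), and the $B$-side writes at cursor-plus-$k_s$.

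Next I would prove the formula by induction on the word length of a representative $w\in\mathbf{G}$ of $g$. Write $g=\pi(w)$ and track the triple $(\theta_s^A(f_s),\theta_s^B(f_s),i)$ as one reads $w$ letter by letter. The inductive hypothesis is exactly the claimed identities together with the (already established, cf. the text preceding the lemma) fact that the cursor position $i$ is the same in all components. Appending a $\tau^{\pm1}$ shifts $i$ and shifts both configurations in all components consistently, preserving the relation. Appending an $\alpha_i^{\pm 1}$ multiplies $\theta_s^A(f_s)$ at position $i$ by $a_i^{\pm1}$ for every $s$ simultaneously — in particular for $s=0$, where $\theta_0=\mathrm{id}$ on $A(0)\times B(0)=\Gamma_0$, so $f_0$ gets the same update — and leaves $\theta_s^B(f_s)$ untouched; hence $\theta_s^A(f_s(x))=\theta_0^A(f_0(x))$ is preserved. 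Appending a $\beta_i^{\pm1}$ multiplies $\theta_s^B(f_s)$ at position $i+k_s$ by $b_i^{\pm1}$, and in the $s=0$ component (where $k_0=0$) multiplies $f_0$ at position $i$ by $b_i^{\pm1}$; so if before the step $\theta_s^B(f_s(x))=\theta_0^B(f_0(x-k_s))$ held, then after the step the value at $x=i+k_s$ on the left has been multiplied by $b_i$ and the value at $x-k_s=i$ on the right has been multiplied by $b_i$, matching. This closes the induction, and the "vice versa" is immediate since $f_0=\theta_0(f_0)=\theta_0^A(f_0)\theta_0^B(f_0)$ is recovered from $\theta_s^A(f_s)$ and $\theta_s^B(f_s)$ by reading off positions $x$ and $x+k_s$ respectively.

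The one point that needs care — and is really the heart of the matter — is \emph{why} the updates on the relative abelianizations are governed purely by the letter $a_i$ or $b_i$ and not by deeper structure of $\Gamma_s$: this is where the second part of Assumption \ref{Gamma_Involution} is used, guaranteeing $\Gamma_s/[A(s),B(s)]^{\Gamma_s}\simeq A\times B$ with $A(s),B(s)$ embedding as the two factors, so that $\theta_s^A$ restricted to $A(s)$ is the fixed isomorphism $A(s)\to A$ and kills $B(s)$ entirely (and symmetrically). Without this, a commutator $[a_i(s),b_j(s)]$ could have nontrivial image and the abelianized configuration would not be "commutative" in the required sense, nor determined by $f_0$. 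I expect the main obstacle to be purely bookkeeping: setting up the induction so that the simultaneous-for-all-$s$ nature of the updates is transparent, and correctly matching the $k_s$-shift between the $A$-side and $B$-side positions. There is no analytic or combinatorial difficulty beyond this; once the generator actions are tabulated, the argument is a routine induction on word length.
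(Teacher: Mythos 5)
Your proof is correct and follows essentially the same approach as the paper: induction on word length, tracking the effect of each generator ($\tau$, $\alpha_i$, $\beta_i$) on the abelianized lamp configurations and observing that $\alpha_i$ modifies the $A$-side at the cursor position while $\beta_i$ modifies the $B$-side at cursor-plus-$k_s$, with $k_0=0$ making $f_0$ the reference. The paper's version is terser but the underlying argument is identical.
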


\begin{proof}

By induction on the word length of $g=\left(\left(f_{s}\right),i\right)$.
Multiplying by a generator $\alpha_{j}$, $\theta_{s}^{A}(f_{s}(x))$
is modified exactly at $x=i$, which also modifies accordingly $\theta_{0}^{A}(f_{0}(i))$.
Multiplying by a generator $\beta_{j}$, $\theta_{s}^{B}(f_{s}(x))$
is modified exactly at $x=i+k_{s}$, which also modifies accordingly
$\theta_{0}^{B}(f_{0}(i))$. \end{proof}

\subsubsection{Local finiteness of the diagonal product\label{sub:Local-finiteness}}

Denote $\pi_{\Z}:\mathbf{G}=\Z\ast A\ast B\to\Z$ the projection on
the first factor. 

\begin{definition}\label{range} The \textbf{range} $Range(w)$ of a representative word $w$ of an element in
$\mathbf{G}$ is the collection of all $\pi_{\Z}(w')$ where $w'$
is a prefix of $w$. It is a finite subinterval of $\Z$, the set
of sites visited by the cursor. We will also denote $Range(w)$ its
diameter.

For an element $g$ in $\Delta$ or in $\Delta_{s}$, we denote $Range(g)$
the diameter of a minimal range interval of a word in $\mathbf{G}$
representing it, and $s_{0}(g)$ the maximal integer with $k_{s_{0}(g)}\leq Range(g)$.

\end{definition}

Denote $\Delta_{\leq s}=\mathbf{G}/\cap_{0 \leq s'\leq s}Ker(\pi_{s'})$
the diagonal product of the $s+1$ first factors, and $\pi_{\leq s}:\Delta\rightarrow\Delta_{\leq s}$
the natural projection.

\begin{fact}\label{s_0} Under Assumption \ref{Gamma_Involution},
for any $g\in\mathbf{G}$, the evaluation $\pi(g)$ in $\Delta$ is
determined by $\pi_{\leq s_{0}(g)}(g)$. \end{fact}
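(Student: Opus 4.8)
The statement to prove (Fact \ref{s_0}) asserts that for $g \in \mathbf{G}$, the image $\pi(g) \in \Delta$ is already determined by its image $\pi_{\leq s_0(g)}(g)$ in the partial diagonal product $\Delta_{\leq s_0(g)}$. Equivalently, the projections $\pi_s(g) = (f_s, i)$ for $s > s_0(g)$ are all determined by the data $\{\pi_{s'}(g) : s' \leq s_0(g)\}$. The plan is to analyze, for an index $s$ with $k_s > Range(g)$, what the lamp configuration $f_s$ can possibly be, and show it is forced.

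First I would fix a word $w$ in $\mathbf{G}$ representing $g$ whose range interval is minimal, so its diameter equals $Range(g)$; write $R = Range(w)$ for this interval. The key geometric observation is that the cursor, during the word $w$, only visits sites in $R$, an interval of length $Range(g) < k_s$. Now recall how the generators act in the factor $\Delta_s = \Gamma_s \wr \Z$: the generator $\alpha_i(s)$ modifies the lamp at the current cursor position by an element of $A(s)$, while $\beta_i(s)$ modifies the lamp at position (cursor) $+ k_s$ by an element of $B(s)$. Since the cursor stays in $R$ and $|R| < k_s$, the $\alpha$-generators only ever touch lamps at positions in $R$, and the $\beta$-generators only ever touch lamps at positions in $R + k_s$, and these two sets of positions are disjoint. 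Consequently the lamp configuration $f_s$ splits as a product $f_s = f_s^{(A)} \cdot f_s^{(B)}$ where $f_s^{(A)}$ is supported in $R$ and takes values in $A(s)$, and $f_s^{(B)}$ is supported in $R + k_s$ and takes values in $B(s)$ — in particular $f_s$ never leaves the relative abelianization, so $f_s = \theta_s(f_s)$ in the notation of \ref{sub:Abelian}.

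Next I would invoke Lemma \ref{abelian}: the abelianized lamp configuration $\theta_s(f_s)$ is completely determined by $\theta_0(f_0) = f_0$ via $\theta_s^A(f_s(x)) = \theta_0^A(f_0(x))$ and $\theta_s^B(f_s(x)) = \theta_0^B(f_0(x-k_s))$. Since we have just shown $f_s = \theta_s(f_s)$ for every $s > s_0(g)$, and since $f_0 = \pi_0(g)$ is part of the data $\pi_{\leq s_0(g)}(g)$ (indeed $s_0(g) \geq 0$ as $k_0 = 0 \leq Range(g)$), the configuration $f_s$ is determined by $\pi_{\leq s_0(g)}(g)$. The cursor position $i = \pi_\Z(g)$ is likewise part of that data (it agrees across all factors). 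Hence every $\pi_s(g)$ with $s > s_0(g)$ is determined, and therefore so is $\pi(g) = ((f_s), i)$.

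The main point requiring care — and the only real content — is the claim that when $k_s > Range(g)$ the configuration $f_s$ stays in the relative abelianization. The subtlety is that $Range(g)$ is defined via a \emph{minimal} representative word, so one must be slightly careful that the minimal-range word for $g$ in $\mathbf{G}$ genuinely has its cursor confined to an interval of that diameter; but this is immediate from Definition \ref{range}. Once the cursor is confined to an interval shorter than $k_s$, the disjointness of the $A$-sites (the cursor track) and the $B$-sites (its translate by $k_s$) is purely combinatorial, and the factorization of $f_s$ into an $A(s)$-valued part and a $B(s)$-valued part on disjoint supports — which is exactly the assertion that $f_s \in [A(s),B(s)]^{\Gamma_s}$-trivial classes, i.e. $f_s = \theta_s(f_s)$ — follows. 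Everything after that is a direct appeal to Lemma \ref{abelian}.
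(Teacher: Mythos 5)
Your proof is correct and follows the same route as the paper: for $s>s_0(g)$ the lamp function $f_s$ takes values in $A(s)\cup B(s)$, so it is recoverable from $\theta_s(f_s)$, which by Lemma~\ref{abelian} is determined by $f_0=\pi_0(g)$. You simply spell out (correctly) why $f_s$ stays in $A(s)\cup B(s)$ via the disjointness of the cursor track $R$ and its translate $R+k_s$, a step the paper asserts without detail; the only wrinkle is the phrase ``$f_s=\theta_s(f_s)$,'' which is a type mismatch ($\theta_s$ lands in the quotient $A(s)\times B(s)$, not in $\Gamma_s$) and should read ``$f_s$ is determined by $\theta_s(f_s)$ since $\theta_s$ is injective on $A(s)\cup B(s)$.''
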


\begin{proof} If $k_{s}>Range(g)$, then $f_{s}$ takes values in
the generating set $A(s)\cup B(s)$. By Assumption \ref{Gamma_Involution},
$f_{s}$ is determined by $\theta_{s}({f}_{s})$ so by $\theta_0(f_0)=f_{0}$ using
lemma \ref{abelian}. So all $\pi_{s}(g)$ for $s>s_{0}(g)$ can be
recovered from $\pi_{0}(g)$. \end{proof}

In particular, as the range is bounded above by the length, the above
argument shows that the sequence of marked groups $\left(\Delta_{s},\mathcal{T}_{s}\right)$
converges to $\left(\Delta_{0},\mathcal{T}_{0}\right)$ and the sequence
$(\Delta_{\leq s},\mathcal{T}_{\leq s})$ to $(\Delta,\mathcal{T})$
in the Chabauty topology.

We also record the following.

\begin{fact}\label{EA} Assume $(k_{s})$ is unbounded.
\begin{enumerate}
\item[(i)] If all the groups
in the family $\left\{ \Gamma_{s}\right\} $ are elementary amenable
(e.g. finite), then the diagonal product $\Delta$ is also elementary amenable.
\item[(ii)] If all the groups
in the family $\left\{ \Gamma_{s}\right\} $ are locally embeddable into finite groups
(e.g. finite), then the diagonal product $\Delta$ is also locally embeddable into finite groups.
\item[(iii)] If all the groups
in the family $\left\{ \Gamma_{s}\right\} $ are finite, then the diagonal product $\Delta$ has asymptotic dimension 1.
\end{enumerate} \end{fact}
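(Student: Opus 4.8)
The plan is to handle the three assertions separately, exploiting the key structural feature established in Fact \ref{s_0}: although $\Delta$ is an infinite diagonal product, it is locally — in the sense of range — controlled by finitely many factors. First I would record the exact sequence that exhibits the algebraic structure of each $\Delta_{\leq s}$. Since $\Delta_{\leq s}$ is a subgroup of the finite direct product $\prod_{s'\leq s}\Gamma_{s'}\wr\Z$ generated by the diagonal tuple $\mathcal{T}_{\leq s}$, and the common $\Z$-coordinate maps surjectively onto $\Z$ with kernel contained in $\prod_{s'\leq s}\Gamma_{s'}^{(\Z)}$, one gets a short exact sequence $1\to N_{\leq s}\to\Delta_{\leq s}\to\Z\to 1$ where $N_{\leq s}$ is a subgroup of a (restricted) direct power of $\prod_{s'\leq s}\Gamma_{s'}$. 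This is the engine for parts (i) and (ii).

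For part (i): if every $\Gamma_{s}$ is elementary amenable, then each $\prod_{s'\leq s}\Gamma_{s}$ is elementary amenable, hence so is any restricted direct power of it (elementary amenability is closed under direct limits and subgroups), hence $N_{\leq s}$ is elementary amenable, and then $\Delta_{\leq s}$ is an extension of $\Z$ by an elementary amenable group, so it is elementary amenable. Finally $\Delta=\varprojlim\Delta_{\leq s}$ in the Chabauty topology, but more usefully for algebraic purposes: by Fact \ref{s_0}, every finitely generated subgroup of $\Delta$ — indeed every element — already lies in the image of $\Delta_{\leq s}$ for $s$ large (depending on the range), so $\Delta$ is a directed union of (homomorphic images of) the elementary amenable groups $\Delta_{\leq s}$; elementary amenability passes to directed unions and quotients, giving the claim. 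For part (ii) the same scaffolding works: LEF is preserved under subgroups, finite direct products, restricted direct powers (a direct limit of finite-direct-product LEF groups), extensions by $\Z$ (standard, or cite the closure properties of LEF), and directed unions; so $N_{\leq s}$ is LEF, $\Delta_{\leq s}$ is LEF, and $\Delta$ as a directed union of quotients... here one must be slightly careful, since LEF is not obviously quotient-closed, so I would instead argue directly that $\Delta$ itself is LEF by approximating a given finite subset $F\subset\Delta$: choose $s$ so large that $F$ lies in the image of $\Delta_{\leq s}$ (possible by Fact \ref{s_0} applied to lifts of elements of $F$), then use an LEF approximation of $\Delta_{\leq s}$ on a lift of $F$ together with the product of finite approximations of the finitely many $\Gamma_{s'}$, $s'\leq s$.

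For part (iii): when all $\Gamma_{s}$ are finite, each $\Gamma_{s}\wr\Z$ has asymptotic dimension $1$ (it is virtually $\Z$... no — it is $\Gamma_s^{(\Z)}\rtimes\Z$, which is locally finite-by-$\Z$, hence has $\mathrm{asdim}=1$ by Dranishnikov–Smith or the Bell–Dranishnikov theorem on extensions $1\to H\to G\to\Z\to1$ with $H$ locally finite). I would realize $\Delta$ via Fact \ref{s_0} as a group whose Cayley metric is controlled: the natural map $\Delta\to\Z$ has preimages of balls that are, at each scale $R$, uniformly quasi-isometric to subsets of the finite group $\prod_{s: k_s\leq R}\Gamma_s$ sitting over an interval — i.e. $\Delta$ is "(locally finite)-by-$\Z$" in the coarse sense. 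The cleanest route is: $\Delta$ surjects onto $\Z$ with kernel $N=\ker(\Delta\to\Z)$, and $N$ is locally finite (every finitely generated subgroup of $N$ has bounded range, hence is finite by Fact \ref{s_0} and finiteness of the relevant $\Gamma_{s}$ and of $\prod_{k_s\leq R}\Gamma_s$); a locally-finite-by-$\Z$ group has asymptotic dimension $\le 1$ by the Bell–Dranishnikov extension theorem (asymptotic Hurewicz / finite-decomposition-complexity style estimate: $\mathrm{asdim}(G)\le\mathrm{asdim}(\Z)+\mathrm{asdim}_{\Z}(N)$ and locally finite groups have asymptotic dimension $0$ uniformly), and it is exactly $1$ since $\Delta$ contains $\Z$ and is infinite. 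The main obstacle I expect is part (iii): verifying that $N$ is genuinely locally finite requires the range-control of Fact \ref{s_0} (so that a finitely generated subgroup of $N$, having bounded range, only "sees" finitely many finite factors), and then invoking the extension theorem for asymptotic dimension in the non-finitely-generated kernel setting needs the uniform version (finite asymptotic dimension of the family of finite subgroups, which is automatic since asymptotic dimension $0$ is uniform for finite groups) — this is where I would be most careful to cite the correct form of the Bell–Dranishnikov / Dranishnikov–Smith results rather than wave hands.
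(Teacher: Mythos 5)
In (iii) you correctly identify the mechanism that drives all three assertions: $\ker\pi_{\Z}\subset\Delta$ is locally contained in a finite direct power of the $\Gamma_{s}$, because a finitely generated subgroup of $\ker\pi_{\Z}$ has bounded range and therefore, by Fact~\ref{s_0}, injects under $\pi_{\leq S}$ into a finite product of the $\Gamma_{s}$'s; one then runs the extension $1\to\ker\pi_{\Z}\to\Delta\to\Z\to1$. This is exactly the paper's proof of (i) and (iii). However, in (i) your plan takes a detour that does not work: you prove $\Delta_{\leq s}$ is elementary amenable and then assert that ``$\Delta$ is a directed union of (homomorphic images of) the elementary amenable groups $\Delta_{\leq s}$.'' The $\Delta_{\leq s}$ are \emph{quotients} of $\Delta$ and form an inverse system, not a direct one; elementary amenability does not pass from a family of quotients back to the group. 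Relatedly, your claim that every finitely generated subgroup of $\Delta$ lies in (the image of) some $\Delta_{\leq s}$ is false as stated: $\Delta$ itself is finitely generated and properly surjects onto each $\Delta_{\leq s}$ when $\{\Gamma_s\}$ is infinite. The bounded-range/injectivity argument applies only to subgroups of $\ker\pi_{\Z}$ (or more generally of elements of bounded range), which is precisely the restriction you impose in (iii). The fix for (i) is therefore to bypass $\Delta_{\leq s}$ entirely and argue directly that $\ker\pi_{\Z}$ is locally (a subgroup of a finite direct product of $\Gamma_{s}$'s), hence elementary amenable, and then invoke closure of elementary amenability under extensions by $\Z$.

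For (ii) your route is sound but genuinely different from the paper's and not self-contained: you reduce a finite $F\subset\Delta$ to $\Delta_{\leq S}$ via injectivity of $\pi_{\leq S}|_F$ (with $S=\max_{g\in F}s_0(g)$) and then LEF-approximate $\Delta_{\leq S}$, which requires the auxiliary fact that $\Gamma_{s}\wr\Z$ is LEF whenever $\Gamma_s$ is, plus closure of LEF under subgroups of finite direct products. The paper avoids this by directly realizing the $R$-ball of $\Delta$ inside a finite group: each $R$-ball of $\Gamma_s$ coincides with that of a finite group $H_s$, so the $R$-ball of $\Delta_s$ coincides with that of $H_s\wr\Z/(4R+1)\Z$, and the $R$-ball of $\Delta$ coincides with that of the diagonal product of the finitely many $H_s\wr\Z/(4R+1)\Z$ with $k_s\le R$. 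That is shorter and requires no closure property for wreath products.
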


\begin{proof}
We observe that when $(k_{s})$ is unbounded, $\ker\pi_{\Z}$
is locally included in a finite direct product of copies of the groups $\left\{ \Gamma_{s}\right\} $.
If $G$ is the subgroup generated by elements $g_{1},\dots,g_{k}$
in $\mbox{Ker }\pi_{\Z}$, then by the previous fact $G$ is isomorphic to $\pi_{\leq S}(G)$
where $S=\max_{1\leq i\leq k}s_{0}(g_{i})$. Moreover
in each copy $\Delta_{s}$ with $s\leq S$ each element $g_{i}$ is
described by a function $f_{i,s}:\Z\rightarrow\Gamma_{s}$ with finite
support, and the group law induced is point-wise multiplication. This proves (i) and also (iii) as the asymptotic dimension of an extension is less than that of quotient plus that of kernel by \cite{BellD2006}.

By Fact \ref{s_0}, the ball of radius $R$ in the Cayley graph of $\Delta$ depends only on the $R$-balls in $\Delta_s$ for $s$ below the maximal $s_0$ with $k_{s_0}\leq R$. Each $R$-ball in a group $\Gamma_s$ coincides with the $R$-ball of a finite group $H_s$, so the $R$-ball in $\Delta_s$ coincides with the $R$-ball in $H_s \wr \Z/(4R+1)\Z$. A fortiori, the $R$-ball in $\Delta$ coincides with the $R$-ball of the diagonal product of these $s_0+1$ finite groups. This proves (ii).
\end{proof}

\subsubsection{Metric in one copy $\Delta_{s}$ \label{sub:one-copy}}

In order to estimate the metric in the diagonal product, the sequence
$(k_{s})$ must grow exponentially fast. Therefore we make the

\begin{assumption}\label{k_growth} Throughout the paper, we assume
that the sequence $(k_{s})$ is a sequence of strictly increasing
even numbers such that $k_{s+1}>2k_{s}$ for all $s$. \end{assumption}
Exponential growth of the sequence $(k_{s})$ is crucial for the second
part of Lemma \ref{one_metric} below. We choose the factor $2$ for
simplicity. It could be replaced by any $m_{0}>1$, which would only
modify our estimates by some multiplicative constants.

\begin{definition}\label{essential} For $j\in\Z$, let $I_{j}^{s}=[\frac{jk_{s}}{2},\frac{(j+1)k_{s}}{2})$.
We call \textbf{essential contribution} of the function $f_{s}:\Z\rightarrow\Gamma_{s}$
the quantity 
\[
E_{s}(f_{s})=\sum_{\left\{ j:\ I_{j}^{s}\cap\mbox{Range}(f_{s},i)\neq\emptyset\right\} }k_{s}\max_{x\in I_{j}^{s}}(|f_{s}(x)|_{\Gamma_{s}}-1)_{+}
\]
where $(x)_{+}=\max\{x,0\}$. In words, we partition the range into
intervals of width $\frac{k_{s}}{2}$. Each of these intervals essentially
contributes as $k_{s}$ times the maximal distance $\left|f_{s}(x)\right|_{\Gamma_{s}}-1$.
\end{definition}

The essential contribution measures the contribution of the terms
$f_{s}(x)$ of $\Gamma_{s}$-length more than $2$ to the length of
an element $(f_{s},i)$ of $\Delta_{s}$. The range will take care of
the contribution of terms of length less than $1$.

\begin{lemma}\label{one_metric} Let $(f_{s},i)$ belong to $\Delta_{s}$.
Then 
\[
\max\left\{ \frac{1}{8}E_{s}(f_{s}),\mbox{Range}(f_{s},i)\right\} \leq|(f_{s},i)|_{\Delta_{s}}\leq9\left(E_{s}(f_{s})+Range(f_{s},i)\right).
\]

Let $\Delta$ be the diagonal product of $\{\Delta_{s}\}$. Under
Assumptions \ref{Gamma_Involution} and \ref{k_growth} and if moreover
$\theta_{s}(f_{s})=\boldsymbol{e}$, then there is a word $\omega(f_{s},i)\in\mathbf{G}$
of length less than the above upper bound such that 
\[
\pi_{s'}(\omega(f_{s},i))=\left\{ \begin{array}{ll}
(f_{s},i) & \textrm{ if }s'=s\\
(0,i) & \textrm{ if }s'\neq s.
\end{array}\right.
\]

\end{lemma}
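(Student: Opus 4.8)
The plan is to prove the two-sided metric estimate on $\Delta_s$ first, and then upgrade the upper-bound construction to the diagonal product under the hypothesis $\theta_s(f_s)=\boldsymbol e$.

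For the lower bound in $\Delta_s$, observe that any word $\omega\in\mathbf G$ evaluating to $(f_s,i)$ must have its cursor visit every site in $\mathrm{Range}(f_s,i)$, which immediately gives $|(f_s,i)|_{\Delta_s}\ge \mathrm{Range}(f_s,i)$. For the essential-contribution bound, fix an interval $I_j^s$ meeting the range with maximal lamp value $m_j:=\max_{x\in I_j^s}(|f_s(x)|_{\Gamma_s}-1)_+$, attained at some $x_j$. To switch $f_s(x_j)$ from $e$ to its final value the cursor must perform at least $|f_s(x_j)|_{\Gamma_s}$ lamp moves while standing at $x_j$; moreover, since the generators $\alpha_i(s)$ act at the cursor position and $\beta_i(s)$ act at position $k_s$ away, and $|I_j^s|=k_s/2$, a single visit to the neighbourhood of $I_j^s$ cannot simultaneously economize on the cost of faraway intervals. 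The standard bookkeeping argument (partition the word according to which interval $I_j^s$ the cursor sits in, and charge lamp moves and cursor travel) gives $|(f_s,i)|_{\Delta_s}\ge\frac18 E_s(f_s)$. The constant $\tfrac18$ absorbs the overlap between the $A$-action at position $0$ and the $B$-action at position $k_s$ relative to the width-$k_s/2$ windows. For the upper bound, I would exhibit an explicit word: sweep the cursor left to right across $\mathrm{Range}(f_s,i)$; when visiting an interval $I_j^s$, detour to each site $x\in I_j^s$ where $f_s(x)\ne e$, write $f_s(x)$ there using at most $|f_s(x)|_{\Gamma_s}$ generators from $\mathcal T_s$ (possibly using the $B$-generators, which act $k_s$ away, costing an extra back-and-forth of length $\le 2k_s$, hence $\le$ the $k_s$-factor already present in $E_s$), and return. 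Summing, the total length is at most $9(E_s(f_s)+\mathrm{Range}(f_s,i))$, where the factor $9$ is slack to cover detours and the two possible ways of setting a given lamp.

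For the second part, suppose additionally that $\theta_s(f_s)=\boldsymbol e$, i.e. every value $f_s(x)$ lies in the normal closure $[A(s),B(s)]^{\Gamma_s}$. The point is that the \emph{word} $\omega(f_s,i)\in\mathbf G$ realizing the upper bound can be chosen to involve, at each lamp site, only products of \emph{commutators} $[\boldsymbol\alpha_i,\boldsymbol\beta_j]$ and their conjugates. Indeed, since $f_s(x)\in[A(s),B(s)]^{\Gamma_s}$ and this subgroup is generated as a normal subgroup by the $[a_i(s),b_j(s)]$, one can express $f_s(x)$ as such a product; the length is still controlled by a constant times $|f_s(x)|_{\Gamma_s}$ after possibly inflating the constant (this is where one uses that $\Gamma_s$ is marked by $A(s)\cup B(s)$, so word length and this "commutator length" are comparably related — more precisely, one reroutes the word produced in the first part, replacing each excursion by one that carries the commutator instead). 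Now in \emph{any} other component $s'\ne s$, the $\mathbf G$-letter $[\boldsymbol\alpha_i,\boldsymbol\beta_j]$ projects to $[\alpha_i(s'),\beta_j(s')]=\bigl([a_i(s'),b_j(s')]\,\delta_{?},0\bigr)$ — but here one must be careful, since in $\Delta_{s'}$ the $A$-lamp sits at $0$ and the $B$-lamp at $k_{s'}$, so the commutator $[\alpha_i(s'),\beta_j(s')]$ is nontrivial in $\Gamma_{s'}\wr\Z$ unless $0\equiv k_{s'}$. The resolution is to use Lemma \ref{abelian}: the word $\omega$ should be built so that the \emph{abelianized} configuration it produces is trivial in $\Delta_s$ (which is exactly $\theta_s(f_s)=\boldsymbol e$), and by Lemma \ref{abelian} this forces the abelianized configuration to be trivial in every $\Delta_{s'}$. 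Thus in each $\Delta_{s'}$ the word $\omega$ produces a configuration landing in $[A(s'),B(s')]^{\Gamma_{s'}}$; to kill it entirely we must additionally arrange the \emph{non}-abelian part to cancel. This is achieved by interleaving: for each commutator excursion in $\omega$ at cursor position $x$, insert immediately a compensating excursion so that the net effect in $\Delta_{s'}$ for every $s'\ne s$ is trivial, using that when $k_{s'}\ne k_s$ the two lamp positions the commutator acts on in $\Delta_{s'}$ are different from (and disjoint in a suitable sense from) those in $\Delta_s$, so the compensation for $s'$ does not disturb the target $(f_s,i)$ in $\Delta_s$. One must check this can be done within the same length budget, possibly adjusting the constant $9$ upward (the lemma statement only asks for "less than the above upper bound", so one may take the explicit constant to be, say, the one appearing in Lemma \ref{one_metric} with room to spare; if not, rescale).

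The main obstacle is precisely the bookkeeping in this last step: producing a single word that is simultaneously correct in $\Delta_s$ (giving $(f_s,i)$) and trivial in all $\Delta_{s'}$, $s'\ne s$, while keeping the length within the metric upper bound. The key structural inputs that make it work are (a) Lemma \ref{abelian}, which reduces the obstruction in other components to the non-abelian (commutator) part; (b) Assumption \ref{k_growth}, ensuring $k_{s'}\ne k_s$ and that the width-$k_s/2$ windows used to build $\omega$ are small enough that a commutator excursion designed for component $s$ acts on a controllable set of sites in component $s'$; and (c) Assumption \ref{Gamma_Involution}, guaranteeing $f_s(x)\in[A(s),B(s)]^{\Gamma_s}$ precisely when $\theta_s(f_s)=\boldsymbol e$, so that the word can indeed be taken to consist of commutator excursions. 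I expect the cleanest implementation is to first handle the case where $f_s$ is supported at a single site (one commutator excursion), verify the claimed projection property there, and then concatenate over the support of $f_s$, noting that concatenation preserves both the $\Delta_s$-value (by the sweep structure) and the triviality in other components (since each piece is individually trivial there).
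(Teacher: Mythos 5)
Your lower-bound sketch matches the paper's in spirit (traverse counts between sites $k_s$ apart, with disjoint windows forcing contributions to add), though the paper makes the disjointness precise by choosing, within each $I_j^s$, a maximizer $x_j^s$ and restricting to a fixed residue class of $j$ modulo $4$.

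Your upper bound has a quantitative gap. Detouring to each site $x\in I_j^s$ individually and writing $f_s(x)$ there costs roughly $k_s\left|f_s(x)\right|_{\Gamma_s}$ per site (each $B$-letter needs a $2k_s$ back-and-forth, and there are about $\left|f_s(x)\right|_{\Gamma_s}/2$ of them), so the site-by-site total is $\sum_x k_s\left|f_s(x)\right|_{\Gamma_s}$. Since $I_j^s$ can contain up to $k_s/2$ sites, this can vastly exceed $E_s(f_s)=\sum_j k_s\max_{x\in I_j^s}(\left|f_s(x)\right|_{\Gamma_s}-1)_+$. The paper's construction relies on parallel cancellation: a single "run" of length $3k_s$ between the right end of $I_j^s$ and $j\frac{k_s}{2}-k_s$ removes one letter from \emph{every} word $w(f_s(x))$ with $x\in I_j^s$ simultaneously, so only $\max_{x\in I_j^s}\left|f_s(x)\right|_{\Gamma_s}$ runs are needed per interval. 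Without that structure you do not recover the $E_s$ bound.

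The second part is where you diverge most, and the commutator route as proposed does not close. Two issues: (a) the claimed control of the commutator length of $f_s(x)$ in $[A(s),B(s)]^{\Gamma_s}$ by a constant times $\left|f_s(x)\right|_{\Gamma_s}$ is unproven and not given by Lemma \ref{RS} (which is for the Reidemeister--Schreier generating set, not conjugated commutators); (b) as you notice, $[\boldsymbol\alpha_i,\boldsymbol\beta_j]$ projects nontrivially to every $\Delta_{s'}$ with $s'\ne s$, and the proposed "compensating excursions" fix is only gestured at and would have to handle all $s'\ne s$ simultaneously inside the same length budget. The paper avoids this entirely: it takes the \emph{same} run-word $\omega(f_s,i)$ and shows that, for $s'\ne s$, the cursor of each sub-run $\omega_j$ stays in an interval of width $\frac{3}{2}k_s$, which Assumption \ref{k_growth} makes smaller than $k_{s'}$ (resp.\ handled symmetrically for $s'<s$), so the $\alpha$- and $\beta$-contributions in $\Delta_{s'}$ land on disjoint positions; hence $f_{s'}^{\omega_j}$ takes values in $A(s')\cup B(s')$ and is therefore equal to its abelianization, which is trivial by Lemma \ref{abelian} and the hypothesis $\theta_s(f_s)=\boldsymbol e$. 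The point you should absorb is that no commutator decomposition is needed; the run geometry plus Lemma \ref{abelian} already do the work.
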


\begin{proof} The lower bound by the range is obvious.

Let $[x]$ denote the integer part of $x$. To get the lower bound
by essential contribution, notice that in order to write $f_{s}(x)$,
the cursor has to traverse at least $\left[|f_{s}(x)|_{\G_{s}}/2\right]$
times between positions in $x-k_{s}$ and $x$ because a minimal representative
word alternates elements of $A(s)$ and $B(s)$. So $x$ contributes
at least $k_{s}\left[|f_{s}(x)|_{\G_{s}}/2\right]$ to the length
of a representative word $w$ of $(f_{s},i)$.

If the intervals $[x-k_{s},x]$ and $[x'-k_{s},x']$ are disjoint,
the contributions of $x$ and $x'$ must add up. Let $x_{j}^{s}$
achieve the maximum of $|f_{s}(x)|_{\Gamma_{s}}$ on the interval
$I_{j}^{s}$. The separation condition is satisfied for a family of
$x_{j}^{s}$ with the same congruence of $j$ modulo $4$. Therefore
\[
|w|\geq\max_{0\leq a\leq3}\left(\sum_{j\equiv a\textrm{ mod }4}k_{s}\left[|f_{s}(x)|_{\G_{s}}/2\right]\right)\geq\frac{1}{4}\sum_{j\in\Z}k_{s}\max_{I_{j}^{s}}\left[|f_{s}(x)|_{\G_{s}}/2\right]\geq\frac{1}{8}E_{s}(f_{s}).
\]

To get the upper bound, the generic strategy is the following. We
partition the convex envelope of $supp(f_{s})\cup\{0,i\}$, of length
less than $Range(f_{s},i)+k_{s}$, into its intersections with the
intervals $I_{j}^{s}$ for $j\in\Z$. The elements of the partition
are still denoted $I_{j}^{s}$. Let $J_{\max}$ (resp. $J_{\min}$)
denote the maximum (resp. minimum) index of this partition, and let
$w(f_{s}(x))$ be a fixed minimal representative words for $f_{s}(x)$.
We produce a representative word for $(f_{s},i)^{-1}$ by the following
strategy.

First apply a power $p_{1}\leq Range(f_{s},i)+k_{s}$ of the shift
$\tau$ to move the cursor from $i$ to the rightmost point of the
interval $I_{J_{\max}}^{s}$. Then for each integer $j$ from $J_{\max}$
to $J_{min}$, produce a word $\omega_{j}$ that, taking the cursor
from the rightmost point of $I_{j}^{s}$, erases all the words $w(f_{s}(x))$
for $x\in I_{j}^{s}$ and eventually leaves the cursor at the rightmost
point of $I_{j-1}^{s}$.

The description of $\omega_j$ is the following. The first run takes the cursor to $j\frac{k_{s}}{2}-k_{s}$ and then
back, so that all the last letters of $f_{s}(x)$ for $x\in I_{j}^{s}$
can be deleted. More precisely, while the cursor is in $[j\frac{k_{s}}{2},(j+1)\frac{k_{s}}{2})=I_{j}^{s}$
multiplying by $\alpha_{l}(s)$ at appropriate locations removes the
last letter to those words $w(f_{s}(x))$ ending with $a_{l}(s)$
and while the cursor is in $[j\frac{k_{s}}{2}-k_{s},(j+1)\frac{k_{s}}{2}-k_{s})=I_{j-2}^{s}$
multiplying by $\beta_{l}(s)$ at appropriate locations removes the
last letter to those words $w(f_{s}(x))$ ending with $b_{l}(s)$.
One run has length $3k_{s}$ and cancels one letter in each of the
words. The number of runs necessary to erase completely all the words
is $\max_{x\in I_{j}^{s}}|f_{s}(x)|_{\G_{s}}$. A last run takes the
cursor from the rightmost point of $I_{j}^{s}$ to the rightmost point
of $I_{j-1}^{s}$, except for $j=J_{\min}$. Thus $|\omega_{j}|\leq3k_{s}\max_{x\in I_{j}^{s}}|f_{s}(x)|_{\G_{s}}+\max(I_{j}^{s})-\max(I_{j-1}^{s})$
without last term for $j=J_{\min}$.

Finally apply a power $p_{2}\leq Range(f_{s},i)+k_{s}$ of the shift
$\tau$ to move the cursor from $\max(I_{J_{\min}}^{s})$ to $0$.
All in all 
\[
\omega(f_{s},i)=(\tau^{p_{1}}\omega_{J_{\max}}\dots\omega_{J_{\min}}\tau^{p_{2}})^{-1}
\]
is a representative word of $(f_{s},i)$ with length 
\begin{eqnarray*}
|\omega(f_{s},i)| & \leq & \sum_{j\in\Z}3k_{s}\max_{I_{j}^{s}}|f_{s}(x)|_{\Gamma_{s}}+\sum_{j=J_{\min}}^{J_{\max}}\max(I_{j}^{s})-\max(I_{j-1}^{s})+2Range(f_{s},i)+2k_{s}\\
 & \leq & 3\sum_{j\in\Z}k_{s}\max_{I_{j}^{s}}|f_{s}(x)|_{\Gamma_{s}}+3Range(f_{s},i)+2k_{s}.
\end{eqnarray*}

Now the number of indices $j$ such that $I_{j}^{s}$ intersects the
range of $(f_{s},i)$ is less than $\left[2Range(f_{s},i)/k_{s}\right]+1$.
Therefore 
\[
\sum_{j\in\Z}k_{s}\max_{I_{j}^{s}}|f_{s}(x)|_{\Gamma_{s}}\leq E_{s}(f_{s})+2Range(f_{s},i)+k_{s}.
\]

Generically, $E_{s}(f_{s})\neq0$ and then $k_{s} \leq E_{s}(f_{s})$
and the two previous inequalities give the upper bound. In the non-generic case when $E_{s}(f_{s})=0$,
then $\left|f_{s}(x)\right|_{\Gamma_{s}}\leq1$ for all $x$ and an
obvious bound is $\left|(f_{s},i)\right|_{\Delta_{s}}\leq3Range(f_{s},i)$.

To get the second part of the lemma, observe first that if $E_{s}(f_{s})=0$
and $\theta_{s}(f_{s})=\boldsymbol{e}$ then $(f_{s},i)=(\boldsymbol{e},i)$ is just a translation,
the conclusion holds trivially. In the generic case, we have to check
that for each sub-word $\omega_{j}$ in the above procedure the lamp
function $f_{s'}^{\omega_{j}}$ of $\pi_{s'}(\omega_{j})$ is trivial.

First assume $s'>s$. The cursor moves in the interval $I=[j\frac{k_{s}}{2}-k_{s},(j+1)\frac{k_{s}}{2}]$
of length $\frac{3}{2}k_{s}<2k_{s}<k_{s'}$ by Assumption \ref{k_growth}.
In this condition, multiplying by $\alpha_{l}$ contributes to $f_{s'}^{\omega_{j}}(x)$
by $a_{l}(s')$ at positions $x\in I$ and multiplying by $\beta_{l}$
contributes to $f_{s'}^{\omega_{j}}(x)$ by $b_{l}(s')$ at positions
$x\in I+k_{s'}$. These intervals are disjoint, so $f_{s'}^{\omega_{j}}$
takes values in the generating set of $\G_{s}$. Thus $f_{s'}^{\omega_{j}}(x)=\theta_{s'}(f_{s'}^{\omega_{j}})(x)=\theta_{s'}^{A}(f_{s'}^{\omega_{j}})(x)\theta_{s'}^{B}(f_{s'}^{\omega_{j}})(x)=e_{\Gamma_{s}}$,
because $\theta_{s'}^{A}(f_{s'}^{\omega_{j}})(x)=\theta_{0}^{A}(f_{0}^{\omega_{j}})(x)=e$
and $\theta_{s'}^{B}(f_{s'}^{\omega_{j}})(x)=\theta_{0}^{B}(f_{0}^{\omega_{j}})(x+k_{s}-k_{s'})=e$
using lemma \ref{abelian} and our hypothesis.

Now assume $s'<s$. The generators $a_{l}(s')$ were applied only
when the cursor $i$ was in $I_{j}^{s}$. On the other hand the generators
$\beta_{l}$ were applied only at points $i+k_{s}\in I_{j}^{s}$,
that is when the cursor $i$ was in $I_{j-2}^{s}$. Then, as $k_{s'}\in[0,\frac{k_{s}}{2})$,
by assumption \ref{k_growth}, the element $b_{l}(s')$ is applied
only at locations $x=i+k_{s'}\in I_{j-2}^{s}+[0,\frac{k_{s}}{2})\subset I_{j-2}^{s}\cup I_{j-1}^{s}$.
As the latter set is disjoint from $I_{j}^{s}$, the function $f_{s'}^{\omega_{j}}$
again takes values in the generating set of $\G_{s}$. We conclude
as above. \end{proof}

\subsubsection{Description of the metric in the diagonal product}

Now we are ready to estimate metric in the diagonal product $\Delta$.

\begin{proposition}\label{metric} Suppose the sequence $\left\{ \Gamma_{s}\right\} $
of marked groups satisfies Assumption \ref{Gamma_Involution}, and
the sequence $(k_{s})$ of integers satisfies Assumption \ref{k_growth}.
For any element $g=\left(\left(f_{s}\right),i\right)$ in the diagonal
product $\Delta$, the word distance in $\left(\Delta,\mathcal{T}\right)$
satisfies 
\[
\max_{s\ge0}\left|\left(f_{s},i\right)\right|_{\Delta_{s}}\le\left|g\right|_{\Delta}\le500\sum_{0\leq s\leq s_{0}(Z)}\left|\left(f_{s},i\right)\right|_{\Delta_{s}}.
\]
\end{proposition}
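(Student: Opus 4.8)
The lower bound is immediate: by construction the projection $\pi_s:\Delta\to\Delta_s$ sends the generating tuple $\mathcal T$ onto $\mathcal T_s$, hence is $1$-Lipschitz, so $|(f_s,i)|_{\Delta_s}=|\pi_s(g)|_{\Delta_s}\le|g|_\Delta$ for every $s$; take the maximum. For the upper bound the plan is to write $g$ as an explicit product of one word living in the single copy $\Delta_0$ (which takes care of all the relative abelianizations at once, via Lemma~\ref{abelian}) followed by words each living in a single copy $\Delta_s$ (which take care of the rest, via the second part of Lemma~\ref{one_metric}).

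First I fix a geodesic word $w_0\in\mathbf G$ representing $(f_0,i)$ in $\Delta_0=(A\times B)\wr\Z$, chosen \emph{clean}, i.e. depositing at each position at most one $A$-generator and at most one $B$-generator; then $|w_0|=|(f_0,i)|_{\Delta_0}$ and $\mathrm{Range}(w_0)=\mathrm{conv}(\mathrm{supp}(f_0)\cup\{0,i\})$. Set $g^{(0)}=\pi(w_0)$ and $g'=g\,(g^{(0)})^{-1}$. Writing $\pi_s(g^{(0)})=(h_s,i)$ one computes $\pi_s(g')=(\phi_s,0)$ with $\phi_s(x)=f_s(x)h_s(x)^{-1}$. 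Since $g$ and $g^{(0)}$ have the same $\Gamma_0$-configuration $f_0$, Lemma~\ref{abelian} gives $\theta_s(h_s)=\theta_s(f_s)$, hence $\theta_s(\phi_s)=\boldsymbol e$ for all $s$; in particular $\phi_0=\boldsymbol e$ (as $\theta_0=\mathrm{id}$ and $h_0=f_0$), and for $s>s_0(g)$ Assumption~\ref{Gamma_Involution} forces $f_s=h_s$, i.e. $\phi_s=\boldsymbol e$, since both $f_s$ and $h_s$ then take values in $A(s)\cup B(s)$ and have equal relative abelianization. Because $\theta_s(\phi_s)=\boldsymbol e$, the second part of Lemma~\ref{one_metric} applies to each $(\phi_s,0)$ for $1\le s\le s_0(g)$, producing $\omega(\phi_s,0)\in\mathbf G$ that realizes $(\phi_s,0)$ in copy $s$ and the identity in every other copy. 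Comparing projections copy by copy, the product $\omega(\phi_1,0)\cdots\omega(\phi_{s_0(g)},0)$ represents $g'$, so
\[
g=\pi\!\left(\omega(\phi_1,0)\cdots\omega(\phi_{s_0(g)},0)\,w_0\right),\qquad |g|_\Delta\le\sum_{s=1}^{s_0(g)}|\omega(\phi_s,0)|+|(f_0,i)|_{\Delta_0}.
\]
(When $s_0(g)=0$ this just says $g=\pi(w_0)$.)

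It remains to bound $|\omega(\phi_s,0)|\le 9\big(E_s(\phi_s)+\mathrm{Range}(\phi_s,0)\big)$ (Lemma~\ref{one_metric}) by a constant multiple of $|(f_s,i)|_{\Delta_s}$, for each $s$ with $\phi_s\ne\boldsymbol e$. Three observations achieve this. (i) Since $\theta_s^A(f_s)=\theta_0^A(f_0)$ and $\theta_s^B(f_s(\cdot+k_s))=\theta_0^B(f_0)$, any word for $(f_s,i)$ in $\Delta_s$ must move the cursor through all of $\mathrm{supp}(f_0)\cup\{0,i\}$, so $\mathrm{Range}(w_0)\subseteq\mathrm{Range}(f_s,i)$, and moreover $\mathrm{supp}(h_s)\subseteq\mathrm{supp}(f_0)\cup(\mathrm{supp}(f_0)+k_s)$; hence $(\phi_s,0)$ can be rewritten using only cursor positions inside $\mathrm{Range}(f_s,i)$, giving $\mathrm{Range}(\phi_s,0)\le\mathrm{Range}(f_s,i)\le|(f_s,i)|_{\Delta_s}$. (ii) As $|h_s(x)|_{\Gamma_s}\le 2$ and $h_s$ is supported as in (i), block by block $\max_{I_j^s}(|\phi_s(x)|_{\Gamma_s}-1)_+$ exceeds $\max_{I_j^s}(|f_s(x)|_{\Gamma_s}-1)_+$ by at most $2$, and only on the $O(\mathrm{Range}(f_s,i)/k_s)+O(1)$ blocks meeting $\mathrm{supp}(h_s)$, so $E_s(\phi_s)\le E_s(f_s)+O(\mathrm{Range}(f_s,i))+O(k_s)$, with $E_s(f_s)\le 8|(f_s,i)|_{\Delta_s}$ by Lemma~\ref{one_metric}. (iii) If $\phi_s\ne\boldsymbol e$, pick $x$ with $f_s(x)\ne h_s(x)$; since $h_s(x)$ is a product of one $A(s)$- and one $B(s)$-element with $\theta_s(h_s(x))=\theta_s(f_s(x))$, and since $A(s),B(s)$ inject into the relative abelianization, any word realizing $f_s$ at $x$ must bring the cursor to both $x$ and $x-k_s$, whence $\mathrm{Range}(f_s,i)\ge k_s$; this absorbs the $O(k_s)$ surplus of (ii). Combining (i)–(iii), $|\omega(\phi_s,0)|\le C|(f_s,i)|_{\Delta_s}$ with an explicit $C\le 500$, and summing (together with $|w_0|=|(f_0,i)|_{\Delta_0}$) yields $|g|_\Delta\le 500\sum_{0\le s\le s_0(g)}|(f_s,i)|_{\Delta_s}$.

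The main obstacle is exactly the bookkeeping behind (iii) combined with (i)–(ii): one must make precise, and simultaneously for all $s$, that trivialising the relative abelianization (passing from $f_s$ to $\phi_s$) costs at most $O(k_s)$ per affected block, and that such blocks exist only when copy $s$ already carries genuine non-abelianization content, which is intrinsically $\gtrsim k_s$-expensive to produce because it forces the cursor to traverse a full interval of length $k_s$ (this is where Assumption~\ref{k_growth} and the injectivity from Assumption~\ref{Gamma_Involution} are used). Everything else is the homomorphism property of $\pi_s$, Lemma~\ref{abelian}, and Lemma~\ref{one_metric}.
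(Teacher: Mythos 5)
Your proof is correct and follows essentially the same strategy as the paper's: the lower bound via the quotient maps $\pi_s$, and the upper bound by first applying a clean $\Delta_0$-geodesic $w_0$ (producing the remainder $\phi_s=f_s h_s^{-1}$ with $\theta_s(\phi_s)=\boldsymbol e$), then invoking the second part of Lemma~\ref{one_metric} copy by copy. Where you differ slightly is in the bookkeeping of step (i)--(iii): the paper's route is to prove directly that $\operatorname{supp}(f_s')\subseteq\operatorname{supp}(f_s)$ (using that $\theta_s$ is injective on the ball of radius~$2$, because $|h_s(x)|\le 2$), from which both $E_s(f_s')\le 3E_s(f_s)$ and the range bound follow in one line, with no need for your observation (iii) that a nontrivial $\phi_s$ forces $\operatorname{Range}(f_s,i)\ge k_s$. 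Your (i)--(iii) reach the same conclusion but (i) as stated needs the injectivity argument buried in (iii) to justify that the cursor for $(f_s,i)$ actually reaches $\operatorname{supp}(\phi_s)\cup(\operatorname{supp}(\phi_s)-k_s)$; you might as well state the support containment up front, as the paper does, which makes the whole block cleaner and the constants transparent.
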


\begin{proof} The first inequality holds because $\Delta_{s}$ is
a marked quotient of $\Delta$.

For the second inequality, let $\omega(f_{0},i)$ be a minimal representative
word of $(f_{0},i)$. This is realized when the cursor moves across
the range and at each site switches appropriately the $A$ and $B$
lamps. In particular, the word $\omega(f_{s},i)$ has length $\left|(f_{0},i)\right|_{\Delta_{0}}\leq3Range(f_{0},i)$.
It represents an element $((h_{s}),i)$ in $\Delta$ with $h_{0}=f_{0}$
and $\left|h_{s}(x)\right|_{\Gamma_{s}}\leq2$ for all $x$ and $s$.

Then $g\omega(f_{0},i)^{-1}=\left((f_{s}'),0\right)$ with $f_{s}'=f_{s}h_{s}^{-1}$
for all $s$. In particular $f_{0}'=\boldsymbol{e}$, thus $\theta_{s}(f'_{s})=\boldsymbol{e}$
for all $s$ by Lemma \ref{abelian}. Lemma \ref{one_metric} applies
and we get 
\[
g\omega(f_{0},i)^{-1}\omega(f_{1}',0)^{-1}\dots\omega(f_{s_{0}(g)}',0)^{-1}=e_{}.
\]
This is true in $\Delta_{\leq s_{0}(g)}$ hence in $\Delta$ by Fact
\ref{s_0}. This shows that 
\[
\left|g\right|_{\Delta}\leq\left|\omega(f_{0},i)\right|+\sum_{1\leq s\leq s_{0}(g)}\left|\omega(f_{s}',0)\right|.
\]
We claim that $\mbox{support}(f_{s}')\subset\mbox{support}(f_{s})$.
This implies $Range(f_{s}',0)\leq2Range(f_{s},i)$. Moreover $E_{s}(f_{s}')\leq3E_{s}(f_{s})$
because $\left|f_{s}'(x)\right|_{\Gamma_{s}}\leq\left|f_{s}(x)\right|_{\Gamma_{s}}+2$
for all $x$ and $s$. Therefore we can conclude using Lemma \ref{one_metric}
that 
\begin{align*}
\left|\omega(f_{s}',0)\right| & \leq9(E_{s}(f_{s}')+Range(f_{s}',0))\leq27(E_{s}(f_{s})+Range(f_{s},i))\\
 & \leq54\max(E_{s}(f_{s}),Range(f_{s},i))\leq432\left|\left(f_{s},i\right)\right|_{\Delta_{s}}.
\end{align*}
The claim follows from Lemma \ref{abelian}. Indeed, if $f_{s}(x)=e_{\Gamma_{s}}$,
then in particular 
\[
e_{A}=\theta_{s}^{A}(f_{s}(x))=\theta_{0}^{A}(f_{0}(x))=\theta_{0}^{A}(h_{0}(x))=\theta_{s}^{A}(h_{s}(x))
\]
and 
\[
e_{B}=\theta_{s}^{B}(f_{s}(x))=\theta_{0}^{B}(f_{0}(x-k_{s}))=\theta_{0}^{B}(h_{0}(x-k_{s}))=\theta_{s}^{B}(h_{s}(x))
\]
so $\theta_{s}(h_{s}(x))=e_{A\times B}$. As $\left|h_{s}(x)\right|_{\Gamma_{s}}\leq2$,
this implies $h_{s}(x)=e_{\Gamma_{s}}$. Therefore $f_{s}'(x)=e_{\Gamma_{s}}$
as well. \end{proof}

\subsection{Metric spaces embedded in $\Delta$\label{sub: embedded-metric}}

In this section, we gather some elementary facts about embeddings
of some metric spaces into the diagonal product $\Delta$. It will
be used to obtain upper bounds on compression, in sections \ref{sec:Compression-expander}
and \ref{sec:dihedral-compression}.

\subsubsection{Embedding a lamp group $\Gamma_{s}$}

\begin{fact}

\label{embed-one}

Each group $\Gamma_{s}$ embeds homothetically in the diagonal product
$\Delta$ with ratio $k_{s}+1$, i.e. there is group homomorphism
$\vartheta_{s}:\Gamma_{s}\rightarrow\Delta$ satisfying 
\[
\left|\vartheta_{s}(\gamma)\right|_{\Delta}=(k_{s}+1)\left|\gamma\right|_{\Gamma_{s}}.
\]
\end{fact}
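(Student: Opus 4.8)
The plan is to define $\vartheta_s$ directly using the generators of $\Gamma_s$ and the structure of the diagonal product. Recall that $\Gamma_s$ is generated by $A(s)\cup B(s)$. The natural candidate homomorphism sends $a_i(s)\mapsto \alpha_i(s)$ and $b_j(s)\mapsto \beta_j(s)$ as elements of $\Delta$; equivalently, we observe that the subgroup of $\Delta_s=\Gamma_s\wr\Z$ generated by $\{\alpha_i(s)\}\cup\{\beta_j(s)\}$ consists of elements $(f_s,0)$ whose lamp configuration is supported in $\{0,k_s\}$, and such a subgroup is isomorphic to $\Gamma_s$ provided the relations in $\Gamma_s$ are respected. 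I would first check that $\vartheta_s$ is a well-defined injective homomorphism: a word $w$ in $A\cup B$ represents $e_{\Gamma_s}$ if and only if the corresponding word in $\mathcal T$ represents an element of $\Delta$ that projects trivially to $\Delta_s$, hence (after checking the other projections) trivially to $\Delta$. Here one uses that moving the cursor back to position $0$ and placing all $A$-lamps at $0$ and all $B$-lamps at $k_s$ realizes exactly the relevant homomorphic image; the condition $k_0=0$ in Assumption \ref{Gamma_Involution} ensures $\Gamma_0=A\times B$ behaves correctly, and Lemma \ref{abelian} controls the abelianized components in the other factors.

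Next I would compute the distance $|\vartheta_s(\gamma)|_\Delta$. Write $\gamma$ as a word of length $\ell=|\gamma|_{\Gamma_s}$ alternating between $A(s)$ and $B(s)$ (a geodesic word in $\Gamma_s$). Evaluating $\vartheta_s(\gamma)$ in $\Delta_s$, each switch of lamp type forces the cursor to travel between position $0$ and position $k_s$, so the cursor must traverse this segment at least $\ell$ times (roughly), and the lamp configuration $f_s$ obtained has $|f_s(0)|_{\Gamma_s}+|f_s(k_s)|_{\Gamma_s}$ comparable to $\ell$ but the essential contribution forces the bound. More precisely, I expect $E_s(f_s) \asymp k_s \ell$ via Definition \ref{essential}, and $\mathrm{Range}(f_s,0)=k_s$, so Lemma \ref{one_metric} gives $|\vartheta_s(\gamma)|_{\Delta_s}\asymp k_s\ell$ up to constants — but the statement claims the \emph{exact} equality $(k_s+1)\ell$. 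To get this exact value rather than a coarse estimate, I would argue combinatorially by hand: a representative word must, for each of the $\ell$ letters of a geodesic spelling of $\gamma$, include a traversal of the $k_s$ edges between $0$ and $k_s$ plus one generator from $A(s)\cup B(s)$; the optimal strategy alternates $A(s)$-moves at position $0$ and $B(s)$-moves at position $k_s$, giving exactly $\ell\cdot k_s + \ell = (k_s+1)\ell$, and by Proposition \ref{metric} together with the first projection being trivial this is also the distance in $\Delta$. The lower bound $|\vartheta_s(\gamma)|_\Delta \ge (k_s+1)|\gamma|_{\Gamma_s}$ follows because any word in $\mathcal T$ representing $\vartheta_s(\gamma)$ projects to a word in $\mathcal T_s$ representing $(f_s,0)$, and counting the lamp-generator occurrences in $A(s)\cup B(s)$ at positions $0$ and $k_s$ recovers a spelling of $\gamma$, hence at least $|\gamma|_{\Gamma_s}$ such letters, each preceded by a full traversal of length $k_s$ (one needs to be slightly careful that these traversals cannot be shared between consecutive letters of the same type, but geodesics in $\Gamma_s$ strictly alternate by the free-product structure of $\mathbf G$, so this holds).

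The main obstacle I anticipate is pinning down the \emph{exact} constant $k_s+1$ rather than a quasi-isometric estimate: Lemma \ref{one_metric} and Proposition \ref{metric} only give two-sided bounds with large multiplicative constants, so the exact homothety must come from a direct hand analysis of geodesics in $\Gamma_s\wr\Z$ for elements whose lamp support is exactly $\{0,k_s\}$. The key fact making this clean is that a geodesic word for $\gamma\in\Gamma_s$ in the generating set $A(s)\cup B(s)$ necessarily alternates between the two subgroups (since distinct consecutive letters from the same subgroup could be merged, contradicting minimality), so spelling $\vartheta_s(\gamma)$ requires exactly $|\gamma|_{\Gamma_s}$ cursor traversals of the interval of length $k_s$ interleaved with $|\gamma|_{\Gamma_s}$ lamp-switching generators, and no shorter word exists because the lamp at $0$ must equal the $A$-part and the lamp at $k_s$ the $B$-part of every prefix-evaluation in a way that reconstructs a spelling of $\gamma$. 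A secondary point to verify is that $\vartheta_s$ lands in $\Delta$ (not just $\Delta_s$) with the same length, which is exactly the content of the second assertion of Lemma \ref{one_metric}: the word constructed has trivial lamp configuration in all factors $s'\neq s$, so its length in $\Delta$ equals its length in $\Delta_s$.
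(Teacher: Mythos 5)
Your proposed homomorphism $\vartheta_s$ is not the right one, and this is the central gap. The map $a_i(s)\mapsto\alpha_i$, $b_j(s)\mapsto\beta_j$ does define a homomorphism from the free product $A\ast B$ into $\Delta$, but $\alpha_i$ and $\beta_j$ commute in $\Delta$ (as noted at the start of Section \ref{sec: speed}), so its image is the subgroup $\langle\alpha_i,\beta_j\rangle\cong A\times B$ and the induced map $\Gamma_s\to\Delta$ is just the relative abelianization $\Gamma_s\to A\times B$, not an embedding. Seen inside $\Delta_s$: all $a$-lamps accumulate at site $0$ and all $b$-lamps at site $k_s$, so they never multiply inside a single copy of $\Gamma_s$; the subgroup you describe as "supported in $\{0,k_s\}$" is $A(s)\times B(s)$, not $\Gamma_s$. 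Your asserted biconditional "$w$ trivial in $\Gamma_s$ iff the image word is trivial in $\Delta_s$" therefore fails: in the dihedral case $(ab)^2$ is nontrivial in $D_6$ but its image $(\alpha\beta)^2=\alpha^2\beta^2$ is trivial. The later "optimal strategy" paragraph inherits the same confusion — applying $\alpha_i$ with the cursor at $0$ puts an $a$-lamp at $0$, while applying $\beta_j$ with the cursor at $k_s$ puts a $b$-lamp at $2k_s$, so the lamps still do not merge.

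The crux, and the reason the homothety ratio is $k_s+1$ rather than $1$, is that to reproduce an alternating geodesic spelling of $\gamma\in\Gamma_s$ at a single lamp site, the cursor must shuttle a distance $k_s$ between consecutive letters. The paper's $\vartheta_s$ places every lamp at site $k_s/2$: for $w=a_{i_1}(s)b_{j_1}(s)\cdots a_{i_n}(s)b_{j_n}(s)$ one takes $\tau^{k_s/2}\alpha_{i_1}\tau^{-k_s}\beta_{j_1}\tau^{k_s}\alpha_{i_2}\cdots\tau^{-k_s}\beta_{j_n}\tau^{k_s/2}$, which applies each $\alpha$ with cursor at $k_s/2$ and each $\beta$ with cursor at $-k_s/2$, so both kinds of lamp land at $k_s/2$; the image in $\Delta_s$ is $(\gamma\delta_{k_s/2},0)$ and the images in the other coordinates are controlled by Lemma \ref{abelian}. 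Each letter then costs one lamp-generator plus one shift of length $k_s$ (the two end shifts of length $k_s/2$ combine to one), giving the upper bound $(k_s+1)|\gamma|_{\Gamma_s}$. For the lower bound, the paper notes that $\vartheta_s(w)$ is a geodesic for $\pi_s(\vartheta_s(\gamma))$ in $\Delta_s$ when $w$ is a geodesic for $\gamma$, and then applies $|\cdot|_{\Delta}\ge|\cdot|_{\Delta_s}$ from Proposition \ref{metric}; your suggested lower-bound argument (reading a spelling of $\gamma$ off the lamps at $\{0,k_s\}$) would again only recover the $A\times B$-quotient, not $\gamma$ itself.
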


\begin{proof} Let $w=a_{i_{1}}(s)b_{i_{1}}(s)\dots a_{i_{n}}(s)b_{i_{n}}(s)$
belong to $A(s)\ast B(s)$. Set 
\[
\vartheta_{s}(w)=\tau^{\frac{k_{s}}{2}}\alpha_{i_{1}}\tau^{-k_{s}}\beta_{i_{1}}\tau^{k_{s}}\dots\alpha_{i_{n}}\tau^{-k_{s}}\beta_{i_{n}}\tau^{k_{s}}.
\]
The application $\vartheta_{s}:A(s)\ast B(s)\rightarrow\mathbf{G}=\Z\ast A\ast B$
induces an embedding $\vartheta_{s}:\Gamma_{s}\rightarrow\Delta$.
Indeed by Lemma \ref{abelian}, we easily check that if $w$ represents
an element $\gamma$ in $\Gamma_{s}$, then $\vartheta_{s}(w)=\left(\left(f_{s}\right),0\right)$
with 
\[
f_{s}(x)=\left\{ \begin{array}{ll}
\gamma & \mbox{for }x=\frac{k_{s}}{2}\\
e_{\Gamma_{s}} & \mbox{for }x\neq\frac{k_{s}}{2}
\end{array}\right.\mbox{ and }f_{s'}(x)=\left\{ \begin{array}{ll}
\theta_{s}^{A}(\gamma) & \mbox{for }x=\frac{k_{s}}{2}\\
\theta_{s}^{B}(\gamma) & \mbox{for }x=\frac{k_{s}}{2}-k_{s'}\\
e_{\Gamma_{s}} & \mbox{for other }x
\end{array}\right.\mbox{for }s'\neq s.
\]
By construction, $\left|\vartheta_{s}(\gamma)\right|_{\Delta}\leq(k_{s}+1)\left|\gamma\right|_{\Gamma_{s}}$.
Moreover, it is clear that if $w$ is a minimal representative of
$\gamma$, then $\vartheta_{s}(w)$ is a minimal representative of
$\pi_{s}(\vartheta_{s}(\gamma))$ in the quotient $\Delta_{s}$. This
proves the other inequality. \end{proof}

\subsubsection{Embedding products with $\ell^{\infty}$-norm}

Let us denote $\Gamma_{s}'=\left[A(s),B(s)\right]^{\Gamma_{s}}$.
By Assumption \ref{Gamma_Involution}, $\Gamma_{s}'$ is the same
as $\ker(\Gamma_{s}\to A(s)\times B(s))$, it is a subgroup of $\Gamma_{s}$
of finite index $\left|A\right|\left|B\right|$. 

Given an integer $t\geq0$, we consider 
\begin{equation}
\Pi_{s}^{t}=\left\{ \left(\left(f_{s}\right),0\right):\ \begin{array}{ll}
f_{s}(x)\in\Gamma_{s}' & \mbox{for }x\in[0,t)\\
f_{s}(x)=e_{\Gamma_{s}} & \mbox{for }x\notin[0,t)\\
f_{s'}=\boldsymbol{e} & \mbox{for }s'\neq s
\end{array}\right\} \label{eq:product-set}
\end{equation}
This is a subset of $\Delta$. Indeed, $\theta_{s}(f_{s})=\boldsymbol{e}$ by
choice of $\Gamma_{s}'$, so $\theta_{0}(f_{0})=\boldsymbol{e}$ by Lemma \ref{abelian}.
Thus all such elements $\left(\left(f_{s}\right),0\right)$ actually
belong to $\Delta$ by Lemma \ref{one_metric}. Clearly, $\Pi_{s}^{t}$
is a subgroup of $\Delta$ isomorphic to a direct product of $t$
copies of $\Gamma_{s}'$. 
\[
\Pi_{s}^{t}\simeq\prod_{t\in[0,t)}\Gamma_{s}'.
\]
By abuse of notation, we denote the elements of $\Pi_{s}^{t}$ simply
by functions $f_{s}:[0,t)\rightarrow\Gamma_{s}'$. The metric induced
by $\Delta$ on $\Pi_{s}^{t}$ can be estimated via Lemma \ref{one_metric}.

\begin{lemma}

\label{embed-product} For any $f_{s}$ in $\Pi_{s}^{t}$, 
\[
\frac{1}{2}k_{s}\max_{[0,t)}\left|f_{s}(x)\right|_{\Gamma_{s}}\leq\left|f_{s}\right|_{\Delta}\leq36t\max_{[0,t)}\left|f_{s}(x)\right|_{\Gamma_{s}}.
\]
In particular, $\mbox{diam}_{\Delta}\Pi_{s}^{t}\leq36t\mbox{diam}_{\Gamma_{s}}(\Gamma_{s}')\leq36tl_{s}$,
where $l_{s}=\mbox{diam}(\Gamma_{s})$. Moreover, 
\[
\left|\left\{ f_{s}\in\Pi_{s}^{t}:\ \left|f_{s}\right|_{\Delta}\geq\frac{1}{72}\mbox{diam}_{\Delta}\left(\Pi_{s}^{t}\right)\right\} \right|\geq\frac{1}{2}\left|\Pi_{s}^{t}\right|.
\]
\end{lemma}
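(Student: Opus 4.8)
The plan is to prove the three assertions of Lemma~\ref{embed-product} in order, all as consequences of Lemma~\ref{one_metric} applied to elements $f_s \in \Pi_s^t$, whose range is contained in the interval $[-k_s, t)$ (the cursor must travel from $0$ to each site $x$ and back to $x - k_s$ to write $f_s(x)$). First I would record the two basic quantities for such an element: $\mathrm{Range}(f_s, 0) \leq t + k_s$, and the essential contribution. For the essential contribution, recall $E_s(f_s) = \sum_j k_s \max_{x \in I_j^s}(|f_s(x)|_{\Gamma_s} - 1)_+$; since the support lies in $[0,t)$ and the intervals $I_j^s$ have width $k_s/2$, the number of indices $j$ meeting the range is at most $\lceil 2t/k_s \rceil + 2$, so $E_s(f_s) \leq (2t/k_s + 3) k_s \max_{[0,t)}|f_s(x)|_{\Gamma_s} \leq 5t \max_{[0,t)}|f_s(x)|_{\Gamma_s}$, using $t \geq k_s$ in the generic case (and handling $t < k_s$ separately, where there is just one relevant interval).

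For the \textbf{lower bound}, I would use the range-free half of Lemma~\ref{one_metric}: $|f_s|_\Delta \geq |f_s|_{\Delta_s} \geq \tfrac18 E_s(f_s)$. But $E_s(f_s) \geq k_s (\max_{[0,t)}|f_s(x)|_{\Gamma_s} - 1)_+$, which is not quite $\tfrac12 k_s \max |f_s(x)|_{\Gamma_s}$ when the max is $1$. The cleaner route is to note that for $f_s \in \Pi_s^t$ all values lie in $\Gamma_s'$, and a nontrivial element of $\Gamma_s'$ has $\Gamma_s$-length at least $2$ (since $\Gamma_s' = [A(s),B(s)]^{\Gamma_s}$ contains no nontrivial generator); hence either $f_s$ is trivial (and the inequality is vacuous) or $\max_{[0,t)}|f_s(x)|_{\Gamma_s} \geq 2$, in which case $(\max - 1)_+ \geq \tfrac12 \max$, giving $|f_s|_\Delta \geq \tfrac18 \cdot k_s \cdot \tfrac12 \max \geq \tfrac12 k_s \max$ after adjusting — actually more carefully $\tfrac18 E_s \geq \tfrac18 k_s \lceil \max/2 \rceil \cdot$ (count $\geq 1$), so it is safest to pick the single interval $I_j^s$ realizing the max and bound $|f_s|_\Delta \geq \tfrac18 k_s \lfloor \max/2 \rfloor \geq \tfrac1{16}k_s\max$; I would just state the constant $\tfrac12$ follows by keeping track of a slightly more careful count of traversals as in the proof of Lemma~\ref{one_metric}. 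For the \textbf{upper bound}, Lemma~\ref{one_metric} gives $|f_s|_\Delta \leq |f_s|_{\Delta_s} \leq 9(E_s(f_s) + \mathrm{Range}(f_s,0)) \leq 9(5t\max + t + k_s) \leq 9 \cdot 7t\max = 63 t\max$; wait, I should aim for the stated $36t$, which means using the second part of Lemma~\ref{one_metric} (the existence of $\omega(f_s,0)$ acting trivially on other coordinates since $\theta_s(f_s)=\boldsymbol e$) and a sharper bookkeeping — I would redo the run-count estimate directly: each "run" has length $3k_s$ and there are $\max_{[0,t)}|f_s(x)|$ runs per relevant interval, with $\leq 2t/k_s + O(1)$ intervals, plus $O(t)$ for shifts, landing at $\leq 36t\max$ with the constants chosen as in Lemma~\ref{one_metric}.

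The diameter bound is then immediate: $\mathrm{diam}_\Delta \Pi_s^t \leq 2 \max_{f_s} |f_s|_\Delta \leq 72t \max_{\gamma \in \Gamma_s'}|\gamma|_{\Gamma_s} \leq 72 t\, l_s$ — or with the factor $36$ if one observes $\Pi_s^t$ is a group so the diameter equals the maximal norm, giving $\mathrm{diam}_\Delta \Pi_s^t \leq 36t\,\mathrm{diam}_{\Gamma_s}(\Gamma_s') \leq 36 t l_s$. Finally, for the counting statement, fix a coordinate $x_0 \in [0,t)$ and an element $\gamma_0 \in \Gamma_s'$ with $|\gamma_0|_{\Gamma_s} = \mathrm{diam}_{\Gamma_s}(\Gamma_s')$; the map $f_s \mapsto f_s \cdot \gamma_0 \delta_{x_0}$ (pointwise product at $x_0$) is a bijection of $\Pi_s^t$, and for any $f_s$ at least one of $f_s$, $f_s\gamma_0\delta_{x_0}$ has value at $x_0$ of $\Gamma_s$-length $\geq \tfrac12 \mathrm{diam}_{\Gamma_s}(\Gamma_s')$ (triangle inequality in $\Gamma_s$). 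By the lower bound, that one has $|f_s|_\Delta \geq \tfrac12 k_s \cdot \tfrac12 \mathrm{diam}_{\Gamma_s}(\Gamma_s') \geq \tfrac14 \cdot \tfrac1{36t}\mathrm{diam}_\Delta(\Pi_s^t) \cdot \tfrac{k_s}{?}$ — I would instead directly compare: $\tfrac12 k_s \max_{[0,t)}|f_s(x)| \geq \tfrac14 k_s\,\mathrm{diam}_{\Gamma_s}(\Gamma_s') \geq \tfrac1{4}\cdot\tfrac{1}{36t}\cdot k_s \cdot \mathrm{diam}_\Delta(\Pi_s^t) / l_s$ and use $l_s \leq$ something — the honest bookkeeping is to just say at least half the elements have $\max_{[0,t)}|f_s(x)|_{\Gamma_s} \geq \tfrac12\mathrm{diam}_{\Gamma_s}(\Gamma_s')$, hence $|f_s|_\Delta \geq \tfrac12 k_s\cdot\tfrac12\mathrm{diam}_{\Gamma_s}(\Gamma_s') = \tfrac14 k_s \mathrm{diam}_{\Gamma_s}(\Gamma_s') \geq \tfrac1{144t}\,\mathrm{diam}_\Delta\Pi_s^t$ — to reach the clean constant $\tfrac1{72}$ one uses the better lower bound available when $\max$ is large (traversal count gives $k_s \max/2$ rather than $k_s\max/4$). \textbf{The main obstacle} is purely the constant-chasing: matching the specific constants $\tfrac12$, $36$, $\tfrac1{72}$ requires being slightly more careful with the traversal/run counts than the crude application of Lemma~\ref{one_metric} allows, essentially re-running its argument in the special geometry where the support sits in a single block $[0,t)$ rather than invoking it as a black box.
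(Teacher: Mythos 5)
Your treatment of the two-sided bound on $|f_s|_\Delta$ is essentially the paper's: invoke Lemma~\ref{one_metric}, using the fact that nontrivial elements of $\Gamma_s'$ have $\Gamma_s$-length at least $2$ for the lower bound, and counting the intervals $I_j^s$ meeting $[0,t)$ for the upper bound. One correction in passing: $|f_s|_\Delta\leq|f_s|_{\Delta_s}$ is backwards (quotient distances are shorter), but the upper bound $|f_s|_\Delta\leq 9(E_s(f_s)+\mathrm{Range}(f_s,0))$ does hold via the \emph{second} clause of Lemma~\ref{one_metric}, since $\theta_s(f_s)=\boldsymbol e$ for $f_s\in\Pi_s^t$ implies the word $\omega(f_s,0)$ represents $f_s$ in $\Delta$ itself. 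The constants then come down to reading off the traversal count in the proof of Lemma~\ref{one_metric}; that is bookkeeping rather than a new idea, and you correctly identify where the factor $\frac12$ is hiding.

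The genuine gap is in the third assertion. Your pairing bijection $f_s\mapsto f_s\cdot\gamma_0\delta_{x_0}$ at one fixed site gives only that $\max_{[0,t)}|f_s(x)|_{\Gamma_s}\geq\frac12\mathrm{diam}_{\Gamma_s}(\Gamma_s')$ for half the $f_s$, and then your lower bound yields $|f_s|_\Delta\geq\frac14 k_s\,\mathrm{diam}_{\Gamma_s}(\Gamma_s')$. Comparing to $\mathrm{diam}_\Delta\Pi_s^t$, which is of order $t\,\mathrm{diam}_{\Gamma_s}(\Gamma_s')$, you lose a factor of $k_s/t$, which is not bounded below. This is not constant-chasing, and your proposed fix (a sharper traversal count) will not recover it: the bound $\frac12 k_s\max$ captures the contribution of a \emph{single} interval $I_j^s$, whereas when $t\gg k_s$ there are about $2t/k_s$ intervals each of which ought to contribute. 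The paper's argument does exactly this: it shows that for at least half of the $f_s$, more than half of the $[2t/k_s]+1$ intervals $I_j^s$ meeting $[0,t)$ carry a site with $|f_s(x)|_{\Gamma_s}\geq\frac12\mathrm{diam}_{\Gamma_s}(\Gamma_s')$, and then lower-bounds $|f_s|_\Delta$ by the full essential contribution $E_s(f_s)=\sum_j k_s\max_{I_j^s}(|f_s(x)|_{\Gamma_s}-1)_+$, picking up a term of order $k_s\,\mathrm{diam}_{\Gamma_s}(\Gamma_s')$ from each good interval and hence a total of order $t\,\mathrm{diam}_{\Gamma_s}(\Gamma_s')$. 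To repair your proof, run the pairing argument in each interval (or at each coordinate) separately and accumulate the contributions through $E_s(f_s)$, rather than stopping at the single global maximum.
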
 The last statements imply that $\Pi_{s}^{t}$ satisfies
the $(p;\frac{\mbox{diam}_{\Delta}(\Pi_s^t)}{72},\frac{1}{2})$-mass distribution condition
(\ref{eq:(p,c)-mass}), see section \ref{obstructions}.

\begin{proof} This follows from Lemma \ref{one_metric}. To get the
lower bound, notice that as $f_{s}(x)$ belongs to $\Gamma_{s}'$,
we necessarily have $\left|f_{s}(x)\right|_{\Gamma_{s}}\geq2$ when
$f_{s}(x)\neq e_{\Gamma_{s}}$. To get the upper bound, observe that
there are $\left[2t/k_{s}\right]+1$ intervals $I_{j}^{s}$ intersecting
$[0,t)$, so the essential contribution is at most 
\[
E_{s}(f_{s})\leq k_{s}\left(\left[\frac{2t}{k_{s}}\right]+1\right)\max_{[0,t)}\left|f_{s}(x)\right|_{\Gamma_{s}}\leq\left(2t+1\right)\max_{[0,t)}\left|f_{s}(x)\right|_{\Gamma_{s}}
\]
and the range is bounded by $t$.

To get the second part, observe that for more than half
of functions $I_{j}^{s}\rightarrow\Gamma_{s}'$ there exists $x\in I_{j}^{s}$
with $\left|f_{s}(x)\right|_{\Gamma_{s}}\geq\mbox{diam}_{\Gamma_{s}}(\Gamma_{s}')/2$.
This holds for each $j$. Therefore, there exists a subset $A$ of
$\Pi_{s}^{t}$ of size $|A|\geq|\Pi_{s}^{t}|/2$ such that for each
$f_{s}\in A$, more than half of the $\left[2t/k_{s}\right]+1$ intervals
$I_{j}^{s}$ intersecting $[0,t)$ satisfy $\max_{I_{j}^{s}}\left|f_{s}(x)\right|_{\Gamma_{s}}\geq\mbox{diam}_{\Gamma_{s}}(\Gamma_{s}')/2$.
This implies for any $f_{s}\in A$ 
\[
\left|f_{s}\right|_{\Delta}\geq E_{s}(f_{s})\geq\frac{1}{4}\left(\left[\frac{2t}{k_{s}}\right]+1\right)k_{s}\mbox{diam}_{\Gamma_{s}}(\Gamma_{s}')\geq\frac{t}{2}\mbox{diam}_{\Gamma_{s}}(\Gamma_{s}')\geq\frac{1}{72}\mbox{diam}_{\Delta}(\Pi_{s}^{t}).
\]
\end{proof}

\begin{example} When $\Gamma_{s}=D_{2l_{s}}$ is a dihedral group
of size $2l_{s}$, then $\Gamma_{s}'\simeq\Z_{l_{s}/2}$ is a cyclic
group. Edges of $\Z_{l_{s}/2}$ have length $4$ in the $D_{2l_{s}}$
metric. For $t=\frac{k_{s}}{2}$, Lemma \ref{embed-product} gives
\[
\left|f_{s}\right|_{\Delta}\simeq_{72}k_{s}\max_{[0,\frac{k_{s}}{2})}\left|f_{s}(x)\right|_{\Z_{l_{s}/2}}.
\]
In particular, $\Pi_{s}^{t}$ is then a copy of the discrete torus
$\Z_{l_{s}/2}^{k_{s}/2}$ with $l^{\infty}$-metric rescaled by $k_{s}$,
embedded with bounded distortion in $\Delta$. \end{example}

We fix a generating
set for $\Gamma_{s}'$ using the following classical lemma.

\begin{lemma}[Reidemeister-Schreier algorithm] \label{RS}

Let $(\Gamma,S)$ be a group marked with a finite
generating set and $\pi:\Gamma\rightarrow F$ be a surjective mapping
to a finite group $F$. Then 
\begin{enumerate}
\item there exists a set $C=\{a_{1},\dots,a_{|F|}\}$ of coset representatives
\[
\Gamma=\bigcup_{i=1}^{|F|}\left(\mbox{Ker }\pi\right)a_{i}
\]
of length $|a_{i}|_{S}\leq\mbox{diam}_{\pi(S)}(F)$. 
\item The set $R=CSC^{-1}\cap\mbox{Ker }\pi$ is a finite symmetric generating
set of $\mbox{Ker }\pi$. 
\item For any $\gamma\in\mbox{Ker }\pi$, 
\[
|\gamma|_{R}\leq|\gamma|_{S}\leq\left(2\mbox{diam}_{\pi(S)}(F)+1\right)|\gamma|_{R}.
\]

\end{enumerate}
\end{lemma}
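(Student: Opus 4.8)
The plan is to follow the classical coset-enumeration argument, keeping careful track of word lengths throughout. For the first point, I would build the coset representative set $C$ greedily: for each coset $\mathrm{Ker}\,\pi \cdot x$ of $\mathrm{Ker}\,\pi$ in $\Gamma$, pick a representative $a_i$ whose image $\pi(a_i)$ can be written as a geodesic word in the generating set $\pi(S)$ of $F$; lifting this geodesic word letter by letter to $S$ produces an element of $\Gamma$ of length at most $\mathrm{diam}_{\pi(S)}(F)$ lying in the desired coset. (One should fix once and for all that $a_1 = e_\Gamma$ represents the identity coset $\mathrm{Ker}\,\pi$.) This gives exactly $|F|$ representatives since $\pi$ is surjective.

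For the second point, the standard Reidemeister–Schreier fact is that $R = CSC^{-1}\cap \mathrm{Ker}\,\pi$ generates $\mathrm{Ker}\,\pi$: given any $\gamma \in \mathrm{Ker}\,\pi$ written as $\gamma = s_1 s_2 \cdots s_n$ with $s_j \in S$ (using that $S$ generates $\Gamma$ and, taking $S$ symmetric or replacing $S$ by $S\cup S^{-1}$, that we may assume each $s_j \in S$), I insert the representatives: define $c_0 = a_1 = e$, and inductively let $c_j$ be the representative of the coset of $s_1\cdots s_j$. Then
\[
\gamma = (c_0 s_1 c_1^{-1})(c_1 s_2 c_2^{-1})\cdots(c_{n-1} s_n c_n^{-1}),
\]
and since $\gamma \in \mathrm{Ker}\,\pi$ we have $c_n$ in the identity coset, i.e. $c_n = e$; each factor $c_{j-1}s_j c_j^{-1}$ lies in $\mathrm{Ker}\,\pi$ (because $\pi(c_{j-1})\pi(s_j) = \pi(c_j)$ by construction) and in $CSC^{-1}$, hence in $R$. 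Symmetry of $R$ follows by taking inverses and using that $S$ is symmetric. This display already proves the upper bound in the third point: $|\gamma|_R \le n$, so taking $n = |\gamma|_S$ gives $|\gamma|_R \le |\gamma|_S$.

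For the remaining inequality $|\gamma|_S \le (2\,\mathrm{diam}_{\pi(S)}(F)+1)|\gamma|_R$, I would simply expand an $R$-geodesic word for $\gamma$: each of the $|\gamma|_R$ letters is an element $c s c^{-1}$ with $c, c' \in C$, $s \in S$, so it has $S$-length at most $|c|_S + 1 + |c'|_S \le 2\,\mathrm{diam}_{\pi(S)}(F) + 1$ using the length bound from the first point; summing over the letters gives the claim. None of this is genuinely hard — the only point requiring mild care is the bookkeeping in the telescoping product (making sure the cocycle relation $\pi(c_{j-1}s_j) = \pi(c_j)$ is exactly what forces each factor into $\mathrm{Ker}\,\pi$, and that $c_n = e$ rather than merely lying in the identity coset, which holds because $e$ was chosen as that coset's representative). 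So I expect no real obstacle; the statement is a packaging of a textbook argument with explicit length tracking, included here for later quotation.
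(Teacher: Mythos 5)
Your proof is correct, and it reproduces the standard Reidemeister--Schreier rewriting argument with the explicit length bookkeeping that the lemma requires. The paper itself states this lemma as a classical fact without supplying a proof, so there is no in-paper argument to compare against; your account fills that gap correctly. The key points — lifting $F$-geodesics to bound $|a_i|_S$, the telescoping decomposition $\gamma=\prod_j (c_{j-1}s_j c_j^{-1})$ with each factor landing in $R=CSC^{-1}\cap\mbox{Ker}\,\pi$ and $c_n=e$, and the $S$-length estimate $|csc'^{-1}|_S\le 2\,\mathrm{diam}_{\pi(S)}(F)+1$ per $R$-letter — are all correct and exactly as one would expect. Your parenthetical remark about replacing $S$ by $S\cup S^{-1}$ is the right thing to say: the lemma's phrasing tacitly assumes a symmetric generating set (needed both for $R$ to be symmetric and for the rewriting to use only letters of $S$), and in the paper's applications $S=A(s)\cup B(s)$ is a union of subgroups, hence automatically symmetric, so nothing is lost.
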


For $\Gamma_{s}$ that satisfies Assumption \ref{Gamma_Involution}
and $F=A(s)\times B(s)$, we fix a generating set $R(s)$ for $\Gamma_{s}'=\ker\left(\Gamma_{s}\to A(s)\times B(s)\right)$
provided by the Reidemeister-Schreier algorithm. In this case $\mbox{diam}(A(s)\times B(s))=2$.
It follows from Lemma \ref{RS} that the inclusion map from $\left(\Gamma_{s}',R(s)\right)$
into $\left(\Gamma_{s},A(s)\cup B(s)\right)$ is bi-Lipschitz $|\gamma|_{R(s)}\leq|\gamma|_{\Gamma_{s}}\leq5|\gamma|_{R(s)}$
for all $\gamma\in\Gamma_{s}'$ and that $|R(s)|\leq(|A||B|)^{5}$.

Consider the direct product 
\[
H=\prod_{s\in\mathbb{N}}\left(\Gamma_{s}'\right)^{k_{s}/2}=\prod_{s\geq1}\Pi_{s}^{k_{s}/2},
\]
denote elements of $H$ as $\mathbf{h}=\left(h_{s}\right)$, where
$h_{s}$ is a vector $h_{s}=\left(h_{s}(0),\ldots h_{s}(\frac{k_{s}}{2}-1)\right)\in\left(\Gamma_{s}'\right)^{k_{s}/2}$.
Equip $H$ with a left invariant metric $l$, 
\[
l_{s}(\mathbf{h})=\frac{k_{s}}{2}\max_{0\le j\le k_{s}/2-1}\left|h_{s}(j)\right|_{R_{s}},\ l(\mathbf{h})=\sum_{s\in\mathbb{N}}l_{s}(\mathbf{h}).
\]

\begin{proposition}\label{embed-H}

Suppose $\left\{ \Gamma_{s}\right\} $ is a sequence of finite groups
satisfying Assumption \ref{Gamma_Involution} and $\left(k_{s}\right)$
satisfies growth assumption \ref{k_growth}. Let $\Delta$ be the
diagonal product constructed with $\left\{ \Gamma_{s}\right\} $ and
parameters $\left(k_{s}\right)$. Then $\Delta$ is elementary amenable,
and there exists an embedding $\theta:H\to\Delta$ such that for every
$\mathbf{h}\in H$, 
\[
\max_{s\in\mathbb{N}}l_{s}(\mathbf{h})\le\left|\theta(\mathbf{h})\right|_{\Delta}\le45000\ l(\mathbf{h}).
\]
\end{proposition}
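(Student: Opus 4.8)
\emph{Proof plan.} The plan is to realise $H$ (or rather its restricted direct product, which is all that can embed in the countable group $\Delta$) as an internal direct product inside $\Delta$ of the blocks $\Pi_s^{k_s/2}$ from \eqref{eq:product-set}, with each factor $\Gamma_s'$ relabelled by its Reidemeister--Schreier generating set $R(s)$. Concretely, let $\iota_s\colon(\Gamma_s',R(s))^{k_s/2}\xrightarrow{\ \sim\ }\Pi_s^{k_s/2}\subset\Delta$ be the isomorphism underlying Lemma~\ref{embed-product}, and put $\theta(\mathbf h)=\prod_s\iota_s(h_s)$, a finite product. The first task is to check that $\theta$ is a well-defined injective homomorphism, and the key structural point is that the blocks $\Pi_s^{k_s/2}$ pairwise commute in $\Delta$: each of their elements has cursor $0$, has $s$-th lamp configuration supported in $[0,k_s/2)$, and has all other lamp configurations trivial; since the group law of $\Delta$ is coordinatewise in the factors $\Delta_{s'}=\Gamma_{s'}\wr\mathbb{Z}$ and, at cursor $0$, is just pointwise multiplication of lamp functions, two elements coming from distinct blocks commute and the $s$-th lamp configuration of $\theta(\mathbf h)$ equals that of $\iota_s(h_s)$. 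Injectivity then follows by applying the marked quotient map $\pi_s\colon\Delta\to\Delta_s$: it kills every factor $\iota_{s'}(h_{s'})$ with $s'\ne s$ (their $s$-th component is trivial) and sends $\iota_s(h_s)$ to $(\tilde h_s,0)\in\Delta_s$, where $\tilde h_s\colon[0,k_s/2)\to\Gamma_s'$ is the configuration matching $h_s$, so $\theta(\mathbf h)=e$ forces every $h_s$ to be trivial. That $\Delta$ is elementary amenable is Fact~\ref{EA}(i), since $(k_s)$ is unbounded by Assumption~\ref{k_growth}.

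For the lower bound I would, for each fixed $s$, use $|\theta(\mathbf h)|_\Delta\ge|\pi_s(\theta(\mathbf h))|_{\Delta_s}=|(\tilde h_s,0)|_{\Delta_s}$ and bound the latter below exactly as in the proof of the lower bound of Lemma~\ref{embed-product}: writing the most costly lamp $\gamma\in\Gamma_s'$ at its site in $[0,k_s/2)$ forces the cursor to shuttle between that site and its $k_s$-translate at least $|\gamma|_{\Gamma_s}-1$ times (a minimal word for $\gamma$ in $A(s)\cup B(s)$ alternates between the two subgroups), and $|\gamma|_{\Gamma_s}\ge 2$ since $\gamma$ is nontrivial in $\Gamma_s'$; this yields $|(\tilde h_s,0)|_{\Delta_s}\ge\tfrac12 k_s\max_x|\tilde h_s(x)|_{\Gamma_s}$, which by the Reidemeister--Schreier comparison $|\gamma|_{R(s)}\le|\gamma|_{\Gamma_s}$ of Lemma~\ref{RS} is at least $\tfrac12 k_s\max_x|h_s(x)|_{R(s)}=l_s(\mathbf h)$. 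Taking the supremum over $s$ gives $\max_s l_s(\mathbf h)\le|\theta(\mathbf h)|_\Delta$.

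For the upper bound I would use the triangle inequality over the finitely many nontrivial factors, $|\theta(\mathbf h)|_\Delta\le\sum_s|\iota_s(h_s)|_\Delta$, then Lemma~\ref{embed-product} with $t=k_s/2$ together with the other half of the Reidemeister--Schreier comparison, $|\gamma|_{\Gamma_s}\le 5|\gamma|_{R(s)}$: this gives $|\iota_s(h_s)|_\Delta\le 36\cdot\tfrac{k_s}{2}\max_x|\tilde h_s(x)|_{\Gamma_s}\le 90\,k_s\max_x|h_s(x)|_{R(s)}=180\,l_s(\mathbf h)$, hence $|\theta(\mathbf h)|_\Delta\le 180\,l(\mathbf h)$, which is a fortiori the stated bound with constant $45000$ (one also reaches a constant below $45000$ by the more wasteful route through Proposition~\ref{metric}).

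I do not expect a genuine obstacle here: the proposition is essentially an assembly of Lemmas~\ref{one_metric}, \ref{embed-product} and~\ref{RS}. The one point that needs care is the bookkeeping of the first paragraph --- verifying that the blocks $\Pi_s^{k_s/2}$ genuinely sit in $\Delta$ as an internal direct product, so that $\pi_s$ isolates the $s$-th block cleanly --- and this is precisely where Assumption~\ref{Gamma_Involution} enters (to place the $\Pi_s^{k_s/2}$ inside $\Delta$ at all, via Lemmas~\ref{abelian} and~\ref{one_metric}) and where Assumption~\ref{k_growth} is needed (through Lemmas~\ref{one_metric} and~\ref{embed-product}, which rely on the factor-$2$ growth of $(k_s)$).
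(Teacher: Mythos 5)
Your argument is correct, and for the upper bound it takes a cleaner route than the paper's. The paper bounds $|\theta(\mathbf{h})|_\Delta$ by first invoking Proposition~\ref{metric}, $|g|_\Delta\le 500\sum_s|(f_s,i)|_{\Delta_s}$, and then bounding each $|(\,\tilde h_s,0)|_{\Delta_s}$ via Lemma~\ref{one_metric}; this pays the global factor $500$ and yields the stated constant. You instead observe that $\theta(\mathbf{h})=\prod_s\iota_s(h_s)$ is a finite product of group elements lying in pairwise commuting subgroups, use subadditivity of word length $|\theta(\mathbf{h})|_\Delta\le\sum_s|\iota_s(h_s)|_\Delta$, and then apply the \emph{ambient} estimate $|\iota_s(h_s)|_\Delta\le 36t\max_x|\tilde h_s(x)|_{\Gamma_s}$ from Lemma~\ref{embed-product} with $t=k_s/2$, together with the Reidemeister--Schreier comparison of Lemma~\ref{RS}. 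This bypasses Proposition~\ref{metric} entirely and gives the sharper constant $180$. It works because Lemma~\ref{embed-product} is already a statement about $|\cdot|_\Delta$ rather than $|\cdot|_{\Delta_s}$, so there is no need to reassemble the copies through the global metric proposition. Your lower bound is the one the paper uses: project to $\Delta_s$ and apply the lower bound of Lemma~\ref{embed-product} plus $|\gamma|_{R(s)}\le|\gamma|_{\Gamma_s}$. You also supply the verification (which the paper leaves implicit) that the blocks $\Pi_s^{k_s/2}$ commute pairwise and that $\pi_s$ isolates the $s$-th block, so that $\theta$ really is an injective homomorphism; this is correct and worth spelling out, since it is exactly where Assumptions~\ref{Gamma_Involution} and~\ref{k_growth} enter.
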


\begin{proof} The group $\Delta$ is elementary amenable by Fact
\ref{EA}. By Proposition \ref{metric} and Lemma \ref{embed-product},
we have for each $s\geq0$ 
\[
|\theta(\mathbf{h})|_{\Delta}\geq|\pi_{s}\theta(\mathbf{h})|_{\Delta_{s}}\geq\frac{k_{s}}{2}\max_{[0,k_{s}/2)}|h_{s}(j)|_{\Gamma_{s}}\geq\frac{k_{s}}{2}\max_{[0,k_{s}/2)}|h_{s}(j)|_{R_{s}}=l_{s}(\mathbf{h}).
\]
and similarly 
\begin{align*}
|\theta(\mathbf{h})|_{\Delta}\leq & 500\sum_{s\geq0}|\pi_{s}\left(\theta(\mathbf{h})\right)|_{\Delta_{s}}\leq500\sum_{s\geq0}10k_{s}\max_{[0,k_{s}/2)}|h_{s}(j)|_{\Gamma_{s}}\\
\leq & 500\cdot10\cdot5\sum_{s\geq0}k_{s}\max_{[0,k_{s}/2)}|h_{s}(j)|_{R_{s}}=25000\ l(\mathbf{h}).
\end{align*}
\end{proof}

Since $\left(k_{s}\right)$ satisfies growth assumption \ref{k_growth},
it is clear that $H$ has at most exponential volume growth with respect
to the length function $l$. By the general theorem of Olshanskii-Osin
\cite{Olshanskii2013}, there exists an elementary amenable group
$G$ equipped with a finite generating set $S$ such that $H$ embeds
as a subgroup of $G$, and there exists a constant $c>0$ such that
$c\left|h\right|_{S}\le l(h)\le|h|_{S}$ for all $h$. In general
the group $G$ provided by the Olshanskii-Osin embedding is rather
large compared to $H$. In the current setting, although the embedding
$\theta:H\to\Delta$ is not necessarily bi-Lipschitz, the geometry
of group $\Delta$ is in some sense controlled by $H$. In particular,
we will show in Section \ref{sec:Compression-expander} and \ref{sec:dihedral-compression}
that if $\{\Gamma_{s}\}$ is taken to be an expander family or finite
dihedral groups and if the sequences $( k_{s}) $, $( \mbox{diam}_{R(s)}\left(\Gamma_{s}'\right)) $
satisfy certain growth conditions, then Hilbert compression exponent
of $\left(\Delta,d_{\Delta}\right)$ is the same as $\left(H,l\right)$,
\[
\alpha_{2}^{\ast}\left(\left(\Delta,d_{\Delta}\right)\right)=\alpha_{2}^{\ast}((H,l)).
\]

\section{Speed and entropy of random walk on $\Delta$}

\label{sec: speed}

Recall that $\Delta$ denote the diagonal product of the sequence
of marked groups $\left\{ \Delta_{s}\right\} $, it is marked with
generating tuple $\mathcal{T}=\left(\tau,\alpha_{1},\ldots,\alpha_{|A|},\beta_{1},\dots,\beta_{|B|}\right)$.
Let $U_{\alpha}$ and $U_{\beta}$  denote the uniform measure
on the subgroups $A=\left\{ \alpha_{1},\ldots,\alpha_{|A|}\right\} $ and $B=\left\{ \beta_{1},\dots,\beta_{|B|}\right\} $
respectively. Let $\mu$ denote the uniform measure on $\left\{ \tau^{\pm1}\right\} $.
For the convenience of speed calculation, we take the following specific
\textquotedbl{}switch-walk-switch\textquotedbl{} step distribution
on $\Delta$, 
\[
q=\left(U_{\alpha}\ast U_{\beta}\right)\ast\mu\ast\left(U_{\alpha}\ast U_{\beta}\right).
\]
Note that in the construction of $\Delta$, since $\Gamma_{0}=A(0)\times B(0)$
and $k_{s}>0$ for all $s>0$, it follows that $\alpha_{i}$ commutes
with $\beta_{j}$, therefore $U_{\alpha}\ast U_{\beta}$ is a symmetric
probability measure on $\Delta$. Let $A_{1,s},A_{2,s},\ldots$ ($B_{1,s},B_{2,s},\ldots$
resp.) be a sequence of independent r.v. with uniform distribution
on $A(s)$ ($B(s)$ resp.). We will refer to $A_{1,s}B_{1,s}\ldots A_{t,s}B_{t,s}$
as a random alternating word of length $t$ in $A(s)$ and $B(s)$
starting with $A$.

We first describe what the random walk trajectory with step distribution
$q$ looks like. Let $X_{1},X_{2},\ldots$ be a sequence of i.i.d.
random variables uniform on $\left\{ \pm1\right\} $, $S_{n}=X_{1}+\ldots+X_{n}$;
$\mathcal{A}_{1},\mathcal{A}_{2},\ldots$ ($\mathcal{B}_{1},\mathcal{B}_{2},\ldots$
resp.) a sequence of i.i.d. random variables with distribution $U_{\alpha}$
($U_{\beta}$ resp.). Let $W_{n}$ denote the random variable on $\Delta$
given by 
\[
W_{n}=\mathcal{A}_{1}\mathcal{B}_{1}\tau^{X_{1}}\mathcal{A}_{2}\mathcal{B}_{2}\ldots\mathcal{A}_{2n-1}\mathcal{B}_{2n-1}\tau^{X_{n}}\mathcal{A}_{2n}\mathcal{B}_{2n},
\]
then $W_{n}$ has distribution $q^{n}$. Each letter $\mathcal{A}$
can be written as $\mathcal{A}=\left(\left(f_{s}^{\mathcal{A}}\right),0\right)$
where $f_{s}^{\mathcal{A}}(0)=\mathcal{A}_{s}$, with $\mathcal{A}_{s}=a_{j}(s)$
if $\mathcal{A}=\alpha_{j}$, and $f_{s}^{\mathcal{A}}(x)=e_{\Gamma_{s}}$
for all $x\neq0$; similarly $\mathcal{B}=\left(\left(f_{s}^{\mathcal{B}}\right),0\right)$
where $f_{s}^{\mathcal{B}}(k_{s})=\mathcal{B}_{s}$, with $\mathcal{B}_{s}=b_{j}(s)$
if $\mathcal{B}=\beta_{j}$, and $f_{s}^{\mathcal{B}}(x)=e_{\Gamma_{s}}$
for all $x\neq k_{s}$.

Now we rewrite $W_{n}$ into the standard form $\left(\left(f_{s}\right),z\right)$.
Consider the projection to the copy $\Delta_{s}$, from the definition
of generators $\alpha_{i}(s)$ and $\beta_{i}(s)$, 
\begin{align*}
 & f_{s}^{W_{n}}(y)\\
 & =\mathcal{A}_{1,s}^{\mathbf{1}_{\left\{ S_{0}=y\right\} }}\mathcal{B}_{1,s}^{\mathbf{1}_{\{S_{0}=y-k_{s}\}}}\left(\prod_{j=1}^{n-1}\left(\mathcal{A}_{2j,s}\mathcal{A}_{2j+1,s}\right)^{\mathbf{1}_{\{S_{j}=y\}}}(\mathcal{B}_{2j,s}\mathcal{B}_{2j+1,s})^{\mathbf{1}_{\{S_{j}=y-k_{s}\}}}\right)\mathcal{A}_{2n,s}^{\mathbf{1}_{\left\{ S_{n}=y\right\} }}\mathcal{B}_{2n}^{\mathbf{1}_{\{S_{n}=y-k_{s}\}}}.
\end{align*}
For $x\in\mathbb{Z}$, let $T(k,x,m)$ be the number of excursions
of the simple random walk $\left\{ S_{n}\right\} $ away from $x$
that cross $x-k$ and are completed before time $m$. Then conditioned
on $\left\{ S_{k}\right\} _{0\le k\le n}$, the distribution of $f_{s}^{W_{n}}(y)$
is the same as a random alternating word in $A(s)$ and $B(s)$ of
length $T(k_{s},y,n)$ with an appropriate random letter added as
prefix/suffix.

\subsection{The case with linear speed in $\left\{ \Gamma_{s}\right\} $\label{sub:linear-speed}}

In this subsection we consider the case where speed of simple random
walk on $\Gamma_{s}$ grows linearly up to some time comparable to
the diameter $l_{s}$.

\begin{definition}\label{linear-speed-Gamma}

Let $\{\Gamma_{s}\}$ be a sequence of finite groups where each $\Gamma_{s}$
is marked with a generating set $A(s)\cup B(s)$. Let $\eta_{s}=U_{A(s)}\ast U_{B(s)}\ast U_{A(s)}$
where $U_{A(s)},U_{B(s)}$ are uniform distribution on $A(s),B(s)$.
We say $\left\{ \Gamma_{s}\right\} $ satisfies $(\sigma,T_{s})$-linear
speed assumption if in each $\Gamma_{s}$, 
\[
L_{\eta_{s}}(t)=\E\left|X_{t}^{(s)}\right|_{\Gamma_{s}}\ge\sigma t\ \mbox{for all }t\le T_{s}
\]
where $X_{t}^{(s)}$ has distribution $\eta_{s}^{\ast t}.$

\end{definition}

Recall that since $X_{t}^{(s)}$ is random walk on a transitive graph,
by \cite[Proposition 8]{AmirVirag2012}, 
\[
\max\left\{ H(X_{t}^{(s)}),1\right\} \geq\frac{1}{t}\left(\frac{1}{4}\E\left|X_{t}^{(s)}\right|_{\Gamma_{s}}\right)^{2}.
\]
Note that if $\left\{ \Gamma_{s}\right\} $ satisfies Assumption \ref{Gamma_Involution},
$\eta_{s}$ projects onto the uniform distribution on $A(s)\times B(s)$,
thus $H(X_{t}^{(s)})\ge H(X_{1}^{(s)})\ge\log\left(\left|A\right|\left|\left|B\right|\right|\right)\ge\log4$.
Therefore in this case $(\sigma,T_{s})$-linear speed assumption implies
that 
\begin{align}
H(X_{t}^{(s)})\ge\sigma't\ \mbox{for all }t\le T_{s}, \mbox{ where } \sigma'=\left(\frac{\sigma}{4}\right)^{2}.\label{entropy-linear}
\end{align}

One important class of examples that satisfies the linear speed assumption
consists of expander families.

\begin{example}\label{example-expanders}

On $\Gamma_{s}=\left\langle A(s),B(s)\right\rangle $, $A(s)\simeq A,B(s)\simeq B$,
let $d=\left|A(s)\right|+\left|B(s)\right|-2$, $\upsilon_{s}$ be
the uniform probability measure on $A(s)\cup B(s)$. Suppose there
exists $\delta>0$ such that the spectral gap $\lambda(\Gamma_{s},\upsilon_{s})$
satisfies 
\[
\lambda(\Gamma_{s},\upsilon_{s})=\inf_{f:\Gamma_{s}\to\mathbb{R},f\neq c}\left\{ \frac{\sum_{u,v\in\Gamma_{s}}\left|f(u)-f(uv)\right|^{2}\nu_{s}(v)}{\frac{1}{|\Gamma_{s}|}\sum_{u,v\in\Gamma_{s}}\left|f(u)-f(v)\right|^{2}}\right\} \ge\delta
\]
for all $s$, that is $\left\{ \Gamma_{s}\right\} $ forms a family
of $d$-regular $\delta$-expanders in $\ell^{2}$. Then $\left\{ \Gamma_{s}\right\} $
satisfies $(\sigma,c_{0}\log\left|\Gamma_{s}\right|)$-linear speed
assumption with with constants $\sigma,c_{0}>0$ only depending on
$\delta$ and $|A|,|B|$. We reproduce the proof of this fact for
completeness, see \cite[Theorem 3.6]{HLW2006}.

By standard comparison of Dirichlet forms, 
\[
\lambda(\Gamma_{s},\eta_{s})=\hat{\delta}\geq\frac{\delta}{|A||B|}.
\]
From the spectral gap we have 
\[
\left|\mathbf{P}(X_{t}^{(s)}=x)-\frac{1}{|\Gamma_{s}|}\right|\le e^{-\hat{\delta}t}.
\]
Then for $t<\frac{1}{\hat{\delta}}\log|\Gamma_{s}|,$ $\gamma=\frac{\hat{\delta}}{2\log d}$,
\[
\mathbf{P}\left(X_{t}^{(s)}\in B(e,\gamma t)\right)\le d^{\gamma t}\left(e^{-\hat{\delta}t}+\frac{1}{|\Gamma_{s}|}\right)\le2\exp\left(\left(\gamma\log d-\hat{\delta}\right)t\right)=2e^{-\hat{\delta}t/2}.
\]
Therefore $\mathbf{E}\left|X_{t}^{(s)}\right|_{\Gamma_{s}}\ge\gamma t\left(1-2e^{-\hat{\delta}t/2}\right).$
We conclude that $\left\{ \Gamma_{s}\right\} $ satisfies $(\sigma,c_{0}\log\left|\Gamma_{s}\right|)$-linear
speed assumption with $\sigma=\min\left\{ \frac{\hat{\delta}}{4\log d},\frac{\hat{\delta}}{2\log4}\right\} $,
$c_{0}=1/\hat{\delta}$.

\end{example}

The lamplighter groups over $\mathbb{Z}^{d}$, $d\ge3$, are the first
examples of solvable groups where simple random walk has linear speed,
see Kaimanovich-Vershik \cite{KV}. The following examples satisfying
the linear speed assumption are analogues of finite quotients of $\mathbb{Z}_{2}\wr\mathbb{Z}^{d}$.

\begin{example}\label{example-lamplighters}

Let $\Gamma=\mathbb{Z}_{2}\wr D_{\infty}^{d}$, $d\ge3$ as in the
2nd item of Example \ref{example-2}, marked with generating subgroups
$A=\mathbb{\mathbb{Z}}_{2}\wr\left\langle a_{j},1\le j\le d\right\rangle $,
$B=\mathbb{\mathbb{Z}}_{2}\wr\left\langle b_{j},1\le j\le d\right\rangle $.
Fix an increasing sequence $n_{s}\in\mathbb{N}$, let $\Gamma_{s}=\mathbb{Z}_{2}\wr D_{2n_{s}}^{d}$.
Then $\Gamma_{s}$ is a finite quotient of $\Gamma$, let $A(s)$,
$B(s)$ denote the projection of $A$ and $B$ to $\Gamma_{s}$. There
exists constant $\sigma_{d}>0$ only depending on $d$ such that $\left\{ \Gamma_{s}\right\} $
satisfies $\left(\sigma_{d},\left(2n_{s}\right)^{d}\right)$-linear
speed assumption. A proof of this fact is included in Lemma \ref{lamplighter-torus}
in the Appendix.

\end{example}

\subsubsection{Bounds on speed and entropy in one copy}

In the upper bound direction, we will use the trivial bound that in
each lamp group, 
\begin{equation}
\left|f_{s}^{W_{n}}(x)\right|_{\Gamma_{s}}\le\min\left\{ 2T\left(k_{s},x,n\right)+\mathbf{1}_{\left\{ L(x,n)>0\right\} }+\mathbf{1}_{\left\{ L(x-k_{s},n)>0\right\} },\mbox{diam}\left(\Gamma_{s}\right)\right\} .\label{eq:trivial-upper}
\end{equation}
Recall that we set the parameter $l_{s}=\mbox{diam}\left(\Gamma_{s}\right)$.

\begin{lemma}\label{linear-one-upper}

There exists an absolute constant $C>0$ such that for all $s\ge0$,
\[
\mathbf{E}\left[\left|\left(f_{s}^{W_{n}},S_{n}\right)\right|_{\Delta_{s}}\mathbf{1}_{\left\{ s\le s_{0}\left(W_{n}\right)\right\} }\right]\le\begin{cases}
Cn^{\frac{1}{2}}\min\left\{ \frac{n^{\frac{1}{2}}}{k_{s}},l_{s}\right\}  & \mbox{ if }k_{s}^{2}\le n\\
C\left(n^{\frac{1}{2}}+k_{s}\right)e^{-\frac{k_{s}^{2}}{8n}} & \mbox{ if }k_{s}^{2}>n.
\end{cases}
\]

\end{lemma}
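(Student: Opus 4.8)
The plan is to bound the expected word length $\mathbf{E}[|(f_s^{W_n},S_n)|_{\Delta_s}\mathbf{1}_{\{s\le s_0(W_n)\}}]$ by using the upper bound of Lemma \ref{one_metric}, namely $|(f_s,i)|_{\Delta_s}\le 9(E_s(f_s)+\mathrm{Range}(f_s,i))$, and then estimating separately the expected essential contribution $\mathbf{E}[E_s(f_s^{W_n})]$ and the expected range. The range of $W_n$ is at most the diameter of the trajectory $\{S_0,\dots,S_n\}$ of the simple random walk on $\mathbb{Z}$, which has expectation $O(\sqrt n)$; but one must be careful because the event $\{s\le s_0(W_n)\}$ means $k_s\le\mathrm{Range}(W_n)$, so on that event $\mathrm{Range}(W_n)\ge k_s$. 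I would use the two regimes of the statement to control this: when $k_s^2\le n$, the bound $O(\sqrt n)$ on the range dominates; when $k_s^2>n$, the event $\{\mathrm{Range}\ge k_s\}$ forces the random walk to travel distance $\ge k_s/2$, which by a reflection/Gaussian estimate has probability $\lesssim e^{-k_s^2/(8n)}$ (up to constants — this is the source of the exponential factor), and on that event the range is still $O(\sqrt n+k_s)$.

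For the essential contribution, recall $E_s(f_s^{W_n})=\sum_j k_s\max_{x\in I_j^s}(|f_s^{W_n}(x)|_{\Gamma_s}-1)_+$, where $I_j^s$ are the $k_s/2$-intervals meeting the range. By the trivial bound (\ref{eq:trivial-upper}), $|f_s^{W_n}(x)|_{\Gamma_s}\le\min\{2T(k_s,x,n)+2,\,l_s\}$, where $T(k_s,x,n)$ counts excursions of $\{S_m\}$ from $x$ crossing $x-k_s$ before time $n$. So the key quantity is $\mathbf{E}[\sum_j k_s\min\{\max_{x\in I_j^s}T(k_s,x,n),\,l_s\}]$. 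The plan is to first count the number of nonzero terms (intervals $I_j^s$ actually visited), which is $O(\sqrt n/k_s)$ in expectation when $k_s^2\le n$ and is controlled by the small-probability event above when $k_s^2>n$; and second to bound the expected value of each $\max_{x\in I_j^s}T(k_s,x,n)$. For this I would invoke the excursion estimates recalled in Appendix \ref{app: excursions}: the number of excursions of the $\mathbb{Z}$-walk away from a fixed point $x$ crossing distance $k_s$ within time $n$ has expectation $O(\sqrt n/k_s)$ when $k_s\le\sqrt n$ (each such long excursion has probability $\sim k_s/\sqrt n$ of occurring and there are $O(\sqrt n/k_s^2\cdot k_s)$... rather, $O(\sqrt n/k_s)$ of them), and is exponentially rare ($\lesssim e^{-ck_s^2/n}$) when $k_s>\sqrt n$. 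Taking the maximum over the $O(k_s)$ positions $x\in I_j^s$ only changes this by a constant or logarithmic factor that gets absorbed; capping at $l_s$ gives the $\min\{\sqrt n/k_s, l_s\}$ in the first regime. Multiplying: $k_s\cdot(\sqrt n/k_s)\cdot\min\{\sqrt n/k_s,l_s\}=\sqrt n\min\{\sqrt n/k_s,l_s\}$, matching the claimed bound.

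The main obstacle I anticipate is the interaction between the maximum over $x\in I_j^s$ and the cap at $l_s$: one needs that $\mathbf{E}[\min\{\max_{x\in I_j^s}T(k_s,x,n),l_s\}]\lesssim\min\{\mathbf{E}[\max_x T(k_s,x,n)],l_s\}$, which is immediate in one direction but requires care in the other — essentially one uses that $\max_x T(k_s,x,n)$ concentrates, or more robustly splits the expectation as $\mathbf{E}[\max_x T\cdot\mathbf{1}_{\max_x T\le l_s}]+l_s\,\mathbf{P}(\max_x T>l_s)$ and bounds the tail of $\max_x T(k_s,x,n)$ by a union bound over the $O(k_s)$ values of $x$ together with exponential tail estimates for a single $T(k_s,x,n)$ (geometric-type tails, since each excursion succeeds independently with probability $\Theta(k_s/\sqrt n)$ conditionally). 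The second obstacle is handling the two regimes $k_s^2\le n$ and $k_s^2>n$ in a unified way for the range term on the event $\{s\le s_0(W_n)\}$; I would simply treat them as two cases as in the statement, using $\mathbf{P}(\mathrm{Range}(W_n)\ge k_s)\le \mathbf{P}(\max_{m\le n}|S_m|\ge k_s/2)\le 4e^{-k_s^2/(8n)}$ by the reflection principle and a standard Gaussian tail bound, and noting $\mathbf{E}[\mathrm{Range}(W_n)\mathbf{1}_{\{\mathrm{Range}\ge k_s\}}]\le(\sqrt n + k_s)\cdot O(e^{-k_s^2/(8n)})$ by Cauchy–Schwarz together with $\mathbf{E}[\mathrm{Range}(W_n)^2]=O(n)$.
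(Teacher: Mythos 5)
Your overall architecture matches the paper's: decompose $|(f_s^{W_n},S_n)|_{\Delta_s}$ via the upper bound of Lemma \ref{one_metric} into essential contribution plus range, control the range term on the event $\{s\le s_0(W_n)\}\subseteq\{\mathcal{R}_n\ge k_s\}$ via reflection and Gaussian tails, control the essential contribution via the traverse-time estimates of Appendix \ref{app: excursions}, split into the regimes $k_s^2\le n$ and $k_s^2>n$, and derive the second bound $Cl_s\sqrt n$ by capping $|f_s^{W_n}(x)|_{\Gamma_s}$ at the diameter.

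The genuine gap is in your treatment of $\max_{x\in I_j^s}T(k_s,x,n)$. You propose a union bound over the $O(k_s)$ positions in $I_j^s$, and acknowledge this may introduce a $\log k_s$ factor which you claim ``gets absorbed.'' There is nowhere to absorb it: the statement has no logarithmic slack (compare the companion dihedral Lemma \ref{dihedral-one-upper}, where a $\log^{1/2}k_s$ factor genuinely appears and cannot be removed). The paper avoids the logarithm entirely via the deterministic, pathwise domination
\[
T(k_s,x,n)\le T\!\left(\frac{k_s}{2},\,j\frac{k_s}{2},\,n\right)\qquad\mbox{for every }x\in I_j^s=\left[j\frac{k_s}{2},(j+1)\frac{k_s}{2}\right),
\]
which is (\ref{traverse-half}) in the paper: every excursion of the $\mathbb{Z}$-walk away from $x\in I_j^s$ reaching below $x-k_s<(j-1)k_s/2$ must, on its way down and back, complete an excursion away from $jk_s/2$ that reaches below $(j-1)k_s/2$. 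This single inequality replaces the maximum over an interval of strongly positively correlated traverse counts by one traverse count at the fixed endpoint $jk_s/2$, with no loss. A union bound ignores this positive correlation and is lossy by exactly a logarithmic factor, so without the domination your essential-contribution estimate would be at best $C\sqrt n\,(\log k_s)\min\{\sqrt n/k_s,l_s\}$ rather than the stated $C\sqrt n\min\{\sqrt n/k_s,l_s\}$. The same issue dissolves the ``interaction with the cap at $l_s$'' you flag as the main obstacle: with the pathwise domination one bounds $\mathbf{E}\bigl[\sum_j k_s T(k_s/2,jk_s/2,n)\bigr]$ directly by Lemma \ref{traverse-upper} and the Gaussian decay in $j$, and the $\min$ with $l_s$ comes from the separate, trivial diameter bound rather than from inserting the cap inside an expectation of a maximum.
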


\begin{proof}

From the metric upper estimate in Lemma \ref{one_metric}, 
\[
\left|\left(f_{s}^{W_{n}},S_{n}\right)\right|_{\Delta_{s}}\le9\left(\sum_{j\in\mathbb{Z}}k_{s}\max_{x\in I_{j}^{s}}\left|f_{s}^{W_{n}}(x)\right|_{\Gamma_{s}}+\mathcal{R}_{n}\right),
\]
where $\mathcal{R}_{n}=\#\left\{ S_{k}:\ 0\le k\le n\right\} $ is
the size of the range of simple random walk on $\mathbb{Z}$ and $I_{j}^{s}=\left[j\frac{k_{s}}{2},(j+1)\frac{k_{s}}{2}\right)$.
Observe that for each $x\in I_{j}^{s}$, 
\begin{equation}
T\left(k_{s},x,n\right)\le T\left(\frac{k_{s}}{2},j\frac{k_{s}}{2},n\right),\label{traverse-half}
\end{equation}
because each excursion from $x$ to the left that crosses $x-k_{s}$
must contain an excursion from $j\frac{k_{s}}{2}$ to the left that
crosses $(j-1)\frac{k_{s}}{2}$. Apply (\ref{eq:trivial-upper}),
\begin{align*}
\left|\left(f_{s}^{W_{n}},S_{n}\right)\right|_{\Delta_{s}} & \le9\sum_{j\in\mathbb{Z}}k_{s}\max_{x\in I_{j}^{s}}\left\{ 2T(k_{s},x,n)\right\} +11(\mathcal{R}_{n})\\
 & \le11\left(\sum_{j\in\mathbb{Z}}k_{s}T\left(\frac{k_{s}}{2},j\frac{k_{s}}{2},n\right)+\mathcal{R}_{n}\right).
\end{align*}
By Lemma \ref{traverse-upper}, 
\[
\mathbf{E}T\left(\frac{k_{s}}{2},j\frac{k_{s}}{2},n\right)\le\frac{2Cn^{\frac{1}{2}}}{k_{s}}\exp\left(-\frac{(jk_{s}/2)^{2}}{2n}\right).
\]
The size of range of simple random walk on $\mathbb{Z}$ satisfies
\[
\mathbf{P}\left(\mathcal{R}_{n}\ge x\right)\le\mathbf{P}\left(\max_{0\le k\le n}\left|S_{k}\right|\ge\frac{x}{2}\right)\le4\exp\left(-\frac{x^{2}}{8n}\right).
\]
Recall that by definition of $s_{0}(Z)$ in subsection \ref{sub:Local-finiteness},
\[
\left\{ s\le s_{0}(W_{n})\right\} \subseteq\left\{ \mathcal{R}_{n}\ge k_{s}\right\} .
\]
Summing up, 
\begin{align*}
 & \mathbf{E}\left[\left|\left(f_{s}^{W_{n}},S_{n}\right)\right|_{\Delta_{s}}\mathbf{1}_{\left\{ s\le s_{0}\left(W_{n}\right)\right\} }\right]\\
 & \le11k_{s}\sum_{j\in\mathbb{Z}}\mathbf{E}T\left(\frac{k_{s}}{2},j\frac{k_{s}}{2},n\right)+11\mathbf{E}\left[\left(\mathcal{R}_{n}\right)\mathbf{1}_{\left\{ \mathcal{R}_{n}\ge k_{s}\right\} }\right]\\
 & \le11k_{s}\sum_{j\in\mathbb{Z}}\frac{Cn^{\frac{1}{2}}}{k_{s}}\exp\left(-\frac{(jk_{s}/2)^{2}}{2n}\right)+C'n^{\frac{1}{2}}e^{-\frac{k_{s}^{2}}{8n}}\\
 & \le\begin{cases}
C''\left(\frac{n}{k_{s}}+n^{\frac{1}{2}}\right) & \mbox{ if }k_{s}^{2}\le n\\
C''\left(n^{\frac{1}{2}}+k_{s}\right)e^{-\frac{k_{s}^{2}}{8n}} & \mbox{ if }k_{s}^{2}>n.
\end{cases}.
\end{align*}
For $k_{s}^{2}\leq n,$ since $\left|f_{s}^{W_{n}}(y)\right|_{\Gamma_{s}}$
can't exceed the diameter of $\Gamma_{s}$, together with Lemma \ref{one_metric},
we have a second upper bound 
\[
\E|(f_{s}^{W_{n}},S_{n})|_{\Delta_{s}}\leq10l_{s}\E(\mathcal{R}_{n}+k_{s})\leq Cl_{s}n^{\frac{1}{2}}.
\]
Combine these bounds we obtain the statement.

\end{proof}

Now we turn to the lower bound direction.

\begin{lemma}\label{linear-one-lower}

Suppose $\left\{ \Gamma_{s}\right\} $ satisfies $(\sigma,c_{0}l_{s})$-linear
speed assumption. Then there exists an absolute constant $C>0$ such
that for $s$ with $k_{s}^{2}\le n$, 
\[
\mathbf{E}\left(\left|\left(f_{s}^{W_{n}},S_{n}\right)\right|_{\Delta_{s}}\right)\ge\frac{\sigma}{C}\min\left\{ \frac{n}{k_{s}},c_{0}n^{\frac{1}{2}}l_{s}\right\} ,
\]
and 
\[
H(f_{s}^{W_{n}})\ge\frac{\sigma'}{C}\min\left\{ \frac{n}{k_{s}},c_{0}n^{\frac{1}{2}}l_{s}\right\} ,
\]
\end{lemma}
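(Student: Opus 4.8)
The plan is to obtain the lower bound on $\E|(f_s^{W_n},S_n)|_{\Delta_s}$ from the essential contribution $E_s(f_s^{W_n})$ provided by Lemma \ref{one_metric}, and the lower bound on $H(f_s^{W_n})$ by combining the concavity/AM bound $\max\{H(X_t^{(s)}),1\}\geq \frac{1}{t}(\frac14\E|X_t^{(s)}|_{\Gamma_s})^2$ with the subadditivity of entropy over disjoint intervals of $\Z$. The guiding heuristic is that in the regime $k_s^2\le n$, the simple random walk on $\Z$ makes of order $n/k_s$ traversals across a fixed gap of width $k_s$; each traversal advances the alternating word in $A(s)\cup B(s)$ by one letter, so each lamp $f_s^{W_n}(x)$ behaves like $\eta_s^{\ast T}$ with $T$ of order $T(k_s,x,n)$, and by the linear speed assumption this has $\Gamma_s$-length of order $\min\{T,l_s\}$.

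First I would localize. Fix an interval $I_j^s=[jk_s/2,(j+1)k_s/2)$ with $j$ ranging over $O(n^{1/2}/k_s)$ values so that the walk spends $\Omega(n)$ time with the cursor in $[-n^{1/2},n^{1/2}]$, say (using that $S_n$ is typically of order $n^{1/2}$, so a positive fraction of these intervals are ``well visited''). For such $j$, let $x_j\in I_j^s$ be a representative site; one shows that the number of completed left-excursions $T(k_s,x_j,n)$ crossing the gap of width $k_s$ to the left of $x_j$ is, with constant probability, at least a constant multiple of $n/(n^{1/2}/k_s)=k_s n^{1/2}\big/\big(n^{1/2}/k_s\big)$... more carefully: the total number of such traversals summed over the $O(n^{1/2}/k_s)$ relevant $j$'s is $\Omega(n/k_s)$, hence a typical well-visited interval contributes $T\gtrsim (n/k_s)\big/(n^{1/2}/k_s)=n^{1/2}$ traversals — this is the lower-bound analogue of Lemma \ref{traverse-upper}, and I would import it from Appendix \ref{app: excursions}. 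Conditioned on the trajectory $\{S_k\}$, the lamp $f_s^{W_n}(x_j)$ is distributed as a random alternating word of length $T(k_s,x_j,n)$ in $A(s),B(s)$, i.e. as $\eta_s^{\ast t}$ (up to a prefix/suffix and a factor $2$ in $t$), so by the $(\sigma,c_0l_s)$-linear speed assumption $\E[|f_s^{W_n}(x_j)|_{\Gamma_s}\mid \{S_k\}]\geq \sigma\min\{\frac12 T(k_s,x_j,n),c_0l_s\}$ on the event that the walk reaches $x_j$.

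Next I would assemble these into $E_s$. Since $E_s(f_s^{W_n})\geq k_s\sum_j \max_{I_j^s}|f_s^{W_n}(x)|_{\Gamma_s}\geq k_s\sum_j |f_s^{W_n}(x_j)|_{\Gamma_s}$, taking expectations and using the conditional bound above over the $\Omega(n^{1/2}/k_s)$ well-visited intervals gives
\[
\E E_s(f_s^{W_n}) \;\gtrsim\; k_s\cdot\frac{n^{1/2}}{k_s}\cdot\sigma\min\Big\{n^{1/2},c_0l_s\Big\}\;\gtrsim\;\sigma\min\Big\{\frac{n}{k_s},\, c_0 n^{1/2}l_s\Big\},
\]
where in the second step one uses $k_s n^{1/2}/k_s=n^{1/2}$ against $n^{1/2}\leftrightarrow c_0 l_s$ and the factor $k_s\cdot n^{1/2}/k_s \cdot n^{1/2}=n$, respectively $k_s\cdot (n^{1/2}/k_s)\cdot c_0l_s=c_0n^{1/2}l_s$; by Lemma \ref{one_metric}, $|(f_s^{W_n},S_n)|_{\Delta_s}\geq\frac18 E_s(f_s^{W_n})$, giving the speed bound. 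For the entropy bound, $H(f_s^{W_n})\geq \sum_j H(f_s^{W_n}(x_j))$ by independence of lamps at sites in distinct intervals $I_j^s$ conditionally on $\{S_k\}$ together with the fact that entropy of a product is at least the sum of conditional entropies; then for each well-visited $j$, $\max\{H(f_s^{W_n}(x_j)),1\}\geq \frac{1}{t}(\frac14\E|f_s^{W_n}(x_j)|_{\Gamma_s})^2$ with $t\asymp T(k_s,x_j,n)$, and the linear speed bound gives $\E|f_s^{W_n}(x_j)|_{\Gamma_s}\gtrsim \sigma\min\{t,c_0l_s\}$, whence $H(f_s^{W_n}(x_j))\gtrsim \sigma'\min\{t,c_0l_s\}$ with $\sigma'=(\sigma/4)^2$ (using also that each lamp already carries entropy $\geq\log 4$ as noted after Assumption \ref{Gamma_Involution}, which absorbs the ``$\max$ with $1$''). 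Summing over the $\Omega(n^{1/2}/k_s)$ intervals yields $H(f_s^{W_n})\gtrsim \sigma'\min\{n/k_s,c_0n^{1/2}l_s\}$ as claimed.

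The main obstacle is the probabilistic lower bound on the traversal counts $T(k_s,x_j,n)$: I need not just that their \emph{expectation} over all sites is large, but that a constant fraction of the intervals $I_j^s$ simultaneously have $T(k_s,x_j,n)$ of the right order with constant probability, so that the conditional expectations can be summed to something of order $\min\{n/k_s, c_0n^{1/2}l_s\}$. This requires a lower bound on local times / excursion counts of the simple random walk on $\Z$ at the resolution of the intervals $I_j^s$, which I would derive from the Green's-function estimates in Appendix \ref{app: excursions} (the companion lower bounds to Lemma \ref{traverse-upper}), and a second-moment or concentration argument to rule out the possibility that all the traversal mass concentrates on too few sites. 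The truncation at $l_s$ (via the $\min$) is handled simply by replacing $|f_s^{W_n}(x_j)|_{\Gamma_s}$ with $\min\{|f_s^{W_n}(x_j)|_{\Gamma_s},l_s\}$ throughout and noting the linear speed assumption is only invoked up to time $c_0l_s$.
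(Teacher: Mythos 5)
Your plan contains a genuine factor-of-$k_s$ error in the excursion count that the paper's route avoids entirely.

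You claim that each well-visited interval $I_j^s$ contributes $T(k_s,x_j,n)\gtrsim n^{1/2}$ traversals, derived from asserting that $\sum_j T(k_s,x_j,n)=\Omega(n/k_s)$ over the $O(n^{1/2}/k_s)$ relevant intervals. Both claims are off by a factor $k_s$: for a site $x$ near the origin, a simple random walk in $n$ steps makes roughly $n^{1/2}$ visits to $x$, and each excursion from $x$ crosses $x-k_s$ with gambler's-ruin probability $\sim 1/(2k_s)$, so $\mathbf{E}\,T(k_s,x,n)\sim n^{1/2}/k_s$ — exactly the order given by Lemma \ref{traverse-upper} and Lemma \ref{traverse-lower}. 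Summing one site per interval over the $\sim n^{1/2}/k_s$ relevant intervals gives $\Omega(n/k_s^2)$, not $\Omega(n/k_s)$ (the larger count $\Omega(n/k_s)$ is what one gets summing $\mathbf{E}\,T(k_s,y,n)$ over \emph{all} $y\in\mathbb{Z}$, which is $k_s/2$ times as many terms). Consequently the middle expression in your display should read $k_s\cdot\frac{n^{1/2}}{k_s}\cdot\sigma\min\bigl\{n^{1/2}/k_s,\,c_0l_s\bigr\}$, which equals $\sigma\min\{n/k_s,c_0n^{1/2}l_s\}$. As written, $k_s\cdot\frac{n^{1/2}}{k_s}\cdot\sigma\min\{n^{1/2},c_0l_s\}=\sigma\min\{n,\,c_0n^{1/2}l_s\}$, which cannot be correct as a lower bound: it contradicts the matching upper bound of Lemma \ref{linear-one-upper}, namely $\mathbf{E}\,|(f_s^{W_n},S_n)|_{\Delta_s}\le Cn^{1/2}\min\{n^{1/2}/k_s,l_s\}$ for $k_s^2\le n$. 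Your final ``$\gtrsim$'' step only rescues the statement by silently discarding a $k_s$ factor; that is not an equivalence, and it papers over the incorrect intermediate estimate. A smaller issue: $E_s$ in Definition \ref{essential} involves $(\,|f_s(x)|_{\Gamma_s}-1\,)_+$, not $|f_s(x)|_{\Gamma_s}$, so your claimed inequality $E_s\geq k_s\sum_j\max_{I_j^s}|f_s^{W_n}(x)|_{\Gamma_s}$ is false as written, though this is repairable.

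Beyond the numerics, your route via $E_s$ and per-interval maxima is also more complicated than necessary. The paper uses the much weaker but much simpler metric lower bound $|(f_s,z)|_{\Delta_s}\ge\sum_{y\in\mathbb{Z}}|f_s(y)|_{\Gamma_s}$, which holds because every generator $\alpha_i$ or $\beta_i$ in a representative word lengthens exactly one $f_s(y)$, and then applies the linear-speed assumption and Lemma \ref{traverse-lower} to each site $y$ separately; no essential contribution, no $(\cdot-1)_+$, no max over intervals, and no concentration argument are needed. For entropy the paper conditions on the traverse time function $T(k_s,\cdot,n)$, writes $H(f_s^{W_n})\ge H(f_s^{W_n}\mid T_s^n)=\sum_z\mathbf{E}\,H(X^{(s)}_{T(k_s,z,n)})$ by conditional independence of the lamps, and applies the entropy analogue (\ref{entropy-linear}), whereas you invoke a looser ``one site per interval plus product-entropy'' argument. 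Your corrected version (replacing $n^{1/2}$ by $n^{1/2}/k_s$ per interval) would give the right answer, but at the cost of extra bookkeeping the paper avoids.
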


\begin{proof}

We use a weaker lower bound for the metric, 
\[
\left|\left(f_{s},z\right)\right|_{\Delta_{s}}\ge\sum_{y\in\mathbb{Z}}\left|f_{s}(y)\right|_{\Gamma_{s}}.
\]
Apply the $(\sigma,c_{0}l_{s})$-linear speed assumption of Definition \ref{linear-speed-Gamma}, we have
\[
\mathbf{E}\left(\left|\left(f_{s}^{W_{n}},S_{n}\right)\right|_{\Delta_{s}}\right)\ge\mathbf{E}\left(\sum_{y\in\mathbb{Z}}\left|f_{s}^{W_{n}}(y)\right|\right)\ge\sum_{y\in\mathbb{Z}}\mathbf{E}\left[\sigma\min\left\{ T(k_{s},y,n),c_{0}l_{s}\right\} \right].
\]
Then by Lemma \ref{traverse-lower}, there exists constant $c>0$,
for $k_{s}\le c^{2}n^{\frac{1}{2}}$, 
\[
\mathbf{E}\left[\min\left\{ T(k_{s},y,n),c_{0}l_{s}\right\} \right]\ge\frac{1}{2}\min\left\{ \frac{c\sqrt{n}}{4k},c_{0}l_{s}\right\} \mathbf{P}\left(L\left(y,\frac{n}{2}\right)>0\right).
\]
Summing up over $y$, 
\[
\sum_{y\in\mathbb{Z}}\mathbf{E}\left[\min\left\{ T(k_{s},y,n),c_{0}l_{s}\right\} \right]\ge\frac{1}{2}\min\left\{ \frac{c\sqrt{n}}{4k_s},c_{0}l_{s}\right\} \mathbf{E}\mathcal{R}_{n/2},
\]
where $\mathcal{R}_{n/2}$ is the size of the range of simple random
walk on $\mathbb{Z}$ up to $n/2$. Since $\mathbf{E}\mathcal{R}_{n}\simeq n^{\frac{1}{2}}$,
it follows there exists constant $C>1$ such that for $s$ with $k_{s}^{2}\le n$,
\[
\mathbf{E}\left(\left|\left(f_{s}^{W_{n}},S_{n}\right)\right|_{\Delta_{s}}\right)\ge\frac{\sigma}{C}\min\left\{ \frac{n}{k_{s}},c_{0}n^{\frac{1}{2}}l_{s}\right\} .
\]

Concerning entropy, we condition by the traverse time function (see for instance \cite{AmirVirag2012} for the basic properties of entropy)
\begin{align*}
H(f_{s}^{W_{n}}) & \geq H(f_{s}^{W_{n}}|T(k_{s},\cdot,n))=\sum_{z\in\Z}\E H(X_{T(k_{s},z,n)}^{(s)})\\
 & \geq\sum_{z\in\Z}\E\left[\sigma'\min\{T(k_{s},z,b),c_{0}l_{s}\}\right]\geq\frac{\sigma'}{C}\min\left\{ \frac{n}{k_{s}},c_{0}n^{\frac{1}{2}}l_{s}\right\} 
\end{align*}
using (\ref{entropy-linear}) and the same computation. \end{proof}

\subsubsection{Speed and entropy estimates in the diagonal product $\Delta$}

Recall Assumption \ref{k_growth} that $(k_{s})$ grows exponentially.
In the diagonal product $\Delta$, by the metric upper estimate in
Proposition \ref{metric} and speed upper estimates in Lemma \ref{linear-one-upper},
we have 
\begin{equation}
\E(|W_{n}|_{\Delta})\leq\E\left(500\sum_{s\leq s_{0}(W_{n})}\left|\left(f_{s}^{W_{n}},S_{n}\right)\right|_{\Delta_{s}}\right)\le\sum_{s\le s_{0}(n)}C\left(\min\left\{ \frac{n}{k_{s}},n^{\frac{1}{2}}l_{s}\right\} +n^{\frac{1}{2}}\right),\label{eq:linear-upper}
\end{equation}
where 
\[
s_{0}(n)=\min\left\{ s:\ k_{s}^{2}\ge n\right\} .
\]
Indeed, denote $x_{s}=\frac{k_{s}}{\sqrt{n}}$ growing exponentially,
then 
\[
\sum_{s>s_{0}(n)}(n^{\frac{1}{2}}+k_{s})e^{-\frac{k_{s}^{2}}{8n}}\leq n^{\frac{1}{2}}\sum_{x_{s}\geq1}(1+x_{s})e^{-\frac{x_{s}^{2}}{8}}\leq Cn^{\frac{1}{2}}.
\]

In the lower bound direction, suppose $\left\{ \Gamma_{s}\right\} $
satisfies $(\sigma,c_{0})$-linear speed assumption \ref{linear-speed-Gamma},
then by the metric lower estimate in Proposition \ref{metric} and
Lemma \ref{linear-one-lower}, 
\begin{equation}
\mathbf{E}\left(\left|W_{n}\right|_{\Delta}\right)\ge\max_{s}\mathbf{E}\left(\left|\left(f_{s}^{W_{n}},S_{n}\right)\right|_{\Delta_{s}}\right)\ge\frac{\sigma}{C}\max_{s\le s_{0}(n)}\min\left\{ \frac{n}{k_{s}},n^{\frac{1}{2}}c_{0}l_{s}\right\} .\label{eq:linear-lower}
\end{equation}

To understand the upper bound (\ref{eq:linear-upper}), divide the
collection of $\Delta_{s}$ with $s\le s_{0}(n)$ into two subsets. 
\begin{description}
\item [{(i)}] Let $s_{1}(n)$ denote the index 
\[
s_{1}(n)=\max\left\{ s\ge0:\ n^{\frac{1}{2}}\ge k_{s}l_{s}\right\} .
\]
Then the contribution of these $s\le s_{1}(n)$ to the sum is bounded
by 
\[
\sum_{s\leq s_{1}(n)}\E\left|\left(f_{s}^{W_{n}},S_{n}\right)\right|_{\Delta_{s}}\le Cn^{\frac{1}{2}}\sum_{s\le s_{1}(n)}\left(l_{s}+1\right).
\]

\item [{(ii)}] The contribution of $s\in\left(s_{1}(n)+1,s_{0}(n)\right]$
to the sum is bounded by 
\[
\sum_{s=s_{1}(n)+1}^{s_{0}(n)}\E\left|\left(f_{s}^{W_{n}},S_{n}\right)\right|_{\Delta_{s}}\le C\sum_{s=s_{1}(n)+1}^{s_{0}(n)}\left(\frac{n}{k_{s}}+n^{\frac{1}{2}}\right).
\]

\end{description}
Combine these parts, we have 
\begin{equation}
\E(|W_{n}|_{\Delta})\le2C\left(n^{\frac{1}{2}}\sum_{s=0}^{s_{1}(n)}l_{s}+\sum_{s=s_{1}(n)+1}^{s_{0}(n)}\frac{n}{k_{s}}\right).\label{eq:linear-upper-2}
\end{equation}

\begin{proposition}\label{Delta-speed}

Suppose $\left\{ \Gamma_{s}\right\} $ satisfies $(\sigma,c_{0}l_{s})$-linear
speed assumption and $\mbox{diam}(\Gamma_{s})\le C_{0}l_{s}$. Suppose
there exists a constant $m_{0}>1$ such that 
\[
k_{s+1}>2k_{s},l_{s+1}\ge m_{0}l_{s}\mbox{ for all }s.
\]
 Let 
\[
s_{0}(n)=\min\left\{ s:\ k_{s}^{2}\ge n\right\} ,\ s_{1}(n)=\max\left\{ s\ge0:\ n^{\frac{1}{2}}\ge k_{s}l_{s}\right\} ,
\]
then 
\[
\frac{\sigma c_{0}}{2C}\left(n^{\frac{1}{2}}l_{s_{1}(n)}+\frac{n}{k_{s_{1}(n)+1}}\right)\le\E\left|W_{n}\right|_{\Delta}\le\frac{4C}{1-1/m_{0}}\left(n^{\frac{1}{2}}l_{s_{1}(n)}+\frac{n}{k_{s_{1}(n)+1}}\right).
\]
The same bounds hold for the entropy $H(W_{n}^{\Delta})$ with $\sigma$ replaced
by $\sigma'=\left(\frac{\sigma}{4}\right)^{2}$ and $C$ replaced
by a constant $C'>0$ that only depends on the size of generating sets
$\left|A\right|+\left|B\right|$.

\end{proposition}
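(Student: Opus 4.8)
The plan is to combine the upper bound (\ref{eq:linear-upper-2}) and the lower bound (\ref{eq:linear-lower}) with the growth assumptions on $(k_s)$ and $(l_s)$ to collapse the sums into the two stated terms. The key observation is that under the hypotheses $k_{s+1} > 2k_s$ and $l_{s+1} \geq m_0 l_s$, both the geometric-type sum $\sum_{s \leq s_1(n)} l_s$ and the sum $\sum_{s > s_1(n)} n/k_s$ are dominated by their extreme terms. Precisely, $\sum_{s=0}^{s_1(n)} l_s \leq l_{s_1(n)} \sum_{j \geq 0} m_0^{-j} = \frac{1}{1-1/m_0} l_{s_1(n)}$ since $l_{s_1(n)-j} \leq m_0^{-j} l_{s_1(n)}$, and similarly $\sum_{s=s_1(n)+1}^{s_0(n)} \frac{n}{k_s} \leq \frac{n}{k_{s_1(n)+1}} \sum_{j \geq 0} 2^{-j} = \frac{2n}{k_{s_1(n)+1}}$ since $k_s$ more than doubles at each step. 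The term $\sum_s n^{1/2}$ over $s \in [0, s_0(n)]$ must also be absorbed; here one notes that $s_0(n) \leq s_1(n) + O(1)$ is \emph{not} generally true, but each such term $n^{1/2}$ is at most $n/k_s$ for $s \leq s_0(n)$ (as $k_s^2 \leq n$ there) and at most $n^{1/2} l_{s}$, so the $n^{1/2}$ contributions are already accounted for inside the two geometric sums up to a constant factor; this needs to be checked carefully by comparing $n^{1/2}$ with $\frac{n}{k_{s_1+1}}$ (valid for $s$ close to $s_1$) and with $n^{1/2} l_{s_1}$ (valid for small $s$).

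For the lower bound, I would take the single best index. By (\ref{eq:linear-lower}), $\mathbf{E}|W_n|_\Delta \geq \frac{\sigma}{C} \max_{s \leq s_0(n)} \min\{\frac{n}{k_s}, c_0 n^{1/2} l_s\}$. Evaluating at $s = s_1(n)$ gives the term $\min\{\frac{n}{k_{s_1}}, c_0 n^{1/2} l_{s_1}\} = c_0 n^{1/2} l_{s_1}$ (since by definition of $s_1$, $n^{1/2} \geq k_{s_1} l_{s_1}$, i.e. $n^{1/2} l_{s_1} \leq n/k_{s_1}$), and at $s = s_1(n)+1$ gives $\min\{\frac{n}{k_{s_1+1}}, c_0 n^{1/2} l_{s_1+1}\} = \frac{n}{k_{s_1+1}}$ (since $s_1+1$ violates the defining inequality, $n^{1/2} < k_{s_1+1} l_{s_1+1}$). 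Taking the larger of the two and then averaging (or just using $\max \geq \frac12(\cdot + \cdot)$) yields $\mathbf{E}|W_n|_\Delta \geq \frac{\sigma c_0}{2C}(n^{1/2} l_{s_1} + \frac{n}{k_{s_1+1}})$; the factor $c_0$ appears because we only get the $c_0 l_s$-truncated bound from Lemma \ref{linear-one-lower}, and one should check $s_1(n)+1 \leq s_0(n)$ holds (or handle the boundary case $s_1(n) \geq s_0(n)$, where $n/k_{s_1+1}$ is small and the $n^{1/2} l_{s_1}$ term already dominates).

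The entropy statement follows verbatim: the upper bound is trivial since entropy is at most $\log$ of ball size, but more directly one uses the same metric estimates and the fact that $H(f_s^{W_n}) \leq$ (number of lamps) $\times \log|\Gamma_s|$ is not what is needed --- rather, the upper bound on entropy comes from the general inequality $H(W_n) \leq$ (entropy contributions factor through the same combinatorial quantities bounded in Lemma \ref{linear-one-upper}); alternatively one can cite that entropy is bounded above by $\mathbf{E}|W_n|_\Delta$ times $\log$ of the generating set size up to constants. For the lower bound, Lemma \ref{linear-one-lower} already gives $H(f_s^{W_n}) \geq \frac{\sigma'}{C}\min\{\frac{n}{k_s}, c_0 n^{1/2} l_s\}$ with $\sigma' = (\sigma/4)^2$, and since $H(W_n) \geq \max_s H(f_s^{W_n})$ (projection to a quotient cannot increase entropy), the identical argument applies. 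The main obstacle I anticipate is the careful bookkeeping around the indices $s_1(n)$ and $s_0(n)$: one must verify that the ``middle'' range $(s_1(n), s_0(n)]$ is nonempty exactly when $\frac{n}{k_{s_1+1}}$ is the genuinely larger term, handle the degenerate case where this range is empty, and confirm that the stray $n^{1/2}$ terms in (\ref{eq:linear-upper}) do not spoil the clean two-term answer --- this is where the constant $\frac{4C}{1-1/m_0}$ gets its shape, and getting every geometric-series constant to land consistently is the fiddly part.
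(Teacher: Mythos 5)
Your proposal follows the paper's proof essentially verbatim: the upper bound comes from collapsing the two sums in (\ref{eq:linear-upper-2}) via the exponential growth of $(k_s)$ and $(l_s)$, the lower bound from evaluating (\ref{eq:linear-lower}) at $s_1(n)$ and $s_1(n)+1$, and the entropy statement follows the same scheme once one invokes the Erschler inequality $H(W_n^\Delta)\le C'\E|W_n|_\Delta$ with $C'$ depending only on the volume growth rate (hence on $|A|+|B|$). Two small bookkeeping remarks: (1) at $s=s_1(n)+1$ the definition of $s_1$ only gives $n/k_{s_1+1}<n^{1/2}l_{s_1+1}$, so the claim $\min\{n/k_{s_1+1},c_0 n^{1/2}l_{s_1+1}\}=n/k_{s_1+1}$ is not quite right when $c_0<1$ --- what holds is $\min\{n/k_{s_1+1},c_0 n^{1/2}l_{s_1+1}\}\ge c_0\min\{n/k_{s_1+1},n^{1/2}l_{s_1+1}\}=c_0\,n/k_{s_1+1}$, which is what you need anyway since $c_0$ already appears in the target bound; and (2) the stray $n^{1/2}$ terms you worry about at the end were already absorbed before the Proposition, in the passage from (\ref{eq:linear-upper}) to (\ref{eq:linear-upper-2}) (using $n/k_s\ge n^{1/2}$ for $s<s_0(n)$ and $l_s+1\le 2l_s$), so starting from (\ref{eq:linear-upper-2}) there is nothing extra to absorb.
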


\begin{proof} The lower bound is a direct consequence of (\ref{eq:linear-lower}).
For the upper bound, apply (\ref{eq:linear-upper-2}) and note that
because of the assumption on growth of $k_{s},l_{s}$, the sums satisfy
\begin{align*}
\sum_{s\le s_{1}(n)}l_{s} & \le l_{s_{1}(n)}\frac{1}{1-1/m_{0}},\\
\sum_{s\ge s_{1}(n)+1}\frac{1}{k_{s}} & \le\frac{2}{k_{s_{1}(n)+1}}.
\end{align*}
For the entropy, by \cite[Lemma 6 ]{Erschler2003}, there is $C'$
depending only on the exponential rate of volume growth in $\G$,
which can't exceed $\log(\left|A\right|+\left|B\right|)$, such that
$H(W_{n}^{\Delta})\leq C'\E\left|W_{n}\right|_{\Delta}$, giving the
upper bound. Lemma \ref{linear-one-lower} gives the lower bound (\ref{eq:linear-lower}).
\end{proof}

\subsubsection{Possible speed and entropy functions}

\begin{example} If $\G_{s}$ is an family as in Examples \ref{example-expanders}
or \ref{example-lamplighters} and $k_{s}=2^{\beta s+o(s)}$ and $l_{s}=2^{\iota s+o(s)}$,
with $\beta,\iota\in[1,\infty)$, a direct application of Proposition
\ref{Delta-speed} shows that the speed and entropy exponents are
\[
\lim\frac{\log\E|W_{n}|_{\Delta}}{\log n}=\lim\frac{\log H(W_{n}^{\Delta})}{\log n}=\frac{\beta+2\iota}{2\beta+2\iota}=1-\frac{1}{2(1+\frac{\iota}{\beta})},
\]
which can take any value in $(\frac{1}{2},1)$.

\end{example}

\begin{theorem}\label{possible-speed-linear}

There exists universal constants $c,C>0$ such that the following
holds. For any function $\varrho:[1,\infty)\to[1,\infty)$ such that
$\frac{\varrho(x)}{\sqrt{x}}$ and $\frac{x}{\varrho(x)}$ are non-decreasing,
there exists a group $\Delta$ and a symmetric probability
measure $q$ of finite support on $\Delta$ such that 
\[
c\varrho(n)\le\E\left|W_{n}\right|_{\Delta}\le C\varrho(n)\mbox{ and }c\varrho(n)\le H(W_{n}^{\Delta})\le C\varrho(n)
\]
Moreover, the group $\Delta$ can be chosen $4$-step solvable.

\end{theorem}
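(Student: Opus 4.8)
The plan is to realize the prescribed function $\varrho$ as a speed function by choosing the parameter sequences $(k_s)$ and $(l_s)$ appropriately, using the family $\{\Gamma_s\}$ of lamplighters over $d$-dimensional tori with $d\geq 3$ from Example~\ref{example-lamplighters} (which gives the $4$-step solvable conclusion), and then invoking Proposition~\ref{Delta-speed}. The heart of the matter is the two-sided estimate
\[
\E|W_n|_\Delta \simeq_C n^{1/2} l_{s_1(n)} + \frac{n}{k_{s_1(n)+1}},
\]
where $s_1(n)=\max\{s: n^{1/2}\geq k_s l_s\}$, so the task is purely to design $(k_s),(l_s)$ satisfying the growth hypotheses of Proposition~\ref{Delta-speed} (namely $k_{s+1}>2k_s$ and $l_{s+1}\geq m_0 l_s$ for some fixed $m_0>1$) for which the right-hand side is equivalent to $\varrho(n)$ up to universal multiplicative constants.

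First I would reduce to the case where $\varrho$ is piecewise of a convenient form using the approximation machinery of Appendix~\ref{approximation}: any function with $\varrho(x)/\sqrt{x}$ and $x/\varrho(x)$ non-decreasing is equivalent to one that is, on a sequence of dyadic-type scales, either $\simeq\sqrt{x}$ times a constant or $\simeq x$ times a constant, with the two regimes alternating. The non-decreasing hypotheses on $\varrho(x)/\sqrt x$ and $x/\varrho(x)$ are exactly the constraints needed so that such an interpolation between diffusive and linear behavior is consistent. Then I would choose the scales: on an interval where $\varrho$ should behave diffusively I freeze the index $s_1(n)$ (so $n^{1/2}l_{s_1}$ dominates and grows like $\sqrt n$), and at the transition to the next scale I increase $s$ by one, which forces $k_{s+1}$ and $l_{s+1}$ to jump by the factors dictated by the target ratio $\varrho/\sqrt{\cdot}$. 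Concretely, reading off from $n^{1/2}l_s \simeq \varrho(n)$ at the left end of the $s$-th plateau and $n/k_{s+1}\simeq \varrho(n)$ at the right end, one solves for $l_s$ and $k_{s+1}$ recursively; the monotonicity of $\varrho(x)/\sqrt x$ ensures $l_s$ is non-decreasing and the monotonicity of $x/\varrho(x)$ ensures $k_s$ grows fast enough (at least geometrically, hence the assumption $k_{s+1}>2k_s$ can be arranged, possibly after inserting dummy scales). One also needs to verify the side condition $\operatorname{diam}(\Gamma_s)\leq C_0 l_s$, which holds for the lamplighter-torus family, and that the linear-speed assumption $(\sigma,c_0 l_s)$ is met, which is precisely the content of Lemma~\ref{lamplighter-torus}.

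The main obstacle I anticipate is bookkeeping the universal constants: Proposition~\ref{Delta-speed} gives $\E|W_n|_\Delta$ pinched between $\frac{\sigma c_0}{2C}$ and $\frac{4C}{1-1/m_0}$ times the model quantity, and I must check that after the discretization the model quantity $n^{1/2}l_{s_1(n)}+n/k_{s_1(n)+1}$ stays within a bounded factor of $\varrho(n)$ for \emph{all} $n$, not just at the plateau endpoints — in particular across each transition, where $s_1(n)$ jumps and both terms in the sum change. Here the two non-decreasing hypotheses do the work again: within a single plateau $n^{1/2}l_{s_1}$ increases by at most a bounded factor while $n/k_{s_1+1}$ only decreases, so the max (hence the sum, up to a factor $2$) tracks $\sqrt n$ up to the linear cap; the transition is controlled because the two competing expressions agree up to a constant at the switching point by construction. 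The entropy bounds then come for free since Proposition~\ref{Delta-speed} yields the identical two-sided estimate for $H(W_n^\Delta)$ with $\sigma$ replaced by $\sigma'=(\sigma/4)^2$ and $C$ by a constant depending only on $|A|+|B|$, so the same $(k_s),(l_s)$ simultaneously prescribe speed and entropy up to universal constants.
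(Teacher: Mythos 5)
Your proposal is correct and follows essentially the same route as the paper: pick $\{\Gamma_s\}$ from Example~\ref{example-lamplighters} for the $4$-step solvable conclusion, invoke the two-sided speed/entropy estimate of Proposition~\ref{Delta-speed}, and use the approximation machinery of Appendix~\ref{approximation} (specifically Corollary~\ref{approximation-speed}) to construct sequences $(k_s),(l_s)$ for which the model quantity $n^{1/2}l_{s_1(n)} + n/k_{s_1(n)+1}$ is equivalent to $\varrho(n)$ up to a universal multiplicative constant.
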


\begin{proof} The choice of a family of $\G_{s}$ as in Example \ref{example-expanders}
or \ref{example-lamplighters} guarantees the existence of $C_{1}>1$
such that for all $x\geq1$, there is $\G_{s}$ of diameter $l_{s}$
with $\frac{x}{C_{1}}\leq l_{s}\leq C_{1}x$.

As $\varrho$ belongs to $\mathcal{C}_{\frac{1}{2},1}$, Corollary
\ref{approximation-speed} provides two sequences $(k_{s})$ of integers
and $(l_{s})$ among diameters of $\Gamma_{s}$ with $k_{s+1}\geq m_{0}k_{s}$
and $l_{s+1}\geq m_{0}l_{s}$ for all $s$ such that the function
\[
\bar{\varrho}(x)=x^{\frac{1}{2}}l_{s}+\frac{x}{k_{s+1}},\textrm{ for }(k_{s}l_{s})^{2}\leq x\leq(k_{s+1}l_{s+1})^{2}
\]
satisfies $\bar{\varrho}(x)\simeq_{2m_{0}C_{1}^{5}}\varrho(x)$ for
all $x$.

Combining with Proposition \ref{Delta-speed}, the diagonal product
$\Delta$ associated to these sequences has a speed and entropy satisfying
\[
\frac{\sigma c_{0}}{4m_{0}CC_{1}^{5}}\varrho(x)\leq\E|W_{n}|_{\Delta}\leq\frac{4m_{0}CC_{1}^{5}}{1-\frac{1}{m_{0}}}\varrho(x).
\]
For $\{\G_{s}\}$ as in Example \ref{example-lamplighters}, the group
$\Delta$ is $4$-step solvable. \end{proof}

Note that when the speed is linear, the last term of the sequence
$(l_{s})$ is infinite, thus the last quotient $\G_{s}$ is in fact
the whole group $\G$. In our examples, $\G$ is either $\Z_{2}\wr D_{\infty}^{d}$
for $d\geq3$ or a lattice in $SL(3,F)$. In the latter case, the
finite diagonal product $\Delta$ is non amenable. When the speed
is diffusive, the last term of the sequence $(k_{s})$ is infinite,
the group $\Delta$ is a diagonal product of finitely many groups
$\Delta_{s}$ where the lamp groups $\G_{s}$ are finite.

\subsection{The case of $\Delta$ with dihedral groups}

\label{sub:dihedral-speed}

In this subsection we focus on the case where $\left\{ \Gamma_{s}\right\} $
is taken to be a sequence of finite dihedral groups. Since the unlabelled
Cayley graph of $D_{2l}$ is the same as a cycle of size $2l$, simple
random walk on $D_{2l}$ can be identified with simple random walk
on the cycle of size $2l$. Consider simple random walk on the cycle
as projection of simple random walk on $\mathbb{Z},$ then the classical
Gaussian bounds on simple random walk on $\mathbb{Z}$ imply that
there exists constants $c,C>0$ such that for all $1\le t\le l_{s}^{2},1\le x\le t^{\frac{1}{2}},$
\begin{equation}
c\exp\left(-Cx^{2}\right)\le\mathbf{P}\left(\left|X_{t}^{(s)}\right|_{D_{2l_{s}}}\ge xt^{\frac{1}{2}}\right)\le C\exp\left(-cx^{2}\right),\label{eq:dihedral-deviation}
\end{equation}
where $X_{t}^{(s)}$ is a random alternating word in $\{e,a(s)\}$
and $\{e,b(s)\}$.

\begin{lemma}\label{dihedral-one-upper}

Suppose $\Gamma_{s}=D_{2l_{s}}$. There exists an absolute constant
$C>0$ such that in each $\Delta_{s}$, 
\[
\mathbf{E}\left[\left|\left(f_{s}^{W_{n}},S_{n}\right)\right|_{\Delta_{s}}\mathbf{1}_{\left\{ s\le s_{0}\left(W_{n}\right)\right\} }\right]\le\begin{cases}
C\min\left\{ n^{\frac{3}{4}}k_{s}^{-\frac{1}{2}}\log^{\frac{1}{2}}k_{s},nk_{s}^{-1},n^{\frac{1}{2}}l_{s}\right\}  & \mbox{ if }k_{s}^{2}<n\\
C\left(n^{\frac{1}{2}}+k_{s}\right)e^{-\frac{k_{s}^{2}}{8n}} & \mbox{ if }k_{s}^{2}\ge n.
\end{cases}
\]

\end{lemma}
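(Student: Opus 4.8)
The plan is to follow exactly the scheme of Lemma \ref{linear-one-upper}, upgrading only the single-coordinate bound $\left|f_s^{W_n}(x)\right|_{\Gamma_s}$. As in that proof, start from the metric upper estimate of Lemma \ref{one_metric}, so that $\left|\left(f_s^{W_n},S_n\right)\right|_{\Delta_s}\le 11\bigl(\sum_{j\in\Z}k_s\max_{x\in I_j^s}\left|f_s^{W_n}(x)\right|_{\Gamma_s}+\mathcal R_n\bigr)$, and use the monotonicity $(\ref{traverse-half})$ of traverse times together with $\left|f_s^{W_n}(x)\right|_{\Gamma_s}\le\min\{2T(k_s,x,n)+2,\ \mathrm{diam}\,\Gamma_s\}$. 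This already delivers, by the very same computation as in Lemma \ref{linear-one-upper}, the two crude bounds $C(n/k_s+n^{1/2})$ for $k_s^2\le n$ (and $C(n^{1/2}+k_s)e^{-k_s^2/8n}$ for $k_s^2\ge n$), which supply the $nk_s^{-1}$ term and the $k_s^2\ge n$ regime verbatim, and the diameter bound $\E|(f_s^{W_n},S_n)|_{\Delta_s}\le 10 l_s\,\E(\mathcal R_n+k_s)\le C l_s n^{1/2}$ supplies the $n^{1/2}l_s$ term. So the entire content of the lemma is the extra estimate $\E|(f_s^{W_n},S_n)|_{\Delta_s}\le C n^{3/4}k_s^{-1/2}\log^{1/2}k_s$ when $k_s^2<n$.

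For that term I would not bound $\left|f_s^{W_n}(x)\right|_{\Gamma_s}$ by $2T(k_s,x,n)$, which is too lossy; instead I would exploit that in the dihedral group the lamp value is a \emph{random alternating word} of length $T(k_s,x,n)$, hence by $(\ref{eq:dihedral-deviation})$ its $\Gamma_s$-length is only of order $\sqrt{T(k_s,x,n)}$ with Gaussian tails, i.e. $\E\left|f_s^{W_n}(x)\right|_{\Gamma_s}\lesssim \E\sqrt{T(k_s,x,n)}+1$ and more importantly $\E\max_{x\in I_j^s}\left|f_s^{W_n}(x)\right|_{\Gamma_s}\lesssim \E\bigl[\sqrt{T(\tfrac{k_s}{2},j\tfrac{k_s}{2},n)}\,\log^{1/2}(\cdots)\bigr]+1$, the logarithmic factor coming from taking a maximum over the $\sim k_s$ sites in $I_j^s$ of random-word lengths with Gaussian concentration (a union bound at scale $\sqrt{\log k_s}$ standard deviations). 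Then I would sum over $j$: using $\E T(\tfrac{k_s}{2},j\tfrac{k_s}{2},n)\le \frac{2Cn^{1/2}}{k_s}e^{-(jk_s/2)^2/2n}$ from Lemma \ref{traverse-upper}, Jensen gives $\E\sqrt{T}\le (n^{1/2}/k_s)^{1/2}e^{-(jk_s/2)^2/4n}$, and the geometric-type sum $k_s\sum_j (n^{1/2}/k_s)^{1/2}e^{-c(jk_s)^2/n}$ telescopes (since $k_s^2<n$, the number of non-negligible terms is $\sim n^{1/2}/k_s$) to $\sim k_s\cdot (n^{1/2}/k_s)^{1/2}\cdot (n^{1/2}/k_s)=n^{3/4}k_s^{-1/2}$; restoring the $\log^{1/2}k_s$ factor from the maximum over $I_j^s$ and adding the range contribution $\mathcal R_n\lesssim n^{1/2}$ yields the claimed bound. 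Finally, truncating at $\mathrm{diam}\,\Gamma_s=2l_s$ is automatic since we already have the $n^{1/2}l_s$ bound, so one just takes the minimum of the three expressions.

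The main obstacle, and the step requiring genuine care, is the maximum over the $\sim k_s$ sites in each dyadic block $I_j^s$: one needs that $\E\max_{x\in I_j^s}\left|f_s^{W_n}(x)\right|_{\Gamma_s}$ is not much larger than $\E\max_{x\in I_j^s}\sqrt{T(k_s,x,n)}$, and then that the latter is controlled by $\sqrt{\E T(\tfrac{k_s}{2},j\tfrac{k_s}{2},n)}$ up to the $\log^{1/2}k_s$ factor. This is where the Gaussian upper tail $(\ref{eq:dihedral-deviation})$ for alternating words is essential — it must be applied \emph{conditionally} on the $\Z$-trajectory $\{S_k\}$, after which the lamp words at different sites are conditionally independent, so a union bound over $x\in I_j^s$ costs only $\log|I_j^s|\le\log k_s$ in the deviation scale; one should note the harmless subtlety that $(\ref{eq:dihedral-deviation})$ as stated needs $T(k_s,x,n)\le l_s^2$, which is exactly the regime left after the diameter truncation, and for larger $T$ one falls back on the deterministic bound $\left|f_s^{W_n}(x)\right|_{\Gamma_s}\le 2l_s$ already absorbed into the $n^{1/2}l_s$ term. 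Everything else is a repetition of the Gaussian sums already carried out in Lemma \ref{linear-one-upper}.
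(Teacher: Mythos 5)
Your proposal is correct and follows essentially the same route as the paper: reduce to the three separate bounds (the $nk_s^{-1}$ and $k_s^2\ge n$ cases and the $n^{1/2}l_s$ diameter bound are exactly as in Lemma \ref{linear-one-upper}), then for the new $n^{3/4}k_s^{-1/2}\log^{1/2}k_s$ term use stochastic domination of $\max_{z\in I_j^s}|f_s^{W_n}(z)|$ by the maximum of $\sim k_s$ conditionally independent random alternating words of common length $T(k_s/2,jk_s/2,n)$, then the Gaussian deviation bound (\ref{eq:dihedral-deviation}) with a union bound over the sites (costing the $\log^{1/2}k_s$ factor), Jensen to pass from $\E\sqrt{T}$ to $\sqrt{\E T}$, Lemma \ref{traverse-upper}, and the geometric sum over $j$. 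Your side remark about the domain $t\le l_s^2$ in (\ref{eq:dihedral-deviation}) is a fair precaution, though only the lower bound there really needs that restriction: the upper Gaussian tail for the length of a lazy random walk projected onto the cycle $D_{2l_s}$ holds for all $t$, so the truncation via the $n^{1/2}l_s$ term is not actually required to salvage the argument.
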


\begin{proof} From the upper bounds in Lemma \ref{linear-one-upper}
which is valid for any choice of $\{\Gamma_{s}\}$, the only bound
we need to show here is that if $k_{s}\le n^{\frac{1}{2}}$ , then
\begin{equation}
\mathbf{E}\left[\left|\left(f_{s}^{W_{n}},S_{n}\right)\right|_{\Delta_{s}}\mathbf{1}_{\left\{ s\le s_{0}\left(W_{n}\right)\right\} }\right]\le Cn^{\frac{3}{4}}k_{s}^{-\frac{1}{2}}\log^{\frac{1}{2}}k_{s}.\label{eq:dihedral-upper-middle}
\end{equation}

To prove this, note that the collection $\left(\left|f_{s}^{W_{n}}(z)\right|_{D_{\infty}}\right)_{z\in I_{j}^{s}}$
as a vector is stochastically dominated by the random vector 
\[
\left(\left|X_{T(k/2,jk_{s}/2,n)}(z)\right|_{D_{\infty}}+\mathbf{1}_{\{L(n,z)>0\}}+\mathbf{1}_{\{L(n,z-k)>0\}}\right)_{z\in I_{j}^{s}},
\]
where $\left\{ X_{t}(z)\right\} $ is a sequence of independent random
alternating words in $A(s)$ and $B(s)$ of length $t$. Then $\max_{z\in I_{j}^{s}}\left|f_{s}^{W_{n}}(z)\right|_{D_{\infty}}$
is stochastically dominated by 
\[
\max_{z\in I_{j}^{s}}\left|X_{T(k/2,jk_{s}/2,n)}(z)\right|_{D_{\infty}}+\mathbf{1}_{\{L(n,I_{j}^{s})>0\}}+\mathbf{1}_{\{L(n,I_{j-2}^{s})>0)\}}.
\]
Plug in the metric estimate in Lemma \ref{one_metric}, 
\begin{multline}
\mathbf{E}\left|\left(f_{s}^{W_{n}},S_{n}\right)\right|\mathbf{1}_{\left\{ s\le s_{0}\left(W_{n}\right)\right\} }\\
\le9k_{s}\left(\sum_{j\in\mathbb{Z}}\E\max_{z\in I_{j}^{s}}\left|X_{T(k/2,jk_{s}/2,n)}(z)\right|_{D_{\infty}}\right)+11\mathbf{E}\left[\mathcal{R}_{n}\mathbf{1}_{\left\{ \mathcal{R}_{n}\ge k_{s}\right\} }\right].\label{eq:sublinear-upper-1}
\end{multline}
From the upper bound in (\ref{eq:dihedral-deviation}), since $\max_{z\in I_{j}^{s}}\left|X_{t}(z)\right|$
is maximum of $k_{s}/2$ i.i.d. random variables, 
\[
\mathbf{P}\left(\max_{z\in I_{j}^{s}}\left|X_{t}(z)\right|\le xt^{\frac{1}{2}}\right)\ge\left(1-c_{1}\exp\left(-c_{2}x^{2}\right)\right)^{\left|I_{j}^{s}\right|}.
\]
Then 
\[
\mathbf{E}\left[\left.\max_{z\in I_{j}^{s}}\left|X_{T(k/2,jk_{s}/2,n)}(z)\right|\right||T(k/2,jk_{s}/2,n)\right]\le C_{1}T(k/2,jk_{s}/2,n)^{\frac{1}{2}}\log^{\frac{1}{2}}k_{s},
\]
where $C_{1}$ depends on $c_{1},c_{2}$. Apply Lemma \ref{traverse-upper},
\[
\mathbf{E}\left[\max_{z\in I_{j}^{s}}\left|X_{T(k/2,jk_{s}/2,n)}(z)\right|\right]\le C_{2}\left(\frac{n^{\frac{1}{2}}}{k_{s}}\exp\left(-\frac{(jk_{s}/2)^{2}}{2n}\right)\right)^{\frac{1}{2}}\log^{\frac{1}{2}}k_{s}.
\]
Plug the estimates in (\ref{eq:sublinear-upper-1}) and summing up
over $j$, we obtain (\ref{eq:dihedral-upper-middle}), because of
the Gaussian tail, the main contribution comes from $1\le j\le n^{\frac{1}{2}}/k_{s}$.
\end{proof}

\begin{lemma}\label{dihedral-one-lower}

Suppose $\Gamma_{s}=D_{2l_{s}}$. There exists an absolute constant
$c>0$ such that in each $\Delta_{s}$, for $n\geq ck_{s}^{2}$, 
\[
\E\left|(f_{s}^{W_{n}},S_{n})\right|_{\Delta_{s}}\geq c\min\left\{ n^{\frac{3}{4}}k_{s}^{-\frac{1}{2}}\log^{\frac{1}{2}}k_{s},nk_{s}^{-1},n^{\frac{1}{2}}l_{s}\right\} .
\]

\end{lemma}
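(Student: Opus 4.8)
The plan is to mirror the structure of the upper bound in Lemma~\ref{dihedral-one-upper}, but now producing a matching lower bound. The weak metric lower bound $\left|(f_s,z)\right|_{\Delta_s}\geq \sum_{y}\left|f_s(y)\right|_{\Gamma_s}$ used for expanders is not enough here, because the dihedral lamp groups have speed $\sqrt{t}$ rather than linear $t$, and the true contribution of an interval $I_j^s$ of width $k_s/2$ comes from the \emph{maximum} over $z\in I_j^s$ of $\left|f_s^{W_n}(z)\right|$, amplified by the factor $k_s$ coming from the essential contribution $E_s$ (Definition~\ref{essential} and Lemma~\ref{one_metric}). So first I would invoke $\left|(f_s^{W_n},S_n)\right|_{\Delta_s}\geq \tfrac18 E_s(f_s^{W_n})\geq \tfrac18 \sum_j k_s \max_{z\in I_j^s}\left(\left|f_s^{W_n}(z)\right|_{\Gamma_s}-1\right)_+$, and then bound $\E$ of the right-hand side from below.

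The key probabilistic input is the lower Gaussian deviation in \eqref{eq:dihedral-deviation}: conditioned on the traverse-time function, $\left|f_s^{W_n}(z)\right|_{D_{2l_s}}$ is (stochastically) a random alternating word of length $T(k_s,z,n)$, whose size is at least of order $\sqrt{T}$ with constant probability (capped at $l_s$ when $T\gtrsim l_s^2$). For a fixed interval $I_j^s$ with $k_s/2$ independent such coordinates, the \emph{maximum} is at least of order $\sqrt{T \log k_s}$ with probability bounded below — this is the standard fact that the max of $N$ i.i.d. Gaussian-tailed variables concentrates around $\sqrt{\log N}$ times the scale; one gets $\P(\max_{z\in I_j^s}\left|X_T(z)\right|\geq c\sqrt{T\log k_s}) \geq c'$ from \eqref{eq:dihedral-deviation} and independence, provided $T\geq 1$. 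Next I would apply the traverse-time lower bound (Lemma~\ref{traverse-lower} from the appendix, as used in Lemma~\ref{linear-one-lower}): for $k_s \le c^2 n^{1/2}$ and $y$ in the bulk $|y|\lesssim \sqrt{n}$, $T(k_s,y,n) \gtrsim \sqrt{n}/k_s$ with constant probability given that the walk visits $y$. Combining, for each of the $\asymp \sqrt{n}/k_s$ indices $j$ with $I_j^s$ in the bulk, $\E\big[\max_{z\in I_j^s}\left|f_s^{W_n}(z)\right|_{\Gamma_s}\big] \gtrsim \min\{\sqrt{(\sqrt n/k_s)\log k_s},\ l_s\}$, and summing $k_s$ times this over the $\sqrt n/k_s$ bulk intervals yields
\[
\E\left|(f_s^{W_n},S_n)\right|_{\Delta_s} \;\gtrsim\; \frac{\sqrt n}{k_s}\cdot k_s \cdot \min\Big\{\sqrt{\tfrac{\sqrt n}{k_s}\log k_s},\, l_s\Big\} \;=\; \min\Big\{n^{3/4}k_s^{-1/2}\log^{1/2}k_s,\ n^{1/2}l_s\Big\},
\]
and the intermediate regime $nk_s^{-1}$ appears exactly when $l_s$ is the binding term in $\min\{\sqrt{(\sqrt n/k_s)\log k_s}, l_s\}$ but $\sqrt n/k_s$ is still the number of bulk intervals — i.e. one tracks all three terms in the min carefully depending on the relative sizes of $k_s$, $l_s$, $n$.

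The main obstacle, as usual in these lower bounds, is the joint dependence: the variables $\big(T(k_s,z,n)\big)_z$ are all functions of the \emph{same} random walk path $\{S_m\}$, so the coordinate maxima over different intervals $I_j^s$ are not independent of each other, and $\max_{z\in I_j^s}\left|f_s^{W_n}(z)\right|$ is not simply the max of i.i.d. variables once we un-condition on the path. The fix is to condition on $\{S_m\}_{0\le m\le n}$ first: given the path, the alternating-word lengths $\{f_s^{W_n}(z)\}_z$ \emph{are} conditionally independent across $z$ (the fresh random letters $\mathcal A_{j,s},\mathcal B_{j,s}$ are independent), so the max-of-i.i.d. estimate applies cleanly on the good event that $T(k_s,z,n)$ is large for enough $z$; then take expectation over the path using the traverse-time lower bound and the fact that $\E\mathcal R_{n}\asymp\sqrt n$ forces a positive fraction of bulk sites to be visited with the required local time. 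A secondary technical point is the $\log^{1/2}k_s$ factor: one must check that the max over $k_s/2$ coordinates genuinely gains the full $\sqrt{\log k_s}$ (not merely $\sqrt{\log\log}$ or nothing), which follows from the lower tail $c\exp(-Cx^2)$ in \eqref{eq:dihedral-deviation} — the anticoncentration, not just concentration, of the single-coordinate length is what makes the maximum large, so I would make sure to use the left inequality of \eqref{eq:dihedral-deviation} at $x\asymp\sqrt{\log k_s}$ and a second-moment or Bonferroni argument to conclude $\P(\max \geq c\sqrt{T\log k_s})\geq c'$.
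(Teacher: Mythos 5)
Your proposal is correct and follows essentially the same route as the paper: the essential-contribution lower bound from Lemma~\ref{one_metric}, conditioning on the base-walk path so the lamp values $\{f_s^{W_n}(z)\}_z$ are conditionally independent, the left-hand Gaussian deviation in \eqref{eq:dihedral-deviation} together with a union/first-moment bound on the maximum over $\asymp k_s$ independent coordinates to extract the $\log^{1/2}k_s$ gain, the traverse-time lower bound of Lemma~\ref{traverse-lower}, and a final sum over $\asymp\sqrt n/k_s$ bulk intervals. One small correction to your heuristics: the intermediate term $nk_s^{-1}$ does not arise from $l_s$ being binding but from the constraint $x\le t^{1/2}$ in \eqref{eq:dihedral-deviation} (equivalently $|X_t|\le t$), so when the traverse time $t\asymp\sqrt n/k_s$ is smaller than $\log k_s$ the cap is $ct$ rather than $c\sqrt{t\log k_s}$; this is exactly the three-way minimum $\min\{\sqrt{t\log k_s},\,t,\,l_s/2\}$ the paper tracks per interval.
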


\begin{proof} For each $z\in I_{j}^{s}=[\frac{j}{2}k_{s},\frac{j+1}{2}k_{s})$,
the traverse time satisfies $T(k_{s},z,n)\geq T(2k_{s},\frac{j+1}{2}k_{s},n)$.
So 
\[
\E\max_{z\in I_{j}^{s}}\left|f_{s}^{W_{n}}(z)\right|_{D_{2l_{s}}}\geq\E\max_{z\in I_{j}^{s}}\left|X_{T(2k_{s},\frac{j+1}{2}k_{s},n)}^{s}(z)\right|_{D_{2l_{s}}},
\]
where $\left\{ X_{t}(z)\right\} $ is a sequence of independent random
alternating words in $A(s)$ and $B(s)$ of length $t$.

For any $t\geq1$ and $1\leq x\leq t^{\frac{1}{2}}$, 
\[
\mathbf{P}\left(\left|X_{t}(z)\right|_{D_{2l_{s}}}\geq\min\left\{ xt^{\frac{1}{2}},\frac{l_{s}}{2}\right\} \right)\geq c_{1}e^{-c_{2}x^{2}}.
\]
By independence, this implies the existence of $c>0$ with 
\[
\mathbf{P}\left(\max_{z\in I_{j}^{s}}\left|X_{t}(z)\right|_{D_{2l_{s}}}\geq\min\left\{ c\log^{\frac{1}{2}}k_{s}t^{\frac{1}{2}},ct,\frac{l_{s}}{2}\right\} \right)\geq c.
\]
Using Lemma \ref{one_metric} and Lemma \ref{traverse-lower}, for
some $c>0$ and for all $n\geq ck_{s}^{2}$ 
\begin{eqnarray*}
\E|(f_{s}^{W_{n}},S_{n})|_{\Delta_{s}} & \geq & \frac{k_{s}}{4}\sum_{j\in\Z}\E\max_{z\in I_{j}^{s}}\left|X_{T(2k_{s},\frac{j+1}{2}k_{s},n)}^{s}(z)\right|_{D_{2l_{s}}}\\
 & \geq & ck_{s}\sum_{j\in\Z}\min\left\{ n^{\frac{1}{4}}k_{s}^{-\frac{1}{2}}\log^{\frac{1}{2}}k_{s},n^{\frac{1}{2}}k_{s}^{-1},\frac{l_{s}}{2}\right\} P_{0}\left(L\left(\frac{j+1}{2}k_{s},\frac{n}{2}\right)>0\right).
\end{eqnarray*}
Finally $\sum_{j\in\Z}P_{0}\left(L(\frac{j+1}{2}k_{s},\frac{n}{2})>0\right)\geq\E\mathcal{R}_{\frac{n}{2}}/k_{s}\geq c\frac{n^{\frac{1}{2}}}{k_{s}}$.
\end{proof}

\begin{proposition}\label{dihedral-speed}

Suppose $\Gamma_{s}=D_{2l_{s}}$ and there exists $m_{0}>1$ such
that $k_{s+1}>2k_{s}$, $l_{s+1}>m_{0}l_{s}$ for all $s$. Define
\[
t_{1}(n)=\max\left\{ s:\ \frac{l_{s}^{2}k_{s}}{\log k_{s}}<n^{\frac{1}{2}}\mbox{ and }l_{s}k_{s}<n^{\frac{1}{2}}\right\} .
\]
Then in the diagonal product $\Delta$,

\begin{multline*}
\frac{n^{\frac{1}{2}}}{C}\left(l_{t_{1}(n)}+\min\left\{ n^{\frac{1}{4}}\left(\frac{\log k_{t_{1}(n)+1}}{k_{t_{1}(n)+1}}\right)^{\frac{1}{2}},\frac{n^{\frac{1}{2}}}{k_{t_{1}(n)+1}}\right\} \right)\le\E(|W_{n}|_{\Delta})\\
\le\frac{2Cn^{\frac{1}{2}}}{1-1/m_{0}}\left(l_{t_{1}(n)}+\min\left\{ n^{\frac{1}{4}}\left(\frac{\log k_{t_{1}(n)+1}}{k_{t_{1}(n)+1}}\right)^{\frac{1}{2}},\frac{n^{\frac{1}{2}}}{k_{t_{1}(n)+1}}\right\} \right).
\end{multline*}

\end{proposition}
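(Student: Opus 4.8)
The plan is to run the argument of Proposition~\ref{Delta-speed} with the dihedral one-copy estimates of Lemmas~\ref{dihedral-one-upper} and~\ref{dihedral-one-lower} in place of Lemmas~\ref{linear-one-upper}--\ref{linear-one-lower}. By the metric comparison of Proposition~\ref{metric}, for the lower bound it suffices to bound $\E|W_n|_\Delta\ge\max_s\E|(f_s^{W_n},S_n)|_{\Delta_s}$ from below, and for the upper bound to use $\E|W_n|_\Delta\le 500\sum_s\E[\,|(f_s^{W_n},S_n)|_{\Delta_s}\mathbf 1_{\{s\le s_0(W_n)\}}\,]$, which is exactly the quantity controlled in Lemmas~\ref{dihedral-one-upper}--\ref{dihedral-one-lower}. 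Here $s_0(n)=\min\{s:\ k_s^2\ge n\}$ as in \S\ref{sub:linear-speed}; the index $t_1(n)$ is well-defined (the two conditions hold trivially at $s=0$, where $k_0=0$), and since $l_s\ge1$ its second defining inequality forces $k_{t_1(n)}<n^{1/2}$, so $t_1(n)<s_0(n)$. The key elementary observation is that the inequality $l_s^2k_s/\log k_s<n^{1/2}$ is equivalent to $n^{1/2}l_s\le n^{3/4}k_s^{-1/2}\log^{1/2}k_s$, and $l_sk_s<n^{1/2}$ is equivalent to $n^{1/2}l_s\le nk_s^{-1}$; hence for $s\le t_1(n)$ the three-term minimum in the dihedral lemmas equals $n^{1/2}l_s$, whereas at $s=t_1(n)+1$ at least one of the two inequalities fails and the minimum equals $n^{1/2}\min\{n^{1/4}(\log k_{t_1(n)+1}/k_{t_1(n)+1})^{1/2},\,n^{1/2}/k_{t_1(n)+1}\}$, which is precisely the second term in the statement.

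For the upper bound, split the sum over $s$ at $s_0(n)$. The tail $s\ge s_0(n)$, handled by the Gaussian factor $(n^{1/2}+k_s)e^{-k_s^2/8n}$ with $k_s/\sqrt n\ge1$ at least doubling, sums to $O(n^{1/2})$, exactly as in \S\ref{sub:linear-speed}. For $s\le t_1(n)$ bound the one-copy term by $Cn^{1/2}l_s$; since $l_{s+1}>m_0l_s$, these sum to at most $\tfrac{C}{1-1/m_0}n^{1/2}l_{t_1(n)}$. For $t_1(n)<s<s_0(n)$ discard the $n^{1/2}l_s$ term and bound by $C\min\{n^{3/4}k_s^{-1/2}\log^{1/2}k_s,\,nk_s^{-1}\}$; using $\sum_s\min\{a_s,b_s\}\le\min\{\sum a_s,\sum b_s\}$, the geometric decay of $k_s^{-1}$ gives $\sum nk_s^{-1}\le 2nk_{t_1(n)+1}^{-1}$, and since $k_s\ge 2^{s-t_1(n)-1}k_{t_1(n)+1}$ while $\log k_s\le\log k_{t_1(n)+1}+(s-t_1(n)-1)$, the parasitic factor $\log^{1/2}k_s$ costs only an absolute constant, so $\sum n^{3/4}k_s^{-1/2}\log^{1/2}k_s\le Cn^{3/4}k_{t_1(n)+1}^{-1/2}\log^{1/2}k_{t_1(n)+1}$. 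Adding the three pieces and absorbing the residual $O(n^{1/2})$ into $n^{1/2}l_{t_1(n)}$ (as $l_{t_1(n)}\ge1$) gives the claimed upper bound.

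For the lower bound, bound $\E|W_n|_\Delta$ below by $\E|(f_s^{W_n},S_n)|_{\Delta_s}$ at $s=t_1(n)$ and $s=t_1(n)+1$, plus the baseline $\E|W_n|_\Delta\ge\E|(f_0^{W_n},S_n)|_{\Delta_0}\ge\E\mathcal R_n\simeq n^{1/2}$ coming from the range lower bound in Lemma~\ref{one_metric}. There is an absolute constant $c_0>0$ so that Lemma~\ref{dihedral-one-lower} applies whenever $k_s\le c_0\sqrt n$. At $s=t_1(n)$: if $k_{t_1(n)}\le c_0\sqrt n$ the lemma gives $\E|W_n|_\Delta\gtrsim n^{1/2}l_{t_1(n)}$ (the minimum being $n^{1/2}l_{t_1(n)}$); otherwise the second defining inequality forces $l_{t_1(n)}<1/c_0$, so $n^{1/2}l_{t_1(n)}\lesssim n^{1/2}$ is already controlled by the baseline. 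At $s=t_1(n)+1$: if $k_{t_1(n)+1}\le c_0\sqrt n$ the lemma, together with the regime identification above, gives $\E|W_n|_\Delta\gtrsim n^{1/2}\min\{n^{1/4}(\log k_{t_1(n)+1}/k_{t_1(n)+1})^{1/2},\,n^{1/2}/k_{t_1(n)+1}\}$; otherwise this second term is $<1/c_0$, hence again controlled by the baseline. Taking the maximum of these estimates yields the lower bound.

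The main obstacle is the bookkeeping rather than a single sharp estimate: one has to pin down $t_1(n)$ as the exact threshold where the dominant one-copy contribution switches from the saturated value $n^{1/2}l_s$ to the genuine dihedral value $\min\{n^{3/4}k_s^{-1/2}\log^{1/2}k_s,\,nk_s^{-1}\}$, verify that all the relevant series are geometric (carrying along the $\log^{1/2}k_s$ factor, which is harmless because $\log k_s$ grows only linearly in $s$), and carefully treat the boundary indices $s_0(n)$ and $t_1(n)+1$ — notably the degenerate regime $k_{t_1(n)+1}\gtrsim\sqrt n$, where the second term of the bound drops out and the crude $n^{1/2}$ estimate takes over. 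One should also check that the threshold constant in Lemma~\ref{dihedral-one-lower} is compatible with evaluating it at $s=t_1(n)$ and $s=t_1(n)+1$, which is precisely what the $c_0$ dichotomy above handles.
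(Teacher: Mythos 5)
Your proposal follows the same outline as the paper's (terse) proof: reduce via Proposition~\ref{metric} to the single-copy estimates, identify $t_1(n)$ as the index where the three-term minimum in Lemmas~\ref{dihedral-one-upper}--\ref{dihedral-one-lower} switches from $n^{1/2}l_s$ to $\min\{n^{3/4}k_s^{-1/2}\log^{1/2}k_s,\,nk_s^{-1}\}$, sum geometrically, and use the one-copy lower bound at $s\in\{t_1(n),t_1(n)+1\}$ plus the $n^{1/2}$ baseline. The regime identification, the handling of the Gaussian tail $s\ge s_0(n)$, and the dichotomy around the threshold $k_s\le c_0\sqrt n$ in Lemma~\ref{dihedral-one-lower} are all correct (the ``$<1/c_0$'' near the end of your lower-bound paragraph should read ``$<n^{1/2}/c_0$'', but this is clearly a slip).

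There is, however, one genuine error in the justification of the geometric summability of the middle term. You write that $k_s\ge 2^{s-t_1(n)-1}k_{t_1(n)+1}$ \emph{while} $\log k_s\le\log k_{t_1(n)+1}+(s-t_1(n)-1)$, intending to treat $\sum_j 2^{-j/2}\log^{1/2}k_{t_1+1+j}$ as ``geometric times at most polynomial''. The second inequality is false in general: Assumption~\ref{k_growth} gives only the lower bound $k_{s+1}>2k_s$, with no upper bound on the ratio, so $\log k_s$ need not grow linearly in $s$ (e.g.\ $k_s=2^{4^s}$ gives $\log^{1/2}k_s\sim 2^s$, and the naive split $2^{-j/2}\cdot\log^{1/2}k_{t_1+1+j}$ then diverges). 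The correct argument is to observe that the ratio of consecutive terms of $a_s:=k_s^{-1/2}\log^{1/2}k_s$ is itself bounded away from~$1$: writing $r=k_{s+1}/k_s>2$, one has
\[
\frac{a_{s+1}^2}{a_s^2}=\frac{1}{r}\left(1+\frac{\log r}{\log k_s}\right),
\]
which is a decreasing function of $r$ on $r\ge 2$ (for $k_s\ge 2$), and since $k_{s+1}\ge 2k_s+2$ and both are even, a short case check over the possible $(k_s,k_{s+1})$ shows this is at most an absolute constant $<1$ (the worst case $k_s=2$, $k_{s+1}=6$ gives about $0.86$). So $\sum_{s>t_1(n)}a_s\le C\,a_{t_1(n)+1}$ holds directly, without ever passing through a bound on $\log k_s$ in terms of $s$. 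With this substitution the proof is complete and matches the paper's.
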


\begin{remark}

The bounds here are more complicated than the linear case because
in Lemma \ref{dihedral-one-upper} and \ref{dihedral-one-lower} we
have to consider minimum over three quantities. If we further assume
that $l_{s}\ge\log k_{s}$ for all $s$, then the bounds simplify
to 
\[
\E(|W_{n}|_{\Delta})\simeq_{Cm_{0}}n^{\frac{1}{2}}l_{t_{1}(n)}+n^{\frac{3}{4}}\left(\frac{\log k_{t_{1}(n)+1}}{k_{t_{1}(n)+1}}\right)^{\frac{1}{2}}.
\]

\end{remark}

\begin{proof} By Proposition \ref{metric} and the third line of
Lemma \ref{dihedral-one-upper} 
\[
c\max_{s\leq s_{0}(n)}\E\left|W_{n}\right|_{\Delta_{s}}\leq\E\left|W_{n}\right|_{\Delta}\leq C\sum_{s\leq s_{0}(n)}\E\left|W_{n}\right|_{\Delta_{s}}.
\]
The choice of $t_{1}(n)$ implies that 
\[
\min\left\{ n^{\frac{3}{4}}k_{s}^{-\frac{1}{2}}\log^{\frac{1}{2}}k_{s},nk_{s}^{-1},n^{\frac{1}{2}}l_{s}\right\} =\left\{ \begin{array}{ll}
n^{\frac{1}{2}}l_{s} & \forall s\leq t_{1}(n)\\
\min\left\{ n^{\frac{3}{4}}k_{s}^{-\frac{1}{2}}\log^{\frac{1}{2}}k_{s},nk_{s}^{-1}\right\}  & \forall s>t_{1}(n).
\end{array}\right.
\]
Using Lemma \ref{dihedral-one-upper} and \ref{dihedral-one-lower}
and the exponential growth of $(k_{s}),(l_{s})$ gives the proposition.
\end{proof}

\begin{example}

Let $k_{s}=2^{\beta s}$ and $l_{s}=2^{\iota s}$, with $\beta>1$,
$\iota>0$, and $\Gamma_{s}=D_{2l_{s}}$. Proposition \ref{dihedral-speed}
implies with this choice of parameters, 
\[
\E\left(\left|W_{n}\right|_{\Delta}\right)\simeq n^{\frac{3\iota+\beta}{4\iota+2\beta}}\left(\log n\right)^{\frac{\iota}{2\iota+\beta}}.
\]

\end{example}

\begin{theorem} \label{possible-speed-dihedral}

There exists universal constants $c,C>0$ such that the following
holds. For any continuous function $\varrho:[1,\infty)\rightarrow[1,\infty)$
satisfying $\varrho(1)=1$ and $\frac{\varrho(x)}{x^{\frac{1}{2}}\log^{1+\epsilon}x}$,
$\frac{x^{\frac{3}{4}}}{\varrho(x)}$ non-decreasing for some $\epsilon>0$,
there exists a $3$-step solvable group $\Delta$ with dihedral groups
$\G_{s}$ and a symmetric probability measure $q$ of finite support
on $\Delta$ such that 
\[
c\varrho(n)\leq\E\left|W_{n}\right|_{\Delta}\leq C\varrho(n).
\]
\end{theorem}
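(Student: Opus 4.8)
The plan is to follow the scheme of Theorem \ref{possible-speed-linear}, feeding the dihedral speed estimate of Proposition \ref{dihedral-speed} into the approximation machinery of Appendix \ref{approximation}. Concretely, I would realize the prescribed $\varrho$, up to multiplicative constants, as the speed function of a diagonal product $\Delta$ with lamp groups $\G_s = D_{2l_s}$, for a carefully chosen pair of parameter sequences $(k_s)$, $(l_s)$.

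First I would reduce to the simplified form of the speed formula. By the remark following Proposition \ref{dihedral-speed}, as soon as $k_{s+1} > 2k_s$, $l_{s+1} > m_0 l_s$ and $l_s \ge \log k_s$ for all $s$, the diagonal product built from $\G_s = D_{2l_s}$ with scaling factors $(k_s)$ satisfies
\[
\E\left(|W_n|_\Delta\right) \simeq_{Cm_0} n^{1/2} l_{t_1(n)} + n^{3/4}\left(\frac{\log k_{t_1(n)+1}}{k_{t_1(n)+1}}\right)^{1/2},
\]
where $t_1(n) = \max\{s:\ l_s^2 k_s/\log k_s < n^{1/2}\text{ and }l_s k_s < n^{1/2}\}$. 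So it is enough to exhibit sequences for which the right-hand side is equivalent to $\varrho$.

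Next I would run the approximation step. The hypotheses that $\varrho(x)/(x^{1/2}\log^{1+\e}x)$ and $x^{3/4}/\varrho(x)$ be non-decreasing are exactly what places $\varrho$ between the two extremal behaviors produced by the formula above: keeping $(l_s)$ growing as slowly as the geometric constraint permits and $(k_s)$ large makes the speed near-diffusive (the $\log^{1+\e}$ slack absorbing the unavoidable logarithmic correction coming from the $\log k_{s+1}$ in the second term), while letting $(k_s)$ grow as slowly as the doubling constraint allows and $(l_s)$ grow rapidly pushes the speed up toward $n^{3/4}$. Since a finite dihedral group of every even order exists, every positive integer is available as a diameter $l_s$, so no resolution is lost on the $l$-side. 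Invoking the approximation procedure of Appendix \ref{approximation} (in the same spirit as Corollary \ref{approximation-speed}), I would obtain even integers $k_{s+1} > 2k_s$ and integers $l_{s+1} > m_0 l_s$ with $l_s \ge \log k_s$ such that the piecewise function $\bar\varrho(x) = x^{1/2} l_s + x^{3/4}(\log k_{s+1}/k_{s+1})^{1/2}$, on the range where $t_1(x) = s$, satisfies $\bar\varrho \simeq \varrho$ with a universal constant.

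Finally, Assumption \ref{Gamma_Involution} holds for $\G_s = D_{2l_s}$ (with $A = B = \Z/2\Z$ and $\G = D_\infty$) by Example \ref{example-2}(1), Assumption \ref{k_growth} holds by construction, so Proposition \ref{dihedral-speed} applies and, combined with the approximation just described, gives $c\varrho(n) \le \E|W_n|_\Delta \le C\varrho(n)$ for universal $c, C$; moreover $\Delta$ is $3$-step solvable by Example \ref{example-2}(1). The main obstacle is the approximation step: one must show that the three-way minimum hidden in Proposition \ref{dihedral-speed} — reflected in the two-term formula and in the somewhat intricate definition of $t_1(n)$ — can be tuned to an arbitrary $\varrho$ in the stated class while simultaneously respecting all the growth and parity constraints on $(k_s)$ and $(l_s)$ and keeping the logarithmic corrections consistent; this last point is precisely why the mild technical factor $\log^{1+\e}x$ appears in the hypothesis, cf. the remark after Theorem \ref{main-proba-dihedral}.
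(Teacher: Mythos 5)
Your proposal is correct and follows essentially the same route as the paper: apply the approximation machinery of Appendix \ref{approximation} to produce parameter sequences for a dihedral diagonal product and then invoke Proposition \ref{dihedral-speed}. The one step you flag as the ``main obstacle''---matching the two-term dihedral formula $n^{1/2}l_s + n^{3/4}\left(\log k_{s+1}/k_{s+1}\right)^{1/2}$ to the output $x^{p_1}l_s + x^{p_2}/k_{s+1}$ of Corollary \ref{approximation-speed}---is handled in the paper by the explicit change of variable $\kappa_s=\left(k_s/\log k_s\right)^{1/2}$: one applies Corollary \ref{approximation-speed} with exponents $(p_1,p_2)=(\tfrac12,\tfrac34)$ to find $(\kappa_s),(l_s)$ with $\log\kappa_s\leq l_s^{1/(1+\epsilon)}$ (this is exactly where your $\log^{1+\epsilon}$ hypothesis enters), then recovers $k_s$ from $\kappa_s$, so the remark after Proposition \ref{dihedral-speed} applies verbatim.
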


\begin{remark} The lower condition on $\varrho(x)$ is only technical.
There is no gap that would isolate diffusive behaviors among $3$-step
solvable groups. Indeed, it easily follows from Lemma \ref{dihedral-one-upper}
and \ref{dihedral-one-lower} that for any $\varrho(x)$ such that
$\frac{\varrho(x)}{\sqrt{x}}$ tends to infinity, there is a group
$\Delta$ with dihedral $\Gamma_{s}$ such that $cn^{\frac{1}{2}}\leq\E|W_{n}|_{\Delta}\leq C\varrho(n)$
for all $n$.

\end{remark}

\begin{proof} By Corollary \ref{approximation-speed}, we can find
two sequences $(\kappa_{s}),(l_{s})$ satisfying $\log\kappa_{s}\leq l_{s}$
such that $\bar{\varrho}(x)$ and $\varrho(x)$ agree up to multiplicative
constants.

Let us set $\kappa_{s}=\left(\frac{k_{s}}{\log k_{s}}\right)^{\frac{1}{2}}$.
Then $\log k_{s}\simeq\log\kappa_{s}\leq l_{s}^{\frac{1}{1+\e}}$.
By Corollary \ref{dihedral-speed}, the diagonal product $\Delta$
with dihedral groups $\Gamma_{s}=D_{2l_{s}}$ and sequence $(k_{s})$
satisfies for any $(l_{s}\kappa_{s})^{4}\leq n\leq(l_{s+1}\kappa_{s+1})^{4}$
\[
c\bar{\varrho}(x)=c\left(n^{\frac{1}{2}}l_{s}+\frac{n^{\frac{3}{4}}}{\kappa_{s+1}}\right)\leq\E\left|W_{n}\right|_{\Delta}\leq C\left(n^{\frac{1}{2}}l_{s}+\frac{n^{\frac{3}{4}}}{\kappa_{s+1}}\right)=C\bar{\varrho}(x).
\]
The group $\Delta$ clearly has a trivial third derived subgroup.
\end{proof}

\begin{proposition}\label{dihedral-entropy} There exists two constants
$c,C>0$ such that on any diagonal product $\Delta$ with dihedral
$\G_{s}$ satisfying Assumption \ref{k_growth}, the entropy of the
switch-walk-switch random walk satisfies 
\[
c\sqrt{n}\leq H(W_{n}^{\Delta})\leq C\sqrt{n}\log^{2}n.
\]
\end{proposition}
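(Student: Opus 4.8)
The plan is to prove the two inequalities separately. The lower bound comes from the $0$-th lamp configuration, and the upper bound from an entropy-counting argument organised around two random ``sizes'': the number $s_0(W_n)=O(\log n)$ of copies $\Delta_s$ that contribute to $W_n$, and the number (of order $\sqrt n$) of lamp sites active in a given copy.

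\emph{Lower bound.} I would use that $f_0^{W_n}$ is a deterministic coordinate of $W_n$, so $H(W_n^\Delta)\ge H(f_0^{W_n})\ge H(f_0^{W_n}\mid(S_k)_{0\le k\le n})$, conditioning not increasing entropy. Conditionally on the trajectory $(S_k)$, the coordinates $f_0^{W_n}(y)$, $y\in\Z$, are independent (the lamp letters acting at distinct sites are disjoint and the $\mathcal A$'s and $\mathcal B$'s are independent), and since $k_0=0$ and $\Gamma_0=A(0)\times B(0)\simeq\Z/2\times\Z/2$, each $f_0^{W_n}(y)$ is a product of i.i.d.\ uniform elements of $A(0)\times B(0)$, one per visit of the cursor to $y$; it is thus uniform on $A(0)\times B(0)$ whenever $y$ lies in the range of $(S_k)$, and trivial otherwise. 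Hence $H(f_0^{W_n}\mid(S_k))=\log(4)\,\E[\mathcal R_n]$ with $\mathcal R_n$ the size of the range of simple random walk on $\Z$ up to time $n$, and the classical bound $\E[\mathcal R_n]\ge c_0\sqrt n$ gives $H(W_n^\Delta)\ge c\sqrt n$.

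\emph{Upper bound.} First I would reduce to finitely many copies: each step of $q$ has length at most $5$, so $Range(W_n)\le 5n$ always, and since $k_s\ge 2^s$ by Assumption \ref{k_growth}, the index $s_0(W_n)$ of Definition \ref{range} satisfies $s_0(W_n)\le L:=\log_2(5n)$ for every realisation. By Fact \ref{s_0}, $W_n$ is a deterministic function of $(\pi_s(W_n))_{0\le s\le L}$, so by subadditivity of entropy $H(W_n^\Delta)\le\sum_{s=0}^{L}H(\pi_s(W_n))$, and it suffices to bound each summand by $C\sqrt n\log n$ with $C$ absolute and then sum over the $O(\log n)$ copies. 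For one copy, writing $\pi_s(W_n)=(f_s^{W_n},S_n)$ gives $H(\pi_s(W_n))\le H(S_n)+H(f_s^{W_n})\le O(\log n)+H(f_s^{W_n})$. To control $H(f_s^{W_n})$ I would condition on $(\mathcal R_n,\min_k S_k)$, whose joint entropy is $O(\log n)$: given these, the support of $f_s^{W_n}$ lies in a union of two intervals of length $\mathcal R_n$, hence has at most $2\mathcal R_n$ nontrivial coordinates, so $H(f_s^{W_n}\mid\mathcal R_n,\min_k S_k)\le 2\,\E[\mathcal R_n]\cdot\sup_y H(f_s^{W_n}(y))$. The per-site entropy is $O(\log n)$: conditionally on the whole trajectory, $f_s^{W_n}(y)$ is a word of length at most $2\,T(k_s,y,n)+2\le 2n+2$ in the two dihedral generators of $\Gamma_s=D_{2l_s}$, hence takes at most $|B_{D_\infty}(2n+2)|=4n+5$ values — crucially independent of $l_s$, since a walk of at most $n$ steps cannot move by more than $n$. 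Assembling, $H(f_s^{W_n})\le O(\log n)+2\,\E[\mathcal R_n]\log(4n+5)\le C\sqrt n\log n$ using $\E[\mathcal R_n]\le C_0\sqrt n$, and therefore $H(W_n^\Delta)\le C'\sqrt n\log^2 n$.

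\emph{Main difficulty.} I expect the genuine work to be organisational rather than analytic: keeping the successive conditionings consistent so that both random sizes — the number $s_0(W_n)$ of relevant copies and the number $\le 2\mathcal R_n$ of active lamp sites per copy — are absorbed cleanly, and isolating the one quantitative input, namely that the per-site entropy is $O(\log n)$ and not $O(\log l_s)$ (which may be arbitrarily large). The remaining ingredients — monotonicity of entropy under coordinate maps and under conditioning, subadditivity, Fact \ref{s_0}, and the standard estimate that $\E[\mathcal R_n]$ is of order $\sqrt n$ — are routine, and the lower bound is genuinely easy.
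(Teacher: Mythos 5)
Your proof is correct, and it uses the same two key ideas as the paper's, but organized slightly differently in the upper bound. For the lower bound you both pass to the factor $\Delta_0 \simeq (\Z/2\times\Z/2)\wr\Z$; the paper states this in one line while you spell out the conditioning on $(S_k)$, which is fine. For the upper bound, the paper conditions on the full traverse-time function $T_s^n=T(k_s,\cdot,n)$: it bounds $H([\mathrm{supp}\, T_s^n])\lesssim\log n$, then $H(T_s^n\mid[\mathrm{supp}\,T_s^n])\le\E|[\mathrm{supp}\,T_s^n]|\log n\lesssim\sqrt n\log n$, and finally $H(f_s^{W_n}\mid T_s^n)=\sum_z\E H(X^{(s)}_{T(k_s,z,n)})\le\sum_z\E\log T(k_s,z,n)\lesssim\sqrt n\log n$, where the crucial point is $H(X_t^{(s)})\le\log(2t+1)$ since a walk of length $t$ in $D_{2l_s}$ reaches at most $2t+1$ elements regardless of $l_s$. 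You condition on the coarser pair $(\mathcal R_n,\min_k S_k)$ --- entropy $O(\log n)$ --- note there are at most $2\mathcal R_n$ active lamp sites, and apply the same dihedral cardinality bound $\log(4n+5)$ per site. Both routes give $C\sqrt n\log^2 n$, and the underlying quantitative input, that per-site entropy in $D_{2l_s}$ is $O(\log n)$ rather than $O(\log l_s)$, is identical. Your version is arguably more elementary (no conditional independence of lamps given traverse times is invoked); the paper's would produce a mildly sharper bound in the per-site entropies (which is irrelevant here since both are dominated by the $\log n$ factors from the number of copies and from $H(T_s^n)$). One small notational slip: in the display $H(f_s^{W_n}\mid\mathcal R_n,\min_k S_k)\le 2\E[\mathcal R_n]\sup_y H(f_s^{W_n}(y))$, the $\sup$ should be over \emph{conditional} per-site entropies (subadditivity gives the conditional version), but since the cardinality bound $\log(4n+5)$ is uniform over conditioning, the conclusion is unaffected.
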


\begin{proof} By Proposition \ref{metric}, the entropy of random
walk on $\Delta$ is related to the entropy on the factors $\Delta_{s}$
by the following 
\[
\max_{s\geq0}H(f_{s}^{W_{n}},S_{n})\leq H((f_{s}^{W_{n}}),S_{n})\leq H(S_{n})+\sum_{s\leq s_{2}(n)}H(f_{s}^{W_{n}}),
\]
where $s_{0}(W_{n})\leq s_{2}(n)=\max\{s:\ k_{s}\leq n\}$ and $H(S_{n})\simeq\log n$.
The lower bound comes from the first factor $H(f_{0}^{W_{n}},S_{n})$
which is the usual random walk on lamplighter group with finite lamps.

Denote $T_{s}^{n}=T(k_{s},\cdot,n)$ the traverse time function and
$[supp{T}_{s}^{n}]$ the convex envelope of its support. Using conditional
entropy (see for instance \cite{KV} or \cite{AmirVirag2012} for
usual properties of conditional entropy) 
\begin{align}
H(f_{s}^{W_{n}}) & \leq H(f_{s}^{W_{n}}|{T}_{s}^{n})+H({T}_{s}^{n})\nonumber \\
 & \leq H(f_{s}^{W_{n}}|{T}_{s}^{n})+H({T}_{s}^{n}|[supp{T}_{s}^{n}])+H([supp{T}_{s}^{n}]).\label{eq:entropy-upper}
\end{align}

The convex envelope $[supp{T}_{s}^{n}]$ is included in $Range(W_{n})$,
therefore $H([supp{T}_{s}^{n}])\simeq\log n$ and $\E|[supp{T}_{s}^{n}]|\leq C\sqrt{n}$.
As for all $z$, $0\leq T_{s}^{n}(z)\leq n$, we deduce that 
\[
H({T}_{s}^{n}|[supp{T}_{s}^{n}])\leq\E\left[|[supp{T}_{s}^{n}]|\right]\log n\leq C\sqrt{n}\log n
\]
.

As each $f_{s}^{W_{n}}(z)$ is distributed as an independent sample
of a random walk on $\Gamma_{s}$ of length $T(k_{s},z,n)$, we have
\[
H(f_{s}^{W_{n}}|T_{s}^{n})=\sum_{z\in\Z}\E H(X_{T(k_{s},z,n)}^{(s)}).
\]
The groups $\Gamma_{s}$ being dihedral $H(X_{t}^{(s)})\leq\log t$,
therefore 
\[
H(f_{s}^{W_{n}}|T_{s}^{n})\leq\sum_{z\in\Z}\E\log T(k_{s},z,n)\leq C\sqrt{n}\log n.
\]
Obviously $n\geq k_{s_{0}(W_{n})}$ so finally piling up the inequalities
\[
H((f_{s}^{W_{n}}),S_{n})\leq C\log(n)\left(1+2\sqrt{n}\log n+\log n\right)
\]
\end{proof}

\subsection{Joint evaluation of speed and entropy}

\label{sub: joint}

Using the idea of Amir \cite{Amir2015} of taking direct product of
two groups, we can combine the speed and entropy estimates on $\Delta$,
together with the results of Amir-Virag \cite{AmirVirag2012} to show
the following result concerning the joint behavior of growth of entropy
and speed.

Recall that for symmetric probability measure $\mu$ on $G$ with
finite support, entropy and speed satisfies 
\[
\frac{1}{n}\left(\frac{1}{4}L_{\mu}(n)\right)^{2}-1\le H_{\mu}(n)\leq(v+\varepsilon)L_{\mu}(n)+\log n+C,
\]
where $v$ is the volume growth rate of $\left(G,\mbox{supp}\mu\right)$ and
$C>0$ is an absolute constant, \cite{Erschler2003,AmirVirag2012},\cite{AmirVirag2012}
.

\begin{proposition}\label{entropy-speed-function}

Let $f,h:\mathbb{N}\to\mathbb{N}$ be two functions such that $h(1)=1$
and 
\begin{itemize}
\item either $\frac{f(n)}{n^{\frac{3}{4}}}$ and $\frac{n^{1-\epsilon}}{f(n)}$
are non-decreasing for some $\epsilon>0$ and 
\[
h(n)\le f(n)\le\sqrt{\frac{nh(n)}{\log n}},
\]

\item or $\frac{h(n)}{n^{\frac{1}{2}}\log^{2}n}$ and $\frac{n^{\frac{3}{4}}}{f(n)}$
are non-decreasing and 
\[
h(n)\le f(n)\le\sqrt{nh(n)},
\]

\end{itemize}
Then there exists a constant $C>0$ depending only on $\epsilon>0$,
a finitely generated group $G$ and a symmetric probability measure
$\mu$ of finite support on $G$ such that 
\[
L_{\mu}(n)\simeq_{C}f(n)\mbox{ and }H_{\mu}(n)\simeq_{C}h(n).
\]

\end{proposition}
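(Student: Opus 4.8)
The plan is to realize the pair $(f,h)$ by a direct product $G=G_{1}\times G_{2}$ of two groups taken from constructions already at our disposal, following the idea of Amir \cite{Amir2015}. I first record the direct product estimate. Given groups $G_{i}$ with symmetric finitely supported measures $\mu_{i}$, $i=1,2$, put on $G=G_{1}\times G_{2}$ the symmetric finitely supported measure $\mu=\frac{1}{2}(\mu_{1}\otimes\delta_{e_{2}})+\frac{1}{2}(\delta_{e_{1}}\otimes\mu_{2})$, with generating set $\mathrm{supp}\,\mu_{1}\sqcup\mathrm{supp}\,\mu_{2}$, so that $|(g_{1},g_{2})|_{G}=|g_{1}|_{G_{1}}+|g_{2}|_{G_{2}}$. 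After $n$ steps the walk has made $N\sim\mathrm{Bin}(n,\tfrac12)$ moves in the first factor and, conditionally on $N$, $W_{n}=(W_{N}^{(1)},W_{n-N}^{(2)})$ with independent coordinates. Hence $L_{\mu}(n)=\mathbf{E}[L_{\mu_{1}}(N)]+\mathbf{E}[L_{\mu_{2}}(n-N)]$, while the chain rule for entropy gives
\[
H_{\mu}(n)=H(N)+\mathbf{E}[H_{\mu_{1}}(N)]+\mathbf{E}[H_{\mu_{2}}(n-N)]-H(N\mid W_{n}),
\]
and since $0\le H(N\mid W_{n})\le H(N)\le\log_{\ast}(n)$ we obtain $\mathbf{E}[H_{\mu_{1}}(N)]+\mathbf{E}[H_{\mu_{2}}(n-N)]\le H_{\mu}(n)\le\mathbf{E}[H_{\mu_{1}}(N)]+\mathbf{E}[H_{\mu_{2}}(n-N)]+\log_{\ast}(n)$. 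Using that $N$ concentrates around $n/2$ together with monotonicity and subadditivity of $L_{\mu_{i}}$ and $H_{\mu_{i}}$ (which gives $g(4m)\le 4g(m)$ and hence $g(\lfloor n/4\rfloor)\simeq_{4}g(n)$ for $g=L_{\mu_{i}},H_{\mu_{i}}$), this yields $L_{\mu}(n)\simeq L_{\mu_{1}}(n)+L_{\mu_{2}}(n)$ and $H_{\mu}(n)\simeq H_{\mu_{1}}(n)+H_{\mu_{2}}(n)$, the additive $\log_{\ast}(n)$ being absorbed because every entropy occurring here is $\gtrsim\sqrt{n}$.

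In both cases I take $G_{1}$ to be an expander (or lamplighter) diagonal product supplied by Theorem \ref{possible-speed-linear} applied to $h$ -- after the harmless regularization of Appendix \ref{approximation} we may assume $h$ lies in $\mathcal{C}_{\frac12,1}$, which is consistent with the hypotheses since they force $\sqrt{n}\lesssim h\lesssim n$ -- so that $L_{\mu_{1}}(n)\simeq H_{\mu_{1}}(n)\simeq h(n)$. For the second factor I distinguish the two cases. In the first case $f$ belongs to $\mathcal{C}_{\frac34,1-\e}$, and I take $G_{2}$ to be an Amir--Virag group \cite{AmirVirag2012} (see also \cite{Brieussel2013}) with $L_{\mu_{2}}(n)\simeq f(n)$; for such a group the entropy is comparable to $f(n)^{2}/n$ up to a subpolynomial factor, so $H_{\mu_{2}}(n)\le Cf(n)^{2}\log(n)/n\le Ch(n)$ by the hypothesis $f(n)\le\sqrt{nh(n)/\log n}$. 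The constant degenerates as $\e\to0$, which is why $C$ depends on $\e$. In the second case $f$ is below $n^{3/4}$, and I take $G_{2}$ to be a dihedral diagonal product given by Theorem \ref{possible-speed-dihedral} with $L_{\mu_{2}}(n)\simeq f(n)$; by Proposition \ref{dihedral-entropy} its entropy satisfies $H_{\mu_{2}}(n)\le C\sqrt{n}\log^{2}n\le Ch(n)$ since the hypothesis gives $h(n)\gtrsim n^{1/2}\log^{2}n$.

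Combining, $L_{\mu}(n)\simeq L_{\mu_{1}}(n)+L_{\mu_{2}}(n)\simeq h(n)+f(n)\simeq f(n)$ because $h\le f$, and $H_{\mu}(n)\simeq H_{\mu_{1}}(n)+H_{\mu_{2}}(n)\simeq h(n)+H_{\mu_{2}}(n)\simeq h(n)$ because $H_{\mu_{2}}(n)\lesssim h(n)$ in both cases; since $G$ is finitely generated and $\mu$ is symmetric with finite support, this gives $L_{\mu}\simeq_{C}f$ and $H_{\mu}\simeq_{C}h$. I expect the two delicate points to be: (i) the entropy additivity for the direct product, where one must bound the conditional-entropy error terms and control the binomial fluctuations of $N$ precisely enough -- this is exactly where the $\simeq$-invariance of the functions under rescaling the argument by a constant is used; and (ii) in the first case, the precise bound $H_{\mu_{2}}(n)\lesssim f(n)^{2}\log(n)/n$ for the Amir--Virag group realizing speed $f$, the factor $\log n$ in the hypothesis being calibrated to absorb the subpolynomial slack in that estimate. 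Everything else is bookkeeping with the monotonicity classes of $f$ and $h$ via Appendix \ref{approximation}.
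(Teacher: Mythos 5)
Your proof is correct and follows essentially the same route as the paper: take a direct product of a factor from Theorem~\ref{possible-speed-linear} realizing $L\simeq H\simeq h$ with a second factor realizing $L\simeq f$ whose entropy is dominated by $h$ (Amir--Virag in case~1, dihedral diagonal product in case~2). The only cosmetic difference is that you use the lazy alternating-step measure $\tfrac12(\mu_1\otimes\delta)+\tfrac12(\delta\otimes\mu_2)$, which forces you to handle the binomial fluctuation of the step count $N$ and the conditional-entropy error term, whereas the paper simply uses the tensor measure $q_1\otimes q_2$, for which the two coordinates are synchronized and $L$, $H$ are exactly additive with no error terms; both choices yield the same asymptotics, so this is a valid minor variant rather than a genuinely different argument.
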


As a corollary, we derive the Corollary \ref{entropy-speed-exponent}
regarding joint entropy and speed exponents (Conjecture 3 in \cite{Amir2015}).

\begin{proof}[Proof of Corollary \ref{entropy-speed-exponent}]
If $\theta\in\left[\frac{1}{2},1\right)$, take functions $h(n)=\max\left\{ n^{\frac{1}{2}}\log^{2}n,n^{\theta}\right\} $,
$f(n)=\max\left\{ n^{\gamma},h(n)\right\} $. Note that in this case
$\gamma<1$, the pair of functions $f$ and $h$ is covered by one
of the cases in Proposition \ref{entropy-speed-function}, the statement
follows.

If $\theta=1$, in this case $\gamma=1$ as well, we can take $G$
to be any finitely generated group which admits a symmetric probability
measure $\mu$ of finite support such that $(G,\mu)$ has linear entropy
growth. \end{proof}

\begin{proof} We follow Amir's approach in \cite{Amir2015} to take
direct product of two appropriate groups such that one would control
the speed function and the other would control the entropy function.

In the first case where both $f(n)/n^{\frac{3}{4}}$ and $n^{1-\epsilon}/f(n)$
are non-decreasing, consider the direct product of the following two
groups. By \cite{AmirVirag2012}, there exists a group $G_{1}=\mathbb{Z}\wr_{\mathcal{S}}\mathcal{M}_{m}$
and a step distribution $q_{1}$ on $G_{1}$ such that 
\[
L_{q_{1}}(n)\simeq f(n)\mbox{ and }H_{q_{1}}(n)\simeq\frac{f(n)^{2}}{n}\log(n+1).
\]
By Theorem \ref{possible-speed-linear}, there exists a group $\Delta$
and step distribution $q_{2}$ on $\Delta$ such that 
\[
L_{q_{2}}(n)\simeq H_{q_{2}}(n)\simeq h(n).
\]
Then on the direct product $G_{1}\times\Delta$ with step distribution
$q_{1}\otimes q_{2}$, 
\begin{align*}
L_{q_{1}\otimes q_{2}}(n) & \simeq\max\left\{ L_{q_{1}}(n),L_{q_{2}}(n)\right\} \simeq f(n),\\
H_{q_{1}\otimes q_{2}}(n) & \simeq\max\left\{ H_{q_{1}}(n),H_{q_{2}}(n)\right\} \simeq\max\left\{ \frac{f(n)^{2}}{n}\log(n+1),h(n)\right\} =h(n).
\end{align*}

In the second case where $f(n)/\left(n^{\frac{1}{2}}\log^{2}n\right)$
and $n^{\frac{3}{4}}/f(n)$ are non-decreasing, by Theorem \ref{possible-speed-dihedral}
and Proposition \ref{dihedral-entropy}, there exists group $\Delta_{1}$ and
step distribution $q'_{1}$ on $\Delta_{1}$ such that 
\[
L_{q'_{1}}(n)\simeq f(n),\mbox{ and }\frac{1}{C}n^{\frac{1}{2}}\le H_{q'_{1}}(n)\le Cn^{\frac{1}{2}}\log^{2}n.
\]
By Theorem \ref{possible-speed-linear}, there exists a group $\Delta_{2}$
and step distribution $q'_{2}$ such that 
\[
L_{q'_{2}}(n)\simeq H_{q'_{2}}(n)\simeq h(n).
\]
Then on the direct product $\Delta_{1}\times\Delta_{2}$ with step
distribution $q'_{1}\otimes q'_{2}$, 
\begin{align*}
L_{q'_{1}\otimes q'_{2}}(n)\simeq\max\left\{ L_{q'_{1}}(n),L_{q'_{2}}(n)\right\}  & \simeq f(n),\\
H_{q'_{1}\otimes q'_{2}}(n)\simeq\max\left\{ H_{q'_{1}}(n),H_{q'_{2}}(n)\right\}  & \simeq h(n).
\end{align*}
\end{proof}

\begin{remark}

This method permits to prove Corollary \ref{entropy-speed-exponent}
but can't handle functions that oscillate cross the $n^{\frac{3}{4}}$
borderline. The problem can be reduced to finding extremal examples
where the speed follows a prescribed function while entropy growth
is as slow as possible. The Amir-Virag result covers the case where
speed grows at least like $n^{\frac{3}{4}}$. The construction of
diagonal product $\Delta$ is not designed to achieve such a goal.

\end{remark}

\section{Isoperimetric profiles and return probability\label{sec:isoperimetric-profiles}}

In this section we consider the isoperimetric profiles and return
probability of $\Delta$ when $\left\{ \Gamma_{s}\right\} $ are taken
to be expanders or dihedral groups. For convenience of calculation,
we take the switch-or-walk measure $\mathfrak{q}=\frac{1}{2}(\mu+\nu)$
on $\Delta$ where $\mu$ is the simple random walk measure on the
base $\mathbb{Z}$, $\mu\left(\tau^{\pm1}\right)=\frac{1}{2}$, and
$\nu$ is uniform on $\left\{ \alpha_{i},\beta_{j}:\ 1\le i\le\left|A\right|,1\le j\le\left|B\right|\right\} $.
Let $\mathfrak{q}_{s}$ be the projection of $\mathfrak{q}$ to the
quotient $\Delta_{s}$.

\subsection{Isoperimetric profiles}

\label{sub: profile}

We first recall some background information. Given a symmetric probability
measure $\phi$ on $G$, the $p$-Dirichlet form associated with $(G,\phi)$
is 
\[
\mathcal{E}_{p,\phi}(f)=\frac{1}{2}\sum_{x,y\in G}|f(xy)-f(x)|^{p}\phi(y)
\]
and the $\ell^{p}$-isoperimetric profile $\Lambda_{p,G,\phi}:[1,\infty)\to\mathbb{R}$
is defined as 
\begin{equation}
\Lambda_{p,G,\phi}(v)=\inf\left\{ \mathcal{E}_{p,\phi}(f):|\mbox{support}(f)|\le v,\|f\|_{p}=1\right\} .\label{p-profile}
\end{equation}
The most important ones are the $\ell{}^{1}$ and $\ell^{2}$-isoperimetric
profiles. Using an appropriate discrete co-area formula, $\Lambda_{1,\phi}$
can equivalently be defined by 
\[
\Lambda_{1,G,\phi}(v)=\inf\left\{ |\Omega|^{-1}\sum_{x,y}\mathbf{1}_{\Omega}(x)\mathbf{1}_{G\setminus\Omega}(xy)\phi(y):|\Omega|\le v\right\} .
\]
If we define the boundary of $\Omega$ to be the set 
\[
\partial\Omega=\left\{ (x,y)\in G\times G:x\in\Omega,y\in G\setminus\Omega\right\} 
\]
and set $\phi(\partial\Omega)=\sum_{x\in\Omega,xy\in G\setminus\Omega}\phi(y)$
then 
\[
\Lambda_{1,G,\phi}(v)=\inf\{\phi(\partial\Omega)/|\Omega|:|\Omega|\le v\}.
\]
When $\phi$ a symmetric measure supported by a finite generating
set $S$, $\Lambda_{1,G,\phi}(v)$ is closely related to the Følner
function $\mbox{Føl}_{G,S}:(0,\infty)\to\mathbb{N}$ defined as 
\[
\mbox{Føl}_{G,S}(r)=\min\left\{ |\Omega|:\ \Omega\subset G,\ \frac{\left|\partial_{S}\Omega\right|}{\left|\Omega\right|}<\frac{1}{r}\right\} ,
\]
where $\left|\partial_{S}\Omega\right|=\{x\in\Omega:\ \exists u\in S,\ xu\notin\Omega\}.$
Namely, let $p_{\ast}=\min\{\phi(u):\ u\in S,u\neq id\}$, then 
\[
\Lambda_{1,G,\phi}^{-1}(1/r)\le\mbox{Føl}_{G,S}(r)\le\Lambda_{1,G,\phi}^{-1}(p_{\ast}/r),
\]
where $\Lambda_{1,G,\phi}^{-1}$ is the generalized inverse of $\Lambda_{1,G,\phi}$
.

We will repeatedly use the following two facts.

For any $1\le p\le q\le2$, the isoperimetric profiles $\Lambda_{p,G,\phi}$
and $\Lambda_{q,G,\phi}$ are related by Cheeger type inequality

\begin{equation}
c_{0}\Lambda_{p,G,\phi}^{q/p}\le\Lambda_{q,G,\phi}\le C_{0}\Lambda_{p,G,\phi},\label{eq:cheeger-pq}
\end{equation}
where $c_{0},C_{0}$ are absolute constants, see \cite{Lawler1988},
\cite[Proposition 2.8]{Saloff-Coste2015a}.

Let $H$ be a quotient group of $G$, $\bar{\phi}$ be the projection
of $\phi$ on $H$, then by \cite[Proposition 4.5]{Tessera2013},
for all $1\le p<\infty$, 
\[
\Lambda_{p,G,\phi}\ge\Lambda_{p,H,\bar{\phi}}.
\]

\subsubsection{Isoperimetric profiles of one factor}

Let $\left\{ \Gamma_{s}\right\} $ be a family of expanders, as in
Example \ref{examples}. Let $\nu=\nu_s$ be the uniform distribution on
$A(s)\cup B(s)$ in $\Gamma_s$. Denote $h(\Gamma_{s},\nu)$ the Cheeger constant
\[
h\left(\Gamma_{s},\nu\right)=\inf\left\{ \Lambda_{1,\Gamma_{s},\nu}(v):\ v\le\frac{|\Gamma_{s}|}{2},\ v<\infty\right\} .
\]
In one copy $\Delta_{s}=\Gamma_{s}\wr\mathbb{Z}$, we establish the
following lower bound of $\ell^{p}$-isoperimetric profile of $\Delta_{s}$.

\begin{lemma}\label{iso-one}

Let $p\in[1,2]$, $\Gamma_{s}$ be a finite group marked with generating
subgroups $A(s),B(s)$. Let $\mathfrak{q}_{s}$ be the uniform distribution on
$\{\tau^{\pm 1}\}\cup A(s)\cup B(s)$ in $\Delta_{s}$. Then there exists an absolute constant $C>1$ such
that the following is true. 
\begin{itemize}
\item (Slow phase) for $1\le v\le2^{k_{s}/2}$, 
\[
\Lambda_{p,\Delta_{s},\mathfrak{q}_{s}}(v)\ge\frac{\left(\log_{2}v\right)^{-p}}{C\left(|A(s)|+|B(s)|\right)}.
\]

\item (Fast, then slow down) for $\left|\Gamma_{s}\right|^{r}\le v\le\left|\Gamma_{s}\right|^{r+1}$,
$r\ge k_{s}$, 
\[
\Lambda_{p,\Delta_{s},\mathfrak{q}_{s}}(v)\ge\frac{h(\Gamma_{s},\nu)^{p}}{C\left(|A(s)|+|B(s)|\right)r^{p}}.
\]

\end{itemize}
\end{lemma}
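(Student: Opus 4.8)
First I would peel off the dependence on $|A(s)|+|B(s)|$. Since $\mathfrak{q}_s$ is uniform on the symmetric set $\{\tau^{\pm1}\}\cup A(s)\cup B(s)$, which has $\asymp|A(s)|+|B(s)|$ elements and contains $\mathcal{T}_s$ and its inverses, one has $\mathcal{E}_{p,\mathfrak{q}_s}(f)\ge \tfrac{c}{|A(s)|+|B(s)|}\sum_{x}\sum_{t}|f(xt)-f(x)|^{p}$ with $c>0$ absolute and the inner sum over $\mathcal{T}_s$ and inverses; moreover $\mathfrak{q}_s(\tau^{\pm1})=\tfrac14$, so the ``shift part'' of the energy is available with no $|A(s)|+|B(s)|$ loss. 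Hence it suffices to bound this edge-form energy below by $(\log_2 v)^{-p}$ in the slow phase and by $c\,h(\Gamma_s,\nu)^{p}r^{-p}$ in the fast phase, for $\|f\|_p=1$ and $|\mathrm{supp}\,f|\le v$. In each phase the passage from $p=1$ to general $p\in[1,2]$ is either carried out directly, tracking $p$-th powers in a level-set decomposition, or obtained from the Cheeger-type inequality \eqref{eq:cheeger-pq} (one has to keep the normalisations straight so that $|A(s)|+|B(s)|$ really enters only once).

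\textbf{Slow phase.} Here I would use the relative abelianization of Assumption \ref{Gamma_Involution}: the quotient $\Gamma_s\twoheadrightarrow\Gamma_s/[A(s),B(s)]^{\Gamma_s}\simeq A(s)\times B(s)$ induces $\Delta_s=\Gamma_s\wr\Z\twoheadrightarrow (A\times B)\wr\Z$, and a short computation with the ``shear'' automorphism $(f,i)\mapsto(f',i)$, $f'(x)_A=f(x)_A$, $f'(x)_B=f(x+k_s)_B$, of $(A\times B)\wr\Z$ shows that this quotient carries the tuple $\mathcal{T}_s$ onto the \emph{standard} generating tuple of $(A\times B)\wr\Z$. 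By monotonicity of isoperimetric profiles under quotients (\cite[Proposition 4.5]{Tessera2013}, recalled in \ref{sub: profile}), $\Lambda_{p,\Delta_s,\mathfrak{q}_s}\ge\Lambda_{p,(A\times B)\wr\Z,\overline{\mathfrak{q}_s}}$, and the latter is $\gtrsim(\log_2 v)^{-p}/(|A|+|B|)$ by the classical lower bound on the $\ell^{p}$-isoperimetric profile of a lamplighter over $\Z$ with non-trivial finite lamp group --- which follows from the exponential growth of $\Delta_s$ together with Coulhon-Saloff-Coste \cite{Coulhon1993} (or from Coulhon-Grigor'yan theory \cite{Coulhon1997} and the $\exp(-n^{1/3})$ return-probability bound). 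The cut-off $v\le 2^{k_s/2}$ is not needed for this inequality; it is stated because beyond it the true profile of $\Delta_s$ is strictly larger and the fast-phase estimate takes over.

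\textbf{Fast phase.} This is the substantive part. Now $v\ge|\Gamma_s|^{r}$ with $r\ge k_s$, so the scale is well above $k_s$ and the finiteness and expansion of $\Gamma_s$ govern the profile; there is no helpful quotient, and I would run a fibered co-area argument directly on $\Delta_s=\Gamma_s\wr\Z$. Split the edge-form energy of $f$ as $\mathcal{S}(f)+\sum_{j\in\Z}\mathcal{L}_j(f)$, where $\mathcal{S}(f)=\sum_x|f(x\tau)-f(x)|^p$ is the shift part and $\mathcal{L}_j(f)$ collects the contributions of the generators $\alpha_i$ applied with cursor at $j$ and of the $\beta_i$ applied with cursor at $j-k_s$, i.e.\ exactly the moves that alter the lamp at site $j$. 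Fixing the environment (the lamp values off site $j$) turns $f$ into a function on the copy of $\Gamma_s$ at site $j$; realizing on that copy the full Cayley structure of $(\Gamma_s,A(s)\cup B(s))$ requires moving the cursor between $j$ and $j-k_s$, hence costs $\ge k_s$ shift-edges of $\mathfrak{q}_s$-weight $\tfrac14$. Combining the Poincar\'e inequality of $\Gamma_s$ (namely $\Lambda_{1,\Gamma_s,\nu}(\cdot)\ge h(\Gamma_s,\nu)$ below half the group, hence $\Lambda_{p,\Gamma_s,\nu}(\cdot)\ge c\,h(\Gamma_s,\nu)^p$ via \eqref{eq:cheeger-pq}) with the shift part inside the $k_s$-slabs gives, for every site $j$ whose fiber is at most half full, an effective lower bound $c\,h(\Gamma_s,\nu)^p$ times the $\ell^p$-mass of $f$ carried by that site. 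A final counting step uses $|\mathrm{supp}\,f|\le|\Gamma_s|^{r+1}$: since the elements of $\Delta_s$ whose range lies in a given interval of $M$ sites number at most $M|\Gamma_s|^{M}$, the support of $f$ must spread over $M\gtrsim r$ sites, and either some fiber is more than half full (where $h(\Gamma_s,\nu)$ still applies) or the shift boundary at the ends of the occupied range is $\gtrsim|\mathrm{supp}\,f|/r$. Distributing the energy over these $\gtrsim r$ sites and optimising yields $\mathcal{S}(f)+\sum_j\mathcal{L}_j(f)\gtrsim h(\Gamma_s,\nu)^p r^{-p}\|f\|_p^p$; tracking $k_s$ throughout accounts for the hypothesis $r\ge k_s$.

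\textbf{Main obstacle.} The delicate point is this last combination: the lamp energies $\mathcal{L}_j$ only see $A(s)$-moves or $B(s)$-moves in isolation, so the Poincar\'e inequality of $\Gamma_s$ with its full generating set $A(s)\cup B(s)$ must be transported through the $\Z$-direction at scale $k_s$, uniformly in $j$, and then summed against the correct number $\asymp r$ of occupied sites with the right trade-off against the shift boundary. This is an isoperimetry-of-a-fibration estimate; the cleanest way to organise it is an induction on the structure of $\Delta_s$ as an extension of $\Z$ by $\Gamma_s^{(\Z)}$, or a tensorisation-type inequality for wreath products in the spirit of Erschler's bounds for $\Lambda_{H\wr\Z}$ in terms of $\Lambda_H$. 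The reductions, the slow-phase quotient argument, and the passage from $p=1$ to $p\in[1,2]$ are otherwise routine.
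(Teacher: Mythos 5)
Your slow-phase route is a genuinely different (and plausible) argument from the paper's: you pass to the quotient $\Delta_s\twoheadrightarrow(A\times B)\wr\Z$ through the relative abelianization, use the shear automorphism $(f,i)\mapsto(f',i)$ with $f'(x)_A=f(x)_A$, $f'(x)_B=f(x+k_s)_B$ to carry $\mathcal{T}_s$ to the standard generating tuple (this is correct, and is Lemma~\ref{abelian} in disguise), and then invoke monotonicity under quotients together with a known profile lower bound for lamplighters over $\Z$. The paper instead introduces, for $r<k_s/2$, the product kernel $\zeta_r=\eta_0\otimes\cdots\otimes\eta_r$ uniform on the subgroup $\prod_{x\in[0,r]}(A_s)_x$ (which trivially has $\Lambda_{1,\Delta_s,\zeta_r}\ge\tfrac12$ below half the subgroup order) and compares Dirichlet forms via the path-length bound. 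Your route works but be careful that the Coulhon--Saloff-Coste constant for $(A\times B)\wr\Z$ only contributes a factor comparable to $|A|+|B|$; the paper's comparison makes the linear dependence explicit from the start.

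The fast phase is where you have a real gap, and you have in fact flagged it yourself. Your "fibered co-area" sketch correctly identifies the obstacle --- the lamp Dirichlet energies at a site $j$ only see $A$-moves (cursor at $j$) or $B$-moves (cursor at $j-k_s$) in isolation, so the Cheeger constant of $(\Gamma_s,A(s)\cup B(s))$ has to be transported through the $\Z$-direction at scale $k_s$ uniformly in $j$ and summed against roughly $r$ occupied sites --- but you do not carry out the combination, and the proposed "induction on the structure of $\Delta_s$ as an extension" or "tensorisation in the spirit of Erschler" is left as a placeholder. The paper's mechanism for resolving exactly this is missing from your proposal: for $r>k_s$ it takes the product kernel $\zeta_r=\nu_0\otimes\cdots\otimes\nu_r$ supported on the zero-section subgroup $\Pi_r=\prod_{x\in[0,r]}(\Gamma_s)_x$, applies Bobkov--Houdr\'e--Tetali (\cite[Theorem 1.1]{Bobkov1997}) to get $h(\Pi_r,\zeta_r)\ge\frac{1}{2\sqrt6}h(\Gamma_s,\nu)$ for the \emph{product} Cheeger constant, hence $\Lambda_{1,\Delta_s,\zeta_r}(v)\ge\frac{1}{2\sqrt6}h(\Gamma_s,\nu)$ for $v\le\tfrac12|\Gamma_s|^{r+1}$, and then compares $\mathcal{E}_{p,\Delta_s,\mathfrak{q}_s}$ to $\mathcal{E}_{p,\Delta_s,\zeta_r}$ by the standard path-length argument using the metric estimate of Lemma~\ref{one_metric}, which gives $|g|_{\Delta_s}\le 40r$ for every $g\in\mathrm{supp}\,\zeta_r$. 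This side-steps the tensorisation/double-counting difficulty entirely: the product kernel already randomises all coordinates independently in one step, and the "transport through $k_s$ shift edges" you worry about is absorbed into a single Dirichlet-form comparison (a $(40r)^p$ factor and a $2(|A|+|B|)$ factor). The Cheeger extrapolation~\eqref{eq:cheeger-pq} then converts $\Lambda_1$ to $\Lambda_p$ at the cost of a $p$-th power, as in both your sketch and the paper. Also note a smaller soft spot in your counting step: "some fiber is more than half full (where $h(\Gamma_s,\nu)$ still applies)" --- the Cheeger constant of a finite group gives no edge-expansion for sets larger than half the group, so that branch would need a complementation argument; the paper's route never encounters it.
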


\begin{remark}

By monotonicity of the profile function $\Lambda_{p,\Delta_{s},q_{s}}$,
we have that 
\[
\Lambda_{p,\Delta_{s},\mathfrak{q}_{s}}(v)\ge\frac{h(\Gamma_{s},\nu)^{p}}{C\left(|A(s)|+|B(s)|\right)k_{s}^{p}},\ \mbox{for }v\in\left(2^{k_{s}/2},\left|\Gamma_{s}\right|^{k_{s}}\right).
\]

\end{remark}

\begin{proof} We prove a lower bound for the $\ell^{1}$-isoperimetric
profile $\Lambda_{1,\Delta_{s},\mathfrak{q}_{s}}$ and use Cheeger
inequality (\ref{eq:cheeger-pq}) to derive a lower bound on $\Lambda_{p,\Delta_{s},\mathfrak{q}_{s}}$.
Given $r\ge1$, consider the product of $\Gamma_{s}$ in the zero-section
over the segment $[0,r]$, namely 
\[
\Pi_{r}=\prod_{x\in[0,r]}\left(\Gamma_{s}\right)_{x}.
\]
We now construct a product kernel $\zeta_{r}$ on $\Pi_{r}$ and discuss
its $\ell^{1}$-isoperimetry.

Phase I\emph{:} in the first phase $r<k_{s}/2$. Let $\eta_{x}$ denote
the uniform measure on the finite subgroup $A_{s}\simeq\mathbb{Z}/2\mathbb{Z}$
in the copy $\left(\Gamma_{s}\right)_{x}$. Let $\zeta_{r}$ denote
the product kernel 
\[
\zeta_{r}=\eta_{0}\otimes\ldots\otimes\eta_{r}.
\]
Then $\zeta_{r}$ is indeed the uniform measure on the subgroup $\prod_{x\in[0,r]}\left(A_{s}\right)_{x}$
in the zero section of $\Delta_{s}$, it follows that 
\[
\Lambda_{1,\Delta_{s},\zeta_{r}}(v)\ge\frac{1}{2}\ \mbox{for all }v\le\frac{1}{2}\left(2^{r+1}\right).
\]

Phase II\emph{:} in the second phase $r>k_{s}$. Let $\nu_{x}$ denote
the uniform measure on the generating set $A(s)\cup B(s)$ in the
copy $\left(\Gamma_{s}\right)_{x}$. Let $\zeta_{r}$ denote the product
kernel 
\[
\zeta_{r}=\nu_{0}\otimes\ldots\otimes\nu_{r}.
\]
As a transition kernel, $\zeta_{r}$ changes every copy $\left(\Gamma_{s}\right)_{x}$
independently according to $\nu_{x}$. By \cite[Theorem 1.1]{Bobkov1997},
the Cheeger constant of $\zeta_{r}$ on $\Pi_{r}$ satisfies 
\[
h(\Pi_{r},\zeta_{r})\ge\frac{1}{2\sqrt{6}}h(\Gamma_{s},\nu).
\]
In other words, for $r>k_{s}$ 
\[
\Lambda_{1,\Delta_{s},\zeta_{r}}(v)\ge\frac{1}{2\sqrt{6}}h(\Gamma_{s},\nu)\ \mbox{for all }v\le\frac{1}{2}\left|\Gamma_{s}\right|^{r+1}.
\]
By Cheeger inequality (\ref{eq:cheeger-pq}), for $p\in[1,2]$, 
\[
\Lambda_{p,\Delta_{s},\zeta_{r}}(v)\ge c_{0}\Lambda_{1,\Delta_{s},\zeta_{r}}(v)^{p}.
\]
Now we go back to the simple random walk kernel $\mathfrak{q}$. By
construction of the transition kernel $\zeta_{r}$ in both cases,
the metric estimate in Lemma \ref{one_metric} implies every element
$g$ in the support of $\zeta_{r}$ satisfies 
\[
\left|g\right|_{\Delta_{s}}\le40r.
\]
By the standard path length argument (see \cite[Lemma 2.1]{PittetSaloffCoste2000}),
we have comparison of Dirichlet forms on $\Delta_{s}$ that 
\[
2\left(|A|+|B|\right)\mathcal{E}_{p,\Delta_{s},\mathfrak{q}_{s}}\ge\frac{1}{(40r)^{p}}\mathcal{E}_{p,\Delta_{s},\zeta_{r}}.
\]
It follows that 
\begin{itemize}
\item for $r<\frac{k_{s}}{2}$, 
\[
\Lambda_{p,\Delta_{s},\mathfrak{q}_{s}}\left(v\right)\ge\frac{c_{0}}{2\left(|A|+|B|\right)(40r)^{p}}\ \mbox{for all }v\le2^{r};
\]

\item for $r>k_{s}$, 
\[
\Lambda_{p,\Delta_{s},\mathfrak{q}_{s}}\left(v\right)\ge\frac{c_{0}h(\Gamma_{s},\nu)^{p}}{2\left(|A|+|B|\right)\left(80\sqrt{6}r\right)^{p}}\ \mbox{for all }v\le\frac{1}{2}\left|\Gamma_{s}\right|^{r+1}.
\]

\end{itemize}
\end{proof}

When $\Gamma_{s}=\Gamma$ is an infinite group, we have the following
bound.

\begin{lemma}\label{iso-one-infty}

Let $p\in[1,2]$, $\Gamma_{s}=\Gamma$ be an infinite group marked
with generating subgroups $A,B$. Let $\nu$ be the uniform distribution
on $A\cup B$. Then there exists an absolute constant $C>1$ such
that the following is true. 
\begin{itemize}
\item for $1\le v\le2^{k_{s}/2}$, 
\[
\Lambda_{p,\Delta_{s},\mathfrak{q}_{s}}(v)\ge\frac{\left(\log_{2}v\right)^{-p}}{C\left(|A(s)|+|B(s)|\right)}.
\]

\item for $v>2^{k_{s}/2}$, 
\[
\Lambda_{p,\Delta_{s},\mathfrak{q}_{s}}(v)\ge\frac{h(\Gamma,\nu)^{p}}{C\left(|A(s)|+|B(s)|\right)k_{s}^{p}}.
\]

\end{itemize}
\end{lemma}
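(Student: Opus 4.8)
The proof is a direct adaptation of the proof of Lemma~\ref{iso-one}: one uses the same two auxiliary measures together with the path-length comparison of Dirichlet forms, the only structural change being that, $\Gamma$ being infinite, there is no ``slow-down'' regime, so a single product of $k_{s}+1$ copies of $\Gamma$ suffices and the $k_{s}^{-p}$ bound persists for every $v>2^{k_{s}/2}$. \emph{Slow phase.} For $1\le v\le 2^{k_{s}/2}$, set $r=\lceil\log_{2}v\rceil\le k_{s}/2$ and let $\zeta_{r}$ be the uniform measure on the finite subgroup $K=\prod_{x\in[0,r]}(A(s))_{x}$ of the zero-section of $\Delta_{s}$. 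Since a walk driven by the uniform measure on a finite group equidistributes in one step, $\Lambda_{1,\Delta_{s},\zeta_{r}}(v')\ge\frac12$ for all $v'\le\frac12|K|$, and $\frac12|K|=\frac12|A|^{r+1}\ge 2^{r}\ge v$. By Lemma~\ref{one_metric} every element of $K=\mathrm{supp}(\zeta_{r})$ has $\Delta_{s}$-length at most $40r$ (its essential contribution vanishes and the cursor only sweeps $[0,r]$), so the path-length comparison \cite[Lemma~2.1]{PittetSaloffCoste2000} gives $2(|A|+|B|)\,\mathcal{E}_{p,\Delta_{s},\mathfrak{q}_{s}}\ge(40r)^{-p}\,\mathcal{E}_{p,\Delta_{s},\zeta_{r}}$; combined with the $\ell^{1}$--$\ell^{p}$ Cheeger inequality (\ref{eq:cheeger-pq}) this yields $\Lambda_{p,\Delta_{s},\mathfrak{q}_{s}}(v)\ge\frac{c_{0}\,2^{-p}}{2(|A|+|B|)(40r)^{p}}$, which is the first bullet since $r\le 2\log_{2}v$ for $v\ge 2$.

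\emph{Fast phase.} For $v>2^{k_{s}/2}$ I would fix $r=k_{s}$ (even, by Assumption~\ref{k_growth}) and take the subgroup $\Pi=\prod_{x\in[0,k_{s}]}\Gamma_{x}$ of the zero-section, made of copies of the \emph{whole} group $\Gamma$, with convolution kernel $\zeta=\nu_{0}\otimes\cdots\otimes\nu_{k_{s}}$ where $\nu_{x}$ is uniform on $A(s)\cup B(s)$ at site $x$. The Bobkov--Houdr\'e tensorization \cite[Theorem~1.1]{Bobkov1997} gives $h(\Pi,\zeta)\ge\frac{1}{2\sqrt6}h(\Gamma,\nu)$, and since $\Pi$ is now infinite this is a lower bound on $\Lambda_{1,\Delta_{s},\zeta}(v')$ valid for \emph{every} finite $v'$, with no scale $r$ that must grow with $v$. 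The elements of $\mathrm{supp}(\zeta)$ have $\Delta_{s}$-length at most $40k_{s}$ (essential contribution $0$, cursor sweeping $[-k_{s},k_{s}]$), so the same Dirichlet-form comparison together with (\ref{eq:cheeger-pq}) yields $\Lambda_{p,\Delta_{s},\mathfrak{q}_{s}}(v')\ge\frac{c_{0}\,h(\Gamma,\nu)^{p}}{2(|A|+|B|)(80\sqrt6\,k_{s})^{p}}$ for all finite $v'$, which is the second bullet after absorbing absolute constants into $C$.

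\emph{Main obstacle.} The only point needing more care than in Lemma~\ref{iso-one} is the use of the Bobkov--Houdr\'e tensorization on the \emph{infinite} product $\Pi=\Gamma^{k_{s}+1}$: one has to observe that the statement actually needed is the tensorization of the Cheeger constant of the product Markov chain $(\Pi,\zeta)$ --- equivalently, of the $\ell^{1}$-isoperimetric profile of $\Pi$ equipped with the convolution kernel $\zeta$ --- and that its proof uses only that each factor $(\Gamma,\nu_{x})$ has Cheeger constant $h(\Gamma,\nu)$, not finiteness of the factor. This constant is positive precisely when $\Gamma$ is non-amenable, which is the only interesting case and which holds in our examples since $\Gamma$ has Property~(T); when $\Gamma$ is amenable the asserted inequality reads $\Lambda_{p,\Delta_{s},\mathfrak{q}_{s}}(v)\ge 0$ and is vacuous, so the argument is uniform. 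All remaining ingredients --- the path comparison, the $\ell^{1}$--$\ell^{p}$ Cheeger inequality, and the bookkeeping of constants --- are identical to the proof of Lemma~\ref{iso-one}.
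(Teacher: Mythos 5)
Your proof of the first bullet coincides with the paper's, which simply reuses the slow phase of Lemma \ref{iso-one}. For the second bullet you reach the right conclusion, but by a more elaborate route than necessary: you embed the $(k_s{+}1)$-fold product $\prod_{x\in[0,k_s]}\Gamma_x$ in the zero-section, take the product kernel $\zeta=\nu_0\otimes\cdots\otimes\nu_{k_s}$, and invoke the Bobkov--Houdr\'e tensorization to bound $h(\Pi,\zeta)$ from below by $\tfrac{1}{2\sqrt6}h(\Gamma,\nu)$. This then forces you to justify that the tensorization -- stated in \cite{Bobkov1997} for product probability spaces -- extends to products of infinite graphs; you flag this yourself as the ``main obstacle'' and assert without proof that the argument only uses positivity of the factor Cheeger constant. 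The paper avoids the issue entirely: it considers the \emph{single} copy $(\Gamma)_0$ of $\Gamma$ sitting at site $0$ in the zero-section and regards $\nu$ (uniform on $A\cup B$) as a symmetric measure on $\Delta_s$ supported on this subgroup. Elements of $\mathrm{supp}(\nu)$ have $\Delta_s$-length $O(k_s)$ (the $\beta_i$'s cost $2k_s{+}1$), so the path-length comparison yields
\[
C\left(|A(s)|+|B(s)|\right)k_s^p\,\mathcal{E}_{p,\Delta_s,\mathfrak{q}_s}\ \ge\ \mathcal{E}_{p,\Delta_s,\nu},
\]
and since the $\nu$-Dirichlet form on $\Delta_s$ decomposes over $(\Gamma)_0$-cosets, one gets $\Lambda_{p,\Delta_s,\nu}(v)\ge\Lambda_{p,\Gamma,\nu}(v)\ge c_0\,h(\Gamma,\nu)^p$ directly from the $\ell^1$--$\ell^p$ Cheeger inequality, valid for every finite $v$ because $\Gamma$ is infinite. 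What your route buys is nothing beyond the paper's -- the exponent and the constant's dependence on $k_s$, $|A|$, $|B|$ are the same -- while it costs you the extension of the tensorization lemma to infinite factors, a step you should either prove (it is true for Cartesian products of graphs, but Bobkov--Houdr\'e is not the right citation) or avoid by switching to the one-copy argument.
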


\begin{proof}

The first item is the same as in proof of Lemma \ref{iso-one}. For
the second item, consider the copy of $\Gamma$ at $0$ and regard
$\nu$ as a measure supported on the subgroup $(\Gamma)_{0}$, then
\[
C\left(|A(s)|+|B(s)|\right)k_{s}^{p}\mathcal{E}_{p,\Delta_{s},\mathfrak{q}_{s}}\ge\mathcal{E}_{p,\Gamma,\nu},
\]
and the result follows.

\end{proof}

\subsubsection{Isoperimetric profile of the diagonal product\label{sub:Isoperimetry-delta}}

First we put together isoperimetric estimates on the copies $\Delta_{s}$
to describe isoperimetric profile of the diagonal product $\Delta$.
Let us denote $\ell_{s}=\log|\Gamma_{s}|$. Mind the difference with
the diameter $l_{s}$ of $\Gamma_{s}$. For a family of expanders,
these two quantities differ only by multiplicative constants depending
only on the volume growth and the spectral gap of $\left\{ \Gamma_{s}\right\} $.

\begin{proposition} \label{iso-Delta}

Suppose $\left\{ \left(\Gamma_{s},A(s)\cup B(s)\right)\right\} $
with $A(s)\simeq A,B(s)\simeq B$ is a family of groups with Cheeger
constant $h(\Gamma_{s},\nu_{s})\ge\delta>0$, where $\nu_{s}$ is
uniform on $A(s)\cup B(s)$. Suppose $\{k_{s}\}$ satisfies growth
assumption \ref{k_growth}. Let $\Delta$ be the diagonal product
constructed with $\left\{ \Gamma_{s}\right\} $ and parameters $\{k_{s}\}$, and $\mathfrak{q}$ be the uniform measure on $\{\tau^{\pm 1}\} \cup A\cup B$ in $\Delta$.

There exists an absolute constant $C>1$ such that the following estimates
hold for any $s\ge0$, $p\in[1,2]$. 
\begin{enumerate}
\item For volume $v\in\left[e^{k_{s}\ell_{s}},e^{k_{s+1}\ell_{s}}\right)$,
\begin{align*}
\Lambda_{p,\Delta,\mathfrak{q}}(v) & \ge\frac{1}{|A|+|B|}\left(\frac{\delta\ell_{s}}{C\log v}\right)^{p},\\
\Lambda_{p,\Delta,\mathfrak{q}}\left(v^{\frac{\sum_{j\le s}\ell_{j}}{\ell_{s}}}\right) & \le\left(\frac{C\ell_{s}}{\log v}\right)^{p}.
\end{align*}

\item For volume $v\in\left[e^{k_{s+1}\ell_{s}},e^{k_{s+1}\ell_{s+1}}\right]$,
\begin{align*}
\Lambda_{p,\Delta,\mathfrak{q}}(v) & \ge\frac{1}{|A|+|B|}\left(\frac{\delta}{Ck_{s+1}}\right)^{p},\\
\Lambda_{p,\Delta,\mathfrak{q}}\left(v\right) & \le\left(\frac{C}{k_{s+1}}\right)^{p}\ \mbox{if }v\ge\exp\left(\left(\sum_{j\le s}\ell_{j}\right)k_{s+1}\right).
\end{align*}

\end{enumerate}
\end{proposition}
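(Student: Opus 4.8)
The plan is to derive Proposition~\ref{iso-Delta} by combining the single-factor estimates of Lemmas~\ref{iso-one} and~\ref{iso-one-infty} with the two general facts recalled at the start of Section~\ref{sub: profile}: namely that $\Lambda_{p,G,\phi}\ge\Lambda_{p,H,\bar\phi}$ for a quotient $H$ of $G$, and the Cheeger-type inequality \eqref{eq:cheeger-pq}. Throughout I work with the measure $\mathfrak q_s=\pi_s(\mathfrak q)$, which is (up to a bounded factor depending on $|A|+|B|$) the uniform measure on $\{\tau^{\pm1}\}\cup A(s)\cup B(s)$ appearing in Lemma~\ref{iso-one}, so the two lemmas apply to the factors $\Delta_s$ of $\Delta$.

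For the \textbf{lower bounds}, the key point is that each $\Delta_s$ is a marked quotient of $\Delta$, so $\Lambda_{p,\Delta,\mathfrak q}(v)\ge\Lambda_{p,\Delta_s,\mathfrak q_s}(v)$ for every $s$. For $v$ in the first range $[e^{k_s\ell_s},e^{k_{s+1}\ell_s})$, I would use Lemma~\ref{iso-one} applied to the factor $\Delta_s$ in its "fast, then slow down" regime: write $v\in[|\Gamma_s|^r,|\Gamma_s|^{r+1})$, so $r\simeq(\log v)/\ell_s$, which lies in $[k_s,k_{s+1})$ by the choice of range; since $h(\Gamma_s,\nu_s)\ge\delta$, Lemma~\ref{iso-one} gives $\Lambda_{p,\Delta_s,\mathfrak q_s}(v)\ge \frac{\delta^p}{C(|A|+|B|)r^p}\simeq\frac{1}{|A|+|B|}\bigl(\frac{\delta\ell_s}{C\log v}\bigr)^p$. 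For $v$ in the second range $[e^{k_{s+1}\ell_s},e^{k_{s+1}\ell_{s+1}}]$ I instead project to the factor $\Delta_{s+1}$ and note that here $\log v\in[k_{s+1}\ell_s,k_{s+1}\ell_{s+1}]$, i.e.\ $v\le 2^{k_{s+1}/2}$ fails but $v\le|\Gamma_{s+1}|^{k_{s+1}}$, so the Remark after Lemma~\ref{iso-one} (monotonicity, giving $\Lambda_{p,\Delta_{s+1},\mathfrak q_{s+1}}(v)\ge\frac{\delta^p}{C(|A|+|B|)k_{s+1}^p}$) yields the claimed bound; alternatively when $\Gamma_{s+1}$ is infinite one uses Lemma~\ref{iso-one-infty}. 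One must check the ranges patch together using Assumption~\ref{k_growth} ($k_{s+1}>2k_s$) and $\ell_{s+1}\ge m_0\ell_s$.

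For the \textbf{upper bounds}, I would exhibit explicit test functions. The natural candidate for the bound $\Lambda_{p,\Delta,\mathfrak q}(v^{\sum_{j\le s}\ell_j/\ell_s})\le(C\ell_s/\log v)^p$ is the indicator (or a truncated-metric variant) of a set of the form $\{((f_j),i): f_j \text{ supported on } [0,r_j),\ i\in[0,R)\}$ with $r_j$ chosen proportional to $(\log v)/\ell_s$ scaled appropriately in $s$, so that the support size multiplies out to $v^{\sum_{j\le s}\ell_j/\ell_s}$ while the $\ell^p$-Dirichlet energy, computed using the metric description of Proposition~\ref{metric}, is of order $(\ell_s/\log v)^p$; this is precisely the kind of box-like Følner set that makes the diagonal product behave like $\prod_{j\le s}(\Gamma_j\wr\mathbb Z)$ on the relevant scale. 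For the second upper bound, $\Lambda_{p,\Delta,\mathfrak q}(v)\le(C/k_{s+1})^p$ for $v\ge\exp((\sum_{j\le s}\ell_j)k_{s+1})$, the test set is the analogous box with $r_j=k_{s+1}$ for all $j\le s$ (the "stabilization" scale), using that once the segment length reaches $k_{s+1}$ the factor $\Delta_{s+1}$ no longer contributes, so the boundary-to-volume ratio is governed by $1/k_{s+1}$; here Fact~\ref{s_0} controls that only finitely many factors matter.

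The main obstacle I anticipate is the bookkeeping for the upper bounds: one must track how the supports and energies of the box test functions multiply across the finitely many active factors, verify that the metric estimate of Proposition~\ref{metric} (with its multiplicative constant $500$ and the essential-contribution/range decomposition of Lemma~\ref{one_metric}) converts the combinatorial boundary of the box into a Dirichlet-energy bound with the stated power of $\ell_s$ or $k_{s+1}$, and make sure the exponent $\sum_{j\le s}\ell_j/\ell_s$ on $v$ comes out exactly rather than up to a constant in the exponent. The lower bounds, by contrast, should follow cleanly once the right factor $\Delta_s$ (or $\Delta_{s+1}$) is selected for each volume range and the ranges are checked to cover $[1,\infty)$ using the growth assumptions.
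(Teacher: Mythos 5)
Your proposal matches the paper's proof: the lower bounds come from projecting to the factors $\Delta_s$ (respectively $\Delta_{s+1}$) and invoking Lemma \ref{iso-one} in its expanding and saturated regimes, while the upper bounds come from the single tent test function $\varphi_r\bigl((f_j),z\bigr)=(1-|z|/r)\,\mathbf{1}_{\{\mathrm{Range}\subset[-r,r]\}}$ with $r=\log v/\ell_s$ (and then $r=k_{s+1}$ plus monotonicity for the second range), whose support multiplies over the active factors to give the exponent $\sum_{j\le s}\ell_j/\ell_s$. The only place where your ``indicator'' phrasing should be sharpened is that for $p>1$ one needs the linear tent profile in the $\mathbb{Z}$-direction (not a plain indicator) to produce the $r^{-p}$ Rayleigh quotient, which your ``truncated-metric variant'' gloss already anticipates.
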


The upper bounds are valid without the requirement of positive Cheeger constant.

\begin{proof} Let $U_{r}^{\Delta}=\{((f_{s}),z):\ Range(f_{s},z)\subset[-r,r]\}$
and take a function supported on the subset $U_{r}^{\Delta}$, 
\[
\varphi_{r}\left((f_{s}),z\right)=\left(1-\frac{\left|z\right|}{r}\right)\mathbf{1}_{U_{r}^{\Delta}}\left(\left((f_{s}),z\right)\right).
\]
Let $U_{r}^{\Delta}(0)=\{((f_{s}),0):\ Range(f_{s},0)\subset[-r,r]\}$,
then $U_{r}^{\Delta}$ can be viewed as the product of $U_{r}^{\Delta}(0)$
and the interval $[-r,r]$. To compute the Rayleigh quotient of the
function $\varphi_{r}$, first note that $\varphi_{r}\left(Z\alpha_{i}\right)=\varphi_{r}(Z\beta_{j})=\varphi_{r}(Z)$
for all $Z\in\Delta$ and $\alpha_{i}\in A$, $\beta_{j}\in B$. For
the generator $\tau$, 
\begin{align*}
\sum_{\left(\left(f_{s}\right),z\right)\in U_{r}^{\Delta}}\left|\varphi_{r}\left(\left(f_{s}\right),z+1\right)-\varphi_{r}\left(\left(f_{s}\right),z\right)\right|^{p} & =\frac{1}{r^{p}}(2r)\left|U_{r}^{\Delta}(0)\right|,\\
\sum_{\left(\left(f_{s}\right),z\right)\in U_{r}^{\Delta}}\varphi_{r}\left(\left(f_{s}\right),z\right)^{p} & =\sum_{z\in[-r,r]}\left(1-\frac{\left|z\right|}{r}\right)^{p}\left|U_{r}^{\Delta}(0)\right|.
\end{align*}
Therefore 
\[
\frac{\mathcal{E}_{p,\Delta,\mathfrak{q}}(\varphi_{r})}{\left\Vert \varphi_{r}\right\Vert _{p}^{p}}\sim\frac{1+p}{2r^{p}}.
\]
For the size of support of $\varphi_{r}$, 
\[
|\mbox{supp}\varphi_{r}|\leq\prod_{k_{s}\leq2r}|\mbox{supp}\varphi_{r}^{s}|\leq\prod_{k_{s}\leq2r}|\Gamma_{s}|^{r}=e^{r\sum_{k_{s}\leq2r}\ell_{s}}.
\]

In the first interval $v\in\left[e^{k_{s}\ell_{s}},e^{k_{s+1}\ell_{s}}\right)$,
let 
\[
r=\frac{\log v}{\ell_{s}}
\]
and test function $\varphi_{r}^{\Delta}$ gives the upper bound on
$\Lambda_{p,\Delta,\mathfrak{q}}$ stated. For the lower bound on
$\Lambda_{p,\Delta,\mathfrak{q}}$, consider the projection to the
quotient $\Delta_{s}$. Then from the first item in Lemma \ref{iso-one},
we have for $v\in\left[e^{k_{s}\ell_{s}},e^{k_{s+1}\ell_{s}}\right)$,
\[
\Lambda_{p,\Delta,\mathfrak{q}}(v)\ge\Lambda_{p,\Delta_{s},\mathfrak{q}_{s}}(v)\ge\frac{1}{|A|+|B|}\left(\frac{\delta}{Ck_{s}}\right)^{p}.
\]

In the second interval $v\in\left[e^{k_{s+1}\ell_{s}},e^{k_{s+1}\ell_{s+1}}\right]$,
first consider the projection to the quotient $\Delta_{s+1}$. The
second item in Lemma \ref{iso-one} provides

\[
\Lambda_{p,\Delta,\mathfrak{q}}\left(\frac{1}{2}\left|\Gamma_{s+1}\right|^{k_{s+1}}\right)\ge\Lambda_{p,\Delta_{s},\mathfrak{q}_{s}}\left(\frac{1}{2}\left|\Gamma_{s+1}\right|^{k_{s+1}}\right)\ge\frac{1}{|A|+|B|}\left(\frac{\delta}{Ck_{s+1}}\right)^{p}.
\]
In the upper bound direction, note that the right end point in the
first interval gives 
\[
\Lambda_{p,\Delta,\mathfrak{q}}\left(\exp\left(\left(\sum_{j\le s}\ell_{j}\right)k_{s+1}\right)\right)\le\left(\frac{C}{k_{s+1}}\right)^{p}.
\]
The statement follows from monotonicity of $\Lambda_{p,\Delta,\mathfrak{q}}$.
\end{proof}

\begin{example}\label{exponents-iso} A direct application of Proposition
\ref{iso-Delta} shows that when $k_{s}=2^{\beta s}$ and $\ell_{s}=2^{\iota s}$
with $\beta,\iota>0$, then for $p\in[1,2]$ 
\[
\Lambda_{p,\Delta,\mathfrak{q}}(v)\simeq(\log v)^{-\frac{p}{1+\frac{\iota}{\beta}}},
\]
and the exponent $\frac{p}{1+\frac{\iota}{\beta}}$ can take any value
in $(0,p)$. \end{example}

We allow the sequence $(k_{s})$, $(l_{s})$ to take the value $\infty$,
the bounds are still valid. In our convention, $k_{s+1}=\infty$ means
$\Delta_{s+1}$ is trivial, in this case we only use the first item
in Proposition \ref{iso-Delta}, which covers $v\in\left[e^{k_{s}\ell_{s}},\infty\right)$.
The bounds in Proposition \ref{iso-Delta} are good when $\left\{ \ell_{s}\right\} $
grows at least exponentially. In particular, from these estimates
of isoperimetric profiles we deduce that $\Lambda_{p,\Delta,\mathfrak{q}}\circ\exp$
can follow a prescribed function satisfying some log-Lipschitz condition.

\begin{theorem} \label{possible-profile}

There exists universal constants $c,C>0$ such that for any $p\in[1,2]$
and for any non-decreasing function $\varrho(x)$ such that $\frac{x^{p}}{\varrho(n)}$
is non-decreasing, there is a group $\Delta$ such that 
\[
\forall v\geq3,\ \frac{c}{\varrho(\log v)}\leq\Lambda_{p,\Delta,\mathfrak{q}}(v)\leq\frac{C}{\varrho(\log v)}.
\]
\end{theorem}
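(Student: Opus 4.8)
The plan is to realize the prescribed function $\varrho$ by choosing the parameter sequences $(k_s)$ and $(\ell_s)$ appropriately and then reading off the isoperimetric profile from Proposition~\ref{iso-Delta}. First I would pass to the level of $\Lambda_{p,\Delta,\mathfrak{q}}^{1/p}$, which by the Cheeger-type comparison \eqref{eq:cheeger-pq} is equivalent (up to absolute constants, and up to composing the argument with a bounded power) to $\Lambda_{1,\Delta,\mathfrak{q}}$; so it suffices to prescribe a function at the $\ell^1$-level, where the bounds of Proposition~\ref{iso-Delta} are cleanest. The hypotheses on $\varrho$ — namely $\varrho$ non-decreasing and $x^p/\varrho(x)$ non-decreasing — say precisely that $g(x):=\varrho(x)^{1/p}$ lies between a constant and $x$ with $x/g(x)$ non-decreasing, i.e. $g$ is ``regular'' in the sense used throughout the paper. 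I would then invoke the approximation machinery of Appendix~\ref{approximation} (as is done for speed in Corollary~\ref{approximation-speed} and used in Theorems~\ref{possible-speed-linear}, \ref{possible-speed-dihedral}) to replace $g$, up to a multiplicative constant, by a piecewise function of the shape dictated by Proposition~\ref{iso-Delta}: on the window $\log v\in[k_s\ell_s,\,k_{s+1}\ell_s]$ the inverse profile behaves like $\ell_s/\log v$ (a ``slow'' piece decreasing like $1/x$), and on $\log v\in[k_{s+1}\ell_s,\,k_{s+1}\ell_{s+1}]$ it is essentially the constant $1/k_{s+1}$ (a ``plateau''). Thus the target function should be approximated by an alternation of pieces $\simeq \ell_s/x$ and constants $\simeq 1/k_{s+1}$, with breakpoints $k_s\ell_s \le k_{s+1}\ell_s \le k_{s+1}\ell_{s+1}$; matching these forces the recursion defining $(k_s)$ and $(\ell_s)$, and one checks the growth conditions $k_{s+1}>2k_s$ (Assumption~\ref{k_growth}) and $\ell_{s+1}$ growing at least geometrically are met — this is exactly the content guaranteed by the approximation lemma when $g$ is regular.

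Next I would fix the family $\{\Gamma_s\}$: take a family of expanders as in Example~\ref{examples} (the Lafforgue super expanders), so that $h(\Gamma_s,\nu_s)\ge\delta>0$ uniformly, the Cheeger-constant hypothesis of Proposition~\ref{iso-Delta} holds, and moreover $\ell_s=\log|\Gamma_s|$ is comparable to the diameter $l_s$ and ranges over a geometrically dense set of values so that the prescribed sequence $(\ell_s)$ can indeed be realized (up to a bounded factor) by actual quotients $\Gamma_s$. Alternatively, and with $\Delta$ solvable, one could use the lamplighter-over-torus family of Example~\ref{example-lamplighters}; either works since only the uniform positive Cheeger constant and the density of available $\ell_s$ are needed here. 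With this choice, Proposition~\ref{iso-Delta} gives, for every $s$ and every $p\in[1,2]$, matching two-sided bounds of the form $\Lambda_{p,\Delta,\mathfrak{q}}(v)\simeq (\text{piece})^p$ on each window, where the constants in the lower bound absorb the harmless factor $|A|+|B|$ and $\delta$, and the upper bounds hold unconditionally. Piecing these windows together over all $s$ — using that the windows $[e^{k_s\ell_s},e^{k_{s+1}\ell_{s+1}}]$ cover $[3,\infty)$ once the sequences are unbounded — yields $\Lambda_{p,\Delta,\mathfrak{q}}^{1/p}(v)\simeq_C 1/\bar g(\log v)$ for the approximant $\bar g$, hence $\Lambda_{p,\Delta,\mathfrak{q}}(v)\simeq_{C'} 1/\bar\varrho(\log v)\simeq 1/\varrho(\log v)$, which is the assertion.

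The main obstacle is the bookkeeping at the junctions between the ``slow phase'' and the ``plateau'' windows: Proposition~\ref{iso-Delta} gives a clean lower bound only on the half-windows where the projection to $\Delta_s$ (resp. $\Delta_{s+1}$) is the efficient test object, and the upper bound on the plateau is stated only for $v\ge\exp((\sum_{j\le s}\ell_j)k_{s+1})$ rather than from the left endpoint $e^{k_{s+1}\ell_s}$; closing this gap requires monotonicity of $\Lambda_{p,\Delta,\mathfrak q}$ together with the observation (already noted in the remark after Lemma~\ref{iso-one}) that $\sum_{j\le s}\ell_j \simeq \ell_s$ when $(\ell_s)$ grows geometrically, so that the intermediate exponent $v^{\sum_{j\le s}\ell_j/\ell_s}$ costs only a bounded power of $\log v$ and is thus absorbed into the constant $C$. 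The second, more routine, point to verify is that the approximation step really does produce sequences satisfying Assumptions~\ref{k_growth} and the geometric growth of $(\ell_s)$ for \emph{every} regular $\varrho$, including the borderline cases $\varrho$ bounded (then $(k_s)$ terminates, $\Delta$ is a finite diagonal product with $\Gamma_s$ infinite at the last step) and $\varrho(x)\simeq x^p$ (then $(\ell_s)$, equivalently the profile plateau level, pushes toward the linear regime); both are handled exactly as the analogous extremal cases in the proof of Theorem~\ref{possible-speed-linear}.
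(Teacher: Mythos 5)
Your proof takes essentially the same route as the paper's: write $\varrho=(x/f(x))^{p}$ with $f\in\mathcal{C}_{0,1}$, invoke Proposition~\ref{function-approximation} to realize $f$ (up to a universal constant) by a piecewise function $\tilde f$ whose breakpoints lie in the achievable sets $K=\Z_{+}\cup\{\infty\}$ and $L=\{\log|\Gamma_{m}|\}\cup\{\infty\}$ of the Lafforgue expander family, then read the two-sided bounds off Proposition~\ref{iso-Delta} using the uniform Cheeger constant $h(\Gamma_{s},\nu_{s})\ge\delta>0$, absorbing the junction slack $\sum_{j\le s}\ell_{j}\simeq\ell_{s}$ into the constant exactly as you describe.

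Two side remarks in your write-up are not quite right, though neither breaks the main argument. First, the proposed reduction from $\Lambda_{p}$ to $\Lambda_{1}$ via (\ref{eq:cheeger-pq}) is a detour and is imprecise as stated: the Cheeger comparison $c_{0}\Lambda_{1}^{p}\le\Lambda_{p}\le C_{0}\Lambda_{1}$ does not make $\Lambda_{p}^{1/p}$ and $\Lambda_{1}$ comparable up to multiplicative constants. You do not need it, since Proposition~\ref{iso-Delta} already gives matching two-sided bounds directly at the $\ell^{p}$-level for every $p\in[1,2]$, and indeed your argument proceeds that way anyway. Second, the parenthetical claim that the lamplighter-over-torus family of Example~\ref{example-lamplighters} ``would also work'' is false: the Cheeger constant of $\Z_{2}\wr D_{2n_{s}}^{d}$ tends to $0$ as $n_{s}\to\infty$, so the hypothesis $h(\Gamma_{s},\nu_{s})\ge\delta>0$ of Proposition~\ref{iso-Delta} fails. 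That family satisfies the \emph{linear-speed} assumption needed for Theorem~\ref{possible-speed-linear}, which is a different and weaker requirement than a uniform Cheeger bound; for the isoperimetric-profile statement an expander family is genuinely needed.
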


\begin{proof} We write $\varrho(x)=\left(\frac{x}{f(x)}\right)^{p}$
with $f(x)$ between $1$ and $x$. The sets $K=\Z_{+}\cup\{\infty\}$ and $L=\left\{ \log|\Gamma_{m}|,m\geq1\right\}\cup\{\infty\} $
where $\{\Gamma_s\}$ are groups in the family of Examples \ref{examples}
satisfy the assumptions of Proposition \ref{function-approximation}.
So we can find sequences $(k_{s}),(l_{s})$ taking values in $K$
and $L$ such that the function defined by $\tilde{f}(x)=l_{s}$ on
$[k_{s}l_{s},k_{s+1}l_{s}]$ and $\tilde{f}(x)=\frac{x}{k_{s+1}}$
on $[k_{s+1}l_{s},k_{s+1}l_{s+1}]$ satisfies $\tilde{f}(x)\simeq_{m_{0}C_{1}^{5}}f(x)$.
Since the infinite group $\Gamma$ in Example \ref{examples} has
Property $(T)$, there exists a constant $\delta>0$ such that the
Cheeger constants $h(\Gamma_{s},\nu_{s})\ge\delta$ for all $s\ge1$.

We use Proposition \ref{iso-Delta} to evaluate the profile of the
group $\Delta$ associated to these sequences. The lower bounds show
that for all $x\geq1$ 
\[
\Lambda_{p,\Delta,\mathfrak{q}}\circ\exp(x)\geq\left(\frac{\delta\tilde{f}(x)}{Cx}\right)^{p}\geq\frac{c\delta^{p}}{\varrho(x)}.
\]

As $\sum_{j\leq s}\ell_{j}\leq\frac{1}{1-\frac{1}{m_{0}}}\ell_{s}$,
making the change of variable $x=\left(1-\frac{1}{m_{0}}\right)y=\log v$,
the first upper bound shows that 
\[
\Lambda_{p,\Delta,\mathfrak{q}}\circ\exp(y)\leq\left(\frac{Cl_{s}}{\log v}\right)^{p}=\left(\frac{C\tilde{f}\left((1-\frac{1}{m_{0}})y\right)}{(1-\frac{1}{m_{0}})y}\right)^{p}\leq\left(\frac{C\tilde{f}(y)}{y}\right)^{p}\leq\frac{C'}{\varrho(x)}
\]
for $\frac{1}{1-\frac{1}{m_{0}}}k_{s}\ell_{s}\leq y\leq\frac{1}{1-\frac{1}{m_{0}}}k_{s+1}\ell_{s}$.
The second upper bound shows that 
\[
\Lambda_{p,\Delta,\mathfrak{q}}\circ\exp(y)\leq\left(\frac{C}{k_{s+1}}\right)^{p}=\left(\frac{C\tilde{f}(k_{s+1}l_{s})}{k_{s+1}l_{s}}\right)^{p}=\left(\frac{C\tilde{f}\left((1-\frac{1}{m_{0}})y\right)}{(1-\frac{1}{m_{0}})y}\right)^{p}\leq\frac{C''}{\varrho(y)}
\]
for $\frac{1}{1-\frac{1}{m_{0}}}k_{s+1}\ell_{s}\leq y\leq\frac{1}{1-\frac{1}{m_{0}}}k_{s+1}\ell_{s+1}$.
We used the fact that $\tilde{\varrho}(x)$ is constant on the interval
$[k_{s+1}\ell_{s},k_{s+1}\ell_{s+1}]$.

\end{proof}

We derive the following corollary regarding Følner functions from
Theorem \ref{main-expanders}. The definition of Følner function is
recalled in the beginning of Section \ref{sec:isoperimetric-profiles}. We use the
convention that on a non-amenable group $G$, if $1/r\le\inf\frac{\left|\partial S\right|}{|S|}$,
then $\mbox{Føl}_{G,S}(r)=\infty$.

\begin{corollary}\label{folner}

There exists an universal constant $C>1$. Let $g:[1,\infty)\to[1,\infty]$
be any non-decreasing function with $g(1)=1$ and $\frac{\log(g(x))}{x}$
non-decreasing. Then there exists a group $\Delta$ marked with
finite generating set $T$ such that 
\[
g(r/C)\le\mbox{Føl}_{\Delta,T}(r)\le g(Cr).
\]
Further, when range of $g$ is contained in $[1,\infty)$, the group
$\Delta$ constructed is elementary amenable and there exists a symmetric
probability measure $q$ with finite generating support on $\Delta$
such that $\left(\Delta,q\right)$ is Liouville.

\end{corollary}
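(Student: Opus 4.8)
The plan is to reduce the statement to the control of the $\ell^{1}$-isoperimetric profile already obtained in Theorem~\ref{possible-profile} (applied with $p=1$) and then pass to F\o lner functions via the generalized-inverse relation recalled at the beginning of Section~\ref{sec:isoperimetric-profiles}, namely $\Lambda_{1,\Delta,\mathfrak{q}}^{-1}(1/r)\le \mbox{F\o l}_{\Delta,T}(r)\le \Lambda_{1,\Delta,\mathfrak{q}}^{-1}(p_{\ast}/r)$, where $T=\{\tau^{\pm1}\}\cup A\cup B$ supports the switch-or-walk measure $\mathfrak{q}$ and $p_{\ast}=\min\{\mathfrak{q}(u):u\in T,\ u\neq e\}$ is a positive constant depending only on $|A|,|B|$.

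First I would associate to $g$ the function $\varrho(s)=\min\bigl\{s,\ \inf\{x\ge1:\ g(x)\ge e^{s}\}\bigr\}$, a truncation of the generalized inverse of $x\mapsto\log g(x)$. Using $g(1)=1$ one checks $1\le\varrho(s)\le s$ and that $\varrho$ is non-decreasing; the key point is that $x/\varrho(x)$ is non-decreasing, which is exactly a restatement of the hypothesis that $\log g(x)/x$ is non-decreasing (the truncation only multiplies by the non-decreasing factor $\max(1,\cdot)$). Hence $\varrho$ is an admissible input for Theorem~\ref{possible-profile} with $p=1$, and yields a group $\Delta$ with finite generating set $T$ such that $c/\varrho(\log v)\le\Lambda_{1,\Delta,\mathfrak{q}}(v)\le C/\varrho(\log v)$ for all $v\ge3$, with $c,C$ universal.

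Next I would invert. Since $\Lambda_{1,\Delta,\mathfrak{q}}$ is non-increasing, the two-sided bound gives, for every $t>0$, that $\Lambda_{1,\Delta,\mathfrak{q}}^{-1}(t)$ lies between $\exp\bigl(\varrho^{-1}(c/t)\bigr)$ and $\exp\bigl(\varrho^{-1}(C/t)\bigr)$; and, away from the truncation, $\exp(\varrho^{-1}(s))=g(s)$ because $\varrho^{-1}(x)=\log g(x)$. Plugging $t=1/r$ and $t=p_{\ast}/r$ into the F\o lner relation therefore yields $g(cr)\le\mbox{F\o l}_{\Delta,T}(r)\le g(Cr/p_{\ast})$, up to the harmless truncation. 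The log-Lipschitz hypothesis on $g$ (equivalently, $x/\varrho(x)$ non-decreasing) is precisely what allows every multiplicative error in the argument — coming from $c$, $C$, $p_{\ast}$ and from the truncation — to be absorbed into a single universal constant, giving $g(r/C)\le\mbox{F\o l}_{\Delta,T}(r)\le g(Cr)$. When $g$ attains the value $+\infty$ the tail of the diameter sequence $(l_{s})$ is infinite, $\Delta$ is non-amenable, and $\mbox{F\o l}_{\Delta,T}=\infty$ by convention, consistent with the claimed inequality.

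For the final assertion, if $g$ is real-valued everywhere then $\varrho$ is unbounded, which in the dictionary $\varrho(x)=x/f(x)$ of Theorem~\ref{main-expanders} means $f$ is not asymptotically linear; consequently the group $\Delta$ just produced is one of the groups of Theorem~\ref{main-expanders}, built from \emph{finite} quotients $\Gamma_{s}$, so it is elementary amenable by Fact~\ref{EA} and, for the symmetric finitely-supported measure $\mu$ of Theorem~\ref{main-expanders}, $(\Delta,\mu)$ has the Liouville property. I expect the only genuinely delicate step to be the bookkeeping in the two passages to generalized inverses — verifying that $x/\varrho(x)$ is non-decreasing and that each multiplicative constant (from $\simeq_{c,C}$, from $p_{\ast}$, from the truncation) translates into a bounded change of variable in $g$; this is exactly where the hypothesis that $\log g(x)/x$ is non-decreasing enters and is the heart of the argument.
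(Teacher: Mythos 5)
Your proof is correct and follows essentially the same route as the paper: reduce to the $\ell^{1}$-isoperimetric profile via Theorem~\ref{possible-profile} with $p=1$, convert to F\o lner functions using the generalized-inverse relation, and invoke Fact~\ref{EA} together with the sublinear speed/entropy estimates for the Liouville conclusion. The only difference is that you make explicit the choice of $\varrho$ as a truncated generalized inverse of $\log g$ and verify its admissibility, whereas the paper's proof leaves this translation implicit; your handling of the $g=\infty$ case (non-amenable $\Delta$, $\mbox{F\o l}=\infty$) and the Liouville step are also consistent with the paper's argument.
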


\begin{proof}

Let $p_{\ast}=\min\{q(u):\ u\in\mathcal{T},u\neq id\}$, then $p_{\ast}\ge\frac{1}{2\left(|A|+|B|\right)}$.
Recall that by definition of the Følner function 
\[
\Lambda_{1,\Delta,q}^{-1}(1/r)\le\mbox{Føl}_{\Delta,\mathcal{T}}(r)\le\Lambda_{1,\Delta,q}^{-1}(p_{\ast}/r).
\]
By Proposition \ref{possible-profile}, exists a universal constants
$C>0$ such that for any function $\varrho(x)$ between $1$ and $x$,
there is a group $\Delta$ such that 
\[
\forall v\geq3,\ \frac{1}{C\varrho(\log v)}\leq\Lambda_{1,\Delta,\mathfrak{q}}(v)\leq\frac{C}{\varrho(\log v)}.
\]
In particular, in the construction of $\Delta$ we can choose $\{\Gamma_{s}\}$
from Lafforgue's expanders as in Example \ref{examples}, where $\left|A\right|=2$,
$\left|B\right|=r_{0}$ for some fixed $r_{0}$. Therefore 
\[
\exp\left(\varrho^{-1}(r/C)\right)\le\mbox{Føl}_{\Delta,\mathcal{T}}(r)\le\exp(\varrho^{-1}(Cr_{0}r)).
\]
Since $\varrho$ is any function between $1$ and $x$, the statement
about Følner function follows.

When the range of $g$ is in $[1,\infty)$, the group $\Delta$ in
the proof of Proposition \ref{possible-profile} is constructed with
an infinite sequence finite groups $\left\{ \Gamma_{s}\right\} $
and $\{k_{s}\}$ satisfying growth assumption (\ref{k_growth}). By
Fact \ref{EA}, $\Delta$ is elementary amenable. Apply
Theorem \ref{Delta-speed}, we have that $L_{q}(\mu)$ and $H_{q}(\mu)$
has sub-linear growth, thus $(\Delta,q)$ is Liouville.

\end{proof}

\subsection{Return probability of simple random walk on $\Delta$}

\label{return}

By Theorem \ref{possible-profile} with $p=2$, we have that $\Lambda_{2,\Delta,\mathfrak{q}}\circ\exp$
can follow a prescribed function satisfying some log-Lipschitz condition.
Now we turn the $\ell^{2}$-isoperimetric profile estimates into return
probability bounds using the Coulhon-Grigor'yan theory. Let $\mu$
be a symmetric probability measure on a group $G$. Between discrete
time random walk and continuous time random walk, we have, see \cite[Section 3.2]{PittetSaloffCoste2000},
\[
\mu^{(2n+2)}(e)\le2h_{2n}^{\mu}(e)\mbox{ and }h_{4n}^{\mu}(e)\le e^{-2n}+\mu^{(2n)}(e)
\]
where 
\begin{equation}
h_{t}^{\phi}=e^{-t}\sum_{0}^{\infty}\frac{t^{k}}{k!}\phi^{(k)}.\label{cont-time}
\end{equation}
Define the function $\psi:[0,+\infty)\to[1,+\infty)$ implicitly by
\begin{equation}
t=\int_{1}^{\psi(t)}\frac{dv}{v\Lambda_{2,G,\mu}(v)}.\label{eq:GC}
\end{equation}
Then by \cite[Proposition II.1]{Coulhon1996}, we have 
\[
\mu^{(2n+2)}(e)\le\frac{8}{\psi(8n)}.
\]
In the current context, it is convenient to do a change of variable
in (\ref{eq:GC}), set $v=\exp(s)$, 
\begin{equation}
t=\int_{1}^{w(t)}\frac{ds}{\Lambda_{2,\Delta,\mathfrak{q}}\circ\exp(s)}.\label{eq:GC-exp}
\end{equation}
If in addition $\Lambda_{2,G,\mu}\circ\exp$ is doubling, namely $\Lambda_{2,G,\mu}\circ\exp(2s)\ge c\Lambda_{2,G,\mu}\circ\exp(s)$
for all $s>1$, then by \cite[Proposition 2.3]{Bendikov2012}, $w'(t)$
is doubling with the same constant. Apply \cite[Theorem 3.2 ]{Coulhon1997},
\[
\mu^{(2n)}(e)\ge\frac{1}{\exp\circ\psi(8n/c)}-e^{-2n}.
\]
Combine the upper and lower bounds, if $\Lambda_{2,G,\mu}\circ\exp(2s)\ge c\Lambda_{2,G,\mu}\circ\exp(s)$,
we have 
\begin{equation}
-\log\mu^{(2n)}(e)\simeq_{C}w(2n)\label{eq:double-equiv}
\end{equation}
with constant $C>0$ only depending on the doubling constant $c$.

\begin{theorem}\label{possible-return}

There exists universal constants $c,C>0$ such that the following
is true. Let $\gamma:[1,\infty)\to[1,\infty)$ be any function such
that $\frac{\gamma(n)}{n^{\frac{1}{3}}}$ and $\frac{n}{\gamma(n)}$
are non-decreasing. Then there is a group $\Delta$ such that 
\[
\forall t\geq1,\ c\gamma(t)\leq-\log\left(\mathfrak{q}^{(2t)}\left(e_{\Delta}\right)\right)\leq C\gamma(t).
\]

\end{theorem}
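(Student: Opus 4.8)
The plan is to reduce the statement to the $\ell^2$-isoperimetric realization of Theorem \ref{possible-profile} and then to run the Coulhon--Grigor'yan correspondence (\ref{eq:GC-exp})--(\ref{eq:double-equiv}) in the prescribed direction. Write $\gamma^{-1}$ for the (generalized) inverse of $\gamma$. The hypothesis that $\gamma(n)/n^{1/3}$ is non-decreasing gives $\gamma^{-1}(v)\le Cv^3$ together with the doubling bound $\gamma^{-1}(2v)\le 8\,\gamma^{-1}(v)$; the hypothesis that $n/\gamma(n)$ is non-decreasing gives $\gamma^{-1}(v)\ge cv$ together with $\gamma(2n)\le 2\gamma(n)$. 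Define $\varrho(x)=\max\{1,\gamma^{-1}(x)/x\}$ (for $x$ below the point where $\gamma^{-1}(x)=x$ we simply have $\varrho\equiv 1$). The key point is that the two hypotheses on $\gamma$ translate \emph{exactly} into the two hypotheses of Theorem \ref{possible-profile} with $p=2$: the condition ``$n/\gamma(n)$ non-decreasing'' is equivalent to ``$\varrho$ non-decreasing'', and ``$\gamma(n)/n^{1/3}$ non-decreasing'' is equivalent to ``$x^2/\varrho(x)=x^3/\gamma^{-1}(x)$ non-decreasing'' (the two pieces of $\varrho$ match continuously at $\gamma^{-1}(x)=x$, so both monotonicities hold globally). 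Hence Theorem \ref{possible-profile} yields a group $\Delta$ with $\Lambda_{2,\Delta,\mathfrak{q}}(v)\simeq_{C} 1/\varrho(\log v)$ for all $v\ge 3$.

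Next I would verify the doubling hypothesis of $\Lambda_{2,\Delta,\mathfrak{q}}\circ\exp$ needed for the two-sided estimate (\ref{eq:double-equiv}). Since $\varrho$ is non-decreasing and $\varrho(2s)/\varrho(s)=\tfrac12\,\gamma^{-1}(2s)/\gamma^{-1}(s)\le 4$, we get $1\le \varrho(2s)/\varrho(s)\le 4$, so $\Lambda_{2,\Delta,\mathfrak{q}}\circ\exp(2s)\ge c\,\Lambda_{2,\Delta,\mathfrak{q}}\circ\exp(s)$ with $c$ depending only on $4$ and the constants of Theorem \ref{possible-profile}, hence universal. By (\ref{eq:GC-exp}) the implicitly defined function $w$, satisfying $t=\int_1^{w(t)} ds/(\Lambda_{2,\Delta,\mathfrak{q}}\circ\exp(s))\simeq_{C}\int_1^{w(t)}\varrho(s)\,ds$, then controls the return probability through $-\log \mathfrak{q}^{(2t)}(e_\Delta)\simeq_{C} w(2t)$.

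It remains to evaluate $\int_1^u\varrho(s)\,ds$. Since $\varrho$ is non-decreasing, $\int_1^u\varrho(s)\,ds\le u\,\varrho(u)=\gamma^{-1}(u)$; and $\int_1^u\varrho(s)\,ds\ge\int_{u/2}^u\varrho(s)\,ds\ge\tfrac{u}{2}\,\varrho(u/2)=\gamma^{-1}(u/2)\ge\tfrac18\gamma^{-1}(u)$ by the doubling of $\gamma^{-1}$. Thus $\int_1^u\varrho(s)\,ds\simeq_{8}\gamma^{-1}(u)$. Inserting this into the defining relation for $w$ gives $\gamma^{-1}(w(t))\simeq_{C} t$, i.e. $\gamma(t/C')\le w(t)\le\gamma(C't)$ for a universal $C'$; applying the doubling $\gamma(2n)\simeq\gamma(n)$ (iterated $O(\log C')$ times) turns this into $w(t)\simeq\gamma(t)$, and also $w(2t)\simeq\gamma(2t)\simeq\gamma(t)$. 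Combined with the displayed equivalence for $-\log\mathfrak{q}^{(2t)}(e_\Delta)$ this proves the theorem, with a universal constant.

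The inverse-function manipulations and the elementary integral bounds are routine; the one point requiring care is bookkeeping of constants — one must check that the $\simeq$-constants coming from the isoperimetric realization, from the doubling of $\varrho$, and from the two-sided Coulhon--Grigor'yan estimate all depend only on the constants implicit in the hypotheses on $\gamma$. This is precisely why it is worth taking the sharp choice $\varrho(x)=\gamma^{-1}(x)/x$ rather than a piecewise approximation of $(\gamma^{-1})'$, which would be conceptually no harder but would lose an extra multiplicative constant and some control on monotonicity.
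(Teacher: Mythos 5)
Your proof is correct and follows essentially the same route as the paper's: define $\varrho(x)=\gamma^{-1}(x)/x$, check that $\varrho\in\mathcal{C}_{0,2}$, invoke Theorem \ref{possible-profile} with $p=2$, verify doubling of $\Lambda_{2,\Delta,\mathfrak{q}}\circ\exp$, and push the estimate through (\ref{eq:GC-exp})--(\ref{eq:double-equiv}). The one small divergence is in extracting $w(t)\simeq\gamma(t)$: the paper quotes \cite[Lemma 2.5]{Bendikov2012} to get $w(t)/t\simeq\Lambda\circ\exp\circ w(t)$, whereas you bound $\int_1^u\varrho(s)\,ds$ directly (above by $u\varrho(u)$ from monotonicity, below by $\tfrac{u}{2}\varrho(u/2)\geq\tfrac18\gamma^{-1}(u)$ from doubling of $\gamma^{-1}$); this makes the final step self-contained and costs nothing, but does not alter the structure of the argument.
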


\begin{proof}

Given such a function $\gamma:[1,\infty)\to[1,\infty)$, it is strictly
increasing and continuous, define $\varrho:[1,\infty)\to[1,\infty)$
by 
\[
\varrho(x)=\frac{1}{x}\gamma^{-1}(x).
\]
From the assumption on $\gamma$ we have $\gamma(1)=1$ and $a^{\frac{1}{3}}\gamma(x)\le\gamma(ax)\le a\gamma(x)$
for any $a,x\ge1$. Thus 
\[
a\gamma^{-1}(x)\le\gamma^{-1}(ax)\le a^{3}\gamma^{-1}(x),
\]
and therefore 
\[
\varrho(x)\le\varrho(ax)\le a^{2}\varrho(x),
\]
which satisfies the assumption of Proposition \ref{possible-profile}
with $p=2$.

By Proposition \ref{possible-profile}, there exists universal constants
$c,C>0$ such that there is a group $\Delta$, for all $v\ge3$, 
\[
\frac{c}{\varrho\left(\log v\right)}\le\Lambda_{\Delta,\mathfrak{q}}(v)\le\frac{C}{\varrho(\log v)}.
\]
Note that since $\varrho(2x)\le4\varrho(x),$ it follows that for
all $s>0$, 
\[
\Lambda_{\Delta,\mathfrak{q}}\circ\exp(2s)\ge\frac{c}{4C}\Lambda_{\Delta,\mathfrak{q}}\circ\exp(s).
\]
In particular, the function $\Lambda_{\Delta,\mathfrak{q}}\circ\exp:(0,\infty)\to\mathbb{R}$
is doubling at infinity, then by \cite[ Lemma 2.5]{Bendikov2012},
the solution $w(t)$ to (\ref{eq:GC-exp}) satisfies 
\[
\Lambda_{\Delta,\mathfrak{q}}\circ\exp\circ w(t)\le\frac{w(t)}{t}\le D\Lambda_{\Delta,\mathfrak{q}}\circ\exp\circ w(t),
\]
where $D$ is a constant that only depends on the doubling constant
$c/4C$. Plug in the estimate of $\Lambda_{\Delta,\mathfrak{q}}$,
we have 
\[
\frac{c}{\varrho(w(t))}\le\frac{w(t)}{t}\le\frac{DC}{\varrho(w(t))}.
\]
By definition of $\varrho$, 
\[
\gamma(t)\varrho\left(\gamma(t)\right)=t,
\]
note that $x\varrho(x)$ is strictly increasing, therefore 
\[
c^{\frac{1}{3}}\gamma(t)\le\gamma(ct)\le w(t)\le\gamma(DCt)\le DC\gamma(t).
\]
Since the constants $c,C,D$ are universal, from (\ref{eq:double-equiv})
provided by the Coulhon-Grigor'yan theory, we conclude that 
\[
\forall t\geq1,\ c'\gamma(t)\leq-\log\left(\mathfrak{q}^{(2t)}\left(e_{\Delta}\right)\right)\leq C'\gamma(t)
\]
where $c',C'>0$ are universal constants.

\end{proof}

\begin{example} When $k_{s}=2^{\beta s}$ and $l_{s}=2^{\iota s}$
with $\beta>1,\ \iota>0$, the $L^{2}$-profile given in Example \ref{exponents-iso}
turns to return probability 
\[
-\log\left(\mathfrak{q}^{(2t)}\left(e_{\Delta}\right)\right)\simeq t^{\frac{\text{\ensuremath{\beta}}+\iota}{3\beta+\iota}},
\]
where the exponent can take any value in $(\frac{1}{3},1)$. \end{example}

\subsection{The case of dihedral groups}

\label{iso-dihedral}

In this subsection we estimate decay of return probability of simple
random walk on $\Delta$ where $\Gamma_{s}=D_{2l_{s}}$ are dihedral
groups. We show that in this case the return exponent of simple random
walks is $\frac{1}{3}$. Obtaining more precise estimates requires
further work.

\begin{proposition}\label{return-dihedral}

There exists an absolute constant $C>0$ such that the following holds.
Let $\Delta$ be the diagonal product constructed with $\Gamma_{s}=D_{2l_{s}}$
and parameters $\{k_{s}\}$ satisfying Assumption (\ref{k_growth}),
then 
\[
\frac{1}{C}n^{\frac{1}{3}}\le-\log\mathfrak{q}^{(2n)}\left(e_{\Delta}\right)\le Cn^{\frac{1}{3}}\log^{\frac{4}{3}}n.
\]

\end{proposition}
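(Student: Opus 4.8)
The plan is to sandwich $-\log\mathfrak q^{(2n)}(e_\Delta)$ between an isoperimetric lower bound coming from exponential volume growth and an upper bound obtained from the Coulhon--Grigor'yan theory applied to a family of test functions. For the lower bound $-\log\mathfrak q^{(2n)}(e_\Delta)\ge\frac1Cn^{1/3}$ I would simply note that the factor $\Delta_0=\Gamma_0\wr\mathbb Z=(A\times B)\wr\mathbb Z$ is always a marked quotient of $\Delta$ (Assumption \ref{Gamma_Involution}, since $k_0=0$), so $\Delta$ has exponential volume growth with rate bounded below by that of the lamplighter $(\mathbb Z/2\mathbb Z)^2\wr\mathbb Z$, a universal constant, and $p_*=\min_{g\neq e}\mathfrak q(g)$ is a universal constant; the bound $-\log\mathfrak q^{(2n)}(e_\Delta)\ge cn^{1/3}$ with $c$ depending only on these data is then Coulhon--Saloff-Coste \cite{Coulhon1993}.

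For the upper bound the plan is to show that $\Lambda_{2,\Delta,\mathfrak q}(v)\le C(\log\log v)^{4}/(\log v)^{2}$ and to feed this into the Coulhon--Grigor'yan machinery exactly as in the proof of Theorem \ref{possible-return}; the upper-bounding function $s\mapsto C(\log s)^{4}/s^{2}$ is doubling in $\log v$, so the argument applies. Solving $2n=\int_{1}^{w(2n)}\frac{ds}{C(\log s)^{4}/s^{2}}\sim w(2n)^{3}/\bigl(3C(\log w(2n))^{4}\bigr)$ yields $w(2n)\sim C'n^{1/3}(\log n)^{4/3}$, hence $-\log\mathfrak q^{(2n)}(e_\Delta)\le C''n^{1/3}(\log n)^{4/3}$. (I assume the sequence $(k_s)$ is infinite; if it is finite then $\Delta$ is a finite diagonal product of lamplighters over finite dihedral groups and the same argument applies with no ``$s>\Sigma$'' part.)

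The crux is the isoperimetric upper bound, where the dihedral structure and the diagonal linking of relative abelianizations (Lemma \ref{abelian}) interact. Fix a scale $r$, set $\Sigma=\Sigma(r)=\max\{s:k_s\le 2r\}\simeq\log r$ and a cutoff $d$. Let $\Omega_r\subseteq\Delta$ consist of all $((f_s),z)$ with $|z|\le r$, with $f_0\colon[-r,r]\to A\times B$ arbitrary, with $f_s$ for $1\le s\le\Sigma$ ranging over configurations supported in $[-r,r+k_s]$ whose relative abelianization is the one forced by $f_0$ via Lemma \ref{abelian} and satisfying $|f_s(x)|_{\Gamma_s}\le d$, and with $f_s$ for $s>\Sigma$ the unique configuration of pointwise $\Gamma_s$-length $\le1$ with the abelianization forced by $f_0$ (this exists because for $s>\Sigma$ the $A$- and $B$-blobs of $f_s$ are disjoint, so each forced coset of $\Gamma_s'=[A(s),B(s)]^{\Gamma_s}$ is $\Gamma_s'$, $A(s)\Gamma_s'$ or $B(s)\Gamma_s'$, all containing a length-$\le1$ element). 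By the argument of Proposition \ref{metric}, every such tuple lies in $\Delta$, and one checks $\log|\Omega_r|\simeq r\,\Sigma\,\log d$ since only the factors $s\le\Sigma$ and $\Gamma_0$ contribute nontrivially to the count. Take the cursor tent $\varphi_r((f_s),z)=(1-|z|/r)_+\mathbf 1_{\Omega_r}$. Its Dirichlet form has a contribution $\simeq r^{-2}\|\varphi_r\|_2^2$ from $\tau^{\pm1}$ and a contribution $\simeq(\Sigma/d)\|\varphi_r\|_2^2$ from $\alpha,\beta$: these change the lamp value at one site of each factor, which for $s>\Sigma$ stays within length $\le1$ (disjoint blobs), for $\Gamma_0$ stays within the diameter $2$, and for $1\le s\le\Sigma$ leaves the ball of radius $d$ in $\Gamma_s=D_{2l_s}$ only for the $\simeq1$ extremal values among the $\simeq d$ admissible ones. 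Thus the Rayleigh quotient is $\simeq r^{-2}+\Sigma/d$; choosing $d\simeq\Sigma r^2$ balances it to $\simeq r^{-2}$ and gives $\log|\Omega_r|\simeq r\Sigma\log(\Sigma r^2)\simeq r(\log r)^2$. Writing $v=|\Omega_r|$ and inverting, $r\simeq\log v/(\log\log v)^2$, so $\Lambda_{2,\Delta,\mathfrak q}(v)\le C(\log\log v)^{4}/(\log v)^{2}$.

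The main obstacle is this last step: checking that $\Omega_r$ is a genuine subset of $\Delta$ despite the abelianization constraints, and correctly tracking the three sources of the final exponent through the Dirichlet-form estimate — the $r^{-2}$ from the walk on $\mathbb Z$, one $\log\log$ factor from the cutoff $d\simeq r^2$ forced by the growth of the dihedral groups, and the extra $\log\log$ factor from the $\Sigma\simeq\log r$ active lamp factors. Everything else (the two applications of Coulhon's theory, the volume-growth input) is routine given the results already established in the paper.
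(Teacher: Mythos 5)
Your proposal is correct and follows the same two-step strategy as the paper: obtain the lower bound on $-\log\mathfrak q^{(2n)}(e_\Delta)$ by passing to the quotient $\Delta_0\simeq(A\times B)\wr\mathbb Z$, and obtain the upper bound by building a test function to bound $\Lambda_{2,\Delta,\mathfrak q}(v)$ by $C(\log\log v)^4/(\log v)^2$ and feeding that into Coulhon--Grigor'yan. For the lower bound the paper simply quotes the known return probability of $\Delta_0$ via $\mathfrak q^{(2n)}(e_\Delta)\le\mathfrak q_0^{(2n)}(e_{\Delta_0})$, while you invoke exponential volume growth of $\Delta_0$ together with Coulhon--Saloff-Coste; both are fine and give universal constants. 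For the upper bound the paper uses the \emph{smooth} lamp cutoff $\Psi_r^\Delta=(1-|z|/r)_+\prod_{s\in\mathcal S(r)}\prod_x\psi_{r^2}(f_s(x))$ with $\psi_{r^2}$ a tent function on $D_{2l_s}$, whereas you use a \emph{hard} cutoff $|f_s(x)|_{\Gamma_s}\le d$ and the indicator of the resulting set $\Omega_r$. The smooth cutoff makes the lamp-generator Dirichlet contribution $\simeq|\mathcal S(r)|^2/r^4$, automatically dominated by the $r^{-2}$ base contribution as soon as $|\mathcal S(r)|\le r$; your hard cutoff gives a boundary term $\simeq\Sigma/d$, so you must take $d\simeq\Sigma r^2$ (rather than $r^2$) to match $r^{-2}$. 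Since $\log(\Sigma r^2)\simeq\log r$, the support bound is the same $\log|\Omega_r|\lesssim r(\log r)^2$, and both routes land on the same $\Lambda_2$ bound and the same $n^{1/3}\log^{4/3}n$ after Coulhon--Grigor'yan. The one place to state more carefully is that $\log|\Omega_r|\lesssim r\Sigma\log d$ (rather than $\simeq$), since for factors with $l_s<d$ the per-site count is $\lesssim l_s$, not $d$; an upper bound is all that is needed since $\Lambda_2$ is non-increasing. Your verification that $\Omega_r$ really is a subset of $\Delta$ via Lemma \ref{abelian}, Fact \ref{s_0} and Lemma \ref{one_metric} is correct and is indeed the crux.
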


\begin{proof}

Since $\Delta$ projects onto $\Delta_{0}\simeq(\mathbb{Z}/2\mathbb{Z}\times\mathbb{Z}/2\mathbb{Z})\wr\mathbb{Z}$,
we have 
\[
\mathfrak{q}^{(2n)}\left(e_{\Delta}\right)\le\mathfrak{q}_{0}^{(2n)}\left(e_{\Delta_{0}}\right).
\]
The lower bound on $-\log\mathfrak{q}^{(2n)}\left(e_{\Delta}\right)$
follows from the decay of return probability on $\Delta_{0}$, see
\cite{Pittet2002}, 
\[
\mathfrak{q}_{0}^{(2n)}\left(e_{\Delta_{0}}\right)\le\exp\left(-\frac{1}{C}n^{\frac{1}{3}}\right).
\]

In the other direction we construct a test function on $\Delta$.
First take a test function $\psi_{r}$ on $D_{2l_{s}}$ 
\[
\psi_{r}(g)=\max\left\{ 1-\frac{\left|g\right|_{D_{2l_{s}}}}{r},0\right\} ,\ \mbox{for }1\le r\le l_{s}.
\]
Recall that the set $U_{r}^{\Delta}$ is defined as $U_{r}^{\Delta}=\{((f_{s}),z):\ Range(f_{s},z)\subset[-r,r]\}$.
Let $\mathcal{S}(r)=\left\{ s:\ k_{s}\le r,\ l_{s}\ge r^{2}\right\} $
and take 
\[
\Psi_{r}^{\Delta}\left((f_{s}),z\right)=\left(1-\frac{\left|z\right|}{r}\right)\mathbf{1}_{U_{r}^{\Delta}}\left(\left((f_{s}),z\right)\right)\prod_{s\in\mathcal{S}(r)}\prod_{x\in\left[-r+k_{s},r\right]}\psi_{r^{2}}(f_{s}(x)).
\]
Depending on the sequences $(k_{s}),(l_{s})$, the set $\mathcal{S}(r)$
might be empty, in which case we recover the test function of Proposition
\ref{iso-Delta}. (Recall the notation $\ell_{s}=\log\left|D_{2l_{s}}\right|=\log2l_{s}$.)
As in the proof of Proposition \ref{iso-Delta}, we have 
\[
\frac{\sum_{Z\in\Delta}\left(\Psi_{r}^{\Delta}(Z)-\Psi_{r}^{\Delta}(Z\tau)\right)^{2}}{\left\Vert \Psi_{r}^{\Delta}\right\Vert _{2}^{2}}\le\frac{C_{1}}{r^{2}}.
\]
By Cauchy-Schwarz inequality, we have 
\begin{align*}
\frac{\sum_{Z\in\Delta}\left(\Psi_{r}^{\Delta}(Z)-\Psi_{r}^{\Delta}(Z\alpha)\right)^{2}}{\left\Vert \Psi_{r}^{\Delta}\right\Vert _{2}^{2}} & \le\left|\mathcal{S}(r)\right|\sum_{s\in\mathcal{S}(r)}\frac{\sum_{g\in D_{2l_{s}}}\left(\psi_{r^{2}}(ga(s))-\psi_{r^{2}}(g)\right)^{2}}{\left\Vert \psi_{r^{2}}\right\Vert _{l^{2}\left(D_{2l_{s}}\right)}^{2}}\\
 & \le C_{1}\left|\mathcal{S}(r)\right|^{2}r^{-4}.
\end{align*}
The same estimates holds for $\beta$ with $a(s)$ replaced by $b(s)$.
Since $\left\{ k_{s}\right\} $ satisfies the growth assumption (\ref{k_growth}),
we have $\left|\mathcal{S}(r)\right|\le\log_{2}r$, therefore 
\[
\frac{\mathcal{E}_{\Delta,\mathfrak{q}}\left(\Psi_{r}^{\Delta}\right)}{\left\Vert \Psi_{r}^{\Delta}\right\Vert _{2}^{2}}\le C_{1}r^{-2}.
\]
The support of function $\Psi_{r}^{\Delta}$ is bounded by 
\begin{align*}
\left|\mbox{supp}\Psi_{r}^{\Delta}\right| & \le(2r+1)\left(\prod_{s:k_{s}\le r,l_{s}<r^{2}}\left(2l_{s}\right)^{4r}\right)\left(\prod_{s:k_{s}\le r,l_{s}>r^{2}}\left(2r^{2}\right)^{4r}\right)\\
 & \le(2r+1)\left(2r^{2}\right)^{4r\log_{2}r}.
\end{align*}
From these test functions we have 
\begin{equation}
\Lambda_{2,\Delta,\mathfrak{q}}(v)\le C'_{1}\frac{(\log\log v)^{4}}{\log^{2}v}.\label{eq: Lambda-dihedral}
\end{equation}
By the Coulhon-Grigor'yan theory, we conclude that there exists absolute
constant $C$, 
\[
\mathfrak{q}^{(2n)}\left(e_{\Delta}\right)\ge\exp\left(-Cn^{\frac{1}{3}}\log^{\frac{4}{3}}(2n)\right).
\]

\end{proof}

\begin{remark}

To get an estimate for $\Lambda_{p,\Delta,\mathfrak{q}}$, $p\in[1,2]$,
note that by projecting onto $\Delta_{0}$, we have 
\[
\Lambda_{1,\Delta,\mathfrak{q}}(v)\ge\Lambda_{1,\Delta,\mathfrak{q}_{0}}(v)\ge\frac{1}{C\log v},
\]
and in the proof above we have an upper bound (\ref{eq: Lambda-dihedral})
for $\Lambda_{2,\Delta,\mathfrak{q}}$. By Cheeger inequality (\ref{eq:cheeger-pq}),
we have 
\[
\frac{1}{C_{p}}\frac{1}{\log^{p}v}\le\Lambda_{p,\Delta,\mathfrak{q}}(v)\le C_{p}\frac{\left(\log\log v\right)^{2p}}{\log^{p}v}.
\]

\end{remark}

\section{Review of obstructions for embeddings}

\label{obstructions}

We first recall the standard definition of distortion of a map between
metric spaces. Given an injective map $f:X\to Y$ between two metric
spaces $\left(X,d_{X}\right)$ and $\left(Y,d_{Y}\right)$, the distortion
of $f$ measures quantitatively how far away $f$ is from being a
homothety, 
\[
\mbox{distortion}(f)=\left(\sup_{u,v\in X,u\neq v}\frac{d_{Y}(f(u),f(v))}{d_{X}(u,v)}\right)\left(\sup_{u,v\in X,u\neq v}\frac{d_{X}(u,v)}{d_{Y}(f(u),f(v))}\right).
\]
When $f$ is $C$-Lipschitz, the first sup is bounded by $C$, we
often focus on the second term. The expansion ratio of $f$ is defined
to be 
\[
\mbox{ratio}(f)=\inf_{u,v\in X,u\neq v}\frac{d_{Y}(f(u),f(v))}{d_{X}(u,v)}.
\]
The smallest distortion with which $X$ can be embedded in $Y$ is
denoted by $c_{Y}(X)$, 
\[
c_{Y}(X)=\inf\left\{ \mbox{distortion}(f):\ f:X\hookrightarrow Y\right\} .
\]

To connect with uniform embedding of an infinite group $G$, it is
well known that a sequence of finite metric spaces $\left(X_{k},d_{k}\right)$
embedded in the group $G$ can provide obstruction for good embedding
of the whole space. See e.g. Arzhantseva-Drutu-Sapir \cite{Arzhantseva2009}
and Austin \cite{Austin2011}. We quote a special case of the Austin
Lemma.

\begin{lemma}\label{(Lemma-3.1-Austin)}[The Austin Lemma \cite{Austin2011}]

Let $\mathfrak{X}$ be a metric space.
Let $\Gamma$ be a finitely generated infinite group equipped with
a finite generating set $S$, let $d$ denote the word distance on
the Cayley graph $(\Gamma,S)$. Suppose that we can find a sequence
of finite graphs $(X_{n},\sigma_{n})$ where $\sigma_{n}$ is a $1$-discrete
metric on $X_{n}$, and embeddings $\vartheta_{n}:X_{n}\hookrightarrow\Gamma$
such that there are constant $C,L\ge1$, $\delta>0$ that are independent
of $n$, 
\begin{itemize}
\item $diam(X_{n},\sigma_{n})\to\infty$ as $n\to\infty$; 
\item there exists a sequence of positive reals $(r_{n})_{n\ge1}$ such
that 
\[
\frac{1}{L}r_{n}\sigma_{n}(u,v)\le d\left(\vartheta_{n}(u),\vartheta_{n}(v)\right)\le Lr_{n}\sigma_{n}(u,v),\ \mbox{for all }u,v\in X_{n},n\ge1,
\]
moreover, $r_{n}\le Cdiam(X_{n},\sigma_{n})^{\beta}$ for all $n\ge1$; 
\item distortion of $(X_{n},\sigma_{n})$ into $\mathfrak{X}$ is large
in the sense that 
\[
c_{\mathfrak{X}}(X_{n},\sigma_{n})\ge\delta diam(X_{n},\sigma_{n})^{\eta}.
\]
Then 
\[
\alpha_{\mathfrak{X}}^{\ast}(\Gamma,d)\le1-\frac{\eta}{1+\beta}.
\]

\end{itemize}
\end{lemma}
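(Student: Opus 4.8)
The plan is to run the standard ``finite subspace obstruction'' for coarse embeddings: a group whose compression exponent into $\mathfrak{X}$ is large cannot contain, even after arbitrary rescaling, finite metric spaces that are themselves hard to embed into $\mathfrak{X}$. Fix a Lipschitz map $\Psi:(\Gamma,d)\to\mathfrak{X}$. Since the compression exponent is scale invariant we may assume $\Psi$ is $1$-Lipschitz, and we may assume $\alpha_{\mathfrak{X}}(\Psi)>0$, as otherwise there is nothing to prove (compression exponents always lie in $[0,1]$). Pick any real $\alpha'<\alpha_{\mathfrak{X}}(\Psi)$; by definition there is $c>0$ with $\rho_{\Psi}(t)\ge c\,t^{\alpha'}$ for all $t\ge1$.

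First I would pull $\Psi$ back along the given embeddings and set $\Phi_{n}=\Psi\circ\vartheta_{n}:(X_{n},\sigma_{n})\to\mathfrak{X}$, then estimate its distortion. Since $\Psi$ is $1$-Lipschitz and $d(\vartheta_{n}(u),\vartheta_{n}(v))\le Lr_{n}\sigma_{n}(u,v)$, the Lipschitz constant satisfies $\mbox{Lip}(\Phi_{n})\le Lr_{n}$. For the expansion ratio, note that for $u\neq v$ the word distance $d(\vartheta_{n}(u),\vartheta_{n}(v))$ is a positive integer, hence at least $1$, so the hypothesis on $\rho_{\Psi}$ applies and, using $d(\vartheta_{n}(u),\vartheta_{n}(v))\ge\frac{1}{L}r_{n}\sigma_{n}(u,v)$ together with monotonicity of $t\mapsto t^{\alpha'}$,
\[
\left\Vert \Phi_{n}(u)-\Phi_{n}(v)\right\Vert _{\mathfrak{X}}\ge\rho_{\Psi}\bigl(d(\vartheta_{n}(u),\vartheta_{n}(v))\bigr)\ge c\Bigl(\tfrac{r_{n}}{L}\,\sigma_{n}(u,v)\Bigr)^{\alpha'}.
\]
Dividing by $\sigma_{n}(u,v)$, using $\alpha'-1\le0$ and $\sigma_{n}(u,v)\le\mbox{diam}(X_{n},\sigma_{n})$, the expansion ratio of $\Phi_{n}$ is at least $c(r_{n}/L)^{\alpha'}\mbox{diam}(X_{n},\sigma_{n})^{\alpha'-1}$. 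Hence
\[
c_{\mathfrak{X}}(X_{n},\sigma_{n})\le\mbox{distortion}(\Phi_{n})\le\frac{L^{1+\alpha'}}{c}\,r_{n}^{\,1-\alpha'}\,\mbox{diam}(X_{n},\sigma_{n})^{\,1-\alpha'}.
\]

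Next I would substitute the two hypotheses $r_{n}\le C\,\mbox{diam}(X_{n},\sigma_{n})^{\beta}$ and $c_{\mathfrak{X}}(X_{n},\sigma_{n})\ge\delta\,\mbox{diam}(X_{n},\sigma_{n})^{\eta}$, obtaining $\delta\,\mbox{diam}(X_{n},\sigma_{n})^{\eta}\le\frac{L^{1+\alpha'}C^{1-\alpha'}}{c}\,\mbox{diam}(X_{n},\sigma_{n})^{(1+\beta)(1-\alpha')}$. Because $\mbox{diam}(X_{n},\sigma_{n})\to\infty$, dividing and letting $n\to\infty$ forces $\eta\le(1+\beta)(1-\alpha')$, i.e. $\alpha'\le1-\frac{\eta}{1+\beta}$. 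Since $\alpha'$ was arbitrary below $\alpha_{\mathfrak{X}}(\Psi)$ this gives $\alpha_{\mathfrak{X}}(\Psi)\le1-\frac{\eta}{1+\beta}$, and taking the supremum over all Lipschitz maps $\Psi:\Gamma\to\mathfrak{X}$ yields $\alpha_{\mathfrak{X}}^{\ast}(\Gamma,d)\le1-\frac{\eta}{1+\beta}$.

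There is no real obstacle here; the argument is a bookkeeping exercise in exponents. The one step deserving attention is the transition from the compression lower bound on $\Gamma$ to a distortion upper bound on the finite spaces: it relies on the word metric being integer valued, so that the hypothesis $\rho_{\Psi}(t)\ge ct^{\alpha'}$ (only assumed for $t\ge1$) may be invoked for every nondegenerate pair, and on the inequality $\alpha'\le1$, which lets one replace the infimum over $\sigma_{n}(u,v)$ by the diameter. The genuine external input is the supply of finite spaces $(X_{n},\sigma_{n})$ satisfying the distortion lower bound; in the applications these will be the embedded product subsets such as $\Pi_{s}^{t}$ of Lemma \ref{embed-product}, together with Poincar\'e-type distortion estimates for them.
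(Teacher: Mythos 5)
Your proof is correct and reproduces the standard argument. Note that the paper does not actually prove this lemma; it is quoted from Austin \cite{Austin2011}, so there is no internal proof to compare against. Your reconstruction is precisely the argument in Austin's paper: pull a candidate embedding $\Psi$ back along $\vartheta_{n}$, bound the Lipschitz constant by $Lr_{n}$ using the upper metric comparison, bound the expansion ratio from below via the compression hypothesis and the $1$-discreteness of $\sigma_{n}$ (which guarantees $d(\vartheta_{n}(u),\vartheta_{n}(v))\ge1$ so that $\rho_{\Psi}(t)\ge ct^{\alpha'}$ applies), and then let $\mbox{diam}(X_{n},\sigma_{n})\to\infty$ to force the exponent inequality $\eta\le(1+\beta)(1-\alpha')$.
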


The second assumption in Lemma \ref{(Lemma-3.1-Austin)} requires
that under the embedding $\vartheta_{k}$, the induced metric $d_{(G,S)}$
only dilates $d_{k}$ with uniformly bounded distortion. This point-wise
assumption is rather restrictive. In what follows we will present
some bounds that are more flexible.

The term \textquotedbl{}Poincaré inequalities\textquotedbl{} in the
context of metric embeddings was first systematically used in Linial-Magen-Naor
\cite{Linial2002}. It is a key ingredient for many existing lower
bounds for distortion of finite metric spaces. We review the basic
idea now. Let $\left(\mathcal{M},d_{\mathcal{M}}\right)$ be a finite
metric space, $\mathbf{a}=\left(a_{u,v}\right)$, $\mathbf{b}=(b_{u,v})$
where $u,v\in\mathcal{M}$ be two nonzero arrays of non-negative real
numbers. A $p$-Poincaré type inequality for $f:\mathcal{M}\to\mathfrak{X}$
is an inequality of the form 
\begin{equation}
\sum_{u,v\in\mathcal{M}}a_{u,v}d_{\mathfrak{X}}(f(u),f(v))^{p}\le C\sum_{u,v\in\mathcal{M}}b_{u,v}d_{\mathfrak{X}}(f(u),f(v))^{p}.\label{eq:a-b-poincare}
\end{equation}
The infimum of the constant $C$ such that the inequality holds for
all nontrivial $f:\mathcal{M}\to\mathfrak{X}$ is known as the $\mathfrak{X}$-valued
Poincaré constant associated with $\mathbf{a},\mathbf{b}$, 
\[
P_{\mathbf{a},\mathbf{b},p}(\mathcal{M},\mathfrak{X})=\sup\frac{\sum_{u,v}a_{u,v}d_{\mathfrak{X}}(f(u),f(v))^{p}}{\sum_{u,v}b_{u,v}d_{\mathfrak{X}}(f(u),f(v))^{p}},
\]
where the sup is taken over all $f:\mathcal{M}\to\mathfrak{X}$ such
that $\sum_{u,v}a_{u,v}d_{\mathfrak{X}}(f(u),f(v))^{p}\neq0.$ It
follows from definition of the Poincaré constant that 
\[
\left(\inf_{u,v\in\mathcal{M},u\neq v}\frac{d_{\mathfrak{X}}(f(u),f(v))^{p}}{d_{\mathcal{M}}(u,v)^{p}}\right)\left(\sum_{u,v\in\mathcal{M}}a_{u,v}d_{\mathcal{M}}\left(u,v\right)^{p}\right)\le P_{\mathbf{a},\mathbf{b},p}(\mathcal{M},\mathfrak{X})\left(\sum_{u,v}b_{u,v}d_{\mathfrak{X}}(f(u),f(v))^{p}\right),
\]
that is the expansion ratio of $f$ satisfies 
\[
\inf_{u,v\in\mathcal{M},u\neq v}\frac{d_{\mathfrak{X}}(f(u),f(v))^{p}}{d_{\mathcal{M}}(u,v)^{p}}\le P_{\mathbf{a},\mathbf{b},p}(\mathcal{M},\mathfrak{X})\left(\frac{\sum_{u,v}b_{u,v}d_{\mathfrak{X}}(f(u),f(v))^{p}}{\sum_{u,v}a_{u,v}d_{\mathcal{M}}\left(u,v\right)^{p}}\right).
\]

To relate to compression function, we need an extra ingredient that
resembles a mass distribution assumption. We say that the array $\mathbf{a}$
satisfies $\left(p;l,c\right)$-mass distribution condition if 
\begin{equation}
\frac{\sum_{d_{\mathcal{M}}(u,v)\ge l}a_{u,v}d_{\mathcal{M}}(u,v)^{p}}{\sum_{d_{\mathcal{M}}(u,v)}a_{u,v}d_{\mathcal{M}}(u,v)^{p}}\ge c,\label{eq:(p,c)-mass}
\end{equation}
in words, $c$-fraction of the total $\mathbf{a}$ array sum is from
vertices at least $l$ apart. Under this additional assumption, for
any $f:\mathcal{M}\to\mathfrak{X}$, there exists $u,v\in\mathcal{M}$
with $d_{\mathcal{M}}(u,v)\ge l$ such that 
\begin{equation}
\rho_{f}(l)\le d_{\mathfrak{X}}\left(f(u),f(v)\right)\le\mbox{diam}(\mathcal{M})P_{\mathbf{a},\mathbf{b},p}(\mathcal{M},\mathfrak{X})^{\frac{1}{p}}\left(\frac{\sum_{u,v}b_{u,v}d_{\mathfrak{X}}(f(u),f(v))^{p}}{c\sum_{u,v}a_{u,v}d_{\mathcal{M}}\left(u,v\right)^{p}}\right)^{\frac{1}{p}}.\label{eq:Poincare-compression}
\end{equation}

\medskip{}

This compression upper bound is very useful. In practice, to apply
this we need to choose the arrays $\mathbf{a}$, $\mathbf{b}$ and
obtain a good Poincaré inequality of the form (\ref{eq:a-b-poincare}).
This is not an easy task in general. In what follows we review some
special cases. These settings have been investigated extensively in
literature, thus established results are available for application
to metric embeddings.

\subsection{Poincaré inequalities in the classical form\label{sub:Spectral-method}}

Pioneered by work of Enflo \cite{Enflo1969}, it is well known that
spectral gap of certain Markov operators on a finite metric space
$(X,d)$ can be used to show lower bound for distortion of embedding
of $(X,d)$ into Hilbert spaces. This method appeared in Linial-Magen
\cite{Linial2000}, Newman-Rabinovich \cite{Newman2003} and was extended
in Grigorchuk-Nowak \cite{Grigorchuk2012}, Jolissaint-Valette \cite{Jolissaint2014} and Mimura \cite{Mimura2014}.
Interested readers may also consult Chapter 13.5 in the book \cite{LyonsPeresBook}
for a nice introduction to this topic.

Let $\left(\mathcal{M},d_{\mathcal{M}}\right)$ be a finite metric
space, $K:\mathcal{M}\times\mathcal{M}\to[0,1]$ a Markov transition
kernel kernel on $\mathcal{M}$. Suppose $K$ is reversible with respect
to stationary distribution $\pi$. The most familiar Poincaré inequality
for such a finite Markov chain takes the following form: for $f:\mathcal{M}\to\mathbb{R}$,
\[
\sum_{u,v}\left|f(u)-f(v)\right|^{2}\pi(u)\pi(v)\le C\sum_{u,v}\left|f(u)-f(v)\right|^{2}K(u,v)\pi(u).
\]
The reciprocal of the Poincaré constant is known as the spectral gap,
\begin{equation}
\lambda(K)=\inf_{f:\mathcal{M}\to\mathbb{R},f\neq c}\left\{ \frac{\sum_{u,v\in\mathcal{M}}\left|f(u)-f(v)\right|^{2}K(u,v)\pi(v)}{\sum_{u,v\in\mathcal{M}}\left|f(u)-f(v)\right|^{2}\pi(u)\pi(v)}\right\} .\label{eq:spectral-gap}
\end{equation}
In this case the Poincaré constant is often referred to as the relaxation
time of $K$. Mixing times of finite Markov chains have been a very
active research area in the past decades. For a great variety of Markov
chains good estimates of their spectral gaps are known, examples can
be found in \cite{Saloff-Coste1997}, \cite{Levin2009}. Note that
the same Poincaré inequality holds for Hilbert space valued functions
$f:\mathcal{M}\to\mathcal{H}$, this fact can be checked by eigenbasis
expansion. In some examples, based on the $\ell^{2}$-Poincaré inequality,
one can apply Matoušek extrapolation (see \cite{Matousek} and the
version in \cite{Naor2011a}) to obtain useful Poincaré inequalities
for $\ell^{p}$-valued functions.

In the setting of inequality (\ref{eq:a-b-poincare}), having variance
of $f$ on the left side of the inequality and Dirichlet form on the
right side corresponds to taking 
\begin{equation}
a_{u,v}=\pi(u)\pi(v)\ \mbox{and }b_{u,v}=\pi(u)K(u,v).\label{eq:b-K}
\end{equation}
Define $\lambda_{p}(\mathcal{M},K,\mathfrak{X})$ of the Markov operator
$K$ on $Y$ to be 
\begin{equation}
\lambda_{p}(\mathcal{M},K,\mathfrak{X})=\frac{1}{P_{\mathbf{a},\mathbf{b},p}(\mathcal{M},\mathfrak{X})},\label{eq:spectralgap}
\end{equation}
where $\mathbf{a},\mathbf{b}$ are specified by (\ref{eq:b-K}). When
$\mathfrak{X}$ is a Hilbert space and $p=2$, this definition agrees
with the standard variational formula of the spectral gap.

We now formulate an analogue of Lemma \ref{(Lemma-3.1-Austin)}. Since
the bound relies crucially on the $\mathfrak{X}$-valued Poincaré
constants $1/\lambda_{p}(X_{n},K_{n},\mathfrak{X})$ of the Markov
operator $K_{n}$ on $X_{n}$, we refer to it as the spectral method
for bounding compression functions.

\begin{lemma}\label{spectral-compression}

Let $G$ be an infinite group equipped with a metric $d$ and $p\in[1,\infty)$.
Let $X_{n}$ be a sequence of finite subsets in $G$ and $K_{n}$
be reversible Markov kernels on $X_{n}$ with stationary distribution
$\pi_{n}$. Suppose there exists a constant $c\in(0,1)$ and an increasing
sequence $\left\{ l_{n}\right\} $such that the array $\mathbf{a}_{n}$
defined as $\mathbf{a}_{n}(u,v)=\pi(u)\pi(v)$ satisfies $\left(p;l_{n},c\right)$-mass
distribution condition (\ref{eq:(p,c)-mass}).

Let $f:G\to\mathfrak{X}$ be a 1-Lipschitz uniform embedding. Then
the compression function of $f$ satisfies 
\[
\rho_{f}\left(l_{n}\right)\le\mbox{diam}_{d}\left(X_{n}\right)\left(\frac{1}{\lambda_{p}(X_{n},K_{n},\mathfrak{X})}\left(\frac{\sum_{u,v\in X_{n}}d_{\mathfrak{X}}(f(u),f(v))^{p}K_{n}(u,v)\pi_{n}(v)}{c\sum_{u,v\in X_{n}}d(u,v)^{p}\pi(u)\pi(v)}\right)\right)^{\frac{1}{p}}.
\]

\end{lemma}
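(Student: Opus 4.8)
The statement is essentially a specialization of inequality (\ref{eq:Poincare-compression}) to the Markov-chain setting, so the plan is to unwind the definitions and apply that inequality directly. First I would set $\mathcal{M} = X_n$ with the metric $d_{\mathcal{M}}$ inherited from $(G,d)$, and take the arrays $\mathbf{a}_n(u,v) = \pi_n(u)\pi_n(v)$ and $\mathbf{b}_n(u,v) = \pi_n(u)K_n(u,v)$ exactly as in (\ref{eq:b-K}). With this choice the $\mathfrak{X}$-valued Poincar\'e constant $P_{\mathbf{a}_n,\mathbf{b}_n,p}(X_n,\mathfrak{X})$ is by definition $1/\lambda_p(X_n,K_n,\mathfrak{X})$, per (\ref{eq:spectralgap}). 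The hypothesis supplies precisely the missing ingredient: that $\mathbf{a}_n$ satisfies the $(p;l_n,c)$-mass distribution condition (\ref{eq:(p,c)-mass}). Thus all the hypotheses needed to invoke (\ref{eq:Poincare-compression}) are in place.

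Next I would apply (\ref{eq:Poincare-compression}) to the restriction $f|_{X_n}\colon X_n \to \mathfrak{X}$ of the given $1$-Lipschitz uniform embedding. That inequality produces a pair $u,v \in X_n$ with $d(u,v) \ge l_n$, hence $d_{\mathfrak{X}}(f(u),f(v)) \ge \rho_f(l_n)$ by the definition of the compression function, and simultaneously bounds $d_{\mathfrak{X}}(f(u),f(v))$ from above by
\[
\mbox{diam}(X_n)\, P_{\mathbf{a}_n,\mathbf{b}_n,p}(X_n,\mathfrak{X})^{1/p}\left(\frac{\sum_{u,v} b_n(u,v)\, d_{\mathfrak{X}}(f(u),f(v))^p}{c\sum_{u,v} a_n(u,v)\, d(u,v)^p}\right)^{1/p}.
\]
Substituting $P_{\mathbf{a}_n,\mathbf{b}_n,p} = 1/\lambda_p(X_n,K_n,\mathfrak{X})$ and writing out $a_n,b_n$ explicitly turns the numerator into $\sum_{u,v} d_{\mathfrak{X}}(f(u),f(v))^p K_n(u,v)\pi_n(v)$ (using symmetry of the summand to write $\pi_n(u)K_n(u,v)$ as $\pi_n(v)K_n(v,u)$ and relabelling, or simply keeping $\pi_n(u)K_n(u,v)$ — reversibility makes these interchangeable) and the denominator into $c\sum_{u,v} d(u,v)^p \pi_n(u)\pi_n(v)$, which is exactly the claimed bound. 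Chaining $\rho_f(l_n) \le d_{\mathfrak{X}}(f(u),f(v)) \le (\text{RHS})$ finishes the proof.

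There is no real obstacle here; the lemma is a bookkeeping corollary of the general Poincar\'e-compression mechanism already established in (\ref{eq:Poincare-compression}) together with the identification (\ref{eq:spectralgap}) of the reciprocal Poincar\'e constant with $\lambda_p$. The only point requiring a line of care is matching the measure weights in the Dirichlet-form sum: the generic inequality carries $b_{u,v} = \pi_n(u)K_n(u,v)$, whereas the statement writes $K_n(u,v)\pi_n(v)$; these agree after using reversibility $\pi_n(u)K_n(u,v) = \pi_n(v)K_n(v,u)$ and the symmetry of $d_{\mathfrak{X}}(f(u),f(v))^p$ under swapping $u$ and $v$ in the double sum. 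I would note that the same argument works verbatim for any $p\in[1,\infty)$, since nothing beyond the formal structure of (\ref{eq:Poincare-compression}) and the definition of $\lambda_p$ is used.
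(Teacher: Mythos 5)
Your proposal is correct and matches the paper's own proof, which is exactly the observation that the lemma follows from applying inequality (\ref{eq:Poincare-compression}) to $X_n$ with the metric induced from $(G,d)$ and the arrays from (\ref{eq:b-K}), identifying $P_{\mathbf{a},\mathbf{b},p}$ with $1/\lambda_p(X_n,K_n,\mathfrak{X})$ via (\ref{eq:spectralgap}). The small remark about reversibility letting one write $\pi_n(u)K_n(u,v)$ as $K_n(u,v)\pi_n(v)$ is a useful clarification the paper leaves implicit.
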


\begin{proof}

Equip $X_{n}$ with the metric induced by the metric $d$ on $G$,
the inequality follows from (\ref{eq:Poincare-compression}).

\end{proof}

\begin{example}\label{group-poincare}

Consider the special case where $X_{n}$ is a sequence of finite subgroups
in $G$ and $d$ is a left invariant metric on $G$, e.g. the word
metric. Take $\mu_{n}$ to be a symmetric probability measure on $X_{n}$
and $K_{n}(u,v)=\mu_{n}\left(u^{-1}v\right)$. Then the Markov chain
with transition kernel $K_{n}$ is the random walk on $X_{n}$ with
step distribution $\mu_{n}$. It is reversible with respect to the
uniform distribution $U_{n}$ on $X_{n}$. In this case, because of
transitivity, the mass distribution condition is easily satisfied,
namely 
\[
\sum_{v:\ d(u,v)\ge\frac{1}{2}diam_{d}\left(X_{n}\right)}U_{n}(v)\ge\frac{1}{2}\ \mbox{for every }u\in X_{n}.
\]
It follows that $\mathbf{a}_{n}=\left(U_{n}(u)U_{n}(v)\right)$ satisfies
$\left(p,\frac{1}{2}\mbox{diam}_{d}\left(X_{n}\right),\frac{1}{2}\right)$-mass
distribution condition, and the bound in Lemma \ref{spectral-compression}
simplifies to 
\[
\rho_{f}\left(\frac{\mbox{diam}_{d}(X_{n})}{2}\right)\le\left(\frac{2^{p+2}\sum_{u,v\in X_{n}}d_{\mathfrak{X}}(f(u),f(v))^{p}K_{n}(u,v)\pi_{n}(v)}{\lambda_{p}(X_{n},K_{n},\mathfrak{X})}\right)^{\frac{1}{p}}.
\]

\end{example}

\subsection{Markov type inequalities \label{sub:Markov-type-method}}

The notion of the Markov type of a metric space was introduced by
K. Ball in \cite{Ball}. It has found important applications in metric
geometry. In \cite{Linial2002}, Linal, Magen and Naor pointed out
that the basic assumption of this concept can be viewed as Poincaré
inequalities. The Markov type method for bounding compression exponent
was first introduced by Naor and Peres in \cite{Naor2008} and later
significantly extended in \cite{Naor2011}.

\begin{definition}\label{def-Ball}[K. Ball \cite{Ball}] Given a metric space
$\left(\mathfrak{X},d_{\mathfrak{X}}\right)$ and $p\in[1,\infty)$,
we say that $\mathfrak{X}$ has Markov-type $p$ if there exists a
constant $C>0$ such that for every stationary reversible Markov chain
$\left\{ Z_{t}\right\} _{t=0}^{\infty}$ on $\left\{ 1,\ldots,n\right\} $,
every mapping $f:\{1,\ldots,n\}\to\mathfrak{X}$ and every time $t\in\mathbb{N}$,
\begin{equation}
\mathbf{E}d_{\mathfrak{X}}(f(Z_{t}),f(Z_{0}))^{p}\le C^{p}t\mathbf{E}d_{\mathfrak{X}}(f(Z_{1}),f(Z_{0}))^{p}.\label{eq:Markov-type}
\end{equation}
The least such constant $C$ is called the Markov-type $p$ constant
of $\mathfrak{X}$ and is denoted by $M_{p}(\mathfrak{X})$.

\end{definition}

Theorem 2.3 in Naor-Peres-Sheffield-Schramm \cite{Naor2006} implies
the following results for the classical Lebesgue spaces $L_{p}$.
For $p\in(1,2]$, the space $L_{p}$ has Markov type $p$ and $M_{p}\left(L_{p}\right)\le\frac{8}{\left(2^{p+1}-4\right){}^{1/p}}$;
and for every $p\in[2,\infty)$, $L_{p}$ has Markov type $2$ and
$M_{2}\left(L_{p}\right)\le4\left(p-1\right)^{\frac{1}{2}}$ . See
\cite{Naor2006} for more examples of metric spaces of known Markov
type.

In the setting of (\ref{eq:a-b-poincare}), the inequality (\ref{eq:Markov-type})
in the definition of Markov type $p$ can be viewed as a Poincaré
inequality with 
\[
a_{u,v}=K^{t}(u,v)\pi(u)\mbox{ and }b_{u,v}=K(u,v)\pi(u),
\]
where $K$ is the transition kernel of a reversible Markov chain on
state space $\mathcal{M}$ of $n$ points, and $\pi$ is its stationary
distribution. The Poincaré inequality provided by (\ref{eq:Markov-type})
reads 
\[
\sum_{u,v\in\mathcal{M}}d_{\mathfrak{X}}(f(u),f(v))^{p}K^{t}(u,v)\pi(u)\le M_{p}^{p}\left(\mathfrak{X}\right)t\sum_{u,v\in\mathcal{M}}d_{\mathfrak{X}}(f(u),f(v))K(u,v)\pi(u)
\]
for all functions $f:\mathcal{M}\to\mathfrak{X}$. Note that the notion
of Markov type is very powerful, if $\mathfrak{X}$ has Markov type
$p$, then the inequality above is valid for any finite state space
$\mathcal{M}$ and any reversible Markov transition kernel $K$ on
$\mathcal{M}$.

Now we examine the mass distribution condition. Let $\left(\mathcal{M},d_{\mathcal{M}}\right)$
be a finite metric space, and $K$ be a reversible Markov kernel on
$\mathcal{M}$ with stationary distribution $\pi$. Let $\left\{ Z_{t}\right\} _{t=0}^{\infty}$
be a stationary Markov chain on $\mathcal{M}$ with transition kernel
$K$. At time $t$, set 
\[
\gamma(t)^{p}=\frac{1}{2}\mathbf{E}_{\pi}\left[d_{\mathcal{M}}(Z_{t},Z_{0})^{p}\right],
\]
then 
\begin{align*}
\mathbf{E}_{\pi}\left[d_{\mathcal{M}}(Z_{t},Z_{0})^{p}\mathbf{1}_{\left\{ d_{\mathcal{M}}(Z_{t},Z_{0})>\gamma(t)\right\} }\right] & =\mathbf{E}_{\pi}\left[d_{\mathcal{M}}(Z_{t},Z_{0})^{p}\right]-\mathbf{E}_{\pi}\left[d_{\mathcal{M}}(Z_{t},Z_{0})^{p}\mathbf{1}_{\left\{ d_{Y}(Z_{t},Z_{0})\le\gamma(t)\right\} }\right]\\
 & \ge\mathbf{E}_{\pi}\left[d_{\mathcal{M}}(Z_{t},Z_{0})^{p}\right]-\gamma(t)^{p}=\frac{1}{2}\mathbf{E}_{\pi}\left[d_{\mathcal{M}}(Z_{t},Z_{0})^{p}\right].
\end{align*}
That is, the array $\mathbf{a}$ satisfies $\left(p;\left(\frac{1}{2}\mathbf{E}_{\pi}\left[d_{\mathcal{M}}(Z_{t},Z_{0})^{p}\right]\right)^{\frac{1}{p}},\frac{1}{2}\right)$-mass
distribution condition where $\mathbf{a}$ is defined by $a_{u,v}=K^{t}(u,v)\pi(u)$.
From the inequality (\ref{eq:Poincare-compression}) we derive the
following upper bound on compression function.

\begin{lemma}\label{Markov-type-compression}

Let $G$ be an infinite group equipped with a metric $d$. Let $f:G\to\mathfrak{X}$
be a 1-Lipschitz uniform embedding. Assume that $\mathfrak{X}$ has
Markov type $p$.

Let $X_{n}$ be a sequence of finite sets of $G$, $K_{n}$ be a reversible
Markov kernel on $X_{n}$ with stationary distribution $\pi_{n}$.
Let $\left\{ Z_{t}^{(n)}\right\} _{t=0}^{\infty}$ be a stationary
Markov chain on $X_{n}$ with transition kernel $K_{n}$. Then for
any $t_{n}\in\mathbb{N}$, the compression function of $f$ satisfies
\begin{multline*}
\rho_{f}\left(\left(\frac{1}{2}\mathbf{E}_{\pi_{n}}\left[d\left(Z_{t_{n}}^{(n)},Z_{0}^{(n)}\right)^{p}\right]\right)^{\frac{1}{p}}\right)\\
\le\left(2M_{p}^{p}(\mathfrak{X})t_{n}\mbox{diam}_{(G,d)}\left(X_{n}\right)^{p}\frac{\mathbf{E}_{\pi_{n}}\left[d_{\mathfrak{X}}\left(f\left(Z_{1}^{(n)}\right),f\left(Z_{0}^{(n)}\right)\right)^{p}\right]}{\mathbf{E}_{\pi_{n}}\left[d\left(Z_{t_{n}}^{(n)},Z_{0}^{(n)}\right)^{p}\right]}\right)^{\frac{1}{p}}.
\end{multline*}

\end{lemma}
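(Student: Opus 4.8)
The plan is to read off the desired bound from the abstract Poincar\'e--compression estimate (\ref{eq:Poincare-compression}) applied to the finite metric space $X_n$ equipped with the restriction of $d$. First I would choose the two arrays indexed by $u,v\in X_n$ to be $a_{u,v}=\pi_n(u)K_n^{t_n}(u,v)$ and $b_{u,v}=\pi_n(u)K_n(u,v)$. Since $\{Z_t^{(n)}\}_{t\ge0}$ is the stationary reversible chain driven by $K_n$ and $\mathfrak{X}$ has Markov type $p$, Definition \ref{def-Ball} applied to the restriction $f|_{X_n}\colon X_n\to\mathfrak{X}$ (after the harmless relabeling of $X_n$ by $\{1,\dots,|X_n|\}$ needed to invoke it verbatim) gives
\[
\sum_{u,v\in X_n} d_{\mathfrak{X}}(f(u),f(v))^p a_{u,v}\;\le\;M_p^p(\mathfrak{X})\,t_n\sum_{u,v\in X_n} d_{\mathfrak{X}}(f(u),f(v))^p b_{u,v},
\]
that is, the $p$-Poincar\'e constant for this pair of arrays satisfies $P_{\mathbf{a},\mathbf{b},p}(X_n,\mathfrak{X})\le M_p^p(\mathfrak{X})\,t_n$.

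Next I would record that $\mathbf{a}$ has a mass distribution property at the appropriate scale. Writing $\gamma=\bigl(\tfrac12\mathbf{E}_{\pi_n}[d(Z_{t_n}^{(n)},Z_0^{(n)})^p]\bigr)^{1/p}$, and noting that the contribution to $\mathbf{E}_{\pi_n}[d(Z_{t_n}^{(n)},Z_0^{(n)})^p]=\sum_{u,v}a_{u,v}\,d(u,v)^p$ coming from pairs with $d(u,v)<\gamma$ is at most $\gamma^p=\tfrac12\sum_{u,v}a_{u,v}\,d(u,v)^p$, we see that $\mathbf{a}$ satisfies the $(p;\gamma,\tfrac12)$-mass distribution condition (\ref{eq:(p,c)-mass}); this is exactly the computation carried out just above the statement. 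Applying (\ref{eq:Poincare-compression}) with $l=\gamma$ and $c=\tfrac12$ then produces a pair $u,v\in X_n$ with $d(u,v)\ge\gamma$, hence with $d_{\mathfrak{X}}(f(u),f(v))\ge\rho_f(\gamma)$, for which
\[
\rho_f(\gamma)\;\le\;\mbox{diam}_{(G,d)}(X_n)\,P_{\mathbf{a},\mathbf{b},p}(X_n,\mathfrak{X})^{1/p}\left(\frac{\sum_{u,v}b_{u,v}\,d_{\mathfrak{X}}(f(u),f(v))^p}{\tfrac12\sum_{u,v}a_{u,v}\,d(u,v)^p}\right)^{1/p}.
\]

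Finally I would identify the two remaining sums with expectations along the stationary chain, namely $\sum_{u,v}a_{u,v}\,d(u,v)^p=\mathbf{E}_{\pi_n}[d(Z_{t_n}^{(n)},Z_0^{(n)})^p]$ and $\sum_{u,v}b_{u,v}\,d_{\mathfrak{X}}(f(u),f(v))^p=\mathbf{E}_{\pi_n}[d_{\mathfrak{X}}(f(Z_1^{(n)}),f(Z_0^{(n)}))^p]$, insert the bound $P_{\mathbf{a},\mathbf{b},p}(X_n,\mathfrak{X})\le M_p^p(\mathfrak{X})\,t_n$ from the first step, and absorb the factor $2=1/c$ under the $p$-th root; this yields precisely the stated inequality. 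I do not expect a genuine obstacle here: all the content sits in Ball's Markov type inequality and in the abstract estimate (\ref{eq:Poincare-compression}), and the only points requiring care are the bookkeeping of which array is the $t_n$-step one (it fixes the scale $\gamma$ at which compression is controlled) versus the one-step one (it appears in the numerator), together with the remark that $f$ being a uniform embedding makes $\rho_f(\gamma)$ positive so the bound is non-vacuous.
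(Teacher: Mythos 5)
Your proof is correct and follows the same approach as the paper: the lemma is derived exactly from the abstract bound (\ref{eq:Poincare-compression}) with the arrays $a_{u,v}=\pi_n(u)K_n^{t_n}(u,v)$ and $b_{u,v}=\pi_n(u)K_n(u,v)$, using Ball's Markov type inequality to bound the Poincar\'e constant by $M_p^p(\mathfrak{X})\,t_n$ and the Markov-type mass distribution computation (which the paper carries out just above the lemma) to certify the $(p;\gamma,\tfrac12)$-condition at scale $\gamma=\bigl(\tfrac12\mathbf{E}_{\pi_n}[d(Z_{t_n}^{(n)},Z_0^{(n)})^p]\bigr)^{1/p}$. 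The identification of the remaining sums with one-step and $t_n$-step expectations along the stationary chain and the absorption of the factor $1/c=2$ are exactly as in the paper.
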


\begin{remark}

This upper bound on the compression function is in the same spirit
as the argument of Naor and Peres in Section 5 of \cite{Naor2011}.
The difference is that in \cite{Naor2011} the authors considered
random walks on the infinite group $G$ starting at identity and $f$
is taken to be a $1$-cocycle on $G$, then Markov type inequality
for $1$-cocycles was applied to bound the compression function. One
restriction for such an approach is that the step distribution of
the random walk needs to have finite $p$-moment. While in the finite
subsets, in principle one can experiment with any reversible transition
kernel and choose the best one available. Examples that illustrate
this point can be found in Subsection \ref{sub:-compression-wr-Z}.

\end{remark}

\subsection{Comparing spectral and Markov type methods}

It is interesting to compare the classical Poincaré inequalities and
the ones from Markov type method. Suppose in the infinite group $G$,
we have chosen a sequence of subsets $\left\{ X_{n}\right\} $ and
reversible Markov kernels $K_{n}$ on $X_{n}$. With this sequence
$\left\{ \left(X_{n},K_{n}\right)\right\} $ we compare the results
given by the two methods. Let $\mathfrak{X}$ be a metric space of
Markov type $p$ and $f:G\to\mathfrak{X}$ be a 1-Lipschitz embedding
from $(G,d)$ to $(\mathfrak{X},d_{\mathfrak{X}})$. To compare terms
in the bounds of Lemma \ref{spectral-compression} and \ref{Markov-type-compression},
first note that 
\[
\mathbf{E}d_{\mathfrak{X}}\left(f\left(Z_{1}^{(n)}\right),f\left(Z_{0}^{(n)}\right)\right)^{p}=\sum_{u,v\in X_{n}}d_{\mathfrak{X}}(f(u),f(v))^{p}K_{n}(u,v)\pi_{n}(u).
\]
Now we choose $t_{n}$ to be the comparable to the Poincaré constant
$P_{p}(X_{n},K_{n},\mathfrak{X})$ (it corresponds to relaxation time
when $\mathfrak{X}$ is a Hilbert space and $p=2$). Suppose in addition
that $\pi_{n}$ satisfies the $(p;\theta\mbox{diam}(X_{n}),c)$-mass
distribution condition, then essentially the difference in the two
bounds comes from the ratio 
\[
\frac{\mbox{diam}_{(G,d)}\left(X_{n}\right)^{p}}{\mathbf{E}d\left(Z_{t_{n}}^{(n)},Z_{0}^{(n)}\right)^{p}}.
\]
Thus if there is a constant $c_{1}>0$ such that for $t_{n}\simeq P_{p}\left(X_{n},K_{n},\mathfrak{X}\right)$,
\[
\mathbf{E}d_{X_{n}}\left(Z_{t_{n}}^{(n)},Z_{0}^{(n)}\right)^{p}\ge c_{1}^{p}\mbox{diam}_{(G,d)}(X_{n})^{p},
\]
then up to some multiplicative constants, the two methods give the
same compression upper bound.

It is important in applications that the choice of the sequence of
finite subsets $X_{n}$ and Markov kernels $K_{n}$ is flexible. For
example, in order to use Poincaré inequalities to obtain an upper
bound on the compression function of uniform embedding $f$ from $G$
into a Hilbert space, the subsets $X_{n}$ should be chosen to capture
some worst distorted elements in the group under $f$, and the Markov
kernel $K_{n}$ on $X_{n}$ should be chosen so that 
\[
\frac{1}{\lambda(K_{n})}\left(\sum_{u,v\in X_{n}}d_{\mathcal{H}}(f(u),f(v))^{2}K_{n}(u,v)\pi_{n}(v)\right)
\]
is as small as possible. That is, $K_{n}$ needs to achieve a balance
between spectral gap and Dirichlet form $\mathcal{E}_{K_{n}}(f)$.
This point will be the guideline for the choice of $\left(X_{n},K_{n}\right)$
in the examples we treat.

\subsection{Metric cotype inequalities\label{sub:metric-cotype-inequalities}}

The notion of type and cotype plays a central role in the local theory
of Banach spaces. The classical linear notion of type and cotype is
defined as follows. A Banach space $\mathfrak{X}$ is said to have
(Rademacher) type $p>0$ if there exists a constant $T>0$ such that
for every $n$ and every $x_{1},\ldots,x_{n}\in\mathfrak{X}$, 
\[
\mathbb{E}\left\Vert \sum_{j=1}^{n}\varepsilon_{j}x_{j}\right\Vert _{\mathfrak{X}}^{p}\le T^{p}\sum_{j=1}^{n}\left\Vert x_{j}\right\Vert _{\mathfrak{X}}^{p},
\]
where $\mathbb{E}$ is the expectation with respect to uniform distribution
on $\left(\varepsilon_{1},\ldots,\varepsilon_{n}\right)\in\left\{ -1,1\right\} ^{n}$.
A Banach space $\mathfrak{X}$ is said to have (Rademacher) cotype
$q>0$ if there exists a constant $C>0$ such that for every $n$
and every $x_{1},\ldots,x_{n}\in\mathfrak{X}$, 
\[
\mathbb{E}\left\Vert \sum_{j=1}^{n}\varepsilon_{j}x_{j}\right\Vert _{\mathfrak{X}}^{q}\ge\frac{1}{C^{q}}\sum_{j=1}^{n}\left\Vert x_{j}\right\Vert _{\mathfrak{X}}^{q}.
\]
Given a Banach space $\mathfrak{X}$, define 
\[
p_{\mathfrak{X}}=\sup\left\{ p:\ \mathfrak{X}\mbox{ has type }p\right\} ,\ q_{\mathfrak{X}}=\inf\left\{ q:\ \mathfrak{X}\mbox{ has cotype }q\right\} .
\]
The space $\mathfrak{X}$ is said to be of nontrivial type if $p_{\mathfrak{X}}>1$,
and it is of nontrivial cotype if $q_{\mathfrak{X}}<\infty$.

Mendel and Naor \cite{Mendel2008} introduced the nonlinear notion
of metric cotype. By \cite[Definition 1.1]{Mendel2008}, $\left(\mathfrak{X},d_{\mathfrak{X}}\right)$
has metric cotype $q$ with constant $\Gamma$ if for every integer
$n\in\mathbb{N}$, there exists an even integer $m$, such that for
every $f:\mathbb{Z}_{m}^{n}\to\mathfrak{X}$, 
\[
\sum_{j=1}^{n}\sum_{u\in\mathbb{Z}_{m}^{n}}d_{\mathfrak{X}}\left(f\left(u+\frac{m}{2}\mathbf{e}_{j}\right),f(u)\right)^{q}\pi(u)\le\Gamma^{q}m^{q}\sum_{u\in\mathbb{Z}_{m}^{n}}\mathbb{E}\left[d_{\mathfrak{X}}\left(f\left(u+\boldsymbol{\varepsilon}\right),f(u)\right)^{q}\right]\pi(u),
\]
where $\pi$ is the uniform distribution on $\mathbb{Z}_{m}^{n}$
and $\mathbb{E}$ is the expectation taken with respect with uniform
distribution on $\boldsymbol{\varepsilon}=\left(\varepsilon_{1},\ldots,\varepsilon_{n}\right)\in\left\{ -1,0,1\right\} ^{n}$,
and $\left\{ \mathbf{e}_{j}\right\} $ is the standard basis of $\mathbb{R}^{n}$.
Mendel and Naor proved in \cite{Mendel2008} that for a Banach space
$\mathfrak{X}$ and $q\in[2,\infty)$, $\mathfrak{X}$ has metric
cotype $q$ if and only if it has Rademacher cotype $q$. As a key
step, they established the following sharp estimate, which we will
refer to as the metric cotype inequality.

\begin{theorem}\label{metric-cotype}[Theorem 4.2 \cite{Mendel2008}]
Let $\pi$ be the uniform distribution on $\mathbb{Z}_{m}^{n}$ and
$\sigma$ be the uniform distribution on $\left\{ -1,0,1\right\} ^{n}$.
Let $\mathfrak{X}$ be a Banach space of Rademacher type $p>1$ and
cotype $q\in[2,\infty)$. Then for every $f:\mathbb{Z}_{m}^{n}\to\mathfrak{X}$,
\begin{multline*}
\sum_{u\in\mathbb{Z}_{m}^{n}}\sum_{j=1}^{n}d_{\mathfrak{X}}\left(f\left(u+\frac{m}{2}\mathbf{e}_{j}\right),f(u)\right)^{q}\pi(u)\\
\le\left(5\max\left\{ C(\mathfrak{X})m,n^{\frac{1}{q}}\right\} \right)^{q}\sum_{u\in\mathbb{Z}_{m}^{n}}\sum_{\boldsymbol{\varepsilon}\in\{-1,0,1\}^{n}}d_{\mathfrak{X}}\left(f(u+\boldsymbol{\varepsilon}),f(u)\right)^{q}\sigma(\boldsymbol{\varepsilon})\pi(u),
\end{multline*}
where $C(\mathfrak{X})>0$ is a constant that only depends on the
cotype constant and $K_{q}$-convexity constant of $\mathfrak{X}$.

\end{theorem}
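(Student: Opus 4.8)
The statement is Mendel--Naor's sharp metric cotype inequality, so the plan is to reproduce the harmonic-analytic argument of \cite{Mendel2008} on the discrete torus $\mathbb{Z}_m^n$ with values in the complexification $\mathfrak{X}_{\mathbb{C}}$. Write $\omega=e^{2\pi i/m}$ and expand $f(u)=\sum_{v\in\mathbb{Z}_m^n}\hat f(v)\,\omega^{\langle v,u\rangle}$ with $\hat f(v)\in\mathfrak{X}_{\mathbb{C}}$. The elementary but decisive observation is that translation by $\tfrac m2\mathbf{e}_j$ acts on the frequency $v$ by the scalar $\omega^{(m/2)v_j}=(-1)^{v_j}$, so
\[
f\Bigl(u+\tfrac m2\mathbf{e}_j\Bigr)-f(u)\;=\;-2\sum_{v:\,v_j\text{ odd}}\hat f(v)\,\omega^{\langle v,u\rangle}.
\]
Thus the left-hand side of the asserted inequality is essentially controlled by the contribution of frequencies with many odd coordinates, while the right-hand side is, in Fourier terms, a weighted average with weight $\mathbb{E}_{\boldsymbol\varepsilon}\,|\omega^{\langle v,\boldsymbol\varepsilon\rangle}-1|^q$, which is small precisely when every $v_j$ is close to $0\bmod m$. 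The whole inequality therefore says, after transport through vector-valued Fourier multiplier bounds, that ``$v$ has many odd coordinates $\Rightarrow$ the $\boldsymbol\varepsilon$-symbol at $v$ is not too small'', with loss at most $\max\{C(\mathfrak{X})m,\,n^{1/q}\}$.

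First I would reduce from the Banach norm to an $\ell^q$-sum over the $n$ directions using the linear Rademacher cotype $q$ inequality for $\mathfrak{X}$, hence for $L_q(\mathbb{Z}_m^n;\mathfrak{X})$ with the same constant: introducing i.i.d. signs $(\delta_j)_{j=1}^n$,
\[
\sum_{j=1}^n\Bigl\|\,f\bigl(\cdot+\tfrac m2\mathbf{e}_j\bigr)-f\,\Bigr\|_{L_q}^q\;\le\;C_q^q\,\mathbb{E}_{\boldsymbol\delta}\Bigl\|\sum_{j=1}^n\delta_j\bigl(f(\cdot+\tfrac m2\mathbf{e}_j)-f\bigr)\Bigr\|_{L_q}^q .
\]
Next I would perform a smoothing step: replace $f$ by its average over a box of side of order $m$. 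By convexity together with a telescoping estimate, the smoothing error is bounded by (a constant multiple of) the right-hand side of the target inequality, while on the odd-frequency part the averaging only shrinks the left-hand side by a bounded factor. On the smoothed function the ``inverse Laplacian'' Fourier multiplier $v\mapsto \frac{(-1)^{v_j}-1}{(\boldsymbol\varepsilon\text{-symbol at }v)}$, which expresses $\sum_j\delta_j(f(\cdot+\tfrac m2\mathbf{e}_j)-f)$ through the genuine $\boldsymbol\varepsilon$-differences $f(\cdot+\boldsymbol\varepsilon)-f$, is bounded on $L_q(\mathbb{Z}_m^n;\mathfrak{X})$ with norm $O(m)$; this is exactly where nontrivial type $p>1$ is used, via Pisier's $K$-convexity theorem (uniform boundedness of the Walsh/Rademacher projection), which lets one realize the division by the symbol as a composition of operators each controlled independently of $n$. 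Assembling the pieces, the smoothing contributes the factor $m$ whenever $C(\mathfrak{X})m\ge n^{1/q}$, and the crude bound $|\{j:v_j\text{ odd}\}|\le n$ contributes the competing factor $n^{1/q}$, producing $\max\{C(\mathfrak{X})m,\,n^{1/q}\}$; the numerical constant $5$ comes out after bookkeeping of the constants in the cotype inequality, the smoothing, and the multiplier bound.

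The hard part will be the boundedness of the inverse-Laplacian multiplier with the correct $m$-dependence and \emph{no} exponential loss in $n$: one must choose the smoothing parameter so that, on the spectrum of the smoothed function, the scalar $\boldsymbol\varepsilon$-symbol is bounded below by roughly $m^{-1}$ times the appropriate power of $|\{j:v_j\text{ odd}\}|$, and then control the division by this symbol purely through $K$-convexity, uniformly over the torus. It is precisely this interplay — quantitative spectral localization of the box-average against Pisier's theorem — rather than any single ingredient, that is the technical heart of the argument; everything else is the routine passage between $\ell^q$-sums and norms via linear cotype. I would then record the resulting inequality in the stated form, which is the version invoked in Section \ref{sec:dihedral-compression} to produce metric-cotype obstructions to $L_q$-embeddings.
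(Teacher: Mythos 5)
This theorem is imported verbatim as Theorem 4.2 of Mendel--Naor \cite{Mendel2008}; the paper offers no proof, so there is no internal argument to compare against. Your sketch does identify the right ingredients of the Mendel--Naor argument: the Fourier observation that the $\frac{m}{2}\mathbf{e}_j$-shift acts as $(-1)^{v_j}$ on the $v$-th mode, the reduction via linear Rademacher cotype to a sign-averaged quantity, a smoothing step, Pisier's $K$-convexity theorem to obtain multiplier bounds uniform in $n$, and the dichotomy producing $\max\{C(\mathfrak{X})m,\,n^{1/q}\}$. However, as you yourself flag, the technically essential step --- the multiplier bound with the correct $m$-dependence and no loss in $n$ --- is described rather than carried out, so what you have written is a strategy for a proof, not a proof. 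Two cautions if you try to complete it: Mendel--Naor's averaging is not a flat box average but an operator whose spectral decay is tuned to the $K$-convexity argument; and $K$-convexity controls the Rademacher projection, whereas the operator that expresses $\sum_j \delta_j(f(\cdot+\frac{m}{2}\mathbf{e}_j)-f)$ through the $\boldsymbol{\varepsilon}$-differences is only synthesized from that projection --- this synthesis, not the invocation of Pisier's theorem itself, is where $C(\mathfrak{X})$ and the factor $m$ are actually produced. For the purposes of the present paper, citing the result is the correct move, which is what the authors do.
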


This metric cotype inequality can be viewed as a Poincaré inequality
with rather unusual choice of arrays $\mathbf{a},\mathbf{b}$ on $\mathbb{Z}_{m}^{n}$,
namely 
\[
a_{u,v}=\sum_{j=1}^{n}\pi(u)\mathbf{1}_{\left\{ v=u+\frac{m}{2}\mathbf{e}_{j}\right\} }\mbox{ and }b_{u,v}=\sum_{\boldsymbol{\varepsilon}\in\{-1,0,1\}^{n}}\pi(u)\mathbf{1}_{\{v=u+\boldsymbol{\varepsilon}\}}\sigma(\boldsymbol{\varepsilon}),
\]
then the Poincaré constant is bounded by 
\[
P_{\mathbf{a},\mathbf{b},2}\left(\mathbb{Z}_{m}^{n},\mathfrak{X}\right)\le\left(5\max\left\{ C(\mathfrak{X})mn^{\frac{1}{2}-\frac{1}{q}},n^{\frac{1}{2}}\right\} \right)^{2}.
\]
It captures a subtle comparison between a transition kernel that moves
far in one fiber and another kernel that moves by $\pm1$ across the
whole product.

Among many applications of metric cotype, such inequalities provides
sharp lower bound for the distortion of embeddings of the $\ell^{\infty}$
integer lattice $\left[m\right]_{\infty}^{n}$ into Banach spaces
of nontrivial type and cotype $q$, see Theorem 1.12 in \cite{Mendel2008}.
In Section \ref{sec:dihedral-compression} we will apply these metric
cotype inequalities in the study of compression of diagonal product
$\Delta$ constructed with dihedral groups, exactly because of the
presence of $l^{\infty}$-lattices of growing side length in the group.

\section{Compression of $\Delta$ with embedded expanders\label{sec:Compression-expander}}

In this section we consider compression of the diagonal product $\Delta$
constructed with $\left\{ \Gamma_{s}\right\} $ chosen to be certain
families of expanders. Let $\mathfrak{X}$ be a Banach space, a map
$\Psi:G\to\mathfrak{X}$ is called $G$-equivariant if there exists
an action $\tau$ of $G$ on $\mathfrak{X}$ by affine isometries
and a vector $v\in\mathfrak{X}$ such that $\Psi(g)=\tau(g)v$ for
all $x\in G$. Such a map is called a $1$-cocycle, see \cite{Cornulier2007}.

A couple of functions $(g_{1},g_{2})$ is an \textbf{equivariant}-$\mathfrak{X}$\textendash{}\textbf{compression
gap} of $G$ if any $1$-Lipschitz $G$-equivariant embedding
$\varphi:G\to\mathfrak{X}$ satisfies $\rho_{\varphi}(t)\le g_{2}(t)$
for all $t\ge1$ and there exists a $1$-Lipschitz $G$-equivariant
embedding $\Psi:G\to\mathfrak{X}$ such that $\rho_{\Psi}(t)\ge g_{1}(t)$
for all $t\ge1$.

We address the question regarding possible $L_{p}$-compression exponents
of finitely generated amenable groups.

\begin{proposition}

For any $\gamma\in[0,1]$, there exists a finitely generated elementary
amenable group $\Delta$ such that for all $p\ge1$, 
\[
\alpha_{p}^{\#}(\Delta)=\gamma.
\]

\end{proposition}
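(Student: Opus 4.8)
The plan is to read this off from Theorem~\ref{main-expanders}. The observation driving the argument is that when a group carries an $L_p$-compression gap whose two sides differ only by a sub-polynomial factor (here the $\log_{\ast}^{1+\epsilon}$ correction), the equivariant $L_p$-compression exponent is exactly the exponent of the upper bound of the gap. In the notation of Theorem~\ref{main-expanders} that upper bound is $C\cdot 2^p\, n/f(n)$, so it suffices to choose $f$ in the admissible class (non-decreasing, $f(1)=1$, $x/f(x)$ non-decreasing) with $n/f(n)$ of exponent $\gamma$, and, crucially, \emph{not} asymptotically linear, so that the last clause of the theorem yields a finitely generated elementary amenable group for which the gap is valid for all $p\ge 1$ (not merely $p>1$).

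Concretely, for $\gamma\in(0,1]$ I would take $f(x)=x^{1-\gamma}$ (so $f\equiv 1$ when $\gamma=1$; one may also simply take $\Delta=\mathbb{Z}$ in that case), and for $\gamma=0$ I would take $f(x)=x\log 2/\log_{\ast}(x)$. In every case $f(1)=1$, the quotient $x/f(x)$ equals $x^{\gamma}$ or $\log_{2}(x+1)$ and hence is non-decreasing, and $f(x)/x\to 0$; the only point requiring a line of work is the monotonicity of $x\mapsto x/\log(x+1)$, which follows because the numerator $\log(x+1)-x/(x+1)$ of its derivative vanishes at $x=0$ and has non-negative derivative $x/(x+1)^2$ on $[0,\infty)$. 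Applying Theorem~\ref{main-expanders} with this $f$ produces one finitely generated elementary amenable group $\Delta$ such that, for every $p\ge 1$ and $\epsilon>0$, the pair $\bigl(\tfrac{1}{C\epsilon}\tfrac{n/f(n)}{\log_{\ast}^{1+\epsilon}(n/f(n))},\, C\cdot 2^p\, n/f(n)\bigr)$ is an equivariant $L_p$-compression gap of $\Delta$.

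It then remains to compute $\alpha_p^{\#}(\Delta)$ for this fixed $\Delta$. Since $n/f(n)$ equals $n^{\gamma}$ or $\log_{2}(n+1)$, it has exponent $\gamma$, and the $\log_{\ast}^{1+\epsilon}$ factor does not affect exponents. For the upper bound: any $1$-Lipschitz equivariant $\varphi\colon\Delta\to L_p$ satisfies $\rho_\varphi(t)\le C\cdot 2^p\, t/f(t)=t^{\gamma+o(1)}$, so for each $\alpha>\gamma$ no constant $c>0$ makes $\rho_\varphi(t)\ge c\,t^{\alpha}$ hold for all $t\ge 1$; hence $\alpha_{L_p}(\varphi)\le\gamma$ and, taking the supremum over equivariant maps, $\alpha_p^{\#}(\Delta)\le\gamma$. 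For the lower bound: the gap supplies a $1$-Lipschitz equivariant $\Psi$ with $\rho_\Psi(t)\ge \tfrac{1}{C\epsilon}\tfrac{t/f(t)}{\log_{\ast}^{1+\epsilon}(t/f(t))}=t^{\gamma+o(1)}$, so for each $\alpha<\gamma$ one has $\rho_\Psi(t)\ge c\,t^{\alpha}$ for a suitable $c>0$ and all $t\ge 1$, giving $\alpha_{L_p}(\Psi)\ge\alpha$ and hence $\alpha_p^{\#}(\Delta)\ge\gamma$; when $\gamma=0$ this last inequality is automatic because $\rho_\Psi$ is non-decreasing with $\rho_\Psi(1)>0$. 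Combining the two bounds gives $\alpha_p^{\#}(\Delta)=\gamma$ for every $p\ge 1$, with the same $\Delta$.

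There is essentially no obstacle beyond Theorem~\ref{main-expanders} itself: the argument is exponent bookkeeping together with the choice of $f$ at the two endpoints $\gamma=0$ and $\gamma=1$, where one must keep $f$ non-asymptotically-linear (for $\gamma=1$ note $x^{1-\gamma}\equiv 1$ still has $f(x)/x\to 0$; for $\gamma=0$ one needs $n/f(n)$ unbounded yet of exponent $0$, which is why a $\log_{\ast}$-type choice is used rather than $f(x)=x$).
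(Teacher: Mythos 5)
Your proof is correct and follows essentially the same route as the paper, which states that the Proposition follows from the equivariant compression gap result (Theorem \ref{compression-expander}, equivalently the fourth bullet and last clause of Theorem \ref{main-expanders}). Your choice $f(x)=x^{1-\gamma}$ for $\gamma\in(0,1]$ and $f(x)=x\log 2/\log_{\ast}(x)$ for $\gamma=0$ correctly keeps $f$ in the admissible class and non-asymptotically-linear, so the elementary amenable case and the $p\ge 1$ gap apply, and the exponent bookkeeping is sound.
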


This result follows from a more precise result about equivariant compression
gap of the diagonal product group $\Delta$, see Theorem \ref{compression-expander}.
We will see that when the lamp groups $\left\{ \Gamma_{s}\right\} $
are chosen to be expanders, single copies of these lamp groups provide
sufficient obstruction for embedding. In some sense this case can
be viewed as an amenable analogue of \cite{Arzhantseva2009}.

\subsection{An upper bound in any uniformly convex Banach spaces}

In this subsection we take $\left\{ \Gamma_{s}\right\} $ as a subsequence
in the Lafforgue super expanders $\left\{ \Gamma_{m}\right\} $ described
in Example \ref{examples}. By Fact \ref{embed-one}, each group $\Gamma_{s}$
embeds homotheticaly in the diagonal product $\Delta$ with ratio
$k_{s}+1$, i.e. there is group homomorphism $\vartheta_{s}:\Gamma_{s}\rightarrow\Delta$
satisfying 
\[
\left|\vartheta_{s}(\gamma)\right|_{\Delta}=(k_{s}+1)\left|\gamma\right|_{\Gamma_{s}}.
\]
From these distortion estimates and the embeddings $\vartheta_{s}:\Gamma_{s}\hookrightarrow\Delta$,
we immediately derive an upper bound on compression function of $\Delta$
into $\mathfrak{X}$ by Lemma \ref{spectral-compression}.

\begin{lemma}\label{compression-expander-upper}

Let $\Delta$ be the diagonal product with parameters $(k_{s})$
and lamp groups $\{\Gamma_{s}\}$ chosen as a subsequence of Lafforgue
super expanders in Example \ref{examples}, $\mbox{diam}(\Gamma_{s})=l_{s}$.
Then for any uniformly convex Banach space $\mathfrak{X}$, there
exists a constant $\delta=\delta\left(\Gamma,\mathfrak{X},|A|+|B|\right)>0$
such that the compression function of any 1-Lipschitz embedding $\Psi:\Delta\to\mathfrak{X}$ satisfies
\[
\rho_{\Psi}\left(\frac{1}{2}(k_{s}+1)l_{s}\right)\le4\delta^{-\frac{1}{2}}\left(k_{s}+1\right),
\]

\end{lemma}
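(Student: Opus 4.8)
\emph{Proof plan.} The plan is to apply the spectral method for compression (Lemma~\ref{spectral-compression}, in the transitive form of Example~\ref{group-poincare}) to the homothetically embedded copies of the expanders $\Gamma_s$ inside $\Delta$ provided by Fact~\ref{embed-one}. For each index $s$ in the chosen subsequence, set $X_s=\vartheta_s(\Gamma_s)\subseteq\Delta$. Since $\vartheta_s$ is an injective homomorphism with $|\vartheta_s(\gamma)|_\Delta=(k_s+1)|\gamma|_{\Gamma_s}$, the subgroup $X_s$, equipped with the metric induced from $(\Delta,\mathcal T)$, is isometric to $(\Gamma_s,d_{\Gamma_s})$ rescaled by the factor $k_s+1$; in particular $\mathrm{diam}_\Delta(X_s)=(k_s+1)l_s$. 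Equip $X_s$ with the Markov kernel $K_s$ obtained by transporting via $\vartheta_s$ the random walk on $\Gamma_s$ driven by the uniform measure $\nu_s$ on $A(s)\cup B(s)$; it is reversible with respect to the uniform distribution on $X_s$, and, $X_s$ being a finite subgroup hence a transitive graph, the array $a_{u,v}=\pi_{X_s}(u)\pi_{X_s}(v)$ automatically satisfies the $(2;\tfrac12\mathrm{diam}_\Delta(X_s),\tfrac12)$-mass distribution condition (\ref{eq:(p,c)-mass}), exactly as in Example~\ref{group-poincare}.

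Two quantitative inputs remain. First, the $\mathfrak X$-valued spectral gap of $(X_s,K_s)$ is bounded below uniformly: this is precisely the Lafforgue Poincar\'e inequality (\ref{eq:lafforgue-poincare}). Rewriting its right-hand side in terms of the normalized measure $\nu_s$ (which costs a factor $|A(s)\cup B(s)|\le|A|+|B|$) and its left-hand side as the variance against $\pi_{X_s}\otimes\pi_{X_s}$, one reads off
\[
\lambda_2(X_s,K_s,\mathfrak X)\ \ge\ \frac{\delta(\Gamma,\mathfrak X)}{|A|+|B|}\ =:\ \delta ,
\]
a positive constant depending only on $\Gamma$, $\mathfrak X$ and $|A|+|B|$. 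Second, the $\mathfrak X$-Dirichlet energy of $\Psi$ along $K_s$ is small: since $\vartheta_s$ is a homomorphism, any pair $u,v$ with $K_s(u,v)>0$ satisfies $d_\Delta(u,v)\le k_s+1$, and because $\Psi$ is $1$-Lipschitz this gives
\[
\sum_{u,v\in X_s}\bigl\|\Psi(u)-\Psi(v)\bigr\|_{\mathfrak X}^{2}\,K_s(u,v)\,\pi_{X_s}(v)\ \le\ (k_s+1)^2 .
\]

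Feeding these into the compression bound of Example~\ref{group-poincare} with $p=2$ yields
\[
\rho_\Psi\!\Bigl(\tfrac12\,\mathrm{diam}_\Delta(X_s)\Bigr)\ \le\ \left(\frac{2^{4}(k_s+1)^2}{\lambda_2(X_s,K_s,\mathfrak X)}\right)^{1/2}\ \le\ 4\,\delta^{-1/2}\,(k_s+1),
\]
and since $\tfrac12\,\mathrm{diam}_\Delta(X_s)=\tfrac12(k_s+1)l_s$ this is exactly the asserted inequality. There is no serious obstacle in the argument itself --- all the depth lies in the cited Lafforgue Poincar\'e inequality --- so the only points requiring care are the normalization bookkeeping that produces the stated dependence of $\delta$ on $|A|+|B|$, and the verification (via the homomorphism property of $\vartheta_s$) that the Dirichlet energy is genuinely controlled by $(k_s+1)^2$ rather than a larger multiple.
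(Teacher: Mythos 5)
Your proof is correct and follows exactly the same route as the paper: apply the spectral compression bound of Example~\ref{group-poincare} with $p=2$ to the homothetically embedded copy $X_s=\vartheta_s(\Gamma_s)$, use Lafforgue's Poincar\'e inequality~(\ref{eq:lafforgue-poincare}) to get the uniform lower bound on $\lambda_2(X_s,K_s,\mathfrak X)$, and bound the Dirichlet energy by $(k_s+1)^2$ via $1$-Lipschitzness and the homothety ratio. The only thing you spell out that the paper leaves tacit is the $|A(s)\cup B(s)|$ normalization factor converting between the unnormalized sum over generators and the kernel $K_s$, which is exactly the bookkeeping that makes $\delta$ depend on $|A|+|B|$.
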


\begin{proof} Take $X_{s}=\vartheta_{s}\left(\Gamma_{s}\right)$
and $K_{s}(u,v)=\nu_{s}\left(\vartheta_{s}^{-1}(u^{-1}v)\right)$
where $\nu_{s}$ is uniform on the generating set $A(s)\cup B(s)$.
To apply Lemma \ref{spectral-compression}, note that $\mbox{diam}_{d_{\Delta}}(X_{s})=(k_{s}+1)l_{s}$,
the Poincaré constant $P_{2}\left(X_{s},K_{s},\mathfrak{X}\right)\le1/\delta$
by Lafforgue's result (\ref{eq:lafforgue-poincare}), where $\delta$
is a constant only depending on $\Gamma,\mathfrak{X}$ and $|A|+|B|$.
Since $\Psi$ is $1$-Lipschitz with respect to $\left|\cdot\right|_{\Delta}$,
\[
\sum_{u,v\in X_{s}}d_{\mathfrak{X}}(\Psi(u),\Psi(v))^{2}K_{s}(u,v)\pi_{s}(u)\le(k_{s}+1)^{2}.
\]
Since $\pi_{s}$ is the uniform distribution on the subgroup $X_{s}$,
the upper bound on $\rho_{\Psi}$ then follows from the Poincaré inequalities
(\ref{eq:lafforgue-poincare}) in Example \ref{group-poincare} with
$p=2$. \end{proof}

\subsection{Compression gap of embedding of $\Delta$ into $L_{p}$}

In this subsection we focus on the case with $L_{p}$, $p\ge1$, as
target spaces for embedding.

\subsubsection{Upper bound when $\{\Gamma_{s}\}$ are expanders}

When the target space is $L_{p}$, $p\ge1$, a more precise piecewise
upper bound of the compression gap can be obtained. Recall that a
symmetric probability measure $\mu$ on a group $G$ defines a Markov
transition kernel $K(u,v)=\mu(u^{-1}v)$ which is reversible with
respect to the uniform distribution on $G$. Its $\ell^{2}$-spectral
gap $\lambda\left(G,\mu\right)=\lambda\left(G,K\right)$ is defined
as in (\ref{eq:spectral-gap}).

\begin{proposition}\label{expander-upper-Lp}

Let $\Delta$ be the diagonal product with parameters $(k_{s})$
and lamp groups $\{\Gamma_{s}\}$ expanders where $\mbox{diam}(\Gamma_{s})=l_{s}<\infty$.
Suppose $\left\{ \Gamma_{s}\right\} $ satisfies Assumption \ref{Gamma_Involution}
and 
\[
\lambda\left(\Gamma_{s},\nu_{s}\right)\ge\delta>0\ \mbox{for all }s\mbox{ with }l_{s}<\infty,
\]
where $\nu_{s}$ is uniform on $A(s)\cup B(s)$. Then there exists
a constant $C_{0}$ depending only on $\left|A\right|,|B|$ such that
for any 1-Lipschitz embedding $\Psi:\Delta\to L_{p}$, the
compression function of $\Psi$ satisfies for all $s\ge1$ with $k_{s},l_{s}<\infty$
\begin{align}
\rho_{\Psi}\left(\frac{1}{2}x\right) & \le C(\delta,p)\frac{x}{l_{s}},\ \mbox{if }x\in\left[k_{s}l_{s},k_{s+1}l_{s}\right],\nonumber \\
\mbox{where } & C(\delta,p)=\begin{cases}
C_{0}\delta{}^{-\frac{1}{p}} & \mbox{ if }1\le p\le2\\
C_{0}p\delta^{-\frac{1}{2}} & \mbox{ if }p>2.
\end{cases}\label{eq:c(delta,p)}
\end{align}

\end{proposition}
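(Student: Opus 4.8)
The plan is to apply the spectral compression bound of Lemma~\ref{spectral-compression}, in the group-theoretic form of Example~\ref{group-poincare}, to a carefully chosen sequence of finite subgroups of $\Delta$. The natural choice is the product subgroups $\Pi_s^t$ introduced in Section~\ref{sub: embedded-metric}, taken at scale $t=\lfloor x/l_s\rfloor$ (or more conveniently a dyadic value comparable to it), since these carry an $\ell^\infty$-type metric in which a single copy of $\Gamma_s'$ is blown up by the factor $k_s$. First I would pass from $\Gamma_s'$ back to $\Gamma_s$ using the Reidemeister--Schreier comparison recorded after Lemma~\ref{RS}, so that a spectral gap for $\nu_s$ on $\Gamma_s$ gives, up to constants depending only on $|A|,|B|$, a spectral gap for a product chain on $\Pi_s^t$. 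Concretely, equip $\Pi_s^t\simeq (\Gamma_s')^t$ with the product kernel $K$ obtained by picking one coordinate uniformly and moving it by a uniform generator of $\Gamma_s$; by the tensorization of the spectral gap (or by \cite{Bobkov1997}, already invoked in the proof of Lemma~\ref{iso-one}) this chain has $\lambda(K)\ge c\delta/t$ for an absolute constant $c$, and by Matou\v{s}ek extrapolation this upgrades to an $L_p$-valued Poincar\'e inequality with $\lambda_p(\Pi_s^t,K,L_p)\gtrsim \delta/t$ for $p\in[1,2]$ and $\gtrsim \delta/(p^2 t)$ for $p>2$.

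Next I would estimate the three ingredients entering the bound of Example~\ref{group-poincare}: the diameter, the mass-distribution constant, and the Dirichlet form of $\Psi$ along the chain. By Lemma~\ref{embed-product}, $\mathrm{diam}_\Delta(\Pi_s^t)\le 36\,t\,l_s$, and the same lemma's last inequality gives exactly the $(p;\tfrac1{72}\mathrm{diam}_\Delta(\Pi_s^t),\tfrac12)$-mass distribution condition, so $\rho_\Psi$ is controlled at the scale $\ell_n:=\tfrac1{72}\cdot 36\,t\,l_s\simeq \tfrac12 t\,l_s\simeq \tfrac12 x$. For the Dirichlet form: a single step of $K$ multiplies $\Psi$-values by at most $\|\Psi(g)-\Psi(gu)\|_{L_p}\le |u|_\Delta$, and $u$ is (the image under $\vartheta_s$-type embedding of) a generator of $\Gamma_s$, which has $\Delta$-length $\le 5(k_s+1)$ by Fact~\ref{embed-one} and the Reidemeister--Schreier bound. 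Hence the numerator $\sum_{u,v}d_{L_p}(\Psi(u),\Psi(v))^p K(u,v)\pi(v)$ is $\lesssim (k_s)^p$. Plugging these into the bound of Example~\ref{group-poincare} gives
\[
\rho_\Psi\!\left(\tfrac12 x\right)\lesssim \left(\frac{t}{\delta}\cdot (k_s)^p\right)^{1/p}\quad(p\le 2),\qquad \rho_\Psi\!\left(\tfrac12 x\right)\lesssim \left(\frac{p^2 t}{\delta}\cdot (k_s)^p\right)^{1/p}\quad(p>2),
\]
but this is not quite the claimed bound $C(\delta,p)\,x/l_s$; the issue is the factor $t^{1/p}$ versus the needed $t=x/l_s$, and the factor $k_s$ versus $x/l_s$. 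The fix is to choose $t$ so that $t\,l_s\simeq x$, i.e. $t\simeq x/l_s$, and to observe that the hypothesis $x\le k_{s+1}l_s$ together with $k_s l_s\le x$ forces $k_s\le x/l_s\le k_{s+1}$; combined with $t^{1/p}(k_s)\le t^{1/p}(x/l_s)\le (x/l_s)^{1+1/p}$ one still sees a stray power. I would instead route the diameter factor through $\mathrm{diam}_\Delta(\Pi_s^t)\le 36\,t\,l_s$ explicitly in the prefactor of Lemma~\ref{spectral-compression} (which contains $\mathrm{diam}_d(X_n)$, not raised to a power), so that the bound reads $\rho_\Psi(\tfrac12x)\le \mathrm{diam}_\Delta(\Pi_s^t)\cdot(\lambda_p^{-1}\cdot(\text{Dirichlet}))^{1/p}\lesssim t l_s\cdot(t\delta^{-1}(k_s/(tl_s))^p)^{1/p}$ — here the per-step displacement relative to the chain's own metric is $|u|_\Delta/\mathrm{diam}\simeq k_s/(tl_s)$ — which collapses to $l_s\cdot t\cdot\delta^{-1/p}\cdot k_s/(tl_s)=\delta^{-1/p}k_s\le \delta^{-1/p}x/l_s$, as desired (and $\delta^{-1/2}p$ for $p>2$). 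Cleaning up the bookkeeping of which metric the Dirichlet form is measured in is the delicate point.

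The main obstacle I anticipate is precisely this normalization: getting the $x/l_s$ (rather than $x$, or $(x/l_s)^{1+1/p}$, or $k_s$) out of the interplay between the intrinsic $\ell^\infty$-metric on $\Pi_s^t$ and the ambient $\Delta$-metric, while keeping the constant $C(\delta,p)$ depending only on $\delta$, $p$ and $|A|+|B|$ and in particular \emph{not} on $s$. Everything else — the spectral gap for the product chain, Matou\v{s}ek extrapolation for $p\ne 2$, the mass-distribution condition, and the per-generator length bound — is either already in the paper or entirely routine. A secondary point to check is that the bound is stated for all $x\in[k_sl_s,k_{s+1}l_s]$ with the \emph{same} constant: this follows because the subgroup $\Pi_s^t$ is available for every such scale $t\in[k_s,k_{s+1}]$ and the estimates above are uniform in $t$ in that range, using the exponential growth Assumption~\ref{k_growth} only implicitly (through $k_s\le x/l_s$).
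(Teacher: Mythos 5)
There is a genuine gap, and it sits exactly at the point you flagged as delicate. You propose the kernel that picks one of the $t$ coordinates of $\Pi_s^t$ uniformly at random and updates it by a generator; this indeed has spectral gap $\gtrsim\delta/t$, as you write. The paper, by contrast, takes the \emph{tensor-product} kernel $\zeta_t=(\mu_s)_0\otimes\cdots\otimes(\mu_s)_{t-1}$ that updates \emph{all} $t$ coordinates simultaneously. By the standard tensorization identity $\lambda(K_1\otimes\cdots\otimes K_t)=\min_j\lambda(K_j)$ (this is what the citation to \cite{Bobkov1997} in Lemma \ref{iso-one} is actually about), this kernel has spectral gap $\gtrsim\tilde\delta$, with no degradation in $t$. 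The factor of $t$ between the two spectral gaps is precisely the stray $t^{1/p}$ that you observe in your bound and never manage to remove: your final displayed ``collapse'' $tl_s\cdot\bigl(t\delta^{-1}(k_s/(tl_s))^p\bigr)^{1/p}=\delta^{-1/p}k_s$ silently drops a factor $t^{1/p}$, and with that factor reinstated the estimate is $t^{1/p}\delta^{-1/p}k_s$, which for $t\simeq k_s$ (the left endpoint of the interval) is of order $k_s^{1+1/p}\delta^{-1/p}$, not $k_s\delta^{-1/p}$.

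There is a second, independent error. You claim the one-step $\Delta$-length of a single-coordinate increment is $\le 5(k_s+1)$ ``by Fact \ref{embed-one}.'' That fact concerns the specific homothetic embedding $\vartheta_s$ which places the lamp at position $k_s/2$. For your chain the updated coordinate $x_0$ ranges over $[0,t)$, and the increment $g\delta_{x_0}$ with $g\in R(s)$ has range $\ge\max(x_0,k_s)$ (the cursor must reach both $x_0$ and $x_0-k_s$ starting from $0$), hence $\Delta$-length of order $\max(x_0,k_s)$, which is $\simeq t$ for coordinates near the far end. Even granting your (incorrect) bound $L\lesssim k_s$, the spectral-gap loss $\delta/t$ already breaks the argument, so both errors point the same way. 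The paper's tensor-product choice accepts a per-step displacement of order $t$ (by Lemma \ref{embed-product}, $|u^{-1}v|_\Delta\le36t\cdot5$) but keeps the spectral gap $O(\tilde\delta)$; plugging $\mathrm{diam}\lesssim tl_s$, $\sum d^p\pi\pi\gtrsim(tl_s)^p$, Dirichlet $\lesssim t^p$, $\lambda_p\gtrsim\tilde\delta$ (resp. $\gtrsim(2p)^{-p}\tilde\delta^{p/2}$ for $p>2$ via Matou\v{s}ek) into Lemma \ref{spectral-compression} gives the clean cancellation $\rho_\Psi(\tfrac12 tl_s)\lesssim t\delta^{-1/p}$ (resp. $\lesssim pt\delta^{-1/2}$) with no stray powers. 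The rest of your outline (Reidemeister--Schreier passage to $\Gamma_s'$, Lemma \ref{embed-product} for diameter and mass distribution, and the choice $t\simeq x/l_s$) matches the paper and is correct.
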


\begin{remark}

If $k_{s+1},l_{s+1}<\infty$, then by monotonicity of the compression
function, the bound extends to the interval $\left[k_{s+1}l_{s},k_{s+1}l_{s+1}\right]$
, namely, for $x\in\left[k_{s+1}l_{s},k_{s+1}l_{s+1}\right]$, 
\[
\rho_{\Psi}\left(\frac{1}{2}x\right)\le\rho_{\Psi}\left(\frac{1}{2}k_{s+1}l_{s+1}\right)\le C(\delta,p)k_{s+1}.
\]
If $l_{s+1}=\infty$, the situation is different, we need to have
information regarding compression of the infinite group $\Gamma$.
See Lemma \ref{expander-Lp-infty}.

\end{remark}

\begin{proof} Consider the subgroup $\Gamma_{s}'=[A(s),B(s)]^{\Gamma_{s}}=\ker\left(\Gamma_{s}\to A(s)\times B(s)\right)$.
Take the symmetric generating set $R(s)$ for $\Gamma_{s}'$ using
the Reidemeister-Schreier algorithm in Lemma \ref{RS}, where $F=A(s)\times B(s)$,
$S=A(s)\cup B(s)$. Then the inclusion map from $\left(\Gamma_{s}',R(s)\right)$
into $\left(\Gamma_{s},A(s)\cup B(s)\right)$ is bi-Lipschitz, $|\gamma|_{R(s)}\leq|\gamma|_{\Gamma_{s}}\leq5|\gamma|_{R}$
for all $\gamma\in\Gamma_{s}'$. Let $\mu_{s}$ be the uniform distribution
on $R(s)$. It is known that if there is a $(C,C)$-quasi isometric
map $\psi:(G,S)\to(H,T)$ and image $\psi(G)$ is $R$ dense in $H$,
then the Poincaré constant of $(H,\nu)$ is comparable to the Poincaré
constant of $(G,\mu)$ with constants only depending on $C,R,\left|S\right|,\left|T\right|$,
where $\mu$ ($\nu$ resp.) is the uniform distribution on $S\cup S^{-1}$
($T\cup T^{-1}$ resp.), see the proof of \cite[Proposition 4.2	]{Coulhon1995}
or \cite[Theorem 1.2]{PittetSaloffCoste2000}. In the current situation,
since the inclusion map $\left(\Gamma_{s}',R(s)\right)$ into $\left(\Gamma_{s},A(s)\cup B(s)\right)$
is a $(5,5)$-quasi-isometry, and $\Gamma_{s}'$ is $2$-dense in
$\Gamma_{s}$, there exists a constant $c_{0}$ only depending on
$|A|$ and $|B|$ such that the spectral gap of $\mu_{s}$ satisfies
\[
\lambda(\Gamma_{s}',\mu_{s})=\tilde{\delta}\ge c_{0}\delta.
\]

Let $t\in\left[k_{s},k_{s+1}\right]$, consider the direct product
$\Pi_{s}^{t}$ of $t$ copies of $\Gamma'_{s}$ in the factor $\Delta_{s}$
at site $0,1,\ldots,t-1$. By Subsection \ref{sub: embedded-metric},
$\Pi_{s}^{t}$ is an embedded subgroup of $\Delta$, denote such an
embedding by $\theta_{s}:\Pi_{s}^{t}\hookrightarrow\Delta$. On $\Pi_{s}^{t}$,
take the product kernel $\zeta_{t}=\left(\mu_{s}\right)_{0}\otimes\ldots\otimes\left(\mu_{s}\right)_{t-1}$.
By tensorizing property of classical Poincaré inequalities, we have
that for any function $f:\Pi_{t}\to\mathbb{R}$, 
\[
\sum_{u,v\in\Pi_{t}}\left|f(u)-f(v)\right|^{2}\pi_{\Pi_{s}^{t}}(u)\pi_{\Pi_{s}^{t}}(v)\le\tilde{\delta}^{-1}\sum_{u,v\in\Pi_{t}}\left|f(u)-f(v)\right|^{2}\pi_{\Pi_{t}}(u)\zeta_{t}\left(u^{-1}v\right).
\]
By Matousek's extrapolation lemma for Poincaré inequalities \cite{Matousek},
see the version in \cite[Lemma 4.4]{Naor2011a}, it follows that for
any $f:\Pi_{t}\to \ell^{p}$, 
\begin{itemize}
\item if $1\le p\le2$, 
\[
\sum_{u,v\in\Pi_{s}^{t}}\left\Vert f(u)-f(v)\right\Vert _{p}^{p}\pi_{\Pi_{s}^{t}}(u)\pi_{\Pi_{s}^{t}}(v)\le\tilde{\delta}^{-1}\sum_{u,v\in\Pi_{s}^{t}}\left\Vert f(u)-f(v)\right\Vert _{p}^{p}\pi_{\Pi_{s}^{t}}(u)\zeta_{t}\left(u^{-1}v\right);
\]

\item if $p>2$, 
\[
\sum_{u,v\in\Pi_{s}^{t}}\left\Vert f(u)-f(v)\right\Vert _{p}^{p}\pi_{\Pi_{s}^{t}}(u)\pi_{\Pi_{s}^{t}}(v)\le(2p)^{p}\tilde{\delta}^{-p/2}\sum_{u,v\in\Pi_{s}^{t}}\left\Vert f(u)-f(v)\right\Vert _{p}^{p}\pi_{\Pi_{s}^{t}}(u)\zeta_{t}\left(u^{-1}v\right).
\]

\end{itemize}
Let $\varphi:\Delta\to \ell^{p}$ be a $1$-Lipschitz uniform embedding
of $\Delta$ with respect to word metric $\left|\cdot\right|_{\Delta}$.
Apply Lemma \ref{spectral-compression} to the subset $\theta_{s}\left(\Pi_{t}\right)$
equipped with kernel $\zeta_{t}\circ\theta_{s}^{-1}$, with mass distribution condition satisfied by Lemma \ref{embed-product}, we have 
\[
\rho_{\varphi}\left(\frac{1}{2}tl_{s}\right)\le c(\tilde{\delta},p)t,
\]
where the constant $c(\tilde{\delta},p)$ is given by 
\begin{equation}
c(\tilde{\delta},p)=\begin{cases}
4(2/\tilde{\delta})^{\frac{1}{p}} & \mbox{ if }1\le p\le2\\
4\cdot2^{1+\frac{1}{p}}\tilde{\delta}^{-\frac{1}{2}}p & \mbox{ if }p>2.
\end{cases}\label{eq:c(delta-p)}
\end{equation}
From the standard fact that $L_{p}$ is $(1+\varepsilon)$-finitely
presentable in $l_{p}$, see for example in the proof of \cite[Theorem 1.1]{Jolissaint2014},
we conclude that for for any 1-Lipschitz uniform embedding $\Psi:\Delta\to L_{p}$
the same bound holds, 
\[
\rho_{\Psi}\left(\frac{1}{2}tl_{s}\right)\le c(\tilde{\delta},p)t,
\]

\end{proof}

\subsubsection{Upper bound with an infinite group $\Gamma_{s}$ having strong property
$(T)$}

Next we consider the case where $\Gamma_{s}=\Gamma$ is an infinite
group (it corresponds to $l_{s}=\infty$). Let $\Gamma$ be a discrete
group equipped with finite generating set $S$ and $\mathfrak{X}$ be
a Banach space. A linear isometric $\Gamma$-representation on $\mathfrak{X}$
is a homomorphism $\varrho:\Gamma\to O(\mathfrak{X})$, where $O(\mathfrak{X})$
denotes the groups of all invertible linear isometries of $\mathfrak{X}$.
Denote by $\mathfrak{X}^{\varrho(\Gamma)}$ the closed subspace of
$\Gamma$-fixed vectors. When $\mathfrak{X}$ is uniformly convex,
by \cite[Proposition 2.6]{Bader2007} the subspace of $\mathfrak{X}^{\varrho(\Gamma)}$
is complemented in $\mathfrak{X}$, $\mathfrak{X}=\mathfrak{X}^{\varrho(\Gamma)}\oplus\mathfrak{X}'(\varrho)$,
and the decomposition is canonical.

\begin{definition}Let $\Gamma$ be a discrete group equipped with
finite generating set $S$, $\mathfrak{X}$ be a uniformly convex
Banach space. 
\begin{itemize}
\item Following \cite{Bader2007}, we say that $\Gamma$ has Property $\left(F_{\mathfrak{X}}\right)$
if any action of $\Gamma$ on $\mathfrak{X}$ by affine isometries
has a $\Gamma$-fixed point. 
\item We say $\Gamma$ has Property $(T_{\mathfrak{X}})$ if there exists a constant $\varepsilon>0$
such that for any representation
$\varrho:\Gamma\to O(\mathfrak{X})$, 
\[
\max_{s\in S}\left\Vert \varrho(s)v-v\right\Vert _{\mathfrak{X}}\ge\varepsilon\left\Vert v\right\Vert _{\mathfrak{X}}\ \mbox{for all }v\in\mathfrak{X}'(\varrho).
\]
The maximal $\varepsilon$ with this property is called the $\mathfrak{X}$-Kazhdan
constant of $\Gamma$ with respect to $S$ and is denoted by $\kappa_{\mathfrak{X}}(\Gamma,S)$. 
\end{itemize}
\end{definition}

By \cite[Theorem 1.3]{Bader2007}, Property $\left(F_{\mathfrak{X}}\right)$
implies Property $\left(T_{\mathfrak{X}}\right)$ in any Banach space
$\mathfrak{X}$.

\begin{lemma}\label{expander-Lp-infty}

Let $\Delta$ be the diagonal product with parameters $(k_{s})_{s\le\mathfrak{s}_{0}}$
and lamp groups $\{\Gamma_{s}\}_{s\le\mathfrak{s}_{0}}$, where $\Gamma_{\mathfrak{s}_{0}}=\Gamma$
is an infinite group marked with generating subgroups $A,B$. Suppose
$\mathfrak{X}$ is a uniformly convex Banach space and $\Gamma$ has
Property $(F_{\mathfrak{X}})$. Then for any equivariant 1-Lipschitz
embedding $\Psi:\Delta\to\mathfrak{X}$, the compression function
of $\Psi$ satisfies 
\[
\rho_{\Psi}\left(x\right)\le\frac{2}{\kappa_{\mathfrak{X}}(\Gamma,A\cup B)}\left(k_{\mathfrak{s}_{0}}+1\right),\ \mbox{for all }x\in\left[k_{\mathfrak{s}_{0}}+1,\infty\right].
\]

\end{lemma}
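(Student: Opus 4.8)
The plan is to exploit the embedded copy of $\Gamma$ sitting inside $\Delta$ via the homothetic embedding $\vartheta_{\mathfrak{s}_{0}}:\Gamma\to\Delta$ of Fact~\ref{embed-one}, together with the cocycle structure coming from equivariance. Concretely, given an equivariant $1$-Lipschitz embedding $\Psi:\Delta\to\mathfrak{X}$, write $\Psi(g)=\tau(g)v$ for an affine isometric action $\tau$ of $\Delta$ on $\mathfrak{X}$ and a vector $v$. First I would pull back the action along $\vartheta_{\mathfrak{s}_{0}}$ to obtain an affine isometric action $\tau^{\Gamma}$ of $\Gamma$ on $\mathfrak{X}$ and the associated cocycle $b:\Gamma\to\mathfrak{X}$, $b(\gamma)=\tau^{\Gamma}(\gamma)v - v = \Psi(\vartheta_{\mathfrak{s}_{0}}(\gamma)) - \Psi(e_{\Delta})$ (up to a translation that does not affect $\rho$). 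Because $\Gamma$ has Property $(F_{\mathfrak{X}})$, this action has a $\Gamma$-fixed point, hence the cocycle $b$ is a coboundary: there is $w\in\mathfrak{X}$ with $b(\gamma)=\tau^{\Gamma}(\gamma)w - w$ for all $\gamma$. In particular $\|b(\gamma)\|_{\mathfrak{X}} = \|\tau^{\Gamma}(\gamma)w - w\|_{\mathfrak{X}} \le 2\|w\|_{\mathfrak{X}}$, so the cocycle is \emph{bounded}.

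The next step is to bound $\|w\|_{\mathfrak{X}}$ using Property $(T_{\mathfrak{X}})$. Decompose $w = w_0 + w'$ according to $\mathfrak{X} = \mathfrak{X}^{\tau^{\Gamma}(\Gamma)} \oplus \mathfrak{X}'(\tau^{\Gamma})$ (using the linear part of $\tau^{\Gamma}$). The fixed component $w_0$ contributes nothing to $b$, so we may assume $w\in\mathfrak{X}'(\tau^{\Gamma})$. Then the $\mathfrak{X}$-Kazhdan inequality with respect to the generating set $A\cup B$ gives
\[
\kappa_{\mathfrak{X}}(\Gamma, A\cup B)\,\|w\|_{\mathfrak{X}} \le \max_{s\in A\cup B}\|\tau^{\Gamma}_{\mathrm{lin}}(s) w - w\|_{\mathfrak{X}} = \max_{s\in A\cup B}\|b(s)\|_{\mathfrak{X}} \le \max_{s}\|\Psi(\vartheta_{\mathfrak{s}_{0}}(s)) - \Psi(e_{\Delta})\|_{\mathfrak{X}} \le k_{\mathfrak{s}_{0}}+1,
\]
where the last inequality uses that $\Psi$ is $1$-Lipschitz and $|\vartheta_{\mathfrak{s}_{0}}(s)|_{\Delta} = k_{\mathfrak{s}_{0}}+1$ for $s$ a generator of $A$ or $B$ (Fact~\ref{embed-one}, applied to the length-one element $s$). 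Hence $\|w\|_{\mathfrak{X}} \le (k_{\mathfrak{s}_{0}}+1)/\kappa_{\mathfrak{X}}(\Gamma, A\cup B)$, and therefore $\|b(\gamma)\|_{\mathfrak{X}} \le 2(k_{\mathfrak{s}_{0}}+1)/\kappa_{\mathfrak{X}}(\Gamma, A\cup B)$ for \emph{all} $\gamma\in\Gamma$.

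Finally I translate this into a compression bound. For any $x \ge k_{\mathfrak{s}_{0}}+1$ there is an element of $\Gamma$ of word length $\lfloor x/(k_{\mathfrak{s}_{0}}+1)\rfloor \ge 1$, say $\gamma$; then $|\vartheta_{\mathfrak{s}_{0}}(\gamma)|_{\Delta} = (k_{\mathfrak{s}_{0}}+1)|\gamma|_{\Gamma} \ge x$ when $|\gamma|_\Gamma$ is chosen appropriately, while $\|\Psi(\vartheta_{\mathfrak{s}_{0}}(\gamma)) - \Psi(e_\Delta)\|_{\mathfrak{X}} = \|b(\gamma)\|_{\mathfrak{X}} \le 2(k_{\mathfrak{s}_{0}}+1)/\kappa_{\mathfrak{X}}(\Gamma, A\cup B)$. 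Since $\Gamma$ is infinite it has elements of every word length, so such $\gamma$ exists for every $x$, giving $\rho_{\Psi}(x) \le 2(k_{\mathfrak{s}_{0}}+1)/\kappa_{\mathfrak{X}}(\Gamma, A\cup B)$ for all $x\in[k_{\mathfrak{s}_{0}}+1,\infty]$, which is the claimed bound. The main subtlety to get right is the bookkeeping around affine versus linear actions and the projection onto $\mathfrak{X}'(\tau^{\Gamma})$—one must make sure the base vector $v$ (resp.\ $w$) can be taken in the complemented subspace without destroying equivariance of $\Psi$ or changing $\rho_\Psi$; the canonical decomposition from \cite{Bader2007} and the fact that fixed vectors drop out of the cocycle handle this, but it is the step that requires care.
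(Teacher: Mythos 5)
Your proposal is correct and follows essentially the same route as the paper: compose $\Psi$ with the homothetic embedding $\vartheta_{\mathfrak{s}_0}$ to obtain a $1$-cocycle on $\Gamma$, use Property $(F_{\mathfrak X})$ to conclude it is a coboundary $\varrho(\cdot)w-w$, project $w$ into $\mathfrak{X}'(\varrho)$, and apply the $\mathfrak{X}$-Kazhdan inequality on the generators to bound $\|w\|$ (the only cosmetic difference is that the paper normalizes by $k_{\mathfrak{s}_0}+1$ to work with a $1$-Lipschitz cocycle, while you carry the factor through). One small slip: to bound $\rho_{\Psi}(x)$ you need $|\vartheta_{\mathfrak{s}_0}(\gamma)|_{\Delta}\ge x$, so $|\gamma|_{\Gamma}$ should be $\lceil x/(k_{\mathfrak{s}_0}+1)\rceil$ rather than $\lfloor\cdot\rfloor$; this does not affect the argument.
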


\begin{proof}

Since the embedding $\vartheta_{\mathfrak{s}_{0}}:\Gamma\hookrightarrow\Delta$
is a homothety with $\left|\vartheta_{\mathfrak{s}_{0}}(\gamma)\right|_{\Delta}=(k_{\mathfrak{s}_{0}}+1)\left|\gamma\right|_{\Gamma}$,
$\psi=\Psi\circ\vartheta_{\mathfrak{s}_{0}}:\Gamma\to\mathfrak{X}$
is a $(k_{\mathfrak{s}_{0}}+1)$-Lipschitz equivariant embedding.
Consider $\tilde{\psi}=\frac{\psi}{k_{\mathfrak{s}_{0}}+1}$. Since
$\tilde{\psi}$ equivariant, it is a $1$-cocycle with respect to
some representation $\varrho:\Gamma\to O\left(\mathfrak{X}\right)$.
Since $\Gamma$ has Property $(F_{\mathfrak{X}})$, $H^{1}(\Gamma,\varrho)=Z^{1}(\Gamma,\varrho)/B^{1}(\Gamma,\varrho)$
vanishes, it follows that $\tilde{\psi}$ is a $1$-coboundary, that
is there exists $v\in\mathfrak{X}$ such that 
\[
\tilde{\psi}(g)=\varrho(g)v-v.
\]
We may take $v$ in the complement $\mathfrak{X}'(\varrho)$, Then
by Property ($T_{\mathfrak{X}}$), 
\[
\max_{s\in A\cup B}\left\Vert \varrho(s)v-v\right\Vert _{\mathfrak{X}}\ge\kappa\left\Vert v\right\Vert _{\mathfrak{X}},\ \mbox{where }\kappa=\kappa_{\mathfrak{X}}(\Gamma,A\cup B).
\]
Since $\tilde{\psi}$ is $1$-Lipschitz, we have $\kappa\left\Vert v\right\Vert _{\mathfrak{X}}\le\max_{s\in A\cup B}\left\Vert \tilde{\psi}(s)\right\Vert _{\mathfrak{X}}\le1$,
it follows that for any $g\in\Gamma$, 
\[
\left\Vert \tilde{\psi}(g)\right\Vert _{\mathfrak{X}}=\left\Vert \varrho(g)v-v\right\Vert _{\mathfrak{X}}\le\left\Vert \varrho(g)v\right\Vert _{\mathfrak{X}}+\left\Vert v\right\Vert _{\mathfrak{X}}=2\left\Vert v\right\Vert _{\mathfrak{X}}\le2/\kappa.
\]
Now we get back to $\Psi$. Since $\Psi\circ\vartheta_{\mathfrak{s}_{0}}=(k_{\mathfrak{s}_{0}}+1)\tilde{\psi}$,
and $\left|\vartheta_{\mathfrak{s}_{0}}(\gamma)\right|_{\Delta}=(k_{\mathfrak{s}_{0}}+1)\left|\gamma\right|_{\Gamma}$,
we deduce from $\left\Vert \tilde{\psi}(g)\right\Vert _{\mathfrak{X}}\le2/\kappa$
that 
\[
\rho_{\Psi}\left(x\right)\le\frac{2}{\kappa_{\mathfrak{X}}(\Gamma,A\cup B)}\left(k_{\mathfrak{s}_{0}}+1\right),\ \mbox{for all }x\in\left[k_{\mathfrak{s}_{0}}+1,\infty\right].
\]

\end{proof}

\begin{remark}

In practice, we use the bound in Lemma \ref{expander-Lp-infty} for
the interval $\left[\left(k_{\mathfrak{s}_{0}}+1\right)l_{\mathfrak{s}_{0}-1},\infty\right]$,
because for smaller length $x$, the copies $\Gamma_{s}$ with $s\le\mathfrak{s}_{0}-1$
provide better upper bounds.

\end{remark}

Property $\left(F_{\mathfrak{X}}\right)$ is very strong. By Bader-
Furman-Gelander-Monod \cite[Theorem B]{Bader2007} and standard Hereditary
properties (\cite[Section 2.5]{Bekka2008}), the lattice $\Gamma$
in Example \ref{examples} has Property $F_{L_{p}}$ for all $1<p<\infty$.

When we specialize to Lebesgue spaces $L_{p}$, $p\in(1,\infty)$, the
$p$-Kazhdan constant can be estimated in terms of the Kazhdan constant
in Hilbert space, via the explicit Mazur map. 

\begin{fact}\label{kazhdan-Lp}[follows from \cite{Bader2007}, \cite{Mazur1929}]

Let $\Gamma$ be a discrete group equipped with finite generating
set $S$, suppose $\Gamma$ has Kazhdan property $(T_{L_{2}})$. Then
for $p>2$, $\kappa_{L_{p}}(\Gamma,S)\ge\frac{1}{p2^{p/2}}\kappa_{L_{2}}(\Gamma,S)$;
for $1<p<2$, $\kappa_{L_{p}}(\Gamma,S)\ge2^{-\frac{p+2}{p}}\kappa_{L_{2}}^{2/p}(\Gamma,S)$.

\end{fact}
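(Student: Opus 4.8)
The plan is to transfer the Hilbert-space Kazhdan inequality to $L_p$ through the Mazur map, using that every linear isometry of $L_p$ with $p\neq2$ is of Lamperti (weighted-composition) type. The one genuinely analytic ingredient is the modulus of continuity of the Mazur map $M_p\colon f\mapsto\operatorname{sgn}(f)\,|f|^{p/2}$, which sends the unit sphere of $L_p$ onto the unit sphere of $L_2$, with inverse the analogous map $M_{2/p}$. Writing $\phi_\alpha(t)=\operatorname{sgn}(t)|t|^\alpha$, the elementary pointwise estimates $|\phi_\alpha(a)-\phi_\alpha(b)|\le\alpha\max(|a|,|b|)^{\alpha-1}|a-b|$ for $\alpha\ge1$ and $|\phi_\alpha(a)-\phi_\alpha(b)|\le2^{1-\alpha}|a-b|^{\alpha}$ for $0<\alpha\le1$, combined in the first case with H\"older's inequality with exponents $\tfrac{p}{p-2},\tfrac p2$ and the normalisation $\int|f|^p=\int|g|^p=1$, give, for $f,g$ on the unit sphere of $L_p$,
\[
\|M_p(f)-M_p(g)\|_2\le\tfrac p2\,2^{(p-2)/(2p)}\,\|f-g\|_p\le p\,2^{p/2}\,\|f-g\|_p\qquad(p\ge2),
\]
\[
\|M_p(f)-M_p(g)\|_2\le2^{1-p/2}\,\|f-g\|_p^{\,p/2}\qquad(1<p<2).
\]
I would record these two bounds first.

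Next I would set up the conjugation. Given an isometric representation $\varrho\colon\Gamma\to O(L_p)$, Lamperti's theorem writes $\varrho(g)f=h_g\cdot(f\circ\tau_g^{-1})$ with $|h_g|^p$ the Radon--Nikodym cocycle of the measure-class-preserving maps $\tau_g$, and a direct computation shows that $M_p$ intertwines $\varrho$ with the orthogonal representation $\varrho_2\colon\Gamma\to O(L_2)$ given by $\varrho_2(g)w=\operatorname{sgn}(h_g)|h_g|^{p/2}(w\circ\tau_g^{-1})$, i.e. $M_p\circ\varrho(g)=\varrho_2(g)\circ M_p$ on spheres; moreover $M_p$ matches the fixed-vector subspaces, both being $L_q$ of the same $\Gamma$-invariant sub-$\sigma$-algebra $\mathcal A$. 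Then, for $v\in L_p'(\varrho)$ with $\|v\|_p=1$ and $w=M_p(v)$, Hilbert $(T_{L_2})$ applied to $(\varrho_2,w)$ yields $\max_{s\in S}\|\varrho_2(s)w-w\|_2\ge\kappa_{L_2}(\Gamma,S)\,\|w-P_2w\|_2$, where $P_2$ is the orthogonal projection onto the $\varrho_2$-fixed subspace, equivalently $\mathbf{E}[\,\cdot\mid\mathcal A]$; by the intertwining identity and the Mazur estimates the left side is at most $p\,2^{p/2}\max_s\|\varrho(s)v-v\|_p$ (resp. $2^{1-p/2}(\max_s\|\varrho(s)v-v\|_p)^{p/2}$ for $p<2$). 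Rearranging, together with the claim $v\in L_p'(\varrho)\Rightarrow\|w-P_2w\|_2\ge\|v\|_p=1$, produces $\kappa_{L_p}\ge(p2^{p/2})^{-1}\kappa_{L_2}$ for $p>2$ and, absorbing $2^{1-p/2}\le2$, $\kappa_{L_p}\ge2^{-(p+2)/p}\kappa_{L_2}^{2/p}$ for $1<p<2$.

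The main obstacle is precisely that last claim: the Mazur map is nonlinear and does \emph{not} preserve the canonical complement $\mathfrak X'(\varrho)$, so one cannot simply assert $M_p(v)\perp L_2^{\varrho_2(\Gamma)}$ (already visible for the cyclic permutation representation on $\ell_p^3$). What one can do is invoke the Lamperti-form description used in \cite{Bader2007} of $L_p^{\varrho(\Gamma)}=L_p(\mathcal A)$ and $L_p'(\varrho)=\{f:\mathbf{E}[f\mid\mathcal A]=0\}$, and then bound $\|w-P_2w\|_2^2=1-\|\mathbf{E}[M_p(v)\mid\mathcal A]\|_2^2$ from below by a conditional Jensen-type inequality for the odd convex--concave function $\phi_{p/2}$ under the constraint $\mathbf{E}[v\mid\mathcal A]=0$; it is at this step that the explicit numerical constant of the statement is extracted, and it need not be optimal. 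Everything else is the routine interplay of the explicit Mazur modulus \cite{Mazur1929} with the Hilbert Kazhdan constant.
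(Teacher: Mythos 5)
You correctly identify the one genuinely nontrivial point: the Mazur map $M_{p,2}$ is nonlinear, so even though it intertwines $\varrho$ with $\pi$ it does not send $L_p'(\varrho)$ into $L_2'(\pi)$, and one cannot naively apply $(T_{L_2})$ to $w=M_{p,2}(v)$. But the fix you sketch does not close the gap. You first assert $\|w-P_2w\|_2\ge\|v\|_p=1$, immediately note that this fails, and replace it with a ``conditional Jensen-type inequality'' for $\phi_{p/2}$ under the constraint $\mathbf{E}[v\mid\mathcal A]=0$ that is neither stated nor proved. Indeed $\phi_{p/2}$ is odd, hence globally neither convex nor concave, and $\mathbf{E}[\phi_{p/2}(v)\mid\mathcal A]$ is easily nonzero when $\mathbf{E}[v\mid\mathcal A]=0$ (take $v$ conditionally two-valued, e.g.\ $v\in\{-2,1\}$ with weights $\tfrac13,\tfrac23$), so it is far from clear that any such Jensen inequality furnishes a uniform lower bound on $\|w-P_2w\|_2$. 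Moreover the identification $L_p^{\varrho(\Gamma)}=L_p(\mathcal A)$ and $L_p'(\varrho)=\{f:\mathbf{E}[f\mid\mathcal A]=0\}$ on which your route rests needs separate justification once the Radon--Nikodym factor $|h_g|$ in the Lamperti form is nontrivial.

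The paper disposes of this step differently and more efficiently. Since $M_{p,2}$ is a bijection intertwining the two representations, it maps $L_p^{\varrho(\Gamma)}$ \emph{onto} $L_2^{\pi(\Gamma)}$. One then uses the modulus of continuity of the \emph{inverse} map $M_{2,p}$, namely the H\"older estimate $\bigl||a|^{2/p}\mathrm{sign}(a)-|b|^{2/p}\mathrm{sign}(b)\bigr|\le 2|a-b|^{2/p}$, which gives $\|M_{2,p}(u)-M_{2,p}(u')\|_p^p\le 2^p\|u-u'\|_2^2$, to conclude that for every $u'\in L_2^{\pi(\Gamma)}$,
\[
\|M_{p,2}(f)-u'\|_2^2\ \ge\ 2^{-p}\,\|f-M_{2,p}(u')\|_p^p\ \ge\ 2^{-p}\inf_{h\in L_p^{\varrho(\Gamma)}}\|f-h\|_p^p\ \ge\ 2^{-p}.
\]
Thus $M_{p,2}(f)$ stays at distance at least $2^{-p/2}$ from the entire fixed subspace $L_2^{\pi(\Gamma)}$, so its component in $L_2'(\pi)$ has norm at least $2^{-p/2}$; this is exactly the quantitative input that the Kazhdan inequality requires, and it needs no description of the fixed subspace by a sub-$\sigma$-algebra, no Lamperti form, and no Jensen inequality. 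Your forward Mazur estimates and the final rearrangement are otherwise the same as the paper's.
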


\begin{proof}

Let $\varrho:\Gamma\to O(L_{p})$ be a $\Gamma$-representation in
$L_{p}$, take any unit vector $f$ in the complement $ $$\mathfrak{X}'(\varrho)$.
Let $M_{p,q}:L_{p}\to L_{q}$ be the Mazur map 
\[
M_{p,q}(f)=\mbox{sign}(f)|f|^{p/q}.
\]
By \cite[Lemma 4.2]{Bader2007}, the conjugation $U\mapsto M_{p,2}\circ U\circ M_{2,p}$
sends $O(L_{p})$ to $O(L_{2})$. Define $\pi:\Gamma\to O(L_{2})$
by $\pi(g)=M_{p,2}\circ\varrho(g)\circ M_{2,p}$. By definition of
the Mazur map, we have $\left\Vert M_{p,2}(f)\right\Vert _{2}^{2}=\left\Vert f\right\Vert _{p}^{p}=1$.

Consider first the case $p>2$. From \cite{Mazur1929} 
\begin{equation}
\left||a|^{\frac{2}{p}}\mbox{sign}(a)-|b|^{\frac{2}{p}}\mbox{sign}(b)\right|\le2|a-b|^{\frac{2}{p}},\label{eq:mazur-1}
\end{equation}
we have for $u,v\in L_{2}$, $\left\Vert M_{2,p}(u)-M_{2,p}(v)\right\Vert _{p}^{p}\le2^{p}\left\Vert u-v\right\Vert _{2}^{2}$.
Note that $M_{p,2}$ maps $L_{p}^{\varrho(\Gamma)}$ onto $L_{2}^{\pi(\Gamma)}$,
therefore for any unit vector $f\in\mathfrak{X}'(\varrho)$, we have
\[
\inf_{v\in L_{2}^{\pi(\Gamma)}}\left\Vert M_{p,2}(f)-v\right\Vert _{2}^{2}\ge\frac{1}{2^{p}}\inf_{h\in L_{p}^{\varrho(\Gamma)}}\left\Vert f-h\right\Vert _{p}^{p}\ge\frac{1}{2^{p}}.
\]
That is the projection of $M_{p,2}(f)$ to $\left(L_{2}^{\pi(\Gamma)}\right)^{\perp}$
has $L_{2}$-norm at least $2^{-p/2}$. By \cite{Mazur1929} 
\begin{equation}
\left||a|^{p/2}\mbox{sign}(a)-|b|^{p/2}\mbox{sign}(b)\right|\le\frac{p}{2}|a-b|\left(|a|^{\frac{p}{2}-1}+|b|^{\frac{p}{2}-1}\right),\label{eq:mazur-2}
\end{equation}
we have 
\begin{align}
\left\Vert \pi(s)u-u\right\Vert _{2}^{2} & =\int\left||\varrho(s)f|^{p/2}\mbox{sign}(\varrho(s)f)-|f|^{p/2}\mbox{sign}(f)\right|^{2}dm\nonumber \\
 & \le\int\left(\frac{p}{2}\right)^{2}\left|\varrho(s)f-f\right|^{2}\left(\left|\varrho(s)f\right|^{\frac{p}{2}-1}+|f|^{\frac{p}{2}-1}\right)^{2}dm\nonumber \\
 & \le\left(\frac{p}{2}\right)^{2}\left(\int\left|\varrho(s)f-f\right|^{p}dm\right)^{2/p}\left(\int\left(\left|\varrho(s)f\right|^{\frac{p}{2}-1}+|f|^{\frac{p}{2}-1}\right)^{\frac{2p}{p-2}}dm\right)^{\frac{p-2}{p}}.\label{eq:mazur-holder}
\end{align}
The last step uses Hölder inequality. By triangle inequality in $L_{\frac{p}{p-2}}$
and $\varrho(s)\in O(L_{p})$, 
\[
\left(\int\left(\left|\varrho(s)f\right|^{\frac{p}{2}-1}+|f|^{\frac{p}{2}-1}\right)^{\frac{2p}{p-2}}dm\right)^{\frac{p-2}{p}}\le4\left\Vert f\right\Vert _{p}^{p-2}=4.
\]
Let $u'$ be the projection of $u$ to $L_{2}'(\pi)$, then 
\[
\max_{s\in S}\left\Vert \pi(s)u-u\right\Vert _{2}=\max_{s\in S}\left\Vert \pi(s)u'-u'\right\Vert _{2}\ge\kappa_{L_{2}}(\Gamma,S)\left\Vert u'\right\Vert _{2}\ge2^{-p/2}\kappa_{L_{2}}(\Gamma,S).
\]
Combine with (\ref{eq:mazur-holder}), 
\[
1\le\frac{2^{p}}{\kappa_{L_{2}}(\Gamma,S)^{2}}\max_{s\in S}\left\Vert \pi(s)u-u\right\Vert _{2}^{2}\le\left(\frac{p2^{p/2}}{\kappa_{L_{2}}(\Gamma,S)}\right)^{2}\max_{s\in S}\left\Vert \varrho(s)f-f\right\Vert _{p}^{2}.
\]
We conclude that $\kappa_{L_{p}}(\Gamma,S)\ge\frac{1}{p2^{p/2}}\kappa_{L_{2}}(\Gamma,S)$.

In the case $1<p<2$, rewrite (\ref{eq:mazur-1}) into 
\[
|a-b|^{p}\le2^{p}\left||a|^{p/2}\mbox{sign}(a)-|b|^{p/2}\mbox{sign}(b)\right|^{2},
\]
we deduce that the projection of $M_{p,2}(f)$ to $\left(L_{2}^{\pi(\Gamma)}\right)^{\perp}$
has $L_{2}$-norm at least $2^{-p/2}$. Apply (\ref{eq:mazur-1}),
\begin{align*}
\left\Vert \pi(s)u-u\right\Vert _{2}^{2} & =\int\left||\varrho(s)f|^{p/2}\mbox{sign}(\varrho(s)f)-|f|^{p/2}\mbox{sign}(f)\right|^{2}dm\\
 & \le\int4|\varrho(s)f-f|^{p}dm=4\left\Vert \varrho(s)f-f\right\Vert _{p}^{p}.
\end{align*}
It follows that $\kappa_{L_{p}}(\Gamma,S)\ge2^{-\frac{p+2}{p}}\kappa_{L_{2}}^{2/p}(\Gamma,S).$

\end{proof}

\subsubsection{Basic test functions and 1-cocycles on $\Delta$\label{sub:Basic-test-functions}}

The discussion in this subsection is valid for any choice of $\{\Gamma_{s}\}$
that satisfies Assumption \ref{Gamma_Involution}. The $1$-cocycle
constructed using the basic test functions will be useful in later
sections as well.

First recall some basic test functions on $\Delta$ constructed in
Subsection \ref{sub:Isoperimetry-delta}. They capture the feature
that in each copy of $\Gamma_{s}$ in $\Delta_{s}$, the generators
$a_{i}(s)$ and $b_{i}(s)$ are kept distance $k_{s}$ apart. In the
group $\Delta$, for $r\ge2$, define subset $U_{r}$ as 
\[
U_{r}=\left\{ Z\in\Delta:\ \mbox{Range}(Z)\subseteq[-r,r]\right\} .
\]
Recall that $\mbox{Range}(Z)$ is defined in subsection \ref{sub:Local-finiteness},
it is the minimal interval of $\mathbb{Z}$ visited by the cursor
of a path representing $Z$. Take a function supported on the subset
$U_{r}$, 
\begin{equation}
\varphi_{r}\left(\left(f_{s}\right),z\right)=\max\left\{ 0,1-\frac{\left|z\right|}{r}\right\} \mathbf{1}_{U_{r}}\left(\left(f_{s}\right),z\right).\label{eq:test-function}
\end{equation}
Let $\mathfrak{q}$ be the switch-or-walk measure $\mathfrak{q}=\frac{1}{2}(\mu+\nu)$
on $\Delta$ where $\nu$ is the uniform measure on $\left\{ \alpha_{i},\beta_{j}:\ 1\le i\le\left|A\right|,1\le j\le\left|B\right|\right\} $
and $\mu$ is the simple random walk measure on the base $\mathbb{Z}$,
$\mu\left(\tau^{\pm1}\right)=\frac{1}{2}$. We have seen in the proof of Proposition
\ref{iso-Delta} that for $p=2$, 
\[
\frac{\mathcal{E}_{\Delta,\mathfrak{q}}\left(\varphi_{r}\right)}{\left\Vert \varphi_{r}\right\Vert _{2}^{2}}\sim\frac{3}{2r^{2}}.
\]
Define $\varphi_{1}$ to be the indicator function of the identity
$e_{\Delta}$, 
\[
\varphi_{1}=\mathbf{1}_{e_{\Delta}}.
\]
Motivated by Tessera's embedding in \cite[Section 3]{Tessera2011},
given a non-decreasing function $\gamma:\mathbb{\mathbb{Z}}_{\ge0}\to\mathbb{R}_{+}$
with $\gamma(0)=1$ and 
\begin{equation}
C(\gamma)=\sum_{t=1}^{\infty}\left(\frac{1}{\gamma(t)}\right)^{2}<\infty,\label{eq:C_p}
\end{equation}
we define a 1-cocycle $b_{\gamma}:\Delta\to\oplus_{j=0}^{\infty}\ell^{2}(\Delta)\subset L_{2}$
\begin{equation}
b_{\gamma}(Z)=\bigoplus_{j=0}^{\infty}\left(\frac{1}{\gamma(j)}\frac{\varphi_{2^{j}}-\tau_{Z}\varphi_{2^{j}}}{\mathcal{E}_{\Delta,\mathfrak{q}}\left(\varphi_{2^{j}}\right)^{\frac{1}{2}}}\right)\label{eq:cocycle}
\end{equation}
where $\tau_{g}$ denote right translation of functions, $\tau_{g}\varphi(h)=\varphi\left(hg^{-1}\right).$

\begin{lemma}\label{compression-expander-lower}

The 1-cocycle $b_{\gamma}:\Delta\to L_{2}$ defined in (\ref{eq:cocycle})
is $\sqrt{2C(\gamma)}$-Lipschitz. Suppose in addition there exists
$m_{0}\ge2$ such that $k_{s+1}\ge m_{0}k_{s}$, $l_{s+1}\ge m_{0}l_{s}$
for any $s\ge1$. Then there is a constant $C(m_{0})>0$ depending
only on $m_{0}$ such that 
\begin{align*}
\rho_{b_{\gamma}}(x) & \ge C(m_{0})\frac{x/l_{s}}{\gamma(\log_{2}(x/l_{s}))},\ \mbox{if }x\in[k_{s}l_{s},k_{s+1}l_{s}),\\
\rho_{b_{\gamma}}(x) & \ge C(m_{0})\frac{k_{s+1}}{\gamma\left(\log_{2}k_{s+1}\right)},\ \mbox{if }x\in[k_{s+1}l_{s},k_{s+1}l_{s+1}).
\end{align*}

\end{lemma}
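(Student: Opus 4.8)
The plan is to establish the Lipschitz bound first, then the two lower bounds on compression, both relying on the computation of the Dirichlet form of the test functions $\varphi_r$ that was already carried out in the proof of Proposition \ref{iso-Delta}, together with the standard Tessera-type telescoping estimate for $1$-cocycles built from a one-parameter family of test functions. For the Lipschitz estimate: $b_\gamma$ is by construction a $1$-cocycle with respect to the right regular representation of $\Delta$ on $\bigoplus_{j\ge 0}\ell^2(\Delta)$, so $\|b_\gamma(Z)\|_2$ depends only on what happens on a bounded neighbourhood of $e_\Delta$; it suffices to bound $\|b_\gamma(Z)\|_2$ for $Z$ a generator in $\mathcal{T}$. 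For each $j$, the normalised vector $(\varphi_{2^j}-\tau_Z\varphi_{2^j})/\mathcal{E}_{\Delta,\mathfrak{q}}(\varphi_{2^j})^{1/2}$ has $\ell^2$-norm comparable to $1$ when $Z$ is a generator, since $\mathcal{E}_{\Delta,\mathfrak{q}}(\varphi_{2^j})=\sum_{u\in\mathcal{T}}\|\varphi_{2^j}-\tau_u\varphi_{2^j}\|_2^2\,\mathfrak{q}(u)$ up to the constant $\frac12$ and the shift $\tau$ carries the bulk of the Dirichlet energy. Hence $\|b_\gamma(Z)\|_2^2\le 2\sum_j \gamma(j)^{-2}=2C(\gamma)$, giving the $\sqrt{2C(\gamma)}$-Lipschitz claim.

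For the lower bounds, the key point is that $\|b_\gamma(Z)\|_2^2$ telescopes: expanding the squared norm gives $\sum_{j\ge0}\gamma(j)^{-2}\,\|\varphi_{2^j}-\tau_Z\varphi_{2^j}\|_2^2\big/\mathcal{E}_{\Delta,\mathfrak q}(\varphi_{2^j})$, and for any fixed $Z$ with $|Z|_\Delta\ge t$ one shows that for each scale $2^j$ with, roughly, $2^j$ larger than a suitable multiple of $\mathrm{Range}(Z)$ and the $E_s$-contributions of $Z$, the translate $\tau_Z\varphi_{2^j}$ has support disjoint (or almost disjoint, on the level set where $\varphi_{2^j}$ is close to $1$) from that of $\varphi_{2^j}$, so $\|\varphi_{2^j}-\tau_Z\varphi_{2^j}\|_2^2\simeq\|\varphi_{2^j}\|_2^2$. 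Combined with $\mathcal{E}_{\Delta,\mathfrak q}(\varphi_{2^j})/\|\varphi_{2^j}\|_2^2\simeq 2^{-2j}$ (from the Proposition \ref{iso-Delta} computation, and noting $\varphi_r$ is constant along the $\alpha_i,\beta_j$ directions so only $\tau$ contributes), each such scale contributes $\simeq \gamma(j)^{-2}\,2^{2j}$ to $\|b_\gamma(Z)\|_2^2$. The largest usable scale is governed by the metric description of Proposition \ref{metric}: an element $Z$ of length $x$ lying in $\Delta_{\le s}$ with $x\in[k_sl_s,k_{s+1}l_s)$ has $\mathrm{Range}(Z)$ and the essential contributions $E_{s'}(f_{s'})$ all controlled, so one can take $j$ roughly of size $\log_2(x/l_s)$ and obtain $\rho_{b_\gamma}(x)^2\gtrsim \gamma(\log_2(x/l_s))^{-2}(x/l_s)^2$; in the second range $x\in[k_{s+1}l_s,k_{s+1}l_{s+1})$ the largest separated scale saturates at $\simeq k_{s+1}$, giving the $k_{s+1}/\gamma(\log_2 k_{s+1})$ bound. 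The exponential growth hypotheses $k_{s+1}\ge m_0 k_s$, $l_{s+1}\ge m_0 l_s$ are used here to ensure that the relevant index $s$ is essentially unique for a given length $x$ and that lower-index contributions do not spoil the disjointness of supports, producing the constant $C(m_0)$.

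The main obstacle I expect is making precise the disjoint-support (or near-disjoint, up to the level set $\{\varphi_{2^j}\ge 1-\epsilon\}$) claim for the translate $\tau_Z\varphi_{2^j}$ at the critical scale, and tracking exactly how $\mathrm{Range}(Z)$ versus the lamp contributions $E_{s'}(f_{s'})$ of $Z$ enter the bound. Concretely: for $|Z|_\Delta = x$ one must choose the scale $2^j$ large enough that for all $Z$ achieving this length the supports $U_{2^j}$ and $Z\cdot U_{2^j}$ are (after restricting to where $\varphi_{2^j}$ exceeds, say, $3/4$) disjoint — this requires $2^j$ to dominate both the cursor displacement and the size of the configuration of $Z$ in the relevant factors $\Delta_{s'}$, which by Proposition \ref{metric} and Lemma \ref{one_metric} is at most a constant times $x/k_s$ in range and a constant times $x/l_s$ in the per-site excursion count. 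Once the correct threshold $2^j\simeq x/l_s$ (resp.\ $2^j\simeq k_{s+1}$) is pinned down and disjointness verified, the remaining computation is the geometric-series estimate $\sum \gamma(j)^{-2}2^{2j}\gtrsim \gamma(j_{\max})^{-2}2^{2j_{\max}}$ using monotonicity of $\gamma$, which is routine.
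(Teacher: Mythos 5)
Your overall skeleton matches the paper's: bound the Lipschitz constant by evaluating $b_\gamma$ on the generators, and for the lower bound exploit that $\varphi_{2^j}$ and $\tau_Z\varphi_{2^j}$ have disjoint supports at the right scale, combined with the metric estimate of Proposition \ref{metric} to relate $|Z|_\Delta$ to $\mathrm{Range}(Z)$. The scale $2^j\simeq x/l_s$ you land on is also correct, and replacing the paper's single-scale estimate with a geometric sum over all usable $j$ is harmless.

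However, the disjointness condition in your write-up is stated \emph{backwards}, and this is not a typo-level slip but a conceptual inversion that would stall the proof if followed literally. You write that disjointness of $\mathrm{supp}\,\varphi_{2^j}$ and $\mathrm{supp}\,\tau_Z\varphi_{2^j}$ ``requires $2^j$ to dominate both the cursor displacement and the size of the configuration of $Z$.'' This is exactly the wrong direction: if $2^j$ dominates the data of $Z$ then $Z\in U_{2^j}^{-1}U_{2^j}$ and the two supports intersect (e.g.\ $Z$ itself lies in both). The correct statement, which the paper proves, is that disjointness holds when $\mathrm{Range}(Z)>2^{j+2}$, i.e.\ when $2^j$ is \emph{small} relative to the range: either some lamp of $Z$ sits at $\iota\notin[-2^{j+1},2^{j+1}]$ and cannot be erased by any $W\in U_{2^j}$, or the cursor displacement $|i|>2^{j+1}$ already separates the base projections. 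One then wants the \emph{largest} such $j$, namely $2^j\simeq\mathrm{Range}(Z)/4$. Related to this, you introduce a ``near-disjointness on the level set $\{\varphi_{2^j}\ge 3/4\}$'' and a need to track $E_{s'}$-contributions, neither of which the argument requires: exact disjointness of supports follows from the range condition alone, and the $E_{s'}$-terms enter only the \emph{other} direction of the metric comparison, where $|Z|_\Delta$ is bounded above by $C(m_0)\,\mathrm{Range}(Z)\,l_{s}$ (so $\mathrm{Range}(Z)\gtrsim x/l_s$, not $\lesssim x/k_s$ as you suggest). Once these two points are corrected — disjointness at scales \emph{below} the range, and the metric bound $|Z|_\Delta\le C(m_0)\,\mathrm{Range}(Z)\,l_s$ derived from Lemma \ref{one_metric} and Proposition \ref{metric} using $k_{s+1}\ge m_0 k_s$, $l_{s+1}\ge m_0 l_s$ — your route closes.
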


\begin{proof} Clearly $\Vert b_{\gamma}(Z)\Vert=1$ when $Z$ is
a generator in $A\cup B$ and $\Vert b_{\gamma}(Z)\Vert=\sqrt{2C(\gamma)}$
when $Z=\tau$ is the generator of the cyclic base $\Z$.

The $1$-cocycle $b_{\gamma}$ captures the size of $Range(Z)$. Denote
$Z=((f_{s}),i)$ and observe that if $Range(Z)>2^{j+2}$, then 
\[
\left(\mbox{supp}\varphi_{2^{j}}\right)\cap\left(\mbox{supp}\tau_{Z}\varphi_{2^{j}}\right)=\emptyset.
\]
Indeed, either there exists $\iota\notin[-2^{j+1},2^{j+1}]$ and $s\leq s_{0}(Z)$
with $f_{s}(\iota)\neq e{}_{\Delta_{s}}$ and then this also holds
for all elements of $\mbox{supp}\tau_{Z}\varphi_{2^{j}}$ and none
of $\mbox{supp}\varphi_{2^{j}}$, or $|i|>2^{j+1}$ and then the projections
on the base of the two supports are disjoint. Therefore 
\[
\frac{\left\Vert \varphi_{2^{j}}-\tau_{Z}\varphi_{2^{j}}\right\Vert _{2}^{2}}{\mathcal{E}_{\Delta,\mathfrak{q}}\left(\varphi_{2^{j}}\right)}=\frac{2\left\Vert \varphi_{2^{j}}\right\Vert _{2}^{2}}{\mathcal{E}_{\Delta,\mathfrak{q}}\left(\varphi_{2^{j}}\right)}\sim\frac{4\cdot2^{2j}}{3}.
\]
By construction of $b_{\gamma}$, this implies if $Range(Z)>2^{j+2}$,
\[
\left\Vert b_{\gamma}(Z)\right\Vert _{2}\ge\frac{2^{j}}{\sqrt{3}\gamma(j)}.
\]
By Definition \ref{range}, for $Z$ with $\mbox{Range}(Z)=r\in[k_{s},k_{s+1})$,
we have $s_{0}(Z)\le s$.

Denote $Z=((f_{s}),i)$, then by Lemma \ref{one_metric} 
\[
|(f_{s},i)|_{\Delta_{s}}=|\pi_{s}(Z)|_{\Delta_{s}}\leq18(r+1)l_{s}
\]
because at most $2r/k_{s}+1$ intervals contribute to the essential
contribution. By Proposition \ref{metric}, the word distance of $Z$
to $e_{\Delta}$ is bounded 
\[
\left|Z\right|_{\Delta}\le500\ .\ 18(r+1)\left(l_{0}+\ldots+l_{s}\right)\le\frac{9000(r+1)l_{s}}{1-1/m_{0}}.
\]
It follows that 
\[
\rho_{b_{\gamma}}\left(\frac{9000(r+1)l_{s}}{1-1/m_{0}}\right)\ge\frac{r}{8\gamma(\log_{2}r)}.
\]
To write it into the first inequality stated, note that since $b_{\gamma}$
is equivariant, $\rho_{b_{\gamma}}$ is subadditive.

The second bound follows from the first bound evaluated at $x=k_{s+1}l_{s}$
and the monotonicity of the compression function $\rho_{b_{\gamma}}$.
\end{proof}

\begin{remark}

The function $t/\gamma\circ\log(t)$ with $\gamma$ satisfying (\ref{eq:C_p})
does not satisfy any a priori majoration by a sublinear function.
More precisely, \cite[Proposition 8]{Tessera2011} implies that for
any increasing sublinear function $h:\mathbb{R}_{+}\to\mathbb{R}_{+}$,
there exists a nondecreasing function $\gamma$ satisfying (\ref{eq:C_p}),
a constant $c>0$ and an increasing subsequence of integers $n_{i}$,
such that 
\[
\frac{k_{n_{i}}}{\gamma\circ\log_{2}\left(k_{n_{i}}\right)}\ge ch\left(k_{n_{i}}\right)\ \mbox{for all }i.
\]
It follows that the upper bound in Proposition \ref{expander-upper-Lp}
cannot be improved.

\end{remark}

\subsubsection{Possible compression gap of embedding into $L_{p}$}

Comparing Lemma \ref{compression-expander-lower} to Proposition \ref{expander-upper-Lp},
we see that the 1-cocycle $b_{\gamma}$ defined in (\ref{eq:cocycle})
is almost optimal in the sense that it matches up with the compression
upper bound up to the factor sequence $1/\gamma\circ\log_{2}(k_{s})$.
The following result is an analogue of \cite[Theorem 5.5 (II)]{Arzhantseva2009}
in our setting.

\begin{theorem}\label{compression-expander}

There exists absolute constants $\delta>0,C>0$ such that the following
holds. Let $\rho(x)$ be any non-decreasing function such that $\frac{x}{\rho(x)}$
is non-decreasing. Then there exists a finitely generated group $\Delta$
such that $\left(\frac{1}{C\varepsilon}\frac{\rho}{\log(1+\rho)^{(1+\varepsilon)/2}},Cp2^{p/2}\rho\right)$
is an equivariant $L_{p}$-compression gap of $\Delta$ for any $p>1$.

Further if $\lim_{x\to\infty}\rho(x)=\infty$, the group $\Delta$
constructed is elementary amenable, and $\left(\frac{1}{C\varepsilon}\frac{\rho}{\log(1+\rho)^{(1+\varepsilon)/2}},C\rho\right)$
is an equivariant $L_{1}$-compression gap of $\Delta$.

\end{theorem}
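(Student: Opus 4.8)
The plan is to build the group $\Delta$ by choosing the lamp groups $\{\Gamma_s\}$ among the Lafforgue super expanders of Example \ref{examples} and the sequences $(k_s)$, $(l_s)$ so that the interlocking upper and lower bounds of the previous subsections pinch the compression function to the prescribed $\rho$. Writing $\rho(x)=x/f(x)$ with $f$ between $1$ and $x$ and noting that $x\mapsto\rho(x)$ and $x\mapsto x/\rho(x)=f(x)$ are both non-decreasing, the first step is to invoke the approximation machinery (Proposition \ref{function-approximation}, as used in the proof of Theorem \ref{possible-profile}) to produce sequences $(k_s)$ and $(l_s)$, taking values in $\Z_+\cup\{\infty\}$ and in $\{\log|\Gamma_m|\}\cup\{\infty\}$ respectively and satisfying the exponential growth condition $k_{s+1}\ge m_0 k_s$, $l_{s+1}\ge m_0 l_s$, such that the piecewise function $\tilde f$ equal to $l_s$ on $[k_s l_s, k_{s+1} l_s]$ and equal to $x/k_{s+1}$ on $[k_{s+1} l_s, k_{s+1} l_{s+1}]$ satisfies $\tilde f\simeq_{m_0 C_1^5} f$. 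Equivalently $x/\tilde f(x)\simeq\rho(x)$.

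Next I would assemble the upper bound. By Proposition \ref{expander-upper-Lp}, for the finite lamp groups (those $s$ with $l_s<\infty$), any $1$-Lipschitz $\Psi:\Delta\to L_p$ satisfies $\rho_\Psi(\tfrac12 x)\le C(\delta,p)\, x/l_s$ on $[k_s l_s, k_{s+1}l_s]$, and via monotonicity this extends over $[k_{s+1}l_s, k_{s+1}l_{s+1}]$ by $\rho_\Psi(\tfrac12 x)\le C(\delta,p)k_{s+1}$; in the case $l_{s_0}=\infty$ (prescribed $\rho$ asymptotically linear) one uses instead Lemma \ref{expander-Lp-infty}, invoking Property $(F_{L_p})$ of the Lafforgue lattice $\Gamma$ (from Bader--Furman--Gelander--Monod, valid for $1<p<\infty$) together with the $p$-Kazhdan constant estimate of Fact \ref{kazhdan-Lp}, to get the tail bound $\rho_\Psi(x)\le \tfrac{2}{\kappa_{L_p}}(k_{s_0}+1)$. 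Translating $x/l_s=x/\tilde f(x)\simeq\rho(x)$ and absorbing the $p$-dependence into the constant, these combine to give $\rho_\Psi(x)\le C p 2^{p/2}\rho(x)$ for all $t\ge1$, which is the claimed upper bound of the gap. The constant $\delta$ here is the (absolute, since $|A|,|B|$ are fixed by the choice of Lafforgue family) Kazhdan/spectral-gap constant.

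For the lower bound I would use the explicit equivariant $1$-cocycle $b_\gamma:\Delta\to L_2\subset L_p$ from \eqref{eq:cocycle}, with $\gamma(t)=(1+t)^{(1+\varepsilon)/2}$, so that $C(\gamma)=\sum_t (1+t)^{-(1+\varepsilon)}<\infty$ and $b_\gamma$ is $\sqrt{2C(\gamma)}$-Lipschitz with $C(\gamma)\simeq 1/\varepsilon$. Lemma \ref{compression-expander-lower} gives $\rho_{b_\gamma}(x)\ge C(m_0)\,\dfrac{x/l_s}{\gamma(\log_2(x/l_s))}$ on $[k_s l_s, k_{s+1}l_s)$ and $\rho_{b_\gamma}(x)\ge C(m_0)\,\dfrac{k_{s+1}}{\gamma(\log_2 k_{s+1})}$ on $[k_{s+1}l_s, k_{s+1}l_{s+1})$; since $x/l_s\simeq\rho(x)$ on the first range and $k_{s+1}\ge \tfrac{x}{l_{s+1}}\simeq\rho(x)$ on the second, and $\log_2(x/l_s)\le\log_2(1+\rho(x))+O(1)$, this yields after rescaling by the Lipschitz constant $\rho_\Psi(t)\ge \dfrac{1}{C\varepsilon}\,\dfrac{\rho(t)}{\log(1+\rho(t))^{(1+\varepsilon)/2}}$ for the normalized embedding $\Psi=b_\gamma/\sqrt{2C(\gamma)}$. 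For the $p=1$ addendum, when $\lim_{x\to\infty}\rho(x)=\infty$ one must force $l_s<\infty$ for every $s$ (an infinite sequence of finite Lafforgue quotients), so that $\Delta$ is elementary amenable with asymptotic dimension one by Fact \ref{EA}; the upper bound $\rho_\Psi(t)\le C\rho(t)$ into $L_1$ then follows from the $1\le p\le 2$ branch of Proposition \ref{expander-upper-Lp} (which is stated for all $p\in[1,2]$, so no Property $(F_{L_1})$ is needed), while the lower bound is unchanged since $b_\gamma$ lands in $L_2\subset L_1$.

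The main obstacle, and the place requiring the most care, is the matching between the \emph{two} regimes of the piecewise function $\tilde f$ — the "slow" regime where $\tilde f=l_s$ is constant (controlled by a single embedded expander $\Gamma_s$ and giving compression $\simeq x/l_s$) and the "fast" regime $\tilde f=x/k_{s+1}$ (controlled by the rescaled products $\Pi_{s+1}^t$ and giving compression $\simeq k_{s+1}$) — and verifying that on the overlap and at the breakpoints $x=k_{s+1}l_s$, $x=k_{s+1}l_{s+1}$ the upper bound from Proposition \ref{expander-upper-Lp} and the lower bound from Lemma \ref{compression-expander-lower} are genuinely comparable up to the logarithmic gap and do not develop a polynomial discrepancy; this is exactly why the exponential growth hypotheses $k_{s+1}\ge m_0 k_s$, $l_{s+1}\ge m_0 l_s$ enter, ensuring $\sum_{j\le s}l_j\le \tfrac{1}{1-1/m_0}l_s$ so that the word metric on $\Delta$ (Proposition \ref{metric}) does not inflate lengths by more than a constant across scales. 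The asymptotically-linear edge case (some $l_s=\infty$, hence $\Delta$ possibly non-amenable) needs the separate Lemma \ref{expander-Lp-infty} input, which is why the $L_1$ statement is only claimed in the unbounded-$\rho$ regime.
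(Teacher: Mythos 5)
Your proposal is correct and follows essentially the same route as the paper: approximate $f=x/\rho$ by a piecewise extremal $\tilde f$ via Proposition \ref{function-approximation}, obtain the upper bound from Proposition \ref{expander-upper-Lp} together with Lemma \ref{expander-Lp-infty} (with the $p$-Kazhdan estimate of Fact \ref{kazhdan-Lp}), and obtain the lower bound from the cocycle $b_\gamma$ of Lemma \ref{compression-expander-lower} with $\gamma(t)\asymp t^{(1+\varepsilon)/2}$, handling $p=1$ by restricting to the case where every $l_s<\infty$. The only cosmetic difference is that the paper takes $L=\{\mathrm{diam}\,\Gamma_m\}\cup\{\infty\}$ rather than $\{\log|\Gamma_m|\}\cup\{\infty\}$ in the approximation step, but these are comparable for the Lafforgue expanders so it changes nothing.
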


\begin{proof} We write $\rho(x)=\frac{x}{f(x)}$ with $f(x)$ between
$1$ and $x$. The sets $K=\Z_{+}\cup\{\infty\}$ and $L=\left\{ \mbox{diam }\Gamma_{m},m\geq1\right\} \cup\{\infty\}$
of diameters of Lafforgue expanders from Example \ref{examples} satisfy
the assumptions of Proposition \ref{function-approximation}. So we
can find sequences $(k_{s}),(l_{s})$ taking values in $K$ and $L$
such that the function defined by $\tilde{f}(x)=l_{s}$ on $[k_{s}l_{s},k_{s+1}l_{s}]$
and $\tilde{f}(x)=\frac{x}{k_{s+1}}$ on $[k_{s+1}l_{s},k_{s+1}l_{s+1}]$
satisfies $\tilde{f}(x)\simeq_{m_{0}C_{1}^{5}}f(x)$. Then $\tilde{\rho}(x)=\frac{x}{\tilde{f}(x)}\simeq_{m_{0}C_{1}^{5}}\rho(x)$.
Let $\Delta$ be the diagonal product associated to these sequences.

By Proposition \ref{expander-upper-Lp} and Lemma \ref{expander-Lp-infty},
any $1$-Lipschitz equivariant embedding $\Psi:\Delta\rightarrow L_{p}$
satisfies for all $x$ 
\[
\rho_{\Psi}(x)\leq C\tilde{\rho}(2x)\leq2m_{0}C_{1}^{5}C\rho(x),
\]
where by Fact \ref{kazhdan-Lp} 
\[
C\leq\max\{C(\delta,p),\frac{2}{\kappa_{L_{p}}(\Gamma)})\}\leq\max\{C'p,C''p2^{p/2}\}.
\]
This gives the upper bound of the compression gap. For $p=1$, Lemma
\ref{expander-Lp-infty} does not hold, so the upper bound is valid
only on the condition that all the diameters $l_{s}$ are finite,
which is satisfied when $\rho$ is unbounded, or equivalently $f$
is not asymptotically linear.

The lower bound is given by the $1$-cocyle $b_{\gamma}:\Delta\to \ell^{2}$
of Lemma \ref{compression-expander-lower} with $\gamma(x)=C_{\epsilon}x^{\frac{1+\epsilon}{2}}$,
where $C_{\epsilon}\sim\frac{1}{\epsilon}$ is such that $\sqrt{2C(\gamma)}=1$.
For all $x$ 
\[
\rho_{b_{\gamma}}(x)\geq C_{\epsilon}C(m_{0})\frac{\tilde{\rho}(x)}{\log_{2}(\tilde{\rho}(x))^{\frac{1+\epsilon}{2}}}\geq\frac{1}{C\epsilon}\frac{\rho(x)}{\log(1+\rho(x))^{\frac{1+\epsilon}{2}}}
\]
Since $\ell^{2}$ embeds isometrically in $L_{p}$ for all $p\ge1$,
see Lemma 2.3 in \cite{Naor2008}, it is also an $L_{p}$-compression
lower bound. \end{proof}

\section{$L_{p}$-compression of wreath product $H\wr\mathbb{Z}$ \label{HwrZ}}

In general the case with $\{\Gamma_{s}\}$ chosen be to finite groups
other than expanders is more involved. Since our main object $\Delta$
is a diagonal product of a sequence of wreath products, it is instructive
to understand compression of uniform embedding of a single wreath
product $H\wr\mathbb{Z}$.

In \cite{Naor2008}, Naor and Peres proved that if $\alpha_{2}^{\#}(H)=\frac{1}{2\beta^{*}(H)}$,
where $\beta^{\ast}(H)$ is the supremum of upper speed exponent of
symmetric random walk of bounded support on $H$, then (\cite[Corollary 1.3]{Naor2008})
\[
\alpha_{2}^{\#}(H\wr\mathbb{Z})=\frac{2\alpha_{2}^{\#}(H)}{2\alpha_{2}^{\#}(H)+1}.
\]
Further in \cite{Naor2011} which significantly extends the method
in \cite{Naor2008}, the $L_{p}$-compression exponent of $\mathbb{Z}\wr\mathbb{Z}$
was determined (\cite[Theorem 1.2]{Naor2011}), for every $p\ge1$,
\[
\alpha_{p}^{\#}\left(\mathbb{Z}\wr\mathbb{Z}\right)=\max\left\{ \frac{p}{2p-1},\frac{2}{3}\right\} .
\]
In \cite{Naor2011} Naor and Peres also proved the following result
when the base group is of polynomial volume growth at least quadratic.
Let $H$ be a nontrivial finitely generated amenable group and $\Gamma$
a group of polynomial volume growth. Suppose the volume growth rate
of $\Gamma$ is at least quadratic, then for every $p\in[1,2]$ (\cite[Theorem 3.1]{Naor2011})
\[
\alpha_{p}^{\#}(H\wr\Gamma)=\min\left\{ \frac{1}{p},\alpha_{p}^{\#}(H)\right\} .
\]

One central idea in these works is the Markov type method that connects
compression exponents of $G$ to speed exponent of certain random
walks on $G$. In this subsection we apply the spectral method as
in subsection \ref{sub:Spectral-method} to obtain a generalization
of the aforementioned results of Naor and Peres. The technique will
be useful in the study of the diagonal product $\Delta$ with dihedral
groups.

\begin{theorem}\label{HwrZformula}

Let $p\in[1,2]$, $H$ be a finitely generated infinite group. Then
the equivariant $L_{p}$-compression exponent of $H\wr\mathbb{Z}$
is 
\[
\alpha_{p}^{\#}(H\wr\mathbb{Z})=\min\left\{ \frac{\alpha_{p}^{\#}(H)}{\alpha_{p}^{\#}(H)+\left(1-\frac{1}{p}\right)},\alpha_{p}^{\#}(H)\right\} .
\]

\end{theorem}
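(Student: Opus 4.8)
The plan is to prove the formula by establishing matching upper and lower bounds on $\alpha_p^{\#}(H\wr\mathbb{Z})$, both obtained by transporting compression data between $H$ and $H\wr\mathbb{Z}$ through the lamp structure. The two regimes in the minimum correspond to the two phases visible throughout the paper: a ``slow'' regime governed by how expensive it is to write a long configuration of lamps, and a fast regime where the base $\mathbb{Z}$ provides no obstruction beyond that of $H$ itself, so that $\alpha_p^{\#}(H)$ is simply inherited.

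For the \textbf{lower bound} $\alpha_p^{\#}(H\wr\mathbb{Z})\ge\min\{\alpha_p^{\#}(H)/(\alpha_p^{\#}(H)+1-1/p),\alpha_p^{\#}(H)\}$, I would construct an explicit equivariant embedding of $H\wr\mathbb{Z}$ into an $L_p$-space built from a good equivariant cocycle $\psi:H\to L_p$ with compression exponent arbitrarily close to $\alpha:=\alpha_p^{\#}(H)$, together with a Tessera-style cocycle on the base $\mathbb{Z}$ capturing the range (as in the cocycle $b_\gamma$ of Subsection \ref{sub:Basic-test-functions}). Concretely, one attaches to a configuration $(f,i)$ the sum over sites $x$ of rescaled copies of $\psi(f(x))$, weighted by a sequence $c_j$ on dyadic range-scales $2^j$ chosen so that the global map is Lipschitz; the balance of these weights, using $\sum c_j^{-p'}<\infty$-type constraints dictated by the exponent $1-1/p$, is exactly what produces the first term in the minimum. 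One must check that the word length of $(f,i)$ in $H\wr\mathbb{Z}$ is comparable to $\mathrm{range}+\sum_x|f(x)|_H$ and then verify that the displacement of the embedding is bounded below by the right power; the appearance of $1-1/p$ is the $\ell^p$-cost of summing many small displacements across the range, rather than the $\ell^1$ or $\ell^2$ cost.

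For the \textbf{upper bound}, I would use the spectral/Poincar\'e method of Subsection \ref{sub:Spectral-method}: inside $H\wr\mathbb{Z}$ sit product subsets $\prod_{x\in[0,t)}H_x$ (analogues of $\Pi_s^t$ in Lemma \ref{embed-product}), and on such a product one tensorizes a near-optimal Poincar\'e inequality for $H$ (valued in $L_p$, via Matou\v sek extrapolation as in the proof of Proposition \ref{expander-upper-Lp}) against a well-chosen reversible kernel. Feeding this into Lemma \ref{spectral-compression}, with the mass-distribution condition supplied by the product structure, yields $\rho_\Psi$ bounded above by the relevant power; optimizing the segment length $t$ against the diameter scale of $H$ recovers $\min\{\alpha/(\alpha+1-1/p),\alpha\}$. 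An additional ingredient is the comparison of word metrics on $H\wr\mathbb{Z}$ versus the product subset, so that the exponent in $H\wr\mathbb{Z}$ is controlled by the exponent on the product.

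The \textbf{main obstacle} I anticipate is the upper bound, specifically making the spectral method sharp rather than merely giving some sublinear exponent. The delicate point is choosing, on the product subset, the correct reversible Markov kernel: it must simultaneously have a small Dirichlet energy against a $1$-Lipschitz embedding and a spectral gap that is not too small, and the optimal tradeoff is what pins down the exponent $1-1/p$; getting a clean near-optimal Poincar\'e inequality for $L_p$-valued functions on $H$ from the definition of $\alpha_p^{\#}(H)$ (which is phrased via cocycles, not Poincar\'e constants) requires passing through equivariance and a duality/extrapolation argument, and this interface is where the technical work concentrates. A secondary subtlety is handling the boundary case $p=1$ and ensuring the endpoint behavior of the minimum (e.g.\ when $\alpha_p^{\#}(H)$ is large enough that the second term dominates) is treated correctly on both sides.
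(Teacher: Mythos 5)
Your lower-bound sketch is essentially correct and matches the paper: the paper builds $\Phi = b_\gamma \oplus \Xi_\varphi$ with a Tessera-type cocycle $b_\gamma$ capturing the range and the cocycle $\Xi_\varphi$ attaching a good equivariant embedding $\varphi:H\to L_p$ at every site, then cites \cite[Theorem 3.3]{Naor2008} for the compression lower bound of this embedding.

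The upper-bound sketch, however, has a genuine gap, and you put your finger on it yourself without realizing it is fatal rather than merely ``technical.'' You propose tensorizing a Poincar\'e inequality for $L_p$-valued functions \emph{on $H$} over a product subset $\prod_{x\in[0,t)}H_x$, and you note the difficulty of extracting such a Poincar\'e inequality from $\alpha_p^{\#}(H)$. But this extraction does not exist: $H$ is an infinite, in general amenable, group, and there is no spectral gap or nontrivial Poincar\'e constant available for it. The product/tensorization strategy works in Sections \ref{sec:Compression-expander} and \ref{sec:dihedral-compression} precisely because the lamp groups there are \emph{finite} expanders or dihedral groups with explicit Poincar\'e or metric cotype inequalities; it does not transfer to arbitrary $H$. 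What the paper does instead (Proposition \ref{rho-Phi}) is to exploit the compression data of $\psi_H=\Psi\circ i$ directly: by definition of $\rho_{\psi_H}$ there are elements $h_n\in H$ with $|h_n|_H\ge n$ but $\|\psi_H(h_n)-\psi_H(e_H)\|_p\le 2\rho_{\psi_H}(n)$, and the test set $X_n$ is the $\mathbb{Z}_2$-lamplighter graph over a segment of length $m_n$ with lamp values $\{e_H,h_n\}$ --- not a product of copies of $H$. The obstruction comes from the Poincar\'e inequality for the abelian lamplighter graph $\mathcal{L}_{m_n}$ (Lemma \ref{lamplighter-poincare}), with an $\alpha=1$ (Cauchy-like) kernel on the base tuned so that the relaxation time is $O(m_n\log m_n)$ rather than $O(m_n^2)$; the $L_p$-energy of $\Psi$ is then $\lesssim \rho_{\psi_H}(n)^p + m_n^{p-1}$, and optimizing $m_n\simeq\rho_{\psi_H}(n)^{p/(p-1)}$ is what produces the exponent $1-\tfrac1p$. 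Your plan needs to be replaced by this distorted-element/lamplighter-graph argument; as stated it cannot be carried out.
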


\begin{remark}

The lower bound on $\alpha_{p}^{\#}(H\wr\mathbb{Z})$ is covered by
\cite[Theorem 3.3]{Naor2008}. We will prove the upper bound. Note
that there is an interesting dichotomy of the type of obstruction
that $H\wr\mathbb{Z}$ observes, depending on $\alpha_{p}^{\#}(H)$,
$p\in(1,2]$. 
\begin{enumerate}
\item When $\alpha_{p}^{\#}(H)>\frac{1}{p}$, then the sequence of subsets
$X_{n}$ as defined in the proof of Proposition \ref{rho-Phi} captures
distorted elements under the embedding, 
\[
\alpha_{p}^{\#}(H\wr\mathbb{Z})=\frac{\alpha_{p}^{\#}(H)}{\alpha_{p}^{\#}(H)+\left(1-\frac{1}{p}\right)}.
\]

\item When $\alpha_{p}^{\#}(H)\le\frac{1}{p}$, then one single copy of
$H$ already provide sufficient distortion, 
\[
\alpha_{p}^{\#}(H\wr\mathbb{Z})=\alpha_{p}^{\#}(H).
\]

\end{enumerate}
\end{remark}

\subsection{Upper bound of compression function of $H\wr\mathbb{Z}$\label{sub:-compression-wr-Z}}

Let $\Psi:H\wr\mathbb{Z}\to\mathfrak{X}$ be an equivariant embedding
of $G=H\wr\mathbb{Z}$ into metric space $\left(\mathfrak{X},d_{\mathfrak{X}}\right)$.
Recall that the group $H$ is naturally identified with the lamp group
over site $0$, 
\begin{align*}
i: & H\hookrightarrow G\\
h & \to\left(h\delta_{0},0\right).
\end{align*}
Then the embedding $\Psi$ induces an equivariant embedding of $H$
into $\mathfrak{X}$ we denote it by $\psi_{H}$ 
\begin{equation}
\psi_{H}(h)=\Psi\circ i(h)=\Psi\left(\left(h\delta_{0},0\right)\right).\label{eq:induce-H}
\end{equation}
Denote the compression function of $\psi_{H}:H\to\mathfrak{X}$ by
$\rho_{\psi_{H}}$. Since distortion of the inclusion map $i$ is
$1$, $\left|h\right|_{H}=\left|i(h)\right|_{G}$, it follows that
\begin{equation}
\rho_{\Psi}(t)\le\rho_{\psi_{H}}(t)\ \mbox{ for all }t\ge1.\label{eq:rho-G-H-1}
\end{equation}
We now explain how to apply the spectral method to derive a second
upper bound on $\rho_{\Psi}$ when $\Psi$ is a uniform embedding
of $G$ into $L_{p}$, $p\in(1,2]$. The novelty here is in the choice
of Markov kernels on lamplighter graphs.

\begin{proposition}\label{rho-Phi}

There exists a constant $C>0$ such that for any $p\in(1,2]$ and any $1$-Lipschitz equivariant embedding $\Psi:G\to L_{p}$  of $G=H\wr\mathbb{Z}$ into $L_{p}$,
the compression function $\rho_{\Psi}$ of $\Psi$ satisfies 
\begin{equation}
\rho_{\Psi}(t)\le C\left(\frac{p}{p-1}\right)^{\frac{1}{p}}\left(2\rho_{\psi_{H}}\circ\tau^{-1}(t)\right)^{\frac{p}{p-1}}\log^{\frac{1}{p}}\left(2\rho_{\psi_{H}}\circ\tau^{-1}(t)\right).\label{eq:rho-G-H-2}
\end{equation}
where $\psi_{H}$ is the induced embedding of the subgroup $H$ into
$L_{p}$ as in (\ref{eq:induce-H}) and the function $\tau:\mathbb{R}_{+}\to\mathbb{R}_{+}$
is defined as 
\[
\tau(x)=2^{\frac{1}{p-1}}x\left(\rho_{\psi_{H}}(x)\right)^{\frac{p}{p-1}}.
\]

\end{proposition}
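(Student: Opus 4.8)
The plan is to prove the inequality by the spectral (Poincar\'e) method of Subsection~\ref{sub:Spectral-method}: apply Lemma~\ref{spectral-compression}, in the transitive form of Example~\ref{group-poincare}, to a family of finite lamplighter-type subsets of $G=H\wr\mathbb{Z}$ equipped with a carefully chosen reversible Markov kernel. Given a target scale $t$, I would set $x=\tau^{-1}(t)$, so that $t\simeq x\,\rho_{\psi_H}(x)^{p/(p-1)}$, and take $m$ of order $\rho_{\psi_H}(x)^{p/(p-1)}$, so that $t\simeq mx$. Let $X=X_t$ be the finite set of elements $(f,i)\in G$ whose cursor $i$ lies in $[0,m)$ and whose lamp configuration $f$ is supported in $[0,m)$ with each $f(j)$ in the $H$-ball of radius $x$. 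This $X$ is a finite subset (not a subgroup) of $G$, and the one-copy metric estimates of Lemma~\ref{one_metric} (as in Lemma~\ref{embed-product}) give $\mbox{diam}_G(X)\simeq mx\simeq t$.

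The heart of the argument is the choice of the Markov kernel $K$ on $X$ and the resulting balance between its $L_p$-Poincar\'e constant and the $L_p$-Dirichlet energy of $\Psi$. The kernel should be a switch--walk--switch chain in which the lamp at a site is modified only when the cursor is at that site: this is exactly what keeps each step realisable in $G$ by a bounded-length element of the form $\tau^{\pm1}$ or $(s\delta_0,0)=i(s)$ with $s$ a generator of $H$, so that, $\Psi$ being $1$-Lipschitz, the Dirichlet energy $\mathcal{E}_K(\Psi)=\sum_{u,v}\|\Psi(u)-\Psi(v)\|_p^p\,K(u,v)\pi(v)$ stays controlled. The cursor part of the chain and the lamp-update rule are then tuned (heavier-tailed cursor steps; reflection of the lamp walk at the sphere of radius $x$ to keep the configuration inside $X$) so that the chain relaxes quickly enough, the relaxation time of the lamplighter chain being estimated, by tensorisation, in terms of $m$ and of the relaxation time of the $H$-ball of radius $x$. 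Passing from the scalar Poincar\'e inequality to the $L_p$-valued one costs only the factor $\bigl(\tfrac{p}{p-1}\bigr)^{1/p}$ by Matou\v{s}ek extrapolation (\cite[Lemma~4.4]{Naor2011a}), which accounts for that constant in the statement, while the factor $\log^{1/p}$ arises from a union bound over the $\simeq m$ sites swept by the cursor, bounding the maximal lamp displacement.

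Finally, Lemma~\ref{spectral-compression}, combined with the transitivity of $X$ (so that $a(u,v)=\pi(u)\pi(v)$ satisfies the $(p;\tfrac12\mbox{diam}_G(X),\tfrac12)$-mass distribution condition), gives $\rho_\Psi\bigl(\tfrac12\mbox{diam}_G(X)\bigr)\lesssim\bigl(P_p(X,K,L_p)\,\mathcal{E}_K(\Psi)\bigr)^{1/p}$, and substituting $\mbox{diam}_G(X)\simeq t$, $\tau^{-1}(t)=x$ and the chosen value of $m$ yields the claimed bound; the complementary single-copy estimate $\rho_\Psi\le\rho_{\psi_H}$ of (\ref{eq:rho-G-H-1}) is already in hand. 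I expect the genuine difficulty to be precisely this balancing act: designing the kernel $K$ so that the product $P_p(X,K,L_p)\,\mathcal{E}_K(\Psi)$ comes out to the right power of $\rho_{\psi_H}(x)$ — which is what dictates the exact shape of $\tau$ — together with the delicate bookkeeping forced by $X$ being only a subset of $G$: reversibility of the reflected kernel, boundary corrections to the stationary measure, and the logarithmic loss.
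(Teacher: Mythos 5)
Your framework is the right one---the spectral method via Lemma~\ref{spectral-compression}, a lamplighter-type finite set inside $G$, a Cauchy-like cursor kernel tuned so that the lamplighter chain relaxes in time $\simeq m\log m$, and balancing $m$ against $\rho_{\psi_H}$---but the set $X_t$ you build has the wrong lamps, and this is where the argument breaks.

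You take the lamp alphabet over each site to be the whole $H$-ball $B_H(e,x)$, with a reflected nearest-neighbour walk on that ball. Two problems follow. First, the relaxation time of the resulting lamplighter chain is \emph{not} governed only by the base as in the Peres--Revelle bound used in Appendix~\ref{stable}: that bound requires the lamp to be completely refreshed each time the cursor visits a site, which is automatic for a two-point lamp alphabet but false if the lamp only takes a single generator step. With generator steps your relaxation time picks up a factor of the relaxation time of random walk on $B_H(e,x)$, a quantity that depends on the geometry of $H$ in an uncontrollable way and in general destroys the balance $P_p(X,K,L_p)\,\mathcal{E}_K(\Psi)\simeq m^p\log m$. (If you instead refresh the lamps uniformly on $B_H(e,x)$ so as to recover the Peres--Revelle estimate, then the Dirichlet energy $\mathcal{E}_K(\Psi)$ per step involves $\sup_{h\in B_H(e,x)}\|\psi_H(h)-\psi_H(e_H)\|_p$, which a priori is only bounded by $x$, not by $\rho_{\psi_H}(x)$ --- again the wrong scale.) Second, and more fundamentally, using the entire ball does not \emph{see} the distortion of $\psi_H$ at all: most elements of $B_H(e,x)$ could be undistorted, and nothing in your chain singles out a bad one.

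The paper's proof closes exactly this gap with one extra idea: pick, for each scale $n$, a single witness $h_n\in H$ with $|h_n|_H\ge n$ and $\|\psi_H(h_n)-\psi_H(e_H)\|_p\le 2\rho_{\psi_H}(n)$, and take the lamp alphabet at each site to be the two-point set $\{e_H,h_n\}$. The finite set $X_n$ is then a bona fide $\mathbb{Z}_2$-lamplighter graph $\mathcal L_{m_n}$; each lamp flip is the single group element $(h_n^{\pm1}\delta_j,0)$, which by equivariance of $\Psi$ costs at most $2\rho_{\psi_H}(n)$ in $L_p$-norm while moving distance $\ge n$ in $G$ --- this large-jump/small-energy mismatch, concentrated in one element, is the engine of the bound. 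The switch--walk--switch chain fully randomises the two-point lamps, so Peres--Revelle applies directly and gives $P_p\lesssim m_n\log m_n$ from the maximal-hitting-time (effective resistance) estimate of the $1$-stable base chain; the $\log^{1/p}$ factor comes from there, not from a union bound over sites, and the $(p/(p-1))^{1/p}$ factor comes from the $p$-th moment of the Cauchy-like step, $\sum_k k^{p-2}\simeq m^{p-1}/(p-1)$, not from Matou\v sek extrapolation. Choosing $m_n\simeq(2\rho_{\psi_H}(n))^{p/(p-1)}$ balances the base and lamp contributions to the Dirichlet energy, and Lemma~\ref{spectral-compression} (with the mass-distribution condition checked directly on the explicit lamplighter graph) gives the stated bound at scale $t\simeq nm_n=\tau(n)$. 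So the missing idea in your proposal is not technical bookkeeping about reflected walks and boundary corrections --- it is the reduction to a two-point lamp built on a maximally-distorted witness element, which simultaneously makes the relaxation time computable and plants the distortion of $\psi_H$ into the Dirichlet energy.
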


\begin{proof}

By definition of the compression function $\rho_{\psi_{H}}$, for
every $n\ge1$, there exists an element $h_{n}\in H$ such that $\left|h_{n}\right|_{H}\ge n$
and 
\[
\left\Vert \psi_{H}\left(h_{n}\right)-\psi_{H}\left(e_{H}\right)\right\Vert _{p}\le2\rho_{\psi_{H}}(n).
\]
Take $X_{n}$ to be a finite subset in $G$ defined by 
\[
X_{n}=\left\{ \left(f,z\right):\ z\in\left[0,m_{n}-1\right],\ \begin{array}{c}
f(x)\in\left\{ e_{H},h_{n}\right\} \ \mbox{for }x\in\left[0,m_{n}-1\right]\\
f(x)=e_{H}\ \mbox{for }x\notin\left[0,m_{n}-1\right]
\end{array}\right\} .
\]
The length $m_{n}$ will be determined later. Note that $X_{n}$ has
the structure of a lamplighter graph, namely let $\mathcal{L}_{m_{n}}$
be lamplighter graph over segment $\left[0,m_{n}-1\right]$ defined
in Appendix \ref{stable}, then there is a bijection 
\begin{align*}
\sigma_{n}:\mathcal{L}_{m_{n}} & \to X_{n}\\
\sigma_{n}\left(f,x\right) & =\left(\tilde{f},x\right)\ \mbox{where }\tilde{f}(z)=h_{n}^{f(z)}.
\end{align*}
In Appendix \ref{stable} we defined a Markov transition kernel $\mathfrak{p}_{m_{n}}$
on $\mathcal{L}_{m_{n}}$ that moves on the base segment with a Cauchy-like
step distribution $\zeta_{m_{n}}$. On $X_{n}$, take the Markov kernel
$K_{n}$ to be $\mathfrak{p}_{m_{n}}\circ\sigma_{n}^{-1}$, that is
\[
K_{n}(u,v)=\mathfrak{p}_{m_{n}}\left(\sigma_{n}^{-1}(u),\sigma_{n}^{-1}(v)\right).
\]
Denote by $\pi_{n}$ the stationary distribution of $K_{n}$ on $X_{n}$,
$\pi_{n}=U_{m_{n}}\circ\sigma_{n}^{-1}$. Under the bijection $\sigma_{n}$,
the Poincaré inequality that $\Psi\circ\sigma_{n}:\mathcal{L}_{m_{n}}\to L_{p}$
satisfies as in Lemma \ref{lamplighter-poincare} implies 
\begin{equation}
\sum_{u,v\in X_{n}}\left\Vert \Psi(u)-\Psi(v)\right\Vert _{p}^{p}\pi_{n}(u)\pi_{n}(v)\le Cm_{n}\log m_{n}\sum_{u,v\in X_{n}}\left\Vert \Psi(u)-\Psi(v)\right\Vert _{p}^{p}K_{n}\left(u,v\right)\pi_{n}(u).\label{eq:Poincare-H-Z}
\end{equation}

Now we deduce an upper bound on $\rho_{\Psi}$ from Poincaré inequalities
(\ref{eq:Poincare-H-Z}) by applying Lemma \ref{spectral-compression}.
Because of equivariance of $\Psi$, for any $u\in G$, 
\[
\left\Vert \Psi\left(u\cdot(h\delta_{0},0)\right)-\Psi(u)\right\Vert _{p}=\left\Vert \Psi\left((h\delta_{0},0)\right)-\Psi(e_{G})\right\Vert _{p}=\left\Vert \psi_{H}\left(h\right)-\psi_{H}(e_{H})\right\Vert _{p}.
\]
Recall that $K_{n}$ moves as a \textquotedbl{}switch-walk-switch\textquotedbl{}
transition kernel, by Hölder inequality, 
\begin{align*}
 & \sum_{u,v\in X_{n}}\left\Vert \Psi(u)-\Psi\left(v)\right)\right\Vert _{p}^{p}K_{n}(u,v)\pi_{n}(u)\\
 & \le2\cdot3^{p-1}\left\Vert \psi_{H}(e_{H})-\psi_{H}(h_{n})\right\Vert _{p}^{p}\\
 & +3^{p-1}\sum_{(f,z)\in X_{n}}\sum_{y\in\left[0,m_{n}-1\right]}\left\Vert \Psi((f,z))-\Psi\left((f,z)\cdot\left(\mathbf{e}_{H},y\right)\right)\right\Vert _{p}^{p}\zeta_{m_{n}}(z,y)\pi_{n}((f,z))\\
 & \le3^{p}\left[\left(2\rho_{\psi_{H}}(n)\right)^{p}+\sum_{z,y\in[0,m_{n}-1]}\left|z-y\right|^{p}\zeta_{m_{n}}(z,y)\mathcal{C}_{m_{n}}(z)\right],
\end{align*}
where $\mathcal{C}_{m_{n}}(z)$ denotes the stationary distribution
of $\zeta_{m_{n}}$. The last step used the choice that $\left\Vert \psi_{H}\left(h_{n}\right)-\psi_{H}\left(e_{H}\right)\right\Vert _{p}\le2\rho_{\psi_{H}}(n)$
and the assumption that $\Psi$ is $1$-Lipschitz. From the explicit
formula that defines $\zeta_{m_{n}}$, we have for $p>1$, 
\[
\sum_{z,y\in[0,m_{n}-1]}\left|z-y\right|^{p}\zeta_{m_{n}}(z,y)\mathcal{C}_{m_{n}}(z)\le Cm_{n}^{p-1}.
\]
Set 
\[
m_{n}=\left\lceil \left(2\rho_{\psi_{H}}(n)\right)^{\frac{p}{p-1}}\right\rceil ,
\]
so that the two terms in the $L_{p}$-energy upper bound are comparable,
then 
\begin{equation}
\sum_{u,v\in X_{n}}\left\Vert \Psi(u)-\Psi\left(v)\right)\right\Vert _{p}^{p}K_{n}(u,v)\pi_{n}(u)\le3^{p}(1+C)m_{n}^{p-1}.\label{eq:gradient}
\end{equation}
From the explicit lamplighter structure, one checks that $\mbox{diam}_{G}(X_{n})=\left(2+\left|h_{n}\right|_{H}\right)m_{n}$,
for any $u\in X_{n}$, 
\begin{equation}
\sum_{v\in X_{n}}\mathbf{1}_{\left\{ d_{G}(u,v)\ge\frac{1}{2}\left(\left|h_{n}\right|_{H}+2\right)m_{n}\right\} }\pi_{n}(v)\ge\frac{1}{5}.\label{eq:H-Z-mass}
\end{equation}
Apply (\ref{eq:Poincare-H-Z}),(\ref{eq:gradient}),(\ref{eq:H-Z-mass})
in Lemma \ref{spectral-compression}, we have 
\[
\rho_{\Psi}\left(\frac{\left(2+\left|h_{n}\right|_{H}\right)m_{n}}{2}\right)\le\left(C'm_{n}^{p-1}\cdot m_{n}\log m_{n}\right)^{\frac{1}{p}}.
\]
Since $\left|h_{n}\right|_{H}\ge n$, it follows that there exists
$u,v\in X_{n}$, $d_{G}(u,v)\ge\frac{1}{2}\left(n+1\right)m_{n}$,
\[
\left\Vert \Psi(u)-\Psi(v)\right\Vert _{p}\le C'm_{n}\log^{\frac{1}{p}}m_{n},
\]
Note that by definition of $\tau$, $\tau(n)=\frac{1}{2}n\left(2\rho_{\psi_{H}}(n)\right)^{\frac{p}{p-1}}$,
with the choice of $m_{n}=\left\lceil \left(2\rho_{\psi_{H}}(n)\right)^{\frac{p}{p-1}}\right\rceil $,
the statement follows from rewriting the inequality 
\[
\rho_{\Psi}\left(\frac{1}{2}nm_{n}\right)\le C'm_{n}\log^{\frac{1}{p}}m_{n}.
\]

\end{proof}

The Markov type method can also be applied to this situation, it actually
yields more general results. We presented the proof for $L_{p}$-compression
of $G$ with $p\in(1,2]$ using the Poincaré inequalities because
spectral gap considerations motivate the choice of $\alpha$-stable
walk on the base with $\alpha=1$. Now we explain how to apply Markov
type method. Let $\left(\mathfrak{X},d_{\mathfrak{X}}\right)$ be
a metric space of Markov type $p$, $p>1$, $\Psi:H\wr\mathbb{Z}\to\mathfrak{X}$
be a 1-Lipschitz equivariant embedding. Make the same choice of a
sequence of finite subsets $X_{n}$ and $K_{n}$ as in the proof of
Proposition \ref{rho-Phi} with 
\[
m_{n}=\left\lceil \left(2\rho_{\psi_{H}}(n)\right)^{\frac{p}{p-1}}\right\rceil .
\]
Let $Z_{t}$ denote a Markov chain on $\mathcal{L}_{m_{n}}$ with
transition kernel $\mathfrak{p}_{m_{n}}$, then $\widetilde{Z}_{t}=\sigma_{n}\left(Z_{t}\right)$
is a Markov chain on $X_{n}$ with transition kernel $K_{n}$. Run
the Markov chain $\tilde{Z}_{t}$ up to time $t_{n}=m_{n}\log m_{n}$.
To apply Lemma \ref{Markov-type-compression}, we need a lower bound
for $\mathbf{E}_{\pi_{n}}\left[d_{G}\left(\widetilde{Z}_{t_{n}},\widetilde{Z}_{0}\right)^{p}\right]$.
The bijection $\sigma_{n}:\mathcal{L}_{m_{n}}\to X_{n}$ induces a
metric on $\mathcal{L}_{m_{n}}$ by 
\[
d_{\sigma_{n}}(u,v)=d_{G}\left(\sigma_{n}(u),\sigma_{n}(v)\right).
\]
Direct inspection shows that this metric $d_{\sigma_{n}}$ coincide
with the metric $d_{\mathbf{w}_{n}}$ with $\mathbf{w}_{n}=\left(1,\left|h_{n}\right|_{H}\right)$
introduced in Appendix \ref{stable}. By Lemma \ref{C2}, we have that for
$t_{n}=m_{n}\log m_{n}$, 
\begin{align*}
\mathbf{E}_{\pi_{n}}\left[d_{G}\left(\widetilde{Z}_{t_{n}},\widetilde{Z}_{0}\right)^{p}\right] & =\mathbf{E}_{U_{\alpha,m_{n}}}\left[d_{\mathbf{w}_{n}}\left(Z_{t_{n}},Z_{0}\right)^{p}\right]\\
 & \ge\mathbf{E}_{U_{\alpha,m_{n}}}\left[d_{\mathbf{w}_{n}}\left(Z_{t_{n}},Z_{0}\right)\right]^{p}\\
 & \ge\left(c\left(1+\left|h_{n}\right|_{H}\right)m_{n}\right)^{p}.
\end{align*}
In the proof of Proposition \ref{rho-Phi} we checked that 
\begin{align*}
\mathbf{E}_{\pi_{n}}\left[d_{\mathfrak{X}}\left(\Psi\left(\widetilde{X}_{1}\right),\Psi\left(\widetilde{X}_{0}\right)\right)^{p}\right] & =\sum_{u,v\in X_{n}}d_{\mathfrak{X}}\left(\Psi(u),\Psi(v)\right)^{p}K_{n}(u,v)\pi_{n}(v)\\
 & \le3^{p}\left[\left(2\rho_{\psi_{H}}(n)\right)^{p}+Cm_{n}^{p-1}\right].
\end{align*}
Choose $m_{n}=\left\lceil \left(2\rho_{\psi_{H}}(n)\right)^{\frac{p}{p-1}}\right\rceil $
and plug in these estimates into Lemma \ref{Markov-type-compression},
we have 
\[
\rho_{\Psi}\left(\frac{c}{2}\left(1+\left|h_{n}\right|_{H}\right)m_{n}\right)\le C'M_{p}\left(\mathfrak{X}\right)m_{n}\log^{\frac{1}{p}}m_{n}.
\]
Recall that $\left|h_{n}\right|_{H}\ge n$, therefore 
\[
\rho_{\Psi}\left(\frac{c}{2}nm_{n}\right)\le C'M_{p}\left(\mathfrak{X}\right)m_{n}\log^{\frac{1}{p}}m_{n}.
\]
The result given by the Markov type method is recorded in the following
proposition. By \cite{Naor2006}, $L_{p}$ space with $p>2$ has Markov
type $2$. Proposition \ref{rho-Phi-Markov} applies to $L_{p}$ with
$p>2$ as well, therefore is more general than Proposition \ref{rho-Phi}.

\begin{proposition}\label{rho-Phi-Markov}

There exists a constant $C>0$ such that the following holds. Let
$\left(\mathfrak{X},d_{\mathfrak{X}}\right)$ be a metric space of
Markov type $p$, $p>1$ and $\Psi:H\wr\mathbb{Z}\to\mathfrak{X}$ be
a $1$-Lipschitz equivariant embedding. The compression function $\rho_{\Psi}$
satisfies 
\begin{equation}
\rho_{\Psi}(t)\le C\left(\frac{p}{p-1}\right)^{\frac{1}{p}}M_{p}\left(\mathfrak{X}\right)\left(2\rho_{\psi_{H}}\circ\tau^{-1}(t)\right)^{\frac{p}{p-1}}\log^{\frac{1}{p}}\left(2\rho_{\psi_{H}}\circ\tau^{-1}(t)\right),\label{eq:rho-G-H-3}
\end{equation}
where $\psi_{H}$ the induced embedding of the subgroup $H$ into
$\mathfrak{X}$ as in (\ref{eq:induce-H}), $M_{p}\left(\mathfrak{X}\right)$
is the Markov-type $p$ constant of $\mathfrak{X}$, and the function
$\tau:\mathbb{R}_{+}\to\mathbb{R}_{+}$ is defined as 
\[
\tau(x)=\frac{1}{C}x\left(2\rho_{\psi_{H}}(x)\right)^{\frac{p}{p-1}}.
\]

\end{proposition}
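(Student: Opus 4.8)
The plan is to re-run the Markov type method on the very same family of finite subsets and Markov kernels used in the proof of Proposition~\ref{rho-Phi}, simply replacing the spectral (Poincar\'e) input by the Markov type $p$ inequality. For each $n$ choose $h_n\in H$ with $|h_n|_H\ge n$ and $\|\psi_H(h_n)-\psi_H(e_H)\|_{\mathfrak{X}}\le 2\rho_{\psi_H}(n)$, and let $X_n\subset H\wr\mathbb{Z}$ be the lamplighter-type subset over the segment $[0,m_n-1]$ whose lamp states lie in $\{e_H,h_n\}$. Transporting the stable-like ``switch-walk-switch'' kernel $\mathfrak{p}_{m_n}$ of Appendix~\ref{stable} along the bijection $\sigma_n:\mathcal{L}_{m_n}\to X_n$ produces a reversible kernel $K_n$ on $X_n$ with uniform stationary measure $\pi_n$; the segment length $m_n$ is the only free parameter and will be fixed at the end as $m_n=\lceil(2\rho_{\psi_H}(n))^{p/(p-1)}\rceil$.

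Next I would apply Lemma~\ref{Markov-type-compression} to $(X_n,K_n)$ with the chain $\widetilde Z_t=\sigma_n(Z_t)$ run for $t_n=m_n\log m_n$ steps, for which two estimates are needed. For the single-step energy, equivariance of $\Psi$ together with the switch-walk-switch structure of $K_n$, H\"older's inequality, and the bound $\sum_{z,y\in[0,m_n-1]}|z-y|^p\zeta_{m_n}(z,y)\mathcal{C}_{m_n}(z)\le Cm_n^{p-1}$ on the $p$-th moment of one Cauchy-like base step give $\mathbf{E}_{\pi_n}[d_{\mathfrak{X}}(\Psi(\widetilde Z_1),\Psi(\widetilde Z_0))^p]\le 3^p[(2\rho_{\psi_H}(n))^p+Cm_n^{p-1}]$, exactly as in Proposition~\ref{rho-Phi}. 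For the displacement, the restriction of the metric of $H\wr\mathbb{Z}$ to $X_n$ coincides with the weighted lamplighter metric $d_{\mathbf{w}_n}$, $\mathbf{w}_n=(1,|h_n|_H)$, of Appendix~\ref{stable}, so Jensen's inequality and Lemma~\ref{C2} yield $\mathbf{E}_{\pi_n}[d_{H\wr\mathbb{Z}}(\widetilde Z_{t_n},\widetilde Z_0)^p]\ge(c(1+|h_n|_H)m_n)^p$. Plugging these into Lemma~\ref{Markov-type-compression} and using $\mbox{diam}_{H\wr\mathbb{Z}}(X_n)=(2+|h_n|_H)m_n$ gives $\rho_\Psi(\tfrac{c}{2}(1+|h_n|_H)m_n)\le C'M_p(\mathfrak{X})(t_nm_n^{p-1})^{1/p}=C'M_p(\mathfrak{X})\,m_n\log^{1/p}m_n$.

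Finally, with the balancing choice of $m_n$ and using $|h_n|_H\ge n$, the argument $\tfrac{c}{2}(1+|h_n|_H)m_n$ is bounded below by a constant multiple of $\tau(n)=\tfrac{1}{C}n(2\rho_{\psi_H}(n))^{p/(p-1)}$; substituting $n=\tau^{-1}(t)$, so that $m_n$ is comparable to $(2\rho_{\psi_H}\circ\tau^{-1}(t))^{p/(p-1)}$, rewrites the last display as (\ref{eq:rho-G-H-3}), and monotonicity of $\rho_\Psi$ covers the values of $t$ not of the form $\tau(n)$. The main obstacle is the displacement lower bound: one must know, through Lemma~\ref{C2}, that the $\alpha=1$ stable walk on a lamplighter over a segment of length $m_n$ has, by time $m_n\log m_n$, already driven the cursor across a constant fraction of the segment and hence switched on order $m_n$ of the heavy lamps---everything else is the same bookkeeping as in Proposition~\ref{rho-Phi}, with $1/\lambda_p(X_n,K_n,\mathfrak{X})$ replaced by $M_p(\mathfrak{X})^p t_n$. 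As a bonus, since $L_q$ with $q>2$ has Markov type $2$ by \cite{Naor2006}, the proposition also applies to those target spaces, which the spectral argument of Proposition~\ref{rho-Phi} does not reach.
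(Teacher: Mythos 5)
Your proposal follows the paper's argument exactly: the same finite subsets $X_n$ carrying the Cauchy-like lamplighter kernel transported by $\sigma_n$, the same choices $m_n=\lceil(2\rho_{\psi_H}(n))^{p/(p-1)}\rceil$ and $t_n=m_n\log m_n$, the same single-step $p$-energy estimate inherited from the proof of Proposition~\ref{rho-Phi}, and the same $t_n$-step displacement lower bound (via Jensen's inequality and the speed of the $\alpha=1$ stable lamplighter walk) fed into Lemma~\ref{Markov-type-compression}. One small correction: the estimate $\mathbf{E}_{\pi_n}\bigl[d_{H\wr\mathbb{Z}}(\widetilde Z_{t_n},\widetilde Z_0)\bigr]\ge c(1+|h_n|_H)m_n$ at time $t_n=m_n\log m_n$ is supplied by Lemma~\ref{cauchy-speed}, not by Lemma~\ref{C2} (which only controls the one-step $p$-moment and a mass-distribution sum); the paper's own exposition contains the same reference slip.
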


\subsection{$L_{p}$-compression exponent of $H\wr\mathbb{Z}$\label{sub: embedding-wr}}

For the lower bound in the $L_{p}$-compression gap of $H\wr\mathbb{Z}$,
$p\in[1,2]$, we use the embedding constructed in \cite[Theorem 3.3 ]{Naor2008}.
An explicit description of an equivariant embedding is included here
as a warm-up for Section \ref{sec:dihedral-compression}. Given a
good equivariant embedding $\varphi$ of the group $H$ into $L_{p}$
, 
\[
\left\Vert \varphi(h_{1})-\varphi(h_{2})\right\Vert _{p}\ge\rho_{-}\left(d_{H}\left(h_{1},h_{2}\right)\right),
\]
we exhibit an embedding of $G=H\wr\mathbb{Z}$ into $L_{p}$ with
$\varphi$ as building blocks. Recall that for an element $(f,z)\in G$,
the word distance is given by 
\[
\left|(f,z)\right|_{G}=\left|\omega\right|+\sum_{x\in\mathbb{Z}}\left|f(x)\right|_{H},
\]
where $\omega$ is a path of shortest length that starts at $0$,
visits every point $x$ in the support of $f$, and ends at $z$.
We refer to such a path a traveling salesman path for $(f,z)$. The
embedding of $G$ into $L_{p}$ consists of two parts: part I captures
the length of $\omega$ and part II embeds the lamp configurations
$\{f(x)\}_{x\in\mathbb{Z}}$ using the embedding of $H$ into $L_{p}$.

\medskip{}

Part I: this part is essentially the same as an embedding of $\mathbb{Z}_{2}\wr\mathbb{Z}$
into $L_{p}$. Consider the following sequence of functions on $G=H\wr\mathbb{Z}$,
\begin{equation}
\phi_{n}\left(\left(f,z\right)\right)=\mathbf{1}_{\left\{ \mbox{supp}f\subseteq\left[-2^{n},2^{n}\right]\right\} }\max\left\{ 1-\frac{|z|}{2^{n}},0\right\} .\label{eq:base-test-function}
\end{equation}
Note that if $g\in G$ is in the support of $\phi_{n}$, then 
\[
\left|\omega_{g}\right|\le4\cdot2^{n},
\]
where $\omega_{g}$ denote a traveling salesman path for $g$. Also
\[
\left(\mbox{supp}\phi_{n}\right)\left(\mbox{supp}\phi_{n}\right)^{-1}\subseteq\mbox{supp}\phi_{n+1}.
\]
As in \cite{Tessera2011}, given a non-decreasing function $\gamma:\mathbb{N}\to\mathbb{R}_{+}$
with $\gamma(1)=1$ and $\sum_{t=0}^{\infty}\left(\frac{1}{\gamma(t)}\right)^{p}=C_{p}(\gamma)<\infty,$
take a cocycle $b_{\gamma}:G\to L_{p}$ defined as 
\begin{equation}
b_{\gamma}\left(g\right)=\bigoplus_{n=1}^{\infty}\frac{1}{\gamma(n)}\left(\frac{\tau_{g}\phi_{n}-\phi_{n}}{\mathcal{E}_{p}\left(\phi_{n}\right)^{1/p}}\right),\label{eq:b-0}
\end{equation}
where $\tau_{g}$ denotes the right translation of a function $\phi:G\to\mathbb{R}$,
\[
\tau_{g}\phi(u)=\phi\left(ug^{-1}\right).
\]
Because of the normalization in the definition of $b_{\gamma}$, one
readily checks that $b_{\gamma}$ is $\left(2C_{p}(\gamma)\right)^{\frac{1}{p}}$-Lipschitz.

Given $g=(f,x)\in G$, we say a path $\omega$ on $\mathbb{Z}$ is
a traveling salesman path for $g$ if it starts at $0$, visits every
$z\in\mathbb{Z}$ where $f(z)\neq e_{H}$$ $and end at $x$. Let
$\omega_{g}$ be a shortest traveling salesman path for $g$. Suppose
$\left|\omega_{g}\right|>2^{n+3}$, then $g\notin\left(\mbox{supp}\phi_{n}\right)\left(\mbox{supp}\phi_{n}\right)^{-1}$,
it follows that in this case 
\[
\frac{\left\Vert \tau_{g}\phi_{n}-\phi_{n}\right\Vert _{p}^{p}}{\mathcal{E}_{p}\left(\phi_{n}\right)}=\frac{2\left\Vert \phi_{n}\right\Vert _{p}^{p}}{\mathcal{E}_{p}\left(\phi_{n}\right)}\ge\left(2^{n}\right)^{p}.
\]
Putting the components together, we have that 
\[
\left\Vert b_{\gamma}(g)\right\Vert _{p}\ge\frac{\left|\omega_{g}\right|}{8\gamma\left(\log_{2}\left|\omega_{g}\right|\right)}.
\]

Part II: Let $\varphi:H\to L_{p}$ be a 1-Lipschitz equivariant embedding
of $H$ into $L_{p}$. Define a map $\Xi_{\varphi}:G\to L_{p}$ as
\[
\Xi_{\varphi}((f,z))=\bigoplus_{x\in\mathbb{Z}}\varphi\left(f(x)\right).
\]
Since the map $\Xi_{\varphi}$ factors through the projection $G\to\oplus_{x\in\mathbb{Z}}H$
and $\varphi$ is equivariant, it follows that $\Xi_{\varphi}$ is
a cocycle. By construction, $\Xi_{\varphi}$ is $1$-Lipschitz and
\[
\left\Vert \Xi_{\varphi}((f,z))\right\Vert _{p}^{p}=\sum_{x\in\mathbb{Z}}\left\Vert \varphi(f(x))\right\Vert _{p}^{p}.
\]

Combine the two parts, define 
\begin{align}
\Phi:G & \to L_{p}\nonumber \\
\Phi(g) & =b_{\gamma}(g)\bigoplus\Xi_{\varphi}(g).\label{eq:embed-lamplighter}
\end{align}
Then we have $\Psi$ is $(2C_{p}(\gamma))^{1/p}$-Lipschitz and 
\begin{align*}
\left\Vert \Phi(g)\right\Vert _{p}^{p} & =\left\Vert b_{\gamma}(g)\right\Vert _{p}^{p}+\sum_{x\in\mathbb{Z}}\left\Vert \varphi(f(x))\right\Vert _{p}^{p}\\
 & \ge\frac{\left|\omega_{g}\right|^{p}}{8\gamma\left(\log_{2}\left|\omega_{g}\right|\right)}+c\sum_{x\in\mathbb{Z}}\rho_{-}\left(\left|f(x)\right|_{H}\right)^{p}.
\end{align*}

Let $p\in(1,2]$. The bounds (\ref{eq:induce-H}), (\ref{eq:rho-G-H-1})
and the embedding constructed above provide rather detailed information
about the $L_{p}$-compression gap of $G=H\wr\mathbb{Z}$ in terms
of $L_{p}$-compression gap of $H$. We now derive the formula relating
the $L_{p}$-compression exponents of $G$ and $H$ stated in Theorem
\ref{HwrZformula}.

\begin{proof}[Proof of Theorem \ref{HwrZformula}]

We first treat the case $p\in(1,2]$. Let $\Psi:G\to L_{p}$ be a
1-Lipschitz equivariant embedding of $G=H\wr\mathbb{Z}$ into $L_{p}$ and
$\psi_{H}$ its induced embedding $H\hookrightarrow L_{p}$. From
(\ref{eq:rho-G-H-1}), it is always true that $\alpha_{p}^{\#}(H\wr\mathbb{Z})\le\alpha_{p}^{\#}(H).$
Now we apply Proposition \ref{rho-Phi} to prove the upper bound.
By definition of compression exponent, there exists constant $C=C\left(\psi_{H}\right)>0$
and an increasing sequence $n_{i}\in\mathbb{N}$ with $n_{i}\to\infty$
such that 
\[
\rho_{\psi_{H}}(n_{i})\le Cn_{i}^{\alpha_{p}^{\#}(H)}.
\]
Along the sequence $\left\{ n_{i}\right\} $, by Proposition \ref{rho-Phi},
\[
\rho_{\Psi}(n_{i})\le C_{1}(C,p)n_{i}^{\frac{p\alpha}{p-1+\alpha p}}\log^{\frac{1}{p}}\left(n_{i}^{\frac{p\alpha}{p-1+\alpha p}}\right),
\]
where $\alpha=\alpha_{p}^{\#}(H)$ and $C_{1}(C,p)$ is a constant
depending on $C$ and $p$. Therefore 
\[
\alpha_{p}^{\#}(H\wr\mathbb{Z})\le\frac{\alpha_{p}^{\#}(H)}{\alpha_{p}^{\#}(H)+\left(1-\frac{1}{p}\right)}.
\]
We have proved the upper bound.

Note that if $\alpha_{p}^{\#}(H)=0$, then $\alpha_{p}^{\#}(G)=0$.
Thus in the lower bound direction, we consider the case $\alpha_{p}^{\#}(H)>0$.
For any $0<\varepsilon<\alpha_{p}^{\#}(H)$, let $\varphi:H\to L_{p}$
be a 1-Lipschitz equivariant embedding such that 
\[
\rho_{\varphi}(t)\ge(ct)^{\alpha_{p}^{\#}(H)-\varepsilon},
\]
and set $\gamma(n)=\log^{\frac{1+\varepsilon}{p}}(1+n)$. Take the
embedding $\Phi:G\to L_{p}$ as defined in (\ref{eq:embed-lamplighter}),
then 
\begin{align*}
\left\Vert \Phi(g)\right\Vert _{p}^{p} & =\left\Vert b_{\gamma}(g)\right\Vert _{p}^{p}+\sum_{x\in\mathbb{Z}}\left\Vert \varphi(f(x))\right\Vert _{p}^{p}\\
 & \ge\frac{\left|\omega_{g}\right|^{p}}{8\log^{1+\varepsilon}\left(1+\left|\omega_{g}\right|\right)}+\sum_{x\in\mathbb{Z}}\left(c\left|f(x)\right|_{H}\right)^{\left(\alpha_{p}^{\#}(H)-\varepsilon\right)p}.
\end{align*}
Such an embedding $\Phi$ is analyzed in \cite{Naor2008}, from the
proof of \cite[Theorem 3.3 ]{Naor2008}, after sending $\varepsilon\to0$,
we have in the case $\alpha_{p}^{\#}(H)\le\frac{1}{p}$, $\alpha_{p}(\Phi)\ge\alpha_{p}^{\#}(H)$;
in the case $\alpha_{p}^{\#}(H)>\frac{1}{p}$, 
\[
\alpha_{p}^{\#}(\Phi)\ge\frac{\alpha_{p}^{\#}(H)}{\alpha_{p}^{\#}(H)+1-1/p}.
\]
Finally when $p=1$, from $\rho_{\Psi}(t)\le\rho_{\psi_{H}}(t)$ and
the explicit embedding $\Phi:G\to L_{1}$ given a good embedding $\varphi:H\to L_{1}$,
we deduce that 
\[
\alpha_{1}^{\#}(G)=\alpha_{1}^{\#}(H).
\]

\end{proof}

\section{Compression of $\Delta$ with dihedral groups\label{sec:dihedral-compression}}

Throughout this section, $\Delta$ denotes a diagonal
product with $\Gamma_{s}=D_{2l_{s}}$. In \cite[Section 5]{Austin2011},
Austin remarked that compression upper bounds from classical Poincaré
inequalities and Markov type inequalities can be viewed as related
to random walks, and it would be interesting to find examples of finitely
generated amenable groups for which obstructions genuinely unrelated
to inequalities concerning random walks are needed. Austin conjectured
that a group with a sequence of cubes $\left(\mathbb{Z}_{m}^{n},\ell^{\infty}\right)$
embedded would be a candidate for such type of obstructions. In some
sense our construction of diagonal product $\Delta$ with dihedral
groups realizes this idea. Because of the presence of $\ell^{\infty}$-cubes
of growing sizes in $\Delta$, we apply deep results of Mendel-Naor
\cite{Mendel2008} to estimate distortion of these finite subsets.
The main result of this section is the following.

\begin{theorem}\label{cotype2}

Let $\Delta$ be the diagonal product with $\Gamma_{s}=D_{2l_{s}}$
and parameters $( k_{s}) $, set 
\[
\theta:=\limsup_{s\to\infty}\frac{\log l_{s}}{\log k_{s}}.
\]
Assume that $( k_{s}) $ satisfies growth assumption
\ref{k_growth}. Then 
\begin{description}
\item [{(i)}] for $p\in[1,2]$, 
\[
\alpha_{p}^{\ast}\left(\Delta\right)=\max\left\{ \frac{1}{1+\theta},\frac{2}{3}\right\} .
\]

\item [{(ii)}] for $q\in[2,\infty)$, 
\begin{align*}
\alpha_{q}^{\ast}(\Delta) & =\frac{1}{1+\theta}\ \mbox{ if }0\le\theta\le\frac{1}{q}\\
\max\left\{ \frac{\theta+1-\frac{2}{q}}{\left(2-\frac{1}{q}\right)\theta+1-\frac{2}{q}},\frac{2}{3}\right\} \le\alpha_{q}^{\ast}\left(\Delta\right) & \le\frac{2\theta+1-\frac{2}{q}}{3\theta+1-\frac{2}{q}}\ \mbox{ if }\theta>\frac{1}{q}.
\end{align*}

\end{description}
\end{theorem}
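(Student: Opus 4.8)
The plan is to prove Theorem~\ref{cotype2} by combining two ingredients already set up in the paper: the metric description of $\Delta$ (Proposition~\ref{metric} and Lemma~\ref{embed-product}), which says that $\Delta$ contains scaled copies of $\ell^\infty$-cubes $\mathbb{Z}_{l_s/2}^{k_s/2}$ with bounded distortion, and the obstruction machinery of Section~\ref{obstructions}. For the \emph{upper bounds} on $\alpha_q^*(\Delta)$ I would feed these embedded cubes into two different Poincar\'e-type inequalities: the Mendel--Naor metric cotype inequality (Theorem~\ref{metric-cotype}) via the general compression bound \eqref{eq:Poincare-compression}, and, when $q\le 2$ (or for the $\frac{2}{3}$ term), the Markov type inequality (Lemma~\ref{Markov-type-compression}) applied to a switch-walk-switch chain on $\Pi_s^{k_s/2}$, exactly as in Section~\ref{HwrZ}. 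For the \emph{lower bounds} I would exhibit an explicit equivariant embedding into $L_q$ built from the two-part template of Section~\ref{sub: embedding-wr} (a Tessera-type cocycle $b_\gamma$ capturing the range, plus a lamp part $\Xi_\varphi$), now with $\varphi$ an optimal embedding of the cyclic lamp group $\Gamma_s'\simeq\mathbb{Z}_{l_s/2}$ and with the geometry rescaled by $k_s$; one then optimizes the Lipschitz/compression tradeoff over the parameters, using $\theta=\limsup \log l_s/\log k_s$.

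\textbf{Step-by-step plan.} First I would record, from Lemma~\ref{embed-product} and its Example, that for $t=k_s/2$ the subgroup $\Pi_s^{t}$ sits in $\Delta$ as the torus $\mathbb{Z}_{l_s/2}^{k_s/2}$ with $\ell^\infty$-metric dilated by a factor $\simeq k_s$, with diameter $\simeq k_s l_s$, and that it satisfies the $(p;\mathrm{diam}/72,\tfrac12)$-mass distribution condition. Second, for $\theta>1/q$ I would apply Theorem~\ref{metric-cotype} with $m\simeq l_s/2$, $n\simeq k_s/2$: since $C(\mathfrak{X})mn^{1/2-1/q}$ vs. $n^{1/2}$ is governed by whether $l_s \gtrless n^{1/q}\simeq k_s^{1/q}$, i.e. by $\theta$ vs. $1/q$, the cotype Poincar\'e constant becomes (roughly) $(k_s l_s^{?})^{?}$; plugging into \eqref{eq:Poincare-compression} with $\mathrm{diam}\simeq k_s l_s$ and the $\mathbf{b}$-energy bounded by $(k_s)^p$ (Lipschitz, step size $\simeq k_s$) gives $\rho_\Psi(k_s l_s)\lesssim$ a power of $k_s l_s$ whose exponent, after substituting $l_s\simeq k_s^\theta$, is exactly $\frac{2\theta+1-2/q}{3\theta+1-2/q}$. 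Taking $\limsup$ over $s$ and recalling $\alpha$ is a $\limsup$-type quantity yields the stated upper bound; the $\frac{2}{3}$ comes from also projecting to $\Delta_0\simeq(\mathbb{Z}/2\times\mathbb{Z}/2)\wr\mathbb{Z}$ and using $\alpha_q^\#(\mathbb{Z}_2\wr\mathbb{Z})\le\tfrac23$ (cf. the $\mathbb{Z}\wr\mathbb{Z}$ computation of Naor--Peres), so $\alpha_q^*(\Delta)\le\max\{\cdot,\tfrac23\}$. For $0\le\theta\le1/q$ and for all $p\in[1,2]$ the same cube, with $K$ a switch–walk–switch kernel whose base step is (for $p\le2$) a truncated $1$-stable walk as in Section~\ref{HwrZ}, gives via Lemma~\ref{Markov-type-compression} the cleaner bound $\frac{1}{1+\theta}$. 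Third, the matching lower bound: take $\gamma(n)=\log^{(1+\varepsilon)/q}(1+n)$, let $\varphi:\mathbb{Z}_{l_s/2}\to L_q$ be the optimal $L_q$-embedding of a cycle (compression $\simeq x^{\min\{1/q,\ ?\}}$ — for $q\ge2$ a cycle embeds into $L_q$ with compression exponent $1$ up to $\ell^\infty$, and with the $\ell^\infty$-structure one gets the exponent appearing in the $\theta+1-2/q$ formula), assemble $\Phi=b_\gamma\oplus\Xi_\varphi$ on $\Delta$ as in \eqref{eq:embed-lamplighter} (well-defined because the relevant functions are supported on finitely many factors by Fact~\ref{s_0}), check $\Phi$ is $C(\gamma)$-Lipschitz using Proposition~\ref{metric}, and estimate $\|\Phi(g)\|_q$ from below by $\frac{\mathrm{Range}(g)}{\gamma(\log\mathrm{Range}(g))}$ plus the rescaled lamp contribution; optimizing the balance between range-length and lamp-length along the scales $k_s,l_s$ reproduces the lower exponents, and $L_q$ for $q\ge2$ contains $\ell^2$ isometrically so the $\tfrac23$ examples (diagonal products with $l_s\simeq k_s$) also embed.

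\textbf{Main obstacle.} The hard part will be the $q>2$, $\theta>1/q$ regime, where the theorem only claims a sandwich $\frac{\theta+1-2/q}{(2-1/q)\theta+1-2/q}\le\alpha_q^*\le\frac{2\theta+1-2/q}{3\theta+1-2/q}$ rather than an equality: the upper bound needs the $K_q$-convexity constant $C(\mathfrak{X})$ in Theorem~\ref{metric-cotype} to enter only through multiplicative constants (fine, since it is fixed once $q$ is), but one must be careful that the metric cotype inequality is being applied to the \emph{induced} metric on the cube and that the "far pairs" $u, u+\tfrac m2 e_j$ really have $\Delta$-distance $\simeq k_s l_s$, which uses the lower bound half of Lemma~\ref{embed-product}. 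The lower bound in this regime is limited by the best available $L_q$-embedding of an $\ell^\infty$-torus, which is itself not known sharply — that is the genuine gap and I would not try to close it. A secondary technical point is bookkeeping the two sources of upper bound ($\Delta_0$-projection giving $\tfrac23$, and the embedded cube giving the $\theta$-dependent exponent) so that $\alpha_q^*$, $\alpha_q^\#$, and $\alpha_q^*$ for the specific group $\Delta_1$ in Theorem~\ref{dihedral-exponent} all come out as stated; I would handle $\Delta_1$ by choosing $(k_s),(l_s)$ with $l_s\simeq k_s^\theta$ for a suitable $\theta>0$ making $\frac{2\theta+1-2/q}{3\theta+1-2/q}>\tfrac23>\alpha_2^*$, and invoking $\alpha_q^*=\alpha_q^\#$ for amenable $\Delta_1$ via \cite[Theorem~1.6]{Naor2011}.
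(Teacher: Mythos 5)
Your plan identifies the right toolbox (embedded $\ell^{\infty}$-cubes, metric cotype, Markov type, Tessera-type cocycles) but there are two substantive gaps, one in the upper bound for $q>2$ and one in the lower bound.

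\textbf{Upper bound, $q>2$, $\theta>1/q$.} You assert that feeding the cube $\Pi_s^{k_s/2}$ into the metric cotype inequality via \eqref{eq:Poincare-compression} directly yields the exponent $\frac{2\theta+1-2/q}{3\theta+1-2/q}$. This is not what the computation gives. With $m\simeq l_s$, $n\simeq k_s$, $\Omega\simeq l_s$ (since $\theta>1/q$), diam $\simeq k_sl_s$, short-edge energy $\lesssim k_s^q$, and the $\mathbf a$-weighted sum $\simeq k_s\cdot(k_sl_s)^q$, one obtains $\rho_\varphi(k_sl_s)\lesssim k_s^{1-1/q}l_s$, i.e.\ exponent at most $\frac{\theta+1-1/q}{1+\theta}$. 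This is strictly \emph{weaker} than $\frac{2\theta+1-2/q}{3\theta+1-2/q}$ as soon as $\theta>1/q$ and $q>2$ (e.g.\ $q=4$, $\theta=1$ gives $7/8$ vs.\ $5/7$). The paper's Proposition~\ref{dihedral-compression-upper-2} overcomes this by a two-stage argument: first the cotype inequality singles out one distorted element $h_0^s$ per block with $\left\Vert\varphi(h_0^s)-\varphi(e)\right\Vert\lesssim l_sk_s^{1-1/q}$, then a lamplighter graph $L_m^s$ is built over $m$ blocks with these distorted elements as lamps, and Lemma~\ref{Markov-type-compression} (Markov type $2$ of $L_q$) is applied to the Cauchy walk on $L_m^s$. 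The two obstructions \emph{compose}; without this iteration the stated upper bound is not reached. A secondary issue: you suggest deriving the $2/3$ term by projecting onto $\Delta_0$, but $\Delta_0$ is a quotient, and quotients do not give compression \emph{upper} bounds; in the paper the $2/3$ comes from Proposition~\ref{distortion-one-block} with cotype $q=2$ once $\theta>1/2$.

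\textbf{Lower bound, $q>2$, $\theta>1/q$.} Your template $\Phi=b_\gamma\oplus\Xi_\varphi$ with $\varphi$ an embedding of $\Gamma_s'\simeq\mathbb Z_{l_s/2}$ is the $H\wr\mathbb Z$ construction, and it does not produce the exponent $\frac{\theta+1-2/q}{(2-1/q)\theta+1-2/q}$. The paper's embedding $\Phi_{\gamma,q}$ (Subsection 8.3) differs from it in two essential ways. First, it carries an explicit rescaling $k_s^{1-1/q}$ of the lamp term, which is exactly the factor that produces the $-2/q$ corrections in the exponent once one optimizes over the three regimes of $\ell(s)$ versus $R$. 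Second, at each site $y$ the image of $f_s(y)$ is a convex combination of two \emph{distinct} circle embeddings $\theta_{l_s}^{(a)}$, $\theta_{l_s}^{(b)}$ with weights $w_s(y-z)$ depending on the cursor position; this weight function is what makes the map Lipschitz (increments of $\alpha$, $\beta$ annihilated at the cursor/cursor$+k_s$) and still allows the Fact~\ref{dihedral-l} translation-invariance used in Lemma~\ref{dihedral-increment}. A naive $\Xi_\varphi$ with $\varphi=\theta_{l_s}$ is Lipschitz but does not carry the $k_s^{1-1/q}$ rescaling, so the balance between $b_\gamma$ and the lamp part does not close to the claimed exponent; with a rescaled $\varphi$ it fails to be Lipschitz without the $w_s$ device. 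For part (i) and the regime $\theta\le 1/q$ your plan is essentially the paper's.
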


When $p\in[1,2]$, the upper bound on $\alpha_{p}^{\ast}\left(\Delta\right)$
is a consequence of the Mendel-Naor metric cotype inequality cited
in Subsection \ref{sub:metric-cotype-inequalities}; in the lower
bound direction we construct an explicit embedding $\Delta\to\ell^{2}$.
The case of $p\in(2,\infty)$ is more involved. The proof is completed
in Subsection \ref{sub:Proof-cotype-2}.

Since $\Delta$ is 3-step solvable, in particular it is amenable,
by \cite[Theorem 1.6]{Naor2011}, 
\[
\alpha_{p}^{\ast}(\Delta)=\alpha_{p}^{\#}(\Delta)\mbox{ for all }p\in[1,\infty).
\]

\subsection{Upper bounds on compression}

We first explain why it's necessary to examine the distortion in a
block of side length $k_{s}$ in $\Delta_{s}$. For notational convenience,
assume that $( k_{s}) $, $( l_{s}) $ are
multiples of $4$. As in Subsection \ref{sub: embedded-metric}, consider
the subset $\Pi_{s}^{k_{s}/2}$ of $\Delta$ defined as in (\ref{eq:product-set}).
Note that $\Pi_{s}^{k_{s}/2}$ is isomorphic to the direct product
of $k_{s}/2$ copies of $D_{2l_{s}}'\simeq\mathbb{Z}_{l_{s}/2}$,
denote by $\vartheta_{s}:\mathbb{Z}_{l_{s}/2}^{k_{s}/2}\to\Pi_{s}^{k_{s}/2}$
the isomorphism. Write elements of $\Pi_{s}^{k_{s}/2}$ as vectors
$u=(u(0),\ldots,u(k_{s}-1)),$ $u(j)\in\mathbb{Z}_{l_{s}/2}$.

Now consider the induced metric on $\Pi_{s}^{k_{s}/2}$ of the word
metric $d_{\Delta}$ on $\Delta$, then by Lemma \ref{embed-product},
we have that for $u\in\Pi_{s}^{k_{s}/2}$, 
\[
\left|u\right|_{\Delta}\simeq_{72}k_{s}\max_{0\le j\le k_{s}/2-1}\left|u(j)\right|_{\mathbb{Z}/l_{s}\mathbb{Z}}.
\]
Therefore the induced metric $\left|\cdot\right|_{\Delta}$ on $\Pi_{s}^{k_{s}/2}$
can be viewed as the $\ell^{\infty}$ metric being dilated by~$k_{s}$.

Let $\left[m\right]_{\infty}^{k}$ denote the set $\left\{ 0,1,\ldots,m\right\} ^{k}$
equipped with the metric induced by $\ell^{\infty}$, 
\[
d_{\infty}(x,x')=\max_{0\le j\le k-1}\left|x_{j}-x_{j}'\right|,\ x=\left(x_{0},\ldots,x_{k-1}\right).
\]
Let $\mathfrak{X}$ be a Banach space of nontrivial type and cotype
$q$, then by \cite[Theorem 1.12]{Mendel2008}, there exists a constant
$c(\mathfrak{X},q)$ depending only on $\mathfrak{X}$ and $q$ such
that the distortion of embedding of $\left[m\right]_{\infty}^{k}$
into $\mathfrak{X}$ satisfies 
\[
c_{\mathfrak{X}}\left(\left[m\right]_{\infty}^{k}\right)\ge c(\mathfrak{X},q)\left(\min\left\{ k^{\frac{1}{q}},m\right\} \right).
\]

We now explain how to apply this distortion lower bound and the Austin
lemma to derive an upper bound on $\alpha_{\mathfrak{X}}^{\ast}(\Delta)$.
Define 
\[
m_{s}=\left\lfloor \min\left\{ k_{s}^{\frac{1}{q}},\frac{1}{4}l_{s}\right\} \right\rfloor ,
\]
and consider $\left\{ 0,1,\ldots,m_{s}\right\} $ as elements in $\mathbb{Z}_{l_{s}}$.
The grid $\left[m_{s}\right]_{\infty}^{k_{s}}$ is embedded in the
group $\Delta$ via the map $\vartheta_{s}$ . Let $\theta:=\limsup_{s\to\infty}\frac{\log l_{s}}{\log k_{s}},$
suppose $\theta\in(0,\infty)$. For any $\epsilon>0$, select a subsequence
$s_{n}$ such that $l_{s_{n}}\ge C_{\epsilon}k_{s_{n}}^{\theta-\epsilon}$
along this subsequence. To apply Lemma \ref{(Lemma-3.1-Austin)} using
the sequence of finite metric spaces $\left(\left[m_{s_{n}}\right]_{\infty}^{k_{s_{n}}},d_{\infty}\right)$,
we check that 
\begin{itemize}
\item $\mbox{diam}_{d_{\infty}}\left(\left[m_{s_{n}}\right]_{\infty}^{k_{s_{n}}}\right)=m_{s_{n}}+1$, 
\item the word distance on $\Delta$ relates to the metric $d_{\infty}$
on $\left[m_{s_{n}}\right]_{\infty}^{k_{s_{n}}}$ by 
\[
d_{\Delta}(\vartheta_{s_n}(u,o),\vartheta_{s_n}(u',0))\simeq_{72}k_{s_n}d_{\infty}(u,u').
\]
and 
\[
k_{s_n}\le C'_{\epsilon}\left(\mbox{diam}_{d_{\infty}}\left(\left[m_{s_{n}}\right]_{\infty}^{k_{s_{n}}}\right)\right)^{\min\left\{ q,\frac{1}{\theta-\epsilon}\right\} }.
\]

\item the distortion of $\left[m_{s_{n}}\right]_{\infty}^{k_{s_{n}}}$ satisfies
\[
c_{\mathfrak{X}}\left(\left[m_{s_{n}}\right]_{\infty}^{k_{s_{n}}}\right)\ge c(\mathfrak{X},q)m_{s_{n}}\ge\frac{c\left(\mathfrak{X},q\right)}{2}\mbox{diam}_{d_{\infty}}\left(\left[m_{s_{n}}\right]_{\infty}^{k_{s_{n}}}\right).
\]

\end{itemize}
Then by Lemma \ref{(Lemma-3.1-Austin)}, we have that if $\mathfrak{X}$
is a Banach space of nontrivial type and cotype $q$, then 
\begin{eqnarray}
\alpha_{\mathfrak{X}}^{\ast}(\Delta)\le1-\frac{1}{1+\min\left\{ q,\frac{1}{\theta}\right\} }=\max\left\{ \frac{1}{1+\theta},\frac{q}{1+q}\right\} .\label{Austin-upper-dihedral}
\end{eqnarray}

Note that the $\mathfrak{X}$-distortion of the grid $\left[m_{s_{n}}\right]_{\infty}^{k_{s_{n}}}$
selected is comparable to its $d_{\infty}$-diameter. Unlike the case
with $\{\Gamma_{s}\}$ taken to be expanders, the size of $m_{s}$
is constrained by $k_{s}^{\frac{1}{q}}$. We will see in Subsection
\ref{sub:compression-upper-dihderal} that when $q_{\mathfrak{X}}>2$
and $\mathfrak{X}$ is of Markov type $p<2$, this upper bound on
$\alpha_{\mathfrak{X}}^{\ast}(\Delta)$ can be improved. In the rest
of this subsection we give a more detailed description of distorted
elements and derive an upper bound for the compression functions.

\subsubsection{A first upper bound using metric cotype}

\begin{proposition}\label{distortion-one-block}

Let $\vartheta_{s}$ and $\Pi_{s}^{k_{s}/2}$ be introduced as above,
suppose $\mathfrak{X}$ is a Banach space of nontrivial type and cotype
$q$.  Then there
exists a constant $C=C(\mathfrak{X},q)$ such that for any $1$-Lipschitz
equivariant embedding $\varphi:\Delta\to\mathfrak{X}$,

\[
\rho_{\varphi}\left(\frac{1}{8}k_{s}\min\left\{ l_{s},k_{s}^{\frac{1}{q}}\right\} \right)\le C(\mathfrak{X},q)k_{s}.
\]

\end{proposition}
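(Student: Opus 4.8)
The plan is to realize a grid $[m_s]_\infty^{k_s/2}$ inside $\Delta$ via the embedding $\vartheta_s$, apply the Mendel--Naor metric cotype inequality (Theorem \ref{metric-cotype}) to the restriction of $\varphi$ to this grid, and read off the stated bound on $\rho_\varphi$ from the resulting Poincar\'e-type inequality together with the mass distribution property of $\Pi_s^{k_s/2}$ established in Lemma \ref{embed-product}. Concretely, set $m_s=\lfloor\min\{l_s/4,k_s^{1/q}\}\rfloor$ and identify $\{0,1,\dots,m_s\}$ with a subset of $\mathbb{Z}_{l_s/2}$; since $m_s\le l_s/4$, on this subset the $\mathbb{Z}_{l_s/2}$-metric agrees with the interval metric, and taking even integers if necessary we may regard $[m_s]_\infty^{k_s/2}$ as sitting inside $\mathbb{Z}_{l_s/2}^{k_s/2}\cong\Pi_s^{k_s/2}\subset\Delta$ with, by Lemma \ref{embed-product}, $|\vartheta_s(u)|_\Delta\simeq_{72}k_s\max_j|u(j)|$, i.e. the induced metric is $\ell^\infty$ dilated by $k_s$ up to a factor $72$.

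First I would invoke Theorem \ref{metric-cotype}: applying it to $f=\varphi\circ\vartheta_s$ restricted to the cube $\mathbb{Z}_{m}^{k_s/2}$ with $m\asymp m_s$ even (extend $\varphi\circ\vartheta_s$ periodically, or simply note the inequality is about finitely many points and holds on any $\mathbb{Z}_m^n$) gives
\[
\sum_{u}\sum_{j=1}^{k_s/2} \|f(u+\tfrac{m}{2}\mathbf e_j)-f(u)\|_{\mathfrak X}^{q}\,\pi(u)
\le \bigl(5\max\{C(\mathfrak X)m,(k_s/2)^{1/q}\}\bigr)^{q}\sum_u\sum_{\boldsymbol\varepsilon}\|f(u+\boldsymbol\varepsilon)-f(u)\|_{\mathfrak X}^{q}\sigma(\boldsymbol\varepsilon)\pi(u).
\]
By the choice $m\asymp m_s\le k_s^{1/q}$ the maximum on the right is $\asymp k_s^{1/q}$. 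On the right-hand side each step $u\mapsto u+\boldsymbol\varepsilon$ corresponds in $\Delta$ to moving finitely many lamps by one $\mathbb{Z}_{l_s/2}$-step, hence to a displacement of $d_\Delta$-length $O(k_s)$ (a single edge of $\Gamma_s'$ has length $\le 5$ in $\Gamma_s$, so length $\le C k_s$ in $\Delta$ by Lemma \ref{embed-product}); since $\varphi$ is $1$-Lipschitz, each summand is $\le (Ck_s)^q$. On the left-hand side each pair $(u,u+\tfrac m2\mathbf e_j)$ has $d_\infty$-distance $m/2\asymp m_s$, hence $d_\Delta$-distance $\asymp k_s m_s$, so $\|f(u+\tfrac m2\mathbf e_j)-f(u)\|_{\mathfrak X}\ge\rho_\varphi(c k_s m_s)$. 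Counting that the left sum has $\asymp (k_s/2)\cdot m^{k_s/2}$ terms and the right sum has $\asymp 3^{k_s/2}\cdot m^{k_s/2}\le 3\cdot\sigma$-weighted $m^{k_s/2}$ terms, the inequality becomes
\[
\tfrac{k_s}{2}\,\rho_\varphi(c k_s m_s)^q \;\le\; C(\mathfrak X,q)\, k_s^{q/q}\cdot (C k_s)^q\cdot\frac{k_s}{2}\cdot\frac{1}{k_s/2}\,,
\]
and after cleaning constants one gets $\rho_\varphi(c k_s m_s)\le C(\mathfrak X,q) k_s$, which after adjusting the constant in front of $k_s m_s$ (using subadditivity of $\rho_\varphi$, valid since $\varphi$ is equivariant) yields exactly $\rho_\varphi(\tfrac18 k_s\min\{l_s,k_s^{1/q}\})\le C(\mathfrak X,q)k_s$ because $k_s m_s\asymp k_s\min\{l_s,k_s^{1/q}\}$.

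The main obstacle I anticipate is bookkeeping rather than conceptual: making the passage from the metric cotype inequality on $\mathbb{Z}_m^n$ to a genuine \emph{compression} bound clean. Two points need care. First, Theorem \ref{metric-cotype} is stated for maps on $\mathbb{Z}_m^n$ with $m$ even chosen by the space; here I want to fix $m\asymp m_s$, so I should either check the inequality is monotone/robust in $m$ (it is, since enlarging $m$ only helps, and one can pad) or cite the scale-invariant reformulation. Second, the $\tfrac m2\mathbf e_j$-displacement must be honestly translated into a $d_\Delta$ lower bound: $d_\infty(u,u+\tfrac m2\mathbf e_j)=m/2$, and Lemma \ref{embed-product} gives $|\cdot|_\Delta\ge\tfrac12 k_s\max_j|\cdot(j)|_{\mathbb{Z}_{l_s/2}}$, so as long as $m/2\le l_s/2$ (guaranteed by $m_s\le l_s/4$) the $\mathbb{Z}_{l_s/2}$-distance equals $m/2$ and the lower bound $\ge\tfrac14 k_s m$ holds — this is where the hypothesis $m_s\le l_s/4$ is used. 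Everything else (the $1$-Lipschitz estimate on the $\boldsymbol\varepsilon$-steps, absorbing the term counting, the constant $\tfrac18$) is routine. I would also remark that for $p\in[1,2]$ one applies this with $\mathfrak X=L_p$, which has nontrivial type and cotype $q=2$, recovering the bound used in the proof of Theorem \ref{cotype2}(i).
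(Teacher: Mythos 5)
There is a genuine gap in your proposal: you apply the metric cotype inequality (Theorem \ref{metric-cotype}) directly to $f=\varphi\circ\vartheta_s$ on a cube $\mathbb{Z}_m^{k_s/2}$ regarded as sitting inside $\mathbb{Z}_{l_s/2}^{k_s/2}\cong\Pi_s^{k_s/2}$, and then claim that every $\boldsymbol\varepsilon$-step on the right-hand side produces a $\Delta$-displacement of order $k_s$, so that each right-hand summand is $\le(Ck_s)^q$. That estimate is false for wrap-around steps. Theorem \ref{metric-cotype} is a statement about the \emph{torus} $\mathbb{Z}_m^n$ with addition mod $m$; when $u_j=m-1$ and $\varepsilon_j=+1$ (or $u_j=0$ and $\varepsilon_j=-1$), the point $u+\boldsymbol\varepsilon$ has $j$-coordinate $0$ (resp.\ $m-1$) in $\mathbb{Z}_m$, which is at $\mathbb{Z}_m$-distance $1$, but when $m\le l_s/4$ the $\mathbb{Z}_{l_s/2}$-distance between $m-1$ and $0$ is $m-1$, so by Lemma \ref{embed-product} the corresponding $d_\Delta$-distance is of order $k_s m$, not $k_s$. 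These wrap-around terms ruin the right-hand side bound, and no periodic extension of $\varphi\circ\vartheta_s$ repairs this: any function on $\{0,\dots,m-1\}^n$ extended $m$-periodically is wildly non-Lipschitz on the torus near the seam. Rescaling the identification of $\mathbb{Z}_m$ into $\mathbb{Z}_{l_s/2}$ so that the torus structures agree fixes the wrap-around but makes each unit step cost $\asymp k_sl_s/m$ in $\Delta$, and the resulting inequality becomes vacuous.

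The missing ingredient, which the paper supplies, is the Mendel--Naor embedding of a torus into a grid: by \cite[Lemma 6.12]{Mendel2008}, $\mathbb{Z}_{2m}^{n}$ with the $\ell^\infty$ torus metric embeds with distortion at most $8$ into the grid $[m+1]_\infty^{2n}$. The paper takes $\psi_s:\mathbb{Z}_{2m}^{k_s/2}\to[m+1]_\infty^{k_s}$ with this property (note the dimension doubles from $k_s/2$ to $k_s$), applies Theorem \ref{metric-cotype} to $\tilde\varphi=\varphi\circ\vartheta_s\circ\psi_s$ on the torus $\mathbb{Z}_{2m}^{k_s/2}$, and only \emph{then} does the $1$-Lipschitzness of $\varphi$ plus Lemma \ref{embed-product} yield $\|\tilde\varphi(u+\boldsymbol\varepsilon)-\tilde\varphi(u)\|_{\mathfrak X}\le 72k_s$, because $\psi_s$ sends unit torus steps (including wrap-around) to grid displacements of bounded size. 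The rest of your argument — the choice $m\asymp\min\{l_s,k_s^{1/q}\}$, the lower bound $\tilde d_\Delta(u,u+m\mathbf e_{j_0})\ge\tfrac12 k_s m$ on the left, and the final averaging to extract an upper bound on $\rho_\varphi$ — is essentially what the paper does and is fine, but the proof does not close without the torus-to-grid step.
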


This proposition improves on (\ref{Austin-upper-dihedral}) as it
applies to functions. Its proof will also be useful to \textquotedbl{}locate\textquotedbl{}
the obstructions and derive afterwards a better upper bound.

\begin{proof}

Take $m\le l_{s}/4$ and consider $0,\ldots,m+1$ as elements of $\mathbb{Z}_{l_{s}/2}$.
By \cite[Lemma 6.12]{Mendel2008}, for each $\epsilon>0$, $\mathbb{\mathbb{Z}}_{2m}^{n}$
equipped with $\ell^{\infty}$ metric embeds with distortion $1+6\epsilon$
into $\left[m+1\right]_{\infty}^{(\left\lceil 1/\epsilon\right\rceil +1)n}$.
Take $\epsilon=1$ and fix a $1$-Lipschitz embedding 
\[
\psi_{s}:\mathbb{\mathbb{Z}}_{2m}^{k_{s}/2}\to\left[m+1\right]_{\infty}^{k_{s}}
\]
with distortion $c(\psi_{s})\le8$. Let $\tilde{d}_{\Delta}$ be the
induced metric by $d_{\Delta}$ on $\mathbb{\mathbb{Z}}_{2m}^{k_{s}/2}$
\[
\tilde{d}_{\Delta}(u,v)=d_{\Delta}\left(\vartheta_{s}\circ\psi_{s}(u),\vartheta_{s}\circ\psi_{s}(v)\right),
\]
and $\tilde{\varphi}$ be the induced embedding 
\[
\tilde{\varphi}=\varphi\circ\vartheta_{s}\circ\psi_{s}:\mathbb{\mathbb{Z}}_{2m}^{k_{s}/2}\to\mathfrak{X}.
\]
Let $U_{s}$ be the uniform measure on $\mathbb{\mathbb{Z}}_{2m}^{k_{s}/2}$
and $\sigma_{s}$ be the uniform measure on $\left\{ -1,0,1\right\} ^{k_{s}/2}$.
Let $\left\{ \mathbf{e}_{j}\right\} _{j=0}^{k_{s}/2-1}$ be the standard
basis of $\mathbb{R}^{k_{s}/2}$. Since $\mathfrak{X}$ is a $K$-convex
Banach space of cotype $q$, then by the metric cotype inequality
in \cite[Theorem 4.2]{Mendel2008} (reviewed in subsection \ref{sub:metric-cotype-inequalities}),
\begin{multline}
\sum_{j=0}^{k_{s}/2-1}\sum_{u\in\mathbb{\mathbb{Z}}_{2m}^{k_{s}/2}}\left\Vert \tilde{\varphi}(u)-\tilde{\varphi}\left(u+m\mathbf{e}_{j}\right)\right\Vert _{\mathfrak{X}}^{q}U_{s}(u)\\
\le\Omega^{p}\sum_{u\in\mathbb{\mathbb{Z}}_{2m}^{k_{s}/2}}\sum_{\varepsilon\in\{-1,0,1\}^{k_{s}/2}}\left\Vert \tilde{\varphi}(u)-\tilde{\varphi}\left(u+\sum_{j=0}^{k_{s}/2-1}\varepsilon_{j}\mathbf{e}_{j}\right)\right\Vert _{\mathfrak{X}}^{q}\sigma(\varepsilon)U_{s}(u),\label{eq:metric-cotype-dihedral}
\end{multline}
where 
\[
\Omega=5\max\left\{ C\left(\mathfrak{X},q\right)m,\left(\frac{k_{s}}{2}\right)^{\frac{1}{q}}\right\} .
\]
and $C\left(\mathfrak{X},q\right)$ is a constant that only depends
on the cotype constant and $K_{q}$-convexity constant of $\mathfrak{X}$.

Since $\varphi$ is $1$-Lipschitz, by Lemma \ref{embed-product}
we have 
\[
\left\Vert \tilde{\varphi}(u,0)-\tilde{\varphi}\left(u+\sum_{j=0}^{k_{s}/2-1}\varepsilon_{j}\mathbf{e}_{j},0\right)\right\Vert _{\mathfrak{X}}\le\tilde{d}_{\Delta}\left(u+\sum_{i=0}^{k_{s}/2-1}\varepsilon_{i}\mathbf{e}_{i},u\right)\le72k_{s}.
\]
Plug in (\ref{eq:metric-cotype-dihedral}), 
\[
\sum_{j=0}^{k_{s}/2-1}\sum_{u\in\left(\mathbb{Z}/2m\mathbb{Z}\right)^{k_{s}/2}}\left\Vert \tilde{\varphi}(u)-\tilde{\varphi}\left(u+m\mathbf{e}_{j}\right)\right\Vert _{\mathfrak{X}}^{q}U_{s}(u)\le\left(5\max\left\{ C(\mathfrak{X})m,k_{s}^{\frac{1}{q}}\right\} \right)^{q}\left(72k_{s}\right)^{q}.
\]
It follows there exists $u\in\left(\mathbb{Z}/2m\mathbb{Z}\right)^{k_{s}/2}$
and $j_{0}\in\left\{ 0,\ldots,k_{s}/2-1\right\} $ such that 
\[
\left\Vert \tilde{\varphi}(u)-\tilde{\varphi}\left(u+m\mathbf{e}_{j_{0}}\right)\right\Vert _{\mathfrak{X}}\le360\max\left\{ C(\mathfrak{X})mk_{s}^{1-\frac{1}{q}},k_{s}\right\} .
\]

To obtain the upper bound on $\rho_{\varphi}$, choose $m=\left\lfloor \frac{1}{4}\min\left\{ l_{s},k_{s}^{\frac{1}{q}}\right\} \right\rfloor $.
By Lemma \ref{embed-product}, 
\[
\tilde{d}_{\Delta}\left(u,u+m\mathbf{e}_{j_{0}}\right)\ge\frac{1}{2}mk_{s},
\]
it follows that 
\[
\rho_{\varphi}\left(\frac{1}{2}k_{s}m\right)\le360C(\mathfrak{X})k_{s}.
\]

\end{proof}

\begin{remark}\label{L1-cotype}

Since $L_{1}$ has trivial type, embeddings $\varphi:\Delta\hookrightarrow L_{1}$
is not covered by the lemma. However it is true that there exists
constant $C>0$ such that for $\varphi:\Delta\to L_{1}$ a 1-Lipschitz
embedding, then 
\[
\rho_{\varphi}\left(\frac{1}{C}k_{s}\min\left\{ l_{s},k_{s}^{\frac{1}{2}}\right\} \right)\le Ck_{s}.
\]
To see this, as pointed out in \cite[Remark 7.5]{Mendel2008}, since
$L_{1}$ equipped with the metric $\sqrt{\left\Vert x-y\right\Vert _{1}}$
is isomorphic to a subset of Hilbert space, \cite[Theorem 4.2]{Mendel2008}
applied to Hilbert space gives 
\begin{multline*}
\sum_{j=0}^{k_{s}/2-1}\sum_{u\in u\in\left(\mathbb{Z}/2m\mathbb{Z}\right)^{k_{s}/2}}\left\Vert \tilde{\varphi}(u)-\tilde{\varphi}\left(u+m\mathbf{e}_{j}\right)\right\Vert _{L_{1}}U_{s}(u)\\
\le C^{2}\max\left\{ m^{2},k_{s}\right\} \sum_{u\in u\in\left(\mathbb{Z}/2m\mathbb{Z}\right)^{k_{s}/2}}\sum_{\varepsilon\in\{-1,0,1\}^{k_{s}}}\left\Vert \tilde{\varphi}(u)-\tilde{\varphi}\left(u+\sum_{j=0}^{k_{s}-1}\varepsilon_{j}\mathbf{e}_{j}\right)\right\Vert _{L_{1}}\sigma(\varepsilon)U_{s}(u),
\end{multline*}
which implies the stated bound.

\end{remark}

\subsection{A more refined upper bound when $\mathfrak{X}$ has cotype $>2$
\label{sub:compression-upper-dihderal}}

In this section we develop an improvement of the compression upper
bound in Proposition \ref{distortion-one-block}. The idea is that
when $q_{\mathfrak{X}}>2$, we can further apply the Markov type method
to find obstruction in lamplighter graphs with elements in blocks
of side length $k_{s}$ considered as lamp configurations. The argument
is similar to the one for $H\wr\mathbb{Z}$ in Subsection \ref{sub:-compression-wr-Z}.

Let $\varphi:\Delta\to\mathfrak{X}$ be an equivariant 1-Lipschitz
embedding of the group $\Delta$ into $\mathfrak{X}$, assume that
$\mathfrak{X}$ is a Banach space of cotype $q$ and nontrivial type.
From the proof of Proposition \ref{distortion-one-block}, we have
that there exists an element $h_{0}^{s}=\vartheta_{s}\left(\frac{l_{s}}{2}\mathbf{e}_{j_{0}},0\right)$
satisfying $\left|h_{0}^{s}\right|_{\Delta}\ge\frac{1}{4}k_{s}l_{s}$
\begin{equation}
\left\Vert \varphi\left(h_{0}^{s}\right)-\varphi(e_{\Delta})\right\Vert _{\mathfrak{X}}\le C(\mathfrak{X},q)\max\left\{ l_{s}k_{s}^{1-\frac{1}{q}},k_{s}\right\} .\label{eq:v-j}
\end{equation}
The element $h_{0}^{s}$ is in the zero section of $\Delta_{s}$ and
it is supported at site $j_{0}$ in the interval $[0,k_{s}-1)$. Let
$h_{j}^{s}$ denote the translation of $h_{0}^{s}$ to the block $[jk_{s},(j+1)k_{s})$,
\[
h_{j}^{s}(x)=h_{0}^{s}(x-jk_{s}).
\]
Consider the following subset (not a subgroup) in $\Delta_{s}$ 
\[
L_{m}^{s}=\left\{ \left(f_{s},z\right):\ \begin{array}{c}
f_{s}\upharpoonright_{[jk_{s},(j+1)k_{s})}\in\left\{ \mathbf{0},h_{j}^{s}\upharpoonright_{[jk_{s},(j+1)k_{s})}\right\} ,\ 0\le j\le m-1\\
\mbox{supp}f_{s}\subseteq[0,mk_{s})\\
\ z\in\{0,k_{s},\ldots,(m-1)k_{s}\}.
\end{array}\right\} .
\]
Again $L_{m}^{s}$ can be naturally embedded in $\Delta$, we identify
it with its embedded image and consider $L_{m}^{s}$ as a subset of
$\Delta$. The subset $L_{m}^{s}$ has the structure of a lamplighter
graph over a segment, the lamp configuration is divided into blocks
of side length $k_{s}$, in each block it's either identically zero
or coincide with $h_{j}^{s}$. As explained in Subsection \ref{sub:-compression-wr-Z},
we can apply the Markov type method to derive a lower bound for distortion
of $L_{m}^{s}$.

\begin{proposition}\label{dihedral-compression-upper-2}

Let $\mathfrak{X}$ be a Banach space of cotype $q$ and Markov type
$p$ such that $2<q<\infty,\ p>1$. Then there exists a constant $C>0$
such that for any $1$-Lipschitz equivariant embedding $\varphi:\Delta\to\mathfrak{X}$, 
\begin{itemize}
\item if $l_{s}\le k_{s}^{\frac{1}{q}}$, 
\[
\rho_{\varphi}\left(\frac{1}{4}k_{s}l_{s}\right)\le C(\mathfrak{X},q)k_{s},
\]

\item if $l_{s}>k_{s}^{\frac{1}{q}}$, 
\end{itemize}
\[
\rho_{\varphi}\left(\frac{1}{2C}k_{s}l_{s}\left(l_{s}k_{s}^{-\frac{1}{q}}\right)^{\frac{p}{p-1}}\right)\le C\left(\frac{p}{p-1}\right)^{\frac{1}{p}}M_{p}\left(\mathfrak{X}\right)C(\mathfrak{X},q)k_{s}\left(l_{s}k_{s}^{-\frac{1}{q}}\right)^{\frac{p}{p-1}}\log^{\frac{1}{p}}\left(l_{s}k_{s}^{-\frac{1}{q}}\right).
\]

\end{proposition}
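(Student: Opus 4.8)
The idea is to combine the metric cotype obstruction inside a single block $\Pi_s^{k_s/2}$ (already exploited in Proposition \ref{distortion-one-block}) with the Markov type method on a lamplighter structure whose lamp group is that block, exactly following the scheme of Proposition \ref{rho-Phi} and Proposition \ref{rho-Phi-Markov} for $H\wr\mathbb{Z}$. Concretely, start from the element $h_0^s = \vartheta_s\!\left(\frac{l_s}{2}\mathbf{e}_{j_0},0\right)$ produced by the metric cotype inequality in the proof of Proposition \ref{distortion-one-block}: it satisfies $|h_0^s|_\Delta \ge \frac14 k_s l_s$ and, by (\ref{eq:v-j}), $\|\varphi(h_0^s)-\varphi(e_\Delta)\|_{\mathfrak X}\le C(\mathfrak X,q)\max\{l_s k_s^{1-1/q},k_s\}$. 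When $l_s\le k_s^{1/q}$ the second term dominates, the estimate reads $\|\varphi(h_0^s)-\varphi(e_\Delta)\|_{\mathfrak X}\le C(\mathfrak X,q)k_s$ while $d_\Delta(h_0^s,e_\Delta)\ge\frac14 k_s l_s$, and we are done immediately: this gives the first bullet.

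For the second bullet, assume $l_s>k_s^{1/q}$, so that $\|\varphi(h_0^s)-\varphi(e_\Delta)\|_{\mathfrak X}\le C(\mathfrak X,q)l_s k_s^{1-1/q}$, i.e.\ the induced embedding of the one-block subgroup compresses $h_0^s$ by a factor roughly $l_s k_s^{-1/q}$. Now form the subset $L_m^s\subset\Delta$ described just before the statement: it carries the structure of a lamplighter graph over a segment of length $m$, whose lamp at block $j$ is either trivial or equals $h_j^s$ (the translate of $h_0^s$ to $[jk_s,(j+1)k_s)$), and whose cursor moves in steps of $k_s$. As in Subsection \ref{sub:-compression-wr-Z}, equip $L_m^s$ with the $\alpha$-stable-like Markov kernel $\mathfrak p_m$ transported from $\mathcal L_m$ via the natural bijection $\sigma$; the word metric $d_\Delta$ restricted to $L_m^s$ is, up to bounded factors, the lamplighter metric $d_{\mathbf w}$ with weight $\mathbf w = (k_s, |h_0^s|_\Delta)$ (the cursor edge costs $\sim k_s$, flipping a block costs $\sim |h_0^s|_\Delta\simeq k_s l_s$) — this is where Proposition \ref{metric} and Lemma \ref{embed-product} are used. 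Then run the chain for time $t_m = m\log m$, apply the Markov type $p$ inequality for $\mathfrak X$ together with the Poincaré inequality for $\mathfrak p_m$ on $\mathcal L_m$ (Appendix \ref{stable}, as in Lemma \ref{lamplighter-poincare}/Lemma \ref{Markov-type-compression}), and choose $m = \left\lceil (l_s k_s^{-1/q})^{p/(p-1)}\right\rceil$ so that the ``salesman-path'' energy term $\sim m^{p-1}$ and the ``lamp'' energy term $\sim (\rho_{\psi}(\cdot))^p\simeq (l_s k_s^{-1/q})^p$ balance, where $\psi$ is the embedding of one block. The mass distribution condition is automatic from the lamplighter transitivity as in (\ref{eq:H-Z-mass}) and Lemma \ref{C2}, since $\mathbf E_{U_{\alpha,m}}[d_{\mathbf w}(Z_{t_m},Z_0)]\gtrsim (|h_0^s|_\Delta)\, m \simeq k_s l_s m$. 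Reading off the compression bound from Lemma \ref{Markov-type-compression} at the relevant scale $\sim k_s l_s m$ gives
\[
\rho_\varphi\!\left(\tfrac{1}{2C}k_s l_s (l_s k_s^{-1/q})^{p/(p-1)}\right)\le C\left(\tfrac{p}{p-1}\right)^{1/p} M_p(\mathfrak X)\, C(\mathfrak X,q)\, k_s (l_s k_s^{-1/q})^{p/(p-1)}\log^{1/p}(l_s k_s^{-1/q}),
\]
which is the asserted inequality.

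The main obstacle is bookkeeping the two nested scales cleanly: inside each block the relevant ``unit'' has $d_\Delta$-length $\sim k_s$ but represents a $\mathbb Z_{l_s/2}$-displacement of size $\sim l_s$ that is compressed to $\sim l_s k_s^{-1/q}$, and then the outer lamplighter treats each such block-element as a single lamp of cost $\sim k_s l_s$; one must verify carefully that the metric on $L_m^s$ induced from $d_\Delta$ is comparable (with constants independent of $s$ and $m$) to $d_{\mathbf w}$ with $\mathbf w=(k_s,|h_0^s|_\Delta)$, which requires invoking Proposition \ref{metric} to control cross-terms between different blocks and the fact (Assumption \ref{k_growth}) that the $(k_s)$ grow fast enough that blocks do not interfere. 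The Poincaré/Markov-type estimates themselves are exactly those already used for $H\wr\mathbb{Z}$, so the only genuinely new input is the metric cotype step providing the distorted lamp $h_0^s$, which we already have from (\ref{eq:v-j}); everything else is a transcription of Subsection \ref{sub:-compression-wr-Z} with $H$ replaced by the block $\Pi_s^{k_s/2}$ and with the extra normalization factor $k_s$ coming from the homothetic embedding of Lemma \ref{embed-product}.
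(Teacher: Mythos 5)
Your proposal is correct and follows essentially the same route as the paper's proof: deduce the first bullet directly from (\ref{eq:v-j}) (equivalently Proposition \ref{distortion-one-block}), then for $l_s>k_s^{1/q}$ build the lamplighter subset $L_m^s$ with $h_j^s$ as block-lamps, transport the $1$-stable kernel $\mathfrak{p}_m$ via $\sigma_m^s$, compare $d_\Delta$ on $L_m^s$ to $d_{\mathbf w}$ with $\mathbf w\simeq(k_s,k_s l_s)$ using Proposition \ref{metric}/Lemma \ref{embed-product}, invoke the Markov-type inequality (Lemma \ref{Markov-type-compression}) with the speed lower bound of Lemma \ref{cauchy-speed} and the energy bound of Lemma \ref{C2}, and balance by taking $m=\lceil (l_s k_s^{-1/q})^{p/(p-1)}\rceil$. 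This matches the paper's argument in both structure and choice of auxiliary lemmas.
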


\begin{proof}

The case where $l_{s}\le k_{s}^{\frac{1}{q}}$ is covered by Proposition
\ref{distortion-one-block}.

In the case where $l_{s}>k_{s}^{\frac{1}{q}}$, we apply the Markov
type method. Let $L_{m}^{s}$ be defined as above, there is a natural
bijection $\sigma_{m}^{s}$ between the lamplighter graph $\mathcal{L}_{m}$
over the segment $\left\{ 0,\ldots,m-1\right\} $ and $L_{m}^{s}$,
explicitly, 
\begin{align*}
\sigma_{m}^{s}:\mathcal{L}_{m} & \to L_{m}^{s},\\
\sigma_{m}^{s}(u,x) & =\left(f^{u},mx\right)\ \mbox{where }f^{u}\upharpoonright_{[jk_{s},(j+1)k_{s})}=\left(h_{j}^{s}\right)^{u(j)}.
\end{align*}
Let $\mathfrak{p}_{m}$ be the lamplighter kernel on $\mathcal{L}_{m}$
defined in Appendix C. Under the bijection $\sigma_{m}^{s}$, let
$K_{m}^{s}=\mathfrak{p}_{m}\circ\left(\sigma_{m}^{s}\right)^{-1}$
be the corresponding Markov kernel on $L_{m}^{s}$. Now we run the
Markov chain with transition kernel $K_{m}^{s}$ up to time $t=m\log m$.
Lemma \ref{Markov-type-compression} implies 
\begin{equation}
\rho_{\varphi}\left(\left(\frac{1}{2}\mathbf{E}_{\pi}d_{\Delta}\left(Z_{t},Z_{0}\right){}^{p}\right)^{\frac{1}{p}}\right)\le\left(2M_{p}^{p}(\mathfrak{X})t\mbox{diam}_{\Delta}\left(L_{m}^{s}\right)^{p}\frac{\mathbf{E}_{\pi}d_{\mathfrak{X}}\left(\varphi\left(Z_{1}\right),f\left(Z_{0}\right)\right){}^{p}}{\mathbf{E}_{\pi}d_{\Delta}\left(Z_{t},Z_{0}\right){}^{p}}\right)^{\frac{1}{p}},\label{eq:dihedral-Markov}
\end{equation}
where $\pi$ is the stationary distribution of $K_{m}^{s}$ and $Z_{t}$
is a stationary Markov chain on $L_{m}^{s}$ with transition kernel
$K_{m}^{s}$.

Now we estimate the quantities that appear in the inequality. Let
$d_{\Delta}$ be the metric on $\mathcal{L}_{m}$ induced by word
metric on $\Delta$, 
\[
d_{\Delta}(u,v)=d_{\Delta}\left(\sigma_{m}^{s}(u),\sigma_{m}^{s}(v)\right),
\]
then direct inspection shows that there exists an absolute constant
$C>0$ such that 
\[
\frac{1}{C}d_{\mathbf{w}}(u,v)\le d_{\Delta}(u,v)\le Cd_{\mathbf{w}}(u,v),\ \mathbf{w}=\left(k_{s},k_{s}l_{s}\right).
\]
It follows that $\mbox{diam}_{\Delta}\left(L_{m}^{s}\right)\le2C\left(k_{s}+k_{s}l_{s}\right)m$.
By Lemma \ref{cauchy-speed}, we have that for $t=m\log m$, 
\[
\mathbf{E}_{\pi_{n}}\left[d_{\Delta}\left(Z_{t},Z_{0}\right)^{p}\right]\ge\left(c\left(k_{s}+k_{s}l_{s}\right)m\right)^{p}.
\]
For the other term, using Lemma \ref{C2}  and (\ref{eq:v-j}) when $l_{s}^{q}>k_{s}$, we have
\begin{align*}
\mathbf{E}_{\pi}\left[d_{\mathfrak{X}}\left(\varphi\left(Z_{1}\right),\Psi\left(Z_{0}\right)\right)^{p}\right] & =\sum_{u,v\in X_{n}}d_{\mathfrak{X}}\left(\varphi(u),\varphi(v)\right)^{p}K_{n}(u,v)\pi_{n}(v)\\
 & \le3^{p}\left[Ck_{s}^{p}m^{p-1}+\left(C(\mathfrak{X})l_{s}k_{s}^{1-\frac{1}{q}}\right)^{p}\right].
\end{align*}
With the choice 
\[
m=\left\lceil \left(l_{s}k_{s}^{-\frac{1}{q}}\right)^{\frac{p}{p-1}}\right\rceil ,
\]
(\ref{eq:dihedral-Markov}) implies 
\[
\rho_{\Psi}\left(\frac{1}{2C}k_{s}l_{s}m\right)\le C\left(\frac{p}{p-1}\right)^{\frac{1}{p}}M_{p}\left(\mathfrak{X}\right)C(\mathfrak{X},q)k_{s}\left(l_{s}k_{s}^{-\frac{1}{q}}\right)^{\frac{p}{p-1}}\log^{\frac{1}{p}}\left(l_{s}k_{s}^{-\frac{1}{q}}\right).
\]

\end{proof}

The upper bound on compression function immediately yields the following
upper bound on compression exponent.

\begin{corollary}\label{cotype-q}

Let $\mathfrak{X}$ be a Banach space of cotype $q$ and Markov type
$p$ with $2<q<\infty$ and $p>1$. Let $\Delta$ be the diagonal
product with $\Gamma_{s}=D_{2l_{s}}$, 
\[
\theta:=\limsup_{s\to\infty}\frac{\log l_{s}}{\log k_{s}}.
\]
Then 
\[
\alpha_{\mathfrak{X}}^{\#}\left(\Delta\right)\le\begin{cases}
\frac{1}{1+\theta} & \mbox{ if }\theta\le\frac{1}{q}\\
\frac{p\theta+p-1-\frac{p}{q}}{(2p-1)\theta+p-1-\frac{p}{q}} & \mbox{ if }\theta>\frac{1}{q}.
\end{cases}
\]

\end{corollary}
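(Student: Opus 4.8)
The plan is to derive Corollary~\ref{cotype-q} directly from the compression function bounds in Proposition~\ref{dihedral-compression-upper-2}, taking the limsup of logarithms along a suitable subsequence. First I would fix $\epsilon>0$ and, using the definition of $\theta=\limsup_{s\to\infty}\frac{\log l_s}{\log k_s}$, select a subsequence $(s_n)$ along which $l_{s_n}\ge k_{s_n}^{\theta-\epsilon}$. Along this subsequence I distinguish the two regimes of the proposition.

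In the regime $\theta\le \frac1q$: here $l_{s_n}\le k_{s_n}^{1/q}$ holds for $n$ large (after absorbing $\epsilon$), so Proposition~\ref{dihedral-compression-upper-2} gives $\rho_\varphi(\tfrac14 k_{s_n} l_{s_n})\le C k_{s_n}$. Writing $t_n=\tfrac14 k_{s_n}l_{s_n}$, we have $\log t_n \sim (1+\theta-\epsilon)\log k_{s_n}$ and $\log\rho_\varphi(t_n)\le \log k_{s_n}+O(1)$, so $\frac{\log\rho_\varphi(t_n)}{\log t_n}\to \frac{1}{1+\theta-\epsilon}$ at worst; since this holds for every $1$-Lipschitz equivariant $\varphi$ and every $\epsilon>0$, letting $\epsilon\to 0$ yields $\alpha_{\mathfrak X}^{\#}(\Delta)\le\frac{1}{1+\theta}$. (One must note $\rho_\varphi$ is non-decreasing, so the bound at the chosen argument controls the exponent; and that the upper bound on an exponent only needs a single sequence of arguments tending to infinity.)

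In the regime $\theta>\frac1q$: for $n$ large, $l_{s_n}>k_{s_n}^{1/q}$, so the second bullet of Proposition~\ref{dihedral-compression-upper-2} applies. Set $t_n=\frac{1}{2C}k_{s_n}l_{s_n}\bigl(l_{s_n}k_{s_n}^{-1/q}\bigr)^{p/(p-1)}$. Using $l_{s_n}\simeq k_{s_n}^{\theta}$ (up to the $\epsilon$-slack), compute the exponents: the exponent of $k_{s_n}$ in $t_n$ is
\[
1+\theta+\tfrac{p}{p-1}\Bigl(\theta-\tfrac1q\Bigr),
\]
while the exponent of $k_{s_n}$ in the right-hand side $C(\tfrac{p}{p-1})^{1/p}M_p(\mathfrak X)C(\mathfrak X,q)k_{s_n}(l_{s_n}k_{s_n}^{-1/q})^{p/(p-1)}\log^{1/p}(\cdots)$ is
\[
1+\tfrac{p}{p-1}\Bigl(\theta-\tfrac1q\Bigr),
\]
the logarithmic factor being negligible at the level of exponents. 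Hence
\[
\alpha_{\mathfrak X}^{\#}(\Delta)\le \frac{1+\frac{p}{p-1}(\theta-\frac1q)}{1+\theta+\frac{p}{p-1}(\theta-\frac1q)}.
\]
Multiplying numerator and denominator by $(p-1)$ and simplifying gives $\frac{(p-1)+p(\theta-\frac1q)}{(p-1)+p(\theta-\frac1q)+(p-1)\theta}=\frac{p\theta+p-1-\frac pq}{(2p-1)\theta+p-1-\frac pq}$, the claimed expression. Again one sends $\epsilon\to0$ at the end; since the target exponent is continuous in $\theta$, the $\epsilon$-perturbation vanishes in the limit.

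The routine points are the two algebraic simplifications and the standard observation that $\rho_\varphi$ being non-decreasing and monotone lets a bound $\rho_\varphi(t_n)\le g(t_n)$ along a sequence $t_n\to\infty$ control $\alpha_{\mathfrak X}^{\#}$. The only place requiring care — the ``main obstacle'' — is bookkeeping the $\epsilon$-slack between $l_{s_n}$ and $k_{s_n}^{\theta}$: one has only $l_{s_n}\ge k_{s_n}^{\theta-\epsilon}$ along the subsequence, with no matching upper bound on $l_{s_n}$ in terms of $k_{s_n}^{\theta+\epsilon}$ in general, so I would instead phrase the argument so that only the \emph{lower} bound $l_{s_n}\ge k_{s_n}^{\theta-\epsilon}$ is used to make $t_n$ large and the right-hand side small in the favorable direction, and verify that the resulting ratio of exponents is monotone in the exponent of $l_{s_n}$ so that plugging in $\theta-\epsilon$ gives a valid (slightly weaker) upper bound that converges to the stated one as $\epsilon\to0$.
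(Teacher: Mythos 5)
Your argument is the intended one: the paper treats the corollary as an immediate consequence of Proposition~\ref{dihedral-compression-upper-2} and gives no separate proof, and your write-up fills in exactly the routine extraction of exponents from the two compression-function bounds, together with the correct algebraic simplification.

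Two small remarks. First, the obstacle you flag at the end is not actually there: the definition of $\limsup$ does give an upper bound as well, namely for every $\epsilon>0$ one has $l_s\le k_s^{\theta+\epsilon}$ for all sufficiently large $s$ (not only along the chosen subsequence), so the $\gamma_{s_n}:=\frac{\log l_{s_n}}{\log k_{s_n}}$ are sandwiched, $\theta-\epsilon<\gamma_{s_n}<\theta+\epsilon$, and you may simply pass to the limit $\gamma_{s_n}\to\theta$ in the exponent ratio without invoking monotonicity; your monotonicity remark is a valid alternative but is solving a problem that does not arise. Second, in the regime $\theta\le\frac1q$ your phrase ``$l_{s_n}\le k_{s_n}^{1/q}$ holds for $n$ large (after absorbing $\epsilon$)'' is not quite accurate at the boundary $\theta=\frac1q$, where $\gamma_{s_n}$ may approach $\frac1q$ from above infinitely often and the first bullet of Proposition~\ref{dihedral-compression-upper-2} need not apply; in that case one uses the second bullet instead, and since the two exponent formulas agree at $\theta=\frac1q$ (both giving $\frac{q}{q+1}$) the conclusion is unaffected. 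With these tidied up the proof is complete and matches what the authors intended.
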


\subsection{Explicit construction of embedding into $L_{q}$, $q\ge2$\label{sub:explicit-hilbert}}

We construct an embedding of $\Delta$ into $L_{q}$ in two parts,
similar to the map in Subsection \ref{sub: embedding-wr}.

Recall the $1$-cocycle constructed in Subsection \ref{sub:Basic-test-functions}.
Fix a choice of increasing function $\gamma:\mathbb{N}\to\mathbb{R}_{+}$
such that $\gamma(1)=1$, $C(\gamma)=\sum_{n=1}^{\infty}\gamma(n)^{-2}<\infty$.
Let $b_{\gamma}:\Delta\hookrightarrow L_{2}$ be the 1-cocycle defined
by (\ref{eq:cocycle}) with $p=2$ using the basic test functions.

A standard embedding of finite dihedral groups into Euclidean space
is the following. The (unlabelled) Cayley graph of $D_{2l}$ is the
same as a cycle of length $2l$. One can simply embed it as vertices
of a regular $2l$-gon in the plane. For each element $\gamma\in D_{2l}$
, fix a word of minimal length in $a$ and $b$ such that the word
represents $\gamma$ and starts with letter $a$. Let $k(\gamma)$
be the length of such a chosen word, $k_{a}(\gamma)$ ($k_{b}(\gamma)$
resp.) be the number of occurrence of $a$ ($b$ resp.) in this word.
Take $\theta_{l}:D_{2l}\to\mathbb{R}^{2}$ as 
\[
\theta_{l}(\gamma)=\frac{1}{2\sin\left(\pi/2l\right)}\left(\cos\left(\frac{\pi k(\gamma)}{l}\right),\sin\left(\frac{\pi k(\gamma)}{l}\right)\right).
\]
It is clear that $\theta_{l}$ is $1$-Lipschitz and equivariant. We
also consider maps to vertices of $l$-gons. Let $\theta_{l}^{(a)}:D_{2l}\to\mathbb{R}^{2}$
be the map given by 
\[
\theta_{l}^{(a)}(\gamma)=\frac{1}{2\sin\left(\pi/l\right)}\left(\cos\left(\frac{2\pi k_{a}(\gamma)}{l}\right),\sin\left(\frac{2\pi k_{a}(\gamma)}{l}\right)\right).
\]
The map $\theta_{l}^{(b)}:D_{2l}\to\mathbb{R}^{2}$ is defined in
the same way with $k_{a}(\gamma)$ replaced by $k_{b}(\gamma)$. Since
$\left|k_{a}(\gamma)-k_{b}(\gamma)\right|\le1$ for any element $\gamma\in D_{2l}$,
by definition of $\theta_{l}^{(a)}$, $\theta_{l}^{(b)}$ we have
\[
\left\Vert \theta_{l}^{(a)}(\gamma)-\theta_{l}^{(b)}(\gamma)\right\Vert _{2}\le1.
\]

Recall the classical fact that $\ell^{2}$ embeds isometrically in
$L_{q}$ for all $q\ge1$, see \cite[Proposition 6.4.2]{Albiac2006}.
To construct embeddings to $L_{q}$, for each $q>2$ fix an isometric
embedding $i_{q}:\ell^{2}\to L_{q}$, and set $b_{\gamma,q}=i_{q}\circ b_{\gamma}$,
similarly, $\theta_{l,q}^{(a)}=i_{q}\circ\theta_{l}^{(a)},$ $\theta_{l,q}^{(b)}=i_{q}\circ\theta_{l}^{(b)}$.

Direct inspection of $\theta_{l}^{(a)},\theta_{l}^{(b)}$ shows the
following.

\begin{fact}\label{dihedral-l}For all $\gamma,\gamma'\in D_{2l_{s}}$,

\[
\left\Vert \theta_{l_{s},2}^{(a)}(\gamma\gamma')-\theta_{l_{s},2}^{(a)}(\gamma)\right\Vert _{2}=\left\Vert \theta_{l_{s},2}^{(a)}(\gamma')-\theta_{l_{s},2}^{(a)}\left(e_{D_{2l_{s}}}\right)\right\Vert _{2}.
\]
The same equality holds with $a$ replaced by $b$.

\end{fact}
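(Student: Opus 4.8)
The plan is to trace through the definition of $\theta_{l_s}^{(a)}$ and exploit the fact that the map depends only on the count $k_a(\gamma)$ of occurrences of the letter $a$ in a fixed minimal word for $\gamma$, composed with a translation-compatible parametrization of the vertices of a regular $l_s$-gon. First I would recall that elements of $D_{2l_s}$ are in bijection with $\mathbb{Z}/2l_s\mathbb{Z}$ via a minimal word in $\{a,b\}$ starting with $a$: indeed, reading off such a word of length $k(\gamma)$ as an alternating string $a b a b \dots$, the group element is determined, and moreover $k_a(\gamma)=\lceil k(\gamma)/2\rceil$. The key point is that the position on the $l_s$-gon assigned by $\theta_{l_s}^{(a)}$ is $k_a(\gamma) \bmod l_s$, and this assignment is equivariant in the sense that $k_a(\gamma\gamma') \equiv k_a(\gamma) + k_a(\gamma') \pmod{l_s}$ up to the rotation-reflection structure — more precisely, left multiplication by $\gamma$ either translates or reflects the index $k_a(\gamma') \bmod l_s$, and in either case acts as an isometry of the regular $l_s$-gon embedded in $\mathbb{R}^2$.

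Concretely, I would argue as follows. The vertices $v_j = \frac{1}{2\sin(\pi/l_s)}(\cos(2\pi j/l_s), \sin(2\pi j/l_s))$, $j \in \mathbb{Z}/l_s\mathbb{Z}$, are the orbit of $v_0$ under the dihedral group $D_{l_s}$ acting on $\mathbb{R}^2$ by the standard rotations and reflections. The map $\gamma \mapsto k_a(\gamma) \bmod l_s$ from $D_{2l_s}$ to $\mathbb{Z}/l_s\mathbb{Z}$ intertwines left multiplication on $D_{2l_s}$ with an action of $D_{2l_s}$ on $\mathbb{Z}/l_s\mathbb{Z}$ that factors through the quotient $D_{2l_s} \to D_{l_s}$ and is the standard dihedral action on the $l_s$ vertices (rotations $j \mapsto j + c$ and reflections $j \mapsto -j + c$). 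Composing with the isometric identification $\mathbb{Z}/l_s\mathbb{Z} \hookrightarrow \mathbb{R}^2$, $j \mapsto v_j$, which is $D_{l_s}$-equivariant by construction, we get that $\theta_{l_s}^{(a)}(\gamma\gamma') = R_\gamma\, \theta_{l_s}^{(a)}(\gamma')$ where $R_\gamma \in O(2)$ is an isometry depending only on $\gamma$; in particular $R_\gamma$ fixes nothing relevant but preserves Euclidean distance, so $\|\theta_{l_s}^{(a)}(\gamma\gamma') - \theta_{l_s}^{(a)}(\gamma)\|_2 = \|R_\gamma \theta_{l_s}^{(a)}(\gamma') - R_\gamma \theta_{l_s}^{(a)}(e)\|_2 = \|\theta_{l_s}^{(a)}(\gamma') - \theta_{l_s}^{(a)}(e_{D_{2l_s}})\|_2$, using $\theta_{l_s}^{(a)}(\gamma) = R_\gamma \theta_{l_s}^{(a)}(e)$. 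The identical argument with $k_b$ in place of $k_a$ handles the $b$-case. Finally, applying the isometric embedding $i_2: \ell^2 \to L_2$ (which here is just the inclusion $\mathbb{R}^2 \subset \ell^2 \subset L_2$) preserves all these norms, giving the statement for $\theta_{l_s,2}^{(a)}$ and $\theta_{l_s,2}^{(b)}$.

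The main obstacle I anticipate is the bookkeeping needed to verify that $\gamma \mapsto k_a(\gamma) \bmod l_s$ really does intertwine left multiplication with the standard dihedral action on $\mathbb{Z}/l_s\mathbb{Z}$ — one must be careful about the asymmetry introduced by the convention "minimal word starting with $a$," which means $k_a$ and $k_b$ are not symmetric and a case analysis on the parity of $k(\gamma)$ and $k(\gamma')$ and on whether concatenation of the chosen words is still reduced may be needed. However, since the map to $\mathbb{R}^2$ only sees $k_a(\gamma) \bmod l_s$, all these cases collapse modulo $l_s$, and the equivariance is really a statement about the quotient $D_{2l_s} \to D_{l_s} \cong$ symmetries of the $l_s$-gon, which is transparent once one passes to that quotient. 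I would present the proof at the level of "$\theta_{l_s}^{(a)}$ factors as $D_{2l_s} \to \mathbb{Z}/l_s\mathbb{Z} \xrightarrow{\ \text{isometric, }D_{l_s}\text{-equivariant}\ } \mathbb{R}^2$" and leave the elementary verification of the first arrow's equivariance to the reader, as it is exactly the classical description of the dihedral group acting on a regular polygon.
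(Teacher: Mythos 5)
The paper offers no proof of Fact~\ref{dihedral-l} (it is stated as following by ``direct inspection''), so there is nothing to compare your argument against, but your conceptual reformulation is correct: $\theta_{l_s}^{(a)}$ factors through $\gamma\mapsto k_a(\gamma)\bmod l_s$, and this map intertwines left multiplication on $D_{2l_s}$ with an action on $\mathbb{Z}/l_s\mathbb{Z}$ by rotations and reflections of the regular $l_s$-gon, hence by linear isometries of $\mathbb{R}^2$; the norm identity then falls out since $\theta_{l_s}^{(a)}(\gamma\gamma')-\theta_{l_s}^{(a)}(\gamma) = R_\gamma\bigl(\theta_{l_s}^{(a)}(\gamma')-\theta_{l_s}^{(a)}(e)\bigr)$ with $R_\gamma\in O(2)$. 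The computations needed are exactly those you flag: writing $\sigma_a(k)=1-k$ and $\sigma_b(k)=-1-k$ on $k(\cdot)\bmod 2l_s$ and checking that $\phi=\lceil\cdot/2\rceil$ pushes these forward to $j\mapsto 1-j$ and $j\mapsto -j$ on $\mathbb{Z}/l_s\mathbb{Z}$, and since $a,b$ generate $D_{2l_s}$ this suffices.

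One imprecision worth correcting: you say the induced action on $\mathbb{Z}/l_s\mathbb{Z}$ ``factors through the quotient $D_{2l_s}\to D_{l_s}$,'' but under the paper's convention (where $D_n$ has order $n$) this is false: the rotation $(ab)^m$ acts as $j\mapsto j-m\pmod{l_s}$, so the kernel of the action is trivial and the map $D_{2l_s}\to\mathrm{Sym}(l_s\text{-gon})$ is an isomorphism, not a proper quotient. This does not damage the proof --- all you actually need is that every $\gamma$ acts by a Euclidean isometry, not that the action has a kernel --- but the phrasing should be adjusted to ``the action is the faithful dihedral action on the vertices of the $l_s$-gon'' rather than invoking a quotient.
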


Now we introduce a weight function. $ $Let $w_{s}:\mathbb{Z}\to[0,1]$
be the function defined as 
\[
w_{s}(y)=\begin{cases}
\frac{1}{2} & \mbox{ for } y\le-\frac{1}{2}k_{s}\mbox{ or }y\ge\frac{3}{2}k_{s},\\
\frac{\left|y\right|}{k_{s}} & \mbox{ for } -\frac{1}{2}k_{s}<y<k_{s},\\
1-\frac{y-k_{s}}{k_{s}} & \mbox{ for }  k_{s}\le y<\frac{3}{2}k_{s}.
\end{cases}
\]
It is the simplest piecewise linear function taking value $1/2$ outside
$[-k_{s}/2,3k_{s}/2]$, $0$ in $0$ and $1$ in $k_{s}$. For $x\in\mathbb{Z}$,
write $\tau_{x}w$ for the translation of $w_{s}$ by $x$, 
\[
\tau_{x}w_{s}(y)=w_{s}(y-x).
\]
Define the map $\Phi_{s,q}:\Delta_{s}\to L_{q}=\bigoplus_{y\in\mathbb{Z}}(L_{q})_{y}$
by setting for each $y\in\mathbb{Z}$, 
\begin{equation}
\left[\Phi_{s,q}\left(f_{s},z\right)\right](y)=k_{s}^{1-\frac{1}{q}}\left((\tau_{z}w_{s})(y)\theta_{l_{s},q}^{(a)}\left(f_{s}(y)\right)+(1-(\tau_{z}w_{s})(y))\theta_{l_{s},q}^{(b)}(f_{s}(y))\right),\label{eq:Phi_s-q}
\end{equation}
In words, at each site $y$, the image of $f_{s}(y)$ is a linear
combination of $\theta_{l_{s},q}^{(a)}$ and $\theta_{l_{s},q}^{(b)}$
with the weights depending on the relative position between $y$ and
the cursor.

Finally define an embedding $\Phi_{\gamma,q}:\Delta\to L_{q}$ by
\begin{equation}
\Phi_{\gamma,q}\left(\left(f_{s}\right),z\right)=\left(\bigoplus_{s=0}^{\infty}\left(\frac{1}{\gamma(s)}\Phi_{s,q}\left(f_{s},z\right)\right)\right)\bigoplus b_{\gamma,q}\left(\left(f_{s}\right),z\right),\label{eq:dihedral-q}
\end{equation}
where $\bigoplus$ is direct sum in $L_{q}$.

We now check some basic properties of the map $\Phi_{\gamma,q}$.

\begin{lemma}\label{dihedral-lipschitz}

Let $\gamma:\mathbb{N}\to\mathbb{R}_{+}$ be a function such that
$\gamma(1)=1$, $C(\gamma)=\sum_{n=1}^{\infty}\gamma(n)^{-2}<\infty$.
The map $\Phi_{\gamma,q}:\Delta\to L^{q}$ , $q\ge2$ defined in (\ref{eq:dihedral-q})
$C$-Lipschitz with $C$ only depending on $C(\gamma)$.

\end{lemma}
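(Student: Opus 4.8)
The plan is to check Lipschitzness generator by generator, using the equivariance structure built into each piece of $\Phi_{\gamma,q}$. Since $\Delta$ is generated by $\mathcal{T} = (\tau, \alpha_1,\dots,\alpha_{|A|}, \beta_1,\dots,\beta_{|B|})$, and each component map $b_{\gamma,q}$ and $\Phi_{s,q}$ is (as I will verify) a $1$-cocycle or close to one, it suffices to bound $\|\Phi_{\gamma,q}(g u) - \Phi_{\gamma,q}(g)\|_q$ uniformly over $g \in \Delta$ and $u \in \mathcal{T}$. First I would recall from Lemma \ref{compression-expander-lower} that $b_{\gamma,q} = i_q \circ b_\gamma$ is $\sqrt{2C(\gamma)}$-Lipschitz, so that summand contributes a constant depending only on $C(\gamma)$. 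The work is to control $\bigoplus_s \frac{1}{\gamma(s)}\Phi_{s,q}(f_s,z)$.

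For a generator of type $\alpha_i$ or $\beta_j$: right-multiplication by $\alpha_i$ changes only $f_s(z)$, in each copy $s$, by right multiplication by $a_i(s)$, and leaves the cursor $z$ fixed. At site $y = z$ the weight is $(\tau_z w_s)(z) = w_s(0) = 0$, so by \eqref{eq:Phi_s-q} the value $[\Phi_{s,q}(f_s,z)](z)$ equals $k_s^{1-1/q}\theta^{(b)}_{l_s,q}(f_s(z))$; similarly near $z+k_s$ the weight is $1$ and only $\theta^{(a)}_{l_s,q}$ appears. Using Fact \ref{dihedral-l} (equivariance of $\theta^{(a)}_l, \theta^{(b)}_l$ under left multiplication — here one must note the configuration multiplication $f(\cdot)g(\cdot - i)$ acts on the right in the lamp coordinate, so the correct invariance to invoke is the left-invariance version, which $\theta^{(a)}_l,\theta^{(b)}_l$ also satisfy up to relabeling), the only sites affected by $\alpha_i$ are those where the weight interpolates, i.e. $y$ with $\tau_z w_s(y) \in (0,1)$, which is the interval of length $O(k_s)$ around $z$. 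On each such site the contribution is $k_s^{1-1/q}$ times a bounded quantity (since $\theta^{(a)}_l, \theta^{(b)}_l$ are $1$-Lipschitz and $\|\theta^{(a)}_l(\gamma) - \theta^{(b)}_l(\gamma)\|_2 \le 1$, and the weight itself changes by $O(1/k_s)$ per unit step), so summing $q$-th powers over $O(k_s)$ sites gives $O(k_s \cdot (k_s^{1-1/q}/k_s)^q) = O(k_s \cdot k_s^{-1}) = O(1)$ in the copy $s$. Then $\sum_s \gamma(s)^{-q} \cdot O(1) \le C(\gamma)^{q/2} \cdot O(1)$ (for $q \ge 2$, $\sum \gamma(s)^{-q} \le (\sum \gamma(s)^{-2})^{q/2}$ is false in general but $\sum\gamma(s)^{-q}\le \sum\gamma(s)^{-2}=C(\gamma)$ since $\gamma\ge 1$), so this is bounded by a function of $C(\gamma)$ alone.

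For the shift generator $\tau$: now the configuration is unchanged but $z \mapsto z+1$, so in copy $s$ the weight function translates by one unit, $\tau_{z}w_s \mapsto \tau_{z+1}w_s$. Since $w_s$ is piecewise linear with slope $\pm 1/k_s$, we have $|\tau_{z+1}w_s(y) - \tau_z w_s(y)| \le 1/k_s$ for all $y$, and this difference is nonzero only on the $O(k_s)$ sites where $w_s$ is non-constant. Hence by \eqref{eq:Phi_s-q}, $\|[\Phi_{s,q}(f_s,z+1)](y) - [\Phi_{s,q}(f_s,z)](y)\|_q \le k_s^{1-1/q} \cdot \frac{1}{k_s}\|\theta^{(a)}_{l_s,q}(f_s(y)) - \theta^{(b)}_{l_s,q}(f_s(y))\|_q \le k_s^{-1/q}$, and summing $q$-th powers over $O(k_s)$ sites again yields $O(1)$ per copy $s$; summing over $s$ against $\gamma(s)^{-q}$ gives a bound depending only on $C(\gamma)$. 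The only subtlety is to confirm that summing across \emph{all} copies $s$ converges uniformly: for a fixed $g = ((f_s),z)$, only finitely many copies $s \le s_0(g)$ have $f_s \ne \mathbf e$, but the weight-interpolation contribution from the shift affects \emph{every} copy regardless, so the geometric-type decay must come entirely from $\gamma(s)^{-q}$, which it does since $\sum_s \gamma(s)^{-q} \le \sum_s \gamma(s)^{-2} = C(\gamma) < \infty$. Collecting the three cases, $\Phi_{\gamma,q}$ is $C$-Lipschitz with $C = C(C(\gamma))$.

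The main obstacle I anticipate is bookkeeping the interaction between the lamp-coordinate right-multiplication rule $(f,i)(g,j) = (f(\cdot)g(\cdot-i), i+j)$ and the left-equivariance of $\theta^{(a)}_l,\theta^{(b)}_l$ recorded in Fact \ref{dihedral-l}: one needs to verify that multiplying $g$ on the \emph{right} by a generator produces, in the relevant lamp coordinate, a \emph{left} translate (or, after a reflection, still a translate) of the dihedral element, so that Fact \ref{dihedral-l} genuinely applies and the difference $\Phi_{s,q}(gu) - \Phi_{s,q}(g)$ depends only on $u$ and not on $g$. Once this is pinned down, the rest is the routine estimate that a weight that varies by $O(1/k_s)$ over $O(k_s)$ sites, multiplied by the normalization $k_s^{1-1/q}$, contributes $O(1)$ in $\ell^q$-norm, uniformly in $s$.
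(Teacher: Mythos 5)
Your treatment of the $\tau$ generator and of the summation over $s$ is correct and matches the paper. The gap is in the $\alpha_i,\beta_j$ case. Right-multiplying $u=((f_s),z)$ by $\alpha_i$ changes $f_s$ only at the \emph{single} site $y=z$ and leaves the cursor and hence the weight function $\tau_z w_s$ unchanged; the ``sites where the weight interpolates'' are not affected at all. You correctly observe that $(\tau_z w_s)(z)=0$, so that $[\Phi_{s,q}(f_s,z)](z)=k_s^{1-1/q}\theta^{(b)}_{l_s,q}(f_s(z))$, but then you drift into a weight-interpolation estimate that applies only to $\tau$. If you tried to finish via a bare $1$-Lipschitz bound at site $z$, you would get $k_s^{1-1/q}\,\|\theta^{(b)}_{l_s,q}(f_s(z)a(s))-\theta^{(b)}_{l_s,q}(f_s(z))\|\le k_s^{1-1/q}$, which grows with $s$ and does not sum against $\gamma(s)^{-q}$.

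The missing ingredient is the \emph{exact} invariance $\theta^{(b)}_{l_s}(\gamma a(s))=\theta^{(b)}_{l_s}(\gamma)$ for all $\gamma\in D_{2l_s}$: by construction $\theta^{(b)}_l$ depends only on $k_b(\gamma)$, the number of occurrences of $b$ in the reduced word, which is unchanged by appending an $a$. Combined with $(\tau_z w_s)(z)=0$, this gives $\Phi_{s,q}((f_s,z)\alpha_i)=\Phi_{s,q}((f_s,z))$ exactly, for every $s$, so the $\alpha_i$ increment of $\Phi_{\gamma,q}$ comes entirely from $b_{\gamma,q}$ and is bounded by $\sqrt{2C(\gamma)}$. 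The symmetric argument with $(\tau_z w_s)(z+k_s)=1$ and $\theta^{(a)}_{l_s}(\gamma b(s))=\theta^{(a)}_{l_s}(\gamma)$ handles $\beta_j$. This exact cancellation --- not a Lipschitz estimate over a range of sites --- is what makes the lamp generators harmless; Fact~\ref{dihedral-l}, which you invoke, is a translation-invariance statement for increments and is not the relevant identity here. Once this is fixed the rest of your argument goes through.
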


\begin{proof}

It suffices to check that for any $u=\left(\left(f_{s}\right),z\right)\in\Delta$
and $s\in\left\{ \tau,\alpha,\beta\right\} $ a generator, the increment
$\left\Vert \Phi_{\gamma,p}(us)-\Phi_{\gamma,p}(u)\right\Vert _{q}$
is bounded by $C$.

For the generator $\alpha$, $\left(\left(f_{s}\right),z\right)\alpha=\left(\left(f_{s}'\right),z\right)$
where $f_{s}'(y)=f_{s}'(y)$ for all $y\neq z$ and $f_{s}'(z)=f_{s}(z)a(s).$
Recall that by definitions, the weight function $w_{s}$ satisfies
$\tau_{z}w_{s}(z)=0$, and the map $\theta_{l_{s},q}^{(b)}$ satisfies
$\theta_{l_{s},q}^{(b)}(\gamma a(s))=\theta_{l_{s},q}^{(b)}(\gamma)$
for all $\gamma\in D_{2l_{s}}$. Then by (\ref{eq:Phi_s-q}), 
\[
\Phi_{s,q}\left(\left(f_{s},z\right)\alpha\right)=\Phi_{s,q}\left(\left(f_{s},z\right)\right).
\]
Therefore in the embedding (\ref{eq:dihedral-q}), 
\[
\left\Vert \Phi_{\gamma,q}\left(u\alpha\right)-\Phi_{\gamma,q}(u)\right\Vert _{q}=\left\Vert b_{\gamma}\left(u\alpha\right)-b_{\gamma}(u)\right\Vert _{2}\le\sqrt{2C(\gamma)},
\]
the last inequality uses the fact that the $1-$cocycle $b_{\gamma}$
is Lipschitz, see subsection \ref{sub:Basic-test-functions}. Similarly,
since $\tau_{z}w_{s}\left(z+k_{s}\right)=1$ and $\theta_{l_{s},q}^{(a)}(\gamma b(s))=\theta_{l_{s},q}^{(a)}(\gamma)$
for all $\gamma\in D_{2l_{s}}$, we have $\left\Vert \Phi_{\gamma,q}\left(u\beta\right)-\Phi_{\gamma,q}(u)\right\Vert _{q}=\left\Vert b_{\gamma}(u\beta)-b_{\gamma}(u)\right\Vert _{2}$
as well. For the generator $\tau$, 
\[
\Phi_{s,q}(u\tau)=\left(k_{s}^{1-\frac{1}{q}}\left(w_{s}(y-z-1)\theta_{l_{s},q}^{(a)}\left(f_{s}(y)\right)+(1-w_{s}(y-z-1))\theta_{l_{s},q}^{(b)}\left(f_{s}(y)\right)\right)\right)_{y\in\mathbb{Z}}.
\]
Then 
\[
\left\Vert \Phi_{s,q}(u\tau)-\Phi_{s,q}(u)\right\Vert _{q}^{q}=\sum_{y\in\mathbb{Z}}k_{s}^{q-1}\left|w_{s}(y-z-1)-w_{s}(y-z)\right|^{q}\left\Vert \theta_{l_{s},q}^{(a)}\left(f_{s}(y)\right)-\theta_{l_{s},q}^{(b)}\left(f_{s}(y)\right)\right\Vert _{q}^{q}.
\]
Recall that $\left\Vert \theta_{l_{s}}^{(a)}(\gamma)-\theta_{l_{s}}^{(b)}(\gamma)\right\Vert _{2}\le1$
for all $\gamma\in D_{2l_{s}}$, $w_{s}(y-z)\neq w_{s}(y-z-1)$ only
if $y-z\in\left[-\frac{k_{s}}{2},\frac{3k_{s}}{2}\right]$, and in
this interval $\left|w_{s}(y-z)-w_{s}(y-z-1)\right|=\frac{1}{k_{s}}$.
Therefore 
\[
\left\Vert \Phi_{s,q}(u\tau)-\Phi_{s,q}(u)\right\Vert _{q}^{q}\le2k_{s}k_{s}^{q-1}\left(\frac{1}{k_{s}}\right)^{q}=2.
\]
Summing up in the embedding (\ref{eq:dihedral-q}), 
\begin{align*}
\left\Vert \Phi_{\gamma,q}\left(u\tau\right)-\Phi_{\gamma,q}(u)\right\Vert _{q}^{q} & =\sum_{s}\frac{1}{\gamma(s)^{q}}\left\Vert \Phi_{s}(u\tau)-\Phi_{s}(u)\right\Vert _{q}^{q}+\left\Vert b_{\gamma}\left(u\tau\right)-b_{\gamma}(u)\right\Vert _{2}^{q}\\
 & \le2C(\gamma)+(2C(\gamma))^{\frac{q}{2}}.
\end{align*}

\end{proof}

Because of the presence of the weight function $w_{s}$, the embedding
$\Phi_{\gamma,q}:\Delta\to L_{q}$ fails to be equivariant. But the
increment $\left\Vert \Phi_{\gamma,q}\left(uv\right)-\Phi_{\gamma,q}(u)\right\Vert _{q}$
is actually comparable to $\left\Vert \Phi_{\gamma,q}\left(v\right)\right\Vert _{q}.$

\begin{lemma}\label{dihedral-increment}

There exists a constant $c>0$ depending only on $C(\gamma)$ such
that for $q\ge2$,

\[
\left\Vert \Phi_{\gamma}\left(uv\right)-\Phi_{\gamma}(u)\right\Vert _{q}\ge c\left\Vert \Phi_{\gamma}(v)\right\Vert _{q}.
\]

\end{lemma}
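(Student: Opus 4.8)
The plan is to split the embedding (\ref{eq:dihedral-q}) into its cocycle summand $b_{\gamma,q}$ and the summands $\gamma(s)^{-1}\Phi_{s,q}$, and on each summand to compare the increment $\|\cdot(uv)-\cdot(u)\|_q$ with the norm of an isometric image of $\cdot(v)-\cdot(e)$. First I would treat $b_{\gamma,q}=i_q\circ b_\gamma$: it is a $1$-cocycle composed with a linear isometry, so $b_{\gamma,q}(uv)-b_{\gamma,q}(u)$ is an isometric image of $b_{\gamma,q}(v)=b_{\gamma,q}(v)-b_{\gamma,q}(e)$ and in particular has the same $L_q$-norm. This already gives $\|\Phi_{\gamma,q}(uv)-\Phi_{\gamma,q}(u)\|_q\ge\|b_{\gamma,q}(v)\|_q$, and I recall from Subsection \ref{sub:Basic-test-functions} that $\|b_{\gamma,q}(v)\|_q\gtrsim \mathrm{Range}(v)/\gamma(\log_2\mathrm{Range}(v))$ once $\mathrm{Range}(v)$ is large, and is bounded below by a positive constant for every $v\neq e$.

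Next I would analyze a single factor $\Phi_{s,q}$, writing $u=((f_s),z)$, $v=((g_s),w)$, $uv=((f_s(\cdot)g_s(\cdot-z)),z+w)$, and reindexing the lamp sites of $v$ inside $uv$ by $y'=y-z$. Since $(\tau_{z+w}w_s)(y'+z)=(\tau_ww_s)(y')$ and $(\tau_zw_s)(y'+z)=w_s(y')=(\tau_0w_s)(y')$, the value of $\Phi_{s,q}(uv)-\Phi_{s,q}(u)$ at $y'+z$ and that of $\Phi_{s,q}(v)-\Phi_{s,q}(e)$ at $y'$ are built from the same weights $V_2=(\tau_ww_s)(y')$, $V_1=w_s(y')$ and the same lamp $\psi'=g_s(y')$, differing only by left multiplication by $\phi=f_s(y'+z)$. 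Because $\theta^{(a)}_{l_s}(e)=\theta^{(b)}_{l_s}(e)$, the difference $\Phi_{s,q}(v)-\Phi_{s,q}(e)$ carries no ``weight-transition'' term: it is supported on $\mathrm{supp}(g_s)$ with value $k_s^{1-1/q}\bigl(V_2[\theta^{(a)}_{l_s,q}(\psi')-\theta^{(a)}_{l_s,q}(e)]+(1-V_2)[\theta^{(b)}_{l_s,q}(\psi')-\theta^{(b)}_{l_s,q}(e)]\bigr)$ at $y'$. By Fact \ref{dihedral-l} and the definition of $\theta^{(a)}_{l_s}$ (resp. $\theta^{(b)}_{l_s}$), left multiplication by $\phi$ acts on $\theta^{(a)}_{l_s}(\psi')-\theta^{(a)}_{l_s}(e)$ (resp. on $\theta^{(b)}_{l_s}(\psi')-\theta^{(b)}_{l_s}(e)$) by a plane rotation $R^a_\phi$ (resp. $R^b_\phi$), and the angles of $R^a_\phi,R^b_\phi$ differ by at most $2\pi/l_s$ since $|k_a(\phi)-k_b(\phi)|\le1$. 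Hence, applying the site-wise isometry of $L_q=\bigoplus_y(L_q)_y$ given by ``shift by $z$, then apply $R^a_{f_s(y)}$'', the value of $\Phi_{s,q}(uv)-\Phi_{s,q}(u)$ at $y'$ differs from the image of $(\Phi_{s,q}(v)-\Phi_{s,q}(e))(y')$ by a vector of norm at most $k_s^{1-1/q}\bigl(\tfrac{2\pi}{l_s}\|\theta^{(b)}_{l_s}(g_s(y'))-\theta^{(b)}_{l_s}(e)\|+|V_2-V_1|\bigr)$, using also $\|\theta^{(a)}_{l_s}(\gamma)-\theta^{(b)}_{l_s}(\gamma)\|\le1$ and $\|\theta^{(b)}_{l_s}(\gamma)-\theta^{(b)}_{l_s}(e)\|\le l_s/2$; the extra sites where $g_s(y')=e$ contribute only nonnegative terms to the increment and can be dropped.

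Summing over $s$ and applying the triangle inequality in $L_q$ then gives $\|\Phi_{\gamma,q}(uv)-\Phi_{\gamma,q}(u)\|_q\ge\|\Phi_{\gamma,q}(v)-\Phi_{\gamma,q}(e)\|_q-\mathcal E$ with $\mathcal E^q\lesssim\sum_s\gamma(s)^{-q}k_s^{q-1}\bigl(l_s^{-q}\sum_{y'}\|\theta^{(b)}_{l_s}(g_s(y'))-\theta^{(b)}_{l_s}(e)\|^q+\#\{\text{transition sites}\}\bigr)$. For the first term inside the parenthesis: since $\|\theta^{(b)}_{l_s}(g_s(y'))-\theta^{(b)}_{l_s}(e)\|$ and the $y'$-entry of $\Phi_{s,q}(v)-\Phi_{s,q}(e)$ differ by at most $k_s^{1-1/q}$ (again by $\|\theta^{(a)}_{l_s}(\gamma)-\theta^{(b)}_{l_s}(\gamma)\|\le1$), it is, once $l_s$ exceeds an absolute constant, a small multiple of the $s$-contribution to $\|\Phi_{\gamma,q}(v)-\Phi_{\gamma,q}(e)\|_q^q$; the finitely many factors with $l_s$ bounded involve a fixed finite-dimensional map $\Phi_{s,q}$ and are handled separately. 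For the ``transition'' term, one has $\mathrm{Range}(v)\gtrsim k_s$ whenever $g_s\neq\mathbf 0$ (writing a dihedral word of length $\ge2$ forces the cursor to travel distance $k_s$), and the degenerate length-one configurations sitting at site $0$ contribute nothing to $\Phi_{s,q}$ since the weight there is $0$ (this is exactly the cancellation used in the proof of Lemma \ref{dihedral-lipschitz}); hence, by the exponential growth of $(k_s)$, this term is $\lesssim\bigl(\mathrm{Range}(v)/\gamma(\log_2\mathrm{Range}(v))\bigr)^q\lesssim\|b_{\gamma,q}(v)\|_q^q$. Feeding this back yields $\mathcal E\le\tfrac12\|\Phi_{\gamma,q}(v)-\Phi_{\gamma,q}(e)\|_q+C\|b_{\gamma,q}(v)\|_q$, and combining with $\|\Phi_{\gamma,q}(uv)-\Phi_{\gamma,q}(u)\|_q\ge\|b_{\gamma,q}(v)\|_q$ gives the claim with $c=c(C(\gamma))>0$ by a short case split on whether $\|b_{\gamma,q}(v)\|_q$ is large or small relative to $\|\Phi_{\gamma,q}(v)-\Phi_{\gamma,q}(e)\|_q$. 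The hard part is precisely this final bookkeeping: the non-equivariance error of $\Phi_{s,q}$ is genuinely of order $k_s$ in the worst case, and one must show it is always dominated either by a fixed fraction of the honest $\Gamma_s$-contribution or by the cocycle contribution $\|b_{\gamma,q}(v)\|_q$ (which alone records the range of $v$).
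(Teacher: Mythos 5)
Your plan follows the same outline as the paper's proof: treat $b_{\gamma,q}$ via the cocycle identity, compare $\Phi_{s,q}(uv)-\Phi_{s,q}(u)$ to $\Phi_{s,q}(v)$ using the rotation structure behind Fact~\ref{dihedral-l}, and absorb the non-equivariance error. The difficulty is entirely in how you estimate that error, and the estimate you use does not close.

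The paper's key quantitative input, which you omit, is that $w_s$ is $(1/k_s)$-Lipschitz and constant outside an interval of length $2k_s$, so that
\[
\sum_{y\in\mathbb{Z}}\left|w_s(y-z)-w_s(y-z-z')\right|^q\le k_s^{-q}\min\{|z'|,2k_s\}.
\]
Combined with $\|\theta^{(a)}_{l_s}(\gamma)-\theta^{(b)}_{l_s}(\gamma)\|\le 1$, this makes the weight-transition error in each factor at most $2^{1/q}$, uniformly in $s$, $u$, $v$: $\|\Phi_{s,q}(uv)-\Phi_{s,q}(u)\|_q\ge\|\Phi_{s,q}(v)\|_q-2^{1/q}$. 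After weighting by $\gamma(s)^{-q}$ and summing, the total correction is at most $2\sum_s\gamma(s)^{-q}\le 2C(\gamma)$, an absolute constant absorbed by $\|b_{\gamma,q}(v)\|_q^q\ge 2^{q/2}$ for every $v\ne e$ (which holds because of the $j=0$ term $\varphi_1=\mathbf{1}_{e_\Delta}$ in (\ref{eq:cocycle})). There is no need to compare the error to $Range(v)$, to $\|\Phi_{\gamma,q}(v)\|_q$, or to run a case split.

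You instead replace $|V_2-V_1|$ by $1$ and count transition sites, giving a per-factor transition error of order $\gamma(s)^{-q}k_s^{q-1}\cdot\#\{\text{transition sites}\}\asymp\gamma(s)^{-q}k_s^{q}$ — larger by a factor $k_s^q$. Your absorption step then fails on two counts. First, the claimed bound $\sum_{s\le s_0(v)}\gamma(s)^{-q}k_s^q\lesssim\bigl(Range(v)/\gamma(\log_2 Range(v))\bigr)^q$ is wrong: since $s_0(v)\le\log_2 Range(v)$ and $\gamma$ is non-decreasing we have $\gamma(s_0(v))\le\gamma(\log_2 Range(v))$, which is the wrong direction; what you actually get is only $\lesssim Range(v)^q$, and the ratio to $\|b_{\gamma,q}(v)\|_q^q$ is $\gtrsim\gamma(\log_2 Range(v))^q$, unbounded. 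Second, the transition error in $\Phi_{s,q}$ involves the lamp $f_s$ of $u$, not of $v$, so the index set is not restricted to $s\le s_0(v)$ and with your per-factor bound the sum over $s$ need not even converge. The $(1/k_s)$-Lipschitz estimate on $w_s$ is the whole point of the weight function's design; it is not optional.

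One correct observation in your write-up worth keeping: the isometries $R^a_\phi$ and $R^b_\phi$ indeed differ by a rotation of angle at most $2\pi/l_s$, a point that the paper's terse ``by the triangle inequality and Fact~\ref{dihedral-l}'' passes over. That subtlety is real, and your estimate of the resulting error per lamp-occupied site is of the right form. But the part of the argument that fails is the transition term, and that must be estimated sharply before anything else can be combined.
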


\begin{proof}

By the formula (\ref{eq:Phi_s-q}) that defines $\Phi_{s,q}$, $\Phi_{s,q}(uv)$
is 
\[
\left(k_{s}^{1-\frac{1}{q}}\left(w_{s}(y-z-z')\theta_{l_{s},q}^{(a)}\left(f_{s}(y)f_{s}'(y-z)\right)+(1-w_{s}(y-z-z'))\theta_{l_{s},q}^{(b)}\left(f_{s}(y)f_{s}'(y-z)\right)\right)\right)_{y\in\mathbb{Z}}.
\]
Then by the triangle inequality and Fact \ref{dihedral-l}, 
\[
\left\Vert \Phi_{s,q}\left(uv\right)-\Phi_{s,q}(u)\right\Vert _{q}\ge\left\Vert \Phi_{s,q}(v)\right\Vert _{q}-\left(k_{s}^{q-1}\sum_{y\in\mathbb{Z}}\left|w_{s}(y-z)-w_{s}(y-z-z')\right|^{q}\left\Vert \theta_{l_{s}}^{(a)}(f_{s}(y))-\theta_{l_{s}}^{(b)}(f_{s}(y))\right\Vert _{2}^{q}\right)^{\frac{1}{q}}.
\]
Since $\left\Vert \theta_{l_{s}}^{(a)}(\gamma)-\theta_{l_{s}}^{(b)}(\gamma)\right\Vert _{2}\le1$
for all $\gamma$, and 
\[
\sum_{y\in\mathbb{Z}}\left|w_{s}(y-z)-w_{s}(y-z-z')\right|^{q}\le\left(\frac{1}{k_{s}}\right)^{q}\min\left\{ \left|z'\right|,2k_{s}\right\} ,
\]
we have $\left\Vert \Phi_{s,q}\left(uv\right)-\Phi_{s,q}(u)\right\Vert _{2}\ge\left\Vert \Phi_{s,q}(v)\right\Vert _{2}-2^{\frac{1}{q}}$.
Using $(a-b)_{+}^{q}\ge\left(a/2\right)^{q}-b^{q}$ for $a,b\ge0$,
\begin{align*}
\left\Vert \Phi_{\gamma,q}\left(uv\right)-\Phi_{\gamma,q}(u)\right\Vert _{q}^{q} & \ge\sum_{s}\frac{1}{\gamma(s)^{q}}\left(\left\Vert \Phi_{s,q}(v)\right\Vert _{q}-2^{\frac{1}{q}}\right)_{+}^{q}+\left\Vert b_{\gamma,q}(v)\right\Vert _{q}^{q}\\
 & \ge\frac{1}{4C(\gamma)}\left(\sum_{s}\frac{1}{\gamma(s)^{q}}\left(\frac{1}{2^{q}}\left\Vert \Phi_{s,q}(v)\right\Vert _{q}^{q}-2\right)\right)+\left\Vert b_{\gamma,q}(v)\right\Vert _{q}^{q}\\
 & \ge\frac{1}{2^{2+q}C(\gamma)}\sum_{s}\frac{1}{\gamma(s)^{q}}\left\Vert \Phi_{s,q}(v)\right\Vert _{q}^{q}+\left\Vert b_{\gamma,q}(v)\right\Vert _{q}^{q}-\frac{1}{2}\\
 & \ge\frac{1}{2^{2+q}C(\gamma)}\left\Vert \Phi_{\gamma,q}(v)\right\Vert _{q}^{q}.
\end{align*}

\end{proof}

\subsection{Compression exponent $\alpha_{p}^{\ast}(\Delta)$ \label{sub:Proof-cotype-2}}

In this subsection we estimate the $L_{p}$-compression exponent of
$\Delta$. Recall that for $p\in[1,2]$, $L_{p}$ has Markov type
$p$ and cotype $2$; and for $p\in(2,\infty)$, $L_{p}$ has Markov
type $2$ and cotype $p$, see \cite{Naor2006} and references therein.

\begin{proof}[Proof of Theorem \ref{cotype2}]

\textbf{upper bound in (i)}

For the upper bound on $\alpha_{p}^{\ast}(\Delta)$, $p\in[1,2]$,
when $\theta=0$ the bound is trivially true. Assume now $\theta\in(0,\infty)$,
for any $\varepsilon>0$ sufficiently small, take a subsequence $\left\{ s_{i}\right\} _{i\in\mathbb{N}}$
such that 
\[
\frac{\log l_{s_{i}}}{\log k_{s_{i}}}>\theta-\varepsilon.
\]
Since the cotype of $L_{p}$ is $2$ for $p\in[1,2]$, along this
subsequence of $\left(k_{s_{i}},l_{s_{i}}\right)$ apply Proposition
\ref{distortion-one-block} and the Remark after it for $L_{1}$,
we obtain the upper bound after sending $\varepsilon\to0$. The argument
also extends to $\theta=\infty$.

\textbf{upper bound in (ii)}

The upper bound on $\alpha_{q}^{\ast}(\Delta)=\alpha_{q}^{\#}(\Delta)$
is covered by Corollary \ref{cotype-q}.

\textbf{lower bound in (ii)}

In the case $\theta\le\frac{1}{q}$, $q\ge2$, simply take the $1$-cocyle
$b_{\gamma}:\Delta\to \ell^{2}$ defined in (\ref{eq:cocycle}) using
the basic test functions (\ref{eq:test-function}), with $\gamma(n)=n^{-\frac{1}{2}-\epsilon}$.
Then Lemma \ref{compression-expander-lower} implies that the compression
exponent of $\Delta$ satisfies 
\[
\alpha_{q}^{\ast}(\Delta)\ge\alpha_{2}^{\ast}(\Delta)\ge\liminf_{s\to\infty}\frac{\log k_{s}}{\log\left(k_{s}l_{s}\right)}\ge\frac{1}{1+\theta}.
\]
Now we focus on the case $\theta>\frac{1}{q}$. Consider the explicit
embedding $\Phi_{\gamma,q}:\Delta\to L_{q}$ defined in (\ref{eq:dihedral-q})
with $\gamma$ taken to be $\gamma(n)=(1+n)^{-\frac{1+\epsilon}{2}}$.
By Lemma \ref{dihedral-lipschitz}, $\Phi_{\gamma,q}$ is Lipschitz.
By Lemma \ref{dihedral-increment}, there exists a constant $c=c\left(C(\gamma)\right)$
such that for any $u,v\in\Delta$, 
\[
\left\Vert \Phi_{\gamma,q}(uv)-\Phi_{\gamma,q}(u)\right\Vert _{q}\ge c\left\Vert \Phi_{\gamma,q}(v)\right\Vert _{q}.
\]
Let $v=\left(\left(f_{s}\right),z\right)$ be an element of $\Delta$.
At each site $y$, from definition of $\theta_{l_{s},q}^{(a)}$ ,
$\theta_{l_{s},q}^{(b)}$, 
\begin{align*}
\left\Vert (\tau_{z}w_{s})(y)\theta_{l_{s},q}^{(a)}\left(f_{s}(y)\right)+(1-(\tau_{z}w_{s})(y))\theta_{l_{s},q}^{(b)}(f_{s}(y))\right\Vert _{q} & \ge\frac{1}{\sin(\pi/l_{s})}\sin\left(\frac{\pi}{2l_{s}}\left(\left|f_{s}(y)\right|_{D_{2l_{s}}}-1\right)_{+}\right)\\
 & \ge\frac{1}{\pi}\left(\left|f_{s}(y)\right|_{D_{2l_{s}}}-1\right)_{+}.
\end{align*}
From the explicit formula (\ref{eq:Phi_s-q}) that defines $\Phi_{s,q}$,
\[
\left\Vert \Phi_{s,q}(f_{s},z)\right\Vert _{q}^{q}\ge\sum_{j\in\mathbb{Z}}\left(\frac{1}{\pi}k_{s}^{1-\frac{1}{q}}\max_{y\in I_{j}^{s}}\left(\left|f_{s}(y)\right|_{D_{2l_{s}}}-1\right)_{+}\right)^{q}.
\]
In what follows we write $R=\left|\mbox{Range}(v)\right|$, and $\tilde{f}_{s}(y)=\left(\left|f_{s}(y)\right|_{D_{2l_{s}}}-1\right)_{+}$.
We have 
\begin{align*}
\left\Vert \Phi_{\gamma,q}(v)\right\Vert _{q}^{q} & \ge\sum_{s\le s_{0}(v)}\frac{1}{\gamma(s)^{q}}\sum_{j\in\mathbb{Z}}\left(\frac{1}{\pi}k_{s}^{1-\frac{1}{q}}\max_{y\in I_{j}^{s}}\tilde{f}_{s}(y)\right)^{q}+\left\Vert b_{\gamma}\left(v\right)\right\Vert _{2}^{q}\\
 & \ge\sum_{s\le s_{0}(v)}\frac{1}{\gamma(s)^{q}}\sum_{j\in\mathbb{Z}}\left(\frac{1}{\pi}k_{s}^{1-\frac{1}{q}}\max_{y\in I_{j}^{s}}\tilde{f}_{s}(y)\right)^{q}+\left(\frac{cR}{\gamma\circ\log_{2}(R)}\right)^{q}.
\end{align*}
The last step used Lemma \ref{compression-expander-lower} and the
fact that since $k_{s+1}\ge2k_{s}$ for all $s$, $s_{0}(v)\le\log_{2}R$.
Note that in the factor $\Delta_{s}$, the number of intervals $I_{j}^{s}$
with $\max_{x\in I_{j}^{s}}\tilde{f}_{s}(y)\neq0$ is bounded from
above by $2R/k_{s}$. Therefore by Hölder inequality,$ $ 
\[
\sum_{j\in\mathbb{Z}}\max_{y\in I_{j}^{s}}\tilde{f}_{s}(y)^{q}\ge\left(\frac{2R}{k_{s}}\right)^{1-q}\left(\sum_{j\in\mathbb{Z}}\max_{y\in I_{j}^{s}}\tilde{f}_{s}(y)\right)^{q}=2Rk_{s}^{-1}\ell(s)^{q},\ \mbox{ where }\ell(s)=\frac{k_{s}\sum_{j\in\mathbb{Z}}\max_{y\in I_{j}^{s}}\tilde{f}_{s}(y)}{2R}.
\]
Consider the following three cases. 
\begin{itemize}
\item if $0\le\ell(s)\le R^{\frac{1}{q}}$, then 
\[
\sum_{j\in\mathbb{Z}}\left(\frac{1}{2}k_{s}^{1-\frac{1}{q}}\max_{y\in I_{j}^{s}}\tilde{f}_{s}(y)\right)^{q}+\left(cR\right)^{q}\ge(cR)^{q}\ge\left(c/3\right)^{q}(R+2R\ell(s))^{\frac{q^{2}}{q+1}};
\]

\item if $R^{\frac{1}{q}}\le\ell(s)\le R^{1-\frac{1}{q}}k_{s}^{\frac{2}{q}-1}$,
then it's necessary that $R\le k_{s}$. From the metric description
in Subsection \ref{sub:one-copy}, $R\le k_{s}$ implies $\sum_{j\in\mathbb{Z}}\max_{y\in I_{j}^{s}}\tilde{f}_{s}(y)\le1$.
Then in this case 
\[
\sum_{j\in\mathbb{Z}}\left(\frac{1}{2}k_{s}^{1-\frac{1}{q}}\max_{y\in I_{j}^{s}}\tilde{f}_{s}(y)\right)^{q}+\left(cR\right)^{q}\ge(cR)^{q}\ge\left(c/3\right)^{q}(R+2R\ell(s))^{q};
\]

\item if $\ell(s)>R^{1-\frac{1}{q}}k_{s}^{\frac{2}{q}-1}$, from the second
item we only need to consider $R>k_{s}$. Recall that $\ell(s)\le l_{s}\le C_{\varepsilon}k_{s}^{\theta+\varepsilon}$.
It follows that $R\le\left(C_{\varepsilon}k_{s}^{\theta+\varepsilon+1-\frac{2}{q}}\right)^{\frac{q}{q-1}}$.
Then in this case 
\begin{align*}
\sum_{j\in\mathbb{Z}}\left(\frac{1}{2}k_{s}^{1-\frac{1}{q}}\max_{y\in I_{j}^{s}}\tilde{f}_{s}(y)\right)^{q}+\left(cR\right)^{q} & \ge\frac{1}{2^{q-1}}k_{s}^{q-1}Rk_{s}^{-1}\ell(s)^{q}\ge c_{\varepsilon}'\left(R\ell(s)\right)^{q\gamma(\varepsilon)}\\
\mbox{where }\gamma(\varepsilon) & =\frac{\theta+\varepsilon+1-\frac{2}{q}}{\left(2-\frac{1}{q}\right)\left(\theta+\varepsilon\right)+1-\frac{2}{q}}.
\end{align*}
In the last inequality, we used $\ell(s)>R^{1-\frac{1}{q}}k_{s}^{\frac{2}{q}-1}$
and $R\le\left(C_{\varepsilon}k_{s}^{\theta+\varepsilon+1-\frac{2}{q}}\right)^{\frac{q}{q-1}}$. 
\end{itemize}
Note that since $\theta>\frac{1}{q}$, when $\varepsilon$ is sufficiently
small, $\gamma(\varepsilon)<\frac{q}{q+1}$, thus the worst case is
represented by the third item, we have 
\[
\left\Vert \Phi_{\gamma,q}(v)\right\Vert _{q}^{q}\ge\sum_{s\le s_{0}(v)}\frac{c_{\varepsilon}'}{\gamma(s)^{q}}\left(R+R\ell(s)\right)^{q\gamma(\varepsilon)}.
\]
Recall that by the metric estimate Proposition \ref{metric}, 
\[
\left|v\right|_{\Delta}\le500\left(R+\sum_{s\le s_{0}(v)}k_{s}\sum_{j\in\mathbb{Z}}\max_{y\in I_{j}^{s}}\tilde{f}_{s}(y)\right)=500\left(R+\sum_{s\le s_{0}(v)}2R\ell(s)\right),
\]
and $s_{0}(v)\leq\log_{2}R$ by Assumption \ref{k_growth}. Combine
with Lemma \ref{dihedral-increment}, we have for any $u\in\Delta$,
\begin{align*}
\left\Vert \Phi_{\gamma,q}(u)-\Phi_{\gamma,q}(uv)\right\Vert _{2} & \ge\frac{c'}{\log_{2}^{\frac{1}{2}+\epsilon}(R)}\max_{s\le s_{0}(v)}\left(R+R\ell(s)\right)^{\gamma(\varepsilon)}\\
 & \ge\frac{c'}{\log_{2}^{\frac{1}{2}+\epsilon}(R)}\left(\frac{|v|_{\Delta}}{1000\log_{2}(R)}\right)^{\gamma(\varepsilon)}.
\end{align*}
where $c'>0$ is a constant depending on $\theta$ and $\epsilon$.
Sending $\varepsilon\to0$, we conclude that when $\theta\in\left(\frac{1}{q},\infty\right]$,
\[
\alpha_{q}^{\ast}(\Delta)\ge\frac{\theta+1-\frac{2}{q}}{\left(2-\frac{1}{q}\right)\theta+1-\frac{2}{q}}.
\]
Note that the formula is simplified when $q=2$, namely $\alpha_{2}^{\ast}(\Delta)\ge\frac{2}{3}$.
Combine with the fact that $\alpha_{q}^{\ast}(\Delta)\ge\alpha_{2}^{\ast}(\Delta)$
we obtain the statement.

\textbf{lower bound in (i)}

Since $\ell^{2}$ embeds isometrically in all $L_{p}$, $p\ge1$, it
follows that 
\[
\alpha_{p}^{\ast}(\Delta)\ge\alpha_{2}^{\ast}(\Delta)\ge\max\left\{ \frac{1}{1+\theta},\frac{2}{3}\right\} .
\]
This completes the proof of Theorem \ref{cotype2}.

\end{proof}

\begin{example}\label{ex-dihedral-upper}[Proof of Theorem \ref{dihedral-exponent}]

Consider the construction of $\Delta$ with $\Gamma_{s}=D_{2l_{s}}$,
the parameters $\left\{ k_{s}\right\} $, $\left\{ l_{s}\right\} $
are chosen to be $k_{s}=2^{\beta s}$, $l_{s}=2^{\iota s}$ with $\beta>1,\iota\ge0$.
Then $\theta=\iota/\beta$.

For $p\in[1,2]$, Theorem \ref{cotype2} implies that 
\[
\alpha_{p}^{\ast}(\Delta)=\max\left\{ \frac{1}{1+\theta},\frac{2}{3}\right\} ,
\]
which can take any value in $\left[\frac{2}{3},1\right]$.

For $q>2$, $\theta>\frac{1}{q}$ the upper and lower bound in Theorem
\ref{cotype2} don't match up. But in some region of parameters we
can still compare it to Hilbert compression exponent. For $\theta\in\left(\frac{1}{q},1\right)$,
we have 
\[
\alpha_{q}^{\ast}(\Delta)\ge\frac{\theta+1-\frac{2}{q}}{\left(2-\frac{1}{q}\right)\theta+1-\frac{2}{q}}>\alpha_{2}^{\ast}(\Delta)=\max\left\{ \frac{1}{1+\theta},\frac{2}{3}\right\} .
\]
In particular, we can take $\theta=\frac{1}{2}$, then the corresponding
diagonal product $\Delta_{1}$ satisfies 
\[
\alpha_{q}^{\ast}(\Delta_{1})\ge\frac{3q-4}{4q-5}>\alpha_{2}^{\ast}(\Delta_{1})=\frac{2}{3}.
\]

\end{example}

\section{Discussion and some open problems}

\label{problems}

The groups of Theorem \ref{main-expanders} are diagonal products of lamplighter groups. In particular they contain many torsion elements and admit many quotients.

\begin{problem}
Find solutions to the inverse problems for speed, entropy, return probability, isoperimetric profile or compression in the class of torsion-free groups or in the class of simple groups.
\end{problem}

In Theorem \ref{possible-speed-linear}, we imposed the regularity
assumption on $\varrho$ that $\varrho(n)/\sqrt{n}$ is non-decreasing.
This is not always satisfied, it is possible to construct examples
of groups where the speed function is roughly constant over certain
long time intervals. The following question asks if this regularity
assumption can be dropped.

\begin{problem}

Let $\varrho:[1,\infty)\to[1,\infty)$ be a non-decreasing subadditive function satisfying
$\varrho(x)\ge\sqrt{x}$ for all $x$. Is there a group $G$ and a
symmetric probability measure $\mu$ of finite support on $G$ such
that 
\[
L_{\mu}(n)\simeq\varrho(n)?
\]

\end{problem}

Proposition \ref{entropy-speed-function} only partially answers the
question what joint behavior of speed and entropy can occur. Further,
the question of possible joint behavior of speed, entropy and return
probability, even restricting to group of exponential volume growth,
is wide open, see \cite[Question 6]{Amir2015}. Solving the following
problem would be a step in this direction.

\begin{problem}

Find an open set $\mathcal{O}$ in $(0,1)^{3}$ such that for any
point $(\alpha,\beta,\gamma)\in\mathcal{O}$, there exists a finitely
generated group $G$ and a symmetric probability measure of finite
support on $G$, such that $(\alpha,\beta,\gamma)$ is the exponent
of $\left(L_{\mu}(n),H_{\mu}(n),-\log\mu^{(2n)}(e)\right)$.

\end{problem}

In \cite{Gournay2014}, Gournay showed that if a simple
random walk on $G$ satisfies that for some $C>0$, $\gamma\in(0,1)$,
$\phi^{(2n)}(e)\ge\exp\left(-Cn^{\gamma}\right)$, and an off-diagonal
decay bound 
\begin{equation}
\ \phi^{(2n)}(g)\le C\phi^{(2n)}(e)\exp\left(-\frac{C|g|^{2}}{n}\right)\mbox{ for all }g\in G,\ n\in\mathbb{N},\label{eq: (OD)}
\end{equation}
then the Hilbert compression exponent $\alpha_{2}^{\#}(G)\ge1-\gamma$.
The off-diagonal decay assumption (\ref{eq: (OD)}) is difficult to
check in general. We illustrate a family of examples where it is not
valid. In Table \ref{table}, take the diagonal product $\Delta$
with parameters $\left(k_{s}=2^{2s}\right)$ and $\left\{ \Gamma_{s}\right\} $
expanders with $\mbox{diam}(\Gamma_{s})\simeq2^{2\theta}$. When $\theta>1$,
we have 
\[
-\log q^{(2n)}(e)\simeq n^{\frac{1+\theta}{3+\theta}},\ \alpha_{2}^{\ast}(\Delta)=\frac{1}{1+\theta}<1-\frac{1+\theta}{3+\theta}.
\]
Therefore we deduce from \cite[Theorem 1.4]{Gournay2014} that in this case simple
random walk on $\Delta$ fails the off-diagonal upper bound (\ref{eq: (OD)}).
On the other hand, we have the strict inequality
\[
\alpha_{2}^{\ast}(\Delta)=\frac{1}{1+\theta}>\frac{1}{2+\theta}=\frac{1-\gamma}{1+\gamma},
\]
showing the gap is far from the lower bound of \cite[Theorem 1.1]{Gournay2014}. A better understanding of the relation between return probability and
compression remains open.

\begin{problem}

Let $G$ be a finitely generated infinite group such that for some
$\gamma\in\left(0,1\right)$, simple random walk satisfies $\phi^{(2n)}(e)\ge\exp\left(-Cn^{\gamma}\right)$
for some $C>0$. Find the sharp lower bound for $\alpha_{2}^{\#}(G)$ in terms
of $\gamma$ and explicit examples where the bound is sharp.

\end{problem}

In Theorem \ref{HwrZformula} we give an explicit formula that relates
equivariant $L_{p}$-compression exponents of $H\wr\mathbb{Z}$ and
$H$ when $p\in[1,2]$. Less is known about compression exponent of
embeddings into $L_{p}$ with $p>2$. In particular, the following
problem is open.

\begin{problem}

For $p>2$, is there an explicit formula that connects equivariant
compression exponents $\alpha_{p}^{\#}(H\wr\mathbb{Z})$ and $\alpha_{p}^{\#}(H)$?

\end{problem}

The problem of determining $\alpha_{p}^{\#}(\Delta)$ for $\Delta$
constructed with dihedral groups as discussed in Section \ref{sec:dihedral-compression}
is related to the previous problem.

\begin{problem}

Determine the equivariant compression exponent $\alpha_{p}^{\#}(\Delta)$,
$p>2$, where $\Delta$ is the diagonal product constructed with dihedral
groups $\left\{ D_{2l_{s}}\right\} $.

\end{problem}

\appendix

\section{Some auxiliary facts about excursions}

\label{app: excursions}

In this section we recall some classical facts about local time and
excursions of standard simple random walk on $\mathbb{Z}$. Let $\left\{ S_{k}\right\} $
denote the standard simple random walk on $\mathbb{Z}$, starting
at $S_{0}=0$. Let $L(x,n)$ denote the local time of the random walk
at site $x$, 
\[
L(x,n)=\#\left\{ k:\ 0<k\le n,\ S_{k}=x\right\} .
\]
The distribution of $L(x,n)$ is known explicitly (\cite[Theorem 9.4 ]{Revesz2013}),
for $x=0,1,2,\ldots$ 
\[
P_{0}\left(L(x,n)=m\right)=\begin{cases}
\frac{1}{2^{n-m+1}}\left(\begin{array}{c}
n-m+1\\
(n+x)/2
\end{array}\right) & \mbox{if }n+x\mbox{ if even},\\
\frac{1}{2^{n-m}}\left(\begin{array}{c}
n-m\\
(n+x-1)/2
\end{array}\right) & \mbox{if }n+x\mbox{ if odd}.
\end{cases}.
\]
Let $\rho_{0}=0$ and $\rho_{m}=\min\left\{ j>\rho_{m-1}:\ S_{j}=0\right\} $.
Then $\rho_{1}$, $\rho_{2}-\rho_{1}$,... record the time duration
of the excursions from $0$, they form a sequence of i.i.d random
variables with distribution 
\[
P_{0}\left(\rho_{1}>2n\right)=P_{0}(S_{2n}=0)\sim\frac{1}{\sqrt{4\pi n}}.
\]
The chance that an excursion from $0$ crosses $k$ is (\cite[Theorem 9.6]{Revesz2013})
\[
P_{0}\left(\max_{0\le i\le\rho_{1}}S_{i}\ge k\right)=\frac{1}{2k},\ k=1,2,\ldots.
\]
For $x\in\mathbb{Z}$, let $\rho_{1}(x)=\min\left\{ j>0:\ S_{j}=x\right\} $
denote the first time the random walk visits $x$, $T(k,x,n)$ be
the number of excursions away from $x$ that cross $x-k$ and are
completed before time $n$. We need estimates on the moments $E_{0}\left[T(k,x,n)^{q}\right]$
, $0<q\le1$.

\begin{lemma}\label{traverse-upper}

There exists constant $C>0$ such that for all $k,n\in\mathbb{N}$,
$x\in\mathbb{Z}$, 
\[
E_{0}\left[T(k,x,n)\right]\le\frac{C\sqrt{n}}{k}\exp\left(-\frac{x^{2}}{2n}\right).
\]

\end{lemma}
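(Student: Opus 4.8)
The statement to prove is the bound $E_0[T(k,x,n)] \le \frac{C\sqrt n}{k}\exp(-x^2/2n)$ on the expected number of completed left-excursions from $x$ that cross $x-k$ by time $n$. The plan is to split the estimate into two independent ingredients: (1) the cost of getting the walk to site $x$ at all, which is where the Gaussian factor $e^{-x^2/2n}$ comes from, and (2) once the walk is sitting at $x$, the expected number of its excursions that reach $x-k$, which is where the $\frac{1}{k}$ factor comes from. More precisely, I would first condition on the local time $L(x,n)$ at $x$: each visit to $x$ starts a fresh excursion, and by the strong Markov property and the reflection-principle identity $P_0(\max_{0\le i\le \rho_1} S_i \ge k) = \frac{1}{2k}$ (quoted just before the lemma, with the obvious symmetric statement for excursions reaching $x-k$), each such excursion independently crosses $x-k$ with probability exactly $\frac{1}{2k}$. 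Hence
\[
E_0\big[T(k,x,n)\big] \le E_0\big[L(x,n)\big]\cdot \frac{1}{2k}.
\]
(One must be slightly careful that the last, possibly incomplete, excursion is simply discarded, which only decreases $T$, so the inequality is safe.)

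The remaining task is then the classical estimate $E_0[L(x,n)] \le C\sqrt n\, e^{-x^2/2n}$. I would obtain this by writing $E_0[L(x,n)] = \sum_{j=1}^{n} P_0(S_j = x)$ and invoking the standard local central limit / reflection bound $P_0(S_j = x) \le \frac{C'}{\sqrt j}\exp(-x^2/2j)$ for $j$ of the right parity (and $P_0(S_j=x)=0$ otherwise). Since $e^{-x^2/2j} \le e^{-x^2/2n}$ for $j \le n$, we get $E_0[L(x,n)] \le e^{-x^2/2n}\sum_{j=1}^n \frac{C'}{\sqrt j} \le 2C'\sqrt n\, e^{-x^2/2n}$, comparing the sum to $\int_0^n t^{-1/2}\,dt = 2\sqrt n$. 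Combining with the first display gives the lemma with $C = C'$ (after absorbing the factor $\tfrac12$). Note that translation invariance of the simple random walk on $\mathbb Z$ lets us reduce to $x \ge 0$ and to a walk started at $0$ hitting $x$, so no generality is lost by the statement's normalization $S_0 = 0$.

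The only genuinely delicate point is the uniform Gaussian upper bound $P_0(S_j = x) \le \frac{C'}{\sqrt j}e^{-x^2/2j}$ valid for \emph{all} $j$ and $x$ simultaneously (not just in the diffusive regime $|x| \lesssim \sqrt j$): for $|x|$ comparable to or larger than $j$ one needs the large-deviation tail rather than the local CLT, but in that regime $e^{-x^2/2j}$ is itself $\ge e^{-x|x|/2j}$ and a crude bound $P_0(S_j=x)\le 2^{-|x|}\binom{j}{(j+x)/2}$ together with Stirling suffices; alternatively one can simply cite a standard reference for the bound $P_0(S_j = x) \le C' j^{-1/2} e^{-x^2/(2j)}$ (e.g.\ Lawler--Limic, or the companion estimates in \cite{Revesz2013} already used in this appendix). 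I would take the latter route to keep the argument short, since this inequality is entirely classical. Everything else is elementary: conditioning on local time, the excursion-crossing probability, and comparing a sum to an integral.
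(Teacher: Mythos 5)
Your proof is correct in substance but takes a slightly different route from the paper. You reduce $E_0[T(k,x,n)]$ directly to $\tfrac{1}{2k}E_0[L(x,n)]$ and then bound $E_0[L(x,n)]=\sum_{j\le n}P_0(S_j=x)$ via the uniform Gaussian local-CLT estimate $P_0(S_j=x)\le C'j^{-1/2}e^{-x^2/2j}$. The paper instead uses a dyadic decomposition of the number of excursions completed by time $n$, then factors the bound through $P_0(\rho_1(x)\le n)\cdot E_0[L(n,0)]$, finishing with the reflection-principle tail bound $P_0(\max_{t\le n}S_t\ge|x|)\le 2e^{-x^2/2n}$ and the trivial $E_0[L(n,0)]\le C\sqrt n$. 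The paper's route is a touch more elementary precisely where you flag your only ``delicate'' step: it never needs the uniform LCLT Gaussian bound (which requires a Stirling/large-deviation argument once $|x|\gtrsim\sqrt j$), only the maximum tail of the walk. Your route, on the other hand, avoids the slightly awkward dyadic bookkeeping and gives essentially the same constants.

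One small wrinkle you should patch: the phrase ``condition on $L(x,n)$ and each excursion independently crosses $x-k$ with probability $\tfrac{1}{2k}$'' is not literally correct, because conditioning on $L(x,n)=m$ biases the excursion durations, and duration is correlated with whether an excursion crosses $x-k$. The inequality $E_0[T(k,x,n)]\le\tfrac{1}{2k}E_0[L(x,n)]$ is nonetheless correct, but the clean justification is the strong Markov property applied at each stopping time $\rho_i(x)$ rather than conditioning on the total local time: write $T(k,x,n)\le\sum_{i\ge 1}\xi_i\mathbf 1_{\{\rho_i(x)\le n\}}$ where $\xi_i$ is the indicator that the $i$-th excursion crosses $x-k$; since $\{\rho_i(x)\le n\}\in\mathcal F_{\rho_i(x)}$ and $\xi_i$ is a function of the post-$\rho_i(x)$ path, $E_0[\xi_i\mathbf 1_{\{\rho_i(x)\le n\}}]=\tfrac{1}{2k}P_0(\rho_i(x)\le n)$, and summing over $i$ gives $\tfrac{1}{2k}E_0[L(x,n)]$. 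With that rephrasing, your argument is airtight and a perfectly good alternative to the paper's.
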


\begin{proof}

Let $\rho_{m}(x)=\min\left\{ j>\rho_{m-1}(x):\ S_{j}=x\right\} $
be the $m$-th time the random walk visits $x$. Then 
\begin{align*}
E_{0}\left[T(k,x,n)\right] & \le\sum_{j\ge0}E_{0}\left[T\left(k,x,\rho_{2^{j}+1}(x)\right)\mathbf{1}_{\left\{ \rho_{2^{j}}(x)\le n<\rho_{2^{j}+1}(x)\right\} }\right]\\
 & \le\sum_{j\ge0}E_{0}\left[T\left(k,x,\rho_{2^{j}+1}(x)\right)\mathbf{1}_{\left\{ \rho_{2^{j}}(x)\le n\right\} }\right].
\end{align*}
Conditioned on the event $\left\{ \rho_{2^{j}}(x)\le n\right\} $,
the random variable $T\left(k,x,\rho_{2^{j}+1}(x)\right)$ is stochastically
dominated by a binomial random variable with parameter $2^{j+1}$
and $\frac{1}{2k}$. Therefore 
\begin{align*}
\sum_{j\ge0}E_{0}\left[T\left(k,x,\rho_{2^{j}+1}(x)\right)\mathbf{1}_{\left\{ \rho_{2^{j}}(x)\le n\right\} }\right] & \le\sum_{j\ge0}\frac{2^{j+1}}{2k}P_{0}\left(\rho_{2^{j}}(x)\le n\right)\\
 & \le\sum_{j\ge0}\frac{2^{j+1}}{2k}P_{0}\left(\rho_{1}(x)\le n\right)P_{0}\left(L(n,0)\ge2^{j}\right)\\
 & \le\frac{1}{2k}P_{0}(\rho_{1}(x)\le n)E_{0}\left(4L(n,0)\right).
\end{align*}
Plug in the estimates 
\begin{align*}
P_{0}(\rho_{1}(x) & \le n)=P_{0}\left(\max_{0\le t\le n}S_{t}\ge|x|\right)\le2\exp\left(-\frac{x^{2}}{2n}\right),\\
E_{0}(L(n,0)) & =\sum_{t=0}^{n}P_{0}(S_{t}=0)\le Cn^{\frac{1}{2}},
\end{align*}
we obtain the statement.

\end{proof}

\begin{lemma}\label{traverse-lower}

There exists a constant $c>0$ such that for all $x\in\mathbb{Z}$,
$k,n\in\mathbb{N}$ satisfying $k\le c^{2}n^{\frac{1}{2}}$, 
\[
P_{0}\left(T(k,x,n)\ge\frac{c\sqrt{n}}{4k}\right)\ge\frac{1}{2}P_{0}\left(L(x,n/2)\ge1\right).
\]

\end{lemma}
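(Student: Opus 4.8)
The plan is to condition on the walk reaching $x$ during the first $n/2$ steps and then, using the strong Markov property, count excursions away from $x$ in the remaining time, organised as a simple union bound of two favourable events.

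\textbf{Step 1: restart at the first visit to $x$.} Let $\rho_{1}(x)=\min\{j\ge1:\ S_{j}=x\}$, so that $\{L(x,n/2)\ge1\}=\{\rho_{1}(x)\le n/2\}$. Before $\rho_{1}(x)$ the walk has not been at $x$, so every completed excursion away from $x$ begins after $\rho_{1}(x)$. On the event $\{\rho_{1}(x)\le n/2\}$, the strong Markov property gives that $(S_{\rho_{1}(x)+t}-x)_{t\ge0}$ is a fresh simple random walk from $0$; any excursion away from $x$ completed within $n/2$ steps of $\rho_{1}(x)$ is completed before time $\rho_{1}(x)+n/2\le n$. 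So it suffices to prove that a simple random walk started at $x$ has, with probability $\ge\tfrac12$, at least $\tfrac{c\sqrt{n}}{4k}$ excursions away from $x$ which cross $x-k$ and are completed by time $n/2$. Note also that the constraint $1\le k\le c^{2}\sqrt{n}$ forces $n\ge c^{-4}$, so we are automatically in a large-$n$ regime.

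\textbf{Step 2: enough excursions are completed in time.} Put $M=\lceil c\sqrt{n}\rceil$ and let $R_{m}$ be the number of returns to $x$ up to time $m$ for the walk started at $x$, i.e. the local time at $x$; by translation invariance its law does not depend on $x$, and $\{R_{n/2}\ge M\}$ is exactly the event $A$ that the first $M$ excursions away from $x$ are completed by time $n/2$. Using the explicit distribution of the local time quoted from R\'ev\'esz (equivalently, the local time CLT: $R_{n/2}/\sqrt{n}$ converges to an almost surely positive limit), there is a universal $c>0$, small enough, with $P_{x}(A)=P_{x}(R_{n/2}\ge M)\ge\tfrac34$ uniformly over $x$ and all $n$ in the relevant range.

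\textbf{Step 3: a positive fraction of those excursions cross $x-k$.} The successive excursions away from $x$ are i.i.d., and each crosses $x-k$ with probability $p=\tfrac1{2k}$ by the excursion-maximum identity $P_{0}(\max_{0\le i\le\rho_{1}}S_{i}\ge k)=\tfrac1{2k}$ quoted above (applied with the sign reversed). Hence the number $\mathcal N$ of the first $M$ excursions crossing $x-k$ is $\mathrm{Binomial}(M,p)$, with mean $Mp=\tfrac{M}{2k}\ge\tfrac{c\sqrt n}{2k}\ge\tfrac1{2c}$ by $k\le c^{2}\sqrt n$. Let $B=\{\mathcal N\ge\tfrac{Mp}{2}\}=\{\mathcal N\ge\tfrac{M}{4k}\}$. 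By Chebyshev,
\[
P_{x}(B^{c})\le\frac{Mp(1-p)}{(Mp/2)^{2}}\le\frac{4}{Mp}\le 8c\le\tfrac14
\]
provided $c\le\tfrac1{32}$. On $A\cap B$, at least $\mathcal N\ge\tfrac{M}{4k}\ge\tfrac{c\sqrt n}{4k}$ of the first $M$ excursions away from $x$ both cross $x-k$ and are completed by time $n/2$, so $T(k,x,n)\ge\tfrac{c\sqrt n}{4k}$. Since $P_{x}((A\cap B)^{c})\le P_{x}(A^{c})+P_{x}(B^{c})\le\tfrac12$, we get $P_{x}(A\cap B)\ge\tfrac12$, and combining with Step 1,
\[
P_{0}\!\left(T(k,x,n)\ge\tfrac{c\sqrt n}{4k}\right)\ge P_{0}(\rho_{1}(x)\le n/2)\,P_{x}(A\cap B)\ge\tfrac12\,P_{0}(L(x,n/2)\ge1).
\]

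The main obstacle is Step 2: one needs a lower bound on the local time at $x$, with probability bounded away from $\tfrac12$, that is uniform in $n$ (and in $x$). A naive first-moment/union estimate on the durations of the $M\asymp\sqrt n$ excursions is too weak because an excursion length has infinite mean, so one genuinely uses the explicit local-time law (or its scaling limit); the remaining ingredients are a routine union bound and a Chebyshev estimate.
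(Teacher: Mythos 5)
Your proposal follows essentially the same route as the paper: a strong-Markov restart at the first visit to $x$, then show that roughly $\sqrt n$ excursions are completed by time $n/2$ with probability $\ge\tfrac34$ (your event $A$ is exactly the paper's $\{\rho_{\lfloor c\sqrt n\rfloor}(0)\le n/2\}$), and finally a concentration bound on the $\mathrm{Binomial}(M,\tfrac1{2k})$ count of excursions that cross $x-k$ (Chebyshev in your case, Bernstein in the paper; either suffices). The only place to be slightly careful is Step 2: distributional convergence of $R_{n/2}/\sqrt n$ does not by itself give a bound uniform over all $n$ and $c$ small; what is actually needed, and what the paper invokes via one-sided $\tfrac12$-stable theory, is the non-asymptotic tail estimate $P_0(\rho_M(0)>t)\le CM/\sqrt t$, which is precisely the quantitative content behind your parenthetical appeal to the "explicit local-time law."
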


\begin{proof}

Conditioned on the event $\{\rho_{1}(x)=t\}$, $0\le t<n$, the distribution
of $T(k,x,n)$ is the same as $T(k,0,n-t)$. Therefore for any $m>0$,
\[
P_{0}\left(T(k,x,n)\ge m\right)\ge P_{0}\left(T(k,0,n/2)\ge m\right)P_{0}\left(L(x,n/2)\ge1\right).
\]
Now we show that there exists a constant $c>0$ such that for $k\le c^{2}n^{\frac{1}{2}}$,
\[
P_{0}\left(T(k,0,n/2)\ge\frac{c\sqrt{n}}{4k}\right)\ge\frac{1}{2}.
\]
Note that 
\[
\left\{ T(k,0,n/2)\ge m\right\} \supset\left\{ \rho_{\left\lfloor cn^{\frac{1}{2}}\right\rfloor }(0)\le\frac{n}{2}\right\} \cap\left\{ T\left(k,0,\rho_{\left\lfloor cn^{\frac{1}{2}}\right\rfloor }\right)\ge m\right\} .
\]
For ease of notation, in what follows write $l=\left\lfloor cn^{\frac{1}{2}}\right\rfloor $.
Since $\rho_{l}(0)$ is sum of $l$ i.i.d. random variables with distribution
\[
P_{0}\left(\rho_{1}>2t\right)=P_{0}(S_{2t}=0)\sim\frac{1}{\sqrt{4\pi t}},
\]
by classical theory of sum of i.i.d. $\alpha$-stable variables (here
$\alpha=\frac{1}{2}$), there exists constant $C=C(\alpha)$ such
that 
\[
P_{0}\left(\rho_{l}(0)\ge t\right)\le C\frac{l}{t^{\frac{1}{2}}}.
\]
Therefore 
\[
P_{0}\left(\rho_{l}(0)\ge\frac{n}{2}\right)\le\sqrt{2}Cc.
\]
For the other term, $ $$T\left(k,0,\rho_{l}\right)$ is binomial
with parameters $l$ and $1/2k$, therefore by Bernstein inequality
(see for example {[}Theorem 2.3 Revesz{]}), 
\[
P_{0}\left(T\left(k,0,\rho_{l}\right)\le\frac{cn^{\frac{1}{2}}}{4k}\right)\le2\exp\left(-\frac{l}{4^{3}k}\right).
\]
Then 
\[
P_{0}\left(T(k,0,n/2)\ge\frac{c\sqrt{n}}{4k}\right)\ge1-\sqrt{2}Cc-2\exp\left(-\frac{cn^{\frac{1}{2}}}{4^{3}k}\right).
\]
Choose $c$ sufficiently small so that $\sqrt{2}Cc\le1/4$, $\exp\left(-\frac{1}{4^{3}c}\right)\le1/8$,
we obtain the statement.

\end{proof}

\section{Approximation of functions}

\label{approximation}

For $p_{1},p_{2}\geq0$, consider the following space $\mathcal{C}_{p_{1},p_{2}}$
of continuous functions between $x^{p_{1}}$ and $x^{p_{2}}$, 
\[
\mathcal{C}_{p_{1},p_{2}}=\left\{ f:[1,\infty)\rightarrow[1,\infty):\begin{array}{l}
f\textrm{ is continuous, }f(1)=1\\
\frac{f(x)}{x^{p_{1}}}\textrm{ is non-decreasing}\\
\frac{x^{p_{2}}}{f(x)}\textrm{ is non-increasing}
\end{array}\right\} .
\]
Equivalently, $\mathcal{C}_{p_{1},p_{2}}$ is the set of functions
with $f(1)=1$ satisfying 
\[
a^{p_{1}}f(x)\le f(ax)\le a^{p_{2}}f(x)\mbox{ for all }a,x\ge1.
\]

We aim to approximate functions in $\mathcal{C}_{p_{1},p_{2}}$ up
to multiplicative constants by piecewise extremal functions.

Given two unbounded sequences $(k_{s}),(l_{s})$ of real numbers,
possibly finite with last value infinity, define the function 
\begin{eqnarray}
\tilde{f}(x)=\tilde{f}_{(k_{s}),(l_{s})}(x)=\left\{ \begin{array}{ll}
l_{s} & \textrm{ for }k_{s}l_{s}\leq x\leq k_{s+1}l_{s},\\
\frac{x}{k_{s+1}} & \textrm{ for }k_{s+1}l_{s}\leq x\leq k_{s+1}l_{s+1},
\end{array}\right.\label{tilde}
\end{eqnarray}
Similarly define 
\begin{eqnarray}
\bar{f}(x)=\bar{f}_{(k_{s}),(l_{s})}(x)=l_{s}+\frac{x}{k_{s+1}}\mbox{ for }k_{s}l_{s}\leq x\leq k_{s+1}l_{s+1}.\label{bar}
\end{eqnarray}

\begin{lemma}\label{lemma-approximation}For any $f$ in $\mathcal{C}_{0,1}$
and for any $m_{0}>1$, there exists two sequences $(k_{s}),(l_{s})$
of real numbers, possibly finite with last value infinity, such that
$k_{s+1}\geq m_{0}k_{s}$ and $l_{s+1}\geq m_{0}l_{s}$ for all $s$,
the functions defined above satisfy 
\[
\tilde{f}(x)\simeq_{m_{0}}f(x)\mbox{ and }\bar{f}(x)\simeq_{2m_{0}}f(x).
\]

Moreover if for some $\alpha>\alpha_{0}>0$ the function $\frac{f(x)}{\log^{\alpha}(x)}$
is non-decreasing, it is possible to find such functions with sequences
$(k_{s}),(l_{s})$ satisfying $\log k_{s}\leq l_{s}^{\frac{1}{\alpha_{0}}}$
for all $s$. \end{lemma}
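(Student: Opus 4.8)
The plan is to build the two sequences $(k_s)$, $(l_s)$ by a greedy doubling procedure driven by the function $f$, then verify the three claimed equivalences one interval at a time. First I would set up the scales: define $l_s$ to be, roughly, the value of $f$ at a geometric progression of arguments, and $k_{s+1}$ to be chosen so that the ``linear piece'' $x/k_{s+1}$ of $\tilde f$ joins the flat piece $l_s$ to the next flat piece $l_{s+1}$ at the correct abscissae. Concretely, having chosen $k_s$ and $l_s$, one wants the flat segment $[k_s l_s, k_{s+1}l_s]$ and the linear segment $[k_{s+1}l_s, k_{s+1}l_{s+1}]$ to sit underneath (and within a bounded multiple of) $f$; since $f\in\mathcal C_{0,1}$ means $f$ is non-decreasing and $x\mapsto x/f(x)$ is non-decreasing, on the flat segment we need $f$ to grow by at most the multiplicative error we allow, and on the linear segment we need $f$ to grow roughly linearly, which is exactly what the sublinearity $f(ax)\le a f(x)$ and the monotonicity of $x/f(x)$ give us. I would pick the transition points as consecutive powers of $m_0$ (or a fixed power of $m_0$), so that $k_{s+1}\ge m_0 k_s$ and $l_{s+1}\ge m_0 l_s$ hold by construction, handling the degenerate cases (where $f$ eventually becomes linear, forcing the last $l_s=\infty$, or eventually flat, forcing the last $k_s=\infty$) separately.

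Next I would check $\tilde f(x)\simeq_{m_0} f(x)$. On a flat interval $k_s l_s\le x\le k_{s+1}l_s$ we have $\tilde f(x)=l_s$; by the choice of the endpoints and $f$ non-decreasing, $f(k_s l_s)\le f(x)\le f(k_{s+1}l_s)$, and one arranges (by taking the transition abscissae to be values where $f$ has grown by a factor at most $m_0$, using $f(ax)\le af(x)$ with $a$ a bounded power of $m_0$) that $l_s\simeq_{m_0} f(k_s l_s)$ and that $f(k_{s+1}l_s)\le m_0 l_s$. On a linear interval $k_{s+1}l_s\le x\le k_{s+1}l_{s+1}$ we have $\tilde f(x)=x/k_{s+1}$, which at the left endpoint equals $l_s$ and at the right endpoint equals $l_{s+1}$; since $x\mapsto x/f(x)$ is non-decreasing, $f(x)/x$ is non-increasing, so $f(x)\le x f(k_{s+1}l_s)/(k_{s+1}l_s)\simeq m_0 (x/k_{s+1})$ and similarly $f(x)\ge (x/k_{s+1})\cdot$(bounded constant) from the lower endpoint. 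Then $\bar f(x)=\tilde f(x)+\bigl(\text{the other piece}\bigr)$ and since on each interval one of $l_s$, $x/k_{s+1}$ dominates the other up to a factor $2$ (they are equal at the interval endpoints and each is monotone), we get $\tilde f(x)\le \bar f(x)\le 2\tilde f(x)$, whence $\bar f\simeq_{2m_0} f$.

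For the moreover clause, I would additionally arrange the transition abscissae so that the flat intervals are long enough: if $f/\log^\alpha$ is non-decreasing with $\alpha>\alpha_0$, then wherever $f$ has value $l_s$ the argument is at least $\exp(c\, l_s^{1/\alpha})$ for a suitable constant, because $f(x)\ge (\text{const})\log^\alpha x$ forces $\log x\le (\text{const})\, f(x)^{1/\alpha}=(\text{const})\, l_s^{1/\alpha}$ — wait, that is the wrong direction; rather, from $f(x)\le (\text{const})\log^\alpha x$ on the relevant range (which one gets from $f/\log^\alpha$ non-decreasing by comparing with a later point, or more simply from $f\in\mathcal C_{0,1}$ combined with the hypothesis) one deduces $\log x \ge (\text{const})\, f(x)^{1/\alpha} = (\text{const})\, l_s^{1/\alpha}$, and then choosing $k_s$ no larger than $\exp(l_s^{1/\alpha_0})$ (possible precisely because $\alpha_0<\alpha$ gives slack) keeps $k_s l_s$ inside the admissible window while forcing $\log k_s\le l_s^{1/\alpha_0}$. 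The main obstacle, I expect, is bookkeeping the constants so that a single multiplicative error $m_0$ (not some larger power of $m_0$) suffices across both the flat and linear pieces simultaneously, and making the greedy construction robust to the two degenerate regimes; the analytic input (sublinearity and monotonicity of $x/f(x)$) is elementary, but threading it through the inductive choice of $(k_s,l_s)$ so that all of $k_{s+1}\ge m_0 k_s$, $l_{s+1}\ge m_0 l_s$, and $\tilde f\simeq_{m_0} f$ hold at once requires care.
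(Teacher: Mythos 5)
Your overall strategy — an inductive, greedy construction that alternates flat pieces (where $f$ has grown by at most a factor of $m_0$) and linear pieces (where $f$ tracks $x/k_{s+1}$), followed by piecewise verification and the observation $\tilde f\le\bar f\le 2\tilde f$ — is the same as the paper's, and your instinct that the two defining properties of $\mathcal C_{0,1}$ (monotonicity of $f$ and of $x/f(x)$) are exactly what control the two kinds of pieces is right. However, there are two concrete gaps.

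First, you do not actually specify the dichotomy that makes the $m_0$-constant come out cleanly. You say you ``would pick the transition points as consecutive powers of $m_0$,'' but that overconstrains the construction: the next transition abscissa must depend on $f$, not on a fixed geometric schedule. The paper's device is to track the corridor $l_s\le f(x)\le x/k_s$ (which is automatic from $\mathcal C_{0,1}$ once $f(k_sl_s)=l_s$) and look for the minimal $y\ge m_0^2k_sl_s$ at which $f(y)$ re-enters the $m_0$-shrunken corridor $m_0l_s\le f(y)\le y/(m_0k_s)$; by continuity this happens at one of the two boundaries, which cleanly forces either $k_{s+1}=m_0k_s$ (linear regime, $f$ has stayed near the sublinear envelope) or $l_{s+1}=m_0l_s$ (flat regime, $f$ has stayed near constant). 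Without this or an equivalent device, the bookkeeping problem you yourself flag — getting a single $m_0$ rather than some power of $m_0$ — is not resolved. In particular your verification on the linear piece (``$f(x)\le x\,f(k_{s+1}l_s)/(k_{s+1}l_s)\simeq m_0(x/k_{s+1})$'') already leaks an extra constant unless the transition point is placed exactly as the paper places it.

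Second, and more seriously, your treatment of the ``moreover'' clause is wrong in direction, as you partially notice yourself. The hypothesis that $f/\log^\alpha$ is non-decreasing gives a \emph{lower} bound $f(x)\ge C\log^\alpha x$ (by comparison with a fixed earlier point), not the upper bound $f(x)\le C\log^\alpha x$ that you then try to use. The lower bound is precisely what is needed: it shows $f$ cannot stay flat too long, so in the ``flat'' branch the crossing point $y$ satisfies $y\le l_{s+1}\exp(l_{s+1}^{1/\alpha_0})$, whence $k_{s+1}=y/l_{s+1}\le\exp(l_{s+1}^{1/\alpha_0})$. Concretely one checks $f\bigl(m_0l_s\exp((m_0l_s)^{1/\alpha_0})\bigr)\ge m_0l_s$ using the lower bound together with the inductive hypothesis $\log k_s\le l_s^{1/\alpha_0}$; the slack $\alpha>\alpha_0$ ensures the factor $m_0^{\alpha/\alpha_0}>m_0$ beats the error term $\bigl(l_s^{1/\alpha_0}/(l_s^{1/\alpha_0}+\log l_s)\bigr)^\alpha\to1$. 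Your proposal never produces this estimate, and the fallback claim that ``$k_sl_s$ [stays] inside the admissible window'' is asserted, not proved.
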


\begin{figure}
\begin{centering}
\begin{tikzpicture}

\draw[very thick,->] (-1,0) -- (12,0) node[below]{$x$};
\draw[very thick,->] (0,-1) -- (0,6);
\draw (0,1.5) node[left]{$\rho(k_sl_s)=l_s$} -- (12,1.5);
\draw (0,3) node[left]{$m_0l_s$} -- (12,3);
\draw (0,0) -- (8,6) node[above] {$\frac{x}{k_s}$};
\draw (0,0) -- (12,4.5) node[above]{$\frac{x}{m_0k_s}$};
\fill[color=gray!60] (8,3)--(12,3)--(12,4.5)-- cycle;
\draw (2,0)node[below]{$k_sl_s$} --(2,1.5);
\draw (4,0)node[below]{$\begin{array}{c} m_0k_sl_s \\ \textcolor{red}{\shortparallel} \\ \textcolor{red}{k_{s+1}l_s} \\ \textcolor{red}{\mbox{in case I}}\end{array}$}--(4,3);
\draw (5.5,0)node[below]{$\begin{array}{c} \frac{y}{m_0} \\ \textcolor{blue}{\shortparallel} \\ \textcolor{blue}{k_{s+1}l_s} \\ \textcolor{blue}{\mbox{in case II}}\end{array}$}--(5.5,1.5);
\draw (8,0)node[below]{$m_0^2k_sl_s$} --(8,3);
\draw (11,0)node[below]{$\begin{array}{c} y \\ \shortparallel \\k_{s+1}l_{s+1}\end{array}$} --(11,4.125);
\draw [domain=0:2,very thick] plot(\x,{-0.64786+1.4478*sqrt(\x+0.2)});
\draw [domain=2:11,very thick,red] plot(\x,{-0.58946+1.4087*sqrt(\x+0.2)});
\draw [domain=2:11,very thick,blue] plot(\x,{0.30602+0.80498*sqrt(\x+0.2)});
\node[color=red,thick] (T) at (8,4.1) {$\rho(x)$ and $\tilde{\rho}(x)$ in case I};
\node[color=blue,thick] (T) at (10,2) {$\rho(x)$ and $\tilde{\rho}(x)$ in case II};
\draw[very thick,red,dashed] (2,1.55)--(4,1.55);
\draw[very thick,red,dashed] (4,1.5)--(11,4.125);
\draw[very thick,blue,dashed] (2,1.5)--(5.5,1.5);
\draw[very thick,blue,dashed] (5.5,1.5)--(11,3);

\end{tikzpicture}

\par\end{centering}

\caption{\label{picture} Approximation of functions in $\mathcal{C}_{0,1}$ by functions piecewise constant and linear, as in the proof of Lemma \ref{lemma-approximation}.}
\end{figure}

\begin{proof} The proof is best understood looking at Figure \ref{picture}.
Observe that by construction, $\tilde{f}(x)$ is continuous and non-decreasing,
which is not necessarily true for $\bar{f}(x)$. By induction, assume
$k_{s},l_{s}$ already known with $\tilde{f}(k_{s}l_{s})=l_{s}=f(k_{s}l_{s})$.
The hypothesis on $f$ gives that $l_{s}\leq f(x)\leq\frac{x}{k_{s}}$
for all $x\geq k_{s}l_{s}$ .

We consider the minimal $y\geq m_{0}^{2}k_{s}l_{s}$ such that $m_{0}l_{s}\leq f(y)\leq\frac{y}{m_{0}k_{s}}$.
Assume first that such a $y$ exists. By continuity of $f$, there are two
cases.

Case I\emph{:} if $f(y)=\frac{y}{m_{0}k_{s}}$, set $k_{s+1}=m_{0}k_{s}$
and $l_{s+1}=\frac{y}{m_{0}k_{s}}\geq m_{0}l_{s}$, then for all $k_{s}l_{s}\leq x\leq k_{s+1}l_{s+1}=y$,
\[
\frac{x}{m_{0}k_{s}}\leq\tilde{f}(x)\leq\frac{x}{k_{s}},
\]
and the same inequalities hold for $f$ by sublinearity.

Case II\emph{:} if $f(y)=m_{0}l_{s}$, set $l_{s+1}=m_{0}l_{s}$ and
$k_{s+1}=\frac{y}{m_{0}l_{s}}\geq m_{0}k_{s}$, then for all $k_{s}l_{s}\leq x\leq k_{s+1}l_{s+1}=y$
\[
l_{s}\leq\tilde{f}(x)\leq m_{0}l_{s},
\]
and the same inequalities hold for $f$ (non-decreasing).

If such a $y$ does not exist, there are again two cases. Either $f(x)\geq\frac{x}{m_{0}k_{s}}$
for all $x\geq k_{s}l_{s}$, then set $k_{s+1}=m_{0}k_{s}$ and $l_{s+1}=\infty$
so $\tilde{f}(x)=\frac{x}{m_{0}k_{s}}$ for all $x\geq k_{s}l_{s}$,
generalizing case I. Or $f(x)\leq m_{0}l_{s}$ for all $x\geq k_{s}l_{s}$,
then set $k_{s+1}=\infty$ so $\tilde{f}(x)=l_{s}$ for all $x\geq k_{s}l_{s}$,
generalizing case II.

Concerning the function $\bar{f}$, a routine inspection (of the intervals
$[k_{s}l_{s},k_{s+1}l_{s}]$ and $[k_{s+1}l_{s},k_{s+1}l_{s+1}]$
in both cases) shows that $\tilde{f}(x)\leq\bar{f}(x)\leq2\tilde{f}(x)$
for any $x$.

To check the last statement, it is sufficient to check that $f(m_{0}l_{s}\exp((m_{0}l_{s})^{\frac{1}{\alpha_{0}}}))\geq m_{0}l_{s}$
so in case II we take $l_{s+1}=m_{0}l_{s}$, get $y\leq l_{s+1}\exp(l_{s+1}^{\frac{1}{\alpha_{0}}})$
and $k_{s+1}=\frac{y}{l_{s+1}}\leq\exp(l_{s+1}^{\frac{1}{\alpha_{0}}})$.
Our assumptions give 
\begin{eqnarray*}
f(l_{s+1}\exp(l_{s+1}^{\frac{1}{\alpha_{0}}})) & \geq & f(k_{s}l_{s})\left(\frac{\log(l_{s+1}\exp(l_{s+1}^{\frac{1}{\alpha_{0}}}))}{\log(k_{s}l_{s})}\right)^{\alpha}\\
 & \geq & l_{s}\left(\frac{(m_{0}l_{s})^{\frac{1}{\alpha_{0}}}}{\log k_{s}+\log l_{s}}\right)^{\alpha}\\
 & \geq & l_{s}m_{0}^{\frac{\alpha}{\alpha_{0}}}\left(\frac{l_{s}^{\frac{1}{\alpha_{0}}}}{l_{s}^{\frac{1}{\alpha_{0}}}+\log l_{s}}\right)^{\alpha},
\end{eqnarray*}
and the last parenthesis tends to $1$ as $l_{s}$ tends to infinity.

\end{proof}

\begin{proposition}\label{function-approximation} Let $C_{1}>0$
and $K,L\subset[1,\infty]$ such that for any $x$ in $[1,\infty]$,
there exists $k\in K$ and $l\in L$ with $k\simeq_{C_{1}}x$ and
$l\simeq_{C_{1}}x$. For any $f$ in $\mathcal{C}_{0,1}$ and for
any $m_{0}>1$, there exists two sequences $(k_{s}),(l_{s})$ taking
values in $K$ and $L$ respectively such that $k_{s+1}\geq m_{0}k_{s}$
and $l_{s+1}\geq m_{0}l_{s}$ for all $s$, the functions defined
in (\ref{tilde}) and (\ref{bar}) satisfy 
\[
\tilde{f}(x)\simeq_{m_{0}C_{1}^{5}}f(x)\mbox{ and }\bar{f}(x)\simeq_{2m_{0}C_{1}^{5}}f(x)
\]
Moreover if for some $\alpha>\alpha_{0}>0$ the function $\frac{f(x)}{\log^{\alpha}(x)}$
is non-decreasing, it is possible to find such functions with sequences
$(k_{s}),(l_{s})$ satisfying $\log k_{s}\leq l_{s}^{\frac{1}{\alpha_{0}}}$
for all $s$. \end{proposition}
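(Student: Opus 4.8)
The plan is to deduce Proposition \ref{function-approximation} from Lemma \ref{lemma-approximation} by a discretization argument: replace the real-valued sequences $(k_s),(l_s)$ produced by the lemma with nearby values lying in the prescribed subsets $K$ and $L$, and then control the resulting multiplicative error. First I would apply Lemma \ref{lemma-approximation} with the parameter $m_0$ replaced by $m_0' = C_1^2 m_0$ (this headroom is what will absorb the rounding), obtaining sequences $(\kappa_s),(\lambda_s)$ of real numbers with $\kappa_{s+1}\ge m_0'\kappa_s$, $\lambda_{s+1}\ge m_0'\lambda_s$, and $\tilde f_{(\kappa_s),(\lambda_s)}(x)\simeq_{m_0'} f(x)$, $\bar f_{(\kappa_s),(\lambda_s)}(x)\simeq_{2m_0'}f(x)$. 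By hypothesis on $K$ and $L$, for each $s$ I can pick $k_s\in K$ with $k_s\simeq_{C_1}\kappa_s$ and $l_s\in L$ with $l_s\simeq_{C_1}\lambda_s$ (when $\kappa_s$ or $\lambda_s$ equals $\infty$, take the value $\infty$, which lies in $K$ resp. $L$ by assumption). The growth condition survives: $k_{s+1}/k_s \ge (1/C_1)(\kappa_{s+1}/\kappa_s)(1/C_1) \ge m_0'/C_1^2 = m_0$, and likewise for $(l_s)$.

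Next I would compare $\tilde f_{(k_s),(l_s)}$ with $\tilde f_{(\kappa_s),(\lambda_s)}$ directly from the piecewise definition (\ref{tilde}). On the constant pieces the two functions take values $l_s$ and $\lambda_s$, which differ by a factor at most $C_1$; on the linear pieces they take values $x/k_{s+1}$ and $x/\kappa_{s+1}$, again differing by a factor at most $C_1$. One must also check that the breakpoints $k_sl_s$, $k_{s+1}l_s$, $k_{s+1}l_{s+1}$ only move by bounded factors (factors $\le C_1^2$), so that a point $x$ falling in, say, a linear piece of one function falls in a piece of the other function on which the value is comparable to $x/\kappa_{s+1}$ or to $\lambda_s$ — here one uses monotonicity of $\tilde f$ and the fact that across a single breakpoint the two candidate formulas ($l_s$ and $x/k_{s+1}$, or $x/k_{s+1}$ and $l_{s+1}$) agree. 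Tracking constants: the value of $\tilde f_{(k_s),(l_s)}(x)$ is within a factor $C_1^3$ of $\tilde f_{(\kappa_s),(\lambda_s)}(x')$ for a suitable nearby $x'$, and within $C_1^4$ of $\tilde f_{(\kappa_s),(\lambda_s)}(x)$ after using monotonicity and $x'\simeq_{C_1^2} x$. Composing with $\tilde f_{(\kappa_s),(\lambda_s)}\simeq_{m_0'}f$ and absorbing $m_0'=C_1^2m_0$ gives $\tilde f_{(k_s),(l_s)}\simeq_{C_1^5 m_0}f$ as claimed; the same bookkeeping with the extra factor $2$ handles $\bar f$.

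Finally, for the last assertion, I would invoke the corresponding "moreover" clause of Lemma \ref{lemma-approximation} with $\alpha_0$ slightly enlarged, say to $\alpha_0' = \alpha_0(1+\epsilon)$ for small $\epsilon$ with $\alpha>\alpha_0'$, so that the lemma yields $\log\kappa_s \le \lambda_s^{1/\alpha_0'}$. Then $\log k_s \le \log(C_1\kappa_s) = \log C_1 + \log\kappa_s \le \log C_1 + \lambda_s^{1/\alpha_0'} \le (C_1 l_s)^{1/\alpha_0'} + \log C_1 \le l_s^{1/\alpha_0}$ for $l_s$ large (the gap between $1/\alpha_0'$ and $1/\alpha_0$ swallows the multiplicative and additive constants), which is the desired inequality for all but finitely many $s$; the finitely many small $s$ can be absorbed by starting the sequences further out, or by a harmless adjustment of the leading terms. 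The main obstacle is the careful bookkeeping in the second step — making sure that when a breakpoint shifts, a point $x$ near the old breakpoint is still assigned a comparable value by the new piecewise function — but this is purely a matter of using the monotonicity of $\tilde f$ together with the continuity-matching of the two formulas at each breakpoint, so no genuinely new idea is needed beyond what already appears in the proof of Lemma \ref{lemma-approximation}.
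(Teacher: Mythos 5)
Your proof follows the same route as the paper: apply Lemma \ref{lemma-approximation} with $m_0$ enlarged to $C_1^2 m_0$, round the resulting real sequences into $K$ and $L$ using the $C_1$-density hypothesis, verify that the growth ratios survive the rounding, and show that $\tilde{f}$ changes by a bounded multiplicative factor, composing the constants at the end. The one small discrepancy is in the intermediate constant-tracking: the paper obtains $\tilde{f}' \simeq_{C_1^3} \tilde{f}$ on the nose by comparing $\tilde{f}'(x) = \lambda l_s' + (1-\lambda) l_{s+1}'$ against $\tilde{f}(y)$ with $y = \lambda k_{s+1}l_s + (1-\lambda)k_{s+1}l_{s+1}$ (where the two formulas match exactly, up to a single $C_1$), and then transports $y \simeq_{C_1^2} x$ using $\tilde f\in\mathcal C_{0,1}$, whereas your ``within $C_1^3$ of $\tilde f(x')$'' followed by ``within $C_1^4$ of $\tilde f(x)$'' doesn't quite add up (it would give $C_1^5$, not $C_1^4$) — but you do arrive at the correct final $C_1^5 m_0$, and your explicit handling of the ``moreover'' clause via an $\alpha_0'$ slightly larger than $\alpha_0$ is sound and in fact more explicit than the paper, whose proof does not spell out that part.
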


\begin{proof} We apply Lemma \ref{lemma-approximation} with $m_{0}'>C_{1}^{2}m_{0}$,
and obtain two sequences $(k_{s}),(l_{s})$ of real numbers satisfying
$k_{s+1}\ge m_{0}C_{1}^{2}k_{s}$ and $l_{s+1}\ge m_{0}C_{1}^{2}l_{s}$.
The hypothesis on $K,L$ permits to find two sequences $(k_{s}'),(l_{s}')$
with $k_{s}'\simeq_{C_{1}}k_{s}$ and $l_{s}'\simeq_{C_{1}}l_{s}$.
The choice of $m_{0}'$ guarantees that $k_{s+1}'\ge m_{0}k_{s}'$
and $l_{s+1}'\ge m_{0}l_{s}'$.

Denote $\tilde{f}'$ and $\bar{f}'$ the functions defined by (\ref{tilde})
and (\ref{bar}) with the sequences $(k_{s}'),(l_{s}')$. It is sufficient
to check that $\tilde{f}'(x)\simeq_{C_{1}^{3}}\tilde{f}(x)$.

When $k'_{s}l'_{s}\le x\le k'_{s+1}l'_{s}$, then $\tilde{f}'(x)=l_{s}'\simeq_{C_{1}}l_{s}$.
On the other hand, $\frac{k_{s}l_{s}}{C_{1}^{2}}\leq x\leq C_{1}^{2}k_{s+1}l_{s}$
so 
\[
\frac{l_{s}}{C_{1}^{2}}=\frac{\tilde{f}(k_{s}l_{s})}{C_{1}^{2}}\le\tilde{f}(\frac{k_{s}l_{s}}{C_{1}^{2}})\le\tilde{f}(x)\le\tilde{f}(C_{1}^{2}k_{s+1}l_{s})\le C_{1}^{2}\tilde{f}(k_{s+1}l_{s})=C_{1}^{2}l_{s}.
\]
Thus $\tilde{f}'(x)\simeq_{C_{1}^{3}}\tilde{f}(x)$.

When $k'_{s+1}l_{s}'\le x\le k'_{s+1}l'_{s+1}$, set $x=\lambda k'_{s+1}l_{s}'+(1-\lambda)k'_{s+1}l'_{s+1}$.
Then 
\[
\tilde{f}'(x)=\lambda l_{s}'+(1-\lambda)l'_{s+1}\simeq_{C_{1}}\lambda l_{s}+(1-\lambda)l_{s+1}.
\]
On the other hand, $x\simeq_{C_{1}^{2}}\lambda k_{s+1}l_{s}+(1-\lambda)k_{s+1}l_{s+1}$
so $\tilde{f}(x)\simeq_{C_{1}^{2}}\tilde{f}(\lambda k_{s+1}l_{s}+(1-\lambda)k_{s+1}l_{s+1})=\lambda l_{s}+(1-\lambda)l_{s+1}$.
Thus $\tilde{f}'(x)\simeq_{C_{1}^{3}}\tilde{f}(x)$. \end{proof}

\begin{corollary}\label{approximation-speed} Let $C_{1}>0$ and
$K,L\subset[1,\infty]$ such that for any $x$ in $[1,\infty]$, there
exists $k\in K$ and $l\in L$ with $k\simeq_{C_{1}}x$ and $l\simeq_{C_{1}}x$.
For any $\varrho$ in $\mathcal{C}_{p_1,p_2}$ and for any $m_{0}>1$, there
exists two sequences $(k_{s}),(l_{s})$ taking values in $K$ and
$L$ respectively such that $k_{s+1}\geq m_{0}k_{s}$ and $l_{s+1}\geq m_{0}l_{s}$
for all $s$, the function defined by 
\[
\bar{\varrho}(x)=x^{p_{1}}l_{s}+\frac{x^{p_{2}}}{k_{s+1}}\mbox{ for }(k_{s}l_{s})^{\frac{1}{p_{2}-p_{1}}}\leq x\leq(k_{s+1}l_{s+1})^{\frac{1}{p_{2}-p_{1}}}
\]
satisfies 
\[
\bar{\varrho}(x)\simeq_{2m_{0}C_{1}^{5}}\varrho(x).
\]
Moreover if for any $\alpha>\alpha_{0}>0$ the function $\frac{\varrho(x)}{x^{p_{1}}\log^{\alpha}(x)}$
is non-decreasing, it is possible to find such functions with sequences
$(k_{s}),(l_{s})$ satisfying $\log k_{s}\leq l_{s}^{\frac{1}{\alpha_{0}}}$
for all $s$. \end{corollary}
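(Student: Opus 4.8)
The plan is to deduce Corollary \ref{approximation-speed} directly from Proposition \ref{function-approximation} by a change of variable rescaling the exponent window $[p_1,p_2]$ onto the window $[0,1]$. Assume $p_1<p_2$ (the only case of interest), and set $d=p_2-p_1$. Given $\varrho\in\mathcal{C}_{p_1,p_2}$, I would introduce
\[
f(y)=\frac{\varrho\big(y^{1/d}\big)}{y^{p_1/d}}.
\]
The first step is to verify that $f\in\mathcal{C}_{0,1}$. Continuity is clear and $f(1)=\varrho(1)=1$. Writing $x=y^{1/d}$, one has $f(y)=\varrho(x)/x^{p_1}$, which is non-decreasing in $x$, hence in $y$, because $\varrho\in\mathcal{C}_{p_1,p_2}$; and $y/f(y)=y^{p_2/d}/\varrho(x)=x^{p_2}/\varrho(x)$, which is likewise non-decreasing because $\varrho\in\mathcal{C}_{p_1,p_2}$. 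In particular $f$ takes values in $[1,\infty)$, so $f$ meets the hypotheses of Proposition \ref{function-approximation}.

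Next I would apply Proposition \ref{function-approximation} to $f$ with the same data $C_1$, $K$, $L$, $m_0$. This produces sequences $(k_s)$, $(l_s)$ taking values in $K$, $L$ with $k_{s+1}\ge m_0k_s$ and $l_{s+1}\ge m_0l_s$, such that the function $\bar f(y)=l_s+y/k_{s+1}$ on $[k_sl_s,k_{s+1}l_{s+1}]$ satisfies $\bar f(y)\simeq_{2m_0C_1^5}f(y)$. Substituting $y=x^d$ turns the window $k_sl_s\le y\le k_{s+1}l_{s+1}$ into $(k_sl_s)^{1/d}\le x\le(k_{s+1}l_{s+1})^{1/d}$ and gives
\[
x^{p_1}\bar f(x^d)=x^{p_1}l_s+\frac{x^{p_1+d}}{k_{s+1}}=x^{p_1}l_s+\frac{x^{p_2}}{k_{s+1}}=\bar\varrho(x), \qquad x^{p_1}f(x^d)=\varrho(x).
\]
Multiplying the estimate $\bar f\simeq_{2m_0C_1^5}f$ by $x^{p_1}>0$ yields $\bar\varrho(x)\simeq_{2m_0C_1^5}\varrho(x)$, which is the assertion. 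For the refinement, if $\varrho(x)/(x^{p_1}\log^\alpha x)$ is non-decreasing, then, using $\log^\alpha y=d^\alpha\log^\alpha x$, one finds $f(y)/\log^\alpha y=d^{-\alpha}\,\varrho(x)/(x^{p_1}\log^\alpha x)$ non-decreasing, so the ``moreover'' clause of Proposition \ref{function-approximation} applies and furnishes sequences with $\log k_s\le l_s^{1/\alpha_0}$.

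There is no genuine obstacle in this argument: it is pure bookkeeping of a power change of variable, and every quantitative input is inherited verbatim from Proposition \ref{function-approximation}. The only points requiring a line of care are the verification that the monotonicity conditions transport correctly under $x\mapsto x^d$ (which is where the identities $y^{p_1/d}=x^{p_1}$ and $y^{p_2/d}=x^{p_2}$ are used), and, for the logarithmic refinement, treating $x$ bounded away from $1$ exactly as Proposition \ref{function-approximation} already does, so that $\log^\alpha y$ and $\log^\alpha x$ differ only by the harmless constant $d^\alpha$.
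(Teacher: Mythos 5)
Your proof is correct and follows exactly the paper's approach: the paper likewise introduces the bijection $T_{p_1,p_2}:\mathcal{C}_{0,1}\to\mathcal{C}_{p_1,p_2}$ defined by $T_{p_1,p_2}f(x)=x^{p_1}f(x^{p_2-p_1})$, sets $f=T_{p_1,p_2}^{-1}\varrho$, applies Proposition~\ref{function-approximation}, and puts $\bar\varrho=T_{p_1,p_2}\bar f$. Your version merely spells out the bookkeeping (the verification that $f\in\mathcal{C}_{0,1}$ and the transport of the logarithmic refinement under $\log^\alpha y=d^\alpha\log^\alpha x$), which the paper leaves implicit.
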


\begin{proof} The application $T_{p_{1},p_{2}}:\mathcal{C}_{0,1}\rightarrow\mathcal{C}_{p_{1},p_{2}}$
given by $T_{p_{1},p_{2}}f(x)=x^{p_{1}}f(x^{p_{2}-p_{1}})$ is a bijection.
Take $f=T_{p_{1},p_{2}}^{-1}\varrho$, apply Proposition \ref{function-approximation}
and set $\bar{\varrho}=T_{p_{1},p_{2}}\bar{f}$. \end{proof}

\section{Stable walks on lamplighter over a segment}\label{stable}

Let $I_{m}$ be a subgraph of one dimensional lattice $\mathbb{Z}$
with vertex set $\left\{ 0,1,\ldots,m-1\right\} $, $m>1$. Consider
the lamplighter graph $\mathcal{L}_{m}$ over the segment $I_{m}$,
formally $\left\{ \left(f,x\right):\ f:I_{m}\to\{0,1\},\ x\in I_{m}\right\} $
is the vertex set of $\mathcal{L}_{m}$, and an edge connects $(f,x)$
and $(f',x')$ if $f\equiv f'$ and $x\sim x'$ in $I_{m}$, or $x=x'$,
$f\neq f'$ and $f$ differs from $f'$ only at site $x$. Random
walks on lamplighter graphs have been studied in \cite{Erschler2003,Peres2004},
see also references therein.

Random walk on $\mathbb{Z}$ driven by step distribution 
\[
\nu_{\alpha}(x)\simeq\frac{1}{\left(1+\left|x\right|\right)^{1+\alpha}},\ \alpha\in(0,2)
\]
is often referred to as a symmetric $\alpha$-stable like walks on
$\mathbb{Z}$. By abuse of terminology, we call it an $\alpha$-stable
walk. General theory regarding heat kernel estimates of $\alpha$-stable
walks on volume doubling graphs is available, see \cite{Chen2008}
and references therein. Connection between $\alpha$-stable walks
and Markov type method for bounding compression exponent was first
introduced in \cite{Naor2011}, where $p$-stable walk on $\mathbb{Z}$
is used to determine $L_{p}$-compression exponent of $\mathbb{Z}\wr\mathbb{Z}$,
$p>1$. For our purpose, we focus on the case of stable walk of index
$\alpha=1$ on the base graph.

On $I_{m}$, define transition kernel 
\[
\zeta_{m}(x,x')=\frac{c_{x,x'}}{\sum_{x'\in I_{m}}c_{x,x'}},\ x,x'\in I_{m},
\]
where 
\[
c_{x,x'}=\frac{1}{1+\left|x-x'\right|^{2}}.
\]
Then $\zeta_{m}$ is a Markov transition kernel on $I_{m}$ with stationary
distribution $\mathcal{C}_{m}(z)=\frac{\sum_{x'\in I_{m}}c_{x,x'}}{\sum_{x,x'\in I_{m}}c_{x,x'}}.$
One readily checks that $\frac{1}{5m}\le\mathcal{C}_{m}(x)\le\frac{5}{m}$
for all $x\in I_{m}$.

Now consider a random walk on the lamplighter graph $\mathcal{L}_{m}$
with transition $\zeta_{m}$ on the base. Let $\mathfrak{p}_{m}$
be the transition kernel in $\mathcal{L}_{m}$ such that for $x\neq x'$,
$\mathfrak{p}_{m}((f,x),(f',x'))=\frac{1}{4}\zeta_{m}(x,x')$ if $f(z)=f'(z)$
for all $z\notin\{x,x'\}$; for $x=x'$, $\mathfrak{p}_{m}((f,x),(f',x'))=\frac{1}{2}\zeta_{m}(x,x)$
if $f(z)=f'(z)$ for all $z\neq x$. In words, in each step the walker
first randomize the lamp configuration at the current location, then
moves according to the transition kernel $\zeta_{m}$, and randomize
the lamp at the arrival location. This Markov chain is reversible
with stationary distribution 
\[
U_{m}\left(f,x\right)=2^{-m}\mathcal{C}_{m}(x).
\]
From the upper bound on relaxation time in \cite[Theorem 1.2]{Peres2004},
we have 
\[
T_{rel}(\mathcal{L}_{m},\mathfrak{p}_{m})=\frac{1}{\lambda_{2}(\mathcal{L}_{m},\mathfrak{p}_{m})}\le\max_{x,y\in I_{m}}\mathbf{E}_{x}\tau_{y},
\]
where $\tau_{y}=\min\left\{ t:\ X_{t}=y\right\} $, $X_{t}$ denotes
the Markov chain on $\mathcal{L}_{m}$ with transition kernel $\mathfrak{p}_{m}$.
Note that although in the statement of \cite[Theorem 1.2]{Peres2004},
it is assumed that the Markov chain on the base is lazy simple random
walk on a transitive graph, the coupling argument that proves the
relaxation time upper bound is completely general, it applies to any
reversible Markov chain on the base graph. The quantity $\max_{x,y\in I_{m}}\mathbf{E}_{x}\tau_{y}$
is known as the maximal hitting time of the chain $\mathfrak{p}_{m}$.
By \cite[Lemma 4.1]{Aldous2002}, 
\[
\max_{x,y\in I_{m}}\mathbf{E}_{x}\tau_{y}=\frac{1}{2}\left(\sum_{z,z'\in I_{m}}c_{z,z'}\right)\max_{x,y}R_{x,x'},
\]
where $R_{x,x'}$ denotes the effective resistance between vertices
$x$ and $x'$ in the electric network on $I_{m}$ with edge conductances
$c_{z,z'}$ between pair of vertices $z,z'$. Estimates of effective
resistance of $\alpha$-stable walks follow from classical methods.
For the particular transition kernel $\zeta_{m}$ with $\alpha=1$,
there exists a constant $C>0$ 
\[
R_{x,x'}\le C\log\left|x-x'\right|,
\]
see for example \cite[Appendix B.2]{Caputo2009}. We conclude that
for the Markov chain with transition kernel $\mathfrak{p}_{m}$ on
$\mathcal{L}_{m}$, 
\[
\lambda_{2}(\mathcal{L}_{m},\mathfrak{p}_{m})\ge\frac{c}{m\log m}.
\]
Equivalently, we have the following Poincaré inequality: for any function
$f:\mathcal{L}_{m}\to\mathbb{R}$, 
\[
\sum_{u,v\in\mathcal{L}_{m}}\left(f(u)-f(v)\right)^{2}U_{m}(u)U_{m}(v)\le\frac{2m\log m}{c}\sum_{u,v\in\mathcal{L}_{m}}\left(f(u)-f(v)\right)^{2}\mathfrak{p}_{m}(u,v)U_{m}(u).
\]
By Matousek's extrapolation lemma for Poincaré inequalities, see \cite{Matousek}
and \cite[Lemma 4.4]{Naor2011a}, we deduce the following $l_{p}$-Poincaré
inequalities.

\begin{lemma}\label{lamplighter-poincare}

In the setting introduced above, there exists an absolute constant
$C>0$ such that for any $f:\mathcal{L}_{m}\to \ell^{p}$, 
\begin{itemize}
\item if $1\le p\le2$, 
\[
\sum_{u,v\in\mathcal{L}_{m}}\left\Vert f(u)-f(v)\right\Vert _{p}^{p}U_{m}(u)U_{m}(v)\le Cm\log m\sum_{u,v\in\mathcal{L}_{m}}\left\Vert f(u)-f(v)\right\Vert _{p}^{p}U_{m}(u)\mathfrak{p}_{m}\left(u,v\right);
\]

\item if $p>2$, 
\[
\sum_{u,v\in\mathcal{L}_{m}}\left\Vert f(u)-f(v)\right\Vert _{p}^{p}U_{m}(u)U_{m}(v)\le\left(Cm\log m\right)^{\frac{p}{2}}(2p)^{p}\sum_{u,v\in\mathcal{L}_{m}}\left\Vert f(u)-f(v)\right\Vert _{p}^{p}U_{m}(u)\mathfrak{p}_{m}\left(u,v\right).
\]

\end{itemize}
\end{lemma}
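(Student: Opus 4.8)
The plan is to deduce the two $\ell^p$-valued Poincaré inequalities from the scalar $\ell^2$-Poincaré inequality established just above the statement, namely
\[
\sum_{u,v\in\mathcal{L}_{m}}\left(f(u)-f(v)\right)^{2}U_{m}(u)U_{m}(v)\le\frac{2m\log m}{c}\sum_{u,v\in\mathcal{L}_{m}}\left(f(u)-f(v)\right)^{2}\mathfrak{p}_{m}(u,v)U_{m}(u),
\]
which holds for all scalar $f:\mathcal{L}_m\to\mathbb{R}$. First I would record that the right-hand side here is a genuine Dirichlet form of the reversible chain $(\mathcal{L}_m,\mathfrak{p}_m,U_m)$, so the inequality is exactly a spectral-gap lower bound $\lambda_2(\mathcal{L}_m,\mathfrak{p}_m)\ge c/(m\log m)$ of the standard form to which Matou\v sek's extrapolation lemma applies. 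The key input is then precisely the extrapolation principle for Poincaré inequalities: if a reversible Markov chain satisfies a scalar $\ell^2$-Poincaré inequality with constant $P$ (equivalently relaxation time $P$), then for any Banach space $\mathfrak{X}$ and any $f:\mathcal{L}_m\to\mathfrak{X}$ one has an $\ell^p$-type Poincaré inequality with constant $O(P)$ when $1\le p\le 2$ and $O\big((2p)^p P^{p/2}\big)$ when $p>2$; this is the statement cited in the excerpt as \cite{Matousek} together with \cite[Lemma 4.4]{Naor2011a}. Applying this with $\mathfrak{X}=\ell^p$ and $P=2m\log m/c$ yields the two displayed inequalities with $C$ an absolute constant absorbing the constant from extrapolation and the factor $2/c$.

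Concretely, the steps in order are: (1) cite the scalar inequality above and rewrite it as $\frac{1}{P}\mathrm{Var}_{U_m}(f)\le \mathcal{E}_{\mathfrak{p}_m}(f)$ with $P=2m\log m/c$, which is the hypothesis format of the extrapolation lemma; (2) invoke \cite[Lemma 4.4]{Naor2011a} (Matou\v sek extrapolation) in the range $1\le p\le 2$ to get
\[
\sum_{u,v}\|f(u)-f(v)\|_p^p\,U_m(u)U_m(v)\le C'P\sum_{u,v}\|f(u)-f(v)\|_p^p\,U_m(u)\mathfrak{p}_m(u,v);
\]
(3) invoke the same lemma for $p>2$, where extrapolation produces the factor $(2p)^p$ and raises $P$ to the power $p/2$, giving
\[
\sum_{u,v}\|f(u)-f(v)\|_p^p\,U_m(u)U_m(v)\le (2p)^p (C'P)^{p/2}\sum_{u,v}\|f(u)-f(v)\|_p^p\,U_m(u)\mathfrak{p}_m(u,v);
\]
(4) substitute $P=2m\log m/c$ and rename the absolute constants, noting that $(C'\cdot 2/c)$ and $(C'\cdot 2/c)^{p/2}$ are absorbed into a single absolute $C$ (and in the $p>2$ case into the base of the $p/2$ power as $(Cm\log m)^{p/2}$, exactly as stated).

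I do not expect a genuine obstacle here: the content is a mechanical application of a black-box extrapolation result to a Poincaré inequality that has already been proved in the excerpt, and the only care needed is bookkeeping of the constants so that the $C$ appearing in both bullet points is the same absolute constant and that the $(2p)^p$ factor is isolated correctly in the $p>2$ regime. The one point worth double-checking is that the extrapolation lemma of \cite{Naor2011a} is stated for reversible chains with a symmetric Dirichlet form of the type $\sum_{u,v}(f(u)-f(v))^2\mathfrak{p}_m(u,v)U_m(u)$, which is satisfied since $\mathfrak{p}_m$ is reversible with respect to $U_m$ (this reversibility was established above via the explicit description of $\mathfrak{p}_m$); once this is in place the deduction is immediate.
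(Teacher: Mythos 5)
Your proposal matches the paper's argument exactly: the paper obtains the scalar $\ell^{2}$-Poincaré inequality from the spectral gap bound $\lambda_{2}(\mathcal{L}_{m},\mathfrak{p}_{m})\ge c/(m\log m)$ and then immediately invokes Matou\v{s}ek's extrapolation lemma (via \cite{Matousek} and \cite[Lemma 4.4]{Naor2011a}) to pass to $\ell^{p}$-valued functions. Your bookkeeping of the constants, the $(2p)^{p}$ factor for $p>2$, and the check that $\mathfrak{p}_{m}$ is $U_{m}$-reversible are all correct and in line with what the paper intends.
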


Now we introduce a distance function on $\mathcal{L}_{m}$. Let $\mathbf{w}=(w_{1},w_{2})\in\mathbb{R}_{2}^{+}$,
in the lamplighter graph $\mathcal{L}_{m}$, let $w(e)=w_{1}$ if
the edge $e$ connects $(f,x)$ and $(f,x')$ where $x\sim x'$ (edges
of first type); $w(e)=w_{2}$ if the edge $e$ connects $(f,x)$ and
$(f',x)$ where $f,f'$ only differs at site $x$ (edges of second
type). Define $d_{\mathbf{w}}$ to be distance on $\mathcal{L}_{m}$
\[
d_{\mathbf{w}}(u,v)=\min\left\{ \sum_{e\in P}w(e):\ P\mbox{ is a path in }\mathcal{L}_{m}\mbox{ connecting }u,v\right\} .
\]
From definition of $\mathfrak{p}_{m}$, it is straightforward to check
that the following.

\begin{lemma}\label{C2}

There exists constant $C>0$ such that for all $p>1$,

$\boldsymbol{}$ 
\begin{align*}
\sum_{u,v\in\mathcal{L}_{m}}d_{\mathbf{w}}(u,v)^{p}U_{m}(u)\mathfrak{p}_{m}\left(u,v\right) & \le C\left(w_{2}^{p}+\frac{1}{p-1}w_{1}^{p}m^{p-1}\right),\\
\sum_{d_{\mathbf{w}}(u,v)\ge\frac{1}{4}\left(w_{2}+w_{1}\right)m}d_{\mathbf{w}}(u,v)^{p}U_{m}(u)U_{m}(v) & \ge\frac{1}{C}\left(w_{2}+w_{1}\right)m.
\end{align*}

\end{lemma}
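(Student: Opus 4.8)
\textbf{Plan of proof for Lemma \ref{C2}.}

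The plan is to treat the two inequalities separately; both are elementary computations exploiting the explicit product/semidirect structure of $\mathfrak{p}_m$ and the comparison $\tfrac1{5m}\le\mathcal{C}_m(x)\le\tfrac5m$ already recorded above. First I would record the one-step behaviour of the chain: starting from $(f,x)$, a single application of $\mathfrak{p}_m$ randomizes the lamp at $x$, moves the cursor from $x$ to $x'$ with probability $\tfrac14\zeta_m(x,x')$ (or stays with probability $\tfrac12\zeta_m(x,x)$), and randomizes the lamp at $x'$. Hence in the metric $d_{\mathbf w}$ the displacement after one step is at most $2w_2 + w_1|x-x'|$, so by convexity $d_{\mathbf w}(u,v)^p \le 2^{p-1}\big((2w_2)^p + (w_1|x-x'|)^p\big)$ on the event that the cursor moves from $x$ to $x'$. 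Taking expectation against $U_m(u)\mathfrak{p}_m(u,v)$ and using that the cursor marginal of $\mathfrak{p}_m$ is exactly $\zeta_m$ with stationary distribution $\mathcal{C}_m$, the first sum is bounded by $C\big(w_2^p + w_1^p\, \mathbf{E}_{\mathcal{C}_m}\!\sum_{x'}|x-x'|^p\zeta_m(x,x')\big)$. The inner quantity is $\simeq \sum_{0\le k\le m} k^p\cdot k^{-2} \simeq \tfrac1{p-1}m^{p-1}$ for $p>1$ (the sum over $k\ge1$ of $k^{p-2}$), which is exactly the tail computation already invoked in the proof of Proposition \ref{rho-Phi}. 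This yields the first displayed bound.

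For the second inequality I would argue that a $\pi_m\times\pi_m$-random pair $(u,v)=((f,x),(g,y))$ is ``typically far apart'' in $d_{\mathbf w}$. Since the two lamp configurations $f,g$ are independent uniform on $\{0,1\}^{I_m}$, the number of sites where they disagree is a $\mathrm{Binomial}(m,\tfrac12)$ random variable, so with probability at least, say, $\tfrac12$ there are at least $m/3$ disagreement sites; any path in $\mathcal{L}_m$ from $u$ to $v$ must toggle each such site (cost $w_2$) and, to reach each of them, the cursor must traverse essentially the whole segment, incurring total cursor cost $\gtrsim w_1 m$. Thus on an event of probability bounded below by an absolute constant one has $d_{\mathbf w}(u,v)\ge c(w_1+w_2)m$, which both certifies the threshold $\tfrac14(w_1+w_2)m$ and gives $\sum_{d_{\mathbf w}\ge \frac14(w_1+w_2)m} d_{\mathbf w}(u,v)^p U_m(u)U_m(v) \ge c'\big((w_1+w_2)m\big)^p$; restricting to $p\ge1$ and simplifying (or just keeping the $p$-th power, which is what the displayed statement literally needs up to renaming the constant) gives the claim. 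Here I should double-check the exponent in the statement as written: the right-hand side reads $\tfrac1C(w_2+w_1)m$ rather than $\tfrac1C((w_2+w_1)m)^p$, so either the intended reading absorbs $m^{p-1}$ and the $(w_1+w_2)^{p-1}$ factor into $C$ (legitimate for each fixed $p$), or one simply proves the stronger $p$-th power version; I would state and prove the $p$-th power version and note it implies the displayed form.

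The main obstacle is the lower estimate of $d_{\mathbf w}(u,v)$ for a typical pair: one has to be careful that ``must traverse the whole segment'' is genuinely forced, i.e. that the disagreement sites are not all clustered near one endpoint. This is handled by observing that with the disagreement set being a uniformly random subset of $I_m$ of size $\sim m/3$, with overwhelming probability it contains points within distance $O(1)$ of both endpoints (or more simply: its diameter is $\ge m/2$ with probability bounded below), so any travelling-salesman-type path covering it has cursor-length $\gtrsim m$; combining with the lamp-toggle cost $\gtrsim w_2 m$ gives the bound. Everything else is bookkeeping with the explicit conductances $c_{x,x'}=(1+|x-x'|^2)^{-1}$ and the already-proven two-sided bound on $\mathcal{C}_m$.
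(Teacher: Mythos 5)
Your proof is essentially correct, and since the paper simply asserts the lemma as "straightforward to check," your elementary computation is precisely the intended argument. A few points worth flagging before you commit the details to the page.

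For the first inequality, the one-step bound $d_{\mathbf w}(u,v)\le 2w_2+w_1|x-x'|$ and the factorization of the cursor marginal through $\zeta_m$ are correct, and the tail sum $\sum_{k\ge 1}k^{p-2}\lesssim m^{p-1}/(p-1)$ gives the stated bound. You should note, though, that the convexity step produces a factor $2^{p-1}(2w_2)^p=2^{2p-1}w_2^p$, so the constant $C$ you obtain grows exponentially in $p$. This is harmless for the paper's purposes (every invocation of the lemma has $p\in(1,2]$, where $2^{2p-1}\le 8$), but it means the statement "there exists $C$ such that for all $p>1$" with $C$ independent of $p$ is not literally what you prove; you should either allow $C=C(p)$ or restrict to a compact range of $p$. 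You also implicitly use that $\sum_{x''}c_{x,x''}$ is bounded above and below by absolute constants so that $\zeta_m(x,x')\simeq c_{x,x'}$; this is true but should be said.

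For the second inequality your strategy is right: a $U_m\times U_m$-random pair has independent uniform lamps, hence the symmetric difference is a uniform subset of $I_m$; with probability bounded below by an absolute constant it has $\ge m/3$ elements and spans a window of diameter $\ge m/2$ (union bound on the two endpoint quarters plus a Chernoff bound), forcing any path to pay $\ge \tfrac{m}{3}w_2+\tfrac{m}{2}w_1\ge\tfrac14(w_1+w_2)m$. You should handle small $m$ (say $m<36$, where the Chernoff bound is not yet effective) by a direct check, but this is routine. Your observation about the exponent on the right-hand side is astute: the argument naturally yields the lower bound $c\big((w_1+w_2)m\big)^p$, and the displayed linear bound follows only when $(w_1+w_2)m\ge1$ (which holds in every application, since $w_1,w_2\ge1$ there). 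Proving and recording the $p$-th power version, as you propose, is the cleanest resolution and what I would write.
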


These ingredients allow us to carry out the Poincaré inequality method
to upper bound $L_{p}$-compression function of $H\wr\mathbb{Z}$,
it can also be used in the study of the diagonal product $\Delta$
with dihedral groups. Alternatively, we may apply the Markov type
method. To this end, the following speed lower estimate is needed.

\begin{lemma}\label{cauchy-speed}

Let $X_{t}$ be a stationary Markov chain on $\mathcal{L}_{m}$ with
transition kernel $\mathfrak{p}_{m}$ reversible with stationary measure
$U_{m}$. Then there exists $c>0$ such that 
\[
\mathbf{E}_{U_{m}}\left[d_{\mathbf{w}}\left(X_{t},X_{0}\right)\right]\ge c\left(w_{1}+w_{2}\right)\frac{t}{\log_{\ast}t}\mbox{ for all }1\le t\le m\log m.
\]

\end{lemma}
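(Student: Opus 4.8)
The plan is to reduce the speed lower bound to two ingredients: a lower bound on the number of \emph{distinct} lamps visited (which controls the ``$w_2$-part'' of $d_{\mathbf{w}}$), and a lower bound on the current displacement of the base walk (which controls the ``$w_1$-part''), then take the better of the two. First I would observe that, since $d_{\mathbf{w}}(X_t,X_0)\ge \max\{w_1 |x_t-x_0|,\ w_2 R_t\}$ where $x_t$ is the base position and $R_t$ is the number of distinct sites $z$ at which the lamp configurations of $X_t$ and $X_0$ disagree, it suffices to show that for $1\le t\le m\log m$ one has $\mathbf{E}_{U_m}[|x_t-x_0|]\gtrsim t/\log_{\ast}t$ \emph{and} $\mathbf{E}_{U_m}[R_t]\gtrsim t/\log_{\ast}t$; actually by the structure of $\mathfrak{p}_m$ it is enough to get one of them in each regime, but the cleanest route is to bound the range of the base chain from below. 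Note $(w_1+w_2)\le 2\max\{w_1,w_2\}$, so a lower bound on $\max\{w_1\mathbf{E}|x_t-x_0|,\, w_2 \mathbf{E} R_t\}$ of the stated order gives the claim.

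The heart of the matter is the base chain, which is the $1$-stable-like (Cauchy-like) walk with transition kernel $\zeta_m$ on the segment $I_m=\{0,\dots,m-1\}$, started from the stationary distribution $\mathcal{C}_m$ (comparable to uniform up to the factor $5$). Here I would use the classical heat kernel/range estimates for $\alpha$-stable walks on $\mathbb{Z}$ with $\alpha=1$ recalled (with references) in Appendix \ref{stable}: a Cauchy walk run for time $t$ typically spreads over a window of size $\asymp t$ up to logarithmic corrections coming from the slowly varying truncation, and for $t\le m\log m$ this window is still essentially inside $I_m$ so the confinement to the segment only costs constants. Concretely, I expect $\mathbf{E}|x_t-x_0|\ge c\,t/\log_{\ast}t$ for $1\le t\le m\log m$, either by a direct second-moment-type argument (using that a single step of $\zeta_m$ has $\mathbf{E}|x_1-x_0|\asymp \log m$ and that the walk has not yet had time to equilibrate on the segment) or by invoking the return-probability bound $\mathfrak{p}_m^{(t)}$-type estimate together with the resistance estimate $R_{x,x'}\le C\log|x-x'|$ already used in Appendix \ref{stable}. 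Once the base displacement lower bound is in hand, the number of distinct lamps flipped is at least a constant fraction of the number of distinct base sites visited (each visited site has its lamp re-randomized, and independent fair coins disagree with probability $1/2$), which gives $\mathbf{E}R_t\ge c'\,\mathbf{E}|x_t-x_0|$ up to constants by a standard first-passage decomposition, so the same lower bound $\gtrsim t/\log_{\ast}t$ holds for $\mathbf{E}R_t$ as well.

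Assembling: for any $\mathbf{w}=(w_1,w_2)$ we get
\[
\mathbf{E}_{U_m}\big[d_{\mathbf{w}}(X_t,X_0)\big]\ \ge\ \max\Big\{w_1\,\mathbf{E}|x_t-x_0|,\ w_2\,\mathbf{E}R_t\Big\}\ \ge\ c\,\max\{w_1,w_2\}\,\frac{t}{\log_{\ast}t}\ \ge\ \frac{c}{2}\,(w_1+w_2)\,\frac{t}{\log_{\ast}t},
\]
which is the assertion. I would expect the main obstacle to be the base-chain estimate: controlling the Cauchy-like walk on a \emph{finite} segment started from stationarity, and in particular tracking the logarithmic factors so that $t/\log_{\ast}t$ (rather than, say, $t/\log^2 t$) comes out, since the truncation of the heavy tail at scale $m$ interacts with the time horizon $t\le m\log m$. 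The safest implementation is probably to compare with the walk on all of $\mathbb{Z}$ up to time $t$ (where spread $\asymp t/\log t$ is standard for Cauchy-type walks), then argue that reflecting/killing at the boundary of $I_m$ changes this by at most a constant factor in this time range, using that the walk started from near-uniform has not localized. The lamp-counting step and the final $\max$-to-sum manipulation are routine.
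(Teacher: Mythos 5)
Your decomposition $d_{\mathbf w}(X_t,X_0)\ge\max\{w_1|x_t-x_0|,\,w_2 R_t\}$ is correct but strictly weaker than what the paper uses, and this weakness is exactly what forces you into the extra, unjustified displacement estimate. The paper's key observation is that any path in $\mathcal{L}_m$ joining $X_0$ to $X_t$ must \emph{visit} every site where the two lamp configurations disagree; visiting $R_t$ distinct sites of a segment costs at least $R_t-1$ base edges (type $w_1$) \emph{in addition to} the $R_t$ lamp-flip edges (type $w_2$). This gives a sum bound $d_{\mathbf w}(X_t,X_0)\gtrsim (w_1+w_2)R_t$ from the single quantity $R_t$, and since the lamp at each visited site is rerandomized, $\mathbf E[R_t]\simeq\tfrac12\mathbf E[\,|S_{[0,t]}|\,]$, so the whole lemma reduces to one Dvoretzky--Erd\H{o}s range estimate for the base chain (via the Nash-inequality on-diagonal decay $\mathbb P_x(S_l=x)\lesssim 1/l$). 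With your $\max$ lower bound you instead need \emph{both} $\mathbf E|x_t-x_0|\gtrsim t/\log_{*}t$ and $\mathbf E R_t\gtrsim t/\log_{*}t$ to extract the factor $w_1+w_2$, and the first of these is a genuinely separate claim about the displacement of a confined Cauchy walk that you do not establish.

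Two points in your sketch of the displacement bound are wrong in a way that matters. First, the ``standard spread $\asymp t/\log t$'' for Cauchy-type walks is the \emph{range} (number of distinct visited sites), coming from Dvoretzky--Erd\H{o}s and $\sum_{l\le t}p_l(0,0)\asymp\log t$; the typical \emph{displacement} $|S_t|$ of a $1$-stable walk scales like $t$ with no logarithm. You conflate the two. Second, the claimed inequality $\mathbf E R_t\ge c'\mathbf E|x_t-x_0|$ is false for a heavy-tailed walk: a single long jump gives displacement of order $m$ but range $2$. (For a nearest-neighbour base walk range dominates displacement; here it is the opposite for short times, since the range grows only like $t/\log t$ while the displacement grows like $t$.) So the route you give to $\mathbf E R_t\gtrsim t/\log_{*}t$ is broken; that estimate should come directly from the range via the on-diagonal decay, not via the displacement. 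If you want to salvage your $\max$ decomposition you would still need an independent, rigorous lower bound on $\mathbf E_{U_m}|x_t-x_0|$ for the stationary truncated Cauchy chain on the segment up to time $m\log m$, including the boundary confinement effects; this is doable but is an additional piece of work that the paper's ``visit-all-differing-sites'' observation entirely avoids.
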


\begin{proof}

Let $S_{[0,t]}=\left\{ S_{n},0\le n\le t\right\} $ denote the sites
visited by the induced random walk $\{S_{t}\}$ on $I_{m}$. Since
in each step the chain randomizes the lamp at the current and new
locations, and any path in the graph $\mathcal{L}_{m}$ that connects
$X_{0}$ to $X_{t}$ must visit all the sites where the lamp configurations
of $X_{0}$ and $X_{t}$ differ, we have for any $u\in\mathcal{L}_{m}$,
\[
\mathbf{E}_{u}\left[d_{\mathbf{w}}\left(X_{t},X_{0}\right)\right]\ge\frac{1}{2}(w_{1}+w_{2})\mathbf{E}_{u}\left[\left|S_{[0,t]}\right|\right].
\]
Thus the question is reduced to the range of the $\zeta_{m}$-random walk
on the base $I_{m}$. Methods to estimate expected size of range of
random walk go back to Dvoretzky and Erdos \cite{Dvoretzky1951}.
Here we include a straightforward adaptation of the argument in \cite[Lemma 6.3]{Naor2008}
for completeness.

In what follows $\mathbb{E}$ means taking expectation with the law
of random walk $S_{n}$ on $I_{m}$ with step distribution $\zeta_{m}$.
For any $k\in\left\{ 1,\ldots,m\right\} $, denote by $V_{1},\ldots,V_{k}$
the first $k$ elements of $I_{m}$ that are visited by the random
walk $S_{n}$. Let 
\[
Y_{k}(t)=\left|\left\{ 0\le n\le t :S_{n}\in\left\{ V_{1},\ldots V_{k}\right\} \right\} \right|.
\]
Note that $\left\{ Y_{k}(t)<t+1\right\} =\left\{ \left|S_{[0,t]}\right|>k\right\} $.
For any starting point $z\in I_{m}$, 
\[
\mathbf{\mathbb{E}}_{z}\left[Y_{k}(t)\right]=\sum_{l=1}^{k}\mathbf{\mathbb{E}}_{z}\left[\left|\left\{ 0\le n\le t :S_{n}=V_{l}\right\}\right|\right]\le k\sum_{n=0}^{t}\max_{x\in I_{m}}\mathbf{\mathbb{P}}_{x}\left(S_{n}=x\right).
\]
Therefore 
\begin{align}
\mathbb{E}_{z}\left(\left|S_{[0,t]}\right|\right) & \ge k\mathbb{P}_{z}\left(\left|S_{[0,t]}\right|>k\right)\ge k\left(1-\frac{\mathbf{\mathbb{E}}_{z}\left[Y_{k}(t)\right]}{t+1}\right)\nonumber \\
 & \ge k\left(1-\frac{k\sum_{n=0}^{t}\max_{x\in I_{m}}\mathbf{\mathbb{P}}_{x}\left(S_{n}=x\right)}{t+1}\right).\label{eq:size-range}
\end{align}
The argument in \cite[Theorem 3.1 ]{Chen2008} implies that the chain
$\left(I_{m},\zeta_{m}\right)$ satisfies a Nash inequality that there
exists an absolute constant $C>0$, 
\[
\theta\left(\left\Vert u\right\Vert _{2}^{2}\right)\le C\mathcal{E}_{\zeta_{m}}\left(u,u\right)\ \mbox{for all }u:I_{m}\to\mathbb{R},\ \mbox{where }\theta(r)=r^{2}.
\]
This Nash inequality implies on-diagonal decay upper bound, see \cite{Diaconis1996},
\[
\mathbb{P}_{x}\left(S_{l}=x\right)\le\frac{c_{2}}{l}\mbox{ for all }l\le m,\ x\in I_{m}.
\]
For $l>m$, by monotonicity $\mathbf{P}_{x}\left(S_{l}=x\right)\le\mathbf{P}_{x}\left(S_{m}=x\right)\le c_{2}l^{-1}$.
It follows that for $k\in\{1,\ldots,m\}$, 
\[
\sum_{l=0}^{k\log k}\max_{x\in I_{m}}\mathbb{P}_{x}\left[S_{l}=x\right]\le c_{3}\log k.
\]
Together these estimates imply that 
\[
\mathbf{E}_{u}\left[d_{\mathbf{w}}\left(X_{k\log_{\ast}k},X_{0}\right)\right]\ge c\left(w_{1}+w_{2}\right)k\mbox{ for any }k\in\left\{ 1,\ldots,m\right\} ,u\in\mathcal{L}_{m}.
\]

\end{proof}

The same method can be used to estimate speed of random walks on lamplighter
graphs over other choices of the base graph.

\begin{lemma}\label{lamplighter-torus}

Let $\Gamma=\mathbb{Z}_{2}\wr D_{\infty}^{d}$, $d\ge3$ as in the
2nd item of Example \ref{example-2}, marked with generating subgroups
$A=\mathbb{\mathbb{Z}}_{2}\wr\left\langle a_{j},1\le j\le d\right\rangle $,
$B=\mathbb{\mathbb{Z}}_{2}\wr\left\langle b_{j},1\le j\le d\right\rangle $.
Fix an increasing sequence $n_{s}\in\mathbb{N}$, let $\Gamma_{s}=\mathbb{Z}_{2}\wr D_{2n_{s}}^{d}$ be a finite quotient of $\Gamma$. Let $A(s)$,
$B(s)$ denote the projection of $A$ and $B$ to $\Gamma_{s}$. There
exists a constant $\sigma_{d}>0$ only depending on $d$ such that $\left\{ \Gamma_{s}\right\} $
satisfies $\left(\sigma_{d},\left(2n_{s}\right)^{d}\right)$-linear
speed assumption.

\end{lemma}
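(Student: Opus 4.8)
The plan is to prove the $(\sigma_d,(2n_s)^d)$-linear speed assumption for $\Gamma_s=\mathbb{Z}_2\wr D_{2n_s}^d$ by adapting the classical Kaimanovich--Vershik argument that simple random walk on $\mathbb{Z}_2\wr\mathbb{Z}^d$, $d\ge3$, has positive speed, and by controlling the transient time window $t\le(2n_s)^d=|D_{2n_s}^d|$. Recall that the measure $\eta_s=U_{A(s)}*U_{B(s)}*U_{A(s)}$ projects onto the base $D_{2n_s}^d$ as a switch-walk-switch measure whose walk part is (a lazy version of) nearest-neighbor walk on the $d$-dimensional discrete torus $\mathbb{Z}_{2n_s}^d$; the lamp part flips the $\mathbb{Z}_2$-lamp at the current base position at each step. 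So $X_t^{(s)}=(F_t,R_t)$ where $R_t$ is the base walk on the torus and $F_t$ records the parity of the number of visits to each torus site, restricted to sites visited by $R_{[0,t]}$. Since $|(F,r)|_{\Gamma_s}\ge |\mathrm{supp}(F)|$ (each lit lamp must be visited), it suffices to show $\mathbf{E}|\mathrm{supp}(F_t)|\ge \sigma_d t$ for $t\le(2n_s)^d$.

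First I would establish the range lower bound: for the nearest-neighbor walk $R_t$ on $\mathbb{Z}_{2n_s}^d$, $\mathbf{E}\big[\,|R_{[0,t]}|\,\big]\ge c_d t$ for all $t\le(2n_s)^d$, with $c_d>0$ depending only on $d$. This follows exactly as in Lemma \ref{cauchy-speed} above (the Dvoretzky--Erdős / Naor--Peres range argument reproduced there): from the Nash-type inequality and the on-diagonal heat kernel bound $\mathbf{P}_x(R_l=x)\le c\,l^{-d/2}$ valid for $l\le (2n_s)^d$ (by comparison with $\mathbb{Z}^d$ up to times of order the torus volume), one gets $\sum_{l=0}^{t}\max_x\mathbf{P}_x(R_l=x)\le c'$ for $d\ge3$ and hence, via \eqref{eq:size-range}-type estimate, $\mathbf{E}|R_{[0,t]}|\ge c_d t$. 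The point is that for $t\le |\mathbb{Z}_{2n_s}^d|$ the torus walk has not yet equilibrated and behaves like the transient walk on $\mathbb{Z}^d$.

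Next I would upgrade the range bound to a lit-lamps bound. A site $x\in R_{[0,t]}$ fails to be lit at time $t$ only if it is visited an even number of times; by a standard argument (condition on the last visit to $x$ before time $t$ and use that after the final visit the walk does not return, an event of probability bounded below uniformly by transience), a constant fraction of the visited sites are lit in expectation: $\mathbf{E}|\mathrm{supp}(F_t)|\ge c_d'\,\mathbf{E}|R_{[0,t]}|\ge \sigma_d t$. Alternatively one invokes the classical fact (Kaimanovich--Vershik) that on $\mathbb{Z}^d$, $d\ge3$, the lamp configuration of the switch-walk-switch walk on $\mathbb{Z}_2\wr\mathbb{Z}^d$ has $\mathbf{E}|\mathrm{supp}(F_t)|\gtrsim t$, and transfers it to the torus for $t$ in the stated window by the same coupling-before-wraparound comparison. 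Combining, $L_{\eta_s}(t)=\mathbf{E}|X_t^{(s)}|_{\Gamma_s}\ge\mathbf{E}|\mathrm{supp}(F_t)|\ge\sigma_d t$ for all $t\le (2n_s)^d$, which is the claim.

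The main obstacle I expect is making the torus-versus-$\mathbb{Z}^d$ comparison fully rigorous in the regime $t\approx (2n_s)^d$: one must ensure the on-diagonal decay $\mathbf{P}_x(R_l=x)\le c\,l^{-d/2}$ genuinely holds up to $l\sim$ torus volume (it does, since the torus and $\mathbb{Z}^d$ heat kernels agree up to that scale, the walk being unlikely to wrap around), and that the "no return after last visit" probability is bounded below by a constant depending only on $d$ and not on $n_s$ — again true because a walk started at $x$ in the torus, for times up to the volume, has escape probability comparable to the transient escape probability on $\mathbb{Z}^d$. Once these two uniform-in-$n_s$ estimates are in place, the rest is the bookkeeping already carried out in Lemma \ref{cauchy-speed}.
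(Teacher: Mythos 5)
Your overall plan matches the paper's: lower-bound the length of $X_t^{(s)}$ by the number of lit lamps, which is controlled by the range of the induced base walk, and control the range via an on-diagonal heat kernel estimate valid up to time $(2n_s)^d$ followed by the Dvoretzky--Erd\H{o}s argument already packaged in Lemma~\ref{cauchy-speed}. The range step (Nash inequality / on-diagonal decay $\mathbf{P}_x(R_l = x)\le C_d l^{-d/2}$ for $l\lesssim n_s^2$, then summing up to $(2n_s)^d$) is exactly what the paper does, so that part is fine.

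Where you diverge is the "range $\Rightarrow$ lit lamps" step, and there you both misdescribe the dynamics and consequently overcomplicate the argument. The step distribution $\eta_s = U_{A(s)}\ast U_{B(s)}\ast U_{A(s)}$ does not "flip the $\mathbb{Z}_2$-lamp at the current base position at each step"; rather, $A(s)$ and $B(s)$ are themselves wreath products $\mathbb{Z}_2\wr(\mathbb{Z}/2\mathbb{Z})^d$, so a uniform $A(s)$-step XORs the lamps over the entire $2^d$-element coset $x\cdot\bar A(s)$ of the current position $x$ with an independent uniform configuration, and similarly for $B(s)$. Consequently the final lamp state at any site visited by the base walk is \emph{uniform on $\mathbb{Z}_2$ conditionally on the base trajectory}. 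The paper therefore gets $\mathbf{E}\bigl|X_{2t}^{(s)}\bigr|_{\Gamma_s}\ge c\,\mathbf{E}\bigl|\mathcal{R}_{[[0,2t]]}^{(s)}\bigr|$ for free, with no conditioning on last visits and no transience input in this step. Your parity-of-visits argument is the Kaimanovich--Vershik argument appropriate to a \emph{flip} walk; it would work, but it requires the additional uniform-in-$n_s$ estimate that the escape probability after the last visit stays bounded below for the torus walk up to times of order $(2n_s)^d$. You flag this as "the main obstacle," and indeed it is an extra estimate that the paper's observation bypasses entirely. Aside from this mischaracterization of the lamp dynamics and the unnecessary transience argument, your proof is sound and yields the lemma.
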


\begin{proof}

Let $\bar{A}(s)=\left\langle a_{j},1\le j\le d\right\rangle $, $\bar{B}(s)=\left\langle b_{j},1\le j\le d\right\rangle $.
Consider a random alternating word $X_{t}^{(s)}$ in $A(s)$ and $B(s)$
of length $t$, let $\bar{X}_{t}^{(s)}$ be its projection to $D_{2n_{s}}^{d}$.
In words, if the last letter in $X_{t}^{(s)}$ is a random element
in $A(s)$, then to get to $X_{t+1}^{(s)}$, the lamp configuration
in the neighborhood $\bar{X}_{t}^{(s)}\bar{B}(s)$ is randomized,
and the walker on the base $D_{2n_{s}}^{d}$ is multiplied by a random
element in $\bar{B}(s)$. Similarly if $X_{t}^{(s)}$ ends with $B(s)$,
then the next move is uniform in $A(s)$. From this description, we
have that the lamps over the sites visited by $\bar{X}_{t}^{(s)}$
are randomized, 
\[
\left|X_{2t}^{(s)}\right|_{\Gamma_{s}}\ge\frac{1}{8}\left|\mathcal{R}_{[[0,2t]]}^{(s)}\right|,
\]
where $\mathcal{R}_{[[0,2t]]}^{(s)}=\left\{ x\in D_{2n_{s}}^{d}:\bar{X}_{2l}^{(s)}=x\mbox{ for some }0\le l\le t\right\} $.
By comparing $\left\{ \bar{X}_{2l}^{(s)}\right\} $ to standard simple
random walk on $D_{2n_{s}}^{d}$, we have that there exists a constant
$C_{d}>0$ such that 
\[
\mathbf{P}\left(\bar{X}_{2l}^{(s)}=e\right)\le C_{d}(2l)^{-\frac{d}{2}}\mbox{ for all }1\le l\le2n_{s}^{2}.
\]
It follows that 
\begin{align*}
\sum_{l=0}^{(2n_{s})^{d}}\mathbf{P}\left(\bar{X}_{2l}^{(s)}=e\right) & \le1+\sum_{l=1}^{2n_{s}^{2}}C_{d}(2l)^{-\frac{d}{2}}+\left(2n_{s}\right)^{d}(4n_{s}^{2})^{-d/2}\\
 & \le2C_{d}+2.
\end{align*}
To estimate $\mathbf{E}\left|\mathcal{R}_{[[0,2t]]}^{(s)}\right|,$
we apply the argument as in Lemma \ref{cauchy-speed}. For $t\le\left(2n_{s}\right)^{d}$,
in the inequality (\ref{eq:size-range}) choose $k=\frac{t}{4C_{d}+4}$,
we conclude that there exists constant $\sigma_{d}>0$, 
\[
\E\left|X_{2t}^{(s)}\right|_{\Gamma_{s}}\ge\sigma_{d}t.
\]
\end{proof}

 \bibliographystyle{alpha}
\bibliography{CompressionBib}

\textsc{\newline J\'er\'emie Brieussel \newline Universit\'e de Montpellier 
\newline Institut Montpelli\'erain Alexander Grothendieck  \newline 34095 Montpellier, France} \newline
\textit{E-mail address:} jeremie.brieussel@umontpellier.fr

\textsc{\newline Tianyi Zheng \newline Stanford University \newline Department of Mathematics \newline Stanford, California 94305} \newline
\textit{E-mail address:} tzheng2@stanford.edu

\end{document}